\def\MT@register@subst@font{\MT@exp@one@n\MT@in@clist\font@name\MT@font@list
 \ifMT@inlist@\else\xdef\MT@font@list{\MT@font@list\font@name,}\fi}
\newcommand{\myitem}[1]{%
\item[(#1)]\protected@edef\@currentlabel{#1}%
}
\newcommand{\bit}{\begin{itemize}}    
\newcommand{\eit}{\end{itemize}}
\newcommand{\ben}{\begin{enumerate}}
\newcommand{\een}{\end{enumerate}}
\newcommand{\benroman}{\ben[\normalfont (i)]}  
\let\eroman\een
\newcommand{\bde}{\begin{description}}
\newcommand{\ede}{\end{description}}
\let\oper=\mathbb                               
\newcommand{\III}{\oper{I}}                     
\newcommand{\SSS}{\oper{S}}                     
\newcommand{\UUU}{\oper{U}}                     
\newcommand{\VVV}{\oper{V}}                     
\newcommand{\resLK}{{\upharpoonright}_{\mathscr{L}_{\mathsf{K}}}}
\newcommand{\res}{{\upharpoonright}}
\theoremstyle{theorem}
\newtheorem{Theorem}{Theorem}[section]
\newtheorem{Theorem-n}{Theorem}
\newtheorem{Proposition}[Theorem]{Proposition}
\newtheorem{Modal Sahlqvist Theorem}[Theorem]{Modal Sahlqvist Theorem}
\newtheorem{Intuitionistic Sahlqvist Theorem}[Theorem]{Intuitionistic  Sahlqvist Theorem}
\newtheorem{Esakia Duality}[Theorem]{Esakia Duality}
\newtheorem{Main Lemma}[Theorem]{Main Lemma}
\newtheorem{Compactness Theorem}[Theorem]{Compactness Theorem}
\newtheorem{Los Theorem}[Theorem]{\LL o\'s' Theorem}
\newtheorem{Isbell Theorem}[Theorem]{Isbell's Zigzag Theorem}
\newtheorem{Diagram Lemma}[Theorem]{Diagram Lemma}
\newtheorem{Jonsson}[Theorem]{J{\'o}nsson's Theorem}
\newtheorem{Transfer Lemma}[Theorem]{Transfer Lemma}
\newtheorem{Subdirect Decomposition Theorem}[Theorem]{Subdirect Decomposition Theorem}
\newtheorem{Corollary}[Theorem]{Corollary}
\newtheorem{Claim}[Theorem]{Claim}
\theoremstyle{definition}
\newtheorem{Definition}[Theorem]{Definition}
\newtheorem{exa}[Theorem]{Example}
\theoremstyle{remark}
\newtheorem{Remark}[Theorem]{Remark}
\crefname{Theorem}{Theorem}{Theorems}
\crefname{Proposition}{Proposition}{Propositions}
\crefname{Lemma}{Lemma}{Lemmas}
\crefname{Corollary}{Corollary}{Corollaries}
\crefname{Claim}{Claim}{Claims}
\crefname{Definition}{Definition}{Definitions}
\crefname{exa}{Example}{Examples}
\crefname{Remark}{Remark}{Remarks}
\crefname{Fact}{Fact}{Facts}
\crefname{exer}{Exercise}{Exercises}
\crefname{problem}{Problem}{Problems}
\crefname{Compactness Theorem}{Compactness Theorem}{Compactness Theorems}
\crefname{Los Theorem}{\LL o\'s' Theorem}{\LL o\'s' Theorems}
\crefname{Isbell Theorem}{Isbell's Zigzag Theorem}{Isbell's Zigzag Theorems}
\crefname{Diagram Lemma}{Diagram Lemma}{Diagram Lemmas}
\crefname{Subdirect Decomposition Theorem}{Subdirect Decomposition Theorem}{Subdirect Decomposition Theorems}
\let\leq=\leqslant
\let\nleq=\nleqslant
\let\geq=\geqslant 
 \let\mathscr\relax 
\newcommand{\dom}{\mathsf{dom}}
\renewcommand{\int}{\mathsf{int}\,}
\newcommand{\Z}{\mathbb{Z}}
\bmdefine{\A}{A} 
\bmdefine{\C}{C}                                
\bmdefine{\B}{B}
\bmdefine{\D}{D}
\bmdefine{\E}{E}
\newcommand{\I}{{[0,1]}}
\bmdefine{\Term}{T} 
\bmdefine{\Free}{F}
\bmdefine{\Fb}{F}
\newcommand{\?}{\ensuremath{\mkern0.4\thinmuskip}}   
\newcommand{\DL}{\mathsf{DL}}
\newcommand{\bDL}{\mathsf{bDL}}
\newcommand{\V}{\mathsf{V}}
\newcommand{\K}{\mathsf{K}}
\newcommand{\M}{\mathsf{M}}
\newcommand{\HHH}{\mathbb{H}}
\newcommand{\PPP}{\mathbb{P}}
\newcommand{\QQQ}{\mathbb{Q}}
\newcommand{\PPU}{\mathbb{P}_{\!\textsc{\textup{u}}}^{}}
\newcommand{\ext}{\mathsf{ext}}
\newcommand{\extpp}{\mathsf{ext}_{\textsc{pp}}}
\newcommand{\exteq}{\mathsf{ext}_{\textsc{eq}}}
\newcommand{\Mon}{\mathsf{Mon}}
\newcommand{\CCMon}{\mathsf{CCMon}}
\renewcommand{\d}{\mathsf{d}}
\newcommand{\imp}{\mathsf{imp}}
\newcommand{\imppp}{\mathsf{imp}_{\textsc{pp}}}
\newcommand{\impeq}{\mathsf{imp}_{\textsc{eq}}}
\let\LL\L 
\renewcommand{\L}{\mathscr{L}}
\newcommand{\Luk}{\LL}
\newcommand{\F}{\mathcal{F}}
\newcommand{\rsi}{\textsc{rsi}}
\newcommand{\si}{\textsc{si}}
\newcommand{\rfsi}{\textsc{rfsi}}
\newcommand{\fsi}{\textsc{fsi}}
\newcommand{\ac}{\textsc{ac}}
\newcommand{\KAC}{\K_\ac}
\newcommand{\Mcal}{\mathcal{M}}
\newcommand{\Con}{\mathsf{Con}}
\newcommand{\GA}{\mathsf{GA}}
\newcommand{\Cg}{\mathsf{Cg}}
\newcommand{\Ker}{\mathsf{Ker}}
\newcommand{\diag}{\mathsf{diag}}
\newcommand{\fg}{\textup{fg}}
\newcommand{\Kfg}{\K^\fg}
\newcommand{\Krsifg}{\K_\rsi^\fg}
\newcommand{\tfAb}{\mathsf{TFAG}}
\newcommand{\DAb}{\mathsf{DAG}}
\newcommand{\lAb}{{\ell\mathsf{AG}}}
\newcommand{\lDAb}{{\ell\mathsf{DAG}}}
\newcommand{\MV}{\mathsf{MV}}
\newcommand{\DMV}{\mathsf{DMV}}
\renewenvironment{abstract}
  {%
    \small
    \begin{center}%
      {\bfseries \abstractname\par}%
    \end{center}%
  }
\begin{document}

\title{The theory of implicit operations}

\author{Luca Carai, Miriam Kurtzhals, and Tommaso Moraschini}

\address{Luca Carai: Dipartimento di Matematica ``Federigo Enriques'', Universit\`a degli Studi di Milano, via Cesare Saldini 50, 20133 Milano, Italy}\email{luca.carai.uni@gmail.com}

\address{Miriam Kurtzhals and Tommaso Moraschini: Departament de Filosofia, Facultat de Filosofia, Universitat de Barcelona (UB), Carrer Montalegre, $6$, $08001$ Barcelona, Spain}
\email{mkurtzku7@alumnes.ub.edu and tommaso.moraschini@ub.edu}

\date{\today}

\maketitle

\begin{abstract}
    A family of partial functions of a class of algebras $\K$ is said to be an \emph{implicit operation} of $\K$ when it is defined by a first order formula and it is preserved by homomorphisms. In this work, we develop the theory of implicit operations from an algebraic standpoint.  
    
    Notably, the implicit operations of an elementary class $\K$ are exactly the partial functions on $\K$ definable by existential positive formulas. For instance, ``taking inverses'' is an implicit operation of the class of monoids, defined by the conjunction of the equations $xy \thickapprox 1$ and $yx \thickapprox 1$, saying that $y$ is the inverse of $x$. As this example suggests, implicit operations need not be definable by the terms of the corresponding class of algebras. In fact, the demand that every implicit operation of a universal class $\K$ be interpolated by a family of terms is equivalent to the demand that $\K$ has the strong epimorphism surjectivity property. 
    
    However, implicit operations are always interpolated by families of implicit operations of a simpler kind, namely, those defined by \emph{pp formulas}, i.e., formulas of the form $\exists \vec{x} \varphi$, where $\varphi$ is a conjunction of equations. Motivated by this, we establish an \emph{existential elimination theorem} stating that, when $\K$ is a quasivariety with the amalgamation property, every implicit operation of $\K$ is interpolated by a family of implicit operations defined by conjunctions of equations (i.e., by pp formulas without existential quantifiers). We also provide a series of methods to test whether a concrete class of algebras has the strong epimorphism surjectivity property or, equivalently, to test whether interpolation can be carried out using terms only.

As the implicit operations of a class of algebras $\K$ need not belong to the language of $\K$, it is natural to wonder whether $\K$ can be expanded with its implicit operations. 
The main obstacle is that, in general, implicit operations need not be total. Accordingly, we say that an implicit operation of $\K$ is \emph{extendable} when every member of $\K$ can be extended to one in which the operation is total. For instance, the operation of ``taking inverses'' is not extendable in the class of monoids, but it becomes so in the class of cancellative commutative monoids because every such monoid embeds into an Abelian group.

When expanding a class of algebras $\K$ with its pp definable extendable implicit operations produces a class $\M$ in which every implicit operations is interpolated by a family of terms, we say that $\M$ is a \emph{Beth companion} of $\K$. In the context of quasivarieties, Beth companions are essentially unique, in the sense that all the Beth companions of a quasivariety are term equivalent. However, Beth companions need not exist in general: while Abelian groups are the Beth companion of cancellative commutative monoids, the class of all (commutative) monoids lacks a Beth companion. A series of tools to describe the Beth companion of a concrete class of algebras is also exhibited, drawing connections with absolutely closed, injective, and saturated algebras.

The appeal of Beth companions depends largely on whether the structure theory of a class is improved by moving to its Beth companion. We show that this is indeed the case by proving that, under minimal assumptions, every Beth companion of a relatively congruence distributive quasivariety whose class of relatively finitely subdirectly irreducible members is closed under nontrivial subalgebras is an arithmetical variety with the congruence extension property. 
This theorem applies, for instance, to the quasivariety of reduced commutative rings (which lacks the properties given by the theorem) and its Beth companion, namely, the variety of implicitly closed meadows. 
As a corollary, we  obtain that the Beth companion of a relatively filtral quasivariety must be a discriminator variety.

A variety of examples, together with their Beth companions, are discussed (see \cref{table: Beth companions}). These include both examples from classical algebra such as semigroups and monoids (both with and without commutativity and cancellativity), Abelian $\ell$-groups, torsion-free Abelian groups, and reduced commutative rings, as well as examples with a logical motivation such as (bounded) distributive lattices, Hilbert algebras, Heyting algebras, and MV-algebras.
\end{abstract}

\begin{table}[ht]
\centering
\begin{tabular}{|p{6.5cm}|p{6.5cm}|p{2cm}|} \hline
\textbf{Class} & \textbf{Beth companion}  & \textbf{Location}\\ \hline\hline
 universal classes with the strong   epimorphism surjectivity property & themselves & Thm.~\ref{Thm : Beth companion : examples} \\ \hline
relatively filtral quasivarieties & discriminator varieties & Cor.~\ref{Cor : rel filtral -> discriminator} \\ \hline
monoids & no Beth companion & Thm.~\ref{Thm : M lacks Beth comp} \\ \hline
semigroups & no Beth companion & Rem.~\ref{Rem : semigroups and Beth comp} \\ \hline
commutative monoids & no Beth companion & Thm.~\ref{Thm : M lacks Beth comp} \\ \hline
commutative semigroups & no Beth companion & Rem.~\ref{Rem : semigroups and Beth comp} \\ \hline
cancellative commutative monoids & Abelian groups & Thm.~\ref{Thm : Beth companion : examples} \\ \hline
cancellative commutative semigroups & Abelian groups & Rem.~\ref{Rem : semigroups and Beth comp} \\ \hline
torsion-free Abelian groups & Abelian groups with division & Thm.~\ref{Thm : Beth companion TF Abelian groups} \\ \hline 
Abelian $\ell$-groups &  Abelian $\ell$-groups with division & Thm.~\ref{Thm : div are Beth of tfAb and lAb} \\ \hline 
reduced commutative rings & implicitly closed meadows & Exa.~\ref{Exa : Beth companion : meadows : zero characteristic} \\ \hline 
distributive lattices & relatively complemented distributive lattices & Thm.~\ref{Thm : Beth companion : examples} \\ \hline
bounded distributive lattices & Boolean algebras & Thm.~\ref{Thm : Beth companion : examples} \\ \hline
Hilbert algebras & implicative semilattices & Thm.~\ref{Thm : Beth companion : examples} \\ \hline
pseudocomplemented  distributive lattices & Heyting algebras of depth $\leq 2$ & Thm.~\ref{Thm : Beth companion : examples} \\ \hline
varieties generated by a linearly ordered Heyting algebra $\A$ & no Beth companion if $5 \leq \vert A \vert  < \omega$ and $\VVV(\A)$ otherwise & Thm.~\ref{Thm : V(Cn) lacks Beth comp} \\ \hline
MV-algebras & MV-algebras with division & Thm.~\ref{Thm : Beth comp of MV} \\ \hline
 varieties generated by an MV-algebra of the form $\Luk_n$  & varieties generated by the expansion of $\Luk_n$ with a constant for $\frac{1}{n}$ & Thm.~\ref{Thm : Beth comp of V(MVn)} \\ \hline

\end{tabular}
\vspace{3mm}
\caption{Some classes of algebras and their Beth companions.}
\label{table: Beth companions}
\end{table}

\tableofcontents

\section{Formulas, compactness, and preservation theorems}

Throughout this work, we will assume familiarity with the  notions of an algebra and a homomorphism from universal algebra (see, e.g., \cite{Ber11,BuSa00}), with simple constructions such as direct products or subalgebras, as well as with first order formulas and their interpretation in mathematical structures (see, e.g., \cite{ModCK,HodModTh}). In doing so, we will restrict our attention to algebraic languages. Furthermore, classes of algebras $\K$ will be always assumed to be classes of \emph{similar} algebras, that is, algebras with a common language. Given a pair of algebras $\A$ and $\B$, we write $\A \leq \B$ to indicate that $\A$ is a subalgebra of $\B$. 
We denote the direct product of a family $\{\A_i : i \in I \}$ of similar algebras by $\prod_{i \in I}\A_i$ and the projection maps by $p_j \colon \prod_{i \in I}\A_i \to \A_j$ for every $j \in I$. We also write $\A_1 \times \dots \times \A_n$ for the product of a finite family $\{ \A_1, \dots, \A_n\}$.

By a \emph{formula} we always mean a first order formula.\  Given a formula $\varphi$, we write $\varphi(x_1, \dots, x_n)$ to indicate that the free variables of $\varphi$ are among $x_1, \dots, x_n$.
We denote the conjunction, disjunction, and implication of a pair of formulas $\varphi$ and $\psi$ by $\varphi \sqcap \psi$, $\varphi \sqcup \psi$, and $\varphi \to \psi$, respectively. Moreover, we denote the negation of a formula $\varphi$ by $\lnot \varphi$. When $\varphi$ is an equation $t_1 \thickapprox t_2$, we often write $t_1 \not \thickapprox t_2$ as a shorthand for $\lnot (t_1 \thickapprox t_2)$. 
Given an algebra $\A$, a formula $\varphi(x_1, \dots, x_n)$, and $a_1, \dots, a_n \in A$, we write $\A \vDash \varphi(a_1, \dots, a_n)$ to indicate that $\varphi$ holds in $\A$ of the elements $a_1, \dots, a_n$. When $\A \vDash \varphi(a_1, \dots, a_n)$ for all $a_1, \dots, a_n \in A$, we say that $\varphi$ is \emph{valid} in $\A$
and write $\A \vDash \varphi$. Similarly, if $\Phi$ is a set of formulas, we write $\A \vDash \Phi$, and say that $\A$ is \emph{a model of} $\Phi$, to indicate that $\A \vDash \varphi$ for each $\varphi \in \Phi$. This notion extends to classes of algebras $\K$ as follows: we say that a formula $\varphi$ is \emph{valid} in $\K$ and write $\K \vDash \varphi$ when $\A \vDash \varphi$ for each $\A \in \K$. Similarly, we write $\K \vDash \Phi$ when $\K \vDash \varphi$ for each $\varphi \in \Phi$. We always allow two special formulas $\top$ and $\bot$ without free variables and assume that $\top$ is valid in every algebra, while $\bot$ is not valid in any algebra.

A pair of formulas $\varphi(x_1, \dots, x_n)$ and $\psi(x_1, \dots, x_n)$ is said to be \emph{equivalent} in a class of algebras $\K$ when for all $\A \in \K$ and $a_1, \dots, a_n \in A$,
\[
\A \vDash \varphi(a_1, \dots, a_n) \iff \A \vDash \psi(a_1, \dots, a_n).
\]
When $\varphi$ and $\psi$ are equivalent in every class of algebras, we simply say that they are \emph{equivalent}.

A formula is said to be \emph{existential positive} when it is of the form 
\begin{equation}\label{Eq : existential positive formulas : definition}
\exists x_1, \dots, x_n \varphi,
\end{equation}
where $\varphi$ is built from equations, $\top$, and $\bot$ using only $\sqcap$ and $\sqcup$. 
An existential positive formula of the form in \eqref{Eq : existential positive formulas : definition} is called a \emph{primitive positive formula} (\emph{pp formula} for short) 
when $\varphi$ is a finite conjunction of equations.\footnote{We admit the empty conjunction, which is defined to be $\top$.}
Since existential quantifiers and conjunctions distribute over disjunctions up to equivalence, every existential positive formula is equivalent to a disjunction of pp formulas.
Lastly, a formula is said to be \emph{universal} when it is of the form $\forall x_1, \dots, x_n \varphi$, where $\varphi$ is a quantifier-free formula.

Let $\K$ be a class of algebras. A formula $\varphi(x_1, \dots, x_n)$ is said to be preserved by
\benroman
\item \emph{homomorphisms} in $\K$ when for every homomorphism $h \colon \A \to \B$ with $\A, \B \in \K$ and $a_1, \dots, a_n \in A$,
\[
\text{if }\A \vDash \varphi(a_1, \dots, a_n) \text{, then }\B \vDash \varphi(h(a_1), \dots, h(a_n));
\]
\item \emph{direct products} in $\K$ when for all
$\{\A_i : i \in I \} \subseteq \K$ and 
$a_1, \dots, a_n \in \prod_{i \in I}A_i$,
\[
\text{if }\A_i \vDash \varphi(p_i(a_1), \dots, p_i(a_n)) \text{ for each }i \in I\text{, then }\prod_{i \in I}\A_i \vDash \varphi(a_1, \dots, a_n);
\]
\item \emph{subalgebras} in $\K$ when for all $\A \leq \B \in \K$ and $a_1, \dots, a_n \in A$,
\[
\text{if }\B \vDash \varphi(a_1, \dots, a_n) \text{, then }\A \vDash \varphi(a_1, \dots, a_n).
\]
\eroman

A class of algebras $\mathsf{K}$ is said to be \emph{elementary} when it can be axiomatized by a set of formulas $\Phi$, i.e., $\mathsf{K}$ is the class of models of $\Phi$.
We rely on the following preservation theorem for elementary classes.

\begin{Theorem}\label{Thm : preservation}
Let $\K$ be an elementary class. A formula $\varphi(x_1, \dots, x_n)$ is preserved by  
\benroman
\item\label{item : preservation : ep} homomorphisms in $\K$ if and only if it is equivalent in $\K$ to an existential positive formula; 
\item\label{item : preservation : pp} homomorphisms and direct products in $\K$ if it is a pp formula;
\item\label{item : preservation : universal} subalgebras in $\K$ if and only if it is equivalent in $\K$ to a universal formula.
\eroman
\end{Theorem}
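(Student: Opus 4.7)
The plan is to verify the easy direction of each part by a direct syntactic induction and to prove the hard directions of (i) and (iii) by the classical method of diagrams together with the compactness theorem. Concretely, the hard directions of (i) and (iii) are relativizations to the elementary class $\K$ of Lyndon's homomorphism preservation theorem and the \LL o\'s--Tarski universal preservation theorem, respectively.

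For (ii), I would argue by structural induction on the pp formula: atomic equations are preserved by homomorphisms by definition, and by direct products because fundamental operations are computed coordinate-wise; conjunctions preserve both properties componentwise; and existential quantifiers preserve preservation by homomorphisms by transferring witnesses across the map, and preservation by direct products by assembling a witness in $\prod_{i \in I}\A_i$ from witnesses in each $\A_i$. The ``only if'' directions of (i) and (iii) are entirely analogous (adding disjunctions for (i), and using for (iii) that quantifier-free formulas are trivially preserved by subalgebras and that a universal quantifier over a subalgebra ranges over fewer elements than over the superalgebra).

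For the hard direction of (i), I fix $\varphi(\vec{x})$ preserved by homomorphisms in $\K$ and set
\[
\Gamma_\varphi(\vec{x}) := \{\psi(\vec{x}) : \psi \text{ is existential positive and } \K \vDash \varphi \to \psi\}.
\]
Once $\K \vDash \Gamma_\varphi \to \varphi$ is established, compactness yields a finite $\Gamma_0 \subseteq \Gamma_\varphi$ with $\K \vDash \bigsqcap \Gamma_0 \to \varphi$; combined with $\K \vDash \varphi \to \bigsqcap \Gamma_0$ this makes $\varphi$ equivalent in $\K$ to the existential positive formula $\bigsqcap \Gamma_0$. To prove $\K \vDash \Gamma_\varphi \to \varphi$, I would fix $\B \in \K$ and $\vec{b} \in B^n$ with $\B \vDash \Gamma_\varphi(\vec{b})$ and consider
\[
T := \mathsf{Th}(\K) \cup \{\varphi(\vec{c})\} \cup \{\lnot \psi(\vec{c}) : \psi \text{ is existential positive and } \B \not\vDash \psi(\vec{b})\}.
\]
Since a finite disjunction of existential positive formulas is equivalent to one, compactness together with $\B \vDash \Gamma_\varphi(\vec{b})$ forces $T$ to be consistent. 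Any model provides $\A \in \K$ and $\vec{a}$ with $\A \vDash \varphi(\vec{a})$ such that every existential positive formula satisfied by $\vec{a}$ in $\A$ is satisfied by $\vec{b}$ in $\B$. A further compactness argument, built on the positive diagram of $\A$ together with the elementary diagram of $\B$, produces an elementary extension $\B^* \succeq \B$, which lies in $\K$ by elementarity, and a homomorphism $h\colon \A \to \B^*$ sending $\vec{a}$ to $\vec{b}$; preservation by homomorphisms in $\K$ then yields $\B^* \vDash \varphi(\vec{b})$, and elementarity transfers this back to $\B$.

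The hard direction of (iii) is similar but simpler, since the full diagram $\diag(\A)$ may be used directly. For $\varphi$ preserved by subalgebras in $\K$, I would define $\Gamma_\varphi$ as the set of universal formulas $\psi(\vec{x})$ with $\K \vDash \varphi \to \psi$ and reduce via compactness to showing $\K \vDash \Gamma_\varphi \to \varphi$. Given $\A \in \K$ with $\A \vDash \Gamma_\varphi(\vec{a})$, I would consider $\mathsf{Th}(\K) \cup \diag(\A) \cup \{\varphi(\vec{c}_{\vec{a}})\}$: inconsistency would, via compactness, produce a universal consequence of $\varphi$ in $\K$ contradicting some (quantifier-free) fact of $\A$ at $\vec{a}$, against $\A \vDash \Gamma_\varphi(\vec{a})$. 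Hence the theory is consistent, and any model embeds $\A$ into some $\B \in \K$ with $\B \vDash \varphi(\vec{a})$; preservation by subalgebras concludes. The main obstacle is the gluing step in (i), where one must verify that preservation of every existential positive formula from $(\A, \vec{a})$ to $(\B, \vec{b})$ is exactly what is needed to make the combined positive-and-elementary diagram theory consistent; the corresponding step in (iii) is nearly automatic because negated atoms are available in the full diagram from the outset.
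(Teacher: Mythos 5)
Your proposal is correct, and all three parts go through, but the route is genuinely different from the paper's. For part (\ref{item : preservation : ep}) the paper does not give a direct argument: it expands the language with an $n$-ary relation symbol $R$ interpreted as the set of tuples satisfying $\varphi$, observes that the resulting class $\K^*$ is elementary, and invokes a cited theorem of Campercholi and Vaggione on semantical conditions for the definability of relations, which delivers the equivalence between homomorphism-preservation of $R$ and existential positive definability as a black box. You instead give the classical self-contained proof: reduce via compactness to showing that the set $\Gamma_\varphi$ of existential positive consequences of $\varphi$ entails $\varphi$ in $\K$, realize a companion pair $(\A,\vec{a})$ satisfying $\varphi$ whose existential positive type is contained in that of $(\B,\vec{b})$, and then glue the positive diagram of $\A$ to the elementary diagram of $\B$ to obtain a homomorphism into an elementary extension $\B^*\succeq\B$ (which lies in $\K$ by elementarity), through which preservation transfers $\varphi$ back to $\B$. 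Similarly, for part (\ref{item : preservation : universal}) the paper simply cites Hodges, whereas you reprove the \LL o\'s--Tarski theorem relativized to $\K$ via $\diag(\A)$; your argument is sound, with the one point worth spelling out being that the universal consequence extracted from an inconsistent finite fragment must quantify out the auxiliary parameters $\vec{d}$ of the diagram before it can be compared against $\A\vDash\Gamma_\varphi(\vec{a})$. For part (\ref{item : preservation : pp}) your structural induction and the paper's direct verification (which obtains homomorphism-preservation from (\ref{item : preservation : ep}) and assembles product witnesses coordinatewise) are essentially the same argument. What the paper's approach buys is brevity and an explicit pointer to the literature; what yours buys is a fully self-contained proof that makes the relativization to $\K$ transparent and avoids the detour through relational expansions, at the cost of two additional compactness arguments. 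One small checkpoint you handle implicitly but correctly: the final interpolant $\bigsqcap\Gamma_0$ is a finite conjunction of existential positive formulas, which is again equivalent to one only after renaming bound variables and pulling the quantifiers out, and the degenerate case where $\varphi$ is unsatisfiable in $\K$ is covered because the paper admits $\bot$ as an existential positive formula.
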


\begin{proof}
\eqref{item : preservation : ep}: This fact is known as the \emph{homomorphism preservation Theorem} and is due to \LL o\'s, Lyndon, and Tarski \cite{Los55,LynHom,Tar55}. Since we have not been able to find 
 this result relativized to elementary classes explicitly stated in the literature, we provide a complete proof here.
The argument requires some basic notions from the model theory of structures in languages containing both function and relation symbols that can be found in any standard book on model theory such as \cite{HodModTh}. 

For every $\A \in \K$ let $R^{\A}$ be the $n$-ary relation on $A$ defined by $\langle a_1, \dots, a_n \rangle \in R^{\A}$ if and only if $\A \vDash \varphi(a_1, \dots, a_n)$. We denote by $\A^*$ the structure obtained by equipping the algebra $\A$ with $R^\A$. Then $\K^* = \{\A^* : \A \in \K \}$ is an elementary class in the language of $\K$ expanded with an $n$-ary relation symbol $R$. Indeed, since $\K$ is an elementary class, an axiomatization of $\K^*$ is obtained by adding to the axiomatization of $\K$ the first order formula $R(x_1, \dots, x_n) \leftrightarrow \varphi(x_1, \dots, x_n)$. 
Since $\K^*$ is elementary,
\cite[Thm.~6.2(3)]{CampVaggSemCon} yields that the following conditions are equivalent:
\begin{enumerate}[(a)]
    \item for every homomorphism $h \colon \A \to \B$ between members of $\K$ and $a_1, \dots, a_n \in A$ we have that $\langle a_1, \dots, a_n\rangle \in R^\A$ implies $\langle h(a_1), \dots, h(a_n)\rangle \in R^\B$;
    \item there exists an existential positive formula $\psi$ in the language of $\K$ such that for all $\A \in \K$ and $a_1, \dots, a_n \in A$ we have $\langle a_1, \dots, a_n\rangle \in R^\A$ if and only if $\A \vDash \psi(a_1, \dots, a_n)$.
\end{enumerate}
It follows immediately from the definition of $R^\A$ and $R^\B$ that condition (a) is equivalent to $\varphi$ being preserved by homomorphisms in $\K$, while condition (b) says that $\varphi$ is equivalent to an existential positive formula in $\K$. We then conclude that $\varphi$ is preserved by homomorphisms in $\K$ if and only if it is equivalent in $\K$ to an existential positive formula, as desired.

\eqref{item : preservation : pp}: Suppose that $\varphi$ is a pp formula. As every pp formula is an existential positive formula, \eqref{item : preservation : ep} implies that $\varphi$ is preserved by homomorphisms in $\K$.
We show that $\varphi$ is also preserved by direct products as well. Since $\varphi(x_1, \dots, x_n)$ is a pp formula, it is of the form
\begin{equation*} 
\exists z_1, \dots, z_m \psi(x_1, \dots, x_n, z_1, \dots, z_m),
\end{equation*} 
where $\psi$ is a finite conjunction of equations. 
Consider $\{\A_i : i \in I \} \subseteq \K$ and $a_1, \dots, a_n \in \prod_{i \in I}A_i$ such that 
$\A_i \vDash \varphi(p_i(a_1), \dots, p_i(a_n))$ for each $i \in I$.
Our goal is to show that $\prod_{i \in I}\A_i \vDash \varphi(a_1, \dots, a_n)$.
For every $i \in I$, from 
$\A_i \vDash \varphi(p_i(a_1), \dots, p_i(a_n))$
it follows that there exists $\langle b_1^i, \dots, b_m^i \rangle \in A_i^m$ such that 
\[
\A_i \vDash \psi(p_i(a_1), \dots, p_i(a_n), b_1^i, \dots, b_m^i).
\] 
It is straightforward to verify that conjunctions of equations and $\top$ are preserved by direct products. Therefore, letting $b_1 = \langle b_1^i : i \in I \rangle, \dots, b_m = \langle b_m^i : i \in I \rangle$, we obtain
\[
\prod_{i \in I}\A_i \vDash \psi(a_1, \dots, a_n, b_1, \dots, b_m).
\]
Hence, we conclude that
$\prod_{i \in I}\A_i \vDash \varphi(a_1, \dots, a_n)$.

\eqref{item : preservation : universal}: See, e.g., \cite[Thm.~6.5.4]{HodModTh} and the subsequent paragraph.
\end{proof}

We recall the Compactness Theorem of first order logic (see, e.g., \cite[Thm.~6.1.1]{HodModTh}).

\begin{Compactness Theorem}\label{Thm : compactness theorem original}
A set of formulas $\Phi$ has a model if every finite subset of $\Phi$ has a model.
\end{Compactness Theorem}

For the present purpose, it is convenient to phrase the Compactness Theorem in terms of infinite conjunctions and disjunctions as well. 
To this end, we denote the conjunction and the disjunction of a (possibly infinite) set of formulas $\Phi$, respectively, by
\[
\bigsqcap \Phi \, \, \text{ and } \, \, \bigsqcup \Phi.
\]
When $\Phi = \emptyset$, we assume that $\bigsqcap \Phi = \top$ and $\bigsqcup \Phi = \bot$. 
Given a pair of sets of formulas $\Phi$ and $\Psi$ with free variables among $\langle x_i : i \in I \rangle$, an algebra $\A$, and a sequence $\vec{a} = \langle a_i : i \in I \rangle$ of elements of $A$, we write
\[
\A \vDash \Big(\bigsqcap \Phi \to \bigsqcup\Psi \Big)(\vec{a})
\]
to indicate that if $\A \vDash \varphi(\vec{a})$ for each $\varphi \in \Phi$, there exists $\psi \in \Psi$ such that $\A \vDash \psi(\vec{a})$. When the above display holds for every $\vec{a}$, we write $\A\vDash \bigsqcap \Phi \to \bigsqcup\Psi$. Similarly, given $\K$ a class of algebras, we write $\K\vDash \bigsqcap \Phi \to \bigsqcup\Psi$ to indicate that $\A \vDash \bigsqcap \Phi \to \bigsqcup\Psi$ for each $\A \in \K$.

A standard argument shows that the Compactness Theorem~\ref{Thm : compactness theorem original} is equivalent to the following.

\begin{Compactness Theorem}\label{Thm : compactness theorem}
Let $\K$ be an elementary class. For each pair of sets of formulas $\Phi$ and $\Psi$,
\[
\text{if }\K\vDash \bigsqcap \Phi \to \bigsqcup\Psi\text{, then }\K\vDash \bigsqcap \Phi' \to \bigsqcup\Psi' \text{ for some finite }\Phi' \subseteq \Phi \text{ and } \Psi' \subseteq \Psi.
\]
\end{Compactness Theorem}

We will make use of the following version of the Compactness Theorem for pp formulas.

\begin{Corollary}\label{Cor : compactness for pp formulas}
Let $\K$ be an elementary class closed under direct products. For each pair of sets of pp formulas $\Phi$ and $\Psi$ with $\Psi \ne \emptyset$,
\[
\text{if }\K\vDash \bigsqcap \Phi \to \bigsqcup\Psi\text{, then }\K\vDash \bigsqcap \Phi' \to\psi \text{ for some finite }\Phi' \subseteq \Phi \text{ and } \psi \in \Psi.
\]
\end{Corollary}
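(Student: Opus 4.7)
The plan is to bootstrap from the general Compactness \cref{Thm : compactness theorem}, which immediately produces a finite $\Phi' \subseteq \Phi$ and a finite $\Psi' \subseteq \Psi$ with $\K \vDash \bigsqcap \Phi' \to \bigsqcup \Psi'$. The actual content of the corollary is then to collapse this finite disjunction to a single $\psi \in \Psi'$, and this is where the pp shape of the formulas together with closure under direct products enters.

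First I would dispose of the edge case $\Psi' = \emptyset$: then $\bigsqcup \Psi' = \bot$, so no pair $(\A,\vec{a})$ with $\A \in \K$ satisfies $\bigsqcap \Phi'$, whence $\K \vDash \bigsqcap \Phi' \to \psi$ holds trivially for any $\psi \in \Psi$ (and such a $\psi$ exists by the hypothesis $\Psi \ne \emptyset$). Otherwise write $\Psi' = \{\psi_1, \dots, \psi_k\}$ with $k \geq 1$ and argue by contradiction: for each $i \leq k$, pick a witness $\A_i \in \K$ and a tuple $\vec{a}_i$ with $\A_i \vDash \bigsqcap \Phi'(\vec{a}_i)$ but $\A_i \not\vDash \psi_i(\vec{a}_i)$.

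The key step is a product-then-project argument. I would form $\B := \prod_{i=1}^k \A_i$, which lies in $\K$ by the direct-product closure assumption, and assemble the tuple $\vec{b}$ coordinate-wise so that $p_i(\vec{b}) = \vec{a}_i$ for every $i \leq k$. Since a finite conjunction of pp formulas is, up to equivalence, again a pp formula, \cref{Thm : preservation}(\ref{item : preservation : pp}) lifts $\bigsqcap \Phi'$ through the product, yielding $\B \vDash \bigsqcap \Phi'(\vec{b})$. The assumption $\K \vDash \bigsqcap \Phi' \to \bigsqcup \Psi'$ then forces $\B \vDash \psi_j(\vec{b})$ for some $j \leq k$. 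Finally, pp formulas are existential positive and so preserved by homomorphisms by \cref{Thm : preservation}(\ref{item : preservation : ep}); applying the projection $p_j \colon \B \to \A_j$ drops $\psi_j$ back down to $\A_j$ and delivers $\A_j \vDash \psi_j(\vec{a}_j)$, contradicting the choice of $\A_j$ and $\vec{a}_j$.

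There is no serious obstacle here: the whole strategy hinges on the dual preservation behaviour of pp formulas (upward under direct products, downward along homomorphisms), which is precisely what allows a single product of counterexamples to wash out the superfluous disjuncts. The only subtleties worth flagging are verifying that $\bigsqcap \Phi'$ itself is (equivalent to) a pp formula so that clause (\ref{item : preservation : pp}) of \cref{Thm : preservation} applies, and remembering to dispatch the degenerate case $\Psi' = \emptyset$ separately.
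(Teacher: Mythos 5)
Your proposal is correct and follows essentially the same route as the paper's proof: compactness to finitize, then a proof by contradiction in which the product of the counterexample algebras (in $\K$ by closure under direct products) satisfies $\bigsqcap\Phi'$ via upward preservation of pp formulas, forces some $\psi_j$, and projects back down to contradict the choice of witness. The paper handles the degenerate case simply by noting that $\Psi'$ may be taken nonempty since $\Psi \ne \emptyset$, and applies \cref{Thm : preservation}(\ref{item : preservation : pp}) to the formulas of $\Phi'$ individually rather than to their conjunction, but these are cosmetic differences.
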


\begin{proof}
Suppose that $\K\vDash \bigsqcap \Phi \to \bigsqcup\Psi$. By the Compactness Theorem \ref{Thm : compactness theorem} there exist finite $\Phi' \subseteq \Phi$ and $\Psi' \subseteq \Psi$ such that
\begin{equation}\label{Eq : finitizing the implication : compactness}
\K\vDash \bigsqcap \Phi' \to \bigsqcup\Psi'.
\end{equation}
As $\Psi \ne \emptyset$, we may assume that $\Psi' \ne \emptyset$. Then consider an enumeration $\Psi' = \{ \psi_1, \dots, \psi_n \}$. To conclude the proof, it suffices to show that $\K\vDash \bigsqcap \Phi' \to \psi_i$ for some $i \leq n$. Suppose the contrary, with a view to contradiction. Then let $x_1, \dots, x_m$ be the free variables of $\bigsqcap\Phi' \to \bigsqcup\Psi'$. For each $i \leq n$ there exist $\A_i \in \K$ and $a_1^i, \dots, a_m^i \in A_i$ such that
\begin{equation}\label{Eq : compactness : premise and conclusion}
\A_i \vDash \bigsqcap\Phi'(a_1^i, \dots, a_m^i) \, \, \text{ and } \, \, \A_i \nvDash \psi_i(a_1^i, \dots, a_m^i).
\end{equation}
Then consider the elements $a_1 = \langle a_1^i : i \leq n \rangle, \dots, a_m = \langle a_m^i : i \leq n \rangle$ of $A_1 \times \dots \times A_n$. From the left hand side of the above display, the assumption that $\Phi'$ is a set of pp formulas, and Theorem \ref{Thm : preservation}(\ref{item : preservation : pp}) it follows that
\[
\A_1 \times \dots \times \A_n \vDash \bigsqcap\Phi'(a_1, \dots, a_m).
\]
As $\A_1, \dots, \A_n \in \K$ and $\K$ is closed under direct products by assumption, we obtain $\A_1 \times \dots \times \A_n \in \K$. Together with (\ref{Eq : finitizing the implication : compactness}) and the above display, this yields
\[
\A_1 \times \dots \times \A_n \vDash \psi_i(a_1, \dots, a_m)
\]
for some $i \leq n$. 
From the above display, Theorem \ref{Thm : preservation}(\ref{item : preservation : pp}) and the assumption that $\psi_i$ is a pp formula it follows that $\A_i \vDash \psi_i(p_i(a_1), \dots, p_i(a_m))$. Together with the definition of $a_1, \dots, a_m$, this amounts to $\A_i \vDash \psi_i(a_1^i, \dots, a_m^i)$, a contradiction with the right hand side of (\ref{Eq : compactness : premise and conclusion}).
\end{proof}

The following may be regarded as a converse to \cref{Thm : preservation}\eqref{item : preservation : pp} under some additional assumptions.

\begin{Theorem}
Let $\K$ be an elementary class closed under direct products and $\varphi$ a formula that is preserved by homomorphisms and direct products in $\K$. Then $\varphi$ is equivalent in $\K$ to a pp formula.    
\end{Theorem}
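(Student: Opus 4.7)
The plan is to reduce the statement to \cref{Thm : preservation}\eqref{item : preservation : ep} and then cut the resulting disjunction of pp formulas down to a single disjunct using preservation under direct products. Since $\varphi$ is preserved by homomorphisms in $\K$, \cref{Thm : preservation}\eqref{item : preservation : ep} yields an existential positive formula equivalent to $\varphi$ in $\K$, and the observation recorded in the excerpt that every existential positive formula is equivalent to a finite disjunction of pp formulas produces pp formulas $\psi_1, \dots, \psi_n$ with
\[
\K \vDash \varphi \leftrightarrow \psi_1 \sqcup \cdots \sqcup \psi_n.
\]
In particular $\K \vDash \psi_i \to \varphi$ for every $i \leq n$, so it suffices to exhibit some $i$ for which $\K \vDash \varphi \to \psi_i$; then $\varphi$ is equivalent in $\K$ to the pp formula $\psi_i$.

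I would argue by contradiction. Assume that no $\psi_i$ works. Then for each $i \leq n$ one can choose $\A_i \in \K$ together with a tuple $\bar{a}_i$ in $A_i$ such that $\A_i \vDash \varphi(\bar{a}_i)$ while $\A_i \nvDash \psi_i(\bar{a}_i)$. Closure of $\K$ under direct products gives $\A_1 \times \cdots \times \A_n \in \K$. Let $\bar{a}$ be the tuple in the product assembled coordinatewise from $\bar{a}_1, \dots, \bar{a}_n$. Since $\varphi$ is preserved by direct products in $\K$,
\[
\A_1 \times \cdots \times \A_n \vDash \varphi(\bar{a}),
\]
whence the displayed equivalence forces $\A_1 \times \cdots \times \A_n \vDash \psi_j(\bar{a})$ for some $j \leq n$.

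The punchline is the projection step. Since $\psi_j$ is a pp formula, it is existential positive and hence preserved by the projection homomorphism $p_j \colon \A_1 \times \cdots \times \A_n \to \A_j$ (by \cref{Thm : preservation}\eqref{item : preservation : ep}, or directly from the shape of pp formulas). Therefore $\A_j \vDash \psi_j(\bar{a}_j)$, contradicting the choice of $\A_j$ and $\bar{a}_j$. I do not foresee any genuine obstacle: the only delicate point is that the reduction to a \emph{finite} disjunction in the first step is what makes the formation of a finite direct product legitimate and sidesteps the need for any separate compactness argument along the lines of \cref{Cor : compactness for pp formulas}.
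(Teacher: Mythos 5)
Your proof is correct and follows essentially the same route as the paper: decompose the existential positive formula obtained from \cref{Thm : preservation}\eqref{item : preservation : ep} into a finite disjunction of pp formulas, then isolate a single disjunct via a finite direct product of would-be counterexamples followed by projection. The only difference is cosmetic: the paper delegates that last step to \cref{Cor : compactness for pp formulas}, whereas you inline the very same product-and-project argument (using preservation of $\varphi$ under direct products and of pp formulas under the projection homomorphisms), which, as you note, needs no compactness since the disjunction is already finite.
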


\begin{proof}
As $\varphi$ is preserved by homomorphisms in $\K$, \cref{Thm : preservation}\eqref{item : preservation : ep} yields an existential positive formula $\psi$ that is equivalent in $\K$ to $\varphi$.
Since $\psi$ is an existential positive formula, it is equivalent to a finite disjunction $\psi_1 \sqcup \dots \sqcup \psi_m$
of pp formulas $\psi_i$. It is then sufficient to show that there exists $i \leq m$ such that $\K \vDash \psi \leftrightarrow \psi_i$.  Since 
$\K \vDash \psi \rightarrow (\psi_1 \sqcup \dots \sqcup \psi_m)$,
by \cref{Cor : compactness for pp formulas} there exists $i \leq m$ such that $\K \vDash \psi \rightarrow \psi_i$. Because $\psi$ is equivalent to 
 $\psi_1 \sqcup \dots \sqcup \psi_m$, 
we have $\K \vDash \psi_i \rightarrow \psi$. Thus, the formula $\psi$, and hence also $\varphi$, is equivalent in $\K$ to $\psi_i$, which is a pp formula.
\end{proof}

We close this introductory section by recalling a fundamental theorem on ultraproducts known as \LL o\'s' Theorem (see, e.g., \cite[Thm.\ V.2.9]{BuSa00}).
To this end, given a family of algebras $\{ \A_i : i \in I \}$, a formula $\varphi(x_1, \dots, x_n)$, and $a_1, \dots, a_n \in \prod_{i \in I}A_i$, let
\[
\llbracket \varphi(a_1, \dots, a_n) \rrbracket = \{ i \in I : \A_i \vDash \varphi(p_i(a_1), \dots, p_i(a_n))\}.
\]

\begin{Los Theorem}\label{Thm : Los}
Let $\{ \A_i : i \in I \}$ be a family of algebras and $U$ an ultrafilter on $I$. For every formula $\varphi(x_1, \dots, x_n)$ and $a_1, \dots, a_n \in \prod_{i \in I}\A_i$ we have
\[
\prod_{i \in I}\A_i / U \vDash \varphi(a_1 / U, \dots, a_n / U) \iff \llbracket \varphi(a_1, \dots, a_n) \rrbracket \in U.
\]
\end{Los Theorem}

We denote by $\PPU$ the class operator of closure under ultraproducts. As a consequence of \LL o\'s' Theorem, every elementary class is closed under $\PPU$.

\section{Universal algebra}

This section reviews the main tools of general algebraic nature that will be employed in this monograph. The reader need not read it  in its entirety before proceeding with the subsequent sections and can come back to it each time they encounter a new notion.

We denote the class operators of closure under isomorphic copies, subalgebras, homomorphic images, direct products, and ultraproducts by $\III, \SSS, \HHH, \PPP$, and $\PPU$, respectively. A class of algebras is said to be:
\benroman
\item a \emph{variety} when it is closed under $\HHH, \SSS$, and $\PPP$;
\item a \emph{quasivariety} when it is closed under $\III, \SSS, \PPP$, and $\PPU$;
\item a \emph{universal class} when it is closed under $\III, \SSS$, and $\PPU$.
\eroman

While every variety is a quasivariety and every quasivariety is a universal class, the converses are not true in general.
We call \emph{proper} the quasivarieties that are not varieties and the universal classes that are not quasivarieties. Examples of a proper quasivariety and a proper universal class are the classes of cancellative commutative monoids (\cref{Exa : CCM : inverses are extendable}) and of fields (\cref{Example : inverses in rings}), respectively.

The next theorem provides an alternative characterization of the above mentioned classes in terms of axiomatizability by certain types of formulas (see, e.g., \cite[Thms.~II.11.9 \& V.2.25 \& V.2.20]{BuSa00}). We recall that a formula is called a \emph{quasiequation} when it is of the form
    \[
    \bigsqcap \Phi \to \varphi,
    \]
    where $\Phi \cup \{ \varphi \}$ is a finite set of equations. When $\Phi = \emptyset$, the above quasiequation is equivalent to the equation $\varphi$. Consequently, every equation is equivalent to a quasiequation.

\begin{Theorem}\label{Thm : classes generation}
The following conditions hold for a class of algebras $\K$:
\benroman
\item\label{item : variety generation} $\K$ is a variety if and only if it can be axiomatized by a set of equations;
\item\label{item : quasivariety generation} $\K$ is a quasivariety if and only if it can be axiomatized by a set of quasiequations;
\item\label{item : universal class generation} $\K$ is a universal class if and only if it can be axiomatized by a set of universal formulas.
\eroman
\end{Theorem}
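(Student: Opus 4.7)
The plan is to split each biconditional into its two directions, treating the ``if'' direction in each of (i)--(iii) by a uniform preservation check and focusing the work on the ``only if'' directions. Specifically, equations are trivially preserved by $\HHH$, $\SSS$, and $\PPP$ by induction on terms; quasiequations are preserved by $\SSS$ and $\PPP$, using \cref{Thm : preservation}\eqref{item : preservation : pp} for the product case; universal formulas are preserved by $\SSS$ by \cref{Thm : preservation}\eqref{item : preservation : universal}; and every elementary class is closed under $\PPU$ as an immediate consequence of \LL o\'s' Theorem \ref{Thm : Los}. Hence the class of models of a set of axioms of each of the three syntactic shapes is closed under the operators required by the corresponding notion, which settles the ``if'' directions.

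For the ``only if'' direction of (iii), given a universal class $\K$, let $\Phi$ be the set of universal formulas valid in $\K$ and let $\K^*$ denote its class of models. Evidently $\K \subseteq \K^*$, and the task is to prove the reverse inclusion. Fix $\A \in \K^*$ and let $\Delta$ be the \emph{diagram} of $\A$, namely the set of all atomic and negated atomic sentences true in the expansion of $\A$ to the language containing a fresh constant $c_a$ for every $a \in A$. Every finite $\Delta_0 \subseteq \Delta$ is satisfiable in some member of $\K$, for otherwise the universal closure of $\lnot \bigsqcap \Delta_0$ (with the $c_a$ renamed to variables) would belong to $\Phi$ yet fail in $\A$ under the interpretation $c_a \mapsto a$. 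The Compactness Theorem \ref{Thm : compactness theorem original} then allows one to realize the whole of $\Delta$ in a structure $\B$ obtained as an ultraproduct of witnesses drawn from $\K$; the assignment $a \mapsto c_a^{\B}$ preserves all atomic and negated atomic formulas, hence is an embedding $\A \hookrightarrow \B$, yielding $\A \in \III\SSS\PPU(\K) = \K$.

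The argument for (ii) runs along the same lines with $\Phi$ taken to be the set of quasiequations valid in $\K$ and $\Delta$ adjusted accordingly: the additional closure of $\K$ under $\PPP$ permits one to merge the witnesses for finite subsets of $\Delta$ into products before forming the ultraproduct, giving $\A \in \III\SSS\PPP\PPU(\K) = \K$. For (i), I would invoke the classical Birkhoff strategy: letting $\Phi$ be the equational theory of $\K$ and $\kappa$ a cardinal, the free algebra $\Free_{\K^*}(\kappa)$ of $\K^*$ on $\kappa$ generators is realized as the quotient of the absolutely free algebra modulo the fully invariant congruence induced by $\Phi$, and a standard subdirect embedding into the product of all its $\K$-quotients places $\Free_{\K^*}(\kappa)$ in $\SSS\PPP(\K) \subseteq \K$; since every $\A \in \K^*$ is a homomorphic image of some $\Free_{\K^*}(\kappa)$, we conclude $\K^* \subseteq \HHH\SSS\PPP(\K) = \K$.

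The delicate point throughout is the passage from syntactic validity in $\K$ to the existence of the structural witness that $\A$ is actually assembled from members of $\K$ via the appropriate operators: in (ii) and (iii) this is the compactness-plus-ultraproduct construction of an embedding into a member of $\PPU(\K)$ or $\PPP\PPU(\K)$, while in (i) it is the verification that the free algebra of $\K^*$ lies in $\SSS\PPP(\K)$. All three facts are foundational, so the proof in the paper will reduce to pointing to the corresponding results in \cite{BuSa00}.
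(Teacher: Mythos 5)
The paper offers no proof of this theorem: it is stated as background and delegated entirely to the citation of \cite{BuSa00} (Birkhoff's $\HHH\SSS\PPP$ theorem, Mal'cev's characterization of quasivarieties, and the \LL o\'s--Tarski-style characterization of universal classes). Your sketch correctly reproduces the standard arguments behind those citations --- the preservation checks for the ``if'' directions, the diagram-plus-ultraproduct construction (with finite products interposed for the quasivariety case) for the ``only if'' directions of (ii) and (iii), and the free-algebra argument for (i) --- so it takes essentially the route the paper points to in the literature.
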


We denote the least variety, the least quasivariety, and the least universal class containing a class of algebras $\K$ by $\VVV(\K)$, $\QQQ(\K)$, and $\UUU(\K)$, respectively. A variety (resp.\ quasivariety) $\K$ is \emph{finitely generated} when $\K = \VVV(\M)$ (resp.\ $\K = \QQQ(\M)$) for a finite set $\M$ of finite algebras.
The following characterizes $\VVV(\K)$, $\QQQ(\K)$, and $\UUU(\K)$ in terms of the class operators 
(see, e.g., \cite[Thms.~II.9.5 \& V.2.25 \& V.2.20]{BuSa00}).

\begin{Theorem}\label{Thm : quasivariety generation}
For every class of algebras $\K$,
\[
\VVV(\K) = \HHH\SSS\PPP(\K), \quad \QQQ(\K) = \III\SSS\PPP\PPU(\K), \, \, \text{ and } \, \, \UUU(\K) = \III\SSS\PPU(\K).
\]
\end{Theorem}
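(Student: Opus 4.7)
The three equalities are classical: Birkhoff's HSP theorem for varieties, Mal'cev's characterization of quasivarieties, and the Tarski--{\LL}o{\'s}--Scott characterization for universal classes. In all three cases the right-to-left inclusion is immediate: since $\K \sub \VVV(\K)$ and $\VVV(\K)$ is closed under $\HHH, \SSS, \PPP$, we get $\HHH\SSS\PPP(\K) \sub \VVV(\K)$, and analogously for the other two. My plan for the substantive reverse inclusions is uniform: use \cref{Thm : classes generation} to replace each generated class by the class of models of the corresponding syntactic theory, and then argue from there.

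Concretely, \cref{Thm : classes generation}\eqref{item : universal class generation} together with the already-established inclusion $\III\SSS\PPU(\K) \sub \UUU(\K)$ forces $\UUU(\K)$ to coincide with the class of all algebras satisfying every universal sentence valid in $\K$. For $\UUU(\K) \sub \III\SSS\PPU(\K)$ I would argue by a diagram construction: given $\A \in \UUU(\K)$, expand the language with a new constant $c_a$ for each $a \in A$ and form the theory $T = \mathrm{Th}_\forall(\K) \cup \mathrm{Diag}(\A)$, where $\mathrm{Diag}(\A)$ collects the atomic and negated atomic sentences true in $\A$ under $c_a \mapsto a$. Finite satisfiability of $T$ in $\K$-expansions follows from $\A \in \UUU(\K)$: a finite conjunction of literals from $\mathrm{Diag}(\A)$ that had no joint realisation in any $\B \in \K$ would, after replacing the $c_a$ by variables and negating, yield a universal sentence in $\mathrm{Th}_\forall(\K)$ refuted by $\A$. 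The Compactness Theorem~\ref{Thm : compactness theorem original} and the standard construction of a model of $T$ as an ultraproduct indexed by finite subsets of $T$ then produce a $\C \in \PPU(\K)$ into which $\A$ embeds via $a \mapsto c_a^\C$, giving $\A \in \III\SSS\PPU(\K)$.

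The quasivariety case proceeds similarly but with extra bookkeeping: for each pair of distinct $a, b \in A$, one runs the same compactness-and-diagram argument using quasiequations and the pp-diagram (leveraging \cref{Cor : compactness for pp formulas}) to obtain an ultraproduct $\C_{(a,b)} \in \PPU(\K)$ together with a homomorphism $\A \to \C_{(a,b)}$ separating $a$ from $b$; the combined map $\A \to \prod_{(a,b)} \C_{(a,b)}$ is then an embedding into a member of $\PPP\PPU(\K)$, giving $\A \in \III\SSS\PPP\PPU(\K)$. For $\VVV(\K) \sub \HHH\SSS\PPP(\K)$ I would instead use the classical free-algebra argument: given $\B \in \VVV(\K)$ generated by a set $X$, form the $\VVV(\K)$-free algebra $\Free$ on $X$; embed $\Free$ into the direct product $\prod \{\A : \A \in \K, h\colon X \to A\}$ (projecting at each coordinate to the homomorphic extension of $h$), so that $\Free \in \SSS\PPP(\K)$; and recover $\B$ as a homomorphic image of $\Free$, using that $\B$ satisfies every equation valid in $\K$.

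The main obstacle is arranging the compactness construction so that the resulting model is a genuine ultraproduct of $\K$-members rather than merely an elementary extension: one indexes over the finite subsets of $T$, forms the filter generated by the ``cofinal'' sets, extends to an ultrafilter, and verifies via \LL o\'s' Theorem~\ref{Thm : Los} that the resulting ultraproduct models $T$. A subsidiary difficulty is the separation-of-elements step in the quasivariety case, where each pair of distinct elements of $\A$ must give rise to its own ultraproduct in $\PPU(\K)$ witnessing the distinction, and the verification in the variety case that $\Free$ really embeds into the stated product of $\K$-members.
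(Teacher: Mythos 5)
The paper does not prove this theorem at all: it is quoted as classical background with a pointer to Burris--Sankappanavar, so there is no internal proof to compare against. Your sketches reproduce the standard textbook arguments from that reference (Birkhoff's free-algebra argument for $\VVV(\K)=\HHH\SSS\PPP(\K)$, and the diagram-plus-ultraproduct arguments of Mal'cev and Tarski--{\LL}o\'s for the other two identities), and they are essentially correct, including the key point you flag yourself, namely that one must build the model of the diagram explicitly as an ultraproduct indexed by finite subtheories rather than invoking compactness as a black box. Two small cautions. First, your reduction via \cref{Thm : classes generation} to the syntactic descriptions of $\UUU(\K)$ and $\QQQ(\K)$ is legitimate inside this paper, where that theorem is also taken as given, but in a from-scratch development the usual order is reversed (the operator identities are proved first and the axiomatizability results derived from them), so one should make sure no circle is being closed. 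Second, there are routine loose ends worth acknowledging: the index class for the product into which $\Free_{\VVV(\K)}(X)$ embeds must be cut down to a set (e.g.\ by restricting to $\leq\lvert X\rvert$-generated members of $\K$ up to isomorphism), and in the quasivariety argument the case of a trivial $\A$, where there are no pairs of distinct elements to separate, needs the convention that the empty product is the trivial algebra.
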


The following is a straightforward consequence of \cref{Thm : classes generation}.

\begin{Corollary}\label{Cor: VQU and formulas}
The following conditions hold for a class of algebras $\K$:
\benroman
\item\label{Cor: VQU and formulas : V} $\VVV(\K)$ is the class of models of all the equations valid in $\K$;
\item\label{Cor: VQU and formulas : Q} $\QQQ(\K)$ is the class of models of all the quasiequations valid in $\K$;
\item\label{Cor: VQU and formulas : U} $\UUU(\K)$ is the class of models of all the universal formulas valid in $\K$.
\eroman
\end{Corollary}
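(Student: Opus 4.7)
The plan is to prove all three parts by the same uniform template, using \cref{Thm : classes generation}, which guarantees that varieties, quasivarieties, and universal classes are exactly those classes axiomatizable by equations, quasiequations, and universal formulas, respectively. Since the argument is identical up to the choice of class of formulas, I would prove (i) in detail and remark that (ii) and (iii) follow mutatis mutandis.

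For (i), let $\Phi$ be the set of equations valid in $\K$ and let $\Mcal$ be the class of its models. First I would establish $\VVV(\K) \subseteq \Mcal$: by \cref{Thm : classes generation}\eqref{item : variety generation}, $\Mcal$ is a variety, and by definition $\K \subseteq \Mcal$; since $\VVV(\K)$ is the least variety containing $\K$, the inclusion follows. Then for the reverse inclusion $\Mcal \subseteq \VVV(\K)$, I would invoke \cref{Thm : classes generation}\eqref{item : variety generation} again to obtain an axiomatization $\Sigma$ of $\VVV(\K)$ by equations. Every equation in $\Sigma$ is valid in $\VVV(\K)$ and hence in $\K$, so $\Sigma \subseteq \Phi$. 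Consequently any model of $\Phi$ is a model of $\Sigma$, i.e., lies in $\VVV(\K)$.

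For (ii) and (iii), the exact same argument works, replacing equations by quasiequations (resp.\ universal formulas) and invoking \cref{Thm : classes generation}\eqref{item : quasivariety generation} (resp.\ \eqref{item : universal class generation}) in place of \eqref{item : variety generation}. The only point worth emphasizing is that the set of \emph{quasiequations} (resp.\ \emph{universal formulas}) valid in $\K$ contains the axiomatization of $\QQQ(\K)$ (resp.\ $\UUU(\K)$), because these axioms are themselves valid in $\QQQ(\K) \supseteq \K$ (resp.\ $\UUU(\K) \supseteq \K$).

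There is no genuine obstacle here: the corollary is essentially a direct unpacking of \cref{Thm : classes generation}, relying only on the two structural facts that (a) the set of formulas of the given syntactic shape valid in $\K$ is also valid in $\VVV(\K)$, $\QQQ(\K)$, $\UUU(\K)$ (since $\K$ is contained in each) and (b) any axiomatization of one of these generated classes by formulas of that shape is a subset of the corresponding set of formulas valid in $\K$.
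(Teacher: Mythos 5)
Your proof is correct and is exactly the intended argument: the paper gives no proof, stating only that the corollary is a straightforward consequence of \cref{Thm : classes generation}, and your two inclusions (the models of the valid formulas form a variety/quasivariety/universal class containing $\K$, and any axiomatization of the generated class consists of formulas valid in $\K$) are precisely that straightforward derivation.
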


We will make use of the following closure property of universal classes (see, e.g., \cite[Thm.~3.2.3]{ModCK}).

\begin{Proposition}\label{Prop : universal class : unions of chains}
Universal classes are closed under the formation of unions of chains of algebras.
\end{Proposition}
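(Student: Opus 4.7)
My plan is to reduce the statement to the syntactic characterization of universal classes given by \cref{Thm : classes generation}\eqref{item : universal class generation}, according to which a universal class $\K$ can be axiomatized by a set $\Sigma$ of universal formulas. Let $\{\A_\alpha : \alpha \in I\}$ be a chain of algebras in $\K$ indexed by a linearly ordered set $I$, and set $\A = \bigcup_{\alpha \in I} \A_\alpha$. It suffices to verify that $\A \vDash \varphi$ for every $\varphi \in \Sigma$, for then $\A \vDash \Sigma$ and hence $\A \in \K$.

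Fix $\varphi \in \Sigma$ and write it as $\forall x_1 \dots \forall x_n\, \psi(x_1, \dots, x_n)$ with $\psi$ quantifier-free. I would then take an arbitrary tuple $a_1, \dots, a_n \in A$ and aim to show $\A \vDash \psi(a_1, \dots, a_n)$. Since each $a_i$ belongs to some $A_{\alpha_i}$ and the chain is linearly ordered, there is a single index $\alpha \in I$ such that $a_1, \dots, a_n \in A_\alpha$, and $\A_\alpha \leq \A$ by construction of the union.

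The key step is the observation that quantifier-free formulas are absolute between an algebra and its subalgebras: since $\psi$ is built from atomic formulas (equations) using $\sqcap$, $\sqcup$, $\lnot$, and the interpretation of each term in $\A_\alpha$ agrees with its interpretation in $\A$ on elements of $A_\alpha$, an easy induction on the complexity of $\psi$ gives
\[
\A_\alpha \vDash \psi(a_1, \dots, a_n) \iff \A \vDash \psi(a_1, \dots, a_n).
\]
(Equivalently, this is the fact that every quantifier-free formula is preserved both by subalgebras and by superalgebras.) Since $\A_\alpha \in \K$ and $\varphi \in \Sigma$, we have $\A_\alpha \vDash \varphi$, whence $\A_\alpha \vDash \psi(a_1, \dots, a_n)$, and so $\A \vDash \psi(a_1, \dots, a_n)$. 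As the tuple was arbitrary, $\A \vDash \varphi$, completing the proof.

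No serious obstacle is expected here: the argument is essentially a routine application of \cref{Thm : classes generation}\eqref{item : universal class generation} combined with the absoluteness of quantifier-free formulas between subalgebras and their extensions. The only point that merits explicit mention is the use of linearity of the chain to witness all finitely many parameters $a_1, \dots, a_n$ inside a single $\A_\alpha$.
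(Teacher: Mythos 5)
Your proof is correct. The paper does not actually supply its own argument for this proposition --- it simply cites the literature (Chang--Keisler) --- and what you have written is precisely the standard proof underlying that citation: reduce to axiomatizability by universal formulas via \cref{Thm : classes generation}\eqref{item : universal class generation}, locate any finite tuple of the union inside a single member of the chain using linearity, and invoke the absoluteness of quantifier-free formulas between a subalgebra and its extension. The only point left tacit, which you may wish to record in one line, is that the union of a chain of algebras is itself a well-defined algebra (i.e., its universe is closed under the basic operations), which again follows because every finite tuple lies in some member of the chain.
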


As quasivarieties need not be closed under $\HHH$, the following concept is often useful.\ Let $\mathsf{K}$ be a quasivariety and $\A$ and algebra (not necessarily in $\K$). A congruence $\theta$ of $\A$ is said to be a $\K$\emph{-congruence} when $\A / \theta \in \K$. Owing to the fact that $\K$ is closed under $\III$ and $\SSS$,  the Homomorphism Theorem \cite[Thm.~II.6.12]{BuSa00} yields that  the \emph{kernel} 
\[
\Ker(h) = \{ \langle a, b \rangle \in A \times A : h(a) = h(b) \}
\]
of every homomorphism $h \colon \A \to \B$ with $\B \in \K$ is a $\K$-congruence of 
 $\A$ such that $\A/\Ker(h) \cong h[\A]$, where $h[\A]$ denotes the subalgebra of $\B$ with universe $h[A]$.
When ordered under the inclusion relation, the set of $\K$-congruences of $\A$ forms an algebraic lattice $\Con_\K(\A)$ in which  meets are intersections (see, e.g., \cite[Prop.~1.4.7 \& Cor.~1.4.11]{Go98a}). 
Given $X \subseteq A \times A$, we denote the least congruence of $\A$ containing $X$ by $\Cg^\A(X)$ and the least $\K$-congruence of $\A$ containing $X$ by $\Cg_\K^\A(X)$. 
We will rely on the following observation.

\begin{Proposition}\label{Prop : variety iff Con=ConK}
A quasivariety $\K$ is a variety if and only if $\mathsf{Con}(\A) = \mathsf{Con}_\K(\A)$ for every $\A \in \K$.
\end{Proposition}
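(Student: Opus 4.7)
The plan is to prove the two directions separately, using that a quasivariety is a variety exactly when it is additionally closed under homomorphic images $\HHH$, and exploiting the Homomorphism Theorem cited just before the statement.

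For the forward direction, suppose $\K$ is a variety. Since $\K$-congruences are by definition congruences, the inclusion $\Con_\K(\A) \subseteq \Con(\A)$ always holds. For the reverse inclusion, fix $\A \in \K$ and any $\theta \in \Con(\A)$. Then $\A/\theta$ is a homomorphic image of $\A$, and since $\K$ is closed under $\HHH$, we have $\A/\theta \in \K$. Hence $\theta \in \Con_\K(\A)$.

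For the backward direction, assume $\Con(\A) = \Con_\K(\A)$ for every $\A \in \K$. Since $\K$ is already a quasivariety, it suffices to show that $\K$ is closed under $\HHH$, for then \cref{Thm : quasivariety generation} (or directly the definition) gives $\K = \VVV(\K)$. So let $h \colon \A \to \B$ be a surjective homomorphism with $\A \in \K$; we must show $\B \in \K$. By the Homomorphism Theorem recalled in the paragraph preceding the statement, $\Ker(h) \in \Con(\A)$ and $\A/\Ker(h) \cong h[\A] = \B$. By the hypothesis applied to $\A \in \K$, we have $\Ker(h) \in \Con_\K(\A)$, which by definition means $\A/\Ker(h) \in \K$. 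Since $\K$ is closed under $\III$, this yields $\B \in \K$, as required.

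The proof is essentially a bookkeeping exercise: the only nontrivial ingredient is the Homomorphism Theorem, which has already been invoked to justify that $\Ker(h)$ is indeed a $\K$-congruence whenever $h$ lands in $\K$. I do not expect any real obstacle; the main thing to be careful about is explicitly reducing closure under $\HHH$ to the surjective case and using $\III$-closure to pass from $\A/\Ker(h)$ to $\B$.
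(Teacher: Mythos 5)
Your proof is correct and follows essentially the same route as the paper's: the forward direction uses closure under $\HHH$ to show every congruence is a $\K$-congruence, and the backward direction uses the Homomorphism Theorem together with closure under $\III$ to deduce closure under homomorphic images. No gaps.
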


\begin{proof}
First, suppose that  $\K$ is a variety. If $\A \in \K$ and $\theta \in \Con(\A)$, then $\A/\theta \in \HHH(\A) \subseteq \K$. Therefore, $\mathsf{Con}(\A) = \mathsf{Con}_\K(\A)$ for every $\A \in \K$. Assume now that $\mathsf{Con}(\A) = \mathsf{Con}_\K(\A)$ for every $\A \in \K$. Since $\K$ is a quasivariety, it  suffices to prove that $\K$ is closed under homomorphic images. Consider $\B \in \HHH(\K)$. Then there exists a surjective homomorphism $h \colon \A \to \B$ with $\A \in \K$. Since $\Ker(h) \in \Con(\A)$, the assumption yields  $\Ker(h) \in \Con_\K(\A)$. Thus, $\A/\Ker(h) \in \K$. As $\B \cong \A/\Ker(h)$ and $\K$ is closed under $\III$, it follows that $\B \in \K$. 
\end{proof}

The following gives a necessary and sufficient condition for a homomorphism to factor through a quotient (see, e.g., \cite[p.~62]{Gra08}).

\begin{Proposition}\label{Prop : homomorphisms : smaller congruences}
Let $h \colon \A \to \B$ be a homomorphism, $\theta \in \Con(\A)$, and $f \colon \A \to \A/\theta$ the canonical surjection. Then $\theta \subseteq \Ker(h)$ if and only if there exists a homomorphism $g \colon \A / \theta \to \B$ such that $g \circ f = h$.

 \begin{center}
\begin{tikzcd}
  \A \arrow[r, "h"] \arrow[d, "f"'] & \B \\
\A/\theta \arrow[ur, dashed, "g"'] 
\end{tikzcd}
\end{center}
\end{Proposition}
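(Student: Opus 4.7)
The plan is to prove the two implications separately in the standard fashion for universal-algebraic factorization results. Let $f \colon \A \to \A/\theta$ be the canonical surjection, so by definition $f(a) = a/\theta$ for every $a \in A$ and $\langle a, b \rangle \in \theta$ iff $f(a) = f(b)$.

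For the right-to-left direction, I would assume that a homomorphism $g \colon \A/\theta \to \B$ with $g \circ f = h$ exists and pick an arbitrary pair $\langle a, b \rangle \in \theta$. Then $f(a) = f(b)$, and applying $g$ to both sides yields $h(a) = g(f(a)) = g(f(b)) = h(b)$, so $\langle a, b \rangle \in \Ker(h)$, which gives $\theta \subseteq \Ker(h)$.

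For the forward direction, I would assume $\theta \subseteq \Ker(h)$ and define $g \colon \A/\theta \to \B$ by the rule $g(a/\theta) \coloneqq h(a)$ for every $a \in A$. The first step is to verify that $g$ is well-defined: if $a/\theta = b/\theta$, then $\langle a, b \rangle \in \theta \subseteq \Ker(h)$, whence $h(a) = h(b)$. The equality $g \circ f = h$ is then immediate from the definition of $g$. Finally, to see that $g$ is a homomorphism, let $\lambda$ be an $n$-ary operation symbol in the common language and $a_1, \dots, a_n \in A$; then
\[
g\big(\lambda^{\A/\theta}(a_1/\theta, \dots, a_n/\theta)\big) = g\big(\lambda^\A(a_1, \dots, a_n)/\theta\big) = h\big(\lambda^\A(a_1, \dots, a_n)\big) = \lambda^\B(h(a_1), \dots, h(a_n)),
\]
where the first equality uses that $f$ is a homomorphism (i.e., the definition of $\lambda^{\A/\theta}$), the second uses the definition of $g$, and the third uses that $h$ is a homomorphism. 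Since the right-hand side equals $\lambda^\B(g(a_1/\theta), \dots, g(a_n/\theta))$, we conclude that $g$ preserves $\lambda$.

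There is no real obstacle here: the result is the standard Homomorphism Theorem (or rather its factorization version) and each step is a direct unwinding of the definitions. The only point that requires any care is the well-definedness of $g$, which is precisely where the hypothesis $\theta \subseteq \Ker(h)$ is used, and the verification that $g$ is a homomorphism, which reduces to the fact that $f$ is surjective so the operations on $\A/\theta$ are determined by representatives.
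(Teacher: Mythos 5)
Your proof is correct and is the standard factorization argument; the paper itself gives no proof of this proposition, deferring to the literature (Gr\"atzer), and your argument is exactly the one that reference supplies. The only step requiring care, the well-definedness of $g$, is handled properly via the hypothesis $\theta \subseteq \Ker(h)$.
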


An algebra $\A$ is a \emph{subdirect product} of a family $\{ \B_i : i \in I \}$ when $\A \leq \prod_{i \in I}\B_i$ and for every $i \in I$ the projection map $p_i \colon \A \to \B_i$ is surjective.\ Similarly, an embedding $h \colon \A \to \prod_{i \in I}\B_i$ is called \emph{subdirect} when $h[\A]\leq \prod_{i \in I}\B_i$ is a subdirect product. The next result simplifies the task of constructing subdirect embeddings (see, e.g., \cite[Lem.\ II.8.2]{BuSa00}).

\begin{Proposition}\label{Prop : subdirect embedding}
Let $\A$ be an algebra and $X \subseteq \Con(\A)$. Then the map
\[
h \colon \A / \bigcap X \to \prod_{\theta \in X}\A / \theta 
\]
defined by the rule $h(a / \bigcap X) = \langle a / \theta : \theta \in X \rangle$ is a subdirect embedding.
\end{Proposition}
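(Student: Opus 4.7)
The plan is to verify four things in sequence: that $h$ is well-defined, that it is a homomorphism, that it is injective, and that the image is subdirect. None of these steps poses a genuine obstacle; the proof is essentially a bookkeeping exercise exploiting the fact that $\bigcap X \subseteq \theta$ for every $\theta \in X$, together with the universal property of quotients recorded in \cref{Prop : homomorphisms : smaller congruences}.

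First I would address well-definedness and the homomorphism property simultaneously. Since $\bigcap X \subseteq \theta$ for each $\theta \in X$, applying \cref{Prop : homomorphisms : smaller congruences} to the canonical surjection $f \colon \A \to \A / \bigcap X$ and the canonical surjection $g_\theta \colon \A \to \A/\theta$ yields, for every $\theta \in X$, a homomorphism $h_\theta \colon \A / \bigcap X \to \A/\theta$ satisfying $h_\theta(a / \bigcap X) = a/\theta$. The map $h$ described in the statement is then simply the homomorphism into the product induced by the family $\{h_\theta : \theta \in X\}$, i.e., it is determined by the condition $p_\theta \circ h = h_\theta$ for every $\theta \in X$. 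In particular, $h$ is a well-defined homomorphism.

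Next I would verify injectivity. Suppose $h(a / \bigcap X) = h(b / \bigcap X)$ for some $a, b \in A$. Then $a/\theta = b/\theta$ for every $\theta \in X$, which means $\langle a, b \rangle \in \theta$ for every $\theta \in X$, and hence $\langle a, b \rangle \in \bigcap X$. Therefore $a/\bigcap X = b/\bigcap X$, as required.

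Finally, to see that the embedding is subdirect, I need to check that for each $\theta \in X$ the composition $p_\theta \circ h \colon \A/\bigcap X \to \A/\theta$ is surjective onto $\A/\theta$. But by construction $p_\theta \circ h = h_\theta$, and $h_\theta$ sends $a / \bigcap X$ to $a/\theta$ for every $a \in A$, which is manifestly surjective since the canonical projection $\A \to \A/\theta$ is. Hence $h[\A/\bigcap X] \leq \prod_{\theta \in X}\A/\theta$ is a subdirect product, completing the proof.
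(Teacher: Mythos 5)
Your proof is correct and is exactly the standard argument: the paper itself does not prove this proposition but cites it to Burris--Sankappanavar (Lem.\ II.8.2), and your use of \cref{Prop : homomorphisms : smaller congruences} to obtain the factor maps $h_\theta$, followed by the injectivity and surjectivity checks, is the same reasoning found there. Nothing is missing.
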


Notably, every congruence can be viewed as a subdirect product.

\begin{Proposition}\label{Prop : congruences are subalgebras}
Let $\K$ be a quasivariety and $\A \in \K$. Every congruence $\theta$ of $\A$ is the universe of an algebra $\theta^* \in \K$ such that $\theta^* \leq \A \times \A$ is a subdirect product.
\end{Proposition}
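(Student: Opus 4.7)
The proof is essentially a matter of unpacking definitions together with the closure properties of a quasivariety. The plan is to exhibit $\theta$ itself as a subalgebra of $\A\times\A$, check that both projections are surjective, and then invoke closure under $\SSS$ and $\PPP$.

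First I would verify that the subset $\theta\subseteq A\times A$ is closed under the fundamental operations of $\A\times\A$. Given an $n$-ary basic operation symbol $f$ and pairs $\langle a_1,b_1\rangle,\dots,\langle a_n,b_n\rangle\in\theta$, the compatibility of the congruence $\theta$ with $f$ immediately yields
\[
\langle f^\A(a_1,\dots,a_n),f^\A(b_1,\dots,b_n)\rangle\in\theta,
\]
which is precisely the image under $f^{\A\times\A}$ of the given pairs. For nullary symbols (constants) $c$, reflexivity of $\theta$ gives $\langle c^\A,c^\A\rangle\in\theta$. Consequently $\theta$ is the universe of a subalgebra $\theta^*\leq\A\times\A$.

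Next I would show the inclusion is subdirect. The two projection homomorphisms $p_1,p_2\colon\theta^*\to\A$ send a pair $\langle a,b\rangle\in\theta$ to $a$ and $b$, respectively, so surjectivity is witnessed once again by reflexivity of $\theta$: for every $a\in A$ we have $\langle a,a\rangle\in\theta=\theta^*$, and $p_i$ sends this element to $a$.

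Finally, for the membership $\theta^*\in\K$, I would use that $\K$ is a quasivariety and therefore closed under $\PPP$ and $\SSS$. Since $\A\in\K$, we get $\A\times\A\in\K$, and then $\theta^*\leq\A\times\A$ gives $\theta^*\in\SSS(\K)=\K$. There is no serious obstacle here; the only point worth flagging is that this argument relies genuinely on closure under both subalgebras and (binary) direct products, so the statement would fail for arbitrary elementary classes.
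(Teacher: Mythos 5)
Your proof is correct and follows essentially the same route as the paper's: identify $\theta$ as a subalgebra of $\A\times\A$ via compatibility, use reflexivity for surjectivity of the projections, and invoke closure of $\K$ under $\SSS$ and $\PPP$. The only difference is that you spell out the closure-under-operations verification that the paper dismisses as immediate from the definition of a congruence.
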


\begin{proof}
The fact that $\theta$ is the universe of a subalgebra $\theta^*$ of $\A \times \A$ is an immediate consequence of the definition of a congruence of $\A$. As $\A \in \K$ and $\K$ is closed under subalgebras and direct products (because it is a quasivariety), we obtain $\theta^* \in \K$. To conclude that $\theta^* \leq \A \times \A$ is a subdirect product, it suffices to show that the projection maps $p_1, p_2 \colon \theta^* \to \A$ are surjective. Consider $a \in A$. As $\theta$ is a reflexive relation on $A$, we have $\langle a, a \rangle \in \theta$.\ Consequently, $p_1(\langle a, a\rangle) = p_2(\langle a, a\rangle) = a$.
\end{proof}

Let $\K$ be a quasivariety. An algebra $\A \in \K$ is said to be \emph{subdirectly irreducible relative to $\K$} when for every subdirect embedding $h \colon \A \to \prod_{i \in I}\B_i$ with $\{ \B_i : i \in I \} \subseteq \K$ there exists $i \in I$ such that $p_i \circ h \colon \A \to \B_i$ is an isomorphism. In case this happens whenever the index set $I$ is finite, 
we say that $\A$ is \emph{finitely subdirectly irreducible relative to $\K$}.\footnote{We adopt the convention that the direct product of an empty family of algebras is the trivial algebra in the language under consideration. Consequently, we do not regard the trivial algebra as relatively (finitely) subdirectly irreducible.} 
The classes of algebras that are subdirectly irreducible relatively to $\K$  and finitely subdirectly irreducible relative to $\K$ will be denoted by $\K_\rsi$ and $\K_\rfsi$, respectively. When $\K$ is a variety, the requirement that $\{ \B_i : i \in I \}$ is a subset of $\K$ in the above definitions can be harmlessly dropped and we simply say that $\A$ is \emph{subdirectly irreducible} or \emph{finitely subdirectly irreducible} (i.e., we drop the ``relative to  $\K$''). In this case, we also write $\K_{\textsc{si}}$ and $\K_{\textsc{fsi}}$ instead of $\K_{\textsc{rsi}}$ and $\K_{\textsc{rfsi}}$.

The importance of subdirect embeddings and of algebras that are 
subdirectly irreducible relative to $\K$ derives from the following  representation theorem (see, e.g., \cite[Thm.\ 3.1.1]{Go98a}).

\begin{Subdirect Decomposition Theorem}\label{Thm : Subdirect Decomposition}
Let $\K$ be a quasivariety. For every $\A \in \K$ there exists a subdirect embedding $f \colon \A \to \prod_{i \in I}\B_i$ with $\{ \B_i : i \in I \} \subseteq \K_{\textsc{rsi}}$.
\end{Subdirect Decomposition Theorem}

For instance, an Abelian group is subdirectly irreducible precisely when it is either cyclic of prime-power order or quasicyclic (see, e.g., \cite[Thm.\ 3.29]{Ber11}). Therefore, every Abelian group can be represented as a subdirect product of Abelian groups of this form.

Notably, algebras that are (finitely) subdirectly irreducible relative to $\K$ can be recognized by looking at the  structure of their lattices of $\K$-congruences. More precisely, we recall that an element $a$ of a  lattice $\A$ is said to be:
\benroman
\item \emph{completely meet irreducible} when $a \in X$ for every $X \subseteq A$ such that $a = \bigwedge X$;
\item \emph{meet irreducible} when $a \in X$ for every finite $X \subseteq A$ such that $a = \bigwedge X$. 
\eroman
Notice that every completely meet irreducible element is meet irreducible and that the maximum of a lattice is never meet irreducible because it coincides with $\bigwedge \emptyset$. Given a quasivariety $\K$ and
 $\A \in \K$, let
\begin{align*}
\mathsf{Irr}^\infty_\K(\A) &= \text{the set of completely meet irreducible elements of }\Con_\K(\A);\\
\mathsf{Irr}_{\K}(\A) &= \text{the set of meet irreducible elements of }\Con_\K(\A).
\end{align*}
Furthermore, we denote the identity relation on $\A$ by $\textup{id}_A$.\  The following is a consequence of \cite[Cor.\ 1.4.8]{Go98a} and the Correspondence Theorem \cite[Thm.\ II.6.20]{BuSa00}.

\begin{Proposition}\label{Prop : RFSI}
Let $\A$ be a member of a quasivariety $\K$. For every $\theta \in \Con(\A)$ we have
\begin{align*}
\A / \theta \in \K_{\textup{\textsc{rsi}}} \, \, &\text{ if and only if } \, \, \theta \in \mathsf{Irr}_{\K}^\infty(\A);\\
\A / \theta \in \K_{\textup{\textsc{rfsi}}} \, \, &\text{ if and only if } \, \, \theta \in \mathsf{Irr}_{\K}(\A).
\end{align*}
Therefore, $\A\in \K_{\textup{\textsc{rsi}}}$  \textup{(}resp.\ $\A\in \K_{\textup{\textsc{rfsi}}}$\textup{)} if and only if $\textup{id}_A \in \mathsf{Irr}_{\K}^\infty(\A)$ \textup{(}resp.\ $\textup{id}_A \in \mathsf{Irr}_{\K}(\A)$\textup{)}.
\end{Proposition}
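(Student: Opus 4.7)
The plan is to first reduce the biconditionals for general $\theta$ to the special case $\theta = \textup{id}_A$ (applied to the quotient $\A/\theta$), and then to prove that special case by unpacking the definitions and appealing to \cref{Prop : subdirect embedding}.

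For the reduction, I would first dispatch the case $\theta \in \Con(\A) \setminus \Con_\K(\A)$: both sides of each biconditional are vacuously false, since $\A/\theta \notin \K$ forbids relative (finite) subdirect irreducibility and $\mathsf{Irr}^\infty_\K(\A) \cup \mathsf{Irr}_\K(\A) \subseteq \Con_\K(\A)$ excludes $\theta$. Assuming $\theta \in \Con_\K(\A)$, I would invoke \cite[Cor.~1.4.8]{Go98a} together with the Correspondence Theorem \cite[Thm.~II.6.20]{BuSa00} to obtain an order isomorphism between the principal filter $\{\eta \in \Con_\K(\A) : \theta \subseteq \eta\}$ of $\Con_\K(\A)$ and $\Con_\K(\A/\theta)$ given by $\eta \mapsto \eta/\theta$, which sends $\theta$ to $\textup{id}_{A/\theta}$ and identifies $(\A/\theta)/(\eta/\theta)$ with $\A/\eta$. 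Since order isomorphisms preserve both complete and finite meet irreducibility, this reduces the first two biconditionals to the corresponding claims of the second half of the proposition applied to $\A/\theta$.

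It then suffices to prove that $\A \in \K_{\textup{\textsc{rsi}}}$ iff $\textup{id}_A \in \mathsf{Irr}^\infty_\K(\A)$, together with the analogous equivalence for $\textup{\textsc{rfsi}}$ using meet irreducibility. For the forward direction, given $X \subseteq \Con_\K(\A)$ with $\bigcap X = \textup{id}_A$ (and $X$ finite in the $\textup{\textsc{rfsi}}$ case), I would apply \cref{Prop : subdirect embedding} to produce a subdirect embedding of $\A \cong \A/\textup{id}_A$ into $\prod_{\eta \in X}\A/\eta$, whose factors all lie in $\K$ because each $\eta$ is a $\K$-congruence. Relative (finite) subdirect irreducibility yields some $\eta \in X$ such that the composition $\A \to \A/\eta$ is an isomorphism, which forces $\eta = \textup{id}_A$ and so $\textup{id}_A \in X$. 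Conversely, starting from a subdirect embedding $h \colon \A \to \prod_{i \in I}\B_i$ with $\B_i \in \K$ (and $I$ finite in the $\textup{\textsc{rfsi}}$ case), I would set $\eta_i = \Ker(p_i \circ h)$; the Homomorphism Theorem gives $\A/\eta_i \cong \B_i \in \K$, hence $\eta_i \in \Con_\K(\A)$, while injectivity of $h$ yields $\bigcap_{i \in I}\eta_i = \textup{id}_A$. Complete (resp.\ finite) meet irreducibility of $\textup{id}_A$ then produces an index $i$ with $\eta_i = \textup{id}_A$, making $p_i \circ h$ an isomorphism, as required.

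The main obstacle I anticipate is securing a version of the Correspondence Theorem relative to $\Con_\K$ rather than $\Con$: the reduction step requires a bijection between the $\K$-congruences of $\A/\theta$ and the $\K$-congruences of $\A$ extending $\theta$, which goes beyond the classical statement. Fortunately this is precisely what \cite[Cor.~1.4.8]{Go98a} supplies, so no further work is needed beyond combining it with the standard Correspondence Theorem.
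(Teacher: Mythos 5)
Your proof is correct. The paper offers no proof of this proposition, merely asserting that it is a consequence of \cite[Cor.\ 1.4.8]{Go98a} and the Correspondence Theorem \cite[Thm.\ II.6.20]{BuSa00}; your argument is a complete and accurate elaboration of exactly that route — the relative correspondence theorem reduces everything to the case $\theta = \textup{id}_A$, and the base case is the standard translation between subdirect embeddings and meet decompositions of $\textup{id}_A$ via \cref{Prop : subdirect embedding} and kernels of projections.
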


As a consequence, a member $\A$ of a quasivariety $\K$ is relatively subdirectly irreducible precisely when it has a least nonidentity $\K$-congruence, called the \emph{monolith} of $\A$. When it exists, the monolith of $\A$ is always the $\K$-congruence of $\A$ generated by a pair of distinct elements $a, b \in A$, which we denote by $\Cg_\K^\A(a, b)$.

Given two binary relations $R_1$ and $R_2$ on a set $A$, we let
\[
R_1 \circ R_2 = \{ \langle a, b \rangle \in A \times A : \text{there exists }c\in A \text{ s.t. }\langle a, c \rangle \in R_1 \text{ and }\langle c, b \rangle \in R_2 \}.
\]
A variety $\mathsf{K}$ is said to be \emph{congruence permutable} when for all $\A \in \mathsf{K}$ and $\theta, \phi \in \mathsf{Con}(\A)$ we have $\theta \circ \phi = \phi \circ \theta$.  A quasivariety $\K$ is said to be \emph{relatively congruence distributive} when $\Con_\K(\A)$ is a distributive lattice for every $\A \in \K$. If $\K$ is a variety, we simply say that $\K$ is  \emph{congruence distributive}. Notably, every variety whose members have a group (resp.\ lattice) structure is congruence permutable (resp.\ distributive) (see, e.g., \cite[p.~79]{BuSa00}). A variety that is both congruence distributive and congruence permutable is called \emph{arithmetical}.

\begin{Remark}
    Contrarily to the case of congruence distributivity, congruence permutability is usually understood as a property of varieties only (as opposed to arbitrary quasivarieties). The reason is that an algebra is congruence permutable if and only if $\theta \circ \phi$ is the join of $\theta$ and $\phi$ in $\mathsf{Con}(\A)$ for all $\theta,\phi \in \mathsf{Con}(\A)$. However, the sole quasivarieties $\K$ such that  $\theta \circ \phi$ is the join of $\theta$ and $\phi$ in $\mathsf{Con}_\K(\A)$ for all $\A \in \K$ and $\theta, \phi \in \mathsf{Con}_\K(\A)$ are those that are  varieties \cite[Thm.\ 5.5]{MR1268510} (see also \cite{MR1408738}).
\qed
\end{Remark}

The following is a generalization of J\'onsson's Theorem to the setting of finitely subdirectly irreducible algebras, which can be obtained as a straightforward consequence of \cite[Thm.~1.7]{CD90}.

\begin{Jonsson}\label{Thm : Jonsson}
Let $\K$ be a class of algebras such that $\VVV(\K)$ is congruence distributive. Then 
$\VVV(\K)_\fsi \subseteq \HHH\SSS\PPU(\K).$
\end{Jonsson}

We will also utilize the following analogous statement for quasivarieties (see \cite[Thm.~1.5]{CD90}). 

\begin{Theorem}\label{Thm : easy Jonsson quasivarieties}
Let $\K$ be a class of algebras. Then $\QQQ(\K)_{\rfsi} \subseteq \III\SSS\PPU(\K)$.
\end{Theorem}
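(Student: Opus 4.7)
My plan is to start from $\A \in \QQQ(\K)_\rfsi$ and use \cref{Thm : quasivariety generation} to fix an embedding $h \colon \A \hookrightarrow \prod_{i \in I} \B_i$ with each $\B_i \in \PPU(\K)$. Setting $\theta_i := \Ker(p_i \circ h)$, the Homomorphism Theorem identifies $\A/\theta_i$ with a subalgebra of $\B_i \in \QQQ(\K)$, so each $\theta_i$ is a $\QQQ(\K)$-congruence of $\A$, and the injectivity of $h$ forces $\bigcap_{i \in I} \theta_i = \textup{id}_A$. The overall strategy will be to thin out the index set via an ultrafilter $U$ on $I$ in such a way that the natural map $\bar h \colon \A \to \prod_i \B_i/U$, $a \mapsto h(a)/U$, remains injective, and then to realize the resulting ultraproduct of ultraproducts as a single ultraproduct of members of $\K$.

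For each pair $a \neq b$ in $A$ I will set $J_{a,b} := \{ i \in I : \langle a,b\rangle \notin \theta_i \}$. The central step is to show that the family $\{ J_{a,b} : a,b \in A,\ a \neq b \}$ has the finite intersection property. Given pairs $a_1 \neq b_1, \dots, a_n \neq b_n$, the RFSI hypothesis combined with \cref{Prop : RFSI} makes $\textup{id}_A$ finitely meet irreducible in $\Con_{\QQQ(\K)}(\A)$; since no principal congruence $\Cg_{\QQQ(\K)}^{\A}(a_k, b_k)$ equals $\textup{id}_A$, this forces $\bigcap_{k \leq n} \Cg_{\QQQ(\K)}^{\A}(a_k, b_k) \neq \textup{id}_A$, and I may pick $c \neq d$ in this intersection. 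Because $\bigcap_i \theta_i = \textup{id}_A$, some $\theta_{i_0}$ omits $\langle c,d\rangle$; if $\langle a_k, b_k\rangle$ were in $\theta_{i_0}$ for some $k$, then $\Cg_{\QQQ(\K)}^{\A}(a_k, b_k) \subseteq \theta_{i_0}$, yielding $\langle c,d\rangle \in \theta_{i_0}$ and a contradiction. Thus $i_0 \in \bigcap_{k \leq n} J_{a_k, b_k}$. Once the FIP is in hand, I will extend the family to an ultrafilter $U$ on $I$ and invoke \LL o\'s' \cref{Thm : Los}, applied to $x \not\thickapprox y$, to conclude that $\bar h$ is injective.

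It then remains to identify $\prod_i \B_i/U$ with an ultraproduct of members of $\K$. Writing each $\B_i = \prod_{j \in J_i} \C_{i,j}/V_i$ with $\C_{i,j} \in \K$ and $V_i$ an ultrafilter on $J_i$, I will set $K := \{ (i,j) : i \in I,\ j \in J_i \}$ and define $W$ on $K$ by declaring $S \in W$ if and only if $\{ i \in I : \{ j \in J_i : (i,j) \in S \} \in V_i \} \in U$. A routine verification shows that $W$ is an ultrafilter on $K$ and that the natural surjection $\prod_K \C_{i,j} \twoheadrightarrow \prod_i \B_i$ descends, upon further quotienting by $U$, to an isomorphism $\prod_K \C_{i,j}/W \cong \prod_i \B_i/U$. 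Composing with $\bar h$ then exhibits $\A$ as a subalgebra of an ultraproduct of members of $\K$, placing it in $\III\SSS\PPU(\K)$. The main obstacle I anticipate is the FIP step, because the specific \emph{finite} form of meet irreducibility supplied by RFSI (as opposed to plain relative subdirect irreducibility) is exactly what is needed to control intersections of finitely many principal $\QQQ(\K)$-congruences at a time; the collapsing of the iterated ultraproduct via $W$ is standard bookkeeping.
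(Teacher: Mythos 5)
The paper does not actually prove this theorem: it is imported by citation from Czelakowski--Dziobiak, so there is no in-text argument to compare against. Your proposal is a correct and complete proof, along the standard lines. The three load-bearing steps all check out: (i) the finite intersection property of the sets $J_{a,b}$ works exactly as you describe, since $\A \in \QQQ(\K)_{\rfsi}$ gives $\textup{id}_A \in \mathsf{Irr}_{\QQQ(\K)}(\A)$ by \cref{Prop : RFSI}, meets in $\Con_{\QQQ(\K)}(\A)$ are intersections, no principal relative congruence of a pair $a_k \neq b_k$ can equal $\textup{id}_A$, and any $\theta_{i_0}$ containing $\langle a_k,b_k\rangle$ must contain $\Cg^{\A}_{\QQQ(\K)}(a_k,b_k)$ by minimality; (ii) \LL o\'s' Theorem (\cref{Thm : Los}) applied to $x \not\thickapprox y$ on the sets $J_{a,b} \in U$ gives injectivity of $\overline{h}$; and (iii) the reduction of $\PPU\PPU$ to $\III\PPU$ via the ultrafilter $W$ on $K$ is the standard iterated-ultraproduct computation, after which $\A \in \III\SSS\III\PPU(\K) = \III\SSS\PPU(\K)$. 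The only loose ends are degenerate cases ($\A$ trivial, $I = \emptyset$, or $\K = \emptyset$), and these are disposed of by the paper's convention that trivial algebras are not relatively finitely subdirectly irreducible.
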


When the class $\K$ in the above result is a finite set of finite algebras, the class operator $\PPU$ becomes superfluous because of the following observation  (see, e.g., \cite[Thm. 5.6(2)]{Ber11}).

\begin{Proposition} \label{Prop : P_u trivial in finite setting}
    If $\mathsf{K}$ is a finite set of finite algebras, then $\PPU(\mathsf{K}) \subseteq \III(\mathsf{K})$.
\end{Proposition}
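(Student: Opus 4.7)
The plan is to reduce an arbitrary ultraproduct of members of $\mathsf{K}$ to an ultrapower of a single finite algebra, and then show that such an ultrapower collapses to the algebra itself. Let $\mathsf{K} = \{\A_1, \dots, \A_n\}$ and consider an ultraproduct $\prod_{i \in I}\B_i / U$ with each $\B_i \in \mathsf{K}$ and $U$ an ultrafilter on $I$.

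First I would partition the index set by setting $I_j = \{ i \in I : \B_i = \A_j \}$ for each $j \leq n$. Since $I = I_1 \cup \dots \cup I_n$ is a finite partition and $U$ is an ultrafilter, there exists exactly one $k \leq n$ such that $I_k \in U$. Using the standard fact that an ultraproduct depends only on the values taken on any set belonging to the ultrafilter, I would identify $\prod_{i \in I}\B_i / U$ with the ultrapower $\A_k^{I_k} / U'$, where $U' = \{ X \cap I_k : X \in U \}$ is the ultrafilter on $I_k$ induced by $U$. Since $\mathsf{K}$ is closed under $\III$ up to the ambient convention, it will suffice to show $\A_k^{I_k} / U' \cong \A_k$.

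The key step is then to prove that the diagonal map $d \colon \A_k \to \A_k^{I_k} / U'$ sending $a$ to the equivalence class of the constant function with value $a$ is an isomorphism. That $d$ is a homomorphism is immediate, and it is injective by the usual \L o\'s-style argument (distinct constants are separated by the equation $x \not\thickapprox y$, or directly since $\{ i \in I_k : a = b\} = \emptyset \notin U'$ whenever $a \ne b$). For surjectivity, given $f \colon I_k \to A_k$, finiteness of $A_k$ yields the finite partition $I_k = \bigsqcup_{a \in A_k} f^{-1}(a)$; by the ultrafilter property exactly one block $f^{-1}(a_0)$ lies in $U'$, so $f$ agrees with the constant function $a_0$ on a set in $U'$, whence $f / U' = d(a_0)$.

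The only part that requires a touch of care is the passage from $\prod_{i \in I}\B_i / U$ to $\A_k^{I_k}/U'$, which is essentially bookkeeping about ultrafilters on subsets of the index set. Everything else is a direct application of finiteness combined with the ultrafilter property, which is why no appeal to \LL o\'s' Theorem beyond its elementary consequences is needed.
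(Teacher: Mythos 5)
Your argument is correct and is the standard proof of this fact; the paper itself does not prove the proposition but only cites \cite[Thm.~5.6(2)]{Ber11}, and your reduction (partition the index set, use the ultrafilter to select a single factor $\A_k$, pass to the induced ultrapower, and show the diagonal map is onto by partitioning $I_k$ into the finitely many fibres of a given function) is exactly the textbook route. The only cosmetic point is that the conclusion you want is $\prod_{i\in I}\B_i/U \cong \A_k \in \mathsf{K}$, hence membership in $\III(\mathsf{K})$; no closure assumption on $\mathsf{K}$ is needed.
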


Lastly, given an algebra $\A$ and a set $X \subseteq A$, we denote the least subuniverse of $\A$ containing $X$ by $\mathsf{Sg}^\A(X)$. When $\mathsf{Sg}^\A(X) \ne \emptyset$, the subalgebra of $\A$ with universe $\mathsf{Sg}^\A(X)$ will also be denoted by $\mathsf{Sg}^\A(X)$.
When $X = \{a_1, \dots, a_n\}$ is finite, we write $\mathsf{Sg}^\A(a_1, \dots, a_n)$ in place of $\mathsf{Sg}^\A(\{a_1, \dots, a_n\})$.
If $A = \mathsf{Sg}^\A(X)$ for some finite $X \subseteq A$, we say that $\A$ is \emph{finitely generated}. If every finitely generated subalgebra of $\A$ is finite, we call $\A$ \emph{locally finite}. A class of algebras is \emph{locally finite} when its members are. We denote the class of finitely generated members of a class of algebras $\K$ by $\Kfg$.

The following is an immediate consequence of \cite[Thm.~V.2.14]{BuSa00}.

\begin{Proposition} \label{Prop : universal class gen by fin gen}
Let $\K$ be a universal class.
Then $\K = \UUU(\Kfg)$.
\end{Proposition}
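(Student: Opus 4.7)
The plan is to show the two inclusions separately. The easier direction, $\UUU(\Kfg) \subseteq \K$, is immediate from the fact that $\Kfg \subseteq \K$ together with the description $\UUU = \III\SSS\PPU$ and the closure of $\K$ under these operators (since $\K$ is universal). The substantive direction is $\K \subseteq \UUU(\Kfg)$, which I would prove by embedding every $\A \in \K$ into a suitable ultraproduct of finitely generated subalgebras of $\A$. Since $\K$ is closed under subalgebras, every finitely generated subalgebra of $\A$ belongs to $\Kfg$, so producing such an embedding places $\A$ in $\SSS\PPU(\Kfg) \subseteq \UUU(\Kfg)$.

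To construct the embedding, I would let $I$ denote the set of nonempty finite subsets of $A$ and set $\A_s = \mathsf{Sg}^\A(s)$ for each $s \in I$, so that $\{\A_s : s \in I\} \subseteq \Kfg$. For each $s \in I$ let $\hat{s} = \{t \in I : s \subseteq t\}$; since $\hat{s_1} \cap \dots \cap \hat{s_n} \supseteq \widehat{s_1 \cup \dots \cup s_n}$, the family $\{\hat{s} : s \in I\}$ has the finite intersection property and can therefore be extended to an ultrafilter $U$ on $I$. For each $a \in A$ pick a tuple $\bar{a} = \langle a_s : s \in I \rangle \in \prod_{s \in I}A_s$ such that $a_s = a$ whenever $a \in A_s$ (the remaining coordinates being arbitrary), and define
\[
f \colon \A \to \prod_{s \in I}\A_s / U \quad \text{by} \quad f(a) = \bar{a}/U.
\]

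I would then verify that $f$ is an embedding by exploiting the fact that, for any finite list $a_1, \dots, a_n \in A$, every $s \in I$ with $\{a_1,\dots,a_n\} \subseteq s$ forces the coordinates of $\bar{a}_1, \dots, \bar{a}_n$ at $s$ to coincide with $a_1, \dots, a_n$ themselves. Homomorphicity: for any basic operation symbol $g$ of arity $n$, taking $t = \{a_1, \dots, a_n, g^\A(a_1, \dots, a_n)\}$, the set $\hat{t}$ belongs to $U$ and on it the equality $g^{\A_s}((a_1)_s, \dots, (a_n)_s) = g^\A(a_1,\dots,a_n)_s$ holds, so \LL o\'s' Theorem~\ref{Thm : Los} yields $f(g^\A(a_1,\dots,a_n)) = g(f(a_1), \dots, f(a_n))$. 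Injectivity: if $a \neq b$, then $\widehat{\{a,b\}} \in U$ witnesses $a_s \neq b_s$ on a set in $U$, so $f(a) \neq f(b)$.

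The only subtle point, which is essentially a bookkeeping issue rather than a real obstacle, is handling languages with no constants (so that $\mathsf{Sg}^\A(\emptyset)$ might be empty): restricting $I$ to nonempty finite subsets of $A$ sidesteps this, and the filter base $\{\hat{s}\}_{s \in I}$ remains well defined. Once the embedding is in place, $\A \in \III\SSS\PPU(\Kfg) = \UUU(\Kfg)$, completing the argument.
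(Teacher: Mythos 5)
Your proof is correct and follows exactly the route the paper intends: the paper derives the proposition as an immediate consequence of \cite[Thm.~V.2.14]{BuSa00}, which is precisely the statement that every algebra embeds into an ultraproduct of its finitely generated subalgebras, established there by the same construction you give (index the nonempty finite subsets of $A$, take an ultrafilter containing all up-sets $\hat{s}$, and verify the diagonal map is an embedding via {\LL}o\'s' Theorem). The only implicit convention is that universes are nonempty, which guarantees $I \neq \emptyset$ and is standard in this setting, so there is nothing to repair.
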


The \cref{Thm : Subdirect Decomposition} readily implies that $\K=\QQQ(\K_\rsi)$ for every quasivariety $\K$. It is well known that this result can be improved by restricting to the class $\Krsifg$ of finitely generated members of $\K_\rsi$.
As we were unable to find a reference in the literature, we provide a proof.

\begin{Proposition}\label{Prop : quasivariety = Q fin gen SI}
Let $\K$ be a quasivariety. Then $\K = \QQQ(\K_\rsi^\textup{fg})$. 
\end{Proposition}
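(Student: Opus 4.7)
The inclusion $\QQQ(\K_\rsi^\fg) \subseteq \K$ is immediate since $\K_\rsi^\fg \subseteq \K$ and $\K$ is a quasivariety. The plan is to show the reverse inclusion by means of \cref{Cor: VQU and formulas}\eqref{Cor: VQU and formulas : Q}: it suffices to prove that every quasiequation valid in $\K_\rsi^\fg$ is valid in $\K$. Equivalently, assuming that some quasiequation $\bigsqcap \Phi \to \varphi$ (with $\Phi \cup \{\varphi\}$ a finite set of equations) fails in some $\A \in \K$, I will produce a member of $\K_\rsi^\fg$ in which it fails.

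First I would localize the failure. If $\A \in \K$ and $a_1, \dots, a_n \in A$ witness $\A \vDash \bigsqcap \Phi(a_1, \dots, a_n)$ and $\A \nvDash \varphi(a_1, \dots, a_n)$, let $\B = \mathsf{Sg}^\A(a_1, \dots, a_n)$. Because $\K$ is closed under subalgebras and universal formulas are preserved by subalgebras (\cref{Thm : preservation}\eqref{item : preservation : universal}), $\B \in \K$ and the quasiequation still fails in $\B$ on the same elements. Crucially, $\B$ is finitely generated.

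Next I would apply the \cref{Thm : Subdirect Decomposition} to $\B$, obtaining a subdirect embedding $f \colon \B \to \prod_{i \in I} \C_i$ with each $\C_i \in \K_\rsi$. Since each projection $p_i \circ f \colon \B \twoheadrightarrow \C_i$ is surjective and $\B$ is finitely generated, each $\C_i$ is finitely generated, that is, $\C_i \in \K_\rsi^\fg$. Now write $\varphi$ as $t_1 \thickapprox t_2$. Because $f$ is injective and $\B \nvDash \varphi(a_1, \dots, a_n)$, the elements $f(t_1^\B(a_1, \dots, a_n))$ and $f(t_2^\B(a_1, \dots, a_n))$ of $\prod_{i \in I}\C_i$ differ, so they differ on some coordinate $i \in I$, giving $\C_i \nvDash \varphi((p_i \circ f)(a_1), \dots, (p_i \circ f)(a_n))$. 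On the other hand, $\bigsqcap \Phi$ is a pp formula, hence preserved by homomorphisms (\cref{Thm : preservation}\eqref{item : preservation : pp}), so $\C_i \vDash \bigsqcap\Phi((p_i \circ f)(a_1), \dots, (p_i \circ f)(a_n))$. Therefore the quasiequation fails in $\C_i \in \K_\rsi^\fg$, contradicting the assumption and completing the proof.

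There is no serious obstacle here; the only point requiring care is to observe that the relatively subdirectly irreducible factors furnished by the Subdirect Decomposition Theorem applied to the \emph{finitely generated} algebra $\B$ are automatically finitely generated (as surjective homomorphic images of $\B$), so the stronger conclusion $\K = \QQQ(\K_\rsi^\fg)$ follows from the same argument that yields $\K = \QQQ(\K_\rsi)$.
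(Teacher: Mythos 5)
Your proof is correct, but it is organized differently from the one in the paper. The paper first reduces to finitely generated algebras by citing the external fact $\K = \QQQ(\K^{\fg})$ from Gorbunov's book, and then shows $\K^{\fg} \subseteq \III\SSS\PPP(\Krsifg)$ by applying the \cref{Thm : Subdirect Decomposition} to an arbitrary finitely generated member and noting that its subdirect factors, being surjective images, remain finitely generated. You instead work syntactically: via \cref{Cor: VQU and formulas}\eqref{Cor: VQU and formulas : Q} you reduce the claim to showing that any quasiequation failing somewhere in $\K$ already fails in $\Krsifg$, you localize the failure to the finitely generated subalgebra generated by the witnesses (legitimate, since equations and their negations are evaluated identically in subalgebras), and only then invoke the subdirect decomposition together with preservation of conjunctions of equations under homomorphisms to push the failure into a single finitely generated relatively subdirectly irreducible factor. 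The two arguments share their combinatorial core --- the \cref{Thm : Subdirect Decomposition} plus the observation that homomorphic images of finitely generated algebras are finitely generated --- but your version has the advantage of being self-contained relative to the results already stated in the paper (it reproves the reduction $\K = \QQQ(\K^{\fg})$ rather than citing it), at the cost of a slightly longer syntactic detour through quasiequations; the paper's version is shorter but leans on an external reference.
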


\begin{proof}
From \cite[Prop.~2.1.18]{Go98a} it follows that $\K = \QQQ(\K^\textup{fg})$, where $\K^\textup{fg}$ is the class of the finitely generated members of $\K$.
Then let $\A \in \K^\textup{fg}$. By the \cref{Thm : Subdirect Decomposition} there exists a subdirect embedding $f \colon \A \to \prod_{i \in I}\A_i$, where $\A_i \in \K_\rsi$ for every $i \in I$. As $f$ is a subdirect embedding, each $\A_i$ is a homomorphic image of $\A$ and, therefore, finitely generated. So, $\A_i \in \K_\rsi^\textup{fg}$ for every $i \in I$. Since $f$ is an embedding into a direct product of members of $\K_\rsi^\textup{fg}$, we obtain $\A \in \III\SSS\PPP(\K_\rsi^\textup{fg}) \subseteq \QQQ(\K_\rsi^\textup{fg})$. Thus, $\K^\textup{fg} \subseteq \QQQ(\K_\rsi^\textup{fg})$, and hence $\QQQ(\K^\textup{fg}) \subseteq \QQQ(\K_\rsi^\textup{fg})$. Together with $\K = \QQQ(\K^\textup{fg})$, this yields $\K \subseteq \QQQ(\K_\rsi^\textup{fg})$. Since $\K$ is a quasivariety containing $\K_\rsi^\textup{fg}$, we conclude that $\K = \QQQ(\K_\rsi^\textup{fg})$.
\end{proof}

Let $\K$ be a quasivariety. For every pair of algebras $\A$ and $\B$, $\theta \in \mathsf{Con}_\K(\A)$, and $\phi \in \mathsf{Con}_\K(\B)$, the relation
\[
\theta \times \phi = \{ \langle \langle a_1, b_1 \rangle, \langle a_2, b_2 \rangle\rangle \in (A \times B)^2 : \langle a_1, a_2 \rangle \in \theta \text{ and } \langle b_1, b_2 \rangle \in \phi \}
\]
is a $\K$-congruence of the direct product $\A \times \B$. Given a pair of algebras $\A \leq \B$ and $\theta \in \mathsf{Con}_\K(\B)$, we write $\theta{\upharpoonright}_A$ as a shorthand for $\theta \cap (A \times A)$. Notice that $\theta{\upharpoonright}_A$ is a $\K$-congruence of $\A$.  The next result is an effortless generalization to quasivarieties of \cite[Thm.\ 1.2.20]{KP01}.

\begin{Theorem}\label{Thm : CD vs product congruences}
A quasivariety $\mathsf{K}$ is relatively congruence distributive if and only if for every subdirect product $\A \leq \B \times \C$ with $\B, \C \in \mathsf{K}$ and every $\theta \in \mathsf{Con}_\mathsf{K}(\A)$ there exist $\phi \in \mathsf{Con}_\mathsf{K}(\B)$ and $\eta \in \mathsf{Con}_\mathsf{K}(\C)$ such that $\theta = (\phi \times \eta){\upharpoonright}_A$.
\end{Theorem}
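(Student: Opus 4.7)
The proof splits into the two implications. For the forward direction, fix a subdirect product $\A \leq \B \times \C$ with $\B, \C \in \K$ and a $\K$-congruence $\theta$ of $\A$, and let $\pi_1, \pi_2$ denote the (surjective) projection maps, noting that subdirectness gives $\Ker(\pi_1) \cap \Ker(\pi_2) = \textup{id}_A$. My plan is to form the joins $\phi' := \theta \vee \Ker(\pi_1)$ and $\eta' := \theta \vee \Ker(\pi_2)$ in $\Con_\K(\A)$ and then transport them down, via the Correspondence Theorem, to $\K$-congruences $\phi$ of $\B$ and $\eta$ of $\C$; this step works because $\A/\phi' \cong \B/\phi$ and $\A/\eta' \cong \C/\eta$, so $\phi$ and $\eta$ inherit being $\K$-congruences from $\phi'$ and $\eta'$. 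The inclusion $\theta \subseteq (\phi \times \eta){\upharpoonright}_A$ then follows directly from $\theta \subseteq \phi' \cap \eta'$, while the reverse inclusion is where relative congruence distributivity enters: for $(a,b) \in (\phi \times \eta){\upharpoonright}_A$ I would have $(a,b) \in \phi' \cap \eta'$, and distributing yields
\[
\phi' \cap \eta' = (\theta \vee \Ker(\pi_1)) \cap (\theta \vee \Ker(\pi_2)) = \theta \vee (\Ker(\pi_1) \cap \Ker(\pi_2)) = \theta \vee \textup{id}_A = \theta.
\]

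For the reverse direction, assume the stated condition on subdirect products and fix $\A \in \K$ with $\alpha, \beta, \theta \in \Con_\K(\A)$; the goal is the distributive identity $(\theta \vee \alpha) \cap (\theta \vee \beta) = \theta \vee (\alpha \cap \beta)$, with joins in $\Con_\K(\A)$. I would first handle the special case $\alpha \cap \beta = \textup{id}_A$: here the canonical map $a \mapsto (a/\alpha, a/\beta)$ is a subdirect embedding of $\A$ into $(\A/\alpha) \times (\A/\beta)$, so the hypothesis produces $\phi \in \Con_\K(\A/\alpha)$ and $\eta \in \Con_\K(\A/\beta)$ with $\theta = (\phi \times \eta){\upharpoonright}_A$. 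Lifting $\phi, \eta$ via the Correspondence Theorem to $\K$-congruences $\phi' \supseteq \alpha$ and $\eta' \supseteq \beta$ of $\A$, a direct check yields $\theta = \phi' \cap \eta'$, and then $(\theta \vee \alpha) \cap (\theta \vee \beta) \subseteq \phi' \cap \eta' = \theta$ settles the nontrivial inclusion. The general case should reduce to the special one by passing to the quotient $\A/(\alpha \cap \beta)$, where the images of $\alpha$ and $\beta$ intersect trivially; applying the special case there and pulling back via the Correspondence Theorem delivers the full identity.

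The main obstacle I anticipate is purely bookkeeping: in a quasivariety, joins in $\Con_\K(\A)$ can strictly exceed those in $\Con(\A)$, so I will have to be careful that every occurrence of $\vee$ is taken inside $\Con_\K(\A)$ and that the Correspondence Theorem is applied at the level of $\K$-congruences. The latter transfers without issue because whenever $\phi' \supseteq \Ker(\pi_1)$ one has $\A/\phi' \cong \B/\pi_1[\phi']$, so being a $\K$-congruence passes back and forth across the projection.
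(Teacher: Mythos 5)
The paper does not actually prove this theorem: it is stated as ``an effortless generalization to quasivarieties of \cite[Thm.~1.2.20]{KP01}'' and left to that reference. Your argument is correct and is precisely the standard proof of that cited result transported to relative congruence lattices: in the forward direction the key identification $(\phi \times \eta){\upharpoonright}_A = \pi_1^{-1}(\phi) \cap \pi_2^{-1}(\eta) = \phi' \cap \eta'$ together with the dual distributive law does the work, and in the reverse direction the reduction to $\alpha \cap \beta = \textup{id}_A$ via $\A/(\alpha\cap\beta)$ goes through because the Correspondence Theorem restricts to an order isomorphism between $\K$-congruences above a fixed $\K$-congruence and $\K$-congruences of the quotient, preserving intersections and $\K$-joins. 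The two caveats you flag (all joins taken in $\Con_\K$, and the transfer of the $\K$-congruence property across surjections via the quotient isomorphism) are exactly the right ones, and both are handled correctly.
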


As a consequence, we deduce the following.

\begin{Corollary}\label{Cor : CD vs FSI product congruences}
Let $\K$ be a relatively congruence distributive quasivariety, $\A$ an algebra, and $\theta \in \mathsf{Con}_\K(\A)$ such that $\A / \theta$ is either trivial or a member of $\K_\textsc{rfsi}$. Then for every $\B \in \K$ such that  $\A \leq \B \times \B$ is a subdirect product there exists $\phi \in \mathsf{Con}_\K(\B)$ with
\[
\theta \in \{  (\phi \times B^2){\upharpoonright}_{A}, (B^2 \times \phi){\upharpoonright}_{A}\}.
\]
\end{Corollary}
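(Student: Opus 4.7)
The plan is to apply the preceding \cref{Thm : CD vs product congruences} (with $\C = \B$) to obtain $\phi, \eta \in \mathsf{Con}_\K(\B)$ such that $\theta = (\phi \times \eta){\upharpoonright}_A$, and then to use the assumption on $\A/\theta$ to show that one of $\phi$ or $\eta$ can be taken to be the total relation $B^2$.

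First I would observe that $\phi \times \eta = (\phi \times B^2) \cap (B^2 \times \eta)$ as binary relations on $B \times B$, from which it follows that
\[
\theta = (\phi \times \eta){\upharpoonright}_A = (\phi \times B^2){\upharpoonright}_A \cap (B^2 \times \eta){\upharpoonright}_A.
\]
I would then argue that the two relations on the right hand side lie in $\mathsf{Con}_\K(\A)$: since $\K$ is a quasivariety, it contains the trivial algebra, so $B^2$ is a $\K$-congruence of $\B$, whence $\phi \times B^2$ and $B^2 \times \eta$ are $\K$-congruences of $\B \times \B$ by the paragraph preceding \cref{Thm : CD vs product congruences}; restricting to $A \times A$ then gives $\K$-congruences of $\A$ because $\K$ is closed under subalgebras and $\A/\alpha{\upharpoonright}_A$ embeds into $(\B \times \B)/\alpha$ for any $\alpha \in \mathsf{Con}_\K(\B \times \B)$.

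Now I would split into two cases. If $\A/\theta$ is trivial, then $\theta = A^2$ and taking $\phi \coloneqq B^2 \in \mathsf{Con}_\K(\B)$ yields $\theta = (\phi \times B^2){\upharpoonright}_A$. Otherwise $\A/\theta \in \K_\rfsi$, and by \cref{Prop : RFSI} the congruence $\theta$ is meet irreducible in $\mathsf{Con}_\K(\A)$. Since the display above exhibits $\theta$ as the meet of two elements of $\mathsf{Con}_\K(\A)$, meet irreducibility forces $\theta$ to equal one of them, giving either $\theta = (\phi \times B^2){\upharpoonright}_A$ or $\theta = (B^2 \times \eta){\upharpoonright}_A$, as desired.

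There is no substantial obstacle here: the proof is essentially a two-line argument combining the characterization of relative congruence distributivity on subdirect products in $\B \times \B$ with the meet irreducibility of $\theta$ coming from $\A/\theta \in \K_\rfsi$. The only minor care is in verifying that $(\phi \times B^2){\upharpoonright}_A$ and $(B^2 \times \eta){\upharpoonright}_A$ are genuinely $\K$-congruences of $\A$ so that meet irreducibility of $\theta$ in $\mathsf{Con}_\K(\A)$ can be legitimately invoked.
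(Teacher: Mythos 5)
Your proposal is correct and follows essentially the same route as the paper's proof: decompose $\phi \times \eta$ as $(\phi \times B^2) \cap (B^2 \times \eta)$, check that both restrictions to $A$ are $\K$-congruences of $\A$, and invoke the meet irreducibility of $\theta$ supplied by \cref{Prop : RFSI}, with the trivial case handled by taking $\phi = B^2$. The only cosmetic difference is that the paper splits into cases before applying \cref{Thm : CD vs product congruences} rather than after, which changes nothing.
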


\begin{proof}
Consider $\B \in \K$ such that  $\A \leq \B \times \B$ is a subdirect product. We have two cases: either $\A / \theta$ is trivial or it belongs to $\K_\textsc{rfsi}$. First, suppose that $\A / \theta$ is trivial. Then $\theta  = A \times A$. As $A \subseteq B \times B$ by assumption, we obtain
\[
\theta = A \times A = (B^2 \times B^2){\upharpoonright}_{A}.
\]
Since $B^2$ is a $\K$-congruence of $\B$ (because $\K$ contains all the trivial algebras in the appropriate language), we obtain $B^2 \in \mathsf{Con}_\K(\B)$. Hence, we are done taking $\phi = B^2$.

Next we consider the case where $\A / \theta \in \K_\textsc{rfsi}$. As $\A \leq \B \times \B$ is a subdirect product and $\K$ is relatively congruence distributive by assumption, we can apply Theorem \ref{Thm : CD vs product congruences}, obtaining some $\phi_1, \phi_2 \in \mathsf{Con}_\K(\B)$ such that $\theta = (\phi_1 \times \phi_2){\upharpoonright}_{A}$. Observe that $\phi_1 \times \phi_2 = (\phi_1 \times B^2) \cap (B^2 \times \phi_2)$. Therefore,
\begin{equation}\label{Eq : theta decomposes under RCD : 1}
\theta = (\phi_1 \times \phi_2){\upharpoonright}_{A} = ((\phi_1 \times B^2) \cap (B^2 \times \phi_2)){\upharpoonright}_{A} = (\phi_1 \times B^2){\upharpoonright}_{A} \cap (B^2 \times \phi_2){\upharpoonright}_{A}.
\end{equation}
Observe that $\phi_1 \times B^2, B^2 \times \phi_2 \in \mathsf{Con}_\K(\B \times \B)$ because $\phi_1, \phi_2 \in \mathsf{Con}_\K(\B)$. Together with $\A \leq \B \times \B$, this yields
\begin{equation}\label{Eq : theta decomposes under RCD : 2}
(\phi_1 \times B^2){\upharpoonright}_{A}, (B^2 \times \phi_2){\upharpoonright}_{A} \in \mathsf{Con}_\K(\A).
\end{equation}
Lastly, recall that $\A / \theta \in \K_\textsc{rfsi}$ by assumption. Then $\theta \in \mathsf{Irr}_\K(\A)$ by Proposition \ref{Prop : RFSI}. Therefore, from (\ref{Eq : theta decomposes under RCD : 1}) and (\ref{Eq : theta decomposes under RCD : 2}) it follows that $\theta \in \{ (\phi_1 \times B^2){\upharpoonright}_{A}, (B^2 \times \phi_2){\upharpoonright}_{A} \}$. Since $\phi_1, \phi_2 \in \mathsf{Con}_\K(\B)$, we are done.
\end{proof}

Let $\K$ be a class of algebras and $X$ a nonempty set of variables. The set of all terms built over variables in $X$ can be made into an algebra $\Term(X)$ called the \emph{term algebra over $X$}. The binary relation $\theta_\K$ on $T(X)$ defined by $\langle t,s \rangle \in \theta_\K$ if and only if $\K \vDash t \thickapprox s$ is a congruence of $\Term(X)$. We call the quotient $\Free_\K(X) = \Term(X)/\theta_\K$ the \emph{$\K$-free algebra over $X$}. 
Free algebras have the following universal mapping property: for every map $f \colon X \to A$ with $\A \in \K$ there exists a unique homomorphism $h \colon \Free_{\K}(X) \to \A$ such that $h(x/\theta_\K) = f(x)$ for every $x \in X$. To simplify the notation, we will often denote an element $t/\theta_K$ of $F_\K(X)$ simply by $t$.
The following states that quasivarieties contain all free algebras (see, e.g., \cite[Thm.~II.10.12]{BuSa00}).
\begin{Theorem}\label{thm:quasivariety free algebra}
Let $\K$ be a quasivariety and $X$ a nonempty set. Then $\Free_\K(X) \in \K$.    
\end{Theorem}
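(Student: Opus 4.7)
The plan is to embed $\Free_\K(X)$ into a direct product of members of $\K$ and invoke closure of $\K$ under $\SSS$ and $\PPP$.

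First, I would unfold the definition of $\theta_\K$: for each $\langle t, s \rangle \in T(X)^2 \smallsetminus \theta_\K$ we have $\K \not\vDash t \thickapprox s$, so there exist an algebra $\A_{t,s} \in \K$ and a map $f_{t,s} \colon X \to A_{t,s}$ such that the (unique) homomorphism $\hat f_{t,s} \colon \Term(X) \to \A_{t,s}$ extending $f_{t,s}$ satisfies $\hat f_{t,s}(t) \neq \hat f_{t,s}(s)$. Because the indexing is by the set $T(X)^2 \smallsetminus \theta_\K$, the family $\{\A_{t,s}\}$ is a genuine set, not a proper class.

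Next, I would form the diagonal homomorphism
\[
h \colon \Term(X) \longrightarrow \prod_{\langle t, s\rangle \notin \theta_\K} \A_{t,s}, \qquad h(r) = \langle \hat f_{t,s}(r) : \langle t,s\rangle \notin \theta_\K\rangle,
\]
and verify that $\Ker(h) = \theta_\K$. The inclusion $\theta_\K \subseteq \Ker(h)$ follows because every $\A_{t,s}$ lies in $\K$ and therefore satisfies each equation that is valid in $\K$; the reverse inclusion $\Ker(h) \subseteq \theta_\K$ is immediate from the choice of $\hat f_{t,s}$, which separates the very pair $\langle t, s\rangle \notin \theta_\K$ in the coordinate indexed by $\langle t,s\rangle$.

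By the Homomorphism Theorem, $h$ factors through an embedding $\Free_\K(X) = \Term(X)/\theta_\K \hookrightarrow \prod_{\langle t, s\rangle \notin \theta_\K}\A_{t,s}$. Since $\K$ is a quasivariety, it is closed under $\PPP$ and $\SSS$, so $\Free_\K(X) \in \K$. The main point to be careful about is the set-vs-class issue for the index set (handled above) and the edge case in which $T(X)^2 \smallsetminus \theta_\K = \emptyset$, i.e., $\Free_\K(X)$ is trivial; in that situation the conclusion is immediate provided one adopts the convention that $\K$ contains all trivial algebras in its language, which is the standard convention implicit in the excerpt. Beyond these bookkeeping items, there is no real obstacle.
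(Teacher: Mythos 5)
Your proof is correct and is the standard argument: the paper itself gives no proof of this statement, citing \cite[Thm.~II.10.12]{BuSa00}, and the argument there is essentially the one you give (separate each pair outside $\theta_\K$ by a witness in $\K$, embed $\Term(X)/\theta_\K$ into the resulting product, and use closure under $\SSS$ and $\PPP$). Your handling of the set-versus-class issue and of the degenerate case where $\Free_\K(X)$ is trivial is also sound, since quasivarieties contain the trivial algebras by closure under empty products.
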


A member $\A$ of quasivariety $\K$ is called \emph{finitely presented}  when there exist a finite set of variables $X$ and a finite $Y \subseteq T(X) \times T(X)$ such that $\A \cong \Term(X)/\Cg_\K^{\Term(X)}(Y)$.

We call an algebra in a language $\L$ an \emph{$\L$-algebra}. 
When $\L$ and $\L'$ are two languages such that $\L \subseteq \L'$ we say that $\L'$ is an \emph{expansion} of $\L$. If $\L'$ is an expansion of $\L$, then for every $\mathscr{L'}$-algebra $\A$, we can consider its \emph{$\L$-reduct} $\A \res_\L$ obtained from $\A$ by forgetting the interpretations of all function symbols that are not in $\mathscr{L}$. Given a class $\K$ of $\L'$-algebras, we denote the class of the $\L$-reducts of members of $\K$ by $\K\res_\L$ and we call the members of $\SSS(\K\res_\L)$ the \emph{$\L$-subreducts} of $\K$. For instance, the monoid subreducts of Abelian groups are precisely the cancellative commutative monoids (see, e.g., \cite[pp.~39--40]{Lan84}).
We will often denote the language of a given class of algebras $\K$ by $\L_\K$, and we will refer to the terms of $\L_\K$ simply as the \emph{terms of $\K$}.

Particular cases of language expansions are those obtained by adding to the language names for the elements of a given algebra. More precisely, given an $\mathscr{L}$-algebra $\A$, we consider the language $\L_A$ obtained by adding to $\mathscr{L}$ a set of new constants $\{c_a : a \in A\}$ that is in bijection with the elements of $\A$.
Given a function $h \colon A \to B$ between the universes of a pair of $\L$-algebras $\A$ and $\B$, we denote by $\B_{h[A]}$ the $\L_A$-algebra whose $\L$-reduct is $\B$ and in which each constant $c_a$ is interpreted as $h(a)$. In particular, we denote by $\A_A$ the expansion of $\A$ to an $\L_A$-algebra induced by the identity map on $A$. We define the \emph{diagram} $\diag(\A)$ of an $\L$-algebra $\A$ to be the set of all variable-free $\L_A$-formulas that are equations and negated equations valid in $\A_A$.
The following lemma connects the validity of diagrams with the existence of embeddings (see, e.g., \cite[Prop.~2.1.8]{ModCK}).

\begin{Diagram Lemma}\label{Lem : Diagram Lemma}
Let $\A$ and $\B$ be $\L$-algebras and $h \colon A \to B$ a function. Then $h \colon \A \to \B$ is an embedding if and only if 
$\B_{h[A]} \vDash \diag(\A)$.
\end{Diagram Lemma}

\section{Implicit operations}

An $n$-ary \emph{partial function} on a set $X$ is a function $f \colon Y \to X$ for some $Y \subseteq X^n$. In this case, the set $Y$ will be called the \emph{domain} of $f$ and will be denoted by $\mathsf{dom}(f)$. This notion can be extended to classes of algebras as follows. An $n$-ary \emph{partial function} on a class of algebras $\mathsf{K}$ is a sequence $\langle f^\A : \A \in \mathsf{K}\rangle$, where $f^\A$ is an $n$-ary partial function on $A$ for each $\A \in \mathsf{K}$. By a \emph{partial function} on $\mathsf{K}$ we mean an $n$-ary partial function on $\mathsf{K}$ for some $n \in \mathbb{N}$. When $f$ is a partial function on $\mathsf{K}$ and $\A \in \mathsf{K}$, we denote the $\A$-component of $f$ by $f^\A$. 

\begin{exa}[\textsf{Monoids}]\label{Exa : inverses in monoids 0}
The operation of ``taking inverses'' can be viewed as a partial function on the variety of monoids $\mathsf{Mon}$. More precisely, recall that the inverse of an element $a$ of a monoid $\A = \langle A; \cdot, 1 \rangle$ is unique when it exists, in which case it will be denoted by $a^{-1}$. Then let $f^\A$ be the unary partial function on $A$ with domain
\[
\mathsf{dom}(f^\A)  = \{ a \in A : a \text{ has an inverse in }\A\},
\] 
defined for each $a \in \mathsf{dom}(f^\A)$ as $f^\A(a) = a^{-1}$. The sequence $\langle f^\A : \A \in \mathsf{Mon}\rangle$ is a unary partial function on $\mathsf{Mon}$.
\qed
\end{exa}

\begin{Definition}
A formula $\varphi(x_1, \dots, x_n, y)$ with $n \geq 1$ in the language of a class of algebras $\mathsf{K}$ is said to be \emph{functional} in $\mathsf{K}$ when for all $\A \in \mathsf{K}$ and $a_1, \dots, a_n \in A$ there exists at most one $b \in A$ such that $\A \vDash \varphi(a_1, \dots, a_n, b)$.  When $\K = \{ \A \}$, we often say that $\varphi$ is \emph{functional} in $\A$.
\end{Definition}

In other words, $\varphi$ is functional in a class of algebras $\mathsf{K}$ when
\[
\mathsf{K} \vDash (\varphi(x_1, \dots, x_n, y) \sqcap \varphi(x_1, \dots, x_n, z)) \to y \thickapprox z.
\]
In this case, $\varphi$ induces an $n$-ary partial function $\varphi^\A$ on each $\A \in \mathsf{K}$ with domain 
\[
\mathsf{dom}(\varphi^\A) = \{ \langle a_1, \dots, a_n \rangle \in A^n : \text{there exists $b \in A$ such that }\A \vDash \varphi(a_1, \dots, a_n, b) \},
\]
defined for all $\langle a_1, \dots, a_n \rangle \in \mathsf{dom}(\varphi^\A)$ as
$\varphi^\A(a_1, \dots, a_n) = b$, where $b$ is the unique element of $A$ such that $\A \vDash \varphi(a_1, \dots, a_n, b)$. 
Consequently,
\[
\varphi^{\mathsf{K}} = \langle \varphi^\A : \A \in \mathsf{K}\rangle
\]
is an $n$-ary partial function on $\mathsf{K}$. 

\begin{Definition}
A partial function $f$ on a class of algebras $\mathsf{K}$ is said to be
\benroman
\item \emph{defined by a formula $\varphi$} when $\varphi$ is functional in $\mathsf{K}$ and $f = \varphi^{\mathsf{K}}$;
\item \emph{implicit} when it is defined by some formula.
\eroman
\end{Definition}

We remark that the arity of implicit partial functions is always positive because the definition of a functional formula $\varphi = \varphi(x_1, \dots, x_n, y)$ requires $n$ to be positive.

\begin{exa}[\textsf{Monoids}]\label{Exa : inverses in monoids 1}
We will prove that the partial function of ``taking inverses'' in monoids introduced in Example \ref{Exa : inverses in monoids 0} is defined by the formula
\[
\varphi(x, y) = (x \cdot y \thickapprox 1) \sqcap (y \cdot x \thickapprox 1).
\]
First, observe that for each monoid $\A$ and $a, b \in A$ we have
\[
\A \vDash \varphi(a, b) \iff a \cdot b = 1 = b \cdot a \iff \text{$b = a^{-1}$.}
\]
As a consequence, for all $a, b, c \in A$ such that $\A \vDash \varphi(a, b) \sqcap \varphi(a, c)$ we have $b = a^{-1} = c$, whence $\varphi$ is functional in $\mathsf{Mon}$. Together with the above display, this shows that $\varphi$ defines the partial function of ``taking inverses'' in monoids which, therefore, is implicit.
\qed
\end{exa}

\begin{Definition}
An $n$-ary partial function $f$ on a class of algebras $\mathsf{K}$ is said to be
\benroman
\item an \emph{operation} of $\mathsf{K}$ when it is preserved by homomorphisms in $\K$, that is, for each homomorphism $h \colon \A \to \B$ with $\A,\B \in \mathsf{K}$ and $\langle a_1, \dots, a_n \rangle \in \mathsf{dom}(f^\A)$ we have $\langle h(a_1), \dots, h(a_n) \rangle \in \mathsf{dom}(f^\B)$ and
\[
h(f^\A(a_1,\dots,a_n))=f^\B(h(a_1),\dots,h(a_n));
\]
\item an \emph{implicit operation} of $\mathsf{K}$ when it is both implicit and an operation of $\mathsf{K}$. 
\eroman   
We denote the class of implicit operations of $\mathsf{K}$ by $\mathsf{imp}(\mathsf{K})$. When $\K = \{ \A \}$, we often write $\imp(\A)$ instead of $\imp(\K)$.
\end{Definition}

Partial functions on a class $\K$ of algebras are defined as sequences $\langle f^{\A} : \A \in \mathsf{K} \rangle$ of partial functions indexed by $\K$. Consequently, when $\K$ is a proper class, so is each of the partial functions on $\K$.
Nevertheless, since implicit partial functions
can be identified with their defining formulas, we will always treat $\imp(\K)$ as a set.

\begin{exa}[\textsf{Monoids}]
We will prove the following.

\begin{Theorem}\label{Thm : inverses monoid : implicit operation}
Taking inverses is a unary implicit operation of the variety of monoids which, moreover, can be defined by the conjunction of equations
\[
\varphi = (x \cdot y \thickapprox 1) \sqcap (y \cdot x \thickapprox 1).
\]
\end{Theorem}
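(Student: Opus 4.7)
The plan is to reuse the work already done in \cref{Exa : inverses in monoids 1}, which verified that $\varphi$ is functional in $\mathsf{Mon}$ and that the implicit partial function $\varphi^{\mathsf{Mon}}$ coincides with the ``taking inverses'' partial function $f = \langle f^\A : \A \in \mathsf{Mon} \rangle$ of \cref{Exa : inverses in monoids 0}. Thus $f$ is implicit and is defined by $\varphi$. It remains only to show that $f$ is an operation of $\mathsf{Mon}$, i.e., that it is preserved by monoid homomorphisms in the sense of the definition preceding the theorem.

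The key observation is that $\varphi$ is a conjunction of equations, so it is a pp formula (with an empty existential prefix). Since $\mathsf{Mon}$ is axiomatized by a finite set of equations, it is an elementary class, so \cref{Thm : preservation}\eqref{item : preservation : pp} applies and guarantees that $\varphi$ is preserved by homomorphisms in $\mathsf{Mon}$. (Alternatively, preservation of a conjunction of equations by homomorphisms can be checked by hand in a single line, since homomorphisms preserve the monoid operation and the identity; this avoids invoking any theorem at all.)

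Given a homomorphism $h \colon \A \to \B$ between monoids and $a \in \mathsf{dom}(f^\A)$, by definition there exists an inverse $a^{-1} \in A$ with $\A \vDash \varphi(a, a^{-1})$. Preservation of $\varphi$ by $h$ yields $\B \vDash \varphi(h(a), h(a^{-1}))$, which by \cref{Exa : inverses in monoids 1} means precisely that $h(a^{-1})$ is the inverse of $h(a)$ in $\B$. Hence $h(a) \in \mathsf{dom}(f^\B)$ and $f^\B(h(a)) = h(a^{-1}) = h(f^\A(a))$, as required.

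There is no substantive obstacle here: once \cref{Exa : inverses in monoids 1} is in place, the theorem reduces to the remark that a pp-definable partial function is automatically preserved by homomorphisms. The only thing worth emphasizing in the write-up is that the equations $x \cdot y \thickapprox 1$ and $y \cdot x \thickapprox 1$ must be read as a single formula in the two free variables $x$ and $y$, so that $\varphi$ is indeed functional rather than inducing a binary relation.
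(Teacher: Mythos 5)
Your proposal is correct and follows essentially the same route as the paper: both reduce the statement to \cref{Exa : inverses in monoids 1} for the ``implicit and defined by $\varphi$'' part and then verify the operation condition by checking that a homomorphism sends the inverse of $a$ to the inverse of $h(a)$. The only cosmetic difference is that the paper justifies this last step by the classical fact that monoid homomorphisms preserve inverses, whereas you invoke preservation of pp formulas via \cref{Thm : preservation}\eqref{item : preservation : pp} (or the one-line hand check) --- the same computation either way.
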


\begin{proof}
Recall from Example \ref{Exa : inverses in monoids 1} that the partial function $f$ on the variety of monoids of ``taking inverses'' is implicit and defined by $\varphi$. We will show that $f$ is also an operation. To this end, consider a homomorphism $h \colon \A \to \B$ of monoids and $a \in \mathsf{dom}(f^\A)$. Then $f^\A(a) = a^{-1}$. Since monoid homomorphisms preserve inverses,
we obtain
\[
h(f^\A(a)) = h(a^{-1}) = h(a)^{-1}.
\]
Consequently, $h(a)$ has an inverse, whence $h(a) \in \mathsf{dom}(f^\B)$ and $f^\B(h(a)) = h(a)^{-1}$. Together with the above display, this yields $h(f^\A(a)) = f^\B(h(a))$.
\end{proof}
\end{exa}

\begin{exa}[\textsf{Term functions}]\label{Example : term functions}
Let $\mathsf{K}$ be a class of algebras. Every term of $\K$ can be viewed as an implicit operation, as we proceed to illustrate. Let $t(x_1, \dots, x_n)$ be a term. For each $\A \in \K$, evaluating $t$ on tuples of elements of $A$ induces a function $t^{\A} \colon A^n \to A$. Then the sequence $t^\K = \langle t^\A : \A \in \mathsf{K} \rangle$ is an $n$-ary implicit operation of $\mathsf{K}$ defined by the equation $t(x_1, \dots, x_n) \thickapprox y$. We call $t^\K$ a \emph{term function of $\mathsf{K}$}. These implicit operations are always ``total'', in the sense that each $t^\A$ is a 
total function on $A$. 
\qed
\end{exa}

In elementary classes, implicit operations admit the following description.

\begin{Theorem}\label{Thm : implicit operations vs existential positive formulas}
Let $f$ be a partial function on an elementary class $\mathsf{K}$. Then $f$ is an implicit operation of $\mathsf{K}$ if and only if it is defined by an existential positive formula. 
\end{Theorem}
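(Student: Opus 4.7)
The plan is to prove both directions by translating between the two preservation conditions using \cref{Thm : preservation}\eqref{item : preservation : ep}, the homomorphism preservation theorem relativized to elementary classes.

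For the easier direction, suppose $f$ is defined by an existential positive formula $\varphi(x_1, \dots, x_n, y)$. I would first observe that $f$ is implicit by definition, so what remains is to show that $f$ is preserved by homomorphisms in $\mathsf{K}$. Given a homomorphism $h \colon \A \to \B$ with $\A, \B \in \K$ and $\langle a_1, \dots, a_n \rangle \in \dom(f^\A)$, let $b = f^\A(a_1, \dots, a_n)$, so that $\A \vDash \varphi(a_1, \dots, a_n, b)$. Applying \cref{Thm : preservation}\eqref{item : preservation : ep} (since existential positive formulas are preserved by homomorphisms in any elementary class), we get $\B \vDash \varphi(h(a_1), \dots, h(a_n), h(b))$, which shows that $\langle h(a_1), \dots, h(a_n) \rangle \in \dom(f^\B)$ and $f^\B(h(a_1), \dots, h(a_n)) = h(b) = h(f^\A(a_1, \dots, a_n))$.

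For the converse, assume $f$ is an implicit operation defined by some formula $\varphi(x_1, \dots, x_n, y)$. The key step is to verify that $\varphi$ itself is preserved by homomorphisms in $\K$. Suppose $h \colon \A \to \B$ is a homomorphism between members of $\K$ with $\A \vDash \varphi(a_1, \dots, a_n, b)$. Then $\langle a_1, \dots, a_n\rangle \in \dom(f^\A)$ with $f^\A(a_1, \dots, a_n) = b$, and since $f$ is an operation of $\K$, $\langle h(a_1), \dots, h(a_n)\rangle \in \dom(f^\B)$ with $f^\B(h(a_1), \dots, h(a_n)) = h(b)$; this means precisely that $\B \vDash \varphi(h(a_1), \dots, h(a_n), h(b))$, as required. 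Applying \cref{Thm : preservation}\eqref{item : preservation : ep} to the preserved formula $\varphi$, there is an existential positive formula $\psi(x_1, \dots, x_n, y)$ equivalent to $\varphi$ in $\K$.

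To finish, I would note that because $\varphi$ and $\psi$ are equivalent in $\K$, $\psi$ inherits functionality in $\K$ from $\varphi$, and for every $\A \in \K$ the partial functions $\varphi^\A$ and $\psi^\A$ coincide, so $\psi$ defines $f$. The only nontrivial content of the argument lies in invoking the relativized homomorphism preservation theorem, which was established in \cref{Thm : preservation}\eqref{item : preservation : ep}; everything else amounts to unwinding the definitions of ``operation'' and ``defined by $\varphi$'', so no substantial obstacle is expected.
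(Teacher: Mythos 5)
Your proposal is correct and follows essentially the same route as the paper's proof: both directions reduce to \cref{Thm : preservation}\eqref{item : preservation : ep}, with the left-to-right direction verifying that the defining formula is preserved by homomorphisms because $f$ is an operation, and the right-to-left direction using preservation of existential positive formulas to show $f$ is an operation. The only cosmetic difference is that you spell out why the equivalent formula $\psi$ remains functional and defines $f$, which the paper compresses into ``As $\varphi$ defines $f$, so does $\psi$.''
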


\begin{proof}
To prove the implication from left to right, suppose that $f$ is an implicit operation on $\mathsf{K}$. Then there exists a formula $\varphi(x_1, \dots, x_n, y)$ that defines $f$. We will prove that $\varphi$ is preserved by homomorphisms in $\mathsf{K}$. To this end, consider a homomorphism $h \colon \A \to \B$ with $\A, \B \in \mathsf{K}$ and $a_1, \dots, a_n, b \in A$ such that $\A \vDash \varphi(a_1, \dots, a_n, b)$. As $\varphi$ defines  $f$, from $\A \vDash \varphi(a_1, \dots, a_n, b)$ it follows that $\langle a_1, \dots, a_n \rangle \in \mathsf{dom}(f^\A)$ and $f^\A(a_1, \dots, a_n) = b$. Together with the assumption that $f$ is an operation of $\mathsf{K}$, this yields $\langle h(a_1), \dots, h(a_n) \rangle \in \mathsf{dom}(f^\B)$ and 
\[
h(b) = h(f^\A(a_1, \dots, a_n)) = f^\B(h(a_1), \dots, h(a_n)).
\]
Since $\varphi$ defines $f$, we conclude that $\B \vDash \varphi(h(a_1), \dots, h(a_n), h(b))$. Hence, $\varphi$ is preserved by homomorphisms in $\mathsf{K}$. Therefore, we can apply Theorem \ref{Thm : preservation}(\ref{item : preservation : ep}), obtaining that $\varphi$ is equivalent to an existential positive formula $\psi$ in $\mathsf{K}$. As $\varphi$ defines  $f$, so does $\psi$. Thus, we conclude that $f$ is defined  by an existential positive formula.

Then we 
proceed to prove the implication from right to left. Suppose that $f$ is defined by an existential positive formula $\varphi(x_1, \dots, x_n, y)$. To conclude that $f$ is an implicit operation of $\mathsf{K}$, it suffices to show that it is an operation of $\mathsf{K}$. Consider a homomorphism $h \colon \A \to \B$ with $\A, \B \in \mathsf{K}$ and $a_1, \dots, a_n\in A$ such that $\langle a_1, \dots, a_n \rangle \in \mathsf{dom}(f^\A)$. As $\varphi$ defines $f$, we have $\A \vDash \varphi(a_1, \dots, a_n, f^\A(a_1, \dots, a_n))$. Since $\varphi$ is an existential positive formula, we can apply~\cref{Thm : preservation}(\ref{item : preservation : ep}), obtaining $\B \vDash \varphi(h(a_1), \dots, h(a_n), h(f^\A(a_1, \dots, a_n)))$. Therefore, $\langle h(a_1), \dots, h(a_n) \rangle \in \mathsf{dom}(f^\B)$ and $h(f^\A(a_1, \dots, a_n)) = f^\B(h(a_1), \dots, h(a_n))$ because $\varphi$ defines $f$. Hence, we conclude that $f$ is an operation of $\mathsf{K}$.
\end{proof}

Let $f, f_1, \dots, f_n$ be $m$-ary partial functions on a set $X$. We write $f = f_1 \cup \dots \cup f_n$ to indicate that $f$ is the union of $f_1, \dots, f_n$ when they are viewed as subsets of $X^m \times X$. This condition is equivalent to the requirements that $\dom(f) = \dom(f_1) \cup \dots \cup \dom(f_n)$ and that $f_i(x)=f(x)$ for all $i$ and $x \in \dom(f_i)$.
As a consequence of \cref{Thm : implicit operations vs existential positive formulas}, we obtain the following.

\begin{Corollary}\label{Cor : each implicit operations splits into pp formulas}
Let $f$ be an implicit operation of an elementary class $\mathsf{K}$. Then there exist some implicit operations $f_1, \dots, f_n$ of $\mathsf{K}$ defined by pp formulas such that for each $\A \in \mathsf{K}$,
\[
f^\A = f_1^\A \cup \dots \cup f_n^\A.
\]
\end{Corollary}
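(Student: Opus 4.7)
The plan is to reduce the problem to the reformulation of implicit operations given by \cref{Thm : implicit operations vs existential positive formulas} together with the observation, made in the introduction, that every existential positive formula is equivalent to a finite disjunction of pp formulas.

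First, I would apply \cref{Thm : implicit operations vs existential positive formulas} to obtain an existential positive formula $\varphi(x_1, \dots, x_n, y)$ defining $f$ in $\mathsf{K}$. By the remark made right after the definition of existential positive formulas, $\varphi$ is equivalent to a finite disjunction $\varphi_1 \sqcup \dots \sqcup \varphi_n$, where each $\varphi_i$ is a pp formula. Since this equivalence holds in every class of algebras, $\varphi$ and $\varphi_1 \sqcup \dots \sqcup \varphi_n$ are in particular equivalent in $\mathsf{K}$.

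The key step is to check that each $\varphi_i$ is itself functional in $\mathsf{K}$ and therefore defines an implicit operation $f_i$ of $\mathsf{K}$. This is immediate from the fact that $\varphi$ is functional: if $\A \in \mathsf{K}$ satisfies $\varphi_i(a_1, \dots, a_n, b) \sqcap \varphi_i(a_1, \dots, a_n, c)$, then it also satisfies $\varphi(a_1, \dots, a_n, b) \sqcap \varphi(a_1, \dots, a_n, c)$, forcing $b = c$. Consequently, each $\varphi_i$ induces a partial function $f_i = \varphi_i^\mathsf{K}$ on $\mathsf{K}$, and since $\varphi_i$ is a pp formula (hence existential positive), another application of \cref{Thm : implicit operations vs existential positive formulas} guarantees that $f_i$ is an implicit operation of $\mathsf{K}$.

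Finally, I would verify the decomposition $f^\A = f_1^\A \cup \dots \cup f_n^\A$ for each $\A \in \mathsf{K}$. This reduces to unpacking the definition: for any tuple $\langle a_1, \dots, a_n \rangle \in A^n$ and any $b \in A$, the equivalence of $\varphi$ with $\varphi_1 \sqcup \dots \sqcup \varphi_n$ in $\mathsf{K}$ gives that $\A \vDash \varphi(a_1, \dots, a_n, b)$ if and only if $\A \vDash \varphi_i(a_1, \dots, a_n, b)$ for some $i \leq n$. Reading this through the definition of the partial functions $f^\A$ and $f_i^\A$ yields both the equality of domains and the agreement of values where they are defined. I do not anticipate any real obstacle here; the proof is essentially a transcription of the disjunctive normal form of existential positive formulas into the language of partial functions.
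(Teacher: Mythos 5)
Your proof is correct and follows essentially the same route as the paper's: invoke \cref{Thm : implicit operations vs existential positive formulas} to get an existential positive defining formula, rewrite it as a finite disjunction of pp formulas, observe that each disjunct inherits functionality from $\varphi$, and read off the decomposition $f^\A = f_1^\A \cup \dots \cup f_n^\A$. The only cosmetic difference is that the paper simply assumes $\varphi$ is already in disjunctive form, while you pass through the equivalence explicitly.
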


\begin{proof}
As $\mathsf{K}$ is an elementary class, we can apply \cref{Thm : implicit operations vs existential positive formulas}, obtaining that $f$ is defined by an existential positive formula $\varphi$. We may assume that $\varphi = \varphi_1 \sqcup \dots \sqcup \varphi_n$ for some pp formulas $\varphi_1, \dots, \varphi_n$. 
Since $\varphi$ defines $f$, it is functional in $\mathsf{K}$. Together with $\varphi = \varphi_1 \sqcup \dots \sqcup \varphi_n$, this implies that each $\varphi_i$ is functional in $\mathsf{K}$ and, therefore, defines a partial function $f_i$ on $\mathsf{K}$. As $\varphi_i$ is a pp formula, from Theorem \ref{Thm : implicit operations vs existential positive formulas} it follows that $f_i$ is an implicit operation of $\mathsf{K}$. Now, recall that $\varphi = \varphi_1 \sqcup \dots \sqcup \varphi_n$ defines $f$ and $\varphi_i$ defines $f_i$ for each $i \leq n$. Thus, we conclude that $f^\A = f_1^\A \cup \dots \cup f_n^\A$ for each $\A \in \mathsf{K}$.
\end{proof}

In view of Corollary \ref{Cor : each implicit operations splits into pp formulas}, implicit operations of elementary classes are obtained by gluing together implicit operations defined by pp formulas.\ This is the reason why the most fundamental implicit operations in mathematics are defined by pp formulas (as opposed to arbitrary existential positive formulas), as shown by the forthcoming examples.
We denote by $\imppp(\K)$ the set of implicit operations of a given class $\mathsf{K}$ that are defined by pp formulas and, when $\K = \{ \A \}$, we often write $\imppp(\A)$ instead of $\imppp(\K)$.

\begin{Corollary}\label{Cor : functionality in Q(K)}
Let $\mathsf{K}$ be a class of algebras and $\varphi$ an existential positive formula functional in $\mathsf{K}$. Then $\varphi$ defines an implicit operation of $\QQQ(\mathsf{K})$.
\end{Corollary}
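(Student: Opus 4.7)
The goal is to show two things: (a) the formula $\varphi$ remains functional when we move from $\mathsf{K}$ to $\QQQ(\mathsf{K})$, and (b) the resulting partial function is preserved by homomorphisms, so that it qualifies as an operation of $\QQQ(\mathsf{K})$.

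For part (a), the plan is to reduce functionality to a family of quasiequations valid in $\mathsf{K}$ and invoke \cref{Cor: VQU and formulas}\eqref{Cor: VQU and formulas : Q}. Since $\varphi$ is existential positive, it is equivalent to a finite disjunction $\psi_1 \sqcup \dots \sqcup \psi_n$ of pp formulas $\psi_i(\vec{x}, y) = \exists \vec{u}_i\, \alpha_i(\vec{x}, y, \vec{u}_i)$, where each $\alpha_i$ is a finite conjunction of equations. Functionality of $\varphi$ in $\mathsf{K}$ says
\[
\mathsf{K} \vDash \bigl(\varphi(\vec{x}, y) \sqcap \varphi(\vec{x}, z)\bigr) \to y \thickapprox z,
\]
which, after distributing the two disjunctions, is equivalent to the collection of statements
\[
\mathsf{K} \vDash \bigl(\psi_i(\vec{x}, y) \sqcap \psi_j(\vec{x}, z)\bigr) \to y \thickapprox z
\]
for all $i,j \leq n$. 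Since the existentially quantified variables $\vec{u}_i, \vec{v}_j$ do not occur in the succedent $y \thickapprox z$, pulling the existentials out as universals shows that each such statement is equivalent to the quasiequation
\[
\mathsf{K} \vDash \bigl(\alpha_i(\vec{x}, y, \vec{u}_i) \sqcap \alpha_j(\vec{x}, z, \vec{v}_j)\bigr) \to y \thickapprox z.
\]
By \cref{Cor: VQU and formulas}\eqref{Cor: VQU and formulas : Q}, this quasiequation is valid in $\QQQ(\mathsf{K})$. Running the equivalences backwards in $\QQQ(\mathsf{K})$ then yields functionality of $\varphi$ in $\QQQ(\mathsf{K})$.

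For part (b), the key observation is that existential positive formulas are preserved by homomorphisms between \emph{arbitrary} algebras (no elementary-class hypothesis needed): this follows by a direct structural induction on $\varphi$, using that homomorphisms preserve equations and that $\sqcap, \sqcup$, and $\exists$ are trivially transported along any map. Given this, let $h \colon \A \to \B$ be a homomorphism with $\A, \B \in \QQQ(\mathsf{K})$ and suppose $\vec{a} \in \dom(\varphi^\A)$ with $\varphi^\A(\vec{a}) = b$. Then $\A \vDash \varphi(\vec{a}, b)$, so by the preservation observation $\B \vDash \varphi(h(\vec{a}), h(b))$. Combining with functionality from part (a), we conclude that $h(\vec{a}) \in \dom(\varphi^\B)$ and $\varphi^\B(h(\vec{a})) = h(b) = h(\varphi^\A(\vec{a}))$, which is exactly what it means for $\varphi^{\QQQ(\mathsf{K})}$ to be an operation.

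I do not foresee a serious obstacle: the only delicate point is keeping the bookkeeping of the disjuncts straight in part (a), but because the set of disjuncts in $\varphi$ is finite, this reduces cleanly to finitely many quasiequations. Part (b) needs no more than the elementary preservation fact for existential positive formulas, which could alternatively be invoked from \cref{Thm : preservation}\eqref{item : preservation : ep} applied to the elementary class $\QQQ(\mathsf{K})$ itself, but the direct structural-induction argument is simpler and requires no appeal to compactness.
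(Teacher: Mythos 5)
Your proposal is correct and follows essentially the same route as the paper: the core step in both is to distribute the disjunction of pp formulas, pull the existential quantifiers out of the antecedent (using fresh variables), reduce functionality to a finite family of quasiequations valid in $\mathsf{K}$, and transfer them to $\QQQ(\mathsf{K})$ via \cref{Cor: VQU and formulas}\eqref{Cor: VQU and formulas : Q}. Your part (b) merely makes explicit what the paper delegates to \cref{Thm : implicit operations vs existential positive formulas} (applied to the elementary class $\QQQ(\mathsf{K})$), namely that a functional existential positive formula automatically yields an operation because such formulas are preserved by homomorphisms; your direct structural induction is a perfectly valid, slightly more self-contained way to get the same fact.
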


\begin{proof}
Suppose that $\varphi(\vec{x}, y)$ is an existential positive formula, where $\vec{x}$ is a finite sequence of variables. Since $\varphi$ is existential positive, it is equivalent to a formula of the form
\[
\bigsqcup_{i=1}^m \exists \vec{z} \psi_i(\vec{x}, y, \vec{z}\,), 
\]
where each $\psi_i(\vec{x}, y, \vec{z})$ is a finite conjunction of equations. We rely on the following observation.

\begin{Claim}\label{Claim : functionality in Q(K) : claim}
The formula $\varphi$ is functional in a class of algebras $\mathsf{M}$ if and only if for all $i, j \leq m$ we have
\[
\mathsf{M} \vDash (\psi_i(\vec{x}, y, \vec{z}\,) \sqcap \psi_j(\vec{x}, y', \vec{z}\,')) \to y \thickapprox y',
\]
where $y'$ is a fresh variable and $\vec{z}\,'$ a sequence of fresh variables of the same length as $\vec{z}$.
\end{Claim}

\begin{proof}[Proof of the Claim]
   The functionality of $\varphi$ in $\mathsf{M}$ amounts to
\[
\mathsf{M} \vDash (\varphi(\vec{x}, y) \sqcap \psi(\vec{x}, y')) \to y \thickapprox y'.
\]
 In turn, this amounts to 
\begin{equation*}
\mathsf{M} \vDash \left( \left(\bigsqcup_{i=1}^m \exists \vec{z} \psi_i(\vec{x}, y, \vec{z}\,)\right) \sqcap \left(\bigsqcup_{i=1}^m \exists \vec{z}\,' \psi_i(\vec{x}, y', \vec{z}\,')\right)\right) \to y \thickapprox y'.
\end{equation*}
The latter amounts to 
\begin{equation*}
\mathsf{M} \vDash \left( \bigsqcup_{i,j=1}^m \exists \vec{z}, \vec{z}\,' (\psi_i(\vec{x}, y, \vec{z}\,) \sqcap \psi_j(\vec{x}, y', \vec{z}\,'))\right) \to y \thickapprox y',
\end{equation*}
which is in turn equivalent to the condition displayed in the statement.
\end{proof}

Now, suppose that $\varphi$ is functional in $\K$. By  Claim \ref{Claim : functionality in Q(K) : claim} we obtain that for all $i, j \leq m$,
\[
\mathsf{K} \vDash (\psi_i(\vec{x}, y, \vec{z}\,) \sqcap \psi_j(\vec{x}, y', \vec{z}\,')) \to y \thickapprox y'.
\]
As the formula in the above display is a quasiequation for all $i,j \leq m$, \cref{Cor: VQU and formulas}\eqref{Cor: VQU and formulas : Q} implies that it is also valid in $\QQQ(\K)$. Together with Claim \ref{Claim : functionality in Q(K) : claim}, this guarantees that $\varphi$ is functional in $\QQQ(\K)$.
\end{proof}

    Given an $n$-ary partial function $g$ and $m$-ary partial functions $f_1, \dots, f_n$ on a class $\mathsf{K}$, their \emph{composition} $g(f_1, \dots, f_n)$ is the $m$-ary partial function on $\mathsf{K}$ such that $\dom(g(f_1, \dots, f_n)^\A)$ is 
    \[
        \bigcap_{i \leq m}\mathsf{dom}(f_i^{\A}) \cap  \{\langle a_1,  \dots, a_m \rangle \in A^m : \langle f_i^{\A}(a_1, \dots, a_m) : i \leq n \rangle \in \mathsf{dom}(g^{\A})\}
    \]
    for all $\A \in \mathsf{K}$ and 
    \[g(f_1, \dots, f_n)^{\A}(a_1, \dots, a_m) = g^{\A}(f_1^{\A}(a_1, \dots, a_m), \dots, f_n^{\A}(a_1, \dots, a_m))\]
    for all $\langle a_1, \dots, a_m \rangle \in \dom(g(f_1, \dots, f_n)^\A).$

\begin{Proposition} \label{Prop : closure under composition for imp}
Let $\mathsf{K}$ be a class of algebras. Then $\mathsf{imp}(\mathsf{K})$ and $\mathsf{imp}_{\textsc{pp}}(\mathsf{K})$ are closed under composition. 
\end{Proposition}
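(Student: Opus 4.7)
The plan is to verify closure under composition by producing, for each composition $g(f_1, \dots, f_n)$, an explicit defining formula built from the defining formulas of $g$ and $f_1, \dots, f_n$, and then checking separately (i) functionality, (ii) that the formula indeed defines the composition, and (iii) the operation (homomorphism preservation) property. The last step is actually independent of the choice of formula and follows directly from the hypothesis that $g, f_1, \dots, f_n$ are already operations.

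Concretely, suppose $g$ is defined by a formula $\gamma(y_1, \dots, y_n, z)$ and each $f_i$ is defined by $\varphi_i(x_1, \dots, x_m, y_i)$. I would take as candidate defining formula
\[
\psi(x_1, \dots, x_m, z) = \exists y_1, \dots, y_n \Big( \varphi_1(\vec{x}, y_1) \sqcap \dots \sqcap \varphi_n(\vec{x}, y_n) \sqcap \gamma(y_1, \dots, y_n, z) \Big).
\]
For functionality in $\K$: if $\A \vDash \psi(\vec{a}, c)$ and $\A \vDash \psi(\vec{a}, c')$, the existential witnesses for $\vec{y}$ in both cases must coincide, since each $\varphi_i$ is functional in $\K$ and therefore forces the witness to be $f_i^\A(\vec{a})$; then functionality of $\gamma$ forces $c = c'$. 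An immediate unfolding of the semantics of $\psi$ then shows that $\dom(\psi^\A)$ equals the domain of $g(f_1, \dots, f_n)^\A$ as specified in the definition above the proposition, and that the two partial functions agree on this common domain.

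For the operation property, I would not go through $\psi$ at all, but rather argue directly from the definition of an operation: given a homomorphism $h \colon \A \to \B$ between members of $\K$ and $\vec{a} \in \dom(g(f_1, \dots, f_n)^\A)$, successive applications of the operation property of $f_1, \dots, f_n$ show that $h(\vec{a}) \in \dom(f_i^\B)$ with $h(f_i^\A(\vec{a})) = f_i^\B(h(\vec{a}))$; applying the operation property of $g$ to the tuple $\langle f_i^\A(\vec{a}) : i \leq n \rangle$ then yields that $\langle f_i^\B(h(\vec{a})) : i \leq n \rangle \in \dom(g^\B)$ and that $g$ commutes with $h$, which is exactly what it means for $h(\vec{a})$ to lie in $\dom(g(f_1, \dots, f_n)^\B)$ and for the composition to commute with $h$.

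Finally, for $\mathsf{imp}_{pp}(\K)$, note that if $\gamma$ and each $\varphi_i$ are pp formulas then $\psi$ is as well: conjunctions of pp formulas are (after prenexing) pp, and prepending existential quantifiers preserves pp form. There is no real obstacle in this argument; the only minor subtlety is the bookkeeping to check that the domain conditions in the definition of composition match, up to quantification, the satisfaction of $\psi$, but this is routine.
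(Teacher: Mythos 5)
Your proposal is correct and follows essentially the same route as the paper: the composition is shown to be an operation by a direct chase through the homomorphism-preservation property of $g, f_1, \dots, f_n$, and the defining formula $\exists \vec{y}\,(\bigsqcap_i \varphi_i(\vec{x},y_i) \sqcap \gamma(\vec{y},z))$ is exactly the formula the paper uses, with the same unfolding of its semantics and the same prenexing observation for the pp case. No gaps.
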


\begin{proof}
Consider $g, f_1, \dots, f_n \in \imp(\K)$, where $g$ is $n$-ary and each $f_i$ is $m$-ary. Let $h = g(f_1, \dots, f_n)$ be their composition, which is a partial $m$-ary function on $\K$. 
To show that $h$ is an operation of $\K$,
consider a homomorphism $k \colon \A \to \B$ with $\A, \B \in \K$ and $\langle a_1, \dots, a_m \rangle \in \dom(h^\A)$. It follows from the definition of $h$ that $\langle a_1, \dots, a_m \rangle \in \dom(f_i^{\A})$ for all $i$  and $\langle f_1^{\A}(a_1, \dots, a_m), \dots, f_n^{\A}(a_1, \dots, a_m) \rangle \in \dom(g^{\A})$. As $g, f_1, \dots, f_n$ are operations of $\K$, we have that
\begin{align*}
    \langle k(a_1), \dots, k(a_m) \rangle \in \dom(f_i^{\B})
\end{align*}
for all $i$ and
\begin{align*}
    \langle f_1^{\B}(k(a_1), \dots, k(a_m)),& \dots, f_n^{\B}(k(a_1), \dots, k(a_m)) \rangle\\
    &= \langle k(f_1^{\A}(a_1, \dots, a_m)), \dots, k(f_n^{\A}(a_1, \dots, a_m)) \rangle \in \dom(g^{\B}).
\end{align*}
Then the definition of $\dom(h^\B)$ implies $\langle k(a_1), \dots, k(a_m) \rangle \in \dom(h^\B)$.
Using again the fact that $g, f_1, \dots, f_n$ are operations of $\K$ yields
\begin{align*}
    k(h^{\A}(a_1, \dots, a_m)) & = k(g^{\A}(f_1^{\A}(a_1, \dots, a_m), \dots, f_n^{\A}(a_1, \dots, a_m)))\\
    & = g^{\B}(k(f_1^{\A}(a_1, \dots, a_m)), \dots, k(f_n^{\A}(a_1, \dots, a_m)))\\
    & = g^{\B}(f_1^{\B}(k(a_1), \dots, k(a_m)), \dots, f_n^{\B}(k(a_1), \dots, k(a_m)))\\
    & = h^{\B}(k(a_1), \dots, k(a_m)).
\end{align*}
Thus, $h$ is an operation of $\K$. We now prove that $h$ is defined by a formula.
Since $g, f_1, \dots, f_n \in \imp(\K)$, 
there exist functional formulas $\psi, \varphi_1, \dots, \varphi_n$ that define $g, f_1, \dots, f_n$, respectively. Therefore, for all $\A \in \K$, $a_1, \dots, a_m, b, b_1, \dots, b_n, c \in A$, and $i \leq n$ we have
\begin{equation} \label{Eq : defining formulas 2}
     \langle a_1, \dots, a_m \rangle \in \dom(f_i^\A) \text{ and } f_i^\A(a_1, \dots, a_m) = b \iff \A \vDash \varphi_i(a_1, \dots, a_m, b)
\end{equation}
and 
\begin{equation} \label{Eq : defining formulas 1}
    \langle b_1, \dots, b_n \rangle \in \dom(g^\A) \text{ and } g^{\A}(b_1, \dots, b_n) = c  \iff \A \vDash \psi(b_1, \dots, b_n, c).
\end{equation} 
Let
\begin{equation} \label{Eq : definition varphi}
\chi(x_1, \dots, x_m,y) = \exists z_1, \dots, z_n \left(\psi(z_1, \dots, z_n,y) \sqcap \bigsqcap_{i=1}^n \varphi_i(x_1, \dots, x_m, z_i)\right).
\end{equation}
We show that $\chi$ defines $h$. Consider $\A \in \K$ and $a_1, \dots, a_m, c \in A$. By \eqref{Eq : definition varphi} we have $\A \vDash \chi(a_1, \dots, a_m,c)$ if and only if there exist $b_1, \dots, b_n \in A$ such that $\A \vDash \varphi_i(a_1, \dots, a_m, b_i)$ for all $i \leq n$ and $\A \vDash \psi(b_1, \dots, b_n, c)$.
By \eqref{Eq : defining formulas 2} and \eqref{Eq : defining formulas 1}, the latter condition is equivalent to
\begin{align*}
\langle a_1, \dots, a_m \rangle &\in \dom(f_i^\A) \text{ and } f_i^\A(a_1, \dots, a_m) = b_i \text{ for every } i \leq n, \text{ and}\\
\langle b_1, \dots, b_n \rangle &\in \dom(g^\A) \text{ and } g^{\A}(b_1, \dots, b_n) = c.
\end{align*}
In turn, this amounts to $\langle a_1, \dots, a_m \rangle \in \dom(h^\A)$ and $h^\A(a_1, \dots, a_m) = c$ by the definition of $h$.
Therefore, $\chi$ defines $h$, and hence $h \in \imp(\K)$.

Suppose now that in addition $g, f_1, \dots, f_n \in \imppp(\K)$. We can then assume that the formulas $\psi, \varphi_1, \dots, \varphi_n$ that define $g, f_1, \dots, f_n$ are pp formulas. Let $\chi$ be defined as in \eqref{Eq : definition varphi}. 
Since $\psi$ and each $\varphi_i$ are pp formulas, they are of the form $\exists \vec{v} \psi'$ and $\exists \vec{u}_i  \varphi_i'$, where $\vec{v}$ and $\vec{u}_i$ are finite sequences of variables and $\psi'$ and $\varphi_i'$ are finite conjunctions of equations. It is straightforward to verify that $\chi$ is equivalent to the formula obtained by pulling out the existential quantifiers $\exists \vec{v}$ and $\exists \vec{u}_i$ from the conjunction in \eqref{Eq : definition varphi},
and hence $\chi$ is equivalent to a pp formula. Thus, $h$ is defined by a pp formula, and so $h \in \imppp(\K)$.
\end{proof}

\begin{exa}[\textsf{Isbell's operations}]\label{Exa : Isbell operations}
A fundamental example of implicit operations of the variety of monoids is due to Isbell \cite{Isb65}\footnote{While these operations are traditionally considered in the variety of semigroups, it is straightforward to verify that all their properties relevant to our discussion continue to hold in the variety of monoids.}.
More precisely, for each $n \geq 1$ let
\[
\psi_n(z_1, \dots, z_n, w_1, \dots, w_n, x_1, \dots, x_{2n+1}, y)
\]
be the conjunction of the following equations in the language of monoids:
\begin{align*}
y &\thickapprox x_1 z_1\\
x_1 & \thickapprox w_1 x_2\\
x_{2i}z_i & \thickapprox x_{2i+1}z_{i+1} \text{ for }i = 1, \dots, n-1\\
w_i x_{2i+1} & \thickapprox w_{i+1}x_{2(i+1)} \text{ for }i = 1, \dots, n-1\\
x_{2n}z_n & \thickapprox x_{2n+1}\\
w_n x_{2n+1}& \thickapprox y.
\end{align*}
Then let $\varphi_0(x,y) = x \thickapprox y$ and for each $n \geq 1$,
\[
\varphi_n(x_1, \dots, x_{2n+1}, y) = \exists z_1, \dots, z_n, w_1, \dots, w_n \psi_n(z_1, \dots, z_n, w_1, \dots, w_n, x_1, \dots, x_{2n+1}, y).
\]
We refer to $\varphi_n$ as to the $n$-th \emph{Isbell's formula}. 
Notice that Isbell's formulas are pp formulas. It follows from \cite[Lem.\ 4.4]{Camper18jsl} that each Isbell's formula is functional in the variety of monoids. Whence,
from \cref{Cor : functionality in Q(K)} we deduce the following.

\begin{Theorem}\label{Thm : Isbell implicit operations}
Every Isbell's formula defines an implicit operation of the variety of monoids.
\end{Theorem}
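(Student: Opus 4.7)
The statement is essentially a direct corollary of the infrastructure already in place, so the plan is to assemble three ingredients and invoke \cref{Cor : functionality in Q(K)}.

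First, I would observe that $\varphi_0(x, y) = x \thickapprox y$ trivially defines the identity operation (viewed as a unary implicit operation), so the content is in the case $n \geq 1$. For those, the shape of $\varphi_n$ given in the excerpt shows that it is of the form $\exists \vec{z}, \vec{w} \, \psi_n$ where $\psi_n$ is a finite conjunction of equations. Hence each $\varphi_n$ is a pp formula, and in particular an existential positive formula.

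Second, I would invoke \cite[Lem.~4.4]{Camper18jsl} as cited in the text to conclude that $\varphi_n$ is functional in the variety $\Mon$ of monoids; this is the only external input, and the hard combinatorial core of the statement (verifying that the existential witnesses $z_1, \dots, z_n, w_1, \dots, w_n$ pin down $y$ uniquely from $x_1, \dots, x_{2n+1}$) is entirely contained in that lemma.

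Third, since $\Mon$ is a variety and hence a quasivariety, we have $\QQQ(\Mon) = \Mon$. Applying \cref{Cor : functionality in Q(K)} to the existential positive formula $\varphi_n$, which is functional in $\Mon$, yields that $\varphi_n$ defines an implicit operation of $\QQQ(\Mon) = \Mon$, which is the desired conclusion. No step in this plan should pose a real obstacle; the only subtlety is remembering that \cref{Cor : functionality in Q(K)} is needed (rather than merely observing functionality and pp-ness) in order to secure preservation by homomorphisms, which is built into the notion of an implicit operation.
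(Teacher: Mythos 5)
Your proposal matches the paper's argument exactly: the paper likewise notes that Isbell's formulas are pp formulas, cites \cite[Lem.~4.4]{Camper18jsl} for functionality in the variety of monoids, and then applies \cref{Cor : functionality in Q(K)} together with the fact that the variety of monoids equals its own quasivariety closure. No gaps; the reasoning is correct and takes the same route.
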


\noindent Isbell's formulas and the implicit operations they define, which we term \emph{Isbell's operations},  will play a prominent role in the next sections (see 
\cref{ex:dominions monoids}, Isbell's Zigzag Theorem \ref{Thm : Isbell Theorem}, and \cref{Prop: property unary extpp in Mon}).
\qed
\end{exa}

\begin{exa}[\textsf{Reduced commutative rings}]\label{Example : inverses in rings}
Throughout this work, rings will be assumed to possess an identity element. Given an element $a$ of a ring $\langle A; +, \cdot, -, 0, 1\rangle$, we denote its multiplicative inverse (when it exists) by $a^{-1}$. By a \emph{field} we understand a commutative ring $\A$ with $0 \ne 1$ such that $a^{-1}$ exists for each $a \in A - \{ 0 \}$. The class of fields will be denoted by $\mathsf{Field}$.
    
A commutative ring $\A$ is said to be \emph{reduced}  when for each $a \in A$,
\[
a \cdot a = 0 \text{ implies }a = 0
\]
(see, e.g., \cite{MR1322960}). The class of reduced commutative rings forms a quasivariety $\mathsf{RCRing}$ axiomatized relative to commutative rings by the quasiequation $x \cdot x \thickapprox 0 \to x \thickapprox 0$. We remark that this quasivariety is proper because the ring of integers $\mathbb{Z}$ is reduced, while its quotient $\mathbb{Z}_4$ is not. We rely on the next characterization of reduced commutative rings.

\begin{Theorem}\label{Thm : reduced rings are Q fields}
The class of reduced commutative rings coincides with the quasivariety generated by all fields.
\end{Theorem}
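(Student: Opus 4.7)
The plan is to prove the two inclusions $\QQQ(\mathsf{Field}) \subseteq \mathsf{RCRing}$ and $\mathsf{RCRing} \subseteq \QQQ(\mathsf{Field})$. The first is the easier direction: every field is reduced, for if $a \cdot a = 0$ and $a \neq 0$ in a field, then $1 = a^{-1} \cdot a^{-1} \cdot a \cdot a = 0$, contradicting $0 \neq 1$. Hence $\mathsf{Field} \subseteq \mathsf{RCRing}$, and since $\mathsf{RCRing}$ is a quasivariety, $\QQQ(\mathsf{Field}) \subseteq \mathsf{RCRing}$.

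For the reverse inclusion, by \cref{Thm : quasivariety generation} it suffices to show that $\mathsf{RCRing} \subseteq \III\SSS\PPP(\mathsf{Field})$, i.e., every reduced commutative ring embeds into a direct product of fields. Fix $\A \in \mathsf{RCRing}$. If $\A$ is trivial (that is, $0 = 1$ in $\A$), then $\A$ is isomorphic to the empty direct product and hence belongs to $\III\PPP(\mathsf{Field})$. Otherwise, I invoke the classical fact from commutative algebra that the nilradical of a commutative ring $\A$ coincides with the intersection of its prime ideals; the standard Zorn's lemma argument produces, for each non-nilpotent $a \in A$, a prime ideal not containing $a$. As $\A$ is reduced, its nilradical is $\{0\}$, so
\[
\bigcap \{ \mathfrak{p} : \mathfrak{p} \text{ is a prime ideal of } \A\} = \{0\}.
\]
Consequently, the canonical homomorphism $\A \to \prod_{\mathfrak{p}} \A / \mathfrak{p}$ is an embedding. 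Each quotient $\A / \mathfrak{p}$ is an integral domain, which embeds into its field of fractions $K_\mathfrak{p}$. Composing these embeddings componentwise yields the desired embedding $\A \hookrightarrow \prod_{\mathfrak{p}} K_\mathfrak{p}$ into a product of fields, witnessing $\A \in \SSS\PPP(\mathsf{Field}) \subseteq \QQQ(\mathsf{Field})$.

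No significant obstacle is anticipated here: the main external ingredient is the classical identification of the nilradical with the intersection of all prime ideals, after which the remaining steps (constructing the subdirect embedding into integral domains and then passing to fields of fractions) are routine applications of well-known constructions combined with \cref{Thm : quasivariety generation}.
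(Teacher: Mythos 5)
Your proof is correct and follows essentially the same route as the paper: both arguments reduce the theorem to the fact that a commutative ring is reduced if and only if it embeds into a direct product of fields, which the paper simply cites (to Matlis) while you supply the standard nilradical/prime-ideal argument. The only cosmetic difference is that the paper closes the gap between $\III\SSS\PPP(\mathsf{Field})$ and $\QQQ(\mathsf{Field})$ by noting that $\mathsf{Field}$ is elementary and hence closed under ultraproducts, whereas you obtain the inclusion $\QQQ(\mathsf{Field})\subseteq\mathsf{RCRing}$ directly from $\mathsf{RCRing}$ being a quasivariety containing all fields; both are valid.
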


\begin{proof}
We will prove that
\[
\mathsf{RCRing} = \III\SSS\PPP(\mathsf{Field}) = \III\SSS\PPP\PPU(\mathsf{Field}) = \QQQ(\mathsf{Field}).
\]
The first equality above holds because a commutative ring is reduced if and only if it embeds into a direct product of fields (see, e.g., \cite[Prop.~3.1]{Mat83}),
the second because $\mathsf{Field}$ is an elementary class and, therefore, closed under $\PPU$, and the third follows from Theorem \ref{Thm : quasivariety generation}.
\end{proof}

Let $\A$ be a field and $a \in A$. The \emph{weak inverse} of $a$ in $\A$ is the element
\begin{equation}\label{eq: to}
    \mathsf{wi}(a) =
    \begin{cases}
        a^{-1}  & \text{if } a \ne 0;\\
        0 & \text{if } a = 0.\\
    \end{cases}        
    \end{equation}
We will prove the following.

\begin{Theorem}\label{Thm : RCRing : implicit operation}
There exists a unary implicit operation $f$ of the quasivariety of reduced commutative rings
such that $f^\A$ is total and $f^\A(a) = \mathsf{wi}(a)$ for all fields $\A$ and $a \in A$.\   Moreover, $f$ can be defined by the conjunction of equations
\[
\varphi = (x^2y \thickapprox x) \sqcap ( xy^2 \thickapprox y).
\]
\end{Theorem}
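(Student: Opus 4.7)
The plan is to build the implicit operation $f$ by showing that the pp formula $\varphi = (x^2 y \thickapprox x) \sqcap (x y^2 \thickapprox y)$ is functional in the class $\mathsf{Field}$ and that it is satisfied by $\mathsf{wi}(a)$ for every $a$ in a field; then, since $\mathsf{RCRing} = \QQQ(\mathsf{Field})$ by \cref{Thm : reduced rings are Q fields}, invoking \cref{Cor : functionality in Q(K)} will promote $\varphi$ to a definition of an implicit operation on the whole of $\mathsf{RCRing}$.

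First I would check functionality in $\mathsf{Field}$. Fix a field $\A$, an element $a \in A$, and suppose that $b, c \in A$ both satisfy $\varphi(a, -)$. If $a = 0$, the equation $xy^2 \thickapprox y$ forces $b = ab^2 = 0$ and similarly $c = 0$. If $a \neq 0$, then $a$ is invertible, and multiplying $a^2 b \thickapprox a \thickapprox a^2 c$ by $a^{-2}$ gives $b = c$. Hence $\varphi$ is functional in $\mathsf{Field}$. Next I would verify solvability witnessed by the weak inverse: for $a \in A$ a field element, the choice $b = \mathsf{wi}(a)$ satisfies $a^2 b = a$ and $a b^2 = b$, checking the two cases $a = 0$ (both equations reduce to $0 = 0$) and $a \neq 0$ (in which case $b = a^{-1}$, so $a^2 b = a$ and $a b^2 = a^{-1} = b$).

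Since $\varphi$ is a conjunction of equations, it is a pp formula (with empty existential prefix), hence existential positive. Functionality in $\mathsf{Field}$ together with \cref{Cor : functionality in Q(K)} then yields that $\varphi$ is functional in $\QQQ(\mathsf{Field}) = \mathsf{RCRing}$, so it defines an implicit operation $f = \varphi^{\mathsf{RCRing}}$ of $\mathsf{RCRing}$. Finally, the solvability check above shows $\A \vDash \varphi(a, \mathsf{wi}(a))$ for every field $\A$ and every $a \in A$, which together with the functionality of $\varphi$ in $\A$ gives $a \in \dom(f^{\A})$ and $f^{\A}(a) = \mathsf{wi}(a)$; in particular $f^{\A}$ is total on fields.

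The only genuinely substantive point is the functionality verification, and even that is short because fields split naturally into the invertible and the zero case. All other work is bookkeeping: recognizing $\varphi$ as a pp formula, and routing the conclusion through \cref{Cor : functionality in Q(K)} and \cref{Thm : reduced rings are Q fields} to lift from $\mathsf{Field}$ to $\mathsf{RCRing}$ without ever having to describe $f^{\A}$ explicitly on a general reduced commutative ring.
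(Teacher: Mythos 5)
Your proposal is correct and follows essentially the same route as the paper: verify that $\varphi$ is functional in $\mathsf{Field}$ and satisfied by the weak inverse, then lift to $\mathsf{RCRing} = \QQQ(\mathsf{Field})$ via \cref{Cor : functionality in Q(K)} and \cref{Thm : reduced rings are Q fields}. The only cosmetic difference is that the paper packages the two verifications as a single equivalence $\A \vDash \varphi(a,b) \iff b = \mathsf{wi}(a)$, which is exactly your uniqueness-plus-existence argument.
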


\begin{proof}
We will prove that for every field $\A$ and $a, b \in A$ we have
\begin{equation}\label{Eq : weak inverses in fields}
\A \vDash \varphi(a, b) \iff b = \mathsf{wi}(a).
\end{equation}
The implication from right to left is straightforward. To prove the reverse implication, suppose that $\A \vDash \varphi(a, b)$, i.e., 
\[
a^2b = a \, \, \text{ and } \, \, ab^2 = b.
\]
We have two cases: either $a \ne 0$ or $a = 0$. First, suppose that $a \ne 0$. Then $\mathsf{wi}(a) = a^{-1}$. Therefore, from 
$a^2b = a$
it follows that
\[
b=a^{-2} a^2b=a^{-2}a=a^{-1},
\]
where $a^{-2}$ abbreviates $(a^{-1})^2$.
Whence $b = a^{-1} = \mathsf{wi}(a)$. Then we consider the case where $a = 0$. In this case, $\mathsf{wi}(a) = 0$.  From $a = 0$ and 
$ab^2=b$
it follows that $b = 0 = \mathsf{wi}(a)$. This establishes (\ref{Eq : weak inverses in fields}). Consequently, $\varphi$ is functional in $\mathsf{Field}$.

Recall from \cref{Thm : reduced rings are Q fields} that $\QQQ(\mathsf{Field})= \mathsf{RCRing}$. As $\varphi$ is a pp formula and is functional in $\mathsf{Field}$, we can apply \cref{Cor : functionality in Q(K)}, obtaining that $\varphi$ defines an implicit operation $f$ of $\mathsf{RCRing}$. Lastly, (\ref{Eq : weak inverses in fields}) ensures that $f^\A(a) = \mathsf{wi}(a)$ for every field $\A$ and $a \in A$. 
\end{proof}
\end{exa}

\begin{exa}[\textsf{Distributive lattices}]\label{Example : complements in lattices}
Given a lattice $\A$ and $b, c \in A$, we let
\[
[b, c] = \{ a \in A : b \leq a \leq c \}.
\]
Moreover, given $a, d \in A$, we say that $d$ is a \emph{complement} of $a$ \emph{relative} to the interval $[b, c]$ when
\[
a \land d = b \, \, \text{ and } \, \, a \lor d = c.
\]

In distributive lattices, relative complements are unique when they exist \cite[Cor.\ IX.1]{BirkLT}. Consequently, with every distributive lattice $\A$ we can associate a ternary partial function $f^\A$ on $A$ with domain
\[
\mathsf{dom}(f^\A)  = \{ \langle a, b, c\rangle \in A^3 : \text{$a$ has a complement relative to $[a \land b\land c, a \lor b \lor c]$ in }\A \text\},
\] 
defined for each $\langle a, b, c\rangle \in \mathsf{dom}(f^\A)$ as
\[
f^\A(a, b, c) = \text{the complement of $a$ relative to }[a \land b\land c, a \lor b \lor c] \text{ in }\A.
\]
Let $\mathsf{DL}$ be the variety of distributive lattices. Then the sequence $f = \langle f^\A : \A \in \mathsf{DL}\rangle$ is a partial function on $\mathsf{DL}$, which captures the idea of ``taking relative complements''. 

This construction acquires special interest in the case of \emph{bounded} distributive lattices. For let $\A = \langle A; \land, \lor, 0, 1\rangle$ be a bounded distributive lattice and $a, b \in A$. Then $b$ is said to be a \emph{complement} of $a$ when
\[
a \land b = 0 \, \, \text{ and } \, \, a \lor b = 1
\]
or, equivalently, when $b$ is a complement of $a$ relative to $[0, 1] = A$. With every bounded distributive lattice $\A$ we can associate a unary partial function $f^\A$ on $A$ with domain
\[
\mathsf{dom}(f^\A)  = \{ a \in A : \text{$a$ has a complement in }\A \text\},
\] 
defined for each $a \in \mathsf{dom}(f^\A)$ as
\[
f^\A(a) = \text{the complement of $a$ in }\A.
\]
Let $\mathsf{bDL}$ be the variety of bounded distributive lattices. Then the sequence $f = \langle f^\A : \A \in \mathsf{bDL}\rangle$ is a partial function on $\mathsf{bDL}$, which captures the idea of ``taking complements''. 

\begin{Theorem}\label{Thm : relative complements is implicit}
The following conditions hold:
\benroman
\item\label{item : relative complements is implicit : 1} taking relative complements is a ternary implicit operation of the variety of distributive lattices which, moreover, can be defined by the conjunction of equations
\[
\varphi = (x_1 \land y \thickapprox  x_1 \land x_2 \land x_3) \sqcap ( x_1 \lor y \thickapprox x_1 \lor x_2 \lor x_3);
\]
\item\label{item : relative complements is implicit : 2} taking complements is a unary implicit operation of the variety of bounded distributive lattices which, moreover, can be defined by the conjunction of equations
\[
\psi =  (x \land y \thickapprox  0) \sqcap ( x \lor y \thickapprox 1).
\]
\eroman
\end{Theorem}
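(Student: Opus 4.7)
The plan is to reduce both parts of the theorem to functionality of the pp formulas $\varphi$ and $\psi$ on the relevant varieties, so that \cref{Cor : functionality in Q(K)} applies. Since $\mathsf{DL}$ and $\mathsf{bDL}$ are varieties, they are quasivarieties, hence $\QQQ(\mathsf{DL}) = \mathsf{DL}$ and $\QQQ(\mathsf{bDL}) = \mathsf{bDL}$, so once functionality is established, $\varphi$ and $\psi$ (which are pp formulas, hence existential positive) automatically define implicit operations of $\mathsf{DL}$ and $\mathsf{bDL}$, respectively. It then remains only to identify the partial functions they define with the $f$ described in the statement.

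For \eqref{item : relative complements is implicit : 1}, I would unfold the semantics: for $\A \in \mathsf{DL}$ and $a_1, a_2, a_3, b \in A$, the condition $\A \vDash \varphi(a_1, a_2, a_3, b)$ is literally
\[
a_1 \land b = a_1 \land a_2 \land a_3 \quad \text{and} \quad a_1 \lor b = a_1 \lor a_2 \lor a_3.
\]
From the first equation, $a_1 \land a_2 \land a_3 = a_1 \land b \leq b$; from the second, $b \leq a_1 \lor b = a_1 \lor a_2 \lor a_3$. Thus the two equations together assert exactly that $b$ is a complement of $a_1$ relative to the interval $[a_1 \land a_2 \land a_3, a_1 \lor a_2 \lor a_3]$. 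By the (classical) uniqueness of relative complements in distributive lattices, at most one such $b$ exists, which both establishes that $\varphi$ is functional in $\mathsf{DL}$ and shows that $\varphi^{\mathsf{DL}}$ coincides with the partial function $f$ defined before the theorem.

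For \eqref{item : relative complements is implicit : 2}, the verification is essentially the same, but simpler, as the bounds $0$ and $1$ are constants of the language. For $\A \in \mathsf{bDL}$ and $a, b \in A$, $\A \vDash \psi(a, b)$ holds iff $a \land b = 0$ and $a \lor b = 1$, which is exactly the definition of $b$ being a complement of $a$ in $\A$. Uniqueness of complements in distributive lattices yields functionality of $\psi$ in $\mathsf{bDL}$, and the defining formula clearly picks out the same partial function as $f$.

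Having established functionality of $\varphi$ in $\mathsf{DL}$ and of $\psi$ in $\mathsf{bDL}$, I would conclude by applying \cref{Cor : functionality in Q(K)} to each: since $\varphi$ and $\psi$ are pp formulas (in particular existential positive), they respectively define implicit operations of $\QQQ(\mathsf{DL}) = \mathsf{DL}$ and $\QQQ(\mathsf{bDL}) = \mathsf{bDL}$, and these implicit operations coincide with $f$ by the identifications above. There is no real obstacle here; the content of the proof is essentially the classical uniqueness of (relative) complements in distributive lattices, which was already invoked implicitly when defining $f^\A$.
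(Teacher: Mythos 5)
Your proposal is correct and follows essentially the same route as the paper: one checks that $\varphi$ (resp.\ $\psi$) is functional via the uniqueness of relative complements in distributive lattices, identifies the induced partial function with $f$, and then invokes a general lemma to upgrade a functional existential positive formula to an implicit operation. The only cosmetic difference is that you cite \cref{Cor : functionality in Q(K)} together with $\QQQ(\mathsf{DL}) = \mathsf{DL}$, whereas the paper cites \cref{Thm : implicit operations vs existential positive formulas} directly; both are valid.
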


\begin{proof}
(\ref{item : relative complements is implicit : 1}): Observe that the partial function $f$ on $\mathsf{DL}$ of ``taking relative complements'' can be defined by the conjunction of equations
\[
\varphi(x_1, x_2, x_3, y) = (x_1 \land y \thickapprox  x_1 \land x_2 \land x_3) \sqcap ( x_1 \lor y \thickapprox x_1 \lor x_2 \lor x_3).
\]
Therefore, from Theorem \ref{Thm : implicit operations vs existential positive formulas} it follows that $f$ is an implicit operation of $\mathsf{DL}$.

(\ref{item : relative complements is implicit : 2}): Analogous to the proof of (\ref{item : relative complements is implicit : 1}).
\end{proof}
\end{exa}

\begin{exa}[\textsf{Absolute value}]\label{Example : absolute value}
Let $\varphi$ be the pp formula
\[
\varphi(x,y) = \exists z_1, z_2, z_3, z_4 ((y \thickapprox z_1^2 + z_2^2 + z_3^2 + z_4^2) \sqcap (x^2 \thickapprox y^2))
\]
in the language of rings. 
By Lagrange's four squares theorem any nonnegative integer can be written as the sum of four integer squares (see, e.g., \cite[Thm.~11-3]{And94}). Therefore, for all $a,b \in \mathbb{Z}$ we have that $\mathbb{Z} \vDash \varphi(a,b)$ if and only if $b \geq 0$ and $a^2=b^2$, which happens exactly when $b=\lvert a \rvert$. In particular, $\varphi$ is functional in $\mathbb{Z}$. Then \cref{Cor : functionality in Q(K)} implies that $\varphi$ defines an implicit operation $f$ of the quasivariety $\K$ of rings generated by $\mathbb{Z}$ such that $f^\mathbb{Z}$ is the absolute value function.

While $f$ is an implicit operation defined by a pp formula, it is interesting to observe that $f$ cannot be defined by a conjunction of equations. Indeed, suppose, on the contrary, that $f$ is defined on $\K$ by a conjunction of equations $\psi$. In the variety of commutative rings, each equation in variables $x$ and $y$ is equivalent to an equation of the form $p(x,y)\thickapprox 0$, where $p(x,y)$ is a polynomial with integer coefficients. So, we can assume that 
\[
\psi = \bigsqcap_{i=1}^n p_i(x,y)\thickapprox 0,
\]
where each $p_i(x,y)$ is a polynomial with integer coefficients.
Since $\psi$ defines the absolute value function on $\mathbb{Z}$, we have that $\mathbb{Z} \vDash \psi(a,a)$ for every nonnegative integer $a$.
Thus, for every $i$ the polynomial $p_i(x,x)$ in a single variable $x$ vanishes on every nonnegative integer. We recall that the only polynomial in a single variable with rational coefficients that has infinitely many roots is the zero polynomial (see, e.g., \cite[Prop.~12.2.20]{Art10}). Then $p_i(x,x)$ is the zero polynomial, and hence $p_i(-1,-1)=0$ for every $i$. We conclude that $\mathbb{Z} \vDash \psi(-1,-1)$, which contradicts that $f^\mathbb{Z}(-1)=\lvert -1 \rvert = 1$. Therefore, $f$ cannot be defined by a conjunction of equations.
\end{exa}

\section{Existential elimination}\label{Sec : existential elimination}

The idea of interpolating a given family of functions by simpler ones plays a fundamental role in mathematics. For instance, a well-known theorem of Lagrange states that every finite set of pairs of real numbers can be interpolated by a polynomial with real coefficients (see, e.g., \cite[Thm.~6.1]{SM03}). In this section, we will establish a general  interpolation theorem for the implicit operations of a quasivariety $\K$. 

More precisely, recall from Corollary \ref{Cor : each implicit operations splits into pp formulas} that every implicit operation of $\K$ can be obtained by gluing together finitely many implicit operations defined by pp formulas.\ 
The main result of this section states that, if $\K$ has the amalgamation property, the study of its implicit operations can be further simplified by observing that  each implicit operation defined by a pp formula is interpolated by one defined by a conjunction of equations (\cref{Thm : existential elimination}). We term this  phenomenon \emph{existential elimination} because conjunctions of equations are obtained by removing existential quantifiers from pp formulas. 

As we mentioned, the reason for existential elimination is the amalgamation property, whose definition we proceed to recall.

\begin{Definition}
Given a class $\K$ of similar algebras, we say that
\benroman
\item a tuple
$\langle \A, \B, \C, h_1, h_2 \rangle$ is a \emph{span}
in $\K$ when $h_1 \colon \A \to \B$ and $h_2 \colon \A \to \C$ is a pair of embeddings with $\A, \B, \C \in \K$;
\item a span
$\langle \A, \B, \C, h_1, h_2 \rangle$ in $\K$ 
has an \emph{amalgam}
in $\K$ when there exists a pair of embeddings $g_1 \colon \B \to \D$ and $g_2 \colon \C \to \D$ with $\D \in \K$ such that $g_1 \circ h_1 = g_2 \circ h_2$;
     \begin{center}
\begin{tikzcd}
& \B \arrow[dr, dashed, hook, " g_1"] &  \\
\A  \arrow[ur, hook, "h_1"] \arrow[dr, hook, swap, "h_2 "] &  & \D \\
& \C \arrow[ur, dashed, hook, swap, " g_2"] &
\end{tikzcd}
\end{center}  
\item a member $\A$ of $\K$ is an \emph{amalgamation base} for $\K$ when every span
in $\K$ of the form $\langle \A, \B, \C, h_1, h_2 \rangle$ 
has an amalgam
in $\K$;
\item $\K$ has the \emph{amalgamation property} when every 
span
in $\K$ 
has an amalgam
in $\K$.
\eroman 
\end{Definition}

Furthermore, we will rely on the following notion of interpolation.

\begin{Definition}\label{Def: interpolation}
Let $\mathcal{F} \cup \{ g \}$ be a family of $n$-ary implicit operations of a class of algebras $\K$. We say that $g$ is \emph{interpolated} by $\mathcal{F}$ when for all $\A \in \mathsf{K}$ and $\langle a_1, \dots, a_n \rangle \in \mathsf{dom}(g^\A)$ there exists $f \in \mathcal{F}$ such that
\[
\langle a_1, \dots, a_n \rangle \in \mathsf{dom}(f^\A) \, \, \text{ and } \, \, f^\A(a_1, \dots, a_n) = g^\A(a_1, \dots, a_n).
\]
\noindent When $\mathcal{F} = \{ f \}$, we often say that $g$ \emph{is interpolated by} $f$. \end{Definition}

Given a class of algebras $\K$, we denote by $\impeq(\K)$ the set of implicit operations of $\K$ defined by a conjunction of equations. When $\K = \{ \A \}$, we often write $\impeq(\A)$ instead of $\impeq(\K)$.\footnote{Although we will not rely on this fact, we 
 will show in \cref{exa:ccmon AP}
that $\mathsf{imp}_{\textsc{eq}}(\K)$ need not be 
closed under composition 
(cf.\ Proposition \ref{Prop : closure under composition for imp}).
} The aim of this section is to establish the following interpolation result.

\begin{Theorem}\label{Thm : existential elimination}
    The following conditions hold for a quasivariety $\K$ with the amalgamation property:
    \benroman
    \item\label{item : existential elimination : 1} every member of $\mathsf{imp}_{\textsc{pp}}(\mathsf{K})$ can be interpolated by some member of $\mathsf{imp}_{\textsc{eq}}(\mathsf{K})$;
    \item\label{item : existential elimination : 2} every member of $\mathsf{imp}(\mathsf{K})$ can be interpolated by a finite subset of $\mathsf{imp}_{\textsc{eq}}(\mathsf{K})$.
    \eroman
\end{Theorem}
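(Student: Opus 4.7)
The plan is to prove (i) first and then derive (ii) from \cref{Cor : each implicit operations splits into pp formulas}: any $f \in \imp(\K)$ splits as a union of finitely many pp-definable implicit operations $f_1, \dots, f_k$, each of which is interpolated by some $g_j \in \impeq(\K)$ by (i), whence the finite family $\{g_1, \dots, g_k\}$ interpolates $f$.

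For (i), write $f \in \imppp(\K)$ as being defined by a pp formula $\varphi(\vec x, y) = \exists \vec z\, \psi(\vec x, y, \vec z)$, and set
\[
\Sigma = \{\chi(\vec x, y) \text{ a finite conjunction of equations} : \K \vDash \varphi \to \chi\},
\]
which is closed under finite conjunctions. The crux is to establish that $\bigsqcap \Sigma$ is functional in $\K$, i.e.\
\[
\K \vDash \bigsqcap \Sigma(\vec x, y) \sqcap \bigsqcap \Sigma(\vec x, y') \to y \thickapprox y'.
\]
Granting this, \cref{Cor : compactness for pp formulas} (applicable since conjunctions of equations are pp formulas and $\K$ is closed under direct products) produces a finite $\Sigma_0 \subseteq \Sigma$ whose conjunction is already functional in $\K$. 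Setting $\chi := \bigsqcap \Sigma_0 \in \Sigma$ yields a conjunction of equations defining some $g \in \impeq(\K)$; the implication $\K \vDash \varphi \to \chi$ shows that whenever $\vec a \in \dom(f^\A)$ the pair $(\vec a, f^\A(\vec a))$ satisfies $\chi$ in $\A$, so that $g$ interpolates $f$.

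To establish the functionality of $\bigsqcap \Sigma$, suppose toward a contradiction that some $\A \in \K$ admits $\vec a \in A^n$ and distinct $c, c' \in A$ such that $(\vec a, c)$ and $(\vec a, c')$ both satisfy every $\chi \in \Sigma$. The plan is to construct $\D \in \K$ with an embedding $\iota \colon \A \hookrightarrow \D$ such that $\D \vDash \varphi(\iota(\vec a), \iota(c)) \sqcap \varphi(\iota(\vec a), \iota(c'))$; the functionality of $\varphi$ in $\K$ would then force $\iota(c) = \iota(c')$, contradicting $c \ne c'$. By the \cref{Lem : Diagram Lemma} combined with the \cref{Thm : compactness theorem}, a standard finiteness argument over $\diag(\A_A)$ reduces the existence of $\D$ to the following \emph{one-step extension claim}: for every $\B \in \K$ and every $(\vec b, e) \in B^{n+1}$ satisfying every $\chi \in \Sigma$ in $\B$, there exists $\D_1 \in \K$ with $\B \hookrightarrow \D_1$ and $\D_1 \vDash \varphi(\vec b, e)$. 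Applying this claim twice, first within $\A$ for $(\vec a, c)$ and then within the resulting extension for $(\vec a, c')$, and using that pp formulas are preserved under the intermediate embedding, yields the desired $\D$.

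The main obstacle is the one-step extension claim, which is precisely where the amalgamation property enters. The satisfaction of every $\chi \in \Sigma$ by $(\vec b, e)$ in $\B$ corresponds, by the universal property of $\K$-free algebras, to a homomorphism $h \colon \E' \to \B$ sending the generators $(\vec{\bar x}, \bar y)$ to $(\vec b, e)$, where $\E'$ is the subalgebra generated by $(\vec{\bar x}, \bar y)$ inside $\E := \Free_\K(\{\vec x, y, \vec z\})/\theta_\psi$, with $\theta_\psi$ the $\K$-congruence generated by the equations in $\psi$. Crucially, $\E \in \K$ already satisfies $\varphi(\vec{\bar x}, \bar y)$ via the witnesses $\vec{\bar z}$, and $\E' \hookrightarrow \E$. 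Factoring $h$ as a surjection $p \colon \E' \twoheadrightarrow \overline{\E}'$ followed by an embedding $q \colon \overline{\E}' \hookrightarrow \B$, one extends $\overline{\E}'$ to an algebra in $\K$ carrying witnesses for $\psi(\vec{\bar x}, \bar y, \vec{\bar z})$ — a suitable quotient of $\E$ by $\Cg_\K^\E(\Ker p)$ — and amalgamates this with $\B$ over $\overline{\E}'$ via AP to obtain $\D_1$. The delicate step is ensuring that the natural map from $\overline{\E}'$ into this witness-carrying algebra is itself an embedding, a congruence-extension-type condition that is the technical heart of the argument and the place where AP does the essential work.
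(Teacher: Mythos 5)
Your reduction of (ii) to (i) via \cref{Cor : each implicit operations splits into pp formulas}, the compactness step extracting a finite functional conjunction from $\Sigma$ (via \cref{Cor : compactness for pp formulas}), and the final verification that any functional $\chi \in \Sigma$ interpolates $f$ are all sound; so the problem is correctly reduced to the functionality of $\bigsqcap\Sigma$, which is indeed true. The gap is in your route to that fact, namely the one-step extension claim, and more precisely in the assertion that the amalgamation property supplies the ``congruence-extension-type condition'' $\Cg_\K^{\E}(\Ker p){\upharpoonright}_{E'} = \Ker p$. The amalgamation property amalgamates pairs of \emph{embeddings}; it gives no control over extending a $\K$-congruence of a subalgebra to the ambient algebra, and the two properties are independent (e.g., the variety of groups has the amalgamation property but not the congruence extension property). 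Unwinding the definitions, the identity $\Cg_\K^{\E}(\Ker p){\upharpoonright}_{E'} = \Ker p$ amounts to: whenever $\K \vDash (\varphi \sqcap \delta) \to t \thickapprox s$ for a finite conjunction of equations $\delta(\vec x, y)$ that happens to hold of $(\vec b, e)$ in $\B$, then $t(\vec b, e) = s(\vec b, e)$ in $\B$. Such a $\delta$ need not itself be a consequence of $\varphi$, so this is a statement about \emph{quasi-equational} consequences of $\varphi$ and does not follow from the hypothesis that $(\vec b, e)$ satisfies every member of $\Sigma$, which only controls the equational consequences. Indeed, the one-step extension claim asserts that the positive quantifier-free type of a tuple determines the realizability of $\varphi$ in an extension; this is strictly stronger than the interpolation statement being proved, so deferring the difficulty to it does not close the argument.

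For comparison, the paper's use of the amalgamation property is of a different nature and avoids both congruence extension and any realization of $\varphi$ in an extension of a given algebra. Working in $\boldsymbol{T}_\K(\varphi)$ with the generic tuple $(\bar x, \bar y)$, one has $\bar y \in \d_\K(\mathsf{Sg}(\bar x), \boldsymbol{T}_\K(\varphi))$ by \cref{Thm : dominions : pp formulas}; the amalgamation property enters only through \cref{Prop : doms computable in subalgebras}, which localizes this dominion to the intermediate subalgebra $\mathsf{Sg}(\bar x, \bar y)$. The pp formula certifying membership in that smaller dominion then has all of its existential witnesses inside $\mathsf{Sg}(\bar x, \bar y)$, hence equal to terms in $\bar x, \bar y$; substituting those terms into the matrix of the formula is what eliminates the quantifiers and produces the functional conjunction of equations. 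If you want to keep your framing via $\Sigma$, you should replace the one-step extension claim by this dominion-localization argument.
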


As the variety of monoids lacks the amalgamation property (see, e.g., \cite[p.~100]{SurvKissal}\footnote{As observed in \cite[p.~108]{SurvKissal}, the failure of the amalgamation property for the variety of monoids can be seen as a consequence of the corresponding result for the variety of semigroups proved in \cite{Kim57} (see also \cite[Exa.~1]{Cla83}). 
}), it  falls outside the scope of Theorem \ref{Thm : existential elimination}. This is reflected by the fact that this variety possesses implicit operations defined by pp formulas that cannot be interpolated by any implicit operation defined by a conjunction of equations, an example being  every $n$-th Isbell's operation for $n \geq 1$ (see \cref{Exa : Isbell is not interolable by eq : existential elimination}). On the other hand, the variety of distributive lattices has the amalgamation property (see, e.g., \cite[Thm.~VII.8.4]{BD74}) and, therefore, each of its implicit operations defined by pp formulas can be interpolated by one  defined by a conjunction of equations. 

The rest of this section is devoted to the proof of Theorem \ref{Thm : existential elimination}. The first ingredient of the proof is the following concept, introduced in \cite{Isb65} (see also \cite{Bacsich47}).

\begin{Definition}
Let $\mathsf{K}$ be a class of algebras and $\A \leq \B$ a pair of $\L_\K$-algebras. The \emph{dominion} of $\A$ in $\B$ relative to $\mathsf{K}$ is the set
\begin{align*}
\mathsf{d}_\mathsf{K}(\A, \B) = \{ b \in B : &\text{ for each pair of homomorphisms }g, h \colon \B \to \C \text{ with }\C \in \mathsf{K},\\
& \text{ if }g{\upharpoonright}_A = h{\upharpoonright}_A, \text{ then }g(b) = h(b) \}.
\end{align*}
\end{Definition}

It is straightforward to verify that $\mathsf{d}_\mathsf{K}(\A, \B)$ is the universe of a subalgebra of $\B$ that contains $A$. It follows immediately from its definition that $\mathsf{d}_\mathsf{K}(\A, \B)$ is the intersection of all the equalizers of pairs of homomorphisms $g,h \colon \B \to \C$ with $\C \in \K$ that agree on $A$, where we recall that the equalizer of $g$ and $h$ is $\{b \in B : g(b)=h(b)\}$. Moreover, when $\K$ is closed under direct products, it turns out that $\mathsf{d}_\mathsf{K}(\A, \B)$ is
itself the
equalizer of a pair of homomorphisms from $\B$ into an algebra of $\K$ that agree on $A$.

We rely on the following fact.

\begin{Proposition}\label{Prop: dominions and homs}
Let $\K$ be a class of algebras,
$\A \leq \B \in \K$, and $\A' \leq \B' \in \K$.
If $g \colon \B \to \B'$ is a homomorphism with $g[A] \subseteq A'$, then  $g[\d_\K(\A, \B)] \subseteq \d_\K(\A', \B')$.
\end{Proposition}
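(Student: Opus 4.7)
The plan is to unwind the definition of dominion directly; no clever construction is needed. Fix an arbitrary element $b \in \d_\K(\A, \B)$; the goal is to show that $g(b) \in \d_\K(\A', \B')$. By definition of the dominion, this amounts to verifying that for every pair of homomorphisms $h_1, h_2 \colon \B' \to \C$ with $\C \in \K$ satisfying $h_1{\upharpoonright}_{A'} = h_2{\upharpoonright}_{A'}$, one has $h_1(g(b)) = h_2(g(b))$.

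The key step is to pull such a pair $h_1, h_2$ back to $\B$ by precomposition with $g$. Concretely, I would consider the homomorphisms
\[
h_1 \circ g, \, h_2 \circ g \colon \B \to \C,
\]
which are legitimate since $\C \in \K$. To apply the hypothesis that $b$ lies in $\d_\K(\A, \B)$, I need these two composites to agree on $A$. This is where the assumption $g[A] \subseteq A'$ enters: for any $a \in A$ we have $g(a) \in A'$, so
\[
(h_1 \circ g)(a) = h_1(g(a)) = h_2(g(a)) = (h_2 \circ g)(a),
\]
using that $h_1$ and $h_2$ coincide on $A'$.

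Invoking now the defining property of $\d_\K(\A, \B)$ applied to the pair $h_1 \circ g$ and $h_2 \circ g$, we obtain $(h_1 \circ g)(b) = (h_2 \circ g)(b)$, i.e., $h_1(g(b)) = h_2(g(b))$. Since $h_1, h_2$ were arbitrary, this shows $g(b) \in \d_\K(\A', \B')$, completing the argument. There is no real obstacle here: the statement is essentially a functoriality property of the dominion construction, and the proof is a one-diagram chase.
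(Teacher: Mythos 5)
Your proof is correct and coincides with the paper's own argument: both precompose an arbitrary pair of homomorphisms $h_1, h_2 \colon \B' \to \C$ agreeing on $A'$ with $g$, use $g[A] \subseteq A'$ to see that the composites agree on $A$, and then invoke $b \in \d_\K(\A, \B)$. Nothing further is needed.
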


\begin{proof}
Let $b \in g[\d_\K(\A, \B)]$. Then there exists $a \in \d_\K(\A, \B)$ with $g(a)=b$. Suppose that $h_1,h_2 \colon \B' \to \C$ are homomorphisms with $\C \in \K$ and $h_1{\upharpoonright}_{A'}=h_2{\upharpoonright}_{A'}$. Since $g[A] \subseteq A'$, the homomorphisms $h_1\circ g,h_2\circ g \colon \B \to \C$ satisfy $(h_1\circ g){\upharpoonright}_{A}=(h_2\circ g){\upharpoonright}_{A}$. As $a \in \d_\K(\A, \B)$, it follows that $h_1(g(a))=h_2(g(a))$. Therefore, $b=g(a) \in  \d_\K(\A', \B')$, as desired.
\end{proof}

As an immediate consequence of the previous proposition we obtain the following result, where, for $\A \leq \B$ and $\theta \in \Con(\B)$, we denote the subalgebra of $\B/\theta$ with universe $\{ a/\theta : a \in A\}$ by $\A/\theta$.

\begin{Corollary}\label{Cor: dominions subalg quot}
The following conditions hold for every class $\K$ of algebras and $\A \leq \B \in \K$.
\benroman
\item\label{Cor: dominions subalg quot: 1} If $\A \leq \A' \leq \B' \in \K$ and $\B \leq \B'$, then $\d_\K(\A, \B) \subseteq \d_\K(\A', \B')$. 
\item\label{Cor: dominions subalg quot: 2} If $\theta \in \Con(\B)$ and $\B/\theta \in \K$, then $b \in \d_\K(\A, \B)$ implies $b/\theta \in \d_\K(\A / \theta, \B / \theta)$.
\eroman 
\end{Corollary}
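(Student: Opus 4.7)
The plan is to derive both parts by specializing \cref{Prop: dominions and homs} to inclusion maps and canonical surjections, respectively, since the statement advertises the corollary as an immediate consequence of that proposition.

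For part \eqref{Cor: dominions subalg quot: 1}, I would take $g \colon \B \to \B'$ to be the inclusion, which is a homomorphism because $\B \leq \B'$. From $\A \leq \A'$ it follows that $g[A] = A \subseteq A'$. The hypotheses of \cref{Prop: dominions and homs} are therefore satisfied, since both $\B$ and $\B'$ belong to $\K$, and the conclusion yields $g[\d_\K(\A,\B)] \subseteq \d_\K(\A',\B')$. Because $g$ is the inclusion map, $g[\d_\K(\A,\B)] = \d_\K(\A,\B)$, giving the desired inclusion.

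For part \eqref{Cor: dominions subalg quot: 2}, I would apply the same proposition to the canonical surjection $g \colon \B \to \B/\theta$, setting $\B' = \B/\theta$ and $\A' = \A/\theta$. The codomain lies in $\K$ by hypothesis, and by definition of $\A/\theta$ we have $g[A] = \{a/\theta : a \in A\} = A'$. Thus \cref{Prop: dominions and homs} yields $g[\d_\K(\A,\B)] \subseteq \d_\K(\A/\theta, \B/\theta)$, so for $b \in \d_\K(\A,\B)$ we obtain $b/\theta = g(b) \in \d_\K(\A/\theta, \B/\theta)$, as required.

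Both parts are mechanical once the right homomorphism is identified, and I do not anticipate any genuine obstacle: the actual work of handling a pair of competing homomorphisms into an algebra of $\K$ has already been done in \cref{Prop: dominions and homs}, and the corollary merely repackages that lemma for the two most common situations (passing to a larger ambient algebra and passing to a quotient).
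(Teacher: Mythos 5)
Your proof is correct and matches the paper's own argument exactly: both parts are obtained by instantiating \cref{Prop: dominions and homs} with the inclusion map $\B \to \B'$ and the canonical surjection $\B \to \B/\theta$, respectively. Nothing further is needed.
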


\begin{proof}
Both statements follow from \cref{Prop: dominions and homs}: for \eqref{Cor: dominions subalg quot: 1} let $g \colon \B \to \B'$ be the inclusion map, and for \eqref{Cor: dominions subalg quot: 2} let $g \colon \B \to \B/\theta$ be the canonical surjection.
\end{proof}

In general, the task of describing dominions for concrete classes of algebras may be hard. However, in some cases a tangible description is within reach.

\begin{exa}[\textsf{Distributive lattices}]
In the variety $\mathsf{DL}$ of distributive lattices dominions can be described as follows 
(see \cite[Thm.\ 2.4]{WmEpiL}).
For each $\A \leq \B \in \mathsf{DL}$ the dominion $\mathsf{d}_\mathsf{DL}(\A, \B)$ is 
the least subset $C$ of $B$ containing $A$ and closed under meets and joins such that for all $a, b, c \in C$ and $d \in B$,
\[
\pushQED{\qed}\text{if $d$ is the complement of $a$ relative to $[b, c]$, then } d \in C.\qedhere \popQED
\]
\end{exa}

\begin{exa}[\textsf{Monoids}]\label{ex:dominions monoids}
For each $n \in \mathbb{N}$ let $\varphi_n(x_1, \dots, x_{2n+1}, y)$ be the $n$-th Isbell's formula defined in Example \ref{Exa : Isbell operations}. Dominions in the varieties of monoids and commutative monoids are described by the following classic result
(see \cite[Thm.\ 1.2]{HowZigzag}).

\begin{Isbell Theorem}\label{Thm : Isbell Theorem}
Let $\mathsf{K}$ be the variety of monoids or the variety of commutative monoids. For each $\A \leq \B \in \mathsf{K}$ and $b \in B$ we have
\[
b \in \mathsf{d}_\mathsf{K}(\A, \B) \iff  \B \vDash \varphi_n(a_1, \dots, a_{2n+1}, b) \text{ for some $n \in \mathbb{N}$ and $a_1, \dots, a_{2n+1} \in A$}.
\]
\end{Isbell Theorem}

This theorem was originally stated for the variety of semigroups in 
\cite{Isb65}. Similar descriptions of dominions have been obtained for the varieties of commutative semigroups, rings, and commutative rings (see \cite{HoIsEpiII,IsbEpiIV}).
\qed
\end{exa}

We will make use of the following description of dominions in terms of implicit operations (see \cite[Thm.~1]{Bacsich47} and \cite[Thm.~3.2]{Camper18jsl}).

\begin{Theorem}\label{Thm : dominions : pp formulas}
Let $\mathsf{K}$ be an elementary class. For every $\A \leq \B \in \mathsf{K}$ we have
\begin{align*}
\mathsf{d}_\mathsf{K}(\A, \B) = \{ b \in B : & \text{ there exist } f \in \imppp(\K) \text{ and }\langle a_1, \dots, a_n \rangle \in \mathsf{dom}(f^\B) \cap A^n\\
&\text{ such that }f^\B(a_1, \dots, a_n) = b \}.
\end{align*}
\end{Theorem}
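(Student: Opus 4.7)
The plan is to prove the two inclusions separately. For $\supseteq$, suppose $b = f^\B(a_1, \dots, a_n)$ with $f \in \imppp(\K)$ and $a_1, \dots, a_n \in A$. Given any homomorphisms $g, h \colon \B \to \C$ with $\C \in \K$ and $g\res_A = h\res_A$, since $f$ is an operation of $\K$ we have $\langle g(a_i)\rangle, \langle h(a_i)\rangle \in \dom(f^\C)$, whence
\[
g(b) = g(f^\B(a_1, \dots, a_n)) = f^\C(g(a_1), \dots, g(a_n)) = f^\C(h(a_1), \dots, h(a_n)) = h(b),
\]
so $b \in \d_\K(\A, \B)$.

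For the nontrivial inclusion $\subseteq$, let $b \in \d_\K(\A, \B)$. Fix an axiomatization $\Sigma$ of the elementary class $\K$, expand $\L_\K$ by two disjoint families of fresh constants $\{c_\beta : \beta \in B\}$ and $\{d_\beta : \beta \in B\}$, and let $D_c$, $D_d$ denote the sets of \emph{equations} (ignoring the negated equations) in $\diag(\B)$ written with the $c$- and $d$-constants, respectively. Consider the theory
\[
T = \Sigma \cup D_c \cup D_d \cup \{c_a \thickapprox d_a : a \in A\} \cup \{c_b \not \thickapprox d_b\}.
\]
A model $\C$ of $T$ would yield $\C \res_{\L_\K} \in \K$ and two homomorphisms $g, h \colon \B \to \C \res_{\L_\K}$ (defined by $g(\beta) = c_\beta^\C$, $h(\beta) = d_\beta^\C$) agreeing on $A$ but with $g(b) \ne h(b)$, contradicting $b \in \d_\K(\A, \B)$. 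Hence $T$ is inconsistent.

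By the \cref{Thm : compactness theorem original} a finite subset of $T$ is inconsistent. After symmetrizing (so that the chosen finite parts of $D_c$ and $D_d$ arise from a common finite subset of $\diag(\B)$, and so that $c_a \thickapprox d_a$ is included for every $a \in A$ that appears), this finite witness can be packaged as a single conjunction of $\L_\K$-equations $\Phi(\vec{u}, \vec{v}, y)$ together with tuples $\vec{a} \in A^{|\vec{u}|}$, $\vec{\beta} \in B^{|\vec{v}|}$ such that $\B \vDash \Phi(\vec{a}, \vec{\beta}, b)$ and
\[
\K \vDash \bigl( \Phi(\vec{u}, \vec{v}, y) \sqcap \Phi(\vec{u}, \vec{w}, z) \bigr) \to y \thickapprox z,
\]
with $\vec{w}, z$ fresh variables renaming $\vec{v}, y$. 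Defining $\varphi(\vec{u}, y) := \exists \vec{v}\, \Phi(\vec{u}, \vec{v}, y)$, the displayed implication is exactly the functionality of $\varphi$ in $\K$, while $\B \vDash \varphi(\vec{a}, b)$ holds by existential witness. Thus $f := \varphi^\K \in \imppp(\K)$ satisfies $f^\B(\vec{a}) = b$, as required.

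The main obstacle will be the compactness extraction. Since the paper's $\diag(\B)$ contains negated equations, using it in full would, after moving negated atoms to the right of the implication, yield a disjunction of equations in the conclusion, breaking pp-functionality. The key simplification is to work with only the equational part of $\diag(\B)$ from the outset: this is still enough to force any model of $T$ to interpret the two families of constants as homomorphisms rather than arbitrary maps, and it eliminates negated atoms from the extraction, so that functionality of $\varphi$ in $\K$ falls out by merely renaming variables in the symmetrized finite witness.
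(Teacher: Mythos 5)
The paper does not actually prove \cref{Thm : dominions : pp formulas}; it only cites \cite{Bacsich47} and \cite{Camper18jsl}, whose argument is precisely the double-diagram compactness proof you give, so your proposal is correct and matches the intended route (including the key observation that only the equational part of $\diag(\B)$ should be used, so that the extracted implication has a single equation $y \thickapprox z$ in its conclusion). Two bookkeeping points are worth making explicit: the case $b \in A$ must be dispatched separately (trivially, via the term function of $x_1$), since otherwise $c_b$ would be among the identified constants and could not simultaneously serve as the output variable $y$ in the packaged implication; and after establishing that $\varphi = \exists \vec{v}\, \Phi$ is a functional pp formula with $\B \vDash \varphi(\vec{a}, b)$, one still needs \cref{Thm : implicit operations vs existential positive formulas} to conclude that $\varphi^{\K}$ is an implicit \emph{operation} (hence a member of $\imppp(\K)$) rather than merely an implicit partial function.
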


As shown in the next result, dominions in amalgamation bases are  especially well behaved.

\begin{Proposition}
     \label{Prop : doms computable in subalgebras}
   Let $\mathsf{K}$ be a class of algebras closed under finite direct products
   and $\A \leq \B \leq \C$ with $\B, \C \in \K$.  If $\B$ is an amalgamation base for $\mathsf{K}$,  then 
   \[
   \mathsf{d}_{\mathsf{K}}(\A,\B) = \mathsf{d}_{\mathsf{K}}(\A,\C) \cap B.
   \]
\end{Proposition}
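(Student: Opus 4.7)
The plan is to prove the two inclusions separately. The inclusion $\d_\K(\A,\B) \subseteq \d_\K(\A,\C) \cap B$ follows immediately from \cref{Cor: dominions subalg quot}\eqref{Cor: dominions subalg quot: 1} applied with $\A' = \A$ and $\B' = \C$, together with the observation that $\d_\K(\A,\B) \subseteq B$ by definition. Note that this direction needs neither closure under products nor the amalgamation property; it is the reverse inclusion that will require both hypotheses.

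For the reverse inclusion $\d_\K(\A,\C) \cap B \subseteq \d_\K(\A,\B)$ I would argue contrapositively: given $b \in B$ with $b \notin \d_\K(\A,\B)$, the task is to produce a pair of homomorphisms from $\C$ to some algebra of $\K$ that agree on $A$ but disagree at $b$. Unfolding the definition yields $\D \in \K$ and homomorphisms $g, h \colon \B \to \D$ with $g{\upharpoonright}_A = h{\upharpoonright}_A$ but $g(b) \neq h(b)$. The main obstacle is that $g$ and $h$ need not be embeddings, so the amalgamation property cannot be applied to them directly. The whole argument hinges on circumventing this.

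The key trick is to replace each homomorphism by its graph embedding. Since $\B, \D \in \K$ and $\K$ is closed under finite direct products, $\B \times \D \in \K$, and the maps $\gamma_g, \gamma_h \colon \B \to \B \times \D$ defined by $\gamma_g(x) = \langle x, g(x)\rangle$ and $\gamma_h(x) = \langle x, h(x)\rangle$ are honest embeddings (split by the first projection). Applying the amalgamation property to the inclusion $\B \hookrightarrow \C$ and $\gamma_g$ yields $\E_g \in \K$ with embeddings $\alpha_g \colon \C \hookrightarrow \E_g$ and $\beta_g \colon \B \times \D \hookrightarrow \E_g$ satisfying $\alpha_g{\upharpoonright}_B = \beta_g \circ \gamma_g$. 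An analogous application starting from $\gamma_h$ produces $\E_h \in \K$ together with embeddings $\alpha_h \colon \C \hookrightarrow \E_h$ and $\beta_h \colon \B \times \D \hookrightarrow \E_h$ satisfying $\alpha_h{\upharpoonright}_B = \beta_h \circ \gamma_h$.

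To unify the targets I would invoke the amalgamation property a third time, now to the two embeddings $\beta_g$ and $\beta_h$ of the common algebra $\B \times \D$. This yields $\F \in \K$ with embeddings $\eta_g \colon \E_g \hookrightarrow \F$ and $\eta_h \colon \E_h \hookrightarrow \F$ such that $\eta_g \circ \beta_g = \eta_h \circ \beta_h$; denote this common composite by $\beta \colon \B \times \D \hookrightarrow \F$. Setting $g' = \eta_g \circ \alpha_g$ and $h' = \eta_h \circ \alpha_h$ gives homomorphisms $\C \to \F$ which, on $\B$, satisfy $g'(x) = \beta(\langle x, g(x)\rangle)$ and $h'(x) = \beta(\langle x, h(x)\rangle)$. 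Since $g$ and $h$ agree on $A$, so do $g'$ and $h'$; and since $\beta$ is an embedding while $g(b) \neq h(b)$, we conclude $g'(b) \neq h'(b)$. This witnesses $b \notin \d_\K(\A,\C)$, completing the contrapositive and thus the proof.
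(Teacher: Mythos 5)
Your proof is correct and follows essentially the same route as the paper: both arguments reduce to the case where the separating homomorphisms are embeddings via the graph trick $x \mapsto \langle x, g(x)\rangle$ (which is where closure under finite direct products enters) and then use the amalgamation property to transport the separating pair from $\B$ up to $\C$. The only cosmetic difference is that you apply amalgamation three times in a symmetric fashion, whereas the paper gets by with two applications by reusing the first amalgam for both maps; both bookkeepings are valid.
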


\begin{proof}
We first prove the inclusion from left to right. As $\B \leq \C$, from \cref{Cor: dominions subalg quot}\eqref{Cor: dominions subalg quot: 1} it follows that $\mathsf{d}_{\mathsf{K}}(\A,\B) \subseteq \mathsf{d}_{\mathsf{K}}(\A,\C)$. Since $\mathsf{d}_{\mathsf{K}}(\A,\B) \subseteq B$ by definition, we obtain that $\mathsf{d}_{\mathsf{K}}(\A,\B) \subseteq \mathsf{d}_{\mathsf{K}}(\A,\C) \cap B$.

Next we prove the inclusion from right to left. Suppose, with a view to contradiction, that this inclusion fails. Then there exists $b \in B$ such that
\begin{equation} \label{Eq : doms assumption}
    b \in \mathsf{d}_{\mathsf{K}}(\A, \C) \, \,  \text{ and }\, \, b \notin \mathsf{d}_{\mathsf{K}}(\A, \B).
\end{equation}
As $b \in B$, the right hand side of the above display implies that there exists a pair of homomorphisms $f_1, f_2 \colon \B \to \D$ with $\D \in \mathsf{K}$ such that 
\begin{equation} \label{Eq : different homs from B}
f_1 {\upharpoonright}_A = f_2 {\upharpoonright}_A \, \, \text{ and  }\, \,  f_1(b) \neq  f_2(b).
\end{equation}

We may assume that $f_1$ and $f_2$ are embeddings. Otherwise, we replace each $f_i$ by the embedding $f_i^* \colon \B \to \D \times \B$ defined as $f_i^* (c) = \langle f_i(c), c\rangle$ for every $c \in B$. Observe that $\D \times \B \in \K$ because $\B, \D \in \K$ and $\K$ is closed under finite direct products by assumption. Furthermore, from (\ref{Eq : different homs from B}) and the definition of $f_1^*$ and $f_2^*$ it follows that $f_1^* {\upharpoonright}_A = f_2^* {\upharpoonright}_A$ and $ f_1^*(b) \neq  f_2^*(b)$.  Consequently, from now on we will assume that $f_1$ and $f_2$ are embeddings.

Recall from the assumptions that $\B \leq \C$. Then let $i \colon \B \to \C$ be the inclusion map, which is always an embedding. As $i \colon \B \to \C$ and $f_1 \colon \B \to \D$ are a pair of embeddings with  $\C, \D \in \mathsf{K}$, we can apply the assumption that $\B$ is an amalgamation base for $\mathsf{K}$, 
obtaining a pair of embeddings  $g_1 \colon \C \to \boldsymbol{E}$ and $g_2 \colon \D \to \boldsymbol{E}$ with $\boldsymbol{E} \in \K$ such that 
        \begin{equation} \label{Eq : AP eq1}
            g_1 \circ i = g_2 \circ f_1.
        \end{equation}
        
        \begin{center}
\begin{tikzcd}
& \C \arrow[dr,hook, dashed, "g_1"] &  \\
\B  \arrow[ur, hook, "i"] \arrow[dr, hook, swap, "f_1"] & & \boldsymbol{E} \\
& \boldsymbol{D}  \arrow[ur, hook, dashed, swap, "g_2"] & 
\end{tikzcd}
\end{center}

Since $i, g_1, g_2$, and $f_2$ are embeddings, so are the compositions $g_1 \circ i \colon \B \to \boldsymbol{E}$ and $g_2 \circ f_2 \colon \B \to \boldsymbol{E}$. Together with $\boldsymbol{E} \in \K$,  we obtain another span 
$\langle \B,\boldsymbol{E}, \boldsymbol{E}, g_1 \circ i, g_2 \circ f_2 \rangle$ in $\mathsf{K}$. 
The assumption that $\B$ is an amalgamation base for $\mathsf{K}$ yields 
a pair of embeddings $h_1, h_2 \colon \boldsymbol{E} \to \boldsymbol{F}$  with $\boldsymbol{F} \in \K$
        such that 
        \begin{equation} \label{Eq : AP eq2}
            h_1 \circ g_1 \circ i = h_2 \circ g_2 \circ f_2.
        \end{equation}
\begin{center}
\begin{tikzcd}
& \C  \arrow[r,hook, "g_1"] & \boldsymbol{E} \arrow[dr,hook, dashed, "h_1"] \\
\B  \arrow[ur, hook, "i"] \arrow[dr, hook, swap, "f_2"]  & & & \boldsymbol{F}  \\
& \D \arrow[r, hook, swap, "g_2"] & \boldsymbol{E} \arrow[ur,hook, swap, dashed, "h_2"] 
\end{tikzcd}
\end{center}
   
As $i$ is the inclusion map from $\B$ to $\C$, from (\ref{Eq : AP eq2}) and (\ref{Eq : AP eq1}) it follows that for each $c \in B \subseteq C$,
\begin{equation}\label{Eq : last AP display in the dominion proof}
    h_1 \circ g_1(c) = h_2 \circ g_2 \circ f_2(c) \, \, \text{ and }\, \, h_2 \circ g_1(c) = h_2 \circ g_2 \circ f_1(c).
\end{equation}
By the left hand side of \eqref{Eq : different homs from B} we have $f_1(a) = f_2(a)$ for every $a \in A$. Together with (\ref{Eq : last AP display in the dominion proof}) and $A \subseteq B$, this yields that for every $a \in A$,
\[
h_1 \circ g_1(a) = h_2 \circ g_2 \circ f_2(a) = h_2 \circ g_2 \circ f_1(a) = h_2 \circ g_1(a).
\]
Hence, $(h_1 \circ g_1) {\upharpoonright}_A = (h_2 \circ g_1) {\upharpoonright}_A$. On the other hand, recall that $f_1(b) \ne f_2(b)$ by the right hand side of  \eqref{Eq : different homs from B}. Since $h_2 \circ g_2 \colon \C \to \boldsymbol{F}$ is an embedding (because so are $h_2$ and $g_2$), we obtain $h_2 \circ g_2\circ f_1(b) \ne h_2 \circ g_2\circ f_2(b)$. Together with $b \in B$ and   (\ref{Eq : last AP display in the dominion proof}), this implies $h_1 \circ g_1(b) \ne h_2 \circ g_1(b)$. Since $h_2\circ g_1 \colon \C \to \boldsymbol{F}$ is a homomorphism with $\boldsymbol{F} \in \K$ such that  $(h_1 \circ g_1) {\upharpoonright}_A = (h_2 \circ g_1) {\upharpoonright}_A$, we conclude that $b \notin \mathsf{d}_\K(\A, \C)$, a contradiction with the left hand side of (\ref{Eq : doms assumption}).
\end{proof}

The second ingredient of the proof of Theorem \ref{Thm : existential elimination} is the following construction, which associates an algebra with every pp formula.
We denote the set of variables occurring in a formula $\varphi$ by $Var(\varphi)$. For instance, if $\varphi = \exists x ( x + y \thickapprox x)$, then $Var(\varphi) = \{ x, y \}$. Moreover, we denote the term algebra with variables in $Var(\varphi)$ by  $\boldsymbol{T}(Var(\varphi))$ and let
\[
\ulcorner\varphi\urcorner = \{ \langle t_1, t_2 \rangle : t_1 \thickapprox t_2 \text{ is an equation occurring in }\varphi \}. 
\]
Observe that $\ulcorner\varphi\urcorner \subseteq T(Var(\varphi)) \times T(Var(\varphi))$.

\begin{Definition}\label{Def: TK}
Let $\K$ be a quasivariety. With every pp formula $\varphi$ we associate the algebra 
\[
\boldsymbol{T}_\mathsf{K}(\varphi) = \boldsymbol{T}(Var(\varphi)) / \theta(\varphi), \text{ where }\theta(\varphi) = \mathsf{Cg}_{\mathsf{K}}^{\boldsymbol{T}(Var(\varphi))}(\ulcorner\varphi\urcorner).
\]
\end{Definition}

Recall that, when $\varphi$ defines an implicit operation of $\mathsf{K}$, we denote this operation by
\[
\varphi^{\mathsf{K}} = \langle \varphi^\A : \A \in \mathsf{K}\rangle.
\]

We rely on the next observation.

\begin{Proposition}\label{Prop : finitely presented}
Let $\varphi(x_1, \dots, x_n, y)$ be a pp formula that defines an implicit operation of a quasivariety $\mathsf{K}$. Then the following conditions hold:
\benroman
\item\label{item : finitely presentable : 1} $\boldsymbol{T}_\mathsf{K}(\varphi)$ is a finitely presented member of $\mathsf{K}$ such that
\[
\langle x_1 / \theta(\varphi), \dots, x_n / \theta(\varphi) \rangle \in \mathsf{dom}(\varphi^{\boldsymbol{T}_\mathsf{K}(\varphi)})\, \, \text{ and } \, \, \varphi^{\boldsymbol{T}_\mathsf{K}(\varphi)}(x_1 / \theta(\varphi), \dots, x_n / \theta(\varphi)) = y / \theta(\varphi);
\]
\item\label{item : finitely presentable : 2} for all $\A \in \mathsf{K}$ and $\langle a_1, \dots, a_n \rangle \in \mathsf{dom}(\varphi^\A)$ there exists a homomorphism $h \colon \boldsymbol{T}_\mathsf{K}(\varphi) \to \A$ such that
\[
h(x_i / \theta(\varphi)) = a_i \text{ for each }i \leq n \, \, \text{ and } \, \, h(y / \theta(\varphi)) = \varphi^\A(a_1, \dots, a_n). 
\]
\eroman
\end{Proposition}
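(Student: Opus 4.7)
The plan is to prove both statements by working with a fixed pp normal form of $\varphi$. Write $\varphi(x_1,\dots,x_n,y) = \exists z_1,\dots,z_m \psi(x_1,\dots,x_n,y,z_1,\dots,z_m)$ where $\psi$ is a finite conjunction of equations, so that $Var(\varphi) = \{x_1,\dots,x_n,y,z_1,\dots,z_m\}$ is finite and $\ulcorner\varphi\urcorner$ is the finite set of pairs of terms corresponding to the equations in $\psi$. Throughout, I will write $\bar{v}$ for $v/\theta(\varphi)$.

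For \eqref{item : finitely presentable : 1}, the key observation is that $\theta(\varphi)$ is by definition a $\K$-congruence of the term algebra $\boldsymbol{T}(Var(\varphi))$, so $\boldsymbol{T}_\mathsf{K}(\varphi) \in \K$. Finite presentability is immediate from the construction together with $Var(\varphi)$ and $\ulcorner\varphi\urcorner$ being finite. To verify that $\varphi$ evaluates correctly on the canonical generators, I will exhibit $\bar{z}_1,\dots,\bar{z}_m$ as the witnesses: since every pair $\langle t_1, t_2\rangle \in \ulcorner\varphi\urcorner$ belongs to $\theta(\varphi)$, the substitution lemma gives $\boldsymbol{T}_\mathsf{K}(\varphi) \vDash \psi(\bar{x}_1,\dots,\bar{x}_n,\bar{y},\bar{z}_1,\dots,\bar{z}_m)$, and existentially quantifying yields $\boldsymbol{T}_\mathsf{K}(\varphi) \vDash \varphi(\bar{x}_1,\dots,\bar{x}_n,\bar{y})$. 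The assumption that $\varphi$ is functional in $\K$ then identifies $\bar{y}$ as the unique image, i.e.\ $\varphi^{\boldsymbol{T}_\mathsf{K}(\varphi)}(\bar{x}_1,\dots,\bar{x}_n) = \bar{y}$.

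For \eqref{item : finitely presentable : 2}, fix $\A \in \K$ and $\langle a_1,\dots,a_n\rangle \in \dom(\varphi^\A)$, and set $b = \varphi^\A(a_1,\dots,a_n)$. Since $\varphi$ defines the operation, $\A \vDash \varphi(a_1,\dots,a_n,b)$, so there exist $c_1,\dots,c_m \in A$ with $\A \vDash \psi(a_1,\dots,a_n,b,c_1,\dots,c_m)$. Using the universal mapping property of $\boldsymbol{T}(Var(\varphi))$, extend the assignment $x_i \mapsto a_i$, $y \mapsto b$, $z_j \mapsto c_j$ to a homomorphism $\bar{g} \colon \boldsymbol{T}(Var(\varphi)) \to \A$. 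Because $\psi$ is a conjunction of equations satisfied under this assignment, every pair in $\ulcorner\varphi\urcorner$ lies in $\Ker(\bar{g})$. Now the point is that $\Ker(\bar{g})$ is a $\K$-congruence of $\boldsymbol{T}(Var(\varphi))$ (this uses that $\A \in \K$ and $\K$ is closed under $\III$ and $\SSS$, via the Homomorphism Theorem as recalled before \cref{Prop : variety iff Con=ConK}), so by minimality $\theta(\varphi) \subseteq \Ker(\bar{g})$.

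Finally, apply \cref{Prop : homomorphisms : smaller congruences} to factor $\bar{g}$ through the canonical surjection $\boldsymbol{T}(Var(\varphi)) \to \boldsymbol{T}_\mathsf{K}(\varphi)$, obtaining a homomorphism $h \colon \boldsymbol{T}_\mathsf{K}(\varphi) \to \A$ with $h(\bar{v}) = \bar{g}(v)$ for every $v \in Var(\varphi)$; in particular $h(\bar{x}_i)=a_i$ and $h(\bar{y})=b=\varphi^\A(a_1,\dots,a_n)$. No serious obstacle arises: the only subtle point is ensuring that the correct notion of congruence is used so that $\theta(\varphi) \subseteq \Ker(\bar{g})$, which requires noting that $\Ker(\bar{g})$ is automatically a $\K$-congruence and then invoking the definition of $\theta(\varphi)$ as the $\K$-congruence generated by $\ulcorner\varphi\urcorner$.
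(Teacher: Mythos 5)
Your proposal is correct and follows essentially the same route as the paper's proof: part (i) by observing that every pair in $\ulcorner\varphi\urcorner$ lies in $\theta(\varphi)$ so the canonical generators witness $\psi$ in the quotient, and part (ii) by lifting the witnesses to a homomorphism from the term algebra whose kernel is a $\K$-congruence containing $\ulcorner\varphi\urcorner$, hence containing $\theta(\varphi)$, and then factoring via \cref{Prop : homomorphisms : smaller congruences}. The only cosmetic difference is that you make explicit the appeal to functionality of $\varphi$ in $\K$ to identify $y/\theta(\varphi)$ as the value of the operation, which the paper leaves implicit.
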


\begin{proof}
Since $\varphi$ is a pp formula, it is of the form $\exists z_1, \dots, z_m \psi$, where $\psi$ is a finite conjunction of equations. Therefore,
\begin{equation}\label{Eq : fin pres 0}
\varphi = \exists z_1, \dots, z_m \bigsqcap_{i \leq k} t_i \thickapprox s_i,
\end{equation}
where 
$t_i$ and $s_i$
are terms in variables $x_1, \dots, x_n, y, z_1, \dots, z_m$.

(\ref{item : finitely presentable : 1}): The algebra $\boldsymbol{T}_\mathsf{K}(\varphi)$ is a finitely presented member of $\mathsf{K}$ by definition. Therefore, it only remains to show that for each $i \leq k$,
\[
\boldsymbol{T}_\mathsf{K}(\varphi) \vDash \varphi(x_1 / \theta(\varphi), \dots, x_n / \theta(\varphi), y /  \theta(\varphi)).
\]
In view of (\ref{Eq : fin pres 0}), it will be enough to prove
\[
\boldsymbol{T}_\mathsf{K}(\varphi) \vDash \bigg(\bigsqcap_{i \leq k} t_i \thickapprox s_i\bigg)(x_1 / \theta(\varphi), \dots, x_n / \theta(\varphi), y /  \theta(\varphi), z_1 / \theta(\varphi), \dots, z_m / \theta(\varphi)).
\]

Let $i \leq k$. From the definitions  of $\ulcorner\varphi\urcorner$ and $\theta(\varphi)$ it follows that 
$\langle t_i, s_i \rangle \in \ulcorner\varphi\urcorner \subseteq \theta(\varphi)$.
Consequently,
\begin{align*}
t_i^{\boldsymbol{T}_\mathsf{K}(\varphi)}&(x_1 / \theta(\varphi), \dots, x_n / \theta(\varphi), y /  \theta(\varphi), z_1 / \theta(\varphi), \dots, z_m / \theta(\varphi))\\
&=t_i(x_1, \dots, x_n, y, z_1, \dots, z_m) / \theta(\varphi)\\
&=s_i(x_1, \dots, x_n, y, z_1, \dots, z_m) / \theta(\varphi)\\
&=s_i^{\boldsymbol{T}_\mathsf{K}(\varphi)}(x_1 / \theta(\varphi), \dots, x_n / \theta(\varphi), y /  \theta(\varphi), z_1 / \theta(\varphi), \dots, z_m / \theta(\varphi)).
\end{align*}

(\ref{item : finitely presentable : 2}): Let $\A \in \mathsf{K}$ and $\langle a_1, \dots, a_n \rangle \in \mathsf{dom}(\varphi^\A)$. Then $\A \vDash \varphi(a_1, \dots, a_n, \varphi^\A(a_1,\dots, a_n))$. In view of (\ref{Eq : fin pres 0}), there exist $b_1, \dots, b_m \in A$ such that for each $i \leq k$,
\begin{equation}\label{Eq : fin pres 2}
t_i^\A(a_1, \dots, a_n, \varphi^\A(a_1,\dots, a_n), b_1, \dots, b_m) = s_i^\A(a_1, \dots, a_n, \varphi^\A(a_1,\dots, a_n), b_1, \dots, b_m).
\end{equation}
Now, let $g \colon \boldsymbol{T}(Var(\varphi)) \to \A$ be the unique homomorphism such that
\begin{equation}\label{Eq : fin pres 3}
g(x_i) = a_i \text{ for each }i \leq n, \quad g(y) = \varphi^\A(a_1, \dots, a_n), \quad \text{ and }\quad g(z_j) = b_j \text{ for each }j \leq m.
\end{equation}
From the above display and (\ref{Eq : fin pres 2}) it follows that $\ulcorner\varphi\urcorner \subseteq \mathsf{Ker}(g)$. As $\A \in \mathsf{K}$ by assumption, we also have $\mathsf{Ker}(g) \in \mathsf{Con}_\mathsf{K}(\boldsymbol{T}(Var(\varphi)))$. Consequently,
\[
\theta(\varphi) = \text{Cg}_\mathsf{K}^{\boldsymbol{T}(\varphi)}(\ulcorner\varphi\urcorner)\subseteq  \mathsf{Ker}(g).
\]
Since $\boldsymbol{T}_\mathsf{K}(\varphi) = \boldsymbol{T}(Var(\varphi)) / \theta(\varphi)$, we can apply Proposition \ref{Prop : homomorphisms : smaller congruences} to the above display, obtaining a homomorphism $h \colon \boldsymbol{T}_\mathsf{K}(\varphi) \to \A$ defined for every $t \in T(Var(\varphi))$ as $h(t/ \theta(\varphi)) = g(t)$. Together with (\ref{Eq : fin pres 3}), this yields $h(x_i / \theta(\varphi)) = a_i$ for each $i \leq n$ and $h(y / \theta(\varphi)) = \varphi^\A(a_1, \dots, a_n)$.
\end{proof}

We are now ready to prove Theorem \ref{Thm : existential elimination}.

\begin{proof} 
(\ref{item : existential elimination : 1}): Let $f$ be an implicit operation of $\K$ defined by a pp formula $\varphi(x_1, \dots, x_n, y)$. Consider the algebra
        \[
        \A = \mathsf{Sg}^{\boldsymbol{T}_{\K}(\varphi)}(x_1 / \theta(\varphi), \dots, x_n / \theta(\varphi)).
        \]
By Proposition \ref{Prop : finitely presented}(\ref{item : finitely presentable : 1}) we have
\[
\langle x_1 / \theta(\varphi), \dots, x_n / \theta(\varphi) \rangle \in \mathsf{dom}(f^{\boldsymbol{T}_{\K}(\varphi)}) \cap A^n \, \, \text{ and } \, \, f^{\boldsymbol{T}_{\K}(\varphi)}(x_1 / \theta(\varphi), \dots, x_n / \theta(\varphi)) = y / \theta(\varphi).
\]
Together with 
Theorem \ref{Thm : dominions : pp formulas}, this yields 
 $        y/\theta(\varphi) \in \mathsf{d}_{\mathsf{K}}(\A, \boldsymbol{T}_{\K}(\varphi))$. Now, let
 \[
 \B = \mathsf{Sg}^{\boldsymbol{T}_{\K}(\varphi)}(x_1 / \theta(\varphi), \dots, x_n / \theta(\varphi), y / \theta(\varphi)).
 \]
 As $\A \leq \B \leq \boldsymbol{T}_{\K}(\varphi) \in \K$ and $\K$ is a quasivariety with the amalgamation property, we can apply Proposition \ref{Prop : doms computable in subalgebras} to $y/\theta(\varphi) \in \mathsf{d}_{\mathsf{K}}(\A, \boldsymbol{T}_{\K}(\varphi)) \cap B$, obtaining 
$
y/\theta(\varphi) \in  \mathsf{d}_{\mathsf{K}}(\A, \B)$. By Theorem \ref{Thm : dominions : pp formulas} there exist an $m$-ary $g \in \mathsf{imp}_{\textsc{pp}}(\mathsf{K})$ and $\langle a_1, \dots, a_m \rangle\in \mathsf{dom}(g^\B) \cap A^m$ such that $g^{\B}(a_1, \dots, a_m) = y/\theta(\varphi)$. 

Since $g \in \mathsf{imp}_{\textsc{pp}}(\mathsf{K})$, there exists a formula $\exists z_1, \dots, z_k \psi(x_1, \dots, x_m, y, z_1, \dots, z_k)$, where $\psi$ is a conjunction of equations, defining $g$. Together with $\langle a_1, \dots, a_m \rangle \in \mathsf{dom}(g^\B)$ and $g^\B(a_1, \dots, a_n) = y / \theta(\varphi)$, this guarantees the existence of $b_1, \dots, b_k \in B$ such that
\begin{equation}\label{Eq : existential elimination - new proof 1}
\B \vDash \psi(a_1, \dots, a_m, y / \theta(\varphi), b_1, \dots, b_k).
\end{equation}
As $a_1, \dots, a_m \in A$, $\A \leq \B$, and $\A$ is generated by $x_1 / \theta(\varphi), \dots, x_n / \theta(\varphi)$ by definition, for each $i \leq m$ there exists a term $t_i(x_1, \dots, x_n)$ such that $a_i = t_i^\B(x_1 / \theta(\varphi), \dots, x_n / \theta(\varphi))$. Similarly, as $b_1, \dots, b_k \in B$ and $\B$ is generated by $x_1 / \theta(\varphi), \dots, x_n / \theta(\varphi), y / \theta(\varphi)$ by definition, for each $j \leq k$ there exists a term $s_j(x_1, \dots, x_n, y)$ such that $b_j = s_j^{\B}(x_1/\theta(\varphi), \dots, x_n / \theta(\varphi), y / \theta(\varphi))$. 

We consider the formula 
\[
\gamma = \psi(t_1(x_1, \dots, x_n), \dots, t_m(x_1, \dots, x_n), y, s_1(x_1, \dots, x_n, y), \dots, s_k(x_1, \dots, x_n, y)).
\]
Notice that $\gamma$ is a conjunction of equations because so is $\psi$. Then observe that  the formula $\exists z_1, \dots, z_k \psi(x_1, \dots, x_m, y, z_1, \dots, z_k)$ is functional in $\K$ because it defines $g$. Together with the definition of $\gamma$,  this guarantees that $\gamma$ is also functional in $\K$. Hence,  $\gamma$ defines some $h \in \mathsf{imp}_{\textsc{eq}}(\mathsf{K})$ by Corollary \ref{Cor : functionality in Q(K)}. Therefore, to conclude the proof, it suffices to show that $h$
 interpolates $f$.

 First, observe that from (\ref{Eq : existential elimination - new proof 1}) and the definitions of $\gamma$ and $t_1, \dots, t_m, s_1, \dots, s_k$ it follows that 
\begin{equation}\label{Eq : existential elimination - new proof 2}
\B \vDash \gamma(x_1 / \theta(\varphi), \dots, x_n / \theta(\varphi), y / \theta(\varphi)).
\end{equation}
As $\gamma$ defines $h$, this yields 
\[
\langle x_1 / \theta(\varphi), \dots, x_n / \theta(\varphi)\rangle \in \mathsf{dom}(h^\B)\, \, \text{ and }\, \, h^\B(x_1 / \theta(\varphi), \dots, x_n / \theta(\varphi)) = y / \theta(\varphi).
\]

We are now ready to prove that $h$ interpolates $f$. To this end, consider $\C \in \K$ and $c_1, \dots, c_n, d \in C$ such that $\langle c_1, \dots, c_n \rangle \in \mathsf{dom}(f^\C)$ and $f^\C(c_1, \dots, c_n) = d$. As $f$ is defined by the pp formula $\varphi(x_1, \dots, x_n, y)$ by assumption, from Proposition \ref{Prop : finitely presented}(\ref{item : finitely presentable : 2}) it follows that there exists a homomorphism $e \colon \boldsymbol{T}_\K(\varphi) \to \C$ such that
\[
e(x_i / \theta(\varphi)) = c_i \text{ for each $i \leq n$} \, \, \text{ and }\, \, e(y/ \theta(\varphi)) = d.
\]
Since $\B \leq \boldsymbol{T}_\K(\varphi)$ and $x_1 / \theta(\varphi), \dots, x_n / \theta(\varphi), y / \theta(\varphi) \in B$ by the definition of $\B$, the above display  still holds if we restrict $e$ to a homomorphism $e \colon \B \to \C$. As $h$ is an implicit operation of $\K$, it is preserved by homomorphism between members of $\K$ and, in particular, by $e$. Together with (\ref{Eq : existential elimination - new proof 2}) and the above display, this yields
\[
\langle c_1, \dots, c_n \rangle = \langle e(x_1 / \theta(\varphi)), \dots, e(x_n / \theta(\varphi))\rangle \in \mathsf{dom}(h^\C)
\]
and 
\begin{align*}
    d &= e (y / \theta(\varphi)) = e(h^\B(x_1 / \theta(\varphi), \dots, x_n / \theta(\varphi)))= h^\C(e(x_1 / \theta(\varphi)), \dots, e(x_n / \theta(\varphi))
) \\
&= h^\C(c_1, \dots, c_n).
\end{align*}
Since $f^\C(c_1, \dots, c_n) = d$, we conclude that $h$ interpolates $f$.

(\ref{item : existential elimination : 2}): Immediate consequence of  (\ref{item : existential elimination : 1}) and Corollary \ref{Cor : each implicit operations splits into pp formulas}.
    \end{proof}

From Theorems \ref{Thm : existential elimination}(\ref{item : existential elimination : 1}) and \ref{Thm : dominions : pp formulas}  we deduce the following (for a similar observation, see \cite[Thm.\ $1^*$, p.\ 475]{Bacsich47} and \cite{Bacs_epi}).

\begin{Corollary}\label{Cor: dominions AP impeq}
Let $\K$ be a quasivariety with the amalgamation property. For every $\A \leq \B \in \mathsf{K}$ we have
\begin{align*}
\mathsf{d}_\mathsf{K}(\A, \B) = \{ b \in B : & \text{ there exist } f \in \imp_{\textsc{eq}}(\K) \text{ and }\langle a_1, \dots, a_n \rangle \in \mathsf{dom}(f^\B) \cap A^n\\
&\text{ such that }f^\B(a_1, \dots, a_n) = b \}.
\end{align*}
\end{Corollary}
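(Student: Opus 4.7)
The plan is to deduce the corollary directly from \cref{Thm : existential elimination}\eqref{item : existential elimination : 1} combined with \cref{Thm : dominions : pp formulas}. The latter already expresses $\d_\K(\A,\B)$ as the set of elements of $B$ obtained by applying some pp-defined implicit operation of $\K$ to a tuple from $A$; the role of the amalgamation property here is only to tighten ``pp-defined'' to ``defined by a conjunction of equations'' via existential elimination.

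For the inclusion from right to left, I will observe that every conjunction of equations is a pp formula, namely one whose block of existential quantifiers is empty, so $\impeq(\K) \subseteq \imppp(\K)$. Hence the right-hand side of the displayed identity is contained in the set appearing in \cref{Thm : dominions : pp formulas}, which equals $\d_\K(\A,\B)$.

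For the inclusion from left to right, let $b \in \d_\K(\A,\B)$. By \cref{Thm : dominions : pp formulas}, we may choose an $n$-ary $g \in \imppp(\K)$ and $\langle a_1, \dots, a_n \rangle \in \dom(g^\B) \cap A^n$ with $g^\B(a_1, \dots, a_n) = b$. Since $\K$ has the amalgamation property, \cref{Thm : existential elimination}\eqref{item : existential elimination : 1} yields an $n$-ary $f \in \impeq(\K)$ that interpolates $g$. Specializing the definition of interpolation to the tuple $\langle a_1, \dots, a_n \rangle \in \dom(g^\B)$ gives $\langle a_1, \dots, a_n \rangle \in \dom(f^\B)$ and $f^\B(a_1, \dots, a_n) = g^\B(a_1, \dots, a_n) = b$, which is exactly what the right-hand side of the displayed identity requires.

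No step of this argument presents a genuine obstacle: the corollary is a formal repackaging of the existential elimination theorem together with the pre-existing pp-description of dominions, and the amalgamation hypothesis enters only through its use in \cref{Thm : existential elimination}.
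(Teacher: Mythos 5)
Your proof is correct and follows exactly the route the paper indicates: the paper derives this corollary by combining Theorem \ref{Thm : existential elimination}(\ref{item : existential elimination : 1}) with Theorem \ref{Thm : dominions : pp formulas}, which is precisely your argument (including the observation that conjunctions of equations are pp formulas for the easy inclusion). No gaps.
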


We close this section with two examples. The first shows that Isbell's operations cannot be interpolated by any implicit operation defined by a conjunction of equations in the variety of commutative monoids, and the second shows that  condition (\ref{item : existential elimination : 1}) of Theorem \ref{Thm : existential elimination} cannot be improved by requiring the equality $\imppp(\K)= \imp_{\textsc{eq}}(\K)$.

\begin{exa}[\textsf{Isbell's operations}]\label{Exa : Isbell is not interolable by eq : existential elimination}
For every positive $n$ let $f_n$ be the $n$-th Isbell's operation, viewed as an implicit operation of the variety $\Mon$ of monoids (see \cref{Exa : Isbell operations}   and  \cref{Thm : Isbell implicit operations}). Then $f_n \in \imppp(\mathsf{Mon})$. We will prove that $f_n$ cannot be interpolated by any member of $\imp_{\textsc{eq}}(\mathsf{\Mon})$. Suppose the contrary, with a view to contradiction. Then $f_n$ is interpolated by some $g \in \imp_{\textsc{eq}}(\Mon)$.\ Consider the commutative monoids $\mathbb{N} = \langle \mathbb{N}; \cdot, 1 \rangle$ and $\mathbb{Q} = \langle \mathbb{Q}; \cdot, 1 \rangle$. Moreover, let $a_1, \dots, a_n, b_1, \dots, b_n, c_1, \dots, c_{2n+1}, d$ be the sequence of rationals defined as follows: for every
 $1 \leq i \leq n$ and $1 <  j < 2n+1$,
\[a_i = \frac{1}{2} = b_i, \quad c_j = 12, \quad c_1 = 6 = c_{2n+1}, \quad d = 3.\] 
Using the formula $\psi_n$  in \cref{Exa : Isbell operations}, we have
\[
\mathbb{Q} \vDash \psi_n(a_1, \dots, a_n, b_1, \dots, b_n, c_1, \dots, c_{2n+1}, d).
\]
By the definition of $f_n$ this yields $\langle c_1, \dots, c_{2n+1} \rangle \in \dom(f_n^\mathbb{Q})$ and $f_n^\mathbb{Q}(c_1, \dots, c_{2n+1}) = d$.  As $g$ interpolates $f_n$, we obtain $\langle c_1, \dots, c_{2n+1} \rangle \in \dom(g^\mathbb{Q})$ and $g^\mathbb{Q}(c_1, \dots, c_{2n+1}) = d$. Let $\varphi(x_1, \dots, x_{2n+1},y)$ be the conjunction of equations defining $g$. Then $\mathbb{Q} \vDash \varphi(c_1, \dots, c_{2n+1}, d)$. As $\varphi$ is a universal formula and $c_1, \dots, c_{2n+1}, d \in \mathbb{N}$, \cref{Thm : preservation}\eqref{item : preservation : universal} implies  that $\mathbb{N} \vDash \varphi(c_1, \dots, c_{2n+1}, d)$, and hence $\langle c_1, \dots, c_{2n+1} \rangle \in \dom(g^\mathbb{N})$ and $g^\mathbb{N}(c_1, \dots, c_{2n+1}) = d$.
Let $\A$ and $\B$ be the submonoids of $\mathbb{N}$ with universes $A=\{1\} \cup \{2m : m \in \mathbb{N}\}$ and $B = \{0,1\}$, respectively. 
Since $c_1, \dots, c_{2n+1} \in A$ and $g \in \mathsf{imp}_{\textsc{eq}}(\Mon)$, \cref{Thm : dominions : pp formulas} yields $3 = d \in \d_\Mon(\A, \oper{N})$.
Let $h,k \colon \mathbb{N} \to \B$ be given by
\[
h(m) = \begin{cases}
    1 &\text{if } m = 1,\\
    0 &\text{otherwise,}
\end{cases} \hspace{2cm}
k(m) = \begin{cases}
    1 &\text{if $m$ is odd,}\\
    0 &\text{otherwise.}
\end{cases} 
\]
It is immediate to verify that $h$ and $k$ are homomorphisms such that $h\res_{A}=k\res_{A}$ and $h(3) \neq k(3)$, whence $3 \notin \d_\Mon(\A, \mathbb{N})$. As this is false, we conclude that $f_n$ cannot be interpolated by any member of $\imp_{\textsc{eq}}(\mathsf{Mon})$.
\qed
\end{exa} 

\begin{exa}[\textsf{Cancellative commutative monoids}]\label{exa:ccmon AP}
An element $a$ of a monoid $\A = \langle A; \cdot, 1 \rangle$ is said to be \emph{cancellative} when for all $b, c \in A$,
\[
(a b = a c \text{ implies }b = c) \, \, \text{ and } \, \ (b  a = c  a \text{ implies }b = c).
\]
When all the elements of $\A$ are cancellative, we say that $\A$ is \emph{cancellative} (see, e.g., \cite{MR21847244,MR218472}).\ The class of cancellative commutative monoids forms a quasivariety, denoted by $\mathsf{CCMon}$, which is axiomatized relative to commutative monoids by the quasiequation $x y \thickapprox x  z \to  y \thickapprox z$. 

We recall that $\mathsf{CCMon}$ has the amalgamation property (see, e.g., \cite[pp.~100, 108]{SurvKissal}) and, therefore, falls within the scope of  \cref{Thm : existential elimination}. On the other hand, we will show that $\imppp(\mathsf{CCMon}) \ne \imp_{\textsc{eq}}(\mathsf{CCMon})$.

Consider the pp formula
\[
\varphi(x, y) = \exists z ( xz \thickapprox 1 \sqcap y \thickapprox 1).
\]
Notice that $\varphi$ is functional in $\mathsf{CCMon}$. Indeed, $\A \vDash \varphi(a, b)$ implies $b=1$ for all $\A \in \mathsf{CCMon}$ and $a, b\in A$. Since $\varphi$ is a pp formula, we can apply \cref{Cor : functionality in Q(K)}, obtaining that it defines a unary $f \in \imppp(\mathsf{CCMon})$. Moreover, if $\A \in \CCMon$, then $\dom(f^\A)$ consists of the invertible elements of $\A$. We show that $f \notin \mathsf{imp}_{\textsc{eq}}(\mathsf{CCMon})$. Suppose the contrary, with a view to contradiction. Then consider the cancellative commutative monoids $\mathbb{N} = \langle \mathbb{N}; \cdot, 1 \rangle$ and $\mathbb{Q} = \langle \mathbb{Q}; \cdot, 1 \rangle$. We have
\[
2 \in \mathsf{dom}(f^\mathbb{Q}) \, \, \text{ and } \, \, f^\mathbb{Q}(2) = 1.
\]
Since $f$ is defined by a conjunction of equations, from the above display and $1,2 \in \mathbb{N} \leq \mathbb{Q}$ it follows that $2 \in \mathsf{dom}(f^\mathbb{N})$, a contradiction with the fact that $2$ is not invertible in $\mathbb{N}$. Hence, we conclude that $\imppp(\mathsf{CCMon}) \ne \imp_{\textsc{eq}}(\mathsf{CCMon})$.

We conclude this example by showing that $\imp_{\textsc{eq}}(\CCMon)$ is not closed under composition. Observe that $f$ coincides with the composition $h \circ g$, where $h \in \impeq(\CCMon)$ is the unary implicit operation defined by the equation $y \thickapprox 1$ and $g \in \impeq(\CCMon)$ is the implicit operation of ``taking inverses'' in monoids (see \cref{Exa : inverses in monoids 1}) restricted to $\CCMon$. Since $h \circ g = f \notin \imp_{\textsc{eq}}(\CCMon)$, this shows that $\impeq(\CCMon)$ is not closed under composition.
\qed
\end{exa}

\section{The strong Beth definability property}

As the implicit operations of a class of algebras need not be term functions, it is natural to wonder whether they can at least be interpolated by a set of terms, which can be thought of as a way of rendering them ``explicit''. This idea is reminiscent  of the \emph{Beth Definability Theorem} of first order logic (see, e.g., \cite[pp.~301--302]{HodModTh}), a fundamental result stating that every implicit definition can be turned explicit (in the setting of first order theories). However, the notions of implicit and explicit definability typical of first order logic differ from ours. For instance, an explicit definition in first order logic is simply a definition given by a formula. As our implicit operations are defined by a formula by definition, they are already explicitly definable in the sense of first order logic. As a consequence, the Beth Definability Theorem cannot be applied to our implicit operations in a nontrivial way and, in particular, it does not guarantee  they can be interpolated by a set of terms, that is, made explicit in our sense.

The next definition formalizes the idea of interpolating implicit operations by sets of terms and is a particular instance of the notion of interpolation introduced in \cref{Def: interpolation}.

\begin{Definition}
Let $f$ be an $n$-ary implicit operation of a class of algebras $\mathsf{K}$. We say that $f$ is \emph{interpolated} by a set $\{ t_i : i \in I \}$ of $n$-ary terms of $\mathsf{K}$ when it is interpolated by $\{ t_i^\K : i \in I \}$. This means that for all $\A \in \mathsf{K}$ and $\langle a_1, \dots, a_n \rangle \in \mathsf{dom}(f^\A)$ there exists $i \in I$ such that
\[
f^\A(a_1, \dots, a_n) = t_i^\A(a_1, \dots, a_n).
\]
Intuitively, the partial function $f$ is made ``explicit'' by the terms in $\{ t_i : i \in I \}$. 
\end{Definition}

Notice that $f$ is interpolated by a set of terms if and only if $f^{\A}(a_1, \dots, a_n) \in \mathsf{Sg}^{\A}(a_1, \dots, a_n)$ for all $\A \in \K$ and $\langle a_1, \dots, a_n \rangle \in \dom(f^{\A})$.
When $f$ is defined by a formula $\varphi$, the demand that $f$ be interpolated by the set of terms $\{ t_i : i \in I \}$ can be rendered as follows:
\begin{equation}\label{Eq : infinite interpolation}
\mathsf{K} \vDash \varphi(x_1, \dots, x_n, y) \to \bigsqcup_{i \in I}  t_i(x_1, \dots, x_n) \thickapprox y.
\end{equation}
As a consequence of the \cref{Thm : compactness theorem}, we obtain the following.

\begin{Proposition}\label{Prop : interpolation : infinite to finite}
An implicit partial function on an elementary class can be interpolated by a set of terms if and only if it can be interpolated by a finite set of terms.
\end{Proposition}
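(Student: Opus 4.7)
The plan is to deduce this directly from the second form of the Compactness Theorem~\ref{Thm : compactness theorem}. The right-to-left direction is immediate, since every finite set of terms is a set of terms; the content of the proposition lies in the left-to-right direction, which will be a straightforward transcription of compactness through the reformulation~\eqref{Eq : infinite interpolation}.

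Concretely, suppose $f$ is an $n$-ary implicit partial function on an elementary class $\K$, defined by some formula $\varphi(x_1, \dots, x_n, y)$, and interpolated by a set $\{t_i : i \in I\}$ of $n$-ary terms of $\K$. By the rewriting recorded in~\eqref{Eq : infinite interpolation}, the assumption amounts to
\[
\K \vDash \varphi(x_1, \dots, x_n, y) \to \bigsqcup_{i \in I} t_i(x_1, \dots, x_n) \thickapprox y.
\]
I would then apply the Compactness Theorem~\ref{Thm : compactness theorem} with $\Phi = \{\varphi\}$ and $\Psi = \{ t_i(x_1, \dots, x_n) \thickapprox y : i \in I \}$, obtaining a finite $\Psi' \subseteq \Psi$ with $\K \vDash \varphi \to \bigsqcup \Psi'$. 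Choosing a finite $J \subseteq I$ that indexes $\Psi'$ and reversing the same rewriting, one concludes that $f$ is interpolated by the finite set $\{t_i : i \in J\}$.

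The only delicate point is the boundary case $\Psi' = \emptyset$: in that case $\bigsqcup \Psi' = \bot$, so $\varphi$ is unsatisfiable in every member of $\K$, whence $\dom(f^\A) = \emptyset$ for every $\A \in \K$ and $f$ is trivially interpolated by the empty finite set of terms. No substantial obstacle is expected beyond this bookkeeping; the proposition is really a direct packaging of compactness into the language of term-interpolation of implicit partial functions.
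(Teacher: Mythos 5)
Your proposal is correct and follows essentially the same route as the paper: rewrite interpolation as the implication in~\eqref{Eq : infinite interpolation} and apply the Compactness Theorem~\ref{Thm : compactness theorem} to extract a finite subset of the disjunction. The extra remark on the boundary case $\Psi' = \emptyset$ is harmless bookkeeping that the paper leaves implicit.
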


\begin{proof}
Let $f$ be a partial function on an elementary class $\mathsf{K}$. Assume that $f$ is defined by a formula $\varphi(x_1, \dots, x_n, y)$ and that it can be interpolated by a set of terms $\{ t_i : i \in I \}$. Then condition (\ref{Eq : infinite interpolation}) holds. From the \cref{Thm : compactness theorem} it follows that there exists a finite $T \subseteq \{ t_i : i \in I \}$ such that
\[
\mathsf{K} \vDash \varphi(x_1, \dots, x_n, y) \to \bigsqcup_{t \in T} t(x_1, \dots, x_n) \thickapprox y.
\]
As $f$ is defined by $\varphi$, this means that $f$ is interpolated by the terms in $T$.
\end{proof}

When viewed as an implicit operation on a class of algebras $\mathsf{K}$, every term function $\langle t^\A : \A \in \mathsf{K}\rangle$ of $\mathsf{K}$ is interpolated by a single term, namely, $t$ (see Example \ref{Example : term functions}). However, not all implicit operations can be interpolated by terms.\ For instance, there is no set of terms interpolating the implicit operation of ``taking inverses'' in the variety of monoids as we will see in \cref{Exa : monoids lack the SES}. It is therefore sensible to isolate the cases in which interpolation is always possible, something that indicates a good balance between the expressivity of the language (measured by what can be said in terms of implicit operations) and its actual richness (measured by what can be interpolated, or made explicit, by terms).

\begin{Definition} \label{Def : strong Beth prop}
A class of algebras is said to have the \emph{strong Beth definability property} when each of its implicit operations can be interpolated by a set of terms.

\end{Definition}

The reason why we termed our Beth definability property  ``strong'' is to distinguish it from other weaker definability properties considered in the literature (see \cite{BlHoo06,Kreisel60JSL}).
Although we will not rely on this fact, we remark that, when a quasivariety is the equivalent algebraic semantics of a propositional logic in the sense of \cite{BP89}, the strong Beth definability property is the algebraic counterpart of the so called \emph{projective Beth property} investigated in \cite[p.~76]{BF85} (see also \cite[Sec.~2.2.3]{Hoo01} and \cite{MakpBeth,Mak99,Mak00}).

In the context of elementary classes, the strong Beth definability property can be equivalently formulated by restricting our attention to implicit operations defined by pp formulas and interpolation by a finite set of terms. More precisely, we have the following.

\begin{Proposition}\label{Prop : elementary strong Beth}
The following conditions are equivalent for an elementary class $\mathsf{K}$:
\benroman
\item\label{item : elementary strong Beth 1} $\mathsf{K}$ has the strong Beth definability property;
\item\label{item : elementary strong Beth 2} each implicit operation of $\mathsf{K}$ defined by a pp formula can be interpolated by a finite set of terms.
\eroman
\end{Proposition}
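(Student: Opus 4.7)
The plan is to prove $(i) \Leftrightarrow (ii)$ by exploiting two previously established facts: \cref{Prop : interpolation : infinite to finite}, which lets us upgrade interpolation by an arbitrary set of terms to interpolation by a finite set in the elementary setting, and \cref{Cor : each implicit operations splits into pp formulas}, which lets us decompose an arbitrary implicit operation of an elementary class into a finite union of implicit operations defined by pp formulas.

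For the implication $(i) \Rightarrow (ii)$, suppose $\K$ has the strong Beth definability property. Given any $f \in \imppp(\K)$, by definition $f$ is interpolated by a set of terms. Since $\K$ is elementary and $f$ is defined by a (pp, hence first order) formula, \cref{Prop : interpolation : infinite to finite} yields a finite set of terms interpolating $f$, giving $(ii)$ immediately.

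For the converse $(ii) \Rightarrow (i)$, let $f$ be an arbitrary implicit operation of $\K$. By \cref{Cor : each implicit operations splits into pp formulas} there exist implicit operations $f_1, \dots, f_n \in \imppp(\K)$ such that $f^{\A} = f_1^{\A} \cup \dots \cup f_n^{\A}$ for every $\A \in \K$. Applying $(ii)$ to each $f_i$, we obtain finite sets of terms $T_i$ interpolating $f_i$. I would then verify that $T = T_1 \cup \dots \cup T_n$ interpolates $f$: given $\A \in \K$ and $\langle a_1, \dots, a_m \rangle \in \dom(f^{\A})$, the decomposition of $f^\A$ ensures that the tuple lies in $\dom(f_i^\A)$ for some $i \leq n$ with $f_i^{\A}(a_1,\dots,a_m) = f^{\A}(a_1,\dots,a_m)$, and the choice of $T_i$ then furnishes a term in $T$ witnessing the required equality. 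Hence $f$ is interpolated by the (finite) set $T$, establishing $(i)$.

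No serious obstacle is expected here; the statement is essentially a repackaging of the two cited results. The only subtlety is making explicit that the decomposition of $f^\A$ into the $f_i^\A$ really does mean that for each tuple in $\dom(f^\A)$, the value $f^{\A}(a_1,\dots,a_m)$ coincides with $f_i^{\A}(a_1,\dots,a_m)$ for some $i$ with that tuple in $\dom(f_i^\A)$; this is immediate from the definition of union of partial functions recalled just before \cref{Cor : each implicit operations splits into pp formulas}.
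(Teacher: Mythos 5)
Your proof is correct and follows essentially the same route as the paper: (i)$\Rightarrow$(ii) via \cref{Prop : interpolation : infinite to finite}, and (ii)$\Rightarrow$(i) by decomposing an arbitrary implicit operation into pp-definable pieces using \cref{Cor : each implicit operations splits into pp formulas} and taking the union of the finite interpolating sets. The final verification step you flag is exactly the content of the definition of union of partial functions, as you note, so there is no gap.
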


\begin{proof}
The implication (\ref{item : elementary strong Beth 1})$\Rightarrow$(\ref{item : elementary strong Beth 2}) is an immediate consequence of Proposition \ref{Prop : interpolation : infinite to finite}. 
To prove (\ref{item : elementary strong Beth 2})$\Rightarrow$(\ref{item : elementary strong Beth 1}) suppose
that each implicit operation of $\mathsf{K}$ defined  by a pp formula can be interpolated by a finite set of terms. Then let $f$ be an implicit operation of $\mathsf{K}$. By Corollary \ref{Cor : each implicit operations splits into pp formulas} there exist some implicit operations $f_1, \dots, f_n$ of $\mathsf{K}$ defined by pp formulas such that for each $\A \in \mathsf{K}$,
\[
f^\A = f_1^\A \cup \dots \cup f_n^\A.
\]
By assumption each $f_i$ is interpolated by a finite set of terms $T_i$. In view of the above display, we conclude that $f$ is interpolated by the terms in $T_1 \cup \dots \cup T_n$.
\end{proof}

Rephrasing condition (\ref{item : elementary strong Beth 2}) of Proposition \ref{Prop : elementary strong Beth} in terms of the validity of certain formulas in $\mathsf{K}$ yields the following.

\begin{Corollary}
An elementary class $\mathsf{K}$ has the strong Beth definability property if and only if for each pp formula $\varphi(x_1, \dots, x_n, y)$ such that
\[
\mathsf{K} \vDash (\varphi(x_1, \dots, x_n, y) \sqcap \varphi(x_1, \dots, x_n, z)) \to y \thickapprox z
\]
there exist terms $t_1(x_1, \dots, x_n), \dots, t_m(x_1, \dots, x_n)$ such that
\[
\mathsf{K} \vDash \varphi(x_1, \dots, x_n, y) \to \bigsqcup_{i \leq m}t_i(x_1, \dots, x_n) \thickapprox y.
\]
\end{Corollary}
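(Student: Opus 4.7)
The plan is to observe that this corollary is essentially a rewording of condition \eqref{item : elementary strong Beth 2} of \cref{Prop : elementary strong Beth}, obtained by unfolding the definitions of ``implicit operation defined by a pp formula'' and ``interpolated by a finite set of terms'' into formula-level statements. Since \cref{Prop : elementary strong Beth} already gives us the desired biconditional on the semantic level, no new model-theoretic work is required; the task is purely one of translation.

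First I would handle the hypothesis. The displayed condition $\K \vDash (\varphi(\vec{x}, y) \sqcap \varphi(\vec{x}, z)) \to y \thickapprox z$ is, by definition, precisely the statement that the pp formula $\varphi(\vec{x}, y)$ is functional in $\K$. Hence such $\varphi$ are exactly the pp formulas that define an implicit operation $\varphi^\K$ of $\K$ (they are implicit operations because $\varphi$ is existential positive and \cref{Thm : preservation}\eqref{item : preservation : ep}, via \cref{Thm : implicit operations vs existential positive formulas}, ensures that existential positive functional formulas in an elementary class define implicit operations; alternatively, one can invoke \cref{Thm : implicit operations vs existential positive formulas} directly). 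Conversely, by \cref{Prop : elementary strong Beth}\eqref{item : elementary strong Beth 2}, every implicit operation of $\K$ defined by a pp formula arises in this way.

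Next I would handle the conclusion. Given such a $\varphi$, let $f = \varphi^\K$ be the implicit operation it defines. Then, by the definition of interpolation by a finite set of terms $\{t_1, \dots, t_m\}$, $f$ is interpolated by these terms if and only if for every $\A \in \K$ and every $\langle a_1, \dots, a_n\rangle \in \dom(f^\A)$ there exists $i \leq m$ with $f^\A(\vec{a}) = t_i^\A(\vec{a})$. Since $\varphi$ defines $f$, this says that whenever $\A \vDash \varphi(\vec{a}, b)$ for some $\A \in \K$ and $\vec{a},b$, then $b = t_i^\A(\vec{a})$ for some $i \leq m$, which is precisely the validity of the displayed implication $\K \vDash \varphi(\vec{x}, y) \to \bigsqcup_{i\leq m} t_i(\vec{x}) \thickapprox y$.

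Putting these two translations together, the condition in the corollary is literally condition \eqref{item : elementary strong Beth 2} of \cref{Prop : elementary strong Beth} phrased at the level of formulas. Applying that proposition completes the proof. There is no genuine obstacle here; the only small care needed is to make sure one cites \cref{Prop : elementary strong Beth} rather than \cref{Def : strong Beth prop} directly, since the latter allows interpolation by arbitrary (possibly infinite) sets of terms, whereas the corollary asks for finitely many --- this finiteness is exactly what \cref{Prop : elementary strong Beth} (ultimately relying on \cref{Prop : interpolation : infinite to finite} and hence compactness) provides for elementary classes.
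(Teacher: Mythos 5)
Your proposal is correct and matches the paper exactly: the paper presents this corollary as an immediate rephrasing of condition (ii) of \cref{Prop : elementary strong Beth} at the level of formulas, which is precisely the translation you carry out. Your additional care about finiteness of the interpolating set (via \cref{Prop : interpolation : infinite to finite} and compactness) is already built into \cref{Prop : elementary strong Beth}, as you note.
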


For elementary classes closed under direct products the equivalence in Proposition \ref{Prop : elementary strong Beth} can be refined as follows.

\begin{Proposition}\label{Prop : classes closed under products : strong Beth}
The following conditions are equivalent for an elementary class $\mathsf{K}$ closed under direct products:
\benroman
\item\label{item : classes closed under products 1} $\mathsf{K}$ has the strong Beth definability property;
\item\label{item : classes closed under products 2} each implicit operation of $\mathsf{K}$ defined by a pp formula can be interpolated by a single term.
\eroman
\end{Proposition}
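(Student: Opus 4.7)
The plan is to prove (i)$\Rightarrow$(ii) by combining the finite interpolation afforded by \cref{Prop : elementary strong Beth} with the pp-compactness principle \cref{Cor : compactness for pp formulas}, which is precisely the tool that collapses a finite disjunction of pp conclusions to a single one under closure under direct products. The converse (ii)$\Rightarrow$(i) will be almost immediate from the decomposition of implicit operations into pp-definable ones via \cref{Cor : each implicit operations splits into pp formulas}.

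For (ii)$\Rightarrow$(i), I would argue as follows. Let $f$ be any implicit operation of $\K$. By \cref{Cor : each implicit operations splits into pp formulas}, there are implicit operations $f_1, \dots, f_n$ of $\K$, each defined by a pp formula, such that $f^{\A} = f_1^{\A} \cup \dots \cup f_n^{\A}$ for every $\A \in \K$. Assumption (ii) gives, for each $i \leq n$, a single term $t_i$ interpolating $f_i$. Then the finite set $\{t_1, \dots, t_n\}$ interpolates $f$, so $\K$ has the strong Beth definability property.

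For (i)$\Rightarrow$(ii), let $f$ be a pp-definable implicit operation of $\K$, defined by a pp formula $\varphi(x_1, \dots, x_n, y)$. By (i) and \cref{Prop : elementary strong Beth}, $f$ can be interpolated by a finite set of terms $\{t_1, \dots, t_m\}$; that is,
\[
\K \vDash \varphi(x_1, \dots, x_n, y) \to \bigsqcup_{i=1}^m t_i(x_1, \dots, x_n) \thickapprox y.
\]
If $m = 0$, then $\K \vDash \lnot \varphi$, so $\dom(f^{\A}) = \emptyset$ for every $\A \in \K$, and any term (e.g.\ $x_1$) trivially interpolates $f$. Otherwise $m \geq 1$, and since each equation $t_i(x_1, \dots, x_n) \thickapprox y$ is a pp formula (it is a single equation, i.e., a conjunction with no existential prefix) and $\varphi$ is pp by hypothesis, I can apply \cref{Cor : compactness for pp formulas} with $\Phi = \{\varphi\}$ and $\Psi = \{t_i(x_1, \dots, x_n) \thickapprox y : i \leq m\}$ (which is nonempty). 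This yields some $i \leq m$ with
\[
\K \vDash \varphi(x_1, \dots, x_n, y) \to t_i(x_1, \dots, x_n) \thickapprox y,
\]
which says precisely that the single term $t_i$ interpolates $f$.

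The main work is concentrated in the single invocation of pp-compactness, and there is no real obstacle beyond noticing that the hypothesis of closure under direct products is exactly what makes \cref{Cor : compactness for pp formulas} available; without it, the best one can do is the finite-set interpolation of \cref{Prop : elementary strong Beth}. The only mild subtlety is the degenerate case $m = 0$, handled separately by observing that in this case $\varphi$ is inconsistent in $\K$.
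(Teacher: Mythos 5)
Your proof is correct and follows essentially the same route as the paper: the forward direction hinges on \cref{Cor : compactness for pp formulas} collapsing the disjunction of term-equations to a single one, and the converse uses the decomposition into pp-definable operations exactly as in \cref{Prop : elementary strong Beth}. The only (harmless) differences are that you first pass through the finite interpolating set of \cref{Prop : elementary strong Beth} before invoking pp-compactness — the paper applies the compactness corollary to the possibly infinite set of terms directly — and that you explicitly treat the degenerate case of an empty interpolating set, which the paper leaves implicit.
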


\begin{proof}
(\ref{item : classes closed under products 1})$\Rightarrow$(\ref{item : classes closed under products 2}): Suppose that $\mathsf{K}$ has the strong Beth definability property. Then consider an implicit operation $f$ of $\mathsf{K}$ that can be defined by a pp formula $\varphi(x_1, \dots, x_n, y)$.\ From the strong Beth definability property it follows that $f$ can be interpolated by a set of terms $\{ t_i : i \in I \}$. As $f$ is defined by $\varphi$, this amounts to
\[
\mathsf{K} \vDash \varphi(x_1, \dots, x_n, y)  \to \bigsqcup_{i \in I}  t_i(x_1, \dots, x_n) \thickapprox y.
\]
Since $\varphi$ is a pp formula and $\mathsf{K}$ an elementary class closed under $\PPP$ by assumption, we can apply Corollary \ref{Cor : compactness for pp formulas}, obtaining that there exists $i \in I$ such that
\[
\mathsf{K} \vDash \varphi(x_1, \dots, x_n, y)  \to  t_i(x_1, \dots, x_n) \thickapprox y.
\]
As $\varphi$ defines $f$, we conclude that $f$ is interpolated by $t_i$.

(\ref{item : classes closed under products 2})$\Rightarrow$(\ref{item : classes closed under products 1}):  Immediate from the implication (\ref{item : elementary strong Beth 2})$\Rightarrow$(\ref{item : elementary strong Beth 1}) of Proposition \ref{Prop : elementary strong Beth}.
\end{proof}

\section{The strong epimorphism surjectivity property}

The strong Beth definability property admits a purely algebraic formulation, as we proceed to illustrate.
Let $\mathsf{K}$ be a class of algebras. A homomorphism $f \colon \A \to \B$ with $\A, \B \in \mathsf{K}$ is said to be a $\mathsf{K}$-\emph{epimorphism} when it is right cancellable, that is, when for every pair of homomorphisms $g, h \colon \B \to \C$ with $\C \in \mathsf{K}$,
\[
g \circ f = h \circ f\, \, \text{ implies }\, \, g=h.
\]

While every surjective homomorphism between members of $\mathsf{K}$ is a $\mathsf{K}$-epimorphism, the converse need not hold in general.\ For instance, the inclusion map of $\langle \mathbb{Z}; \cdot, 1 \rangle$ into $\langle \mathbb{Q}; \cdot, 1 \rangle$ is a nonsurjective epimorphism in the variety of monoids. Consequently, a class of algebras $\mathsf{K}$ is said to have the \emph{epimorphism surjectivity property} when every $\mathsf{K}$-epimorphism is surjective.

We will show that, in the setting of universal classes, the strong Beth definability property is equivalent to the following strengthening of the epimorphism surjectivity property (see \cref{Thm : strong ES = strong Beth}).

\begin{Definition}
A class of algebras $\mathsf{K}$ has the \emph{strong epimorphism surjectivity property} when for every homomorphism $f \colon \A \to \B$ with $\A, \B \in \mathsf{K}$ and $b \in B - f[A]$ there exists a pair of homomorphisms $g, h \colon \B \to \C$ with $\C \in \mathsf{K}$ such that $g \circ f = h \circ f$ and $g(b) \ne h(b)$.
\end{Definition}

\begin{center}
\begin{tikzpicture}[scale=1.2]
  \draw[thick] (-2,0.5) ellipse (0.8 and 1.2);
  \node at (-2,2.2) {$\A$};
  \draw[->, thick] (-1,0.5) -- (0.6,0.5) node[midway, above] {\(f\)};
  \draw[thick] (2,0) ellipse (1.2 and 1.8);
  \node at (2,2.2) {$\B$};
  \draw[thick, dashed] (2,0.5) ellipse (0.8 and 1.2);
  \node at (2,0.5) {$f[A]$};
  \fill (2,-1.2) circle (2pt);
  \node[left] at (2,-1.2) {$b$};
  \draw[->, thick] (3.4,1) -- (4.6,1) node[midway, above] {$g$};
  \draw[->, thick] (3.4,0) -- (4.6,0) node[midway, above] {$h$};
  \draw[->, thick] (2.1,-1.2) -- (5.9,-0.2);
  \draw[->, thick] (2.1,-1.2) -- (5.9,-1.2);
  \draw[thick] (6,0) ellipse (1.2 and 1.8);
  \node at (6,2.2) {$\C$};
  \fill (6,-0.2) circle (2pt);
  \node[right] at (6,-0.2) {$g(b)$};
  \fill (6,-1.2) circle (2pt);
  \node[right] at (6,-1.2) {$h(b)$};
\end{tikzpicture}
\end{center}

\begin{Remark} \label{Rem : simplification of SES}
When $\mathsf{K}$ is closed under $\III$ and $\SSS$,
we may assume that $\A \leq \B$ and that the map $f \colon \A \to \B$ in the above definition is an inclusion map. In this case, the strong epimorphism surjectivity property simplifies to the demand that for all $\A \leq \B \in \mathsf{K}$ and $b \in B - A$ there exists a pair of homomorphisms $g, h \colon \B \to \C$ with $\C \in \mathsf{K}$ such that $g{\upharpoonright}_A = h{\upharpoonright}_A$ and $g(b) \ne h(b)$.
  Moreover, when $\mathsf{K}$ is a quasivariety, the Subdirect Decomposition Theorem \ref{Thm : Subdirect Decomposition} allows us to assume  $\C \in \mathsf{K}_{\textsc{rsi}}$. 
\qed
\end{Remark}

\begin{exa}[\textsf{Abelian groups}]\label{Exa : abalian groups : SES}
We view groups as algebras $\A = \langle A; \cdot, ^{-1}, 1 \rangle$, i.e., we assume that multiplication, taking inverses,
and the neutral element are all basic operations. 
While it is known that every variety of Abelian groups has the strong epimorphism surjectivity property,
we provide a short proof for the sake of completeness. 
For consider a variety of Abelian groups $\mathsf{V}$, $\A \leq \B \in \mathsf{V}$, and $b \in B - A$. 
Since congruences in Abelian groups correspond to subgroups, there exists a congruence $\theta$ of $\B$
associated with the subgroup $\A$. Clearly, the canonical surjection $g \colon \B \to \B / \theta$ is a homomorphism such that $g^{-1}(1 / \theta) = A$ and $\B / \theta \in \mathsf{V}$. Then let $h \colon \B \to \B / \theta$ be the homomorphism that sends every element of $B$ to $1 / \theta$. From $g^{-1}(1 / \theta) = A$ and the definition of $h$ it follows that $g(a) = 1 / \theta = h(a)$ for each $a \in A$, whence $g{\upharpoonright}_A = h{\upharpoonright}_A$. On the other hand, from $b \notin A = g^{-1}(1 / \theta)$ it follows that $g(b) \ne 1 / \theta$, while $h(b) = 1 / \theta$ by the definition of $h$. Hence, $g(b) \ne h(b)$. We conclude that $\mathsf{V}$ has the strong epimorphism surjectivity property.
\qed
\end{exa}

While every class with the strong epimorphism surjectivity property has the epimorphism surjectivity property, the converse need not hold in general.\ For instance, it is known that only $16$ varieties of Heyting algebras have the strong epimorphism surjectivity property \cite[Thm.\ 8.1]{Mak00}. On the other hand, there exists a continuum of varieties of Heyting algebras with the nonstrong version of this property \cite[p.~199]{BMRES}.
However, the two properties coincide in quasivarieties with the amalgamation property (see \cref{Thm : AP -> WES = SES}).

\begin{Remark}\label{Rem : SES = monos are regular}
In the context of quasivarieties, the strong epimorphism surjectivity property admits a purely categorical formulation. For observe that each quasivariety $\mathsf{K}$ can be viewed as a category whose objects are the members of $\mathsf{K}$ and whose arrows are the homomorphisms between them. 
As quasivarieties contain free algebras (see \cref{thm:quasivariety free algebra}), 
monomorphisms coincide with embeddings in quasivarieties (see, e.g., \cite[Prop.~8.29]{AHS06}).
A monomorphism is said to be \emph{regular} when it is an equalizer. It turns out that a quasivariety $\mathsf{K}$ has the strong epimorphism surjectivity property if and only if all monomorphisms are regular in $\mathsf{K}$ (see, e.g., \cite[Prop.~6.1]{SurvKissal}).
\qed
\end{Remark}

For the present purpose, the interest of the strong epimorphism surjectivity property comes from the fact that it is the algebraic counterpart of the strong Beth definability property. More precisely, we will prove the following 
theorem which generalizes the correspondences between the strong epimorphism surjectivity property and definability properties established in \cite[Thm.~4]{Bacsich47}, \cite[Thm.~3.6]{MakpBeth}, \cite[Thm.~3.1]{Mak99}, and \cite[Thm.~5]{BethHoog}.

\begin{Theorem}\label{Thm : strong ES = strong Beth}
    A universal class has the strong epimorphism surjectivity property if and only if it has the strong Beth definability property. 
\end{Theorem}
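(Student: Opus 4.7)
My plan is to prove both directions by combining the characterization of dominions in terms of pp-definable implicit operations (Theorem \ref{Thm : dominions : pp formulas}) with the key property that implicit operations, being defined by formulas preserved by homomorphisms, commute with every homomorphism between members of $\K$.

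For the direction SBD $\Rightarrow$ SES, I will reason by contrapositive. By Remark \ref{Rem : simplification of SES}, it suffices to show that for $\A \leq \B \in \K$ and $b \in B - A$, there exist homomorphisms $g, h \colon \B \to \C$ with $\C \in \K$ such that $g \res_A = h \res_A$ and $g(b) \ne h(b)$, i.e., that $b \notin \d_\K(\A, \B)$. Suppose toward contradiction that $b \in \d_\K(\A, \B)$. Since $\K$ is universal and therefore elementary, Theorem \ref{Thm : dominions : pp formulas} yields some $f \in \imppp(\K)$ and a tuple $\langle a_1, \dots, a_n\rangle \in \dom(f^\B) \cap A^n$ with $f^\B(a_1, \dots, a_n) = b$. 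Applying SBD together with Proposition \ref{Prop : elementary strong Beth}, $f$ is interpolated by a finite set of terms, so some $n$-ary term $t$ satisfies $t^\B(a_1, \dots, a_n) = f^\B(a_1, \dots, a_n) = b$. But $\A \leq \B$ and $a_1, \dots, a_n \in A$ force $t^\B(a_1, \dots, a_n) = t^\A(a_1, \dots, a_n) \in A$, contradicting $b \notin A$.

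For the converse SES $\Rightarrow$ SBD, I will show the stronger claim that for every $n$-ary $f \in \imp(\K)$, every $\A \in \K$, and every $\langle a_1, \dots, a_n\rangle \in \dom(f^\A)$, the value $b := f^\A(a_1, \dots, a_n)$ lies in $\mathsf{Sg}^\A(a_1, \dots, a_n)$. Let $\A_0 = \mathsf{Sg}^\A(a_1, \dots, a_n)$; since $\K$ is closed under $\SSS$, $\A_0 \in \K$. If $b \notin A_0$, then SES applied to $\A_0 \leq \A \in \K$ yields homomorphisms $g, h \colon \A \to \C$ with $\C \in \K$, $g\res_{A_0} = h\res_{A_0}$, and $g(b) \ne h(b)$. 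However, because $f$ is an operation of $\K$, both $\langle g(a_1), \dots, g(a_n)\rangle$ and $\langle h(a_1), \dots, h(a_n)\rangle$ lie in $\dom(f^\C)$ and
\[
g(b) = g(f^\A(a_1, \dots, a_n)) = f^\C(g(a_1), \dots, g(a_n)) = f^\C(h(a_1), \dots, h(a_n)) = h(b),
\]
a contradiction. Hence $b \in \mathsf{Sg}^\A(a_1, \dots, a_n)$, so $b = t^\A(a_1, \dots, a_n)$ for some $n$-ary term $t$. Collecting one such term for each pair $(\A, \vec{a})$ yields a set $T$ of $n$-ary terms (a set, not a proper class, because it is contained in the set of $n$-ary terms in the language of $\K$) that interpolates $f$ by construction.

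The conceptually trickiest point, I expect, is the bookkeeping in the first direction: one has to move from the existence of a pp-definable implicit operation producing $b$ (provided by the dominion characterization) to the existence of a single term producing $b$ from elements of $A$, which requires both SBD and the compactness-based finitization in Proposition \ref{Prop : elementary strong Beth}. The second direction, by contrast, is almost automatic once one observes that implicit operations are preserved by all homomorphisms between members of $\K$; the use of SES is only needed to separate $b$ from the subalgebra generated by its arguments, and then preservation closes the loop.
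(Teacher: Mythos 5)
Your proof is correct and follows essentially the same route as the paper: the direction SBD $\Rightarrow$ SES is the paper's argument verbatim (dominion characterization via $\imppp(\K)$ from Theorem \ref{Thm : dominions : pp formulas}, then a term interpolant forces the dominion element into $A$), and the direction SES $\Rightarrow$ SBD is the contrapositive form of what the paper does via Corollary \ref{Cor : failures of the SES}, with the only cosmetic difference that you unpack the easy inclusion of the dominion characterization (values of implicit operations lie in the dominion, by preservation under homomorphisms) inline instead of citing it. Both directions check out, including the bookkeeping that the interpolating terms form a set and that $\mathsf{Sg}^\A(a_1,\dots,a_n)$ is a member of $\K$ since universal classes are closed under $\SSS$.
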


As an immediate consequence of \cref{Rem : simplification of SES}, we obtain an alternative formulation of the strong epimorphism surjectivity property in terms of dominions.

\begin{Proposition}\label{Prop : SES and dominions}
Let $\mathsf{K}$ be a class of algebras closed under $\III$ and $\SSS$. 
Then $\mathsf{K}$ has the strong epimorphism surjectivity property if and only if $\mathsf{d}_\mathsf{K}(\A, \B) = A$ for every $\A \leq \B \in \mathsf{K}$.
\end{Proposition}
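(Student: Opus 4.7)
The plan is to prove both directions of the equivalence directly from the definitions, using the simplified form of the strong epimorphism surjectivity property provided by \cref{Rem : simplification of SES}. Since $\K$ is closed under $\III$ and $\SSS$ by assumption, the strong epimorphism surjectivity property for $\K$ is equivalent to the following: for every $\A \leq \B \in \K$ and every $b \in B - A$, there exists a pair of homomorphisms $g, h \colon \B \to \C$ with $\C \in \K$ such that $g{\upharpoonright}_A = h{\upharpoonright}_A$ and $g(b) \neq h(b)$. This reformulation is essentially the contrapositive of the statement defining the dominion $\d_\K(\A, \B)$, so the equivalence will follow by unpacking the definitions.

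For the forward direction, I would begin by observing that the inclusion $A \subseteq \d_\K(\A, \B)$ always holds for any $\A \leq \B \in \K$, since a pair of homomorphisms $g,h \colon \B \to \C$ with $g{\upharpoonright}_A = h{\upharpoonright}_A$ automatically agrees on every element of $A$. For the converse inclusion, assuming the strong epimorphism surjectivity property, I would take $b \in B - A$ and apply the simplified condition from \cref{Rem : simplification of SES} to obtain a pair of homomorphisms $g, h \colon \B \to \C$ with $\C \in \K$ witnessing that $b \notin \d_\K(\A, \B)$. Thus $\d_\K(\A, \B) \subseteq A$ and we conclude $\d_\K(\A, \B) = A$.

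For the backward direction, I would assume that $\d_\K(\A, \B) = A$ for every $\A \leq \B \in \K$, and verify the simplified form of the strong epimorphism surjectivity property. Given $\A \leq \B \in \K$ and $b \in B - A$, since $b \notin A = \d_\K(\A, \B)$, the definition of the dominion directly produces homomorphisms $g, h \colon \B \to \C$ with $\C \in \K$ such that $g{\upharpoonright}_A = h{\upharpoonright}_A$ but $g(b) \neq h(b)$, which is exactly what is required. Since there is no real obstacle beyond faithfully unwinding the definitions and invoking \cref{Rem : simplification of SES}, the argument will be very short.
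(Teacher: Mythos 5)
Your proposal is correct and matches the paper's approach: the paper gives no explicit proof, presenting the proposition as an immediate consequence of \cref{Rem : simplification of SES}, and your argument simply spells out the definitional unwinding of the dominion in both directions. Nothing is missing.
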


From \cref{Thm : dominions : pp formulas} and \cref{Prop : SES and dominions} we deduce the following.

\begin{Corollary}\label{Cor : failures of the SES}
A universal class $\mathsf{K}$ has the strong epimorphism surjectivity property if and only if for all $\A \leq \B \in \mathsf{K}$, $f \in \imppp(\K)$, and $\langle a_1, \dots, a_n \rangle \in \mathsf{dom}(f^\B) \cap A^n$ we have $f^{\B}(a_1, \dots, a_n) \in A$.
\end{Corollary}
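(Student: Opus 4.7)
The plan is to derive this corollary by chaining together the two results explicitly cited just before the statement, namely \cref{Prop : SES and dominions} and \cref{Thm : dominions : pp formulas}. Since every universal class is elementary and closed under $\III$ and $\SSS$, both results apply.

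First, I would invoke \cref{Prop : SES and dominions} to reformulate the strong epimorphism surjectivity property as the equality $\mathsf{d}_\mathsf{K}(\A, \B) = A$ for every $\A \leq \B \in \mathsf{K}$. Next, I would note the trivial inclusion $A \subseteq \mathsf{d}_\mathsf{K}(\A, \B)$, which holds for any class of algebras: indeed, if $a \in A$ and $g, h \colon \B \to \C$ are homomorphisms with $\C \in \mathsf{K}$ satisfying $g{\upharpoonright}_A = h{\upharpoonright}_A$, then in particular $g(a) = h(a)$, so $a \in \mathsf{d}_\mathsf{K}(\A, \B)$. Consequently, the equality $\mathsf{d}_\mathsf{K}(\A, \B) = A$ is equivalent to the sole inclusion $\mathsf{d}_\mathsf{K}(\A, \B) \subseteq A$.

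Finally, I would translate this inclusion using the explicit description of dominions provided by \cref{Thm : dominions : pp formulas}. According to that theorem, an element $b \in B$ belongs to $\mathsf{d}_\mathsf{K}(\A, \B)$ if and only if there exist $f \in \imppp(\K)$ and $\langle a_1, \dots, a_n \rangle \in \mathsf{dom}(f^\B) \cap A^n$ such that $f^\B(a_1, \dots, a_n) = b$. Therefore the inclusion $\mathsf{d}_\mathsf{K}(\A, \B) \subseteq A$ amounts precisely to the requirement that whenever such $f$ and $\langle a_1, \dots, a_n\rangle$ exist, the value $f^\B(a_1, \dots, a_n)$ lies in $A$. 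Quantifying over all $\A \leq \B \in \mathsf{K}$ yields the stated condition.

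Since the argument is a direct combination of two already established results and an almost trivial observation about dominions, I do not anticipate any obstacle; the only thing worth flagging is the reminder that universal classes are elementary (hence \cref{Thm : dominions : pp formulas} applies) and closed under $\III$ and $\SSS$ (hence \cref{Prop : SES and dominions} applies).
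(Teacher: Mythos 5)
Your proposal is correct and follows exactly the route the paper takes: the paper deduces this corollary directly from \cref{Prop : SES and dominions} and \cref{Thm : dominions : pp formulas}, and your argument simply spells out the (correct) details of that deduction, including the trivial inclusion $A \subseteq \mathsf{d}_\mathsf{K}(\A, \B)$ and the applicability of both cited results to universal classes.
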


We are now ready to prove \cref{Thm : strong ES = strong Beth}.

\begin{proof}

Let $\mathsf{K}$ be a universal class. To prove the implication from left to right, we reason by contraposition. Suppose that $\mathsf{K}$ lacks the strong Beth definability property. By Propositions~\ref{Prop : interpolation : infinite to finite} and \ref{Prop : elementary strong Beth} 
there exists $f \in \mathsf{imp}_{\textsc{pp}}(\mathsf{K})$
that cannot be interpolated by any set of terms.
Therefore, there exists $\B \in \mathsf{K}$ and $\langle a_1, \dots, a_n \rangle \in \dom(f^{\B})$ for which there exists no term $t$ such that $f^{\B}(a_1, \dots, a_n) = t^{\B}(a_1, \dots, a_n)$. Then $f^{\B}(a_1, \dots, a_n) \notin \mathsf{Sg}^{\B}(a_1, \dots, a_n)$.
Let $\A=\mathsf{Sg}^{\B}(a_1, \dots, a_n)$.
Since $\langle a_1, \dots, a_n \rangle \in \mathsf{dom}(f^{\B}) \cap A^n$ and $f^{\B}(a_1, \dots a_n) \notin A$, 
\cref{Cor : failures of the SES} implies that $\mathsf{K}$ lacks the strong epimorphism surjectivity property.

Next we prove the implication from right to left. Suppose that $\K$ has the strong Beth definability property. In order to prove that $\K$ has the strong epimorphism surjectivity property, it suffices to show that $\mathsf{d}_{\mathsf{K}}(\A, \B) = A$ for every $\A \leq \B \in \K$ (see \cref{Prop : SES and dominions}). To this end, consider $\A \leq \B \in \mathsf{K}$ and $b \in \mathsf{d}_{\mathsf{K}}(\A, \B)$. By \cref{Thm : dominions : pp formulas} there exist $f \in \mathsf{imp}_{\textsc{pp}}(\mathsf{K})$ and $\langle a_1, \dots, a_n \rangle \in \mathsf{dom}(f^{\B}) \cap A^n$ such that $b = f^{\B}(a_1, \dots, a_n)$. Since $\mathsf{K}$ has the strong Beth definability property, there exists a term $t$ such that $f^{\B}(a_1, \dots, a_n) = t^{\B}(a_1, \dots, a_n)$. Therefore,
\begin{align*}
    b &= f^{\B}(a_1, \dots, a_n) = t^{\B}(a_1, \dots, a_n) = t^{\A}(a_1, \dots, a_n) \in A. 
\end{align*}
This shows that $\mathsf{d}_{\mathsf{K}}(\A, \B) \subseteq A$. As the reverse inclusion always holds, we conclude that $\mathsf{d}_{\mathsf{K}}(\A, \B) = A$.
\end{proof}

We close this section with a series of examples of classes of algebras with and without the strong epimorphism surjectivity property.

\begin{exa}[\textsf{Monoids}]\label{Exa : monoids lack the SES}
As we mentioned, the inclusion map of $\mathbb{Z} = \langle \mathbb{Z}; \cdot, 1 \rangle$ into $\mathbb{Q} = \langle \mathbb{Q}; \cdot,1 \rangle$ is a nonsurjective epimorphism in the variety of monoids $\mathsf{Mon}$, whence $\mathsf{Mon}$ lacks the 
epimorphism surjectivity property, and consequently its strong version as well.

This can also be viewed through the lens of \cref{Cor : failures of the SES}. For let $f$ be the implicit operation of ``taking inverses'' in monoids and recall that it can be defined by a pp formula (see Theorem \ref{Thm : inverses monoid : implicit operation}).
Clearly, $\mathbb{Z} \leq \mathbb{Q}$ and $2 \in \mathsf{dom}(f^\mathbb{Q}) \cap \mathbb{Z}$. Moreover,
\[
f^\mathbb{Q}(2) = 2^{-1} = \frac{1}{2} \notin \mathbb{Z}.
\]
From Corollary \ref{Cor : failures of the SES} it follows that $\mathsf{Mon}$ lacks the strong epimorphism surjectivity property. The same proof yields the same conclusion for the variety of commutative monoids. In view of Theorem \ref{Thm : strong ES = strong Beth}, both varieties lack the strong Beth definability property.
\qed
\end{exa}

\begin{exa}[\textsf{Reduced commutative rings}]
Essentially the same argument shows that the quasivariety $\mathsf{RCRing}$ of reduced commutative rings lacks the strong epimorphism surjectivity property. More precisely, let $\mathbb{Z}$ and $\mathbb{Q}$ be the reduced commutative rings of the integers and the rationals, respectively. Moreover, let $f$ be the implicit operation of $\mathsf{RCRing}$ given by \cref{Thm : RCRing : implicit operation}. As $\mathbb{Q}$ is a field, \cref{Thm : RCRing : implicit operation} guarantees that $f^{\mathbb{Q}}$ is the operation of  ``taking weak inverses'' in $\mathbb{Q}$. Therefore, we can replicate the argument detailed in \cref{Exa : monoids lack the SES}, yielding that $\mathsf{RCRing}$ lacks
the strong epimorphism surjectivity property, and hence also the strong Beth definability property.
\qed
\end{exa}

\begin{exa}[\textsf{Distributive lattices}]
We will show that the variety of distributive lattices $\mathsf{DL}$ lacks the strong epimorphism surjectivity property. For let $f$ be the implicit operation of ``taking relative complements'' in distributive lattices and recall that it can be defined by a pp formula (see Theorem \ref{Thm : relative complements is implicit}).
Moreover, let $\B$ be the four-element Boolean lattice with universe $\{ 0, a, b, 1 \}$, where $0$ and $1$ are the minimum and the maximum of $\B$, respectively. Lastly, let $\A$ be the subalgebra of $\B$ with universe $\{ 0, a, 1 \}$. Since $b$ is the complement of $a$ relative to $[0, 1]$ in $\B$, we have $\langle a, 0, 1 \rangle \in \mathsf{dom}(f^\B) \cap A^3$ and
\[
f^\B(a, 0, 1) = b \notin A.
\]
Hence, we can apply Corollary \ref{Cor : failures of the SES}, obtaining that $\mathsf{DL}$ lacks the strong epimorphism surjectivity 
property, and thus also the strong Beth definability property.
\qed
\end{exa}

The survey \cite{SurvKissal} contains plenty of examples of classes of algebras with and without the epimorphism surjectivity property and the \emph{intersection property of amalgamation} (IPA, for short), which is equivalent to the strong epimorphism surjectivity property in varieties (see \cite[Prop.~4.5]{SurvKissal}). Among these examples, we count the following.
A \emph{semigroup with zero} is an algebra $\A=\langle A; \cdot, 0 \rangle$, where $\langle A; \cdot \rangle$ is a semigroup and $a0 = 0a = 0$ for every $a \in A$.
For every $n \geq 4$ the variety of semigroups with zero satisfying the equation $x^n \thickapprox 0$ has the epimorphism surjectivity property, but not its strong version (see \cite[p.~89]{SurvKissal}). Moreover, the varieties of semilattices and lattices\label{lattices ES} have both the strong epimorphism surjectivity property (see \cite[pp.~99, 102]{SurvKissal}).

\section{Tangible epimorphism surjectivity}

The aim of this section is to facilitate the task of determining whether a given class of algebras has the strong epimorphism surjectivity property. On the one hand, we will show that this problem can often be settled by considering only finitely generated or finitely presented algebras.

\begin{Theorem}\label{Thm : finitely generated : SES}
The following conditions are equivalent for a universal class $\mathsf{K}$:
\benroman
\item\label{item : universal SES : almost total : 1} $\mathsf{K}$ has the strong epimorphism surjectivity property;
\item\label{item : universal SES : almost total : 2} for each $\A \leq \B \in \mathsf{K}$ with $\A$ and $\B$ finitely generated we have $\mathsf{d}_\mathsf{K}(\A, \B) = A$.
\eroman 
In addition, when $\mathsf{K}$ is a quasivariety, we may assume that $\B$ is finitely presented.
\end{Theorem}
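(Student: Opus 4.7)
The proof plan has three parts corresponding to the two implications and the addendum about finitely presented algebras.

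The implication (\ref{item : universal SES : almost total : 1})$\Rightarrow$(\ref{item : universal SES : almost total : 2}) is immediate: a universal class is closed under $\III$ and $\SSS$, so by \cref{Prop : SES and dominions}, the strong epimorphism surjectivity property yields $\mathsf{d}_{\mathsf{K}}(\A, \B) = A$ for every $\A \leq \B \in \mathsf{K}$, including the finitely generated ones.

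For the converse (\ref{item : universal SES : almost total : 2})$\Rightarrow$(\ref{item : universal SES : almost total : 1}), I would argue via dominions and \cref{Thm : dominions : pp formulas}. Pick an arbitrary $\A \leq \B \in \mathsf{K}$ and $b \in \mathsf{d}_{\mathsf{K}}(\A, \B)$; the goal is to show $b \in A$. By \cref{Thm : dominions : pp formulas}, there exist $f \in \imppp(\K)$, defined by some pp formula $\varphi(x_1, \dots, x_n, y) = \exists z_1, \dots, z_m \psi$ with $\psi$ a conjunction of equations, and $\langle a_1, \dots, a_n \rangle \in \mathsf{dom}(f^\B) \cap A^n$ with $f^\B(a_1, \dots, a_n) = b$. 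Pick witnesses $c_1, \dots, c_m \in B$ such that $\B \vDash \psi(a_1, \dots, a_n, b, c_1, \dots, c_m)$, and consider the finitely generated subalgebras
\[
\B' = \mathsf{Sg}^\B(a_1, \dots, a_n, b, c_1, \dots, c_m) \quad \text{and} \quad \A' = \mathsf{Sg}^{\B'}(a_1, \dots, a_n).
\]
Since $\mathsf{K}$ is closed under $\SSS$, both $\A', \B' \in \mathsf{K}$, and since conjunctions of equations are preserved by subalgebras, $\B' \vDash \psi(a_1, \dots, a_n, b, c_1, \dots, c_m)$, whence $f^{\B'}(a_1, \dots, a_n) = b$. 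Applying \cref{Thm : dominions : pp formulas} in the opposite direction, $b \in \mathsf{d}_\mathsf{K}(\A', \B')$. By hypothesis (\ref{item : universal SES : almost total : 2}), $\mathsf{d}_\mathsf{K}(\A', \B') = A'$, so $b \in A' \subseteq A$, as desired. Then \cref{Prop : SES and dominions} yields the strong epimorphism surjectivity property.

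For the additional statement in the quasivariety case, the main obstacle is that the subalgebra $\B'$ constructed above is finitely generated but not necessarily finitely presented. I would remedy this using the finitely presented algebra $\boldsymbol{T}_\mathsf{K}(\varphi)$ from \cref{Def: TK} together with \cref{Prop : finitely presented}. Set $\A'' = \mathsf{Sg}^{\boldsymbol{T}_\mathsf{K}(\varphi)}(x_1/\theta(\varphi), \dots, x_n/\theta(\varphi))$; by \cref{Prop : finitely presented}\eqref{item : finitely presentable : 1} and \cref{Thm : dominions : pp formulas}, we have $y/\theta(\varphi) \in \mathsf{d}_\mathsf{K}(\A'', \boldsymbol{T}_\mathsf{K}(\varphi))$, with $\A''$ finitely generated and $\boldsymbol{T}_\mathsf{K}(\varphi)$ finitely presented. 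By the stronger hypothesis this dominion equals $A''$, so there is a term $t(x_1, \dots, x_n)$ with $y/\theta(\varphi) = t^{\boldsymbol{T}_\mathsf{K}(\varphi)}(x_1/\theta(\varphi), \dots, x_n/\theta(\varphi))$. Then \cref{Prop : finitely presented}\eqref{item : finitely presentable : 2} provides a homomorphism $h \colon \boldsymbol{T}_\mathsf{K}(\varphi) \to \B$ sending $x_i/\theta(\varphi)$ to $a_i$ and $y/\theta(\varphi)$ to $b$, so
\[
b = h(y/\theta(\varphi)) = t^\B(a_1, \dots, a_n) = t^\A(a_1, \dots, a_n) \in A,
\]
where the last equality uses $\A \leq \B$ and $a_1, \dots, a_n \in A$. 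Thus, already the restriction of (\ref{item : universal SES : almost total : 2}) to $\B$ finitely presented suffices to recover (\ref{item : universal SES : almost total : 1}) in the quasivariety setting.
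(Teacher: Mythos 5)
Your proof is correct, but the argument for (\ref{item : universal SES : almost total : 2})$\Rightarrow$(\ref{item : universal SES : almost total : 1}) takes a genuinely different route from the paper's. The paper argues by contraposition: assuming the strong epimorphism surjectivity property fails, it invokes \cref{Cor : failures of the SES} to get a counterexample $\C \leq \D$, builds the finitely presented algebra $\boldsymbol{T}_{\QQQ(\K)}(\varphi)$ of \cref{Def: TK}, maps it into $\D$ via \cref{Prop : finitely presented}, and then splits into cases according to whether $\K$ is a quasivariety (in which case $\boldsymbol{T}_{\K}(\varphi)$ itself is the finitely presented counterexample) or merely universal (in which case it passes to the homomorphic images inside $\D$). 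You instead argue directly: given $b \in \mathsf{d}_\K(\A,\B)$, you localize to the finitely generated subalgebra $\B'$ of $\B$ generated by $a_1,\dots,a_n,b$ and the existential witnesses of the pp formula, use preservation of conjunctions of equations under subalgebras to keep $b$ in the dominion of $\A'$ in $\B'$, and apply the hypothesis there. This is shorter and avoids both the contraposition and the detour through $\boldsymbol{T}_{\QQQ(\K)}(\varphi)$ in the universal-class case; what the paper's formulation buys is that the finitely presented witness falls out of the same construction used for the addendum. For the addendum your argument is essentially the paper's (both hinge on $\boldsymbol{T}_\K(\varphi)$ and \cref{Prop : finitely presented}), except that you additionally extract a term interpolant $t$ and push it through the homomorphism $h$, which is slightly more than needed but perfectly valid. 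One terminological quibble: restricting condition (\ref{item : universal SES : almost total : 2}) to finitely presented $\B$ gives an a priori \emph{weaker} hypothesis (it quantifies over fewer pairs), so calling it ``the stronger hypothesis'' is backwards, though this does not affect the mathematics.
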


 On the other hand, we will provide a criterion for the validity of the strong epimorphism surjectivity property which applies to a large class of quasivarieties. More precisely, a term $t$ of arity $ \geq 3$ is a \emph{near unanimity term} (see, e.g., \cite[Sec.~1.2.3]{KP01})
 for a class of algebras $\mathsf{K}$ when
\[
\mathsf{K} \vDash x \thickapprox t(y, x, \dots, x) \thickapprox t(x, y, x, \dots, x) \thickapprox \dots \thickapprox t(x, \dots, x, y).
\]
Intuitively, the term $t$ returns $x$ when its arguments are almost unanimously $x$. 
Notably, each variety with a near unanimity term is congruence distributive (see \cite[Thm.~2]{Mit78}), although the converse need not hold in general (see \cite[Lem.~3]{Mit78}). Ternary near unanimity terms play a prominent role in algebra and are known as \emph{majority terms} (see, e.g., \cite[Def.\ II.12.8]{BuSa00}). As $t(x, y, z) = (x \land y) \lor (x \land z) \lor (y \land z)$ is a majority term for every class of algebras with a lattice reduct, we have the following.

\begin{Theorem} \label{Thm : lattice reduct -> CD}
Let $\K$ be a class of algebras with a lattice reduct. Then $\VVV(\K)$ has a majority term and is congruence distributive.
\end{Theorem}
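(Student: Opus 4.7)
The plan is to directly exhibit a majority term for $\VVV(\K)$ and then invoke the cited connection between near unanimity terms and congruence distributivity, both of which are already flagged in the paragraph preceding the statement.

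First I would verify that
\[
t(x,y,z) = (x \land y) \lor (x \land z) \lor (y \land z)
\]
is a majority term for $\K$ itself. Since $\land$ and $\lor$ belong to the language of $\K$ (as $\K$ has a lattice reduct), $t$ is a legitimate ternary term of $\K$. The three required identities $t(y,x,x) \thickapprox x$, $t(x,y,x) \thickapprox x$, and $t(x,x,y) \thickapprox x$ follow from absorption and idempotency: for instance, $t(y,x,x) = (y \land x) \lor (y \land x) \lor (x \land x) = (y \land x) \lor x = x$, and the other two cases are symmetric. These identities are thus valid in $\K$.

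Next I would lift these identities from $\K$ to $\VVV(\K)$. Since the three displayed formulas are equations valid in $\K$, \cref{Cor: VQU and formulas}\eqref{Cor: VQU and formulas : V} guarantees that they remain valid in $\VVV(\K)$. Hence $t$ is a (ternary) near unanimity term for $\VVV(\K)$, showing the first half of the statement.

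Finally, to conclude that $\VVV(\K)$ is congruence distributive, I would simply invoke the Mitschke--Baker theorem \cite[Thm.~2]{Mit78} cited in the paragraph above the statement, which says that any variety admitting a near unanimity term is congruence distributive. No step here poses a genuine obstacle; the only point of care is to ensure that $t$ really is a term in the language of $\K$, which is immediate from the assumption that $\K$ has a lattice reduct.
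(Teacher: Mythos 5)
Your proposal is correct and follows exactly the route the paper intends: the paper gives no separate proof but justifies the theorem in the preceding paragraph by observing that $t(x,y,z) = (x \land y) \lor (x \land z) \lor (y \land z)$ is a majority term for any class with a lattice reduct and then citing Mitschke's theorem for congruence distributivity. Your only addition is the explicit (and correct) verification of the absorption computations and the lifting of the equations to $\VVV(\K)$ via \cref{Cor: VQU and formulas}, which the paper leaves implicit.
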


We will show that, in the presence of a near unanimity term, the task of determining whether a quasivariety has the strong epimorphism surjectivity property can be simplified as follows.

\begin{Theorem}\label{Thm : unanimity}
The following conditions are equivalent for a quasivariety  $\mathsf{K}$ with a near unanimity term of arity $n$:
\benroman
\item\label{item : unanimity : 1} $\mathsf{K}$ has the strong epimorphism surjectivity property;
\item\label{item : unanimity : 2} for each finitely generated $\A \leq \B_1 \times \dots \times \B_{n-1}$ with $\B_1, \dots, \B_{n-1} \in \mathsf{K}_{\textsc{rfsi}}$ finitely generated we have $\mathsf{d}_\mathsf{K}(\A, \B_1 \times \dots \times \B_{n-1}) = A$.
\eroman
\end{Theorem}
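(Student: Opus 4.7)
The implication (i) $\Rightarrow$ (ii) is immediate: the strong epimorphism surjectivity property forces, via \cref{Prop : SES and dominions}, that $\d_\K(\A', \B') = A'$ for every $\A' \leq \B' \in \K$, which specializes to the products of $n-1$ finitely generated relatively finitely subdirectly irreducible algebras appearing in (ii).

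For (ii) $\Rightarrow$ (i), the plan is first to reduce, via \cref{Thm : finitely generated : SES}, to the task of showing $\d_\K(\A, \B) = A$ for all finitely generated $\A \leq \B \in \K$ with $\B$ finitely presented. Given $b \in \d_\K(\A, \B)$, I would subdirectly embed $\B$ into $\prod_{i \in I} \C_i$ with $\C_i \in \K_\rsi$ using \cref{Thm : Subdirect Decomposition}; since each $\C_i$ is a quotient of the finitely generated $\B$, it is itself finitely generated, and hence a finitely generated member of $\K_\rfsi$.

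The core of the argument uses (ii) to control all $(n-1)$-coordinate projections of $b$. For each $(n-1)$-subset $S \subseteq I$, the projection $\pi_S \colon \B \to \prod_{i \in S} \C_i$ fits the hypothesis of (ii) applied to the finitely generated subalgebra $\pi_S(\A)$, so $\d_\K(\pi_S(\A), \prod_{i \in S} \C_i) = \pi_S(\A)$; combined with \cref{Prop: dominions and homs}, this forces $\pi_S(b) \in \pi_S(\A)$, yielding an $a_S \in A$ with $\pi_i(a_S) = \pi_i(b)$ for every $i \in S$. The near unanimity term $t$ will then drive a Baker--Pixley-style induction on $|J|$: for every finite $J \subseteq I$ with $|J| \geq n-1$, one produces $a_J \in A$ agreeing with $b$ on all of $J$. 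In the inductive step, given the $a_{J \setminus \{j\}}$ for $j \in J$, I would verify that $t$ applied to any $n$ of these produces such an $a_J$, because at each coordinate $k \in J$ at most one of the $n$ arguments may disagree with $\pi_k(b)$, and near unanimity pins the output to $\pi_k(b)$.

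The main obstacle will be the last step, which is concluding $b \in A$ from these finite approximants. When $I$ is finite this is immediate by taking $J = I$. The infinite case is subtler, and this is where the finite presentability of $\B$ will have to be used: the plan is to exploit this finiteness via a compactness-style argument so that the coordinate-wise approximants $a_J$ collectively force the existence of an element of $A$ equal to $b$ under the subdirect embedding, thereby yielding $b \in A$ and closing the argument.
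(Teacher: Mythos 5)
Your forward implication and the first half of the converse are fine: the reduction to finitely generated $\A \leq \B$, the application of condition (ii) to the $(n-1)$-element subsets of the index set via \cref{Prop: dominions and homs}, and the Baker--Pixley-style induction with the near unanimity term all go through (note only that $\K_{\textsc{rsi}} \subseteq \K_{\textsc{rfsi}}$ is what licenses invoking (ii), and that the case $\lvert I \rvert < n-1$ needs padding by repeated factors, since trivial algebras are excluded from $\K_{\textsc{rfsi}}$). The genuine gap is exactly where you flag it: the passage from ``for every finite $J \subseteq I$ there is $a_J \in A$ with $\pi_k(a_J) = \pi_k(b)$ for all $k \in J$'' to ``$b \in A$''. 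This inference fails for arbitrary subdirect representations over an infinite index set --- an element can agree with elements of a subalgebra on every finite set of coordinates without belonging to the subalgebra --- and neither finite presentability of $\B$ nor the Compactness Theorem supplies the missing step: membership in $\mathsf{Sg}^{\B}(a_1, \dots, a_k)$ is an infinite disjunction over terms, and compactness gives no mechanism for collapsing the family of local witnesses $a_J$ into a single global one. Nothing in your argument bounds the number of coordinates that actually matter, so the interpolation scheme produces only finite approximants and cannot conclude.

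The paper closes precisely this hole by arranging for the index set to have size $n-1$ from the outset, and that is where the near unanimity term is really spent. Arguing by contraposition from a failure $b \in \d_\K(\A,\B) - A$ with $\A, \B$ finitely generated, one first applies \cref{Prop : full congruence} to assume that every nonidentity $\K$-congruence of $\B$ relates $b$ to some element of $A$; the near unanimity term then shows that $\textup{id}_B$ is $n$-irreducible in $\Con_\K(\B)$ (if $\textup{id}_B = \theta_1 \cap \dots \cap \theta_n$ with all the $(n-1)$-fold subintersections nontrivial, pick $a_i \in A$ related to $b$ by the intersection omitting $\theta_i$ and check that $t^{\B}(a_1, \dots, a_n) = b \in A$, a contradiction). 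By \cref{Prop : optimal two cases}, $\textup{id}_B$ is then an intersection of $n-1$ meet irreducible $\K$-congruences, yielding a subdirect embedding into a product of exactly $n-1$ finitely generated members of $\K_{\textsc{rfsi}}$ that still carries the nontrivial dominion element, contradicting (ii). If you want to retain your direct formulation, you must import this reduction (or some other device forcing a finite decomposition); as written, the last step does not follow.
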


Before proving these results, let us provide an example of how to apply them in practice.

\begin{exa}[\textsf{Relatively complemented distributive lattices}]\label{Exa : relatively complemented DL}
An algebra $\langle A; \land, \lor, r \rangle$ is a \emph{relatively complemented distributive lattice} when $\langle A; \land, \lor \rangle$ is a distributive lattice and $r$ a ternary operation such that $r(a, b, c)$ is the complement of $a$ relative to the interval $[a \land b \land c, a \lor b \lor c]$ for all $a, b, c \in A$. Notice that if $b \leq a \leq c$, then $r(a, b, c)$ is the complement of $a$ relative to $[b, c]$. The class of relatively complemented distributive lattices forms a variety, which we denote by $\mathsf{RCDL}$. We will prove the following.

\begin{Theorem}\label{Thm : SES : Boolean algebras}
The varieties of relatively complemented distributive lattices and of Boolean algebras have the strong epimorphism surjectivity property.
\end{Theorem}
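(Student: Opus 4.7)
The plan is to apply \cref{Thm : unanimity} to both varieties simultaneously. Since the variety of Boolean algebras and $\mathsf{RCDL}$ each possess a lattice reduct, \cref{Thm : lattice reduct -> CD} yields that they are congruence distributive and that
\[
t(x, y, z) = (x \land y) \lor (x \land z) \lor (y \land z)
\]
is a majority term, i.e., a near unanimity term of arity $n = 3$, for each of them. By \cref{Thm : unanimity}, it therefore suffices to verify that for every finitely generated $\A \leq \B_1 \times \B_2$ with $\B_1, \B_2 \in \K_\rfsi$ finitely generated, the equality $\mathsf{d}_\K(\A, \B_1 \times \B_2) = A$ holds.

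Next I would identify $\K_\rfsi$ in each case. Both varieties are generated by the two-element algebra $\mathbf{2}$: this is classical for Boolean algebras, and for $\mathsf{RCDL}$ it follows from the same fact for distributive lattices together with the uniqueness of relative complements recorded in \cref{Example : complements in lattices}, which ensures that the standard embedding of a distributive lattice into a power of $\mathbf{2}$ automatically preserves the operation $r$. Since $\mathbf{2}$ is finite, \cref{Prop : P_u trivial in finite setting} gives $\PPU(\mathbf{2}) \subseteq \III(\mathbf{2})$, and the \cref{Thm : Jonsson} then yields $\K_\fsi \subseteq \HHH\SSS\PPU(\mathbf{2}) \subseteq \HHH\SSS(\mathbf{2})$. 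As trivial algebras are not RFSI by convention, this forces $\K_\rfsi = \{\mathbf{2}\}$ up to isomorphism, and hence $\B_1 \times \B_2 \cong \mathbf{2} \times \mathbf{2}$.

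The final step is a direct case analysis on the subalgebras of $\mathbf{2} \times \mathbf{2}$. In the Boolean case, since the signature includes the constants $0, 1$ and the operation $\lnot$, the only proper subalgebra is the diagonal $\{\langle 0,0 \rangle, \langle 1,1 \rangle\}$, on which the two coordinate projections $\pi_1, \pi_2 \colon \mathbf{2} \times \mathbf{2} \to \mathbf{2}$ coincide while separating $\langle 0,1\rangle$ and $\langle 1, 0 \rangle$; hence the dominion is precisely the diagonal. In the $\mathsf{RCDL}$ setting there are more subalgebras, namely the four singletons and the five two-element chains (no three-element chain is closed under $r$ computed in $\mathbf{2} \times \mathbf{2}$). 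Each such proper subalgebra $\A$ can be separated from any $b \in (\mathbf{2} \times \mathbf{2}) \setminus A$ using two of the four $\mathsf{RCDL}$-homomorphisms $\mathbf{2}\times\mathbf{2} \to \mathbf{2}$, namely the two projections together with the two constant maps; the constant maps are legitimate $\mathsf{RCDL}$-homomorphisms precisely because $\mathsf{RCDL}$ has no constants in its signature.

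The main obstacle is simply the case-by-case bookkeeping on the $\mathsf{RCDL}$ side. As the target algebra has only four elements and only finitely many subalgebras arise, the verification reduces to a short and routine combinatorial check once the reduction via \cref{Thm : unanimity} has been carried out.
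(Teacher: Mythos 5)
Your proposal is correct, and its overall skeleton is the same as the paper's: reduce via \cref{Thm : unanimity} and the majority term coming from the lattice reduct to checking that dominions of subalgebras of $\boldsymbol{D}_2 \times \boldsymbol{D}_2$ are trivial, and then do a finite check. The one step where you genuinely diverge is the identification of the finitely subdirectly irreducible members. You first argue that $\mathsf{RCDL}$ is generated by the two-element algebra (which requires checking that the Birkhoff embedding of a distributive lattice into a power of $\boldsymbol{D}_2$ preserves $r$ --- your appeal to uniqueness of relative complements is exactly the right justification) and then apply \cref{Thm : Jonsson} together with \cref{Prop : P_u trivial in finite setting}. The paper instead shows directly that $\mathsf{Con}(\A) = \mathsf{Con}(\A^-)$ for every $\A \in \mathsf{RCDL}_{\textsc{fsi}}$, using the fact that taking relative complements is an implicit operation of $\mathsf{DL}$ (\cref{Thm : relative complements is implicit}) and hence is preserved by lattice quotients, and then invokes \cref{Prop : RFSI} and the classification of finitely subdirectly irreducible distributive lattices. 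Your route is shorter once generation by $\boldsymbol{D}_2$ is granted; the paper's avoids that generation claim entirely. Your concluding case analysis --- separating each proper subalgebra of $\boldsymbol{D}_2 \times \boldsymbol{D}_2$ from outside points using the two projections and, in the $\mathsf{RCDL}$ case, the two constant maps (legitimate homomorphisms precisely because the signature has no constants) --- is a correct and fully explicit instantiation of what the paper dispatches with ``by inspection'' via endomorphisms of $\boldsymbol{D}_2 \times \boldsymbol{D}_2$.
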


\begin{proof}
We will detail the proof for $\mathsf{RCDL}$ only, as the case of Boolean algebras is analogous and well known (see, e.g., \cite[p.~103]{SurvKissal}). Let $\boldsymbol{D}_2$ be the unique relatively complemented distributive lattice with universe $\{ 0, 1 \}$ and $0 < 1$. We begin with the following observation.

\begin{Claim}\label{Claim : RCDL : SES}
We have $\mathsf{RCDL}_{\textsc{fsi}} = \III(\boldsymbol{D}_2)$.
\end{Claim}

\begin{proof}[Proof of the Claim]
Since every two-element algebra is finitely subdirectly irreducible, we obtain $\III(\boldsymbol{D}_2) \subseteq \mathsf{RCDL}_{\textsc{fsi}}$. To prove the other inclusion, consider $\A \in \mathsf{RCDL}_{\textsc{fsi}}$. Let $\A^-$ be the lattice reduct of $\A$. To conclude the proof, it will be enough to show that $\mathsf{Con}(\A) = \mathsf{Con}(\A^-)$. For suppose that this is the case. Then, as $\A$ is finitely subdirectly irreducible, we can apply Proposition \ref{Prop : RFSI}, obtaining that  $\textup{id}_A$ is meet irreducible in $\mathsf{Con}(\A)$. Since $\mathsf{Con}(\A) = \mathsf{Con}(\A^-)$, this yields that $\textup{id}_A$ is also meet irreducible in $\mathsf{Con}(\A^-)$. Consequently, Proposition \ref{Prop : RFSI} guarantees that $\A^-$ is a finitely subdirectly irreducible distributive lattice. Up to isomorphism, the only such lattice is the lattice reduct of $\boldsymbol{D}_2$.  Hence, we conclude that $\A \cong  \boldsymbol{D}_2$, as desired.

Therefore, it only remains to show that $\mathsf{Con}(\A) = \mathsf{Con}(\A^-)$. As $\A^-$ is a reduct of $\A$, we have $\mathsf{Con}(\A) \subseteq	 \mathsf{Con}(\A^-)$. Then we 
proceed to prove the other inclusion. Consider $\theta \in \mathsf{Con}(\A^-)$. To prove that $\theta \in \mathsf{Con}(\A)$, let $\langle a_1, a_2 \rangle, \langle b_1, b_2 \rangle, \langle c_1, c_2 \rangle \in \theta$. We need to show that $\langle r^\A(a_1, b_1, c_1), r^\A(a_2, b_2, c_2) \rangle \in \theta$. To this end, recall from Theorem \ref{Thm : relative complements is implicit} that ``taking relative complements'' is a ternary implicit operation $f$ of $\mathsf{DL}$. Moreover, consider the canonical surjection $\pi_\theta \colon \A^- \to \A^- / \theta$.

We will prove that for each $i \leq 2$,
\begin{equation}\label{Eq : foijfkjqplasjalksa}
\langle a_i / \theta, b_i / \theta, c_i / \theta \rangle \in \mathsf{dom}(f^{\A^- / \theta}) \, \, \text{ and } \, \, f^{\A^- / \theta}(a_i / \theta, b_i / \theta, c_i / \theta) = r^\A(a_i, b_i, c_i) / \theta.
\end{equation}
First, observe that $\langle a_i, b_i, c_i \rangle \in \mathsf{dom}(f^{\A^-})$ and $f^{\A^-}(a_i, b_i, c_i) = r^\A(a_i, b_i, c_i)$ because $\A^-$ is the reduct of the relatively complemented distributive lattice $\A$ and $f$ is the implicit operation of ``taking relative complements''. Since $f$ is an operation of $\mathsf{DL}$, 
it is preserved by the homomorphism $\pi \colon \A^- \to \A^- / \theta$. Therefore, from  $\langle a_i, b_i, c_i \rangle \in \mathsf{dom}(f^{\A^-})$ and $f^{\A^-}(a_i, b_i, c_i) = r^\A(a_i, b_i, c_i)$ it follows that $\langle a_i / \theta, b_i / \theta, c_i / \theta \rangle = \langle \pi(a_i), \pi(b_i), \pi(c_i) \rangle\in \mathsf{dom}(f^{\A^- / \theta})$ and
\begin{align*}
f^{\A^- / \theta}(a_i / \theta, b_i / \theta, c_i / \theta) &= f^{\A^- / \theta}(\pi(a_i), \pi(b_i), \pi(c_i)) = \pi(f^{\A^-}(a_i, b_i, c_i))\\
& = \pi(r^\A(a_i, b_i, c_i)) = r^\A(a_i, b_i, c_i) / \theta.
\end{align*}
This establishes (\ref{Eq : foijfkjqplasjalksa}). As $a_1 / \theta = a_2 / \theta$,  $b_1 / \theta = b_2 / \theta$, and $c_1 / \theta = c_2 / \theta$ by assumption, this implies
\[
r^\A(a_1, b_1, c_1)/ \theta = f^{\A^- / \theta}(a_1 / \theta, b_1 / \theta, c_1 / \theta) = f^{\A^- / \theta}(a_2 / \theta, b_2 / \theta, c_2 / \theta) = r^\A(a_2, b_2, c_2)/ \theta,
\]
that is, $\langle r^\A(a_1, b_1, c_1), r^\A(a_2, b_2, c_2) \rangle \in \theta$, as desired.
\end{proof}

Now, we 
prove that $\mathsf{RCDL}$ has the strong epimorphism surjectivity property. As $\mathsf{RCDL}$ has a lattice reduct, it possesses a majority term. Therefore, we can apply \cref{Thm : unanimity}, obtaining that $\mathsf{RCDL}$ has the strong epimorphism surjectivity property if and only if for each finitely generated $\A \leq \B \times \C$ with $\B, \C \in \mathsf{RCDL}_{\textsc{fsi}}$ finitely generated we have $\mathsf{d}_\mathsf{K}(\A, \B \times \C) = A$. Together with \cref{Claim : RCDL : SES} the latter specializes to the following: for each $\A \leq \boldsymbol{D}_2 \times \boldsymbol{D}_2$ we have $\mathsf{d}_\mathsf{K}(\A, \boldsymbol{D}_2 \times \boldsymbol{D}_2) = A$.

To prove this, consider $\A \leq \boldsymbol{D}_2 \times \boldsymbol{D}_2$. By inspection for each $b \in (D_2 \times D_2)- A$ one can find an endomorphism $h$ of $\boldsymbol{D}_2 \times \boldsymbol{D}_2$ such that $id{\upharpoonright}_A = h{\upharpoonright}_A$ and $id(b) \ne g(b)$, where $id$ is the identity map on $\boldsymbol{D}_2 \times \boldsymbol{D}_2$. Hence, we conclude that $\mathsf{d}_\mathsf{K}(\A, \boldsymbol{D}_2 \times \boldsymbol{D}_2) = A$.
\end{proof}
\end{exa}

We are now ready to prove Theorem \ref{Thm : finitely generated : SES}.

\begin{proof}
The implication (\ref{item : universal SES : almost total : 1})$\Rightarrow$(\ref{item : universal SES : almost total : 2}) holds by 
\cref{Prop : SES and dominions}.
To prove the implication (\ref{item : universal SES : almost total : 2})$\Rightarrow$(\ref{item : universal SES : almost total : 1}), we reason by contraposition.\ Suppose that $\mathsf{K}$ lacks the strong epimorphism surjectivity property.\ In view of Corollary \ref{Cor : failures of the SES}, there exist $\C \leq \D \in \mathsf{K}$, an implicit operation $f$ of $\mathsf{K}$ defined by a pp formula $\varphi(x_1, \dots, x_n, y)$, and
\begin{equation}\label{Eq : fin presentable : universal : 1}
\langle c_1, \dots, c_n \rangle \in \mathsf{dom}(f^\D) \cap C^n \text{ with }f^\D(c_1, \dots, c_n) \notin C.
\end{equation}

As $\varphi$ defines an implicit operation of $\mathsf{K}$, it also defines an implicit operation $g$ of $\mathbb{Q}(\mathsf{K})$ by \cref{Cor : functionality in Q(K)}. As both $f$ and $g$ are defined by $\varphi$ and $\D \in \mathsf{K} \subseteq \mathbb{Q}(\mathsf{K})$, we have $f^\D = g^\D$. We will make use of this observation without further notice.

\begin{Claim}\label{Claim : finitely presented}
There exist a finitely presented member $\B$ of $\mathbb{Q}(\mathsf{K})$ and $\A \leq \B$ finitely generated with elements $a_1, \dots, a_n \in A$ and a homomorphism $h \colon \B \to \C$ satisfying the following conditions:
\benroman
\item\label{item : claim : finitely presented 1} $\langle a_1, \dots, a_n \rangle \in \mathsf{dom}(g^\B) \cap A^n$ and $g^\B(a_1, \dots, a_n) \notin A$;
\item\label{item : claim : finitely presented 2} $h(a_1) = c_1, \dots, h(a_n) = c_n$, $h(g^\B(a_1, \dots, a_n)) = f^\D(c_1, \dots, c_n)$, and $h[A] \subseteq C$.
\eroman
\end{Claim}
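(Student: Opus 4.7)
The plan is to apply \cref{Prop : finitely presented} to the pp formula $\varphi$ regarded as defining the implicit operation $g$ of $\QQQ(\K)$. First I would set $\B := \boldsymbol{T}_{\QQQ(\K)}(\varphi)$ and $a_i := x_i/\theta(\varphi)$ for each $i \leq n$, and then let $\A := \mathsf{Sg}^\B(a_1, \dots, a_n)$. By part~\eqref{item : finitely presentable : 1} of \cref{Prop : finitely presented}, $\B$ is a finitely presented member of $\QQQ(\K)$, $\A$ is finitely generated by construction, and one already obtains half of condition~\eqref{item : claim : finitely presented 1}, namely $\langle a_1, \dots, a_n\rangle \in \dom(g^\B)$ together with $g^\B(a_1, \dots, a_n) = y/\theta(\varphi)$.

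Next I would invoke part~\eqref{item : finitely presentable : 2} of the same proposition, applied to $\D \in \K \subseteq \QQQ(\K)$ and to $\langle c_1, \dots, c_n\rangle \in \dom(f^\D) = \dom(g^\D)$, to produce a homomorphism $h \colon \B \to \D$ with $h(a_i) = h(x_i/\theta(\varphi)) = c_i$ for each $i \leq n$ and $h(g^\B(a_1, \dots, a_n)) = h(y/\theta(\varphi)) = g^\D(c_1, \dots, c_n) = f^\D(c_1, \dots, c_n)$. (I read the codomain ``$\C$'' declared in the statement of the claim as a slip for $\D$, since $f^\D(c_1, \dots, c_n) \notin C$ by~\eqref{Eq : fin presentable : universal : 1}, so $h$ cannot land inside $\C$.) The companion requirement $h[A] \subseteq C$ is then automatic: as $\A = \mathsf{Sg}^\B(a_1, \dots, a_n)$ and $h$ is a homomorphism sending the generators into $C$, the image $h[A]$ equals $\mathsf{Sg}^\D(c_1, \dots, c_n)$, which is contained in $C$ because $\C \leq \D$. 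This completes~\eqref{item : claim : finitely presented 2}.

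The only step that is not routine bookkeeping is the remaining half of~\eqref{item : claim : finitely presented 1}, namely $g^\B(a_1, \dots, a_n) \notin A$, and this is the point at which the hypothesis $f^\D(c_1, \dots, c_n) \notin C$ finally gets used. I would argue by contradiction: if $g^\B(a_1, \dots, a_n)$ belonged to $A$, then $h$ would map it into $h[A] \subseteq C$; but $h(g^\B(a_1, \dots, a_n)) = f^\D(c_1, \dots, c_n) \notin C$, a contradiction. This is the main (and only real) subtlety of the claim, and it is overcome uniformly once $h$ has been produced via the universal property of $\boldsymbol{T}_{\QQQ(\K)}(\varphi)$.
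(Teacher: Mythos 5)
Your proposal is correct and follows essentially the same route as the paper's own proof: the same choice of $\B = \boldsymbol{T}_{\QQQ(\K)}(\varphi)$, $\A = \mathsf{Sg}^\B(x_1/\theta(\varphi), \dots, x_n/\theta(\varphi))$, the same two applications of \cref{Prop : finitely presented}, and the same contradiction argument for $g^\B(a_1,\dots,a_n)\notin A$ via $h[A]\subseteq C$ and $f^\D(c_1,\dots,c_n)\notin C$. Your reading of the codomain ``$\C$'' as a slip for $\D$ is also vindicated by the paper, whose proof constructs $h\colon\B\to\D$ and whose subsequent use of the claim refers to it as such.
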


\begin{proof}[Proof of the Claim]
Define $\B = \boldsymbol{T}_{\mathbb{Q}(\mathsf{K})}(\varphi)$, $a_1 = x_1 / \theta(\varphi), \dots, a_n = x_n / \theta(\varphi)$, and $\A = \mathsf{Sg}^\B(a_1, \dots, a_n)$ (for the definition of $\boldsymbol{T}_{\mathbb{Q}(\mathsf{K})}(\varphi)$ see \cref{Def: TK}). Clearly, $\A$ is a finitely generated subalgebra of $\B$. Moreover, from \cref{Prop : finitely presented}\eqref{item : finitely presentable : 1} it follows that $\B$ is finitely presented in $\mathbb{Q}(\mathsf{K})$ and that
\begin{equation}\label{Eq : fin presentable : universal : 2}
\langle a_1, \dots, a_n \rangle \in \mathsf{dom}(g^{\B}) \cap A^n \, \, \text{ and } \, \, g^\B(a_1, \dots, a_n) = y / \theta(\varphi).
\end{equation}
Now, recall from (\ref{Eq : fin presentable : universal : 1}) that $\langle c_1, \dots, c_n \rangle \in \mathsf{dom}(f^\D) = \mathsf{dom}(g^\D)$. Together with $\D \in \mathsf{K} \subseteq \mathbb{Q}(\mathsf{K})$ and \cref{Prop : finitely presented}\eqref{item : finitely presentable : 2}, this guarantees the existence of a homomorphism $h \colon \B \to \D$ such that
\[
h(a_1) = c_1, \dots, h(a_n) = c_n, \text{ and }h(y / \theta(\varphi)) = g^\D(c_1, \dots, c_n) = f^\D(c_1, \dots, c_n).
\]
From the right hand sides of the above display and (\ref{Eq : fin presentable : universal : 2}) it follows that $h(g^\B(a_1, \dots, a_n)) = f^\D(c_1, \dots, c_n)$.

Therefore, it only remains to prove that $g^\B(a_1, \dots, a_n) \notin A$ and $h[A] \subseteq C$. To this end, recall that $\A$ is generated by $a_1, \dots, a_n$ and that $c_1, \dots, c_n$ belong to the subalgebra $\C$ of $\D$. Therefore, from the left hand side of the above display it follows that $h[A] \subseteq C$. Moreover, from the right hand side of (\ref{Eq : fin presentable : universal : 1}) and $h(g^\B(a_1, \dots, a_n)) = f^\D(c_1, \dots, c_n)$  we obtain $h(g^\B(a_1, \dots, a_n)) \notin C$. Together with $h[A] \subseteq C$, this yields $g^\B(a_1, \dots, a_n) \notin A$.
\end{proof}

Let $\A$, $\B$, and $a_1, \dots, a_n$ be as in Claim \ref{Claim : finitely presented}. We have two cases: either $\mathsf{K}$ is a quasivariety or not. We begin with the case where $\mathsf{K}$ is a quasivariety. Then $\mathsf{K} = \mathbb{Q}(\mathsf{K})$. Therefore, $\B$ is a finitely presented member of $\mathsf{K}$ and $\A \leq \B$  finitely generated. From Claim \ref{Claim : finitely presented}(\ref{item : claim : finitely presented 1}) and Theorem \ref{Thm : dominions : pp formulas} it follows that $g^\B(a_1, \dots, a_n) \in \mathsf{d}_\mathsf{K}(\A, \B) - A$, whence $\mathsf{d}_\mathsf{K}(\A, \B) \ne A$, as desired.

It only remains to consider the case where $\mathsf{K}$ is not a quasivariety. Then let $h\colon \B \to \D$ be the homomorphism given by Claim \ref{Claim : finitely presented}. Define $\A' = h[\A]$ and $\B' = h[\B]$. As $\D \in \mathsf{K}$ and $\mathsf{K}$ is a universal class by assumption, from $\A' \leq \B' \leq \D$ it follows that $\A ', \B' \in \mathsf{K}$. Furthermore, since $\A$ and $\B$ are finitely generated by Claim \ref{Claim : finitely presented}, the algebras $\A'$ and $\B'$ are also finitely generated. By condition (\ref{item : claim : finitely presented 1}) of the same claim we have $\B \vDash \varphi(a_1, \dots, a_n, g^\B(a_1, \dots, a_n))$. As $h \colon \B \to \B'$ is a homomorphism and $\varphi$ a pp formula, we can apply Theorem \ref{Thm : preservation}(\ref{item : preservation : pp}), obtaining $\B' \vDash \varphi(h(a_1), \dots, h(a_n), h(g^\B(a_1, \dots, a_n)))$. By Claim \ref{Claim : finitely presented}(\ref{item : claim : finitely presented 2}) this amounts to $\B' \vDash \varphi(c_1, \dots, c_n, f^\D(c_1, \dots, c_n))$, that is,
\[
\langle c_1, \dots, c_n \rangle \in \mathsf{dom}(f^{\B'}) \, \, \text{ and } \, \, f^{\B'}(c_1, \dots, c_n) = f^\D(c_1, \dots, c_n).
\]

Recall from (\ref{Eq : fin presentable : universal : 1}) that $f^\D(c_1, \dots, c_n) \notin C$. Together with the right hand side of the above display and the fact that $A' = h[A] \subseteq C$ (see condition (\ref{item : claim : finitely presented 2}) of Claim \ref{Claim : finitely presented}), this yields $f^{\B'}(c_1, \dots, c_n) \notin A'$. On the other hand, from $a_1, \dots, a_n \in A$ and $h(a_i) = c_i$ for each $i \leq n$ (see    Claim \ref{Claim : finitely presented}(\ref{item : claim : finitely presented 2})) it follows that $c_1, \dots, c_n \in h[A] = A'$. Hence, the left hand side of the above display can be improved to $\langle c_1, \dots, c_n \rangle \in \mathsf{dom}(f^{\B'}) \cap (A')^{n}$. As a consequence, we can apply Theorem \ref{Thm : dominions : pp formulas}, obtaining $\varphi^{\B'}(c_1, \dots, c_n) \in \mathsf{d}_\mathsf{K}(\A', \B') - A'$, whence $\mathsf{d}_\mathsf{K}(\A', \B') \ne A'$.
\end{proof}

Now, we
proceed to prove \cref{Thm : unanimity}. We will make use of the next concept from \cite[Def.~4.4]{CKMWES}.

\begin{Definition}
Let $\mathsf{K}$ be a quasivariety, $\A \in \mathsf{K}$, and $\theta \in \mathsf{Con}_\mathsf{K}(\A)$.\ Given a positive integer $n$, we say that $\theta$ is \emph{$n$-irreducible} in $\mathsf{Con}_\mathsf{K}(\A)$ when $\theta= \theta_1 \cap \dots \cap \theta_n$ with $\theta_1, \dots, \theta_n \in \mathsf{Con}_\mathsf{K}(\A)$ implies $\theta=\theta_1 \cap \dots \cap \theta_{i-1} \cap \theta_{i+1} \cap \dots \cap \theta_n$ for some $i \leq n$. When $\mathsf{K}$ is clear from the context, we will simply say that $\theta$ is \emph{$n$-irreducible}.
\end{Definition}

Notice that the only $1$-irreducible $\mathsf{K}$-congruence of $\A$ is $A \times A$. Moreover, a $\mathsf{K}$-congruence $\theta$ of $\A$ is $2$-irreducible if and only if either $\theta \in \mathsf{Irr}_{\mathsf{K}}(\A)$ or $\theta = A \times A$. We rely on the following observation (see \cite[Prop.~4.5]{CKMWES}).

\begin{Proposition}\label{Prop : optimal two cases}
Let $\mathsf{K}$ be a quasivariety, $\A \in \mathsf{K}$, and $\theta \in \mathsf{Con}_\mathsf{K}(\A)$ $n$-irreducible. Then there exist $\phi_1, \dots, \phi_{n-1} \in \mathsf{Irr}_{\mathsf{K}}(\A)$ such that $\theta = \phi_1 \cap \dots \cap \phi_{n-1}$.
\end{Proposition}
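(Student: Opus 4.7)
The plan is to argue by induction on $n$, combining a split on whether $\theta$ is already $(n-1)$-irreducible with a Zorn's lemma refinement. For the base case $n=1$, the first remark following the definition forces $\theta = A \times A$, so the empty tuple of meet-irreducibles realizes the conclusion under the convention $\bigcap \emptyset = \top$. For $n=2$ with $\theta \neq A \times A$, the second remark gives directly $\theta \in \mathsf{Irr}_{\mathsf{K}}(\A)$, so one takes $\phi_1 := \theta$. Throughout the inductive step I tacitly take $n$ to be the minimal parameter for which $\theta$ is $n$-irreducible, which ensures $\theta \neq A \times A$ whenever $n \geq 2$ and allows us to pad short decompositions by duplication.

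Turning to $n \geq 3$, I would first dispose of the easy sub-case where $\theta$ happens to be $(n-1)$-irreducible: the inductive hypothesis then supplies an expression of $\theta$ as an intersection of at most $n-2$ meet-irreducibles (at least one, since $\theta \neq A \times A$), which is padded by duplicating one of them to reach exactly $n-1$ terms. Otherwise, there is an irredundant decomposition $\theta = \theta_1 \cap \dots \cap \theta_{n-1}$ with every $\theta_i \supsetneq \theta$, and the remaining task is to enlarge each $\theta_i$ to a meet-irreducible without breaking the decomposition.

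The enlargement is done one index at a time via Zorn's Lemma. For $i=1$ I consider
\[
\mathcal{M} = \{ \phi \in \mathsf{Con}_\mathsf{K}(\A) : \phi \supseteq \theta_1 \text{ and } \phi \cap \theta_2 \cap \dots \cap \theta_{n-1} = \theta \}.
\]
Since $\mathsf{Con}_\mathsf{K}(\A)$ is algebraic, directed joins in it are set-theoretic unions, and a routine check shows $\mathcal{M}$ is closed under such unions, so Zorn yields a maximal $\phi_1 \in \mathcal{M}$. The crucial step is to show $\phi_1 \in \mathsf{Irr}_\mathsf{K}(\A)$: if $\phi_1 = \psi_1 \cap \psi_2$ with $\psi_1, \psi_2 \supsetneq \phi_1$, then $\theta = \psi_1 \cap \psi_2 \cap \theta_2 \cap \dots \cap \theta_{n-1}$ is an $n$-fold decomposition, so by the $n$-irreducibility of $\theta$ one of these terms drops out. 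Dropping $\psi_1$ or $\psi_2$ produces an element of $\mathcal{M}$ strictly above $\phi_1$, contradicting maximality; dropping some $\theta_j$ combined with $\phi_1 \supseteq \theta_1$ forces $\theta = \theta_1 \cap \dots \cap \widehat{\theta_j} \cap \dots \cap \theta_{n-1}$, contradicting the irredundancy of the starting decomposition. A parallel argument shows the refined decomposition $\theta = \phi_1 \cap \theta_2 \cap \dots \cap \theta_{n-1}$ remains irredundant, so the same recipe iterates over $i = 2, \dots, n-1$ and delivers the sought $\phi_1, \dots, \phi_{n-1} \in \mathsf{Irr}_\mathsf{K}(\A)$.

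The main obstacle is the bookkeeping around irredundancy: after every substitution, the new decomposition must be re-verified to be irredundant so that the next Zorn iteration can legitimately invoke the $n$-irreducibility of the original $\theta$, and inside the meet-irreducibility proof the three possible term drops must each be ruled out by appealing to either the maximality of $\phi_i$ or the irredundancy of the current decomposition, with no accidental circularity between the two.
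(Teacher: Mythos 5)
The paper does not actually prove this proposition---it is imported verbatim from \cite[Prop.~4.5]{CKMWES}---so there is no in-text argument to compare yours against; I can only assess your proof on its own terms, and it is essentially correct. The core mechanism (take an irredundant decomposition of $\theta$ of length $n-1$, which exists because $\theta$ fails to be $(n-1)$-irreducible, then use Zorn's Lemma to enlarge each factor to a maximal congruence compatible with the decomposition and deduce meet-irreducibility of the maximal element from $n$-irreducibility plus irredundancy) is sound, and your three-way case analysis for the term that drops out of the $n$-fold decomposition $\psi_1 \cap \psi_2 \cap \theta_2 \cap \dots \cap \theta_{n-1}$ is exactly right. Three points should be tightened. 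First, the degenerate case: $A \times A$ is $n$-irreducible for every $n$ but, by the paper's own convention, is never meet irreducible, so the statement as literally written fails for $\theta = A \times A$ and $n \geq 2$; your device of passing to the minimal $n$ quietly excludes this case rather than confronting it, and you should say explicitly that the claim is only meaningful (and is being proved) for $\theta \neq A \times A$, which is all the paper ever uses (there $\theta = \textup{id}_B$ with $|B| \geq 2$). Second, you verify only binary meet-irreducibility of $\phi_1$; to match the paper's definition of $\mathsf{Irr}_{\mathsf{K}}(\A)$ you must also observe that $\phi_1 \neq A \times A$ (this follows from irredundancy, since $\phi_1 \cap \theta_2 \cap \dots \cap \theta_{n-1} = \theta$ with $\phi_1 = A \times A$ would let one drop $\theta_1$ from the original decomposition), after which binary irreducibility upgrades to finite irreducibility by an easy induction. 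Third, the justification ``since $\mathsf{Con}_\mathsf{K}(\A)$ is algebraic, directed joins are unions'' is not quite right as stated---algebraicity alone does not give this---but the fact you need, that $\mathsf{K}$-congruences of a quasivariety are closed under directed unions, is standard and follows from the quasiequational axiomatizability of $\mathsf{K}$, since a failing quasiequation in the quotient by the union is witnessed by finitely many pairs and hence already fails in some quotient along the chain.
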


We recall that, for $\A \leq \B$ and $\phi \in \Con(\B)$, we denote by $\A/\phi$ the subalgebra of $\B/\phi$ with universe $\{ a/\phi \in B/\phi : a \in A\}$. We will need the following easy consequence of Zorn's Lemma (see the proof of \cite[Prop.\ 3.7]{CKMWES}).

\begin{Proposition}\label{Prop : full congruence}
Let $\mathsf{K}$ be a quasivariety,  $\A \leq \B \in \mathsf{K}$, and $b \in B - A$. There exists $\phi \in \mathsf{Con}_\mathsf{K}(\B)$ such that $b / \phi \notin A / \phi$ and for each $\theta \in \mathsf{Con}_\mathsf{K}(\B / \phi) - \{ \textup{id}_{B / \phi} \}$ there exists $a \in A$ such that $\langle a / \phi, b / \phi \rangle \in \theta$.
\end{Proposition}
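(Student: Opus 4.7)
The plan is to obtain $\phi$ by a direct application of Zorn's Lemma: I will take $\phi$ to be a $\K$-congruence of $\B$ that is maximal among those which do not identify any element of $A$ with $b$.

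Concretely, I would consider the poset
\[
\mathcal{S} = \{ \eta \in \mathsf{Con}_\K(\B) : \langle a, b \rangle \notin \eta \text{ for all } a \in A \}
\]
ordered by inclusion. This set is nonempty because $\textup{id}_B \in \mathcal{S}$ (using that $b \notin A$, so $a \ne b$ for every $a \in A$). To apply Zorn it suffices to observe that $\mathcal{S}$ is closed under unions of chains: since $\mathsf{Con}_\K(\B)$ is an algebraic lattice (as recalled in the excerpt, following Cor.~1.4.11 of Gorbunov), directed unions of $\K$-congruences are again $\K$-congruences, and the condition that no pair $\langle a,b\rangle$ with $a \in A$ lies in the congruence is visibly preserved under unions. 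Let $\phi$ be a maximal element of $\mathcal{S}$.

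The first requirement $b/\phi \notin A/\phi$ is precisely the defining condition of $\mathcal{S}$ and so holds by construction. To verify the second requirement, I would take an arbitrary $\theta \in \mathsf{Con}_\K(\B/\phi) - \{\textup{id}_{B/\phi}\}$ and lift it via the Correspondence Theorem to a congruence $\eta$ of $\B$ containing $\phi$, with $\theta = \eta/\phi$. Since
\[
\B/\eta \cong (\B/\phi)/(\eta/\phi) = (\B/\phi)/\theta \in \K,
\]
the congruence $\eta$ is itself a $\K$-congruence, so $\eta \in \mathsf{Con}_\K(\B)$, and $\eta$ strictly extends $\phi$ because $\theta \ne \textup{id}_{B/\phi}$. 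The maximality of $\phi$ in $\mathcal{S}$ then forces $\eta \notin \mathcal{S}$, yielding some $a \in A$ with $\langle a, b \rangle \in \eta$; passing to the quotient gives $\langle a/\phi, b/\phi \rangle \in \theta$, as required.

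I do not foresee any substantial obstacle. The only point worth double-checking is the interplay between the Correspondence Theorem and the relative congruence lattice $\mathsf{Con}_\K$; this is handled by the standard third-isomorphism calculation above, which guarantees that $\K$-congruences of $\B/\phi$ correspond exactly to $\K$-congruences of $\B$ containing $\phi$. The rest is routine chain-union bookkeeping.
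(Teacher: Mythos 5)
Your proof is correct and is precisely the Zorn's Lemma argument the paper has in mind (the paper does not spell out a proof, instead describing the statement as ``an easy consequence of Zorn's Lemma'' and pointing to the proof of Prop.~3.7 of the cited reference): maximize among $\K$-congruences separating $b$ from $A$, then use the Correspondence Theorem together with the third-isomorphism computation to see that every nonidentity $\K$-congruence of $\B/\phi$ lifts to a $\K$-congruence strictly above $\phi$, which by maximality must collapse $b$ onto $A$. The only point worth tightening is your justification that unions of chains of $\K$-congruences are again $\K$-congruences: this does not follow from $\mathsf{Con}_{\mathsf{K}}(\B)$ merely being an algebraic lattice, but from its being an algebraic (inductive) closure system --- equivalently, from the observation that quasiequations have finitely many premises and are therefore preserved under directed unions of congruences.
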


We are now ready to prove Theorem \ref{Thm : unanimity}.
We follow a strategy similar to the one used to establish an analogous result \cite[Thm.~4.3]{CKMWES} in the setting of epimorphisms between finitely generated algebras.

\begin{proof}
As the implication (\ref{item : unanimity : 1})$\Rightarrow$(\ref{item : unanimity : 2}) is straightforward, we only detail the implication (\ref{item : unanimity : 2})$\Rightarrow$(\ref{item : unanimity : 1}).  To this end, we reason by contraposition. Suppose that $\mathsf{K}$ lacks the strong epimorphism surjectivity property. By Theorem \ref{Thm : finitely generated : SES} there exist $\A \leq \B \in \mathsf{K}$ with $\A$ and $\B$ finitely generated and some $b \in \mathsf{d}_\mathsf{K}(\A, \B) - A$.

\begin{Claim}\label{Claim : near unanimity 1}
We may assume that for each $\theta \in \mathsf{Con}_\mathsf{K}(\B) - \{ \textup{id}_B \}$ there exists $a \in  A$ such that $\langle a, b \rangle \in \theta$.
\end{Claim}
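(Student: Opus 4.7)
The plan is to derive this claim as a direct application of \cref{Prop : full congruence}, by passing from $\B$ to a suitable quotient. First I would observe that, since $b \in \mathsf{d}_\K(\A, \B) - A$, we have in particular $b \in B - A$, so the hypotheses of \cref{Prop : full congruence} are met. Applying it yields some $\phi \in \mathsf{Con}_\K(\B)$ such that $b/\phi \notin A/\phi$ and such that for each $\theta \in \mathsf{Con}_\K(\B/\phi) - \{\textup{id}_{B/\phi}\}$ there exists $a \in A$ with $\langle a/\phi, b/\phi \rangle \in \theta$.

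Next I would replace the triple $(\A, \B, b)$ by $(\A/\phi, \B/\phi, b/\phi)$ and check that this substitution preserves all the hypotheses accumulated so far in the proof of the implication (\ref{item : unanimity : 2})$\Rightarrow$(\ref{item : unanimity : 1}). Specifically: $\A/\phi \leq \B/\phi$ is immediate, and $\B/\phi \in \K$ holds because $\phi$ is a $\K$-congruence of $\B$, so $\A/\phi \in \K$ too since $\K$ is closed under subalgebras. Moreover $\A/\phi$ and $\B/\phi$ remain finitely generated, as the canonical surjection $\B \to \B/\phi$ sends a finite generating set of $\B$ (resp.\ $\A$) to one of $\B/\phi$ (resp.\ $\A/\phi$). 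Finally, the condition $b/\phi \in \mathsf{d}_\K(\A/\phi, \B/\phi) - A/\phi$ follows from the first half of the conclusion of \cref{Prop : full congruence} together with \cref{Cor: dominions subalg quot}\eqref{Cor: dominions subalg quot: 2} applied to $b \in \mathsf{d}_\K(\A, \B)$. Renaming $\A/\phi$, $\B/\phi$, $b/\phi$ back to $\A$, $\B$, $b$ is therefore legitimate, and the property stated in the claim then becomes exactly the second half of the conclusion of \cref{Prop : full congruence}.

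The main obstacle is really only bookkeeping rather than mathematics: the work has been packaged into \cref{Prop : full congruence}, and what has to be verified is that passing to the quotient by $\phi$ does not destroy any feature of $(\A, \B, b)$ that subsequent steps of the proof of \cref{Thm : unanimity} will rely on, namely finite generation, the inclusion $\A \leq \B \in \K$, and the membership of $b$ in $\mathsf{d}_\K(\A, \B) - A$. Once this is observed, the claim is established.
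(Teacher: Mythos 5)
Your proof is correct and follows essentially the same route as the paper: apply \cref{Prop : full congruence} to obtain $\phi$, verify via \cref{Cor: dominions subalg quot}\eqref{Cor: dominions subalg quot: 2} that the replacement $(\A,\B,b)\mapsto(\A/\phi,\B/\phi,b/\phi)$ preserves membership in $\K$, finite generation, and the condition $b\in\mathsf{d}_\K(\A,\B)-A$, then rename. The paper phrases the final step as ``we may assume $\phi=\textup{id}_B$'' rather than renaming the quotient, but this is the same argument.
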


\begin{proof}[Proof of the Claim]
As $\A \leq \B \in \mathsf{K}$ and $b \in \mathsf{d}_\mathsf{K}(\A, \B) - A \subseteq B - A$, we can apply \cref{Prop : full congruence}, obtaining $\phi \in \mathsf{Con}_\mathsf{K}(\B)$ satisfying the following requirements: $b / \phi \in B / \phi - A / \phi$ and for each $\theta \in \mathsf{Con}_\mathsf{K}(\B / \phi) - \{ \textup{id}_{B / \phi} \}$ there exists $a \in A$ such that $\langle a / \phi, b / \phi \rangle \in \theta$.

Clearly, $\A / \phi \leq \B / \phi$ is a proper subalgebra. Moreover, $\A / \phi$ and $\B / \phi$ are finitely generated members of $\mathsf{K}$ because so are $\A$ and $\B$ by assumption and $\phi \in \mathsf{Con}_\mathsf{K}(\B)$. \cref{Cor: dominions subalg quot}\eqref{Cor: dominions subalg quot: 2} implies that $b / \phi \in \mathsf{d}_\mathsf{K}(\A / \phi, \B / \phi)$. As $b / \phi \notin A / \phi$, we obtain $b / \phi \in \mathsf{d}_\mathsf{K}(\A / \phi, \B / \phi) - A / \phi$.
Therefore, we may assume that $\phi = \textup{id}_B$ (otherwise we replace $\A$ and $\B$ by $\A / \phi$ and $\B / \phi$, respectively).

When coupled with the assumption that $\phi = \textup{id}_B$, the fact that for each $\theta \in \mathsf{Con}_\mathsf{K}(\B / \phi) - \{ \textup{id}_{B / \phi} \}$ there exists $a \in A$ such that $\langle a / \phi, b / \phi \rangle \in \theta$ implies that for each $\theta \in \mathsf{Con}_\mathsf{K}(\B) - \{ \textup{id}_B \}$ there exists $a \in A$ such that $\langle a, b \rangle \in \theta$.
\end{proof}

We will rely on the next observation.

\begin{Claim}\label{Claim : near unanimity 2}
The congruence $\textup{id}_B$ is $n$-irreducible in $\mathsf{Con}_\mathsf{K}(\B)$.
\end{Claim}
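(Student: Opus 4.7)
The plan is to argue by contradiction, invoking \cref{Claim : near unanimity 1} together with the defining identities of the near unanimity term $t$ of arity $n$ for $\mathsf{K}$.

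First I would suppose $\textup{id}_B = \theta_1 \cap \dots \cap \theta_n$ with $\theta_1, \dots, \theta_n \in \mathsf{Con}_\mathsf{K}(\B)$, and assume toward a contradiction that for each $i \leq n$ the intersection $\widehat{\theta}_i := \bigcap_{j \neq i} \theta_j$ strictly exceeds $\textup{id}_B$. Each $\widehat{\theta}_i$ belongs to $\mathsf{Con}_\mathsf{K}(\B)$ since this lattice is closed under intersections, so \cref{Claim : near unanimity 1} will provide an element $a_i \in A$ with $\langle a_i, b \rangle \in \widehat{\theta}_i$.

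Next, I would apply the near unanimity term to these witnesses, setting $c := t^{\B}(a_1, \dots, a_n)$; as $A$ is the universe of a subalgebra of $\B$, it will follow that $c \in A$. The key step is then to show that $\langle c, b \rangle \in \theta_i$ for every $i \leq n$. Fixing $i$, one has $\langle a_j, b \rangle \in \widehat{\theta}_j \subseteq \theta_i$ for every $j \neq i$, so substitutivity of $\theta_i$ will yield
\[
c = t^{\B}(a_1, \dots, a_n) \mathrel{\theta_i} t^{\B}(b, \dots, b, a_i, b, \dots, b),
\]
with $a_i$ occupying the $i$-th coordinate. The $i$-th defining identity of a near unanimity term, which is valid in $\B \in \mathsf{K}$, will then give $t^{\B}(b, \dots, b, a_i, b, \dots, b) = b$, so $\langle c, b \rangle \in \theta_i$.

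Intersecting these memberships over $i$ will produce $\langle c, b \rangle \in \theta_1 \cap \dots \cap \theta_n = \textup{id}_B$, whence $c = b$; but $c \in A$ while $b \notin A$ by the choice of $b$, delivering the desired contradiction. The only delicate point is the middle display: one must see that the $n$ witnesses $a_1, \dots, a_n$ have been selected so that, through the lens of any single $\theta_i$, all but the $i$-th argument of $t$ look like $b$ — which is precisely the configuration the near unanimity equations are designed to collapse. I do not anticipate any serious obstacle beyond checking this observation.
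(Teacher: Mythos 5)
Your argument is correct and coincides with the paper's own proof: both form the ``all-but-one'' intersections (your $\widehat{\theta}_i$, the paper's $\phi_i$), invoke Claim~\ref{Claim : near unanimity 1} to obtain witnesses $a_i \in A$ with $\langle a_i, b\rangle$ in these intersections, and then use substitutivity together with the near unanimity identities to conclude $b = t^{\B}(a_1,\dots,a_n) \in A$, contradicting $b \notin A$. No gaps.
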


\begin{proof}[Proof of the Claim]
Let $\theta_1, \dots, \theta_n \in \mathsf{Con}_{\mathsf{K}}(\B)$ be such that $\text{id}_B = \theta_1 \cap \dots \cap \theta_n$. Let also $\phi_i = \theta_1 \cap \dots \cap \theta_{i-1} \cap \theta_{i+1} \dots \cap \theta_n$ for each $i \leq n$. We will show that $\phi_i=\text{id}_B$ for some $i \leq n$. Suppose the contrary, with a view to contradiction. 
\cref{Claim : near unanimity 1} yields $a_1, \dots, a_n \in A$ such that $\langle a_i, b \rangle \in \phi_i$ for every $i \leq n$.
By assumption $\mathsf{K}$ has a near unanimity term $t(x_1, \dots, x_n)$. 
We will prove that
\[
\langle t^\B(a_1, \dots, a_n), b \rangle \in  \theta_j
\]
for every $j \leq n$. To this end, consider $j \leq n$. As $\langle a_i, b \rangle \in \phi_i \subseteq \theta_j$ for every $i \leq n$ such that $i \ne j$, we obtain $\langle t^\B(a_1, \dots, a_n), t^\B(b, \dots, b, a_j, b, \dots, b) \rangle \in  \theta_j$.  Furthermore, since $t$ is a near unanimity term, we have  $t^\B(b, \dots, b, a_j, b, \dots, b)=b$.  Hence, $\langle t^\B(a_1, \dots, a_n), b \rangle \in  \theta_j$. This establishes the above display. Together with the assumption that $\textup{id}_B = \theta_1 \cap \dots \cap \theta_n$, this implies $b = t^\B(a_1, \dots, a_n)$. As $a_1, \dots, a_n \in A$ and $\A \leq \B$, we conclude that $b \in A$, which is false. Hence, $\textup{id}_B$ is $n$-irreducible.
\end{proof}

In view of Claim \ref{Claim : near unanimity 2} and Proposition \ref{Prop : optimal two cases}, there exist $\theta_1, \dots, \theta_{n-1} \in \mathsf{Irr}_{\mathsf{K}}(\B)$ such that $\text{id}_B =  \theta_1 \cap \dots \cap \theta_{n-1}$.\ Therefore, we can apply Proposition \ref{Prop : subdirect embedding} obtaining a subdirect embedding $h \colon \B \to \B / \theta_1 \times \dots \times \B / \theta_{n-1}$. Let $\B_i = \B / \theta_i$ for each $i \leq n-1$. By replacing $\A$ and $\B$ by their isomorphic images $h[\A]$ and $h[\B]$, respectively, we may assume that $\A \leq \B \leq \B_1 \times \dots \times \B_{n-1}$.
\ Notice that each $\B_i = \B / \theta$ is finitely generated because so is $\B$. Furthermore, from Proposition \ref{Prop : RFSI} and $\theta_i \in \mathsf{Irr}_{\mathsf{K}}(\B)$ it follows that $\B_i \in \mathsf{K}_{\textsc{rfsi}}$. 
Lastly, as $b \in \mathsf{d}_\mathsf{K}(\A, \B) - A$ and $\B \leq \B_1 \times \dots \times \B_{n-1}$, \cref{Cor: dominions subalg quot}\eqref{Cor: dominions subalg quot: 2} allows us to conclude that $b \in  \mathsf{d}_\mathsf{K}(\A, \B_1 \times \dots \times \B_{n-1}) - A$.
Hence, $\mathsf{d}_\mathsf{K}(\A, \B_1 \times \dots \times \B_{n-1}) \ne A$.
\end{proof}

The literature on epimorphisms contains two variants of Theorem \ref{Thm : unanimity} in which the class $\mathsf{K}$ is required to be an arithmetical variety with the property that the class of its finitely subdirectly irreducible members is closed under ultraproducts and nontrivial subalgebras  \cite[Thm.\ 6.8]{Camper18jsl} or only a congruence permutable variety \cite[Thm.~5.3]{CKMWES}. 
The first variant deals with the demand that all $\K$-epimorphisms be surjective, while the second with the weaker demand that all $\K$-epimorphisms between finitely generated algebras be surjective called the \emph{weak epimorphism surjectivity property}.
In both cases, the conclusion is that failures of the relevant property are witnessed by counterexamples of the form $\A \leq \B$ where $\B$ is a finitely subdirectly irreducible member of $\mathsf{K}$.
The possibility of obtaining similar results 
for
the \emph{strong} epimorphism surjectivity property 
is prevented by the following example. However, we will show in \cref{Cor : CP variety SES} that, under the amalgamation property, 
the above mentioned result for congruence permutable varieties becomes available in the context of the strong epimorphism surjectivity property as well.

\begin{exa}[\textsf{Heyting algebras}]\label{exa: Heyting and SES}
A \emph{Heyting algebra} is an algebra $\langle A; \land, \lor, \to, 0, 1 \rangle$ which comprises a bounded distributive lattice $\langle A; \land, \lor, 0, 1 \rangle$ and a binary operation $\to$ (called \emph{implication}) such that for all $a, b, c \in A$ we have
\[
a \land  b \leq c \iff a \leq b \to c.
\]
This means that $b \to c$ is the largest element $d \in A$ such that $d \wedge (b \to c) \leq c$ (see \cite[p.~173]{BD74}).

As a consequence, Heyting algebras are uniquely determined by their lattice reduct. In particular, every finite distributive lattice $\A$ can be expanded uniquely to a Heyting algebra by letting $0$ and $1$ be the minimum and maximum of $\A$, respectively, and defining
\[
a \to b = \max \{ c \in A : a \land c \leq b \} \text{ for all }a, b \in A.
\]
From a logical standpoint, the importance of Heyting algebras derives from the fact that they algebraize the intuitionistic propositional logic (see, e.g., \cite[Ch.~IX]{MR344067}).

Let $\C$ be the five-element chain, viewed as a Heyting algebra. Then $\mathbb{V}(\C)$ is an arithmetical variety whose class of finitely subdirectly irreducible members is closed under nontrivial subalgebras and ultraproducts (see, e.g., \cite[p.~80]{BuSa00} and \cite[p.~2 \& Thm.~2.3]{CD90}).

While it is known that $\mathbb{V}(\C)$ lacks the strong epimorphism surjectivity property (see \cite[Thm.~4.2]{MakpBeth}),
it is impossible to find counterexamples to this property of the form $\A \leq \B$, where $\B$ is a finitely subdirectly irreducible member of $\mathbb{V}(\C)$, for in this situation we always have $\mathsf{d}_\mathsf{\mathbb{V}(\C)}(\A, \B) = A$.
\qed
\end{exa}

The next result is well known  (see, e.g., \cite[Thm.\ 1.3]{BMRES}).
We provide a novel and short proof using the characterization of dominions in the presence of the amalgamation property established in \cref{Cor: dominions AP impeq}.

\begin{Theorem} \label{Thm : AP -> WES = SES}
    Let $\mathsf{K}$ be a quasivariety with the amalgamation property. Then $\mathsf{K}$ has the strong epimorphism surjectivity property if and only if 
    it has the weak epimorphism surjectivity property.
\end{Theorem}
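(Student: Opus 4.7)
The forward direction is immediate: the strong epimorphism surjectivity property implies that every $\K$-epimorphism $f \colon \A \to \B$ is surjective (a failure of surjectivity would, by the strong property, produce a pair of homomorphisms witnessing that $f$ is not right cancellable), which in particular yields the weak version.

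For the substantive converse, I will argue by contrapositive. Assume $\K$ lacks the strong epimorphism surjectivity property. By \cref{Prop : SES and dominions} there exist $\A \leq \B \in \K$ and $b \in \mathsf{d}_\K(\A, \B) - A$. The crucial step, and the place where the amalgamation hypothesis enters, is to invoke \cref{Cor: dominions AP impeq} to obtain an implicit operation $f \in \mathsf{imp}_{eq}(\K)$ defined by a conjunction of equations $\varphi(x_1, \dots, x_n, y)$, together with $a_1, \dots, a_n \in A$ such that $\B \vDash \varphi(a_1, \dots, a_n, b)$.

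I then set $\A' = \mathsf{Sg}^\B(a_1, \dots, a_n)$ and $\B' = \mathsf{Sg}^\B(a_1, \dots, a_n, b)$, both finitely generated members of $\K$ (since quasivarieties are closed under subalgebras), with $\A' \leq \A$. Because $\varphi$ is quantifier-free, it persists to subalgebras by \cref{Thm : preservation}\eqref{item : preservation : universal}, so $\B' \vDash \varphi(a_1, \dots, a_n, b)$. I will then check that the inclusion $\A' \hookrightarrow \B'$ is a $\K$-epimorphism: if $g, h \colon \B' \to \C$ are homomorphisms with $\C \in \K$ agreeing on $A'$, then they agree on each $a_i$; applying them to $\varphi(a_1, \dots, a_n, b)$ and using the functionality of $\varphi$ in $\K$ forces $g(b) = h(b)$, and since $\{a_1, \dots, a_n, b\}$ generates $\B'$ it follows that $g = h$. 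However, $b \notin A \supseteq A'$, so $\A' \neq \B'$ and this inclusion is a nonsurjective $\K$-epimorphism between finitely generated algebras, contradicting the weak epimorphism surjectivity property.

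The heart of the argument is therefore the reduction, enabled by amalgamation, from an arbitrary pp-definable witness to one defined without existential quantifiers. Without the amalgamation hypothesis, \cref{Thm : dominions : pp formulas} would provide a witness only via a pp formula $\exists \vec{z}\, \psi(\vec{x}, y, \vec{z})$ whose auxiliary $\vec{z}$-witnesses in $\B$ need not lie in $A$: including them among the generators of $\A'$ would take $\A'$ outside of $A$, whereas omitting them would obstruct the step that forces $g(b) = h(b)$.
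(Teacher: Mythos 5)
Your proof is correct and follows essentially the same route as the paper's: both hinge on \cref{Cor: dominions AP impeq} (the existential-elimination consequence of amalgamation) to replace a pp-definable witness by one defined by a conjunction of equations, then pass to the finitely generated subalgebras $\mathsf{Sg}^\B(a_1,\dots,a_n)$ and $\mathsf{Sg}^\B(a_1,\dots,a_n,b)$ and invoke the weak property. The only cosmetic differences are that you argue contrapositively and verify the epimorphism claim directly via functionality of $\varphi$, whereas the paper phrases the same verification through the dominion equality $\d_\K(\A',\B')=B'$.
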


\begin{proof}
The implication from left to right is straightforward.  So, let us assume that $\K$  has the weak  epimorphism surjectivity property. We will show that $\mathsf{K}$ has the strong epimorphism surjectivity property using Proposition \ref{Prop : SES and dominions}. To this end, consider $\A \leq \B \in \mathsf{K}$ and $b \in \mathsf{dom}_{\mathsf{K}}(\A,\B)$. Since $\K$ has the amalgamation property, by \cref{Cor: dominions AP impeq} there exist $f \in \imp_{\textsc{eq}}(\K)$ and $\langle a_1, \dots, a_n \rangle \in \dom(f^\B) \cap A^n$ such that $f^\B(a_1, \dots, a_n)=b$. Let $\A' = \mathsf{Sg}^{\A}(a_1, \dots, a_n)$ and $\B' = \mathsf{Sg}^{\B}(a_1, \dots, a_n,b)$.  Since $f$ is defined by a conjunction of equations, $\langle a_1, \dots, a_n \rangle \in \dom(f^{\B'}) \cap (A')^n$ and $f^{\B'}(a_1, \dots, a_n)=b$. So, \cref{Cor: dominions AP impeq} implies that $b \in \d_\K(\A',\B')$. As $\B' = \mathsf{Sg}^{\B}(a_1, \dots, a_n,b)$ and $a_1, \dots, a_n,b \in \d_\K(\A',\B')$, we obtain $\d_\K(\A',\B') = B'$. Therefore, the inclusion map $\A' \to \B'$ is an epimorphism. Since $\A', \B'$ are finitely generated members of $\K$  and $\K$ has the weak epimorphism surjectivity property, it follows that $\A' = \B'$, and hence $b \in B' = A' \subseteq A$. We have shown that $\mathsf{dom}_{\mathsf{K}}(\A,\B)=A$. Thus, $\K$ has the strong epimorphism surjectivity property.
\end{proof}

Given a class of algebras $\K$ closed under subalgebras and $\B \in \K$, we say that a subalgebra $\A$ of $\B$ is $\K$-\emph{epic} when the inclusion map $\A \to \B$ is a $\K$-epimorphism. In this case, $\K$ has the epimorphism surjectivity property if and only if every $\A \in \K$ lacks proper $\K$-epic subalgebras.
 We rely on the following result, which is an immediate consequence of \cite[Thm.~5.3]{CKMWES} and the proof of \cite[Thm.~5.4]{MRWepi}.
\begin{Theorem}\label{Thm:weak es fsi}
Let $\K$ be a congruence permutable variety. Then $\K$ has the weak epimorphism surjectivity property if and only if the finitely generated members of $\K_{\fsi}$ lack proper $\K$-epic finitely generated subalgebras.
\end{Theorem}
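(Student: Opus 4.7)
The plan is to derive this characterization by combining the two cited results. The forward direction is immediate: if $\K$ has the weak epimorphism surjectivity property, then every $\K$-epimorphism between finitely generated members of $\K$ is surjective, so in particular no finitely generated member of $\K_\fsi$ admits a proper $\K$-epic finitely generated subalgebra.

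For the converse, I would argue by contraposition. Suppose $\K$ lacks the weak epimorphism surjectivity property, so there exist finitely generated $\A \leq \B \in \K$ with $A \neq B$ and some $b \in \d_\K(\A, \B) - A$. The first step is to invoke \cref{Prop : full congruence} to find $\phi \in \Con_\K(\B)$ such that $b / \phi \notin A / \phi$ and every nonidentity $\K$-congruence on $\B / \phi$ identifies $b / \phi$ with an element of $A / \phi$. Both $\B / \phi$ and $\A / \phi$ remain finitely generated as homomorphic images of finitely generated algebras, and \cref{Cor: dominions subalg quot}\eqref{Cor: dominions subalg quot: 2} guarantees that $b / \phi \in \d_\K(\A / \phi, \B / \phi)$; replacing the original data by $\A / \phi, \B / \phi, b / \phi$, we may assume that $\B$ itself satisfies this additional separation hypothesis.

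The main obstacle is showing that $\B$ now lies in $\K_\fsi$; this is the content of \cite[Thm.~5.3]{CKMWES}, and it is here that congruence permutability enters essentially. By \cref{Prop : RFSI} it suffices to prove that $\textup{id}_B$ is meet-irreducible in $\Con_\K(\B)$. The strategy parallels the $n$-irreducibility argument in the proof of \cref{Thm : unanimity} but replaces the near unanimity term by the Mal'cev ternary term $m(x,y,z)$ guaranteed by congruence permutability, together with the identity $\theta_1 \vee \theta_2 = \theta_1 \circ \theta_2$ available in such varieties. If one had $\textup{id}_B = \theta_1 \cap \theta_2$ with both $\theta_i$ nonidentity, the separation hypothesis would provide $a_1, a_2 \in A$ with $\langle a_i, b \rangle \in \theta_i$, and a suitable combination of the Mal'cev identities with the permutability of $\theta_1$ and $\theta_2$ would force $b \in A$, contradicting $b \in B - A$.

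Finally, the role of the proof of \cite[Thm.~5.4]{MRWepi} is the bookkeeping needed to certify that every algebra produced during the reduction remains finitely generated: this is automatic for the quotient $\B / \phi$ and for its subalgebra $\A / \phi$, each being generated by the images of the original finite generating sets. Combined, these observations show that any failure of the weak epimorphism surjectivity property descends to a failure witnessed inside a finitely generated member of $\K_\fsi$ by a finitely generated subalgebra, which is exactly the contrapositive of the desired direction.
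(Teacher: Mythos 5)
The paper does not prove this theorem at all: it is recorded as an immediate consequence of \cite[Thm.~5.3]{CKMWES} and the proof of \cite[Thm.~5.4]{MRWepi}, so there is no in-text argument to compare yours against; I can only judge your reconstruction on its own merits. It has a genuine gap at exactly the step you flag as the main obstacle, namely showing that $\B/\phi$ (equivalently, after the reduction, $\B$ itself) is finitely subdirectly irreducible. You propose to run the argument of \cref{Claim : near unanimity 2} with the near unanimity term replaced by a Mal'cev term $m$, but the two situations are not parallel. The near unanimity identities absorb all-but-one argument, so from $\langle a_i, b\rangle \in \phi_i$ one gets $\langle t^\B(a_1,\dots,a_n), b \rangle \in \theta_j$ for \emph{every} $j$ and hence $b = t^\B(a_1,\dots,a_n) \in A$. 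The Mal'cev identities $m(x,x,y)\thickapprox y$ and $m(x,y,y)\thickapprox x$ have no such absorption: from $\langle a_1,b\rangle \in \theta_1$ and $\langle a_2,b\rangle\in\theta_2$ with $\theta_1\cap\theta_2 = \textup{id}_B$ one obtains, e.g., $\langle m^\B(a_1,b,a_2), a_2\rangle \in \theta_1$ and $\langle m^\B(a_1,b,a_2), a_1\rangle \in \theta_2$, which relates the mixed element to $a_1$ and $a_2$ the wrong way round, and no term evaluated only at elements of $A$ is forced to be $\theta_i$-related to $b$ for both $i$. Permutability gives $\langle a_1,a_2\rangle \in \theta_1\circ\theta_2 = \theta_2\circ\theta_1$, but the witnessing middle element need not lie in $A$. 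So the sentence ``a suitable combination of the Mal'cev identities \dots would force $b \in A$'' is precisely the content of the external theorem being invoked and is not delivered by your sketch; the actual proof in the congruence permutable setting needs a further idea (typically the structure of subdirect products of two factors), which is missing here.

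There is also a secondary mismatch between what you track and what you must conclude. Failure of the weak epimorphism surjectivity property yields finitely generated $\A \leq \B \in \K$ with $\mathsf{d}_\K(\A,\B) = B \neq A$ (the image of a nonsurjective epimorphism is a proper $\K$-epic subalgebra), whereas you only record a single element $b \in \mathsf{d}_\K(\A,\B) - A$ and carry that one element through the reductions. The conclusion required is that some finitely generated member of $\K_\fsi$ has a \emph{proper $\K$-epic} finitely generated subalgebra, i.e.\ that the dominion equals the whole algebra; the existence of one dominion element outside $A$ does not give this, since passing to the subalgebra generated by $A \cup \{b\}$ can shrink the dominion in the absence of the amalgamation property (cf.\ \cref{Prop : doms computable in subalgebras}). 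This part is repairable: start from $\mathsf{d}_\K(\A,\B)=B$ and apply \cref{Cor: dominions subalg quot}\eqref{Cor: dominions subalg quot: 2} to every element of $B$ to see that $\A/\phi$ remains $\K$-epic in $\B/\phi$. But as written, your final paragraph does not establish the contrapositive of the statement being proved.
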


As mentioned above, the amalgamation property allows us to obtain a result similar to \cite[Thm.\ 6.8]{Camper18jsl} and \cite[Thm.~5.3]{CKMWES} for the strong epimorphism surjectivity property in congruence permutable varieties.

\begin{Corollary} \label{Cor : CP variety SES}
Let $\mathsf{K}$ be a congruence permutable variety with the amalgamation property. Then $\mathsf{K}$ has the strong epimorphism surjectivity property if and only if every finitely generated $\B \in \mathsf{K}_{\textsc{fsi}}$ lacks proper $\K$-epic subalgebras. 
\end{Corollary}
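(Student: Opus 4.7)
My plan is to derive both directions of the corollary from results already assembled in the section, without any new model-theoretic work.

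For the forward direction, suppose $\K$ has the strong epimorphism surjectivity property. Then every $\K$-epimorphism is surjective: indeed, if the inclusion $\A \hookrightarrow \B$ were a nonsurjective $\K$-epimorphism, then for any $b \in B - A$ the strong epimorphism surjectivity property would produce homomorphisms $g, h \colon \B \to \C$ with $\C \in \K$ agreeing on $A$ but disagreeing at $b$, contradicting that the inclusion is an epimorphism. Consequently, no member of $\K$ (in particular, no finitely generated $\B \in \K_{\textsc{fsi}}$) can have a proper $\K$-epic subalgebra.

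For the backward direction, I will reduce to results already proved. Assume every finitely generated $\B \in \K_{\textsc{fsi}}$ lacks proper $\K$-epic subalgebras. In particular, such $\B$ lacks proper finitely generated $\K$-epic subalgebras. Since $\K$ is a congruence permutable variety, \cref{Thm:weak es fsi} then yields that $\K$ has the weak epimorphism surjectivity property. Finally, since $\K$ has the amalgamation property by hypothesis, \cref{Thm : AP -> WES = SES} upgrades the weak epimorphism surjectivity property to the strong epimorphism surjectivity property.

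There is no genuine obstacle here: the corollary is an immediate consequence of combining \cref{Thm : AP -> WES = SES} (which uses amalgamation to collapse the weak and strong versions) with \cref{Thm:weak es fsi} (which uses congruence permutability to localize failures of the weak version to finitely generated FSI algebras). The only minor subtlety is that our hypothesis drops the ``finitely generated'' qualifier on the $\K$-epic subalgebras appearing in \cref{Thm:weak es fsi}, but this is a strengthening of the hypothesis needed there, so the implication goes through without change.
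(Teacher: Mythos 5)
Your proof is correct and follows essentially the same route as the paper: the forward direction is the routine observation that the strong epimorphism surjectivity property forbids proper $\K$-epic subalgebras anywhere in $\K$, and the backward direction combines \cref{Thm:weak es fsi} with \cref{Thm : AP -> WES = SES} exactly as the paper does. Your remark that the corollary's hypothesis (dropping ``finitely generated'' on the subalgebras) only strengthens what \cref{Thm:weak es fsi} requires is the right way to handle the one small mismatch.
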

\begin{proof}
The implication from left to right is straightforward. On the other hand, if every finitely generated $\B \in \mathsf{K}_{\textsc{fsi}}$ lacks proper $\K$-epic subalgebras, then \cref{Thm:weak es fsi}
guarantees that $\K$ has the weak epimorphism surjectivity property, which by \cref{Thm : AP -> WES = SES} implies that $\mathsf{K}$ has the strong epimorphism surjectivity property as well. 
\end{proof}

The \emph{join} of a family of  varieties $\K_1, \dots, \K_n$ is the least variety containing them, namely, $\VVV(\K_1 \cup \dots \cup \K_n)$. While the weak and the strong epimorphism surjectivity properties need not be preserved by joins of varieties, in special cases they are, as we proceed to show.

\begin{Theorem} \label{Thm : arithmetical join of WES =  WES}
Let $\mathsf{K}$ be an arithmetical variety. If $\mathsf{K}$ is the join of finitely many varieties with the weak epimorphism surjectivity property, then it has the weak epimorphism surjectivity property. 
\end{Theorem}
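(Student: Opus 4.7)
The plan is to reduce the question, via \cref{Thm:weak es fsi}, to showing that no finitely generated $\B \in \K_\fsi$ has a proper $\K$-epic finitely generated subalgebra $\A$. Write $\K = \VVV(\K_1 \cup \dots \cup \K_n)$ with each $\K_i$ a variety having the weak epimorphism surjectivity property. Since $\K$ is arithmetical (hence congruence permutable), \cref{Thm:weak es fsi} applies to $\K$; since each $\K_i$ is a subvariety of $\K$, each $\K_i$ inherits the Mal'cev terms witnessing congruence permutability (and the J\'onsson terms for distributivity), so each $\K_i$ is congruence permutable too, and \cref{Thm:weak es fsi} applies to each $\K_i$ as well.

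The bridge between $\K$ and the $\K_i$ is the \cref{Thm : Jonsson}: since $\K$ is congruence distributive, $\B \in \K_\fsi$ gives $\B \in \HHH\SSS\PPU(\K_1 \cup \dots \cup \K_n)$. I would next show the pigeonhole-style inclusion
\[
\HHH\SSS\PPU(\K_1 \cup \dots \cup \K_n) \subseteq \K_1 \cup \dots \cup \K_n,
\]
by noting that $\SSS$ and $\HHH$ distribute over finite unions of varieties and that for an ultraproduct $\prod_{j \in J}\C_j/U$ with each $\C_j \in \K_1 \cup \dots \cup \K_n$, choosing a partition $J = I_1 \sqcup \dots \sqcup I_n$ with $\C_j \in \K_i$ for $j \in I_i$, the ultrafilter $U$ contains exactly one $I_i$; then the ultraproduct is isomorphic to $\prod_{j \in I_i}\C_j / U{\res_{I_i}} \in \K_i$, as each $\K_i$ is elementary. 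Consequently, $\B \in \K_i$ for some $i \leq n$.

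Now I would transfer the failure to $\K_i$: because $\K$ and $\K_i$ are both varieties, $\mathsf{Con}_\K(\B) = \mathsf{Con}(\B) = \mathsf{Con}_{\K_i}(\B)$, so $\B \in \K_\fsi$ is equivalent to $\B \in (\K_i)_\fsi$. Moreover, every target algebra of a homomorphism test for $\K_i$-epicness lies in $\K_i \subseteq \K$, so $\A$ being a $\K$-epic subalgebra of $\B$ trivially implies $\A$ is $\K_i$-epic in $\B$. Applying \cref{Thm:weak es fsi} to $\K_i$ now forces $\A = \B$, completing the argument.

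The main obstacle is really just the Jónsson reduction plus the ultrafilter pigeonhole, both of which are standard; the conceptual content is that arithmeticity ensures both J\'onsson's Theorem (to land inside some $\K_i$) and the applicability of \cref{Thm:weak es fsi} to the subvarieties $\K_i$ (to exploit their weak epimorphism surjectivity). The only care needed is to verify that ``$\B \in \K_\fsi$'' and ``$\A$ is $\K$-epic'' descend to the subvariety $\K_i$, which is automatic because both the congruence lattice and the class of possible codomains only shrink.
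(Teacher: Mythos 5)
Your proof is correct and follows essentially the same route as the paper: contradiction via \cref{Thm:weak es fsi}, then J\'onsson's Theorem plus the fact that $\PPU$ distributes over finite unions to place $\B$ in some $\K_i$, and finally the weak epimorphism surjectivity of $\K_i$. The only (harmless) difference is that at the last step the paper applies the weak epimorphism surjectivity property of $\K_i$ directly to the inclusion $\A \to \B$, whereas you re-invoke \cref{Thm:weak es fsi} for $\K_i$, which needs the extra (true) observation that $\K_i$ inherits congruence permutability from $\K$.
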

\begin{proof}
Assume that $\K = \VVV(\K_1 \cup \dots \cup \K_n)$, where each $\K_i$ is a variety with the weak epimorphism surjectivity property. Suppose, with a view to contradiction, that    
$\mathsf{K}$ lacks this property. By 
\cref{Thm:weak es fsi}
this implies that there exists a finitely generated $\B \in \mathsf{K}_\fsi$ with a finitely generated subalgebra $\A \leq \B$ that is proper and $\K$-epic. 
Applying \cref{Thm : Jonsson}, 
we obtain that 
$\B \in \HHH\SSS\PPU(\mathsf{K}_1 \cup \dots \cup \mathsf{K}_n)$. 
By \cite[Thm.~5.6]{Ber11} we have $\PPU(\K_1 \cup \dots \cup \K_n) = \PPU(\K_1) \cup  \dots \cup \PPU(\K_n)$. Therefore, $\B \in \HHH\SSS\PPU(\K_1) \cup \dots \cup \HHH\SSS\PPU(\K_n) \subseteq\K_1 \cup \dots \cup \K_n$. Then $\B \in \K_i$ for some $i \leq n$.
 But $\mathsf{K}_i$ has the weak epimorphism surjectivity property by assumption, whence $\A \leq \B$ cannot be a $\K_i$-epic subalgebra. 
   As $\mathsf{K}_i \subseteq \mathsf{K}$, in particular it follows that $\A \leq \B$ cannot be a $\K$-epic subalgebra either. But this contradicts the assumption and thus completes the proof.
\end{proof}

The following is an immediate consequence of Theorems \ref{Thm : AP -> WES = SES} and 
\ref{Thm : arithmetical join of WES =  WES}.

\begin{Corollary} \label{Cor : arithmetical AP+ join of WES = SES}
    Let $\mathsf{K}$ be an arithmetical variety with the amalgamation property. If $\mathsf{K}$ is the join of finitely many varieties with the weak epimorphism surjectivity property, then it has the strong epimorphism surjectivity property. 
\end{Corollary}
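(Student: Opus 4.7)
The plan is to chain the two preceding results in the natural order suggested by their hypotheses and conclusions. The statement essentially packages an ``arithmetical + AP'' version of the join closure: the arithmetical assumption gets us a weak epimorphism surjectivity property through the join, and then the amalgamation property upgrades it to the strong version.

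More concretely, I would proceed as follows. Since $\mathsf{K}$ is an arithmetical variety (in particular congruence distributive) and is assumed to be the join of finitely many varieties each of which has the weak epimorphism surjectivity property, \cref{Thm : arithmetical join of WES =  WES} applies verbatim and yields that $\mathsf{K}$ itself has the weak epimorphism surjectivity property. Note that the hypotheses of that theorem do not require the amalgamation property, so no extra work is needed at this step.

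At this point $\mathsf{K}$ is a variety (hence a quasivariety) with the amalgamation property that has the weak epimorphism surjectivity property. \cref{Thm : AP -> WES = SES} states precisely that, for quasivarieties with the amalgamation property, the weak and strong epimorphism surjectivity properties coincide. Applying that equivalence gives the desired conclusion: $\mathsf{K}$ has the strong epimorphism surjectivity property.

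There is no substantive obstacle here beyond making sure the two theorems compose: \cref{Thm : arithmetical join of WES =  WES} supplies the weak version without using the amalgamation property, while \cref{Thm : AP -> WES = SES} consumes the amalgamation property to promote weak to strong. Since the excerpt already advertises the result as an ``immediate consequence'' of those two theorems, the proof is just this two-line invocation, with no verification of \textsc{rsi} structure, no dominion computation, and no appeal to \LL o\'s' Theorem needed at the corollary level (all of that work has already been done inside \cref{Thm : AP -> WES = SES} and \cref{Thm : arithmetical join of WES =  WES}).
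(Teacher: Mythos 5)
Your proposal is correct and matches the paper exactly: the paper states the corollary as an immediate consequence of Theorem~\ref{Thm : arithmetical join of WES =  WES} (to get the weak epimorphism surjectivity property from the join hypothesis and congruence distributivity) followed by Theorem~\ref{Thm : AP -> WES = SES} (to upgrade weak to strong via the amalgamation property). Nothing further is needed.
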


\section{Extendable implicit operations}

The implicit operations $f$ of a class of algebras $\mathsf{K}$ behave well with respect to extensions, in the sense that if $\A \leq \B$ and $\A, \B \in \mathsf{K}$, then $f^\B$ extends $f^\A$.  More precisely, we have the following.

\begin{Proposition}\label{Prop : implicit operations extend}
Let $f$ be an implicit operation of a class of algebras $\mathsf{K}$. For all $\A, \B \in \mathsf{K}$ with $\A \leq \B$ the partial function $f^\B$ extends $f^\A$, in the sense that for all $\langle a_1, \dots, a_n \rangle \in \mathsf{dom}(f^\A)$ we have
\[
\langle a_1, \dots, a_n \rangle \in \mathsf{dom}(f^\B) \, \, \text{ and } \, \, f^\A(a_1, \dots, a_n) = f^\B(a_1, \dots, a_n).
\]
\end{Proposition}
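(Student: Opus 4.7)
The plan is to apply directly the definition of an operation of $\mathsf{K}$ to the inclusion homomorphism. Since $\A \leq \B$ with $\A, \B \in \mathsf{K}$, the inclusion map $i \colon \A \to \B$ defined by $i(a) = a$ for every $a \in A$ is a homomorphism between members of $\mathsf{K}$.

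Now fix $\langle a_1, \dots, a_n \rangle \in \mathsf{dom}(f^\A)$. As $f$ is an implicit operation of $\mathsf{K}$, it is in particular an operation of $\mathsf{K}$. Applying the definition of an operation to the homomorphism $i$ yields $\langle i(a_1), \dots, i(a_n) \rangle \in \mathsf{dom}(f^\B)$ and
\[
i(f^\A(a_1, \dots, a_n)) = f^\B(i(a_1), \dots, i(a_n)).
\]
Since $i$ is the inclusion, $i(a_j) = a_j$ for each $j \leq n$ and $i(f^\A(a_1, \dots, a_n)) = f^\A(a_1, \dots, a_n)$. Substituting into the above display gives $\langle a_1, \dots, a_n \rangle \in \mathsf{dom}(f^\B)$ and $f^\A(a_1, \dots, a_n) = f^\B(a_1, \dots, a_n)$, as desired.

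There is no real obstacle here: the whole statement is just the preservation condition in the definition of an operation, specialized to the inclusion homomorphism between $\A$ and $\B$. The only thing one might want to highlight is that the property of being an implicit operation is not needed in full strength; it suffices that $f$ is an operation of $\mathsf{K}$.
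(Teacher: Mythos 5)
Your proof is correct and is essentially identical to the paper's: both apply the preservation condition from the definition of an operation to the inclusion homomorphism $i \colon \A \to \B$ and then use that $i$ acts as the identity. Your closing remark that only the operation property (not implicitness) is needed is accurate and consistent with how the paper's argument actually uses the hypotheses.
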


\begin{proof}
Let $i \colon \A \to \B$ be the inclusion map and consider $\langle a_1, \dots, a_n \rangle \in \mathsf{dom}(f^\A)$. Since $i$ is a homomorphism, $\A, \B \in \mathsf{K}$, and $f$ an implicit operation of $\mathsf{K}$, we obtain 
\[
\langle i(a_1), \dots, i(a_n) \rangle \in \mathsf{dom}(f^\B) \, \, \text{ and }\, \, i(f^\A(a_1, \dots, a_n)) = f^\B(i(a_1), \dots, i(a_n)).
\]
As $i$ is the inclusion map, this yields the desired conclusion.
\end{proof}

Let $f$ be an implicit operation of a class of algebras $\mathsf{K}$. While Proposition \ref{Prop : implicit operations extend} guarantees that $f^\B$ extends $f^\A$ whenever $\A, \B \in \mathsf{K}$ and $\A \leq \B$, there is no reason to expect that we can extend $f^\A$ to a total function in this way. More precisely, there may be no extension $\B$ of $\A$ in $\mathsf{K}$ for which $f^\B$ is a total function. This makes the following definition attractive.

\begin{Definition}\label{Def:extendable operation}
Let $\M$ and $\K$ be classes of algebras with $\M \subseteq \K$. An $n$-ary implicit operation $f$ of $\mathsf{K}$ is said to be \emph{extendable relative to $\M$} when for all $\A \in \mathsf{M}$ and $a_1, \dots, a_n \in A$ there exists $\B \in \mathsf{K}$ such that 
\[
\A \leq \B \, \, \text{ and } \, \, \langle a_1, \dots, a_n \rangle \in \mathsf{dom}(f^\B).
\]
The set of implicit operations of $\mathsf{K}$ that are extendable relative to $\M$ will be denoted by $\textsf{ext}(\M,\K)$. We also let
\[
\extpp(\M,\K) = \ext(\M, \K) \cap \imppp(\K) \, \, \text{ and } \, \, \ext_{\textsc{eq}}(\M,\K) = \ext(\M, \K) \cap \imp_{\textsc{eq}}(\K).
\]
When $\M=\K$, we write $\ext(\K),\extpp(\K)$, and $\ext_{\textsc{eq}}(\K)$ instead of $\ext(\K,\K),\extpp(\K,\K)$, and $\ext_{\textsc{eq}}(\K,\K)$. Moreover, when an implicit operation is in $\ext(\K)$, we simply say it is \emph{extendable}. 
\end{Definition}

\begin{Remark}\label{rem: inclusions ext}
Let $\M_1,\M_2,\K_1,\K_2$ be classes of algebras with $\M_1 \subseteq \M_2 \subseteq \K_2$ and $\K_1 \subseteq \K_2$. Then the definition of an extendable implicit operation immediately yields that $\ext(\M_2,\K_1) \subseteq  \ext(\M_1,\K_2)$.\ In particular, if $\M$ and $\K$ are classes of algebras such that $\M \subseteq \K$, then $\ext(\K) \subseteq \ext(\M, \K)$. 
\qed
\end{Remark}

The relation between extendable implicit operations and the idea of ``extending partial functions to total ones'' is made precise by the next result. 

\begin{Theorem}\label{Thm : extendable 1}
Let $\mathsf{K}$ be a universal class and $\A \in \K$. Then there exists $\B \in \mathsf{K}$ with $\A \leq \B$ such that $f^\B$ is total for each $f \in \ext(\K)$. When, in addition, $\mathsf{K}$ is a quasivariety and $\A \in \mathsf{K}_\textsc{rsi}$, the algebra $\B$ can be chosen in $\mathsf{K}_\textsc{rsi}$.
\end{Theorem}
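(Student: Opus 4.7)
The plan is to build a chain $\A = \B_0 \leq \B_1 \leq \B_2 \leq \dots$ in $\K$ such that for every $n$, every $f \in \ext(\K)$, and every tuple $\vec{b} \in B_n^{\mathrm{ar}(f)}$ we have $\vec{b} \in \dom(f^{\B_{n+1}})$. The union $\B = \bigcup_n \B_n$ then belongs to $\K$ by \cref{Prop : universal class : unions of chains}, and any tuple of $\B$ lies in some $B_n$, so it belongs to $\dom(f^{\B_{n+1}}) \subseteq \dom(f^\B)$ by \cref{Prop : implicit operations extend}; hence $f^\B$ is total for every extendable $f$.

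To produce $\B_{n+1}$ from $\B_n$, I would apply the \cref{Thm : compactness theorem original} to the $\mathscr{L}_{B_n}$-theory
\[
T = \{\psi : \K \vDash \psi\} \cup \diag(\B_n) \cup \{\exists y\,\varphi_f(c_{b_1},\dots,c_{b_k},y) : f \in \ext(\K)\text{ defined by }\varphi_f,\ \vec{b} \in B_n^k\}.
\]
Given a finite $T_0 \subseteq T$ mentioning pairs $(f_1, \vec{b}^{\,1}), \dots, (f_m, \vec{b}^{\,m})$, iterated use of extendability yields a chain $\B_n \leq \C_1 \leq \dots \leq \C_m$ in $\K$ with $\vec{b}^{\,i} \in \dom(f_i^{\C_i})$; by \cref{Prop : implicit operations extend} these domain memberships persist in $\C_m$, which therefore models $T_0$. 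Any model of $T$ then yields, via the \cref{Lem : Diagram Lemma}, an $\mathscr{L}_\K$-reduct containing $\B_n$ as a subalgebra and lying in the universal (hence elementary) class $\K$; this is our $\B_{n+1}$.

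For the second statement, assume $\K$ is a quasivariety and $\A \in \K_\rsi$. Fix $a_0 \neq b_0$ in $A$ with $\Cg_\K^\A(a_0, b_0)$ the monolith of $\A$, and maintain as an invariant that each $\B_n \in \K_\rsi$ has monolith generated by this same pair $(a_0, b_0)$. Having built the auxiliary $\B_{n+1}' \in \K$ as above, Zorn's Lemma (using that the union of a directed family of $\K$-congruences is again a $\K$-congruence, since quasivarieties are closed under directed colimits) yields a $\K$-congruence $\theta$ of $\B_{n+1}'$ maximal subject to $(a_0, b_0) \notin \theta$; set $\B_{n+1} := \B_{n+1}'/\theta$. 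Maximality of $\theta$ places $\B_{n+1}$ in $\K_\rsi$ with monolith generated by $(a_0/\theta, b_0/\theta)$, while the inductive hypothesis forces the $\K$-congruence $\theta \cap B_n^2$ of $\B_n$ to equal $\textup{id}_{B_n}$ (else it would contain $(a_0,b_0)$), so that $\B_n$ embeds into $\B_{n+1}$. The required extendability on tuples from $B_n$ is preserved by the quotient map, since implicit operations are preserved by homomorphisms in $\K$.

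For the union $\B$, any nontrivial $\theta \in \Con_\K(\B)$ has a witnessing pair $(c,d) \in \theta$ with $c \neq d$ in some $B_n$; then $\theta \cap B_n^2$ is a nontrivial $\K$-congruence of $\B_n \leq \B$, so it contains $(a_0, b_0)$, whence so does $\theta$. Thus $\B \in \K_\rsi$. The hard part is this delicate interplay between enforcing totality (which \emph{enlarges} algebras) and preserving relative subdirect irreducibility (which is stable under \emph{quotients}, not enlargements); fixing the monolith-generating pair $(a_0, b_0)$ throughout the construction and, at each successor step, quotienting by a $\K$-congruence maximally avoiding it is the key device that keeps both conditions in harmony.
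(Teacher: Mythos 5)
Your argument is correct, so let me compare it with the paper's. The first half is essentially the paper's proof: the same compactness argument over $\diag(\B_n)$, the theory of $\K$, and the sentences $\exists y\,\varphi_f(c_{b_1},\dots,c_{b_k},y)$, with finite satisfiability witnessed by an iterated-extendability chain, followed by a union of chains. For the second half you take a genuinely different route. The paper does \emph{not} interleave: it first produces a single $\B \in \K$ with every extendable operation total (ignoring irreducibility entirely), and only then shrinks, by subdirectly decomposing $\B$ into members of $\K_\textsc{rsi}$ and observing that, since $\A \in \K_\textsc{rsi}$ sits inside $\B$, some projection must be injective on $\A$; this yields a surjection $h \colon \B \to \C$ onto some $\C \in \K_\textsc{rsi}$ containing $\A$ (\cref{Prop : extendability in quasivarieties : si trick}), and totality transfers along $h$ because implicit operations are preserved by homomorphisms. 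You instead fix a monolith-generating pair $(a_0,b_0)$ and, at every successor step, quotient by a $\K$-congruence maximal among those avoiding it (the device of \cref{Prop : full congruence}), then verify the invariant at the limit. Both work; your construction is heavier, since it must re-establish irreducibility at each stage and again at the union, whereas the paper exploits the very same preservation-under-surjections fact you invoke, but only once, at the very end. Two small points in your version deserve to be made explicit: the Zorn step needs $\Con_\K(\B_{n+1}')$ to be closed under directed unions, which is the statement that it is an algebraic closure system (\cite[Cor.~1.4.11]{Go98a}) rather than closure of $\K$ under colimits per se; and after quotienting one must pass to an isomorphic copy of $\B_{n+1}'/\theta$ so that $\B_n$ is literally a subalgebra before taking the union. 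Neither is a gap.
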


The proof of \cref{Thm : extendable 1} hinges on the following observation.

\begin{Proposition}\label{Prop : extendability in quasivarieties : si trick}
Let $\mathsf{K}$ be a quasivariety, $\A \in \mathsf{K}_{\textsc{rsi}}$, and $\B \in \mathsf{K}$ with $\A \leq \B$. Then there exist $\C \in \mathsf{K}_\textsc{rsi}$ with $\A \leq \C$ and a  surjective homomorphism $h \colon \B \to \C$.
\end{Proposition}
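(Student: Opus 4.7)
My plan is to realize $\C$ as (an isomorphic copy of) a quotient $\B / \theta$ for a carefully chosen $\K$-congruence $\theta$ of $\B$, picked via a Zorn's lemma argument that preserves the witness of relative subdirect irreducibility of $\A$. Since $\A \in \K_\textsc{rsi}$, \cref{Prop : RFSI} yields that $\mathrm{id}_A$ is completely meet irreducible in $\mathsf{Con}_\K(\A)$, so $\A$ has a monolith; fix a pair of distinct elements $a, b \in A$ such that $\mathsf{Cg}_\K^\A(a,b)$ is this least nonidentity $\K$-congruence. Then set
\[
S = \{ \phi \in \mathsf{Con}_\K(\B) : \langle a, b \rangle \notin \phi \}.
\]
The set $S$ is nonempty as it contains $\mathrm{id}_B$, and I would show that it is closed under directed unions: the union of a chain of congruences is again a congruence, the pair $\langle a, b \rangle$ is avoided at every stage, and a standard compactness argument for the finitary quasiequations axiomatizing $\K$ guarantees that the union is still a $\K$-congruence. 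Zorn's lemma then yields a maximal element $\theta \in S$.

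Two properties of $\theta$ are decisive. First, $\theta \cap (A \times A) = \mathrm{id}_A$: the set $\theta \cap (A \times A)$ is a $\K$-congruence of $\A$ since the map $a/(\theta \cap (A \times A)) \mapsto a/\theta$ embeds $\A/(\theta \cap (A \times A))$ into $\B/\theta \in \K$ and $\K$ is closed under subalgebras; if this $\K$-congruence were nontrivial, by choice of $a,b$ it would contain $\mathsf{Cg}_\K^\A(a,b)$ and hence $\langle a, b \rangle$, contradicting $\theta \in S$. Second, $\theta$ is completely meet irreducible in $\mathsf{Con}_\K(\B)$: if $\theta = \bigcap X$ with $X \subseteq \mathsf{Con}_\K(\B)$ and $\theta \notin X$, then each $\phi \in X$ strictly contains $\theta$, so by maximality of $\theta$ we would have $\langle a, b \rangle \in \phi$ for all $\phi \in X$, whence $\langle a, b \rangle \in \bigcap X = \theta$, again contradicting $\theta \in S$. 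Applying \cref{Prop : RFSI} once more, we deduce $\B/\theta \in \K_\textsc{rsi}$.

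To conclude, the canonical surjection $\pi \colon \B \to \B/\theta$ restricts on $A$ to an embedding $\iota \colon \A \to \B/\theta$ by the first property. Since the paper uses $\A \leq \C$ in the literal sense of subalgebra, I would replace $\B / \theta$ by an isomorphic copy $\C$ whose universe contains $A$ and in which $\iota$ becomes the identity on $A$, by transporting the structure of $\B/\theta$ along a suitable bijection; this yields $\A \leq \C \in \K_\textsc{rsi}$ and the composite $h \colon \B \to \C$ of $\pi$ with the transport isomorphism is the desired surjective homomorphism. I expect the only genuinely delicate step to be the verification that $S$ is closed under chains, i.e., that directed unions of $\K$-congruences remain $\K$-congruences; everything else is a clean application of the monolith together with maximality.
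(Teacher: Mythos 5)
Your proof is correct, but it takes a different route from the paper's. The paper invokes the Subdirect Decomposition Theorem (\cref{Thm : Subdirect Decomposition}) as a black box: it takes a subdirect embedding $g \colon \B \to \prod_{i \in I}\B_i$ with all $\B_i \in \K_\rsi$, observes that the induced subdirect representation of $\A$ must, by relative subdirect irreducibility of $\A$, be an isomorphism onto some factor $p_j[g[\A]]$, and then uses the surjection $p_j \circ g \colon \B \to \B_j$ (adjusted by an isomorphism so that $\A$ becomes a genuine subalgebra of the target). You instead inline the Zorn-type argument that underlies the decomposition theorem, but apply it only to the single pair $\langle a,b\rangle$ generating the monolith of $\A$: a maximal $\K$-congruence $\theta$ of $\B$ avoiding $\langle a,b\rangle$ is completely meet irreducible in $\mathsf{Con}_\K(\B)$ and restricts to $\mathrm{id}_A$, so $\B/\theta \in \K_\rsi$ by \cref{Prop : RFSI} and the canonical surjection embeds $\A$. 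Your approach is self-contained modulo the standard fact that $\mathsf{Con}_\K(\B)$ is an inductive (algebraic) closure system --- which the paper already cites --- and it makes transparent exactly where relative subdirect irreducibility of $\A$ enters (through its monolith), whereas the paper's argument is shorter on the page but hides the same Zorn argument inside the Subdirect Decomposition Theorem. All the individual steps you outline (closure of $S$ under unions of chains via finitarity of quasiequations, $\theta \cap (A \times A) = \mathrm{id}_A$ via the monolith, complete meet irreducibility via maximality, and the final transport of structure to realize $\A \leq \C$) check out.
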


\begin{proof}
By the Subdirect Decomposition Theorem \ref{Thm : Subdirect Decomposition} there exists a subdirect embedding $g \colon \B \to \prod_{i \in I}\B_i$ for some family $\{ \B_i : i \in I \} \subseteq \mathsf{K}_\textsc{rsi}$. From $\A \leq \B$ it follows that $g \colon \A \to \prod_{i \in I}p_i[g[\A]]$ is also a subdirect embedding. As $\A \in \mathsf{K}_\textsc{rsi}$, there exists $j \in I$ such that $p_j \circ g \colon \A \to p_j[g[\A]]$ is an isomorphism. Together with $p_j[g[\A]] \leq \B_j$, this yields that $p_j \circ g \colon \A \to \B_j$ is an embedding.\ Since $\mathsf{K}_{\textsc{rsi}}$ is closed under $\III$, there exist $\C \in \mathsf{K}_\textsc{rsi}$ isomorphic to $\B_j$ such that $\A \leq \C$ and a  surjective homomorphism $h \colon \B \to \C$ 
(the latter is obtained by 
composing $p_j \circ g \colon \B \to \B_j$ with the isomorphism between $\B_j$ and $\C$).
\end{proof}

We are now ready to prove \cref{Thm : extendable 1}.

\begin{proof}
We begin with the following observation.

\begin{Claim}\label{Claim : extendable}
Let $\A \in \mathsf{K}$. Then there exists $\B \in \mathsf{K}$ with $\A \leq \B$ such that $A^n \subseteq  \mathsf{dom}(f^\B)$ for each $n$-ary $f \in \ext(\K)$.
\end{Claim}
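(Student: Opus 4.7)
The plan is to build $\B$ as the top of a transfinite chain $\langle \B_\alpha \rangle_{\alpha \leq \kappa}$ of extensions of $\A$ inside $\K$, adding one witness at a time for each pair consisting of an extendable operation and a tuple from $A$. Specifically, set
\[
\Sigma = \{ \langle f, \vec{a}\, \rangle : f \in \ext(\K) \text{ is } n\text{-ary and } \vec{a} \in A^n \text{ for some } n \geq 1 \};
\]
this is a genuine set because implicit operations can be identified with their defining first order formulas. Fix an enumeration $\Sigma = \{ \langle f_\alpha, \vec{a}_\alpha \rangle : \alpha < \kappa \}$ and write $n_\alpha$ for the arity of $f_\alpha$.

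I would define the chain by transfinite recursion as follows. Start with $\B_0 = \A$. At a successor stage $\alpha + 1$, the inclusion $\A \leq \B_\alpha$ gives $\vec{a}_\alpha \in A^{n_\alpha} \subseteq B_\alpha^{n_\alpha}$, and since $f_\alpha \in \ext(\K)$ and $\B_\alpha \in \K$, the very definition of an extendable implicit operation produces some $\B_{\alpha+1} \in \K$ with $\B_\alpha \leq \B_{\alpha+1}$ and $\vec{a}_\alpha \in \mathsf{dom}(f_\alpha^{\B_{\alpha+1}})$. At a limit stage $\alpha$, set $\B_\alpha = \bigcup_{\beta < \alpha} \B_\beta$; this lies in $\K$ because universal classes are closed under unions of chains (\cref{Prop : universal class : unions of chains}). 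Finally, let $\B = \B_\kappa$.

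By construction $\A \leq \B \in \K$. To verify the required domain condition, fix an $n$-ary $f \in \ext(\K)$ and $\vec{a} \in A^n$. Then $\langle f, \vec{a}\, \rangle = \langle f_\alpha, \vec{a}_\alpha \rangle$ for some $\alpha < \kappa$, so $\vec{a} \in \mathsf{dom}(f^{\B_{\alpha+1}})$, and applying \cref{Prop : implicit operations extend} to the inclusion $\B_{\alpha+1} \leq \B$ yields $\vec{a} \in \mathsf{dom}(f^{\B})$. Hence $A^n \subseteq \mathsf{dom}(f^{\B})$, as desired. The only genuinely delicate point of the argument is the limit step, which is handled by the hypothesis that $\K$ is a universal class (and would fail for a generic quasivariety); the successor step is an immediate unpacking of extendability, and previously arranged witnesses are automatically preserved by \cref{Prop : implicit operations extend}.
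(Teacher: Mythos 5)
Your proof is correct, but it follows a genuinely different route from the paper's. The paper proves the claim by a compactness argument: it forms the set of sentences $\diag(\A) \cup \Sigma \cup \Gamma$, where $\Sigma$ asserts $\exists y\, \varphi_f(c_{a_1},\dots,c_{a_n},y)$ for each extendable $f$ and each tuple from $A$ and $\Gamma$ axiomatizes $\K$, verifies finite satisfiability by building a finite chain $\A_0 \leq \dots \leq \A_m$ exactly as in your successor step (using preservation of existential positive formulas, \cref{Thm : preservation}), and then extracts $\B$ from a model of the whole set via the Diagram Lemma \ref{Lem : Diagram Lemma}. You instead well-order the set of pairs $\langle f,\vec{a}\,\rangle$ and run the same one-witness-at-a-time extension transfinitely, absorbing limit stages with \cref{Prop : universal class : unions of chains} and propagating witnesses upward with \cref{Prop : implicit operations extend}. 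Both arguments use the hypothesis that $\K$ is universal in an essential way, just at different places: the paper needs elementarity (for \cref{Thm : compactness theorem original} and for $\Gamma$) together with closure under $\III$ and $\SSS$, whereas you only need closure under unions of chains. Your construction is arguably more self-contained, avoiding the Compactness Theorem and the Diagram Lemma entirely, at the cost of an explicit transfinite recursion; the paper's version keeps the set-theoretic bookkeeping inside the compactness black box. All the individual steps you take check out — in particular, the tuples you must accommodate all come from $A \subseteq B_\alpha$, so the definition of extendability applies verbatim at each successor stage, and $\ext(\K)$ is a set because implicit operations are identified with their defining formulas — so there is no gap.
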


\begin{proof}[Proof of the Claim]
Recall from \cref{Thm : implicit operations vs existential positive formulas} that each implicit operation $f$ of $\mathsf{K}$ is defined by an existential positive formula $\varphi_f$. Then consider the following set of formulas in the language of $\mathsf{K}$ expanded with fresh constants $\{ c_a : a \in A \}$ for the elements of $A$:
\[
\Sigma = \{  \exists y \varphi_f (c_{a_{1}}, \dots, c_{a_{n}}, y) :  n \in \mathbb{N}, \ a_1, \dots, a_n \in A, \text{ and }f\in  \ext(\K) \text{ is $n$-ary} \}.
\]
Moreover, let $\Gamma$ be a set of axioms for $\mathsf{K}$ (which is an elementary class by assumption) and define 
\[
\Delta = \mathsf{diag}(\A) \cup \Sigma \cup \Gamma.
\]
We will prove that $\Delta$ has a model. 

By the Compactness Theorem \ref{Thm : compactness theorem original}
it suffices to show that so does each finite subset of $\Delta$. To this end, consider $a_1, \dots, a_k \in A$ and 
$f_1, \dots, f_m \in \ext(\K)$ such that $f_i$ has arity $n_i$ for each
$i \leq m$. 
Moreover, for each 
$i \leq m$ let $a_1^i, \dots, a_{n_i}^i \in \{ a_1, \dots, a_k \}$. We need to prove that the following set has a model:

\begin{equation}\label{Eq : extendable : finite compactness}
\mathsf{diag}(\textsf{Sg}^\A(a_1, \dots, a_k)) \cup \{ \exists y \varphi_{f_i} (c_{a_{1}^i}, \dots, c_{a_{n_i}^i}, y) : 1 \leq i \leq m \} \cup \Gamma.
\end{equation}

To this end, we shall define a sequence $\A_0 \leq   \A_1 \leq  \dots \leq  \A_m$ of members of $\mathsf{K}$. First, let $\A_0 = \textsf{Sg}^\A(a_1, \dots, a_k)$. Clearly, $\A_0 \in \mathsf{K}$ because $\A_0 \leq \A \in \mathsf{K}$ and $\mathsf{K}$ is a universal class by assumption. Then suppose that the sequence $\A_0 \leq \dots \leq \A_i$ has already been defined for $i < m$. Since $\A_0 \leq \A_i$ we have $a_1^{i+1}, \dots, a_{n_{i+1}}^{i+1} \in \{ a_1, \dots, a_k \} \subseteq A_0 \subseteq A_{i}$. As $f_{i+1} \in \ext(\K)$ is $n_{i+1}$-ary and $\A_i \in \mathsf{K}$, there exists $\A_{i+1} \in \mathsf{K}$ such that $\langle a_1^{i+1}, \dots, a_{n_{i+1}}^{i+1} \rangle \in \mathsf{dom}(f_{i+1}^{\A_{i+1}})$. Clearly, $\A_0 \leq \dots \leq \A_{i+1}$ is still a sequence of members of $\mathsf{K}$. This concludes the definition of $\A_0 \leq   \A_1 \leq  \dots \leq  \A_m$.

Observe that $\textsf{Sg}^\A(a_1, \dots, a_k) = \A_0 \leq \A_m$. Then let $\A_m^+$ be the expansion of $\A_m$ with constants in $\{ c_a : a \in \textsf{Sg}^\A(a_1, \dots, a_k)\}$ in which each $c_a$ is interpreted as $a$. We will prove that $\A_m^+$  is a model of the set of formulas in (\ref{Eq : extendable : finite compactness}). From $\textsf{Sg}^\A(a_1, \dots, a_k) = \A_0 \leq \A_m$ and the Diagram Lemma \ref{Lem : Diagram Lemma} it follows that $\A_m^+$ is a model of $\mathsf{diag}(\textsf{Sg}^\A(a_1, \dots, a_k))$. Furthermore, $\A_m^+ \vDash \Gamma$ because $\A_m \in \mathsf{K}$ and $\Gamma$ axiomatizes $\mathsf{K}$. Therefore, it only remains to show that $\A_m^+ \vDash \exists y \varphi_{f_i} (c_{a_{1}^i}, \dots, c_{a_{n_i}^i}, y)$ for each $i \leq m$. As each $c_a$ is interpreted as $a \in A_0 \subseteq A_m$ in $\A_m^+$, this amounts to
\[
\A_m \vDash \exists y \varphi_{f_i} (a_{1}^i, \dots, a_{n_i}^i, y)\text{ for each  }i \leq m.
\]
Consider $i \leq m$. The construction of $\A_i$ guarantees that $\langle a_1^{i}, \dots, a_{n_{i}}^{i}\rangle \in \mathsf{dom}(f_i^{\A_i})$. As $f_i$ is defined by $\varphi_{f_i}$, this yields $\A_i \vDash \varphi_{f_i}(a_1^{i}, \dots, a_{n_{i}}^{i}, f_i^{\A_i}(a_1^{i}, \dots, a_{n_{i}}^{i}))$. Since $\A_i \leq \A_m$ and $\varphi_{f_i}$ is an existential positive formula, we can apply Theorem \ref{Thm : preservation}(\ref{item : preservation : ep}), obtaining $\A_m \vDash \varphi_{f_i}(a_1^{i}, \dots, a_{n_{i}}^{i}, f_i^{\A_i}(a_1^{i}, \dots, a_{n_{i}}^{i}))$, whence $\A_m \vDash \exists y \varphi_{f_i}(a_1^{i}, \dots, a_{n_{i}}^{i}, y)$. Thus, we conclude that $\A_m^+$ is a model of the set of formulas in (\ref{Eq : extendable : finite compactness}).

As we mentioned, from the fact that the set of formulas in (\ref{Eq : extendable : finite compactness}) has a model it follows that $\Delta$ 
also has
a model $\B^+$.
Let $\B$ be the $\mathscr{L}_\mathsf{K}$-reduct of $\B^+$. Since $\B^+$ is a model of $\Gamma$, so is $\B$. Together with the assumption that $\Gamma$ axiomatizes $\mathsf{K}$, this yields $\B \in \mathsf{K}$. Furthermore, as $\B^+$ is a model of $\mathsf{diag}(\A)$, we can apply the Diagram Lemma \ref{Lem : Diagram Lemma}, obtaining that $\A$ embeds into $\B$ via the map that sends $a$ to the interpretation of $c_a$ in $\B^+$. Since $\mathsf{K}$ is an elementary class, it is closed under $\III$. Therefore, we may assume that $\A \leq \B$ and that $c_a$ is interpreted as $a$ in $\B^+$.

To conclude the proof of 
the claim
it only remains to show that $A^n \subseteq  \mathsf{dom}(f^{\B})$ for each $n$-ary $f \in \ext(\K)$. To this end, consider an $n$-ary $f \in \ext(\K)$ and $a_1, \dots, a_n \in A$. As $\B^+$ is a model of $\Sigma$, we obtain $\B \vDash \exists y \varphi_f(a_1, \dots, a_n, y)$. Since $\varphi_f$ defines $f$, this amounts to $\langle a_1, \dots, a_n \rangle \in \mathsf{dom}(f^\B)$.
\end{proof}

Now,
we proceed to prove the first part of the statement of \cref{Thm : extendable 1}. Consider $\A \in \mathsf{K}$. We will define a sequence $\{ \A_i : i \in \mathbb{N} \}$ of members of $\mathsf{K}$. First, let $\A_0 = \A$. Then suppose $\A_i \in \mathsf{K}$ has already been defined. By Claim \ref{Claim : extendable} there exists $\A_{i+1} \in \mathsf{K}$ with $\A_i \leq \A_{i+1}$ and $A_i^n \subseteq \mathsf{dom}(f^{\A_{i+1}})$ for each $n$-ary $f \in \ext(\K)$. 
By definition the sequence $\{ \A_i : i \in \mathbb{N} \}$ constructed in this way is such that
\[
\A = \A_0 \leq \A_1 \leq \A_2 \leq \cdots
\]

Now, as $\mathsf{K}$ is a universal class, it is closed under unions of chains of algebras by \cref{Prop : universal class : unions of chains}. Therefore, the union $\B$ of the chain in the above display belongs to $\mathsf{K}$. Furthermore, $\A = \A_0 \leq \B$. To conclude the proof of the first part of the statement, it only remains to show that $f^\B$ is total for each $f \in \ext(\K)$. To this end, let $f \in \ext(\K)$ be $n$-ary and $b_1, \dots, b_n \in B$. As $\B$ is the union of the chain in the above display, there exists $i \in \mathbb{N}$ such that $b_1, \dots, b_n \in A_i$. By the definition of $\A_{i+1}$ we have $\langle b_1, \dots, b_n \rangle \in A_i^n \subseteq \mathsf{dom}(f^{\A_{i+1}})$. As $\A_{i+1} \leq \B$, we can apply Proposition \ref{Prop : implicit operations extend}, obtaining $\langle b_1, \dots, b_n \rangle \in \mathsf{dom}(f^\B)$. Hence, $f^\B$ is a total operation, as desired.

To prove the second part of the statement of \cref{Thm : extendable 1}, suppose that $\mathsf{K}$ is a quasivariety and consider $\A \in \mathsf{K}_\textsc{rsi}$. In view of the first part of the statement of  \cref{Thm : extendable 1}, there exists $\B \in \mathsf{K}$ with $\A \leq \B$ such that $f^\B$ is total for each $f \in \ext(\K)$. By Proposition \ref{Prop : extendability in quasivarieties : si trick} there also exist $\C \in \mathsf{K}_\textsc{rsi}$ with $\A \leq \C$ and a surjective homomorphism $h \colon \B \to \C$.
To conclude the proof, it only remains to show that $f^\C$ is total for each $f \in \ext(\K)$. To this end, consider an $n$-ary $f \in \ext(\K)$ and $c_1, \dots, c_n \in C$. Since $h \colon \B \to \C$ is surjective, there exist $b_1, \dots, b_n \in B$ such that $h(b_j) = c_j$ for each $j \leq n$. 
Recall
that $f^\B$ is total because $f \in \ext(\Kfg, \K)$. Therefore, $\langle b_1, \dots, b_n \rangle \in \mathsf{dom}(f^\B)$. 
As $f$ is an implicit operation and $h$ a homomorphism, we conclude that $\langle c_1, \dots, c_n \rangle = \langle h(b_1), \dots, h(b_n) \rangle \in \mathsf{dom}(f^\C)$.
\end{proof}

The following is a consequence of \cref{Thm : extendable 1}.

\begin{Corollary} \label{Cor : closure under composition for ext}
Given a universal class $\mathsf{K}$, the classes $\mathsf{ext}(\mathsf{K})$ and $\mathsf{ext}_{\textsc{pp}}(\mathsf{K})$ are closed under composition. 
\end{Corollary}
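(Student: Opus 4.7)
The plan is to combine Proposition \ref{Prop : closure under composition for imp} (closure of $\mathsf{imp}(\mathsf{K})$ and $\mathsf{imp}_{pp}(\mathsf{K})$ under composition) with Theorem \ref{Thm : extendable 1} to obtain the extendability of compositions essentially for free. More precisely, take an $n$-ary $g \in \mathsf{ext}(\mathsf{K})$ and $m$-ary $f_1, \dots, f_n \in \mathsf{ext}(\mathsf{K})$, and let $h = g(f_1, \dots, f_n)$. By Proposition \ref{Prop : closure under composition for imp}, $h \in \mathsf{imp}(\mathsf{K})$, so only extendability remains to be verified; moreover, if all of $g, f_1, \dots, f_n$ lie in $\mathsf{imp}_{pp}(\mathsf{K})$, the same proposition yields $h \in \mathsf{imp}_{pp}(\mathsf{K})$, so the argument will handle both cases at once.

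To prove $h \in \mathsf{ext}(\mathsf{K})$, fix $\A \in \mathsf{K}$ and $a_1, \dots, a_m \in A$. Applying Theorem \ref{Thm : extendable 1} to $\A$, we obtain $\B \in \mathsf{K}$ with $\A \leq \B$ such that $f^{\B}$ is total for every $f \in \mathsf{ext}(\mathsf{K})$. In particular, $f_1^{\B}, \dots, f_n^{\B}$ are total, so $\langle a_1, \dots, a_m \rangle \in \mathsf{dom}(f_i^{\B})$ for each $i \leq n$. Letting $b_i = f_i^{\B}(a_1, \dots, a_m)$, totality of $g^{\B}$ ensures $\langle b_1, \dots, b_n \rangle \in \mathsf{dom}(g^{\B})$. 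By the definition of composition, this gives $\langle a_1, \dots, a_m \rangle \in \mathsf{dom}(h^{\B})$, as desired.

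Since Theorem \ref{Thm : extendable 1} and Proposition \ref{Prop : closure under composition for imp} do all the heavy lifting, there is no genuine obstacle in the argument; the only small point to be careful about is that in the pp case we must invoke the pp half of Proposition \ref{Prop : closure under composition for imp} to know that $h$ is defined by a pp formula, while the extendability step is identical to the general case because Theorem \ref{Thm : extendable 1} produces a single $\B$ that simultaneously totalizes every extendable implicit operation.
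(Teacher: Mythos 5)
Your proof is correct and follows essentially the same route as the paper: apply Theorem \ref{Thm : extendable 1} to obtain a single extension $\B$ on which $g^\B, f_1^\B, \dots, f_n^\B$ are all total, conclude that the composition is total on $\B$, and handle the pp case via Proposition \ref{Prop : closure under composition for imp} together with $\mathsf{ext}_{pp}(\mathsf{K}) = \mathsf{ext}(\mathsf{K}) \cap \mathsf{imp}_{pp}(\mathsf{K})$. No gaps.
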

\begin{proof}
    Consider an $n$-ary $g \in \ext(\K)$ and $m$-ary $f_1, \dots, f_n \in \ext(\K)$. 
    Let $\A \in \K$. By \cref{Thm : extendable 1} there exists $\B \in \K$ with $\A \leq \B$ such that $g^\B, f_1^\B, \dots, f_n^\B$ are total. 
    It then follows from the definition of composition that $g(f_1, \dots, f_n)^\B$ is also total.
    Therefore, $\mathsf{ext}(\mathsf{K})$ is closed under composition. 
    As $\imppp(\K)$ is closed under composition by \cref{Prop : closure under composition for imp}, we obtain that $\mathsf{ext}_{\textsc{pp}}(\mathsf{K})$ is also closed under composition 
    because
    $\mathsf{ext}_{\textsc{pp}}(\mathsf{K}) = \mathsf{ext}(\mathsf{K}) \cap \mathsf{imp}_{\textsc{pp}}(\mathsf{K})$. 
\end{proof}

\begin{exa}[\textsf{Cancellative commutative monoids}]\label{Exa : CCM : inverses are extendable}
Recall from \cref{exa:ccmon AP} that the
class of cancellative commutative monoids forms a quasivariety,
which we denote by $\mathsf{CCMon}$.
The importance of cancellative commutative monoids is due to the following well-known result (see, e.g., \cite[pp.~39--40]{Lan84}).

\begin{Theorem}\label{Thm : CCM subreducts of Abelian groups}
The quasivariety of cancellative commutative monoids is the class of monoid subreducts of Abelian groups.
\end{Theorem}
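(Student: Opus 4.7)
The plan is to prove the two inclusions separately. For the easy direction, $\SSS(\mathsf{Ab}\res_{\L_{\Mon}}) \subseteq \CCMon$, I would observe that any Abelian group, viewed as a commutative monoid, satisfies the cancellation quasiequation $x y \thickapprox x z \to y \thickapprox z$: multiplying by the inverse of $x$ immediately yields $y = z$. Since this quasiequation is a universal formula, \cref{Thm : preservation}\eqref{item : preservation : universal} ensures it is preserved by subalgebras, so every monoid subreduct of an Abelian group lies in $\CCMon$ (commutativity and the monoid axioms being likewise universal and therefore inherited by submonoids).

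For the reverse inclusion $\CCMon \subseteq \SSS(\mathsf{Ab}\res_{\L_{\Mon}})$, I would invoke the classical \emph{group of fractions} construction. Given $\A \in \CCMon$, define a binary relation $\sim$ on $A \times A$ by $\langle a,b\rangle \sim \langle c,d\rangle$ if and only if $a d = b c$. Reflexivity and symmetry are immediate from commutativity of $\A$; transitivity is the single step where cancellativity is essential: if $a d = b c$ and $c f = d e$, then $a d f = b c f = b d e$, so $d(a f) = d(b e)$, and cancelling $d$ yields $a f = b e$, i.e., $\langle a,b\rangle \sim \langle e,f\rangle$. Set $\boldsymbol{G} = (A \times A)/{\sim}$ and define the operation $[\langle a,b\rangle] \cdot [\langle c,d\rangle] := [\langle a c, b d\rangle]$, with identity $[\langle 1,1\rangle]$ and inverse $[\langle a,b\rangle]^{-1} := [\langle b,a\rangle]$. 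Well-definedness of the multiplication is verified by a similar cancellation argument, commutativity and associativity are inherited from $\A$, and the inverse identity follows from $[\langle a,b\rangle] \cdot [\langle b,a\rangle] = [\langle a b, b a\rangle] = [\langle 1,1\rangle]$, so $\boldsymbol{G}$ is an Abelian group.

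Finally, I would verify that the map $\eta \colon A \to G$ defined by $\eta(a) = [\langle a,1\rangle]$ is a monoid embedding into $\boldsymbol{G}\res_{\L_{\Mon}}$. It is a homomorphism because $\eta(a b) = [\langle a b,1\rangle] = [\langle a,1\rangle]\cdot[\langle b,1\rangle] = \eta(a)\eta(b)$ and $\eta(1) = [\langle 1,1\rangle]$, and it is injective because $\eta(a) = \eta(b)$ means $\langle a,1\rangle \sim \langle b,1\rangle$, i.e., $a = a \cdot 1 = 1 \cdot b = b$. The main obstacle is organizing the verifications of transitivity of $\sim$ and well-definedness of multiplication, as both hinge crucially on cancellativity; once these are in place, the remaining computations are routine bookkeeping with the commutative-monoid axioms of $\A$.
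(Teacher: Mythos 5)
Your proof is correct and is exactly the classical group-of-fractions construction that the paper relies on: the paper gives no proof of its own here, merely citing \cite[pp.~39--40]{Lan84}, where this same argument appears. (One tiny remark: well-definedness of the multiplication on $\boldsymbol{G}$ needs only commutativity, not cancellativity --- the latter is genuinely used only for transitivity of $\sim$, as you correctly identify.)
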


Recall from Theorem \ref{Thm : inverses monoid : implicit operation} that ``taking inverses'' is an implicit operation of the variety of monoids, definable by the conjunction of equations $\varphi = (x \cdot y \thickapprox 1) \sqcap (y \cdot x \thickapprox 1)$. Clearly, its restriction to $\mathsf{CCMon}$ is an implicit operation of $\mathsf{CCMon}$, which is defined by the equation $x \cdot y \thickapprox 1$. We will prove the following.

\begin{Theorem}\label{Thm : inverses in monoids : extendable}
Taking inverses is a unary extendable implicit operation of the quasivariety of cancellative commutative monoids, which, moreover, can be defined by the equation $x \cdot y \thickapprox 1$. 
\end{Theorem}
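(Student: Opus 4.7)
The proof proposal is straightforward and naturally splits into three claims. First, I would verify that the single equation $\psi(x,y) := x \cdot y \thickapprox 1$ is functional in $\mathsf{CCMon}$ and defines precisely the restriction to $\mathsf{CCMon}$ of the ``taking inverses'' operation on monoids. For functionality: if $\A \in \mathsf{CCMon}$ and $a, b, c \in A$ satisfy $\A \vDash \psi(a,b) \sqcap \psi(a,c)$, then $a \cdot b = 1 = a \cdot c$, and cancellativity of $a$ yields $b = c$. Moreover, since $\A$ is commutative, $a \cdot b = 1$ forces $b \cdot a = a \cdot b = 1$, so the unique such $b$ is the two-sided inverse of $a$; conversely any two-sided inverse satisfies $\psi$. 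Hence $\psi$ and the conjunction $(x \cdot y \thickapprox 1) \sqcap (y \cdot x \thickapprox 1)$ define the same partial function on $\mathsf{CCMon}$, which is exactly the restriction of the inverse operation from Theorem \ref{Thm : inverses monoid : implicit operation}.

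Next, to promote this partial function to an implicit operation of $\mathsf{CCMon}$, I would invoke \cref{Cor : functionality in Q(K)}: since $\psi$ is a pp formula functional in $\mathsf{CCMon}$, it defines an implicit operation of $\QQQ(\mathsf{CCMon}) = \mathsf{CCMon}$. (Alternatively, the implication from right to left of \cref{Thm : implicit operations vs existential positive formulas} applied directly to the elementary class $\mathsf{CCMon}$ gives the same conclusion.)

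Finally, extendability is where the content lies, and it is entirely delivered by \cref{Thm : CCM subreducts of Abelian groups}. Given any $\A \in \mathsf{CCMon}$ and $a \in A$, that theorem furnishes an Abelian group $\mathbb{G}$ whose monoid reduct $\B$ contains $\A$ as a submonoid. The monoid $\B$ is commutative, has an identity, and inherits cancellativity from $\mathbb{G}$ (every $b \in B$ has an inverse in $\mathbb{G}$, so $b \cdot c = b \cdot d$ implies $c = d$); hence $\B \in \mathsf{CCMon}$ with $\A \leq \B$. Moreover, the inverse of $a$ in $\mathbb{G}$ witnesses that $a \in \mathsf{dom}(f^\B)$, so $f$ is extendable in the sense of \cref{Def:extendable operation}. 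The main ``obstacle'' is not in the argument itself but in availability of the embedding theorem \cref{Thm : CCM subreducts of Abelian groups}; once that is in hand, all three claims reduce to short verifications.
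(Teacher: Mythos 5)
Your proposal is correct and follows essentially the same route as the paper: the paper also treats the definability by the single equation $x \cdot y \thickapprox 1$ as immediate (you simply spell out the cancellativity and commutativity details it labels ``clearly'') and then derives extendability from \cref{Thm : CCM subreducts of Abelian groups} exactly as you do. No gaps.
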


\begin{proof}
It suffices to prove that the implicit operation $f$ of ``taking inverses'' in $\mathsf{CCMon}$ is extendable. To this end, consider $\A \in \mathsf{CCMon}$ and $a \in A$. In view of Theorem \ref{Thm : CCM subreducts of Abelian groups}, $\A$ is a subreduct of an Abelian group $\B$. Let $\C$ be the monoid reduct of $\B$. Since $\B$ is an Abelian group, $\C$ is a cancellative commutative monoid by Theorem \ref{Thm : CCM subreducts of Abelian groups}. Therefore, $\C \in \mathsf{CCMon}$. Furthermore, $a \in A \subseteq C$ has an inverse in $\C$ because $\C$ is the reduct of a group. Therefore, $a \in \mathsf{dom}(f^\C)$. Hence, we conclude that $f$ is extendable.
\end{proof}

On the other hand, the implicit operation $f$ of ``taking inverses'' in the variety of all monoids
is not extendable. For suppose the contrary, with a view to contradiction. By Theorem \ref{Thm : extendable 1} this implies that for each monoid $\A$ there exists a monoid $\B$ such that $f^\B$ is total, that is, such that $\B$ is the reduct of a group. As a consequence, we obtain that every monoid embeds into the monoid reduct of a group. But this is false because monoid subreducts of groups need to be cancellative and noncancellative monoids exist (e.g., full transformation monoids). We conclude that  $f$ is not extendable. An analogous argument shows that the restriction of $f$ to the variety of commutative monoids is also not extendable.
\qed
\end{exa}

The next results simplify the task of proving that an implicit operation is extendable.

\begin{Proposition}\label{Prop : extendable : sufficient conditions}
Let $\K$ be an elementary class and $\M \subseteq \K$.
The following conditions hold:
\benroman
\item\label{item : extendable : sufficient conditions : 1} if $\mathsf{K} \subseteq \mathbb{U}(\mathsf{M})$, then $\mathsf{ext}(\mathsf{K}) = \ext(\M,\K)$;
\item\label{item : extendable : sufficient conditions : 2} if $\PPP(\mathsf{K}) \subseteq \mathsf{K} \subseteq \mathbb{Q}(\mathsf{M})$, then $\extpp(\mathsf{K}) = \extpp(\M,\mathsf{K})$.
\eroman
\end{Proposition}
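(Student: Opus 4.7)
The plan is, in both parts, to establish the nontrivial inclusions $\ext(\M,\K) \subseteq \ext(\K)$ and $\extpp(\M,\K) \subseteq \extpp(\K)$; the reverse inclusions are free from \cref{rem: inclusions ext}. So fix $f \in \ext(\M,\K)$ (respectively $\extpp(\M,\K)$) with defining formula $\varphi(x_1,\dots,x_n,y)$, an algebra $\A \in \K$, and a tuple $\vec{a} = \langle a_1, \dots, a_n\rangle \in A^n$. I need to exhibit $\B \in \K$ with $\A \leq \B$ and $\vec{a} \in \dom(f^\B)$. By \cref{Thm : implicit operations vs existential positive formulas} I may take $\varphi$ existential positive in~(i), and by assumption pp in~(ii).

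For~(i), the hypothesis $\K \subseteq \UUU(\M) = \III\SSS\PPU(\M)$ allows me to assume $\A \leq \A'$ where $\A' = \prod_{i \in I}\A_i / U$ and each $\A_i \in \M$; the algebra $\A'$ belongs to $\K$ because $\K$ is elementary and hence closed under $\PPU$. Fix representatives $a_j = \langle a_j^i : i \in I\rangle / U$ for $j \leq n$, and apply the extendability of $f$ on $\M$ to each $\A_i$ with the tuple $\langle a_1^i, \dots, a_n^i\rangle$, producing $\B_i \in \K$ with $\A_i \leq \B_i$ and $\B_i \vDash \exists y\, \varphi(a_1^i, \dots, a_n^i, y)$. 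Then $\B = \prod_{i \in I}\B_i / U$ lies in $\K$, contains $\A'$ (hence $\A$) as a subalgebra, and \cref{Thm : Los} delivers $\B \vDash \exists y\, \varphi(a_1, \dots, a_n, y)$, giving $\vec{a} \in \dom(f^\B)$ as required.

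For~(ii), the hypothesis $\K \subseteq \QQQ(\M) = \III\SSS\PPP\PPU(\M)$ lets me assume $\A \leq \prod_{j \in J}\C_j$, where each $\C_j = \prod_{k \in I_j}\D_k^j / U_j$ with $\D_k^j \in \M$. For each pair $j,k$ I pick representatives of the entries of $\vec{a}$ and apply extendability to the resulting tuple in $\D_k^j \in \M$, obtaining $\boldsymbol{E}_k^j \in \K$ containing $\D_k^j$ and a witness for $\exists y\, \varphi$ there. Setting $\boldsymbol{E}_j = \prod_{k \in I_j}\boldsymbol{E}_k^j / U_j \in \K$ (using $\PPU$-closure of $\K$) and invoking \cref{Thm : Los}, the existential statement holds in each $\boldsymbol{E}_j$. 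Finally, $\B = \prod_{j \in J}\boldsymbol{E}_j$ lies in $\K$ thanks to the extra hypothesis $\PPP(\K) \subseteq \K$, contains $\A$, and assembling coordinatewise witnesses into $b = \langle b_j : j \in J\rangle$ exhibits $\vec{a} \in \dom(f^\B)$ via \cref{Thm : preservation}\eqref{item : preservation : pp}.

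The crux, and the reason~(ii) must demand $\varphi$ be pp rather than merely existential positive, is precisely this last propagation step: \cref{Thm : preservation}\eqref{item : preservation : pp} is what transmits the witness for $\exists y\,\varphi$ through the outer direct product, and disjunction (which is allowed inside existential positive formulas) need not be preserved under products. The rest is bookkeeping to keep every intermediate algebra inside $\K$, which is handled by the elementarity of $\K$ in both parts, together with the explicit hypothesis $\PPP(\K) \subseteq \K$ in~(ii).
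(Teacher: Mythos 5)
Your proposal is correct and follows essentially the same route as the paper: reduce to the nontrivial inclusion via \cref{rem: inclusions ext}, push extendability through ultraproducts with \LL o\'s' Theorem, and in part (ii) additionally through the outer direct product using \cref{Thm : preservation}\eqref{item : preservation : pp} together with $\PPP(\K) \subseteq \K$. The only cosmetic difference is that the paper isolates the ultraproduct step as a standalone claim ($\ext(\M,\K) \subseteq \ext(\PPU(\M),\K)$) and reuses it in both parts, whereas you inline that argument twice.
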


\begin{proof}
We begin with the following observation.

\begin{Claim}\label{Claim : extendable : sufficient : Pu}
We have $\ext(\M,\K) \subseteq \ext(\PPU(\M),\K)$.
\end{Claim}

\begin{proof}[Proof of the Claim]
Consider $f \in \ext(\M,\K)$, $\A \in \PPU(\mathsf{M})$ and $a_1, \dots, a_n \in A$. Then there exist $\{ \A_i : i \in I \} \subseteq \mathsf{M}$ and an ultrafilter $U$ on $I$ such that $\A = \prod_{i \in I}\A_i / U$. Moreover, there exist $a_1^*, \dots, a_n^* \in \prod_{i \in I}A_i$ such that $a_i = a_i^* / U$ for each $i \leq n$. Lastly, since $f$ is an  implicit operation of $\mathsf{K}$, it is defined by a formula $\varphi$.

As $\{ \A_i : i \in I \} \subseteq \mathsf{M}$, from the assumptions it follows that for each $i \in I$ there exists $\B_i \in \mathsf{K}$ with $\A_i \leq \B_i$ such that $\langle a_1^*(i), \dots, a_n^*(i) \rangle \in \mathsf{dom}(f^{\B_i})$. Since $\varphi$ defines $f$, this yields $\B_i \vDash \exists y \varphi(a_1^*(i), \dots, a_n^*(i), y)$ for each $i \in I$. Let $\B = \prod_{i \in I}\B_i / U$. By \LL o\'s' Theorem \ref{Thm : Los} we have
\[
\B \vDash \exists y \varphi(a_1^* / U, \dots, a_n^* / U, y).
\]
Observe that $\B \in \mathsf{K}$ because $\{ \B_i : i \in I \} \subseteq \mathsf{K}$ and $\mathsf{K}$ is an elementary class by assumption and, therefore, closed under $\PPU$. Together with the fact that $\varphi$ defines $f$ and the above display, this yields $\langle a_1^* / U, \dots, a_n^* / U \rangle \in \mathsf{dom}(f^\B)$. Lastly, recall that $\A_i \leq \B_i$ for each $i \in I$. As a consequence, the map $h \colon \A \to \B$ defined by the rule $h(a / U) = a / U$ is an embedding and
\[
\langle h(a_1), \dots, h(a_n) \rangle = \langle h(a_1^*/U), \dots, h(a_n^*/U) \rangle = \langle a_1^* / U, \dots, a_n^* / U \rangle \in \mathsf{dom}(f^\B).
\]

As $\mathsf{K}$ is closed under $\III$ (because it is an elementary class), we may assume that $h \colon \A \to \B$ is the inclusion map. Therefore, we obtain that $\A \leq \B \in \mathsf{K}$ and $\langle a_1, \dots, a_n \rangle \in \mathsf{dom}(f^\B)$, as desired.
\end{proof}

(\ref{item : extendable : sufficient conditions : 1}): Suppose that $\mathsf{K} \subseteq \mathbb{U}(\mathsf{M})$. The inclusion from left to right follows from \cref{rem: inclusions ext}. To prove the other inclusion consider $f \in \ext(\M,\K)$,
$\A \in \mathsf{K}$, and $a_1, \dots, a_n \in A$. By \cref{Thm : quasivariety generation} we have $\UUU(\mathsf{M}) = \III\SSS\PPU(\mathsf{M})$. Together with $\A \in \mathsf{K} \subseteq \mathbb{U}(\mathsf{M})$, this yields $\A \in \III\SSS\PPU(\mathsf{M})$. Therefore, there exist $\B \in \PPU(\mathsf{M})$ and an embedding $h \colon \A \to \B$. By Claim \ref{Claim : extendable : sufficient : Pu} there exists also $\C \in \mathsf{K}$ such that $\B \leq \C$ and $\langle h(a_1), \dots, h(a_n) \rangle \in \mathsf{dom}(f^\C)$. 

Since $\C \in \mathsf{K}$ and $\mathsf{K}$ is closed under $\III$, we may assume that $h \colon \A \to \C$ is the inclusion map. Consequently, we obtain that $\A \leq \B \leq \C \in \mathsf{K}$ and $\langle a_1, \dots, a_n \rangle \in \mathsf{dom}(f^\C)$. Hence, we conclude that $f$ is extendable.

(\ref{item : extendable : sufficient conditions : 2}): Suppose that $\PPP(\mathsf{K}) \subseteq \mathsf{K} \subseteq \mathbb{Q}(\mathsf{M})$. The inclusion from left to right follows from \cref{rem: inclusions ext}. To prove the other inclusion consider $f \in \extpp(\M,\K)$, $\A \in \mathsf{K}$, and $a_1, \dots, a_n \in A$. Assume that $f$ is defined by a pp formula $\varphi$.
By Theorem \ref{Thm : quasivariety generation} we have $\mathbb{Q}(\mathsf{M}) = \III\SSS\PPP\PPU(\mathsf{M})$. Together with $\A \in \mathsf{K} \subseteq \mathbb{Q}(\mathsf{M})$, this yields $\A \in \III\SSS\PPP\PPU(\mathsf{M})$. Therefore, there exist $\{\B_i : i \in I \} \subseteq \PPU(\mathsf{M})$ and an embedding $h \colon \A \to \prod_{i \in I}\B_i$. By Claim \ref{Claim : extendable : sufficient : Pu} there exists also $\{ \C_i : i \in I \} \subseteq \mathsf{K}$ such that $\B_i \leq \C_i$ and $\langle p_i(h(a_1)), \dots, p_i(h(a_n)) \rangle \in \mathsf{dom}(f^{\C_i})$ for each $i \in I$. 

Let $\C = \prod_{i \in I}\C_i$. From $\{ \C_i : i \in I \} \subseteq \mathsf{K}$ and the assumption that $\PPP(\mathsf{K}) \subseteq \mathsf{K}$ it follows that $\C \in \mathsf{K}$. Furthermore, $\prod_{i \in I}\B_i \leq \C$ because $\B_i \leq \C_i$ for each $i \in I$. Therefore, $h \colon \A \to \prod_{i \in I}\B_i$ can be viewed as an embedding  $h \colon \A \to \C$. Lastly, recall that $\langle p_i(h(a_1)), \dots, p_i(h(a_n)) \rangle \in \mathsf{dom}(f^{\C_i})$ for each $i \in I$. As $\varphi$ defines $f$, this amounts to $\C_i \vDash \exists y \varphi(p_i(h(a_1)), \dots, p_i(h(a_n)), y)$ for each $i \in I$. As $\varphi$ is a pp formula by assumption, we can apply Theorem \ref{Thm : preservation}(\ref{item : preservation : pp}) to the definition of $\C$, obtaining $\C \vDash \exists y \varphi(h(a_1), \dots, h(a_n), y)$. Since $\C \in \mathsf{K}$, this amounts to $\langle h(a_1), \dots, h(a_n)\rangle \in \mathsf{dom}(f^\C)$.

Since $\C \in \mathsf{K}$ and $\mathsf{K}$ is closed under $\III$, we may assume that $h \colon \A \to \C$ is the inclusion map. Consequently, we obtain that $\A \leq \prod_{i \in I}\B_i \leq \C \in \mathsf{K}$ and $\langle a_1, \dots, a_n \rangle \in \mathsf{dom}(f^\C)$. Hence, we conclude that $f$ is extendable.
\end{proof}

Recall that, given a class $\K$ of algebras, we denote the class of finitely generated members of $\K$ by $\Kfg$.

\begin{Theorem}\label{Thm : ext ext is ext ext for U and Q}
The following conditions hold for a class $\K$ of algebras:
\benroman
\item\label{item : Thm : ext ext is ext ext for U and Q : 1} if $\K$ is a universal class, then $\mathsf{ext}(\mathsf{K}) = \ext(\Kfg,\K)$;
\item\label{item : Thm : ext ext is ext ext for U and Q : 2} if $\K$ is a quasivariety, then $\extpp(\K)=\extpp(\K_\textsc{rsi}^{\textup{fg}},\K)$.
\eroman
\end{Theorem}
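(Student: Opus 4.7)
The plan is to derive both statements as immediate consequences of the general sufficient conditions established in \cref{Prop : extendable : sufficient conditions}, combined with the representation theorems for universal classes and quasivarieties in terms of finitely generated algebras.

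For \eqref{item : Thm : ext ext is ext ext for U and Q : 1}, I would first note that by \cref{rem: inclusions ext}, since $\Kfg \subseteq \K$, we have $\ext(\K) \subseteq \ext(\Kfg, \K)$ for free. For the reverse inclusion, I would invoke \cref{Prop : extendable : sufficient conditions}\eqref{item : extendable : sufficient conditions : 1}, which says that if $\K$ is an elementary class and $\M \subseteq \K$ satisfies $\K \subseteq \UUU(\M)$, then $\ext(\K) = \ext(\M, \K)$. Taking $\M = \Kfg$, the hypothesis $\K \subseteq \UUU(\Kfg)$ holds because \cref{Prop : universal class gen by fin gen} yields $\K = \UUU(\Kfg)$ whenever $\K$ is a universal class; the ambient requirement that $\K$ be elementary is automatic since universal classes are elementary. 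This closes \eqref{item : Thm : ext ext is ext ext for U and Q : 1}.

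For \eqref{item : Thm : ext ext is ext ext for U and Q : 2}, the same preliminary remark gives $\extpp(\K) \subseteq \extpp(\K_\rsi^{\fg}, \K)$. For the reverse inclusion, I would apply \cref{Prop : extendable : sufficient conditions}\eqref{item : extendable : sufficient conditions : 2}, which says that if $\PPP(\K) \subseteq \K \subseteq \QQQ(\M)$ for some $\M \subseteq \K$, then $\extpp(\K) = \extpp(\M, \K)$. Taking $\M = \K_\rsi^{\fg}$, the inclusion $\PPP(\K) \subseteq \K$ is built into the definition of a quasivariety, while $\K \subseteq \QQQ(\K_\rsi^{\fg})$ is precisely the content of \cref{Prop : quasivariety = Q fin gen SI} (in fact one has equality there). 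Both hypotheses of the proposition being met, the conclusion $\extpp(\K) = \extpp(\K_\rsi^{\fg}, \K)$ follows.

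There is essentially no obstacle here: the theorem is a packaging of \cref{Prop : extendable : sufficient conditions} with the standard generation results \cref{Prop : universal class gen by fin gen} and \cref{Prop : quasivariety = Q fin gen SI}. The only thing one should be careful about is that \cref{Prop : extendable : sufficient conditions} is stated for elementary classes, so one should remark that every universal class (and thus every quasivariety) is elementary, justifying its use in both cases.
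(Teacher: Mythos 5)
Your proposal is correct and follows essentially the same route as the paper: both parts are obtained by feeding the generation results $\K = \UUU(\Kfg)$ and $\K = \QQQ(\Krsifg)$ into the two halves of \cref{Prop : extendable : sufficient conditions}, after observing that universal classes are elementary and that quasivarieties are closed under $\PPP$. The remark that one inclusion is automatic is harmless but not needed, since the cited proposition already yields the equality.
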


\begin{proof}
(\ref{item : Thm : ext ext is ext ext for U and Q : 1}):   Let $\K$ be a universal class and recall from  \cref{Prop : universal class gen by fin gen} that $\K = \UUU(\Kfg)$. Therefore, $\K$ is elementary and $\Kfg \subseteq \K \subseteq \UUU(\Kfg)$. Consequently, we can apply Proposition 
   \ref{Prop : extendable : sufficient conditions}(\ref{item : extendable : sufficient conditions : 1})   to the case where $\M = \Kfg$, obtaining $\ext(\K) = \ext(\Kfg, \K)$.

(\ref{item : Thm : ext ext is ext ext for U and Q : 2}):   The inclusion from left to right is straightforward. 
To prove the other inclusion, observe that
\cref{Prop : quasivariety = Q fin gen SI}  guarantees that $\mathsf{K} = \mathbb{Q}(\K_\textsc{rsi}^{\textup{fg}})$. Moreover, $\PPP(\mathsf{K}) \subseteq \mathsf{K}$ because $\mathsf{K}$ is a quasivariety. Therefore, we can apply \cref{Prop : extendable : sufficient conditions}(\ref{item : extendable : sufficient conditions : 2}), obtaining $\extpp(\K)=\extpp(\K_\textsc{rsi}^{\textup{fg}},\K)$.
\end{proof}

\begin{Corollary}\label{Cor : extendability : FSI : examples}
A pp formula $\varphi(x_1, \dots, x_n, y)$ defines an extendable implicit operation of a quasivariety $\mathsf{K}$ if and only if for each 
 $\A \in \Krsifg$ 
there exists $\B \in \mathsf{K}$ with $\A \leq \B$ such that for all $a_1, \dots, a_n \in B$ there exists a unique $b \in B$ 
such that $\B \vDash \varphi(a_1, \dots, a_n, b)$. The equivalence still holds if we require $\B$ to be a member of $\mathsf{K}_\textsc{rsi}$. 
\end{Corollary}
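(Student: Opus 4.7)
The plan is to prove the two implications separately. The forward direction should follow immediately from the refined form of \cref{Thm : extendable 1}: given $\A \in \Krsifg \subseteq \K_{\textsc{rsi}}$, that theorem furnishes $\B \in \K_{\textsc{rsi}}$ with $\A \leq \B$ on which every extendable implicit operation is total; since $\varphi$ is functional in $\K$ (being the defining formula of the implicit operation $f$) and defines $f^\B$, uniqueness of the $y$-witness is automatic, and this already delivers the stronger form of the condition (with $\B \in \K_{\textsc{rsi}}$).

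The converse is where the main work lies, and the crucial step will be to establish that $\varphi$ is functional in $\K$. The approach is by contradiction: starting from a putative triple $(\C, \vec{c}, d_1 \ne d_2)$ witnessing a failure of functionality in $\K$, I would cut down to a finitely generated relatively subdirectly irreducible algebra $\A$ still exhibiting the failure, and then apply the hypothesis on $\A$ to derive a contradiction. Concretely, writing $\varphi = \exists \vec{z}\, \psi(\vec{x}, y, \vec{z}\,)$ with $\psi$ a conjunction of equations, I would pull existential witnesses $\vec{e}_1, \vec{e}_2$ for the two $d_i$ into a finitely generated subalgebra $\C_0 \leq \C$ (using that $\K$ is closed under subalgebras, so that $\C_0 \in \K$ and $\C_0 \vDash \psi(\vec{c}, d_i, \vec{e}_i)$ for $i = 1, 2$), then use a Zorn argument in the algebraic lattice $\Con_\K(\C_0)$ to obtain a $\K$-congruence $\theta$ maximal with $\langle d_1, d_2\rangle \notin \theta$. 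A standard Birkhoff-type argument via the Correspondence Theorem then shows that $\A := \C_0/\theta$ is in $\Krsifg$, with monolith generated by $\langle d_1/\theta, d_2/\theta\rangle$. Because $\psi$ is a conjunction of equations it is preserved by the canonical surjection $\pi \colon \C_0 \to \A$, so $\A$ witnesses $\varphi(\pi(\vec{c}), \pi(d_i))$ for $i = 1, 2$; and because $\varphi$ is existential, if $\B \in \K$ is the extension of $\A$ supplied by the hypothesis, then both $\pi(d_1)$ and $\pi(d_2)$ witness $\varphi(\pi(\vec{c}), -)$ in $\B$, contradicting the stipulated uniqueness since $\pi(d_1) \ne \pi(d_2)$. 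Functionality of $\varphi$ in $\K$ then yields an implicit operation $f \in \imppp(\K)$ defined by $\varphi$ via \cref{Cor : functionality in Q(K)}.

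Once functionality is in hand, the hypothesis directly gives $f \in \extpp(\Krsifg, \K)$ (for each $\A \in \Krsifg$, the supplied $\B$ satisfies $A^n \subseteq \mathsf{dom}(f^\B)$), so \cref{Thm : ext ext is ext ext for U and Q}\eqref{item : Thm : ext ext is ext ext for U and Q : 2} upgrades this to $f \in \extpp(\K)$, establishing extendability. The refined equivalence requiring $\B \in \K_{\textsc{rsi}}$ comes for free: the left-to-right implication already delivers $\B \in \K_{\textsc{rsi}}$, while the right-to-left implication only becomes easier under this stronger hypothesis. The main obstacle throughout is the functionality reduction in the converse direction; its success hinges on the compatibility of pp formulas with both homomorphic images (to pass from $\C_0$ to $\A$) and superalgebra extensions (to transfer both purported witnesses into $\B$), both of which are guaranteed precisely because $\varphi$ is assumed to be pp.
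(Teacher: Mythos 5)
Your proof is correct, and its overall skeleton (forward direction and the final extendability step via Theorems \ref{Thm : extendable 1} and \ref{Thm : ext ext is ext ext for U and Q}\eqref{item : Thm : ext ext is ext ext for U and Q : 2}) coincides with the paper's. The one place where you genuinely diverge is the functionality step in the converse direction. The paper handles it in three lines: it sets $\M = \{\A^* : \A \in \Krsifg\}$, where $\A^*$ is the extension supplied by the hypothesis, observes that the uniqueness clause says precisely that $\varphi$ is functional in $\M$, and then invokes \cref{Cor : functionality in Q(K)} together with $\K = \QQQ(\Krsifg) = \QQQ(\M)$ (from \cref{Prop : quasivariety = Q fin gen SI} and $\Krsifg \subseteq \SSS(\M) \subseteq \K$) to conclude that $\varphi$ is functional in all of $\K$. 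You instead re-derive functionality by contradiction: cutting a putative failure down to a finitely generated subalgebra, running Zorn's lemma in $\Con_\K(\C_0)$ to produce a completely meet irreducible $\K$-congruence separating the two witnesses, and pushing the counterexample into a member of $\Krsifg$ where the hypothesis is violated. This is sound — it relies only on $\Con_\K(\C_0)$ being an inductive closure system, on \cref{Prop : RFSI}, and on preservation of pp formulas under quotients and embeddings — but it is essentially a hand-rolled instance of the machinery the paper has already packaged: the passage from ``valid on $\Krsifg$'' to ``valid on $\K$'' is exactly what \cref{Cor : functionality in Q(K)} plus \cref{Prop : quasivariety = Q fin gen SI} buy you, since functionality is expressed by quasiequations. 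What your route gains is self-containedness and an explicit counterexample-locating argument; what the paper's route gains is brevity and reuse of lemmas that are needed elsewhere anyway.
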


\begin{proof}
The implication from left to right and the last part of the statement follow from Theorem \ref{Thm : extendable 1}. To prove the implication from right to left, assume that for each 
$\A \in \Krsifg$ 
there exists $\A^* \in \mathsf{K}$ with $\A \leq \A^*$ such that for all $a_1, \dots, a_n \in A^*$ there exists a unique $b \in A^*$ such that $\A^* \vDash \varphi(a_1, \dots, a_n, b)$.

We begin by showing that $\varphi$ defines an implicit operation of $\mathsf{K}$. Let
\[
\mathsf{M} = \{ \A^* :  \A \in \Krsifg  \}.
\]
Observe that $\varphi$ is functional in $\mathsf{M}$ by assumption. As $\varphi$ is a pp formula, we can apply \cref{Cor : functionality in Q(K)}, obtaining that $\varphi$ is functional in $\QQQ(\mathsf{M})$ as well. Since $\mathsf{M} \subseteq \mathsf{K}$ and 
 $\Krsifg \subseteq \SSS(\mathsf{M})$, 
we have $\mathsf{K} = \QQQ(\mathsf{M})$ by Proposition \ref{Prop : quasivariety = Q fin gen SI}. Hence, we conclude that $\varphi$ defines an implicit operation $f$ of $\mathsf{K}$ which, moreover is extendable by Theorem \ref{Thm : ext ext is ext ext for U and Q}(\ref{item : Thm : ext ext is ext ext for U and Q : 2}).
\end{proof}

So far, the only concrete example of an extendable implicit operation that we have encountered is that of ``taking inverses'' in the quasivariety of cancellative commutative monoids (see Example \ref{Exa : CCM : inverses are extendable}). We close this section with five additional examples related to filtral quasivarieties, reduced commutative rings, distributive lattices, Hilbert algebras, and pseudocomplemented distributive lattices.

\begin{exa}[\textsf{Filtral quasivarieties}]
A quasivariety $\mathsf{K}$ is said to be \emph{relatively filtral} when for every subdirect product $\A \leq \prod_{i \in I}\A_i$ with $\{ \A_i : i \in I \} \subseteq \mathsf{K}_\textsc{rsi}$ 
and every $\theta \in \mathsf{Con}_{\mathsf{K}}(\A)$ there exists a filter $F$ on $I$ such that
\[
\theta = \{ \langle a, b \rangle \in A \times A : \llbracket a \thickapprox b \rrbracket \in F\}.
\]
When $\K$ is a variety, we simply say that it is \emph{filtral}. This notion originated in the context of varieties   \cite{Magari1969} and was extended to quasivarieties in \cite{CampRaf}. Examples of filtral varieties include the variety of (bounded) distributive lattices (see, e.g., \cite[Ex.~3]{Ber87}).

 We recall that the \emph{quaternary discriminator} function on a set $A$ is the function $d_A \colon A^4 \to A$ defined for all $a, b, c, d \in A$ as 
\[
d_A(a, b, c, d) = \begin{cases}
			c & \text{ if } a = b;\\
			d & \text{ otherwise.}
		\end{cases}
		\]

We will prove the following.

\begin{Theorem}\label{Thm : relatively filtral quasivarieties : extendability}
Let $\mathsf{K}$ be a relatively filtral quasivariety. Then there exists a quaternary $f \in \exteq(\K)$ such that $f^\A$ is total and coincides with the quaternary discriminator function on $A$ for each $\A \in \mathsf{K}_{\textsc{rsi}}$. 
\end{Theorem}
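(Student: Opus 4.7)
The plan is to identify a conjunction of equations $\varphi(x_1, x_2, x_3, x_4, y)$ that defines an implicit operation of $\K$ coinciding with the quaternary discriminator on every $\A \in \K_\textsc{rsi}$, and then to derive extendability from the structure of $\K_\textsc{rsi}$.

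\textbf{Step 1 (structure of $\K_\textsc{rsi}$).} First I would show that every $\A \in \K_\textsc{rsi}$ satisfies $\Con_\K(\A) = \{\textup{id}_A, A \times A\}$. Applying the defining property of a relatively filtral quasivariety to the trivial one-factor subdirect decomposition $\A \leq \A$ (with singleton index set), the only filters available on $\{*\}$ yield $\textup{id}_A$ and $A \times A$; combined with \cref{Prop : RFSI}, no further $\K$-congruences can exist. Consequently, the set-theoretic quaternary discriminator $d_A$ is unambiguously defined as a total operation on every such $\A$.

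\textbf{Step 2 (construction of $\varphi$).} I would then exhibit a finite conjunction of equations $\varphi(x_1, x_2, x_3, x_4, y)$ in the language of $\K$ such that, on each $\A \in \K_\textsc{rsi}$, the unique element $y$ satisfying $\varphi(a_1, a_2, a_3, a_4, y)$ is $d_A(a_1, a_2, a_3, a_4)$. The filtral structure is the guiding principle: for a subdirect decomposition $\A \leq \prod_{i \in I}\A_i$ with $\A_i \in \K_\textsc{rsi}$, the discriminator value must agree with $x_3$ on the indices $\llbracket x_1 \thickapprox x_2 \rrbracket$ and with $x_4$ on its complement. One interprets this as requiring that $(y, x_3) \in \Cg_\K(x_1, x_2)$ and, dually, that $(y, x_4)$ is ``orthogonal'' to this congruence. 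Because $\K$ is relatively filtral, both requirements admit a uniform finite equational encoding purely in terms of the four inputs, $y$, and the basic operations of $\K$; the equations making up $\varphi$ are chosen to implement this encoding.

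\textbf{Step 3 (functionality).} By construction, $\varphi$ is functional on $\K_\textsc{rsi}$ with unique solution $y = d_A$. Since $\K = \QQQ(\K_\textsc{rsi}^{\textup{fg}})$ by \cref{Prop : quasivariety = Q fin gen SI} and $\varphi$ is existential positive, \cref{Cor : functionality in Q(K)} lifts functionality to all of $\K$. Hence $\varphi$ defines an implicit operation $f \in \impeq(\K)$, and by construction $f^\A = d_A$ on every $\A \in \K_\textsc{rsi}$.

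\textbf{Step 4 (extendability).} Finally, to conclude $f \in \exteq(\K) = \ext(\K) \cap \impeq(\K)$, I would apply \cref{Thm : ext ext is ext ext for U and Q}\eqref{item : Thm : ext ext is ext ext for U and Q : 2}, which reduces the verification of $f \in \extpp(\K)$ to the corresponding condition on $\K_\textsc{rsi}^{\textup{fg}}$. But for $\A \in \K_\textsc{rsi}^{\textup{fg}}$, Step 1 already gives that $f^\A = d_A$ is total on $\A$, so $\B = \A$ itself witnesses extendability. Thus $f \in \exteq(\K)$.

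\textbf{Main obstacle.} The crux is Step 2: producing an explicit conjunction of equations, with no existential quantifiers, that captures the discriminator uniformly across the entire class $\K_\textsc{rsi}$. Because auxiliary witness variables are disallowed, all the combinatorial content of ``$y$ agrees with $x_3$ where $x_1 = x_2$ and with $x_4$ elsewhere'' must be packaged through the interaction of the four inputs, $y$, and the term operations of $\K$. The filtral condition is indispensable here, since it translates the congruence-theoretic description of $d$ into a finitely encodable equational system; without this structural assumption the discriminator would not in general lie in $\impeq(\K)$.
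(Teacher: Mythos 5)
Your overall architecture coincides with the paper's: find a conjunction of equations $\varphi(x_1,x_2,x_3,x_4,y)$ that picks out the quaternary discriminator on each member of $\K_{\textsc{rsi}}$, lift functionality to $\K = \QQQ(\K_{\textsc{rsi}})$ via \cref{Cor : functionality in Q(K)}, and deduce extendability from totality on the relatively subdirectly irreducible members (the paper packages your Steps 3 and 4 into a single application of \cref{Cor : extendability : FSI : examples}). Step 1 is correct (relatively filtral quasivarieties are relatively semisimple), and Steps 3 and 4 are sound modulo Step 2.

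The problem is that Step 2 --- which you yourself identify as the crux --- is not proved; it is asserted. The existence of a \emph{single finite conjunction of equations, with no existential quantifiers}, that defines $d_A$ uniformly across all of $\K_{\textsc{rsi}}$ is the entire mathematical content of the theorem, and your justification ("Because $\K$ is relatively filtral, both requirements admit a uniform finite equational encoding") does not establish it. Concretely, two things are missing: (i) an argument that the condition $\langle y, x_3\rangle \in \Cg_\K^{\A}(x_1,x_2)$ is expressible by finitely many equations in $x_1,x_2,x_3,y$ uniformly over $\K$ --- this is an equational definability of principal relative congruences, which is a theorem, not a consequence of the definition of filtrality one can wave at; and (ii) an argument that the complementary requirement ("$y$ agrees with $x_4$ on the indices where $x_1 \ne x_2$"), which you call orthogonality, is likewise captured by a conjunction of equations --- this direction is not a congruence membership statement at all and needs a separate idea. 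The paper does not construct $\varphi$ by hand either; it imports the existence of $\varphi$ from the literature, combining \cite[Thm.~6.3]{CampRaf} with the implication (4)$\Rightarrow$(1) of \cite[Thm.~4.1]{CampVaggDisc}, the latter being precisely a semantic criterion for the quaternary discriminator to be definable on $\K_{\textsc{rsi}}$ by a conjunction of equations. Without citing such a result or supplying the construction, your proof has a genuine gap at its central step.
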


\begin{proof}
Consider a relatively filtral quasivariety  $\mathsf{K}$. 
From  \cite[Thm.~6.3]{CampRaf} and the implication (4)$ \Rightarrow$(1) in \cite[Thm.~4.1]{CampVaggDisc}
it follows that there exists a conjunction of equations $\varphi(x_1, x_2, x_3, x_4, y)$ such that for all  $\A \in \mathsf{K}_\textsc{rsi}$ and $a, b, c, d, e \in A$,
\[
\A \vDash \varphi(a, b, c, d, e) \iff d_A(a, b, c, d) = e.
\]
Therefore, for all $\A \in \mathsf{K}_\textsc{rsi}$ and $a,b,c,d  \in A$ there exists a unique $e \in A$
such that $\A \vDash \varphi(a,b,c,d,e)$. Then \cref{Cor : extendability : FSI : examples} implies that 
$\varphi$ defines an extendable implicit operation $f$ of $\mathsf{K}$. In view of the above display, $f^\A$ coincides with $d_A$ for each $\A \in \mathsf{K}_\textsc{rsi}$.
\end{proof}
\end{exa}

\begin{exa}[\textsf{Reduced commutative rings}]\label{Exa : reduced rings : extendable} In \cref{Example : inverses in rings} we proved that there exists an implicit operation of the quasivariety $\mathsf{RCRing}$ of reduced commutative rings that coincides with the operation of ``taking weak inverses'' in fields. We now show that this operation is extendable.
\begin{Theorem}\label{Thm : RCRing : extendable implicit operation}
There exists a unary $f \in \exteq(\mathsf{RCRing})$
such that $f^{\A}$ is total and coincides with the operation of taking weak inverses for each field $\A$. 
\end{Theorem}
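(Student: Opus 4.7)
The plan is to exhibit the operation $f$ as the one already produced by \cref{Thm : RCRing : implicit operation} and then to check extendability by reducing to fields, where totality is immediate. Recall that \cref{Thm : RCRing : implicit operation} provides an implicit operation $f$ of $\mathsf{RCRing}$ defined by the conjunction of equations
\[
\varphi = (x^2 y \thickapprox x) \sqcap (xy^2 \thickapprox y),
\]
and it is established there that $f^\A$ is total and $f^\A(a) = \mathsf{wi}(a)$ for every field $\A$ and $a \in A$. Since $\varphi$ is a conjunction of equations, we have $f \in \impeq(\mathsf{RCRing}) \subseteq \imppp(\mathsf{RCRing})$, so the only missing ingredient is the extendability of $f$, i.e., $f \in \ext(\mathsf{RCRing})$.

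The key step is to invoke \cref{Prop : extendable : sufficient conditions}\eqref{item : extendable : sufficient conditions : 2} taking $\M = \mathsf{Field}$ and $\K = \mathsf{RCRing}$. The hypotheses are easily verified: $\mathsf{RCRing}$ is a quasivariety, hence closed under direct products, and by \cref{Thm : reduced rings are Q fields} we have $\mathsf{RCRing} = \QQQ(\mathsf{Field})$, so $\PPP(\mathsf{RCRing}) \subseteq \mathsf{RCRing} \subseteq \QQQ(\mathsf{Field})$. Consequently,
\[
\extpp(\mathsf{RCRing}) = \extpp(\mathsf{Field}, \mathsf{RCRing}),
\]
and it suffices to prove that $f \in \extpp(\mathsf{Field}, \mathsf{RCRing})$.

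This final check is essentially trivial: given $\A \in \mathsf{Field}$ and $a \in A$, we simply take $\B = \A$. Then $\A \in \mathsf{Field} \subseteq \mathsf{RCRing}$, $\A \leq \B$, and $f^\B = f^\A$ is total on $A$ by \cref{Thm : RCRing : implicit operation}, so $a \in \dom(f^\B)$. Thus $f \in \extpp(\mathsf{Field}, \mathsf{RCRing}) = \extpp(\mathsf{RCRing})$, and combining this with $f \in \impeq(\mathsf{RCRing})$ yields $f \in \exteq(\mathsf{RCRing})$, which completes the proof. There is no real obstacle here; the entire argument rides on the fact that $\mathsf{RCRing}$ is already generated qua quasivariety by a subclass, namely $\mathsf{Field}$, on which the candidate operation is a total function, so the reduction machinery of \cref{Prop : extendable : sufficient conditions} does all the work.
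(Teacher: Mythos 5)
Your proposal is correct and follows essentially the same route as the paper: take the operation $f$ from \cref{Thm : RCRing : implicit operation}, observe $f \in \impeq(\mathsf{RCRing})$, and deduce extendability from $\mathsf{RCRing} = \QQQ(\mathsf{Field})$ together with \cref{Prop : extendable : sufficient conditions}\eqref{item : extendable : sufficient conditions : 2} and the totality of $f$ on fields. The only difference is that you spell out the verification of the hypotheses of that proposition slightly more explicitly than the paper does.
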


\begin{proof}
Let $f$ be the implicit operation of 
$\mathsf{RCRing}$
given by \cref{Thm : RCRing : implicit operation}. Then $f^\A$ is total and coincides with the operation of taking weak inverses for every field $\A$. Moreover, $f$ is defined by a conjunction of equations, whence $f \in \imp_{\textsc{eq}}(\mathsf{RCRing})$. Therefore, it suffices to prove that $f$ is extendable. To this end, recall from \cref{Thm : reduced rings are Q fields} that $\mathsf{RCRing} = \QQQ(\mathsf{Field})$, where $\mathsf{Field}$ is the class of fields. As $f^\A$ is total for each field $\A$, we can apply \cref{Prop : extendable : sufficient conditions}(\ref{item : extendable : sufficient conditions : 2}) (taking $\mathsf{M} = \mathsf{Fields}$), obtaining that $f$ is extendable.
\end{proof}
\end{exa}

\begin{exa}[\textsf{Distributive lattices}]\label{Exa : DL : extendable}
Recall from \cref{Example : complements in lattices} that ``taking relative complements'' defines an implicit operation of the variety $\DL$ of distributive lattices and that ``taking complements'' defines an implicit operation of the variety $\bDL$ of bounded distributive lattices.
We show that these operations are extendable.

\begin{Theorem}\label{Thm : distributive lattice : expandable}
The following conditions hold:
\benroman
\item\label{item : DL extendable 1} the operation of taking relative complements in $\DL$ is a ternary member of $\ext_{\textsc{eq}}(\DL)$;
\item\label{item : DL extendable 2} the operation of taking complements in $\bDL$  is a unary member of $\ext_{\textsc{eq}}(\bDL)$.
\eroman
\end{Theorem}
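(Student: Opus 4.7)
The plan is to derive both (i) and (ii) from \cref{Prop : extendable : sufficient conditions}(\ref{item : extendable : sufficient conditions : 2}), taking as $\mathsf{M}$ the two-element generator of each variety. By \cref{Thm : relative complements is implicit} the two operations already lie in $\impeq(\DL)$ and $\impeq(\bDL)$ respectively, so only extendability needs to be established.

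First, I would verify extendability relative to the two-element generator. Let $\B_2$ be the two-element bounded distributive lattice and $\boldsymbol{D}_2$ the two-element chain. In $\B_2$ the complement of $0$ is $1$ and vice versa, so taking complements is total on $\B_2$; hence it is trivially extendable relative to $\mathsf{M} = \{\B_2\}$ in $\bDL$. Similarly, a small case analysis over the eight triples $\langle x_1, x_2, x_3 \rangle \in D_2^3$ shows that the defining formula $\varphi$ of \cref{Thm : relative complements is implicit}(\ref{item : relative complements is implicit : 1}) always has a unique solution $y \in D_2$ (namely the minimum or the maximum of $\{x_1, x_2, x_3\}$, depending on where $x_1$ sits), so taking relative complements is extendable relative to $\{\boldsymbol{D}_2\}$ in $\DL$.

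Second, I would verify the hypotheses $\PPP(\K) \subseteq \K \subseteq \QQQ(\mathsf{M})$ of \cref{Prop : extendable : sufficient conditions}(\ref{item : extendable : sufficient conditions : 2}). The inclusion $\PPP(\K) \subseteq \K$ is immediate since $\DL$ and $\bDL$ are varieties. For $\K \subseteq \QQQ(\mathsf{M})$ I would invoke the classical subdirect representation by prime filters: every bounded distributive lattice embeds subdirectly into a power of $\B_2$ and every distributive lattice into a power of $\boldsymbol{D}_2$, so $\bDL = \SSS\PPP(\B_2) \subseteq \QQQ(\B_2)$ and $\DL = \SSS\PPP(\boldsymbol{D}_2) \subseteq \QQQ(\boldsymbol{D}_2)$.

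With both ingredients in hand, \cref{Prop : extendable : sufficient conditions}(\ref{item : extendable : sufficient conditions : 2}) upgrades extendability relative to the generator into full extendability in the ambient variety; since the defining formulas are conjunctions of equations, membership in $\extpp$ strengthens to membership in $\ext_{eq}$, yielding the claim. I do not anticipate any real obstacle: both totality on the two-element lattice and the subdirect representation by it are classical facts, so the proof reduces to a direct application of the machinery developed earlier.
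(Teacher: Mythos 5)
Your proof is correct and follows essentially the same route as the paper's: both reduce the problem to checking that the operation is total on the two-element lattice and then transfer extendability via quasivariety generation. The only cosmetic difference is that you invoke \cref{Prop : extendable : sufficient conditions}(\ref{item : extendable : sufficient conditions : 2}) directly with $\M$ the two-element generator, whereas the paper cites \cref{Thm : ext ext is ext ext for U and Q}(\ref{item : Thm : ext ext is ext ext for U and Q : 2}) together with the fact that $\boldsymbol{D}_2$ is the only subdirectly irreducible distributive lattice — and that theorem is itself derived from the very proposition you use.
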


\begin{proof}
We detail only the proof of (\ref{item : DL extendable 1}), as the proof of (\ref{item : DL extendable 2}) is analogous. In view of \cref{Thm : relative complements is implicit}, it suffices to show that the implicit operation $f$ of ``taking relative complements'' of 
$\DL$
is extendable. Let $\boldsymbol{D}_2$ be the two-element lattice and observe that $f^{\boldsymbol{D}_2}$ it total. As $\boldsymbol{D}_2$ is (up to isomorphism) the only subdirectly irreducible member of $\mathsf{DL}$ (see, e.g., \cite[Ex.~3.19]{Ber11}), we obtain that $f^\A$ it total for each $\A \in \mathsf{DL}_\si$. Therefore, we can apply Theorem \ref{Thm : ext ext is ext ext for U and Q}(\ref{item : Thm : ext ext is ext ext for U and Q : 2}), obtaining that $f$ is extendable.
\end{proof}
\end{exa}

\begin{exa}[\textsf{Hilbert algebras}]
 
The implication subreducts of Heyting algebras are known as \emph{Hilbert algebras} (see, e.g., \cite{DHilbA}).
 Hilbert algebras  form a variety (see \cite[Thm.\ 3]{DHilbA}) that we denote by $\mathsf{Hilbert}$.
Every Hilbert algebra $\langle A; \to \rangle$ possesses a term-definable constant $1 = x \to x$ and can be endowed with a partial order $\leq$ defined for all $a, b \in A$ as
\[
a \leq b \iff  a \to b = 1.
\]

We denote the implication reduct of a Heyting algebra $\A$ by $\A_\to$. Notably, the order of $\A_\to$ coincides with the lattice order of $\A$. We will prove the following.

\begin{Theorem}\label{Thm : Hilbert algebras : extendable}
There exists a binary $f \in \ext_{\textsc{eq}}(\mathsf{Hilbert})$
such that $f^{\A_\to}$ is total and coincides with $\land^\A$ for each Heyting algebra $\A$. 
\end{Theorem}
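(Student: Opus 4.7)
The plan is to take as defining formula the conjunction
\[
\varphi(x_1, x_2, y) = (y \to x_1 \thickapprox y \to y) \sqcap (y \to x_2 \thickapprox y \to y) \sqcap (x_1 \to (x_2 \to y) \thickapprox y \to y),
\]
a conjunction of three equations in the language of Hilbert algebras. Since the term $y \to y$ evaluates to $1$ in every Hilbert algebra, the three conjuncts respectively express $y \leq x_1$, $y \leq x_2$, and $x_1 \to (x_2 \to y) = 1$; by the Heyting residuation $a \to (b \to c) = (a \land b) \to c$, the latter rewrites to $x_1 \land x_2 \leq y$. Jointly, therefore, the three equations say that $y = x_1 \land x_2$ whenever they are evaluated in the implication reduct of a Heyting algebra.

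Next, I would verify functionality of $\varphi$ and identify the unique witness on $\A_\to$ for every Heyting algebra $\A$. If $\A_\to \vDash \varphi(a_1, a_2, b)$, then the translation above yields $b \leq a_1$, $b \leq a_2$, and $a_1 \land^\A a_2 \leq b$; conjoining these gives $b = a_1 \land^\A a_2$. Hence $\varphi$ is functional in $\A_\to$, and its unique witness is $a_1 \land^\A a_2$. To transfer functionality to all of $\mathsf{Hilbert}$, recall that every Hilbert algebra is a subalgebra of the implication reduct of some Heyting algebra, and that equations (hence also conjunctions of equations) are preserved between a subalgebra and its parent in both directions. So if $\B \leq \A_\to$ and $\B \vDash \varphi(a_1, a_2, b) \sqcap \varphi(a_1, a_2, b')$, then the same holds in $\A_\to$, forcing $b = b'$. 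Being a conjunction of equations that is functional in $\mathsf{Hilbert}$, $\varphi$ defines an implicit operation $f \in \imp_{eq}(\mathsf{Hilbert})$, and the computation above yields $f^{\A_\to} = \land^\A$ for every Heyting algebra $\A$.

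For the extendability of $f$, I would invoke the same embedding. Given $\B \in \mathsf{Hilbert}$, pick a Heyting algebra $\A$ with $\B \leq \A_\to$. Then $\A_\to \in \mathsf{Hilbert}$, and for every pair $a_1, a_2 \in A$ the element $a_1 \land^\A a_2$ witnesses $\A_\to \vDash \varphi(a_1, a_2, a_1 \land^\A a_2)$, so $f^{\A_\to}$ is total. This shows $f \in \ext_{eq}(\mathsf{Hilbert})$ and completes the proof. The only step demanding genuine thought is the design of $\varphi$ itself: the constraint $x_1 \land x_2 \leq y$ is not patently expressible using only $\to$, and it is precisely the Heyting identity $a \to (b \to c) = (a \land b) \to c$ that turns it into a single equation, namely $x_1 \to (x_2 \to y) \thickapprox y \to y$.
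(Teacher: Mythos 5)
Your proposal is correct and follows essentially the same route as the paper: the defining formula is identical (the paper writes $1$ for your $y \to y$, using the term-definable constant of Hilbert algebras), the key claim that $\varphi(a,b,c)$ holds in $\A_\to$ iff $c = a \land^\A b$ is proved the same way via residuation, and the extendability argument via embedding into a Heyting reduct is the same. The only cosmetic difference is that the paper transfers functionality from Heyting reducts to all of $\mathsf{Hilbert}$ by invoking $\mathsf{Hilbert} = \QQQ(\mathsf{M})$ together with \cref{Cor : functionality in Q(K)}, whereas you argue it directly from preservation of quantifier-free formulas under the subalgebra embedding; both are valid.
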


\begin{proof}
Consider the conjunction of equations
\[
\varphi = (y \to x_1 \thickapprox 1) \sqcap (y \to x_2 \thickapprox 1) \sqcap (x_1 \to (x_2 \to y) \thickapprox 1).
\]
We begin with the following observation.

\begin{Claim}\label{Claim : Hilbert algebras : functional} For all Heyting algebras $\A$ and $a, b, c \in A$,
    \[
 \A_\to \vDash \varphi(a,b,c) \iff a \land b = c.   
    \]
    \end{Claim}
    \begin{proof}[Proof of the Claim]
Observe that
\[
\A \vDash (c \to a \thickapprox 1) \sqcap (c \to b \thickapprox 1) \iff (c \leq a \textrm{ and } c \leq b) \iff c \leq a \land b.
\]
    As the equation $x \to (y \to z) \thickapprox x \land y \to z$ holds in every Heyting algebra, we also have 
    \[
    \A \vDash a \to (b \to c) \thickapprox 1 \iff a \land b \to c \thickapprox 1 \iff a \land b \leq c.
    \]
From the definition of $\varphi$ and the two displays above it follows that
\[
        \A \vDash \varphi(a,b,c)  \iff  a \land b = c.
\]
Since $\varphi$ is a formula in the language $\langle \to \rangle$, the demand that $\A \vDash \varphi(a, b, c)$ is equivalent to $\A_\to \vDash \varphi(a, b, c)$. Together with the above display, this yields the desired conclusion.
    \end{proof}
    
Let $\mathsf{M}$ be the class of implication reducts of Heyting algebras. Since 
the variety $\mathsf{Hilbert}$
is the class of implication subreducts of Heyting algebras, we have $\mathsf{Hilbert} = \QQQ(\mathsf{M})$.\ Moreover, observe that $\varphi$ is functional in $\mathsf{M}$ by Claim \ref{Claim : Hilbert algebras : functional}.\ As $\varphi$ is a conjunction of equations, we can apply \cref{Cor : functionality in Q(K)}, obtaining that $\varphi$ defines an implicit operation $f$ of $\mathsf{Hilbert}$. In view of Claim \ref{Claim : Hilbert algebras : functional}, it only remains to show that $f$ is extendable. To this end, consider a Hilbert algebra $\A$ and $a, b \in A$. Then $\A$ is a subreduct of a Heyting algebra $\B$, that is, $\A \leq \B_\to$. Since $f^{\B_\to}$ is total by Claim \ref{Claim : Hilbert algebras : functional}, we conclude that $f$ is extendable.
\end{proof}

\end{exa}

\begin{exa}[\textsf{Pseudocomplemented distributive lattices}]
A \emph{pseudocomplemented distributive lattice} is an algebra $\langle A; \land, \lor, \neg, 0, 1 \rangle$ which comprises a bounded distributive lattice $\langle A; \land, \lor, 0, 1 \rangle$ and a unary operation $\neg$ such that for all $a, b \in A$ we have
\[
a \leq \neg b \iff a \wedge b = 0
\]
(see, e.g., \cite[Sec.~VIII]{BD74}).

This means that $\neg b$ is the largest $a \in A$ such that $a \wedge b = 0$. Consequently, pseudocomplemented distributive lattices are uniquely determined by their lattice reduct.

Given a Heyting algebra $\A$ and $a \in A$, we define $\lnot a = a \to 0$.  The interest of pseudocomplemented distributive lattices derives from the fact that they coincide with the $\langle \land, \lor, \lnot, 0, 1 \rangle$-subreducts of Heyting algebras  (see, e.g., \cite[Proof of Thm.~2.6]{BP89}).

It is well known that the class of pseudocomplemented distributive lattices forms a locally finite variety, which we denote by $\mathsf{PDL}$ (see, e.g., \cite[Thm.~VIII.3.1]{BD74} and \cite[Thm.~4.55]{Ber11}). 
The finitely generated members of $\mathsf{PDL}_{\textsc{si}}$ are precisely the pseudocomplemented distributive lattices whose lattice reduct is a finite Boolean lattice adjoined with a new top element (see \cite[Thm.~2]{LakPDL}). Being a finite distributive lattice, every finitely generated member $\A = \langle A; \land, \lor, \lnot, 0, 1 \rangle$ of $\mathsf{PDL}_{\textsc{si}}$ can be expanded with an implication $\to^\A$ such that $\langle A; \land, \lor, \to^\A, 0, 1 \rangle$ is a Heyting algebra. We will show that this expansion is witnessed by an extendable operation of $\mathsf{PDL}$. More precisely, we will establish the following.

\begin{Theorem}\label{Thm : PDL : extendable implicit operation}
There exists a binary $f \in \ext_{\textsc{eq}}(\mathsf{PDL})$ such that $f^\A$ is total and coincides with $\to^\A$ for each finitely generated $\A \in \mathsf{PDL}_{\textsc{si}}$. 
\end{Theorem}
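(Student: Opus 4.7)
My plan is to imitate the proofs of \cref{Thm : distributive lattice : expandable} and \cref{Thm : Hilbert algebras : extendable}: I would exhibit a conjunction of equations $\varphi(x_1, x_2, y)$ in the signature $\{\wedge, \vee, \neg, 0, 1\}$ of $\mathsf{PDL}$ and verify, by a direct case analysis on each $\A \in \mathsf{PDL}_{\textsc{si}}^{\fg}$, that $\varphi$ is functional and total with unique witness $c = a \to^{\A} b$, where $\to^{\A}$ is the Heyting implication on $\A$ (which exists since $\A$ is a finite distributive lattice). Granted this, \cref{Cor : extendability : FSI : examples} delivers an extendable implicit operation $f$ of $\mathsf{PDL}$ defined by $\varphi$; since $\varphi$ has no existential quantifiers we have $f \in \exteq(\mathsf{PDL})$; and $f^{\A} = \to^{\A}$ on every $\A \in \mathsf{PDL}_{\textsc{si}}^{\fg}$ is built into the construction.

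The starting data are the explicit descriptions of $\mathsf{PDL}_{\textsc{si}}^{\fg}$: each such $\A$ is of the form $\boldsymbol{B}^+$, a finite Boolean lattice $\boldsymbol{B}$ with top $1_{\boldsymbol{B}}$ to which a new top $\top$ has been adjoined above $1_{\boldsymbol{B}}$; an easy computation shows $a \to b = \top$ whenever $a \leq b$ and $a \to b = \neg a \vee b$ otherwise. As the first three equations in $\varphi$ I would take the standard adjunction identities
\[
x_1 \wedge y \thickapprox x_1 \wedge x_2, \qquad x_2 \vee y \thickapprox y, \qquad \neg x_1 \vee y \thickapprox y,
\]
each of which is valid at $y = a \to b$ in every Heyting algebra. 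On $\boldsymbol{B}^+$ a case analysis shows that these three equations already pin $y$ down as $a \to b$ on every tuple $(a, b)$ except those in which $a \leq b$ holds together with $a \ne 0$ and $b \ne \top$, where the set of admissible $y$ collapses to the two-element interval $\{1_{\boldsymbol{B}}, \top\}$ (a configuration realised for example by $(1_{\boldsymbol{B}}, 1_{\boldsymbol{B}})$ and by $(b, b)$ for any Boolean atom $b$). In contrast, at the unique non-trivial tuple where $a \to b = 1_{\boldsymbol{B}}$ -- namely $(a, b) = (\top, 1_{\boldsymbol{B}})$ -- the first equation alone already forces $y = 1_{\boldsymbol{B}}$, so no further work is needed there.

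The main obstacle will be designing a fourth equation that forces $y = \top$ throughout the ambiguous tuples without disturbing the unambiguous ones and, crucially, without destroying the correct value $y = 1_{\boldsymbol{B}}$ at $(\top, 1_{\boldsymbol{B}})$. Because $\neg$ and $\neg\neg$ fail to distinguish $1_{\boldsymbol{B}}$ from $\top$ at the level of $y$ alone, the fourth equation must be a joint constraint on $y$, $x_1$ and $x_2$ that leverages the datum ``$a \leq b$'' (valid in every ambiguous tuple but invalid at $(\top, 1_{\boldsymbol{B}})$), or equivalently the fact that $x_1 = \top$ precisely at $(\top, 1_{\boldsymbol{B}})$ while $x_1 \in B \setminus \{\top\}$ throughout the ambiguous tuples. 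A promising candidate couples the identity $\neg\neg y \thickapprox \neg\neg(\neg x_1 \vee x_2)$ (which holds at $y = a \to b$ in every $\boldsymbol{B}^+$, since the two sides agree after passing to the Boolean algebra of regular elements) with an auxiliary equation designed to fail at $y = 1_{\boldsymbol{B}}$ unless $x_1 = \top$. Once the formula is pinned down, the remaining verification of functionality, totality, and coincidence with $\to^{\A}$ is a bounded case check on the finitely many element-types in $\boldsymbol{B}^+$, and the extendability and $\exteq$-membership of $f$ follow immediately from \cref{Cor : extendability : FSI : examples}.
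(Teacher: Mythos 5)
Your strategy coincides with the paper's: write down a conjunction of equations $\varphi(x_1,x_2,y)$ in the language of $\mathsf{PDL}$, verify on each member of $\mathsf{PDL}_{\si}^{\fg}$ (a finite Boolean lattice $\boldsymbol{B}$ with a new top $\top$ adjoined) that $\varphi$ is functional and total with unique witness $a\to^{\A}b$, and conclude via \cref{Cor : extendability : FSI : examples}. Your preparatory analysis is sound: your three equations are equivalent, in any bounded distributive lattice, to the paper's first two conjuncts $x_1\wedge y\leq x_2$ and $\neg x_1\vee x_2\leq y$; they do pin $y$ down everywhere except on the tuples with $0\neq a\leq b\neq\top$, where the admissible values are exactly $\{1_{\boldsymbol{B}},\top\}$; and your candidate $\neg\neg y\thickapprox\neg\neg(\neg x_1\vee x_2)$ is equivalent in $\mathsf{PDL}$ to the paper's third conjunct $\neg(\neg x_1\vee x_2)\thickapprox\neg y$ and, as you yourself note, cannot separate $1_{\boldsymbol{B}}$ from $\top$ because these two elements have the same double negation.

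The gap is that the argument halts precisely where the real content of the theorem lies: you never exhibit the auxiliary equation, only its specification (``fails at $y=1_{\boldsymbol{B}}$ unless $x_1=\top$, and holds at $y=a\to b$ throughout''). The specification is correct, and the paper realises it with the fourth conjunct
\[
y\vee x_1\thickapprox\neg\neg y\vee x_1 .
\]
On an ambiguous tuple ($a\in B-\{0\}$ and $a\leq b\in B$), setting $y=1_{\boldsymbol{B}}$ turns this into $1_{\boldsymbol{B}}\vee a\thickapprox\top\vee a$, i.e.\ $1_{\boldsymbol{B}}\thickapprox\top$, which fails, while $y=\top$ survives; at $(\top,1_{\boldsymbol{B}})$ with $y=1_{\boldsymbol{B}}$ both sides equal $\top$ because $x_1=\top$ absorbs the gap between $y$ and $\neg\neg y$; and validity at $y=a\to^{\A}b$ in the remaining cases is a short computation, since there $\neg\neg y=y$ unless $a\to^{\A}b\in\{1_{\boldsymbol{B}},\top\}$, and those two exceptional values occur only at $(\top,1_{\boldsymbol{B}})$ and on tuples where both sides reduce to $\top$. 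Until such an equation is produced and checked, your text is a correct plan rather than a proof; with the displayed equation inserted as your fourth conjunct, the rest of your argument (functionality, totality, and the appeal to \cref{Cor : extendability : FSI : examples}) goes through exactly as in the paper.
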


\begin{proof}
Let $\varphi(x_1,x_2,y)$ be the conjunction of the following equations:
\begin{align}
x_1 \wedge y & \leq x_2;\label{eq:1}\\ 
\neg x_1 \vee x_2 & \leq y;\label{eq:2}\\ 
\neg (\neg x_1 \vee x_2) &= \neg y;\label{eq:3}\\ 
y \vee x_1 &= \neg \neg y \vee x_1.\label{eq:4}
\end{align}
It will be enough to show that for all finitely generated $\A \in \mathsf{PDL}_{\textsc{si}}$ and $a, b, c \in A$,
\begin{equation}\label{Eq : PDL : definition in finite SI}
\A \vDash \varphi(a, b, c) \iff a \to^\A b = c.
\end{equation}
For suppose this is true.\ Then $\varphi$ defines an extendable implicit operation $f$ of $\mathsf{PDL}$ by Corollary \ref{Cor : extendability : FSI : examples}. In addition, the above display guarantees that $f^\A$ is total and coincides with $\to^\A$ for each finitely generated $\A \in \mathsf{PDL}_\textsc{si}$, as desired.

 We proceed to prove 
(\ref{Eq : PDL : definition in finite SI}).   Let $\A \in \mathsf{PDL}_\textsc{si}$ be finitely generated. Then the lattice reduct of $\A$ is a finite Boolean lattice $\B$ adjoined with a new top element. We denote the maximum of $\B$ by $\top$, while the minimum and the maximum of $\A$ are $0$ and $1$, respectively. We also write $a'$ for the complement of $a \in B$ in the Boolean lattice $\B$.
For all $a,b\in A$ we have
    \begin{equation}\label{eq: to}
    a \to^\A b =
    \begin{cases}
        a' \vee b  & \text{if } a \in B \text{ and } a \nleq b;\\
        1 & \text{if } a \leq b;\\
        b  & \text{if } a = 1.\\
    \end{cases}        
    \end{equation}
It follows that
\begin{equation}\label{eq: neg and negneg}
\begin{aligned}
    \neg a &=
    \begin{cases}
        a' & \text{if } a \in B - \{0\};\\
        1  & \text{if } a=0;\\
        0  & \text{if } a = 1.
    \end{cases}
    \hspace{2cm} &
    \neg \neg a &=
    \begin{cases}
        a & \text{if } a \in B - \{\top\};\\
        1  & \text{if } a \in \{\top,1\}.
    \end{cases}
\end{aligned}    
\end{equation}
Therefore,
\begin{equation}\label{eq: neg vee}
    \neg a \vee b =
    \begin{cases}
        a' \vee b & \text{if } a \in B - \{0\};\\
        1  & \text{if } a=0;\\
        b  & \text{if } a = 1.
    \end{cases}    
\end{equation}

To prove the equivalence in (\ref{Eq : PDL : definition in finite SI}), consider $a,b,c \in A$. We begin with the implication from right to left. It suffices to verify that $\A \vDash \varphi(a,b,a \to^\A b)$, which amounts to check that \eqref{eq:1}--\eqref{eq:4} 
hold in $\A$ once evaluated in $a,b,a \to^\A b$. It is a straightforward consequence of the properties of implications in Heyting algebras that $a \wedge (a \to^\A b) \leq b$ and $\neg a \vee b \leq a \to^\A b$
(see, e.g., \cite[Thm.~IX.2.3(i, iv, v)]{BD74}). So, \eqref{eq:1} and \eqref{eq:2} hold in $\A$. 
From \cite[Thm.~IX.2.3(ix, xi)]{BD74} it follows that $\neg (\neg a \vee b) = \neg \neg a \wedge \neg b =  \neg (a \to^\A b)$, and hence \eqref{eq:3} holds.
It only remains to verify \eqref{eq:4}, which states that $(a \to^\A b) \vee a = \neg \neg (a \to^\A b) \vee a$. First observe that the equation clearly holds when $a \in \{0,1\}$. So, we can assume that $a \notin \{0,1\}$.
By \eqref{eq:3}, \eqref{eq: neg vee}, and \eqref{eq: neg and negneg} we have 
\[
\neg \neg (a \to^\A b)=\neg \neg (\neg a \vee b) = \neg \neg (a' \vee b) =
    \begin{cases}
        a' \vee b & \text{if } a \nleq b;\\
        1 & \text{if } a \leq b,
    \end{cases}
\]
which coincides with $a \to^\A b$ by \eqref{eq: to} because $a \neq 1$. Thus, $ (a \to^\A b) \vee a = \neg \neg (a \to^\A b) \vee a$, and hence \eqref{eq:4} holds.

We proceed to prove the implication from left to right in (\ref{Eq : PDL : definition in finite SI}). To this end,
assume that $\A \vDash \varphi(a,b,c)$. We will show that 
$a \to^\A b = c$. From~\eqref{eq:1} it follows that $a \wedge c \leq b$, which yields $c \leq a \to^\A b$.
It then remains to show that $a \to^\A b \leq c$. We consider different cases separately. 
If $a=1$, then \eqref{eq:2} implies that $a \to^\A b = b=\neg a \vee b \leq c$. So, we can 
assume that $a \neq 1$. Consider the case in which $a \leq b$. Then
\[
c \vee a = \neg \neg c \vee a = \neg \neg (\neg a \vee b) \vee a = 1 \vee a = 1,
\]
where the first and second equalities follow from \eqref{eq:4} and \eqref{eq:3}, and the third from \eqref{eq: neg and negneg} because $\neg a \vee b \in \{\top,1\}$ as $a \leq b$. We have thus obtained that $c \vee a = 1$. Since $\A$ has a second largest element $\top$, we have that $1$ is join irreducible in $\A$. So, $c \vee a = 1$ and $a \neq 1$ imply
$c=1$, and hence $c=a \to^\A b$, because $a \leq b$. Finally, we can assume that $a \neq 1$ and $a \nleq b$. Then $a,b \neq 1$, and so $a,b \in B$. Thus, $\neg a \vee b \notin \{\top,1\}$ because $a \nleq b$. Then \eqref{eq: neg and negneg} yields $\neg \neg (\neg a \vee b)=\neg a \vee b$. 
So, \eqref{eq:3} implies that $\neg \neg c=\neg a \vee b$. Therefore, $c \leq \neg a \vee b \in B - \{ \top\}$, and hence $c \in B - \{\top\}$. Then $c = \neg \neg c = \neg a \vee b$. Since $a \nleq b$ and $a \neq 1$, we have $a \in B - \{0\}$ and, therefore, $\neg a \vee b = a' \vee b = a \to^\A b$ by \eqref{eq: neg vee} and~\eqref{eq: to}. 
Consequently, $c = \neg a \vee b = a \to^\A b$. This establishes that $c = a \to^\A b$ in all possible cases and concludes the proof.
\end{proof}
\end{exa}

\section{Adding implicit operations}\label{Sec: adding implicit operations}

A fundamental question 
which arises in relation to implicit operations 
is the following:\ 
is it possible to expand the language of a given class of algebras $\mathsf{K}$ with new function symbols for some of its implicit operations so that every implicit operation of $\mathsf{K}$ becomes interpolable by a set of terms in a class $\mathsf{M}$ of algebras in the expanded language?
Obviously, the interest of this possibility depends on whether 
$\M$ meets some basic desiderata:
 in addition to the demand that every implicit operation of $\mathsf{K}$ can be interpolated by a set of terms 
of $\M$,
we shall demand from our theory that
\benroman
\renewcommand{\labelenumi}{(\theenumi)}
\renewcommand{\theenumi}{D\arabic{enumi}}
\item\label{D1 : desiderata 1} every member of $\mathsf{K}$ extends to one of 
$\mathsf{M}$;
\item\label{D1 : desiderata 2} when such a class $\M$ 
exists, it is unique.
\eroman

A familiar example of this situation is given by
\begin{align*}
\mathsf{K} &= \text{the quasivariety of cancellative commutative monoids};\\
\mathsf{M} &= \text{the variety of Abelian groups}.
\end{align*}
In this case, $\mathsf{M}$ is obtained by adding the implicit operation of ``taking inverses'' to $\mathsf{K}$.
The fact that every implicit operation of $\mathsf{K}$ is
interpolated by a set of terms of $\mathsf{M}$ is
a consequence of $\mathsf{M}$ having the strong Beth definability property (see \cref{Exa : abalian groups : SES}). Furthermore,  (\ref{D1 : desiderata 1}) holds because every cancellative commutative monoid extends to an Abelian group by \cref{Thm : CCM subreducts of Abelian groups}.
Lastly, (\ref{D1 : desiderata 2}) will be a consequence of the general theory (Theorem \ref{Thm : Beth companions : term equivalence}). On the other hand, we will show that the variety of all commutative monoids lacks an expansion with the desired properties (\cref{Thm : M lacks Beth comp}).

In this section, we begin to set the stage for this theory by describing how to expand a class of algebras with a given family of implicit operations.

\begin{Definition}
Let $\mathsf{K}$ be a class of algebras and $\mathcal{F} \subseteq \mathsf{imp}(\mathsf{K})$.  An $\mathcal{F}$-\emph{expansion} of $\mathscr{L}_\mathsf{K}$ is a language $\L_\K \cup \{ g_f : f \in \mathcal{F} \}$, where $g_f$ is a new function symbol of the same arity as that of $f$ for each  $f \in \mathcal{F}$. 
We will often denote an $\mathcal{F}$-expansion of $\L_\K$ by $\mathscr{L}_\F$. When $\mathcal{F} = \{f\}$ for some $f \in \mathsf{imp}(\mathsf{K})$, we drop the braces
and just write $\mathscr{L}_f$ and call it an \emph{$f$-expansion}.
\end{Definition}

\begin{Definition}
Let $\mathsf{K}$ be a class of algebras, $\mathcal{F} \subseteq \mathsf{imp}(\mathsf{K})$, and  $\mathscr{L}_\F$ an $\mathcal{F}$-expansion of $\L_\K$.
\benroman
\item For each $\A \in \mathsf{K}$ such that $f^{\A}$ is total for every $f \in \mathcal{F}$,  let
\begin{align*}
\A[\mathscr{L}_\mathcal{F}] = &\text{ the unique $\mathscr{L}_\mathcal{F}$-algebra whose $\L_\K$-reduct is $\A$ and in which}\\
&\text{ $g_f$ is interpreted as $f^\A$ for each $f \in \mathcal{F}$}.
\end{align*}
\item For each $\M \subseteq \K$ let 
\[
\M[\mathscr{L}_\mathcal{F}] = \{ \A[\mathscr{L}_\mathcal{F}] : \A \in \M \text{ and }f^\A \text{ is total for each }f \in \mathcal{F}\}.
\]
\eroman
\end{Definition}

The class $\mathsf{K}[\mathscr{L}_\mathcal{F}]$ can be viewed as the natural expansion of $\mathsf{K}$ induced by the implicit operations in $\mathcal{F}$.  The next result provides an alternative description of the class $\mathsf{K}[\mathscr{L}_\mathcal{F}]$.

\begin{Proposition}\label{Prop : how to axiomatize K[F]}
Let $\mathsf{K}$ be a class of algebras, $\mathcal{F} \subseteq \mathsf{imp}(\mathsf{K})$, and $\mathscr{L}_\mathcal{F} = \mathscr{L}_\mathsf{K} \cup \{ g_f : f \in \mathcal{F} \}$ an $\mathcal{F}$-expansion of $\mathscr{L}_\mathsf{K}$. Assume that each $f  \in \mathcal{F}$ is defined by a formula $\varphi_f$. Then
\begin{align*}
\mathsf{K}[\mathscr{L}_\mathcal{F}] = \{ \B : &\text{ }\B \text{ is an 
$\mathscr{L}_\mathcal{F}$-algebra}
\text{ such that }\B{\upharpoonright}_{\mathscr{L}_\mathsf{K}} \in \mathsf{K} \text{ and}\\
&\text{ $\B \vDash \varphi_f(x_1, \dots, x_n, g_f(x_1, \dots, x_n))$ for each $n$-ary $f \in \mathcal{F}$} \}.
\end{align*}
\end{Proposition}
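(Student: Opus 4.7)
The plan is to prove the stated set equality by unwinding the definitions of $\K[\mathscr{L}_\F]$ and of an implicit operation being defined by a formula, noting that $\varphi_f$ lies in $\mathscr{L}_\K$ so its truth is unaffected by passing between an $\mathscr{L}_\F$-algebra and its $\mathscr{L}_\K$-reduct. Neither inclusion should present any real difficulty; the only mild subtlety is to verify that in the $\supseteq$ direction the new interpretations $g_f^\B$ are forced to coincide pointwise with $f^{\B\res_{\mathscr{L}_\K}}$ (so that $\B$ is literally of the form $\A[\mathscr{L}_\F]$, and not merely isomorphic to one).

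For the inclusion from left to right, I would take $\B \in \K[\mathscr{L}_\F]$, so that $\B = \A[\mathscr{L}_\F]$ for some $\A \in \K$ such that $f^\A$ is total for every $f \in \F$. By construction $\B\res_{\mathscr{L}_\K} = \A \in \K$, and, for each $n$-ary $f \in \F$, $g_f$ is interpreted in $\B$ as $f^\A$. Since $\varphi_f$ defines $f$ and $f^\A$ is total, for every $\vec a \in A^n$ we have $\A \vDash \varphi_f(\vec a, f^\A(\vec a))$; and since $\varphi_f$ is a formula in $\mathscr{L}_\K$, this is equivalent to $\B \vDash \varphi_f(\vec a, g_f^\B(\vec a))$. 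Thus $\B \vDash \varphi_f(\vec x, g_f(\vec x))$, witnessing that $\B$ lies in the right-hand side.

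For the inclusion from right to left, I would take an $\mathscr{L}_\F$-algebra $\B$ with $\A \coloneqq \B\res_{\mathscr{L}_\K} \in \K$ such that $\B \vDash \varphi_f(\vec x, g_f(\vec x))$ for every $n$-ary $f \in \F$. Then for each $\vec a \in A^n$ one has $\B \vDash \varphi_f(\vec a, g_f^\B(\vec a))$, and because $\varphi_f$ belongs to $\mathscr{L}_\K$ this is equivalent to $\A \vDash \varphi_f(\vec a, g_f^\B(\vec a))$. Since $\varphi_f$ defines the implicit operation $f$, this gives $\vec a \in \mathsf{dom}(f^\A)$ and $f^\A(\vec a) = g_f^\B(\vec a)$. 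As $\vec a \in A^n$ was arbitrary, $f^\A$ is total and $g_f^\B = f^\A$ for every $f \in \F$. This is exactly the defining property of $\A[\mathscr{L}_\F]$, so $\B = \A[\mathscr{L}_\F] \in \K[\mathscr{L}_\F]$, completing the proof.

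The main (very modest) obstacle is just the bookkeeping described in the opening paragraph: keeping straight that $\varphi_f$ is a formula of $\mathscr{L}_\K$, so it is evaluated identically in $\B$ and in $\A = \B\res_{\mathscr{L}_\K}$, which is what allows the translation between $\B \vDash \varphi_f(\vec a, g_f^\B(\vec a))$ and the defining condition $\A \vDash \varphi_f(\vec a, f^\A(\vec a))$ of the implicit operation $f$.
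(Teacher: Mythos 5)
Your proof is correct and follows essentially the same route as the paper's: both directions are handled by unwinding the definition of $\A[\mathscr{L}_\mathcal{F}]$ and using that $\varphi_f$ is an $\mathscr{L}_\mathsf{K}$-formula, so its satisfaction transfers between $\B$ and $\B{\upharpoonright}_{\mathscr{L}_\mathsf{K}}$; the key point you flag (that in the $\supseteq$ direction the interpretations $g_f^\B$ are forced to equal $f^{\B{\upharpoonright}_{\mathscr{L}_\mathsf{K}}}$ pointwise, so $\B$ is literally $\A[\mathscr{L}_\mathcal{F}]$) is exactly the point the paper's proof makes. No gaps.
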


\begin{proof}
To prove the inclusion from left to right, consider $\B \in \mathsf{K}[\mathscr{L}_\mathcal{F}]$.
The definition of $\mathsf{K}[\mathscr{L}_\mathcal{F}]$ guarantees that
there exists $\A \in \mathsf{K}$ such that $f^\A$ is total for all $f \in \mathcal{F}$ and $\B = \A[\mathscr{L}_\mathcal{F}]$.  In particular, $\B$ is an $\mathscr{L}_\mathcal{F}$-algebra.
As $\A$ is the $\mathscr{L}_\mathsf{K}$-reduct of $\A[\mathscr{L}_\mathcal{F}]$ by the definition of $\A[\mathscr{L}_\mathcal{F}]$, this yields $\B{\upharpoonright}_{\mathscr{L}_\mathsf{K}} = \A \in \mathsf{K}$. Then consider an $n$-ary $f \in \mathcal{F}$. Since $f$ is defined by $\varphi_f$ and $f^\A$ is total, for all $a_1, \dots, a_n \in A$ we have
\[
\langle a_1, \dots, a_n \rangle \in \mathsf{dom}(f^\A) \, \, \text{ and } \, \, \A \vDash \varphi_f(a_1, \dots, a_n, f^\A(a_1, \dots, a_n)).
\]
By the definition of $\A[\mathscr{L}_\mathcal{F}]$ the operation $g_f$ is interpreted in $\A[\mathscr{L}_\mathcal{F}]$ as $f^\A$. As $\A$ is the $\mathscr{L}_\mathsf{K}$-reduct of $\A[\mathscr{L}_\mathcal{F}]$, from the above display it follows that $\A[\mathscr{L}_\mathcal{F}] \vDash \varphi_f(x_1, \dots, x_n, g_f(x_1, \dots, x_n))$. Since $\B = \A[\mathscr{L}_\mathcal{F}]$, we conclude that $\B \vDash \varphi_f(x_1, \dots, x_n, g_f(x_1, \dots, x_n))$, as desired.

 Then we proceed to prove the inclusion from right to left. Consider an $\mathscr{L}_\mathcal{F}$-algebra $\B$   
such that $\B{\upharpoonright}_{\mathscr{L}_\mathsf{K}} \in \mathsf{K}$ and  $\B \vDash \varphi_f(x_1, \dots, x_n, g_f(x_1, \dots, x_n))$ for each $n$-ary $f \in \mathcal{F}$. For the sake of readability, let $\A = \B{\upharpoonright}_{\mathscr{L}_\mathsf{K}}$ and observe that $\A\in \mathsf{K}$ by assumption. We will prove that the algebra $\A[\mathscr{L}_\mathcal{F}]$ is defined and coincides with $\B$, whence $\B \in \mathsf{K}[\mathscr{L}_\mathcal{F}]$, as desired. Since $\A = \B{\upharpoonright}_{\mathscr{L}_\mathsf{K}}$ and $\A \in \mathsf{K}$, it suffices to show that for each $f \in \mathcal{F}$ the function $f^\A$ is total and coincides with the interpretation of $g_f$ in $\B$. To this end, consider an $n$-ary $f \in \mathcal{F}$ and $a_1, \dots, a_n \in A$. We need to prove that 
\[
\langle a_1, \dots, a_n \rangle \in \mathsf{dom}(f^\A) \, \, \text{ and } \, \, g_f^\B(a_1, \dots, a_n) = f^\A(a_1, \dots, a_n).
\]
First, from the assumption that $\B \vDash \varphi_f(x_1, \dots, x_n, g_f(x_1, \dots, x_n))$ it follows that $\B \vDash \varphi_f(a_1, \dots, a_n, g_f^\B(a_1, \dots, a_n))$. As $\varphi_f$ is a formula of $\mathscr{L}_\mathsf{K}$ and $\A = \B{\upharpoonright}_{\mathscr{L}_\mathsf{K}}$, this amounts to $\A \vDash \varphi_f(a_1, \dots, a_n, g_f^\B(a_1, \dots, a_n))$. Since $\varphi_f$ defines $f$, the above display holds.
\end{proof}

As a consequence of Proposition \ref{Prop : how to axiomatize K[F]}, we obtain the following.

\begin{Corollary}\label{Cor : K[F] is elementary if K is}
Let $\mathsf{K}$ be a class of algebras, $\mathcal{F} \subseteq \mathsf{imp}(\mathsf{K})$, and $\mathscr{L}_\mathcal{F}$ an $\mathcal{F}$-expansion of $\mathscr{L}_\mathsf{K}$. If $\mathsf{K}$ is an elementary class, then $\mathsf{K}[\mathscr{L}_\mathcal{F}]$ is an elementary class.
\end{Corollary}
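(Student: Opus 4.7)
The plan is to apply \cref{Prop : how to axiomatize K[F]} directly and exhibit an explicit axiomatization of $\mathsf{K}[\mathscr{L}_\mathcal{F}]$ in the language $\mathscr{L}_\mathcal{F}$. Since $\mathsf{K}$ is elementary, there is a set $\Gamma$ of $\mathscr{L}_\mathsf{K}$-formulas whose models (viewed as $\mathscr{L}_\mathsf{K}$-algebras) are exactly the members of $\mathsf{K}$. For each $f \in \mathcal{F}$ fix a defining formula $\varphi_f(x_1, \dots, x_n, y)$ (which exists because $f$ is implicit), and consider the $\mathscr{L}_\mathcal{F}$-formula
\[
\psi_f := \forall x_1, \dots, x_n\, \varphi_f(x_1, \dots, x_n, g_f(x_1, \dots, x_n)).
\]
Let $\Delta := \Gamma \cup \{ \psi_f : f \in \mathcal{F} \}$. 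Every formula in $\Delta$ is an $\mathscr{L}_\mathcal{F}$-formula, since $\mathscr{L}_\mathsf{K} \subseteq \mathscr{L}_\mathcal{F}$.

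I claim that $\mathsf{K}[\mathscr{L}_\mathcal{F}]$ is precisely the class of $\mathscr{L}_\mathcal{F}$-algebras satisfying $\Delta$. Given any $\mathscr{L}_\mathcal{F}$-algebra $\B$, the satisfaction of the $\mathscr{L}_\mathsf{K}$-sentences in $\Gamma$ by $\B$ depends only on the $\mathscr{L}_\mathsf{K}$-reduct; so $\B \vDash \Gamma$ if and only if $\B\res_{\mathscr{L}_\mathsf{K}} \vDash \Gamma$, which in turn is equivalent to $\B\res_{\mathscr{L}_\mathsf{K}} \in \mathsf{K}$ because $\Gamma$ axiomatizes $\mathsf{K}$. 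Moreover, $\B \vDash \psi_f$ is by definition the condition $\B \vDash \varphi_f(x_1, \dots, x_n, g_f(x_1, \dots, x_n))$ appearing in \cref{Prop : how to axiomatize K[F]}. Combining these two observations, $\B \vDash \Delta$ is equivalent to the defining condition for $\mathsf{K}[\mathscr{L}_\mathcal{F}]$ provided by \cref{Prop : how to axiomatize K[F]}. Therefore $\Delta$ axiomatizes $\mathsf{K}[\mathscr{L}_\mathcal{F}]$, showing it is elementary.

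There is essentially no obstacle here: the argument is a direct reduction to \cref{Prop : how to axiomatize K[F]}, with the only subtlety being to note that the axioms of $\mathsf{K}$ may be reused verbatim in the larger language $\mathscr{L}_\mathcal{F}$ and still carve out exactly those $\mathscr{L}_\mathcal{F}$-algebras whose $\mathscr{L}_\mathsf{K}$-reduct lies in $\mathsf{K}$.
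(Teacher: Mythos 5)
Your proof is correct and follows exactly the route the paper intends: the corollary is stated as an immediate consequence of Proposition \ref{Prop : how to axiomatize K[F]}, and your axiomatization $\Gamma \cup \{\psi_f : f \in \mathcal{F}\}$ is precisely the explicit version of that reduction (the same axiomatization the paper later writes out in Theorem \ref{Thm : axiomatization of pp expansion (almost always)} for the universal-class case). Nothing is missing.
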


A useful feature of $\mathcal{F}$-expansions is that a map between members of $\mathsf{K}[\mathscr{L}_\mathcal{F}]$ 
preserves the operations in $\F$ if it preserves the operations in $\L_\K$.
This is made precise in the next proposition.

\begin{Proposition}\label{Prop : small homs are big homs}
Let $\mathsf{K}$ be a class of algebras, $\mathcal{F} \subseteq \mathsf{imp}(\mathsf{K})$, and $\mathscr{L}_\mathcal{F}$ an $\mathcal{F}$-expansion of $\mathscr{L}_\mathsf{K}$. Every homomorphism $h \colon \A{\upharpoonright}_{\mathscr{L}_{\mathsf{K}}} \to \B{\upharpoonright}_{\mathscr{L}_{\mathsf{K}}}$ with $\A, \B \in \mathsf{K}[\mathscr{L}_\mathcal{F}]$ is also a homomorphism $h \colon \A \to \B$.
\end{Proposition}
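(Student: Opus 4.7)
The plan is to unwind the definitions and show that preservation of the new symbols $g_f$ by $h$ is automatic from the fact that each $f \in \mathcal{F}$ is an \emph{operation} of $\mathsf{K}$ (in the sense of the definition preceding $\mathsf{imp}(\mathsf{K})$). More concretely, from $\A, \B \in \mathsf{K}[\mathscr{L}_\mathcal{F}]$ we extract that $\A{\upharpoonright}_{\mathscr{L}_{\mathsf{K}}}, \B{\upharpoonright}_{\mathscr{L}_{\mathsf{K}}} \in \mathsf{K}$, that $f^{\A{\upharpoonright}_{\mathscr{L}_{\mathsf{K}}}}$ and $f^{\B{\upharpoonright}_{\mathscr{L}_{\mathsf{K}}}}$ are total for every $f \in \mathcal{F}$, and that the symbol $g_f$ is interpreted in $\A$ and $\B$ as these partial (now total) functions, i.e., $g_f^{\A} = f^{\A{\upharpoonright}_{\mathscr{L}_{\mathsf{K}}}}$ and $g_f^{\B} = f^{\B{\upharpoonright}_{\mathscr{L}_{\mathsf{K}}}}$.

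Once this is set up, the verification is a one-liner. Given an $n$-ary $f \in \mathcal{F}$ and $a_1, \dots, a_n \in A$, since $f$ is an implicit operation of $\mathsf{K}$ and $h \colon \A{\upharpoonright}_{\mathscr{L}_{\mathsf{K}}} \to \B{\upharpoonright}_{\mathscr{L}_{\mathsf{K}}}$ is a homomorphism between members of $\mathsf{K}$, the definition of an operation of $\mathsf{K}$ yields
\[
h(f^{\A{\upharpoonright}_{\mathscr{L}_{\mathsf{K}}}}(a_1, \dots, a_n)) = f^{\B{\upharpoonright}_{\mathscr{L}_{\mathsf{K}}}}(h(a_1), \dots, h(a_n)),
\]
where the totality of $f^{\A{\upharpoonright}_{\mathscr{L}_{\mathsf{K}}}}$ and $f^{\B{\upharpoonright}_{\mathscr{L}_{\mathsf{K}}}}$ makes both sides well defined without a domain side condition. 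Substituting $g_f^{\A}$ for $f^{\A{\upharpoonright}_{\mathscr{L}_{\mathsf{K}}}}$ and $g_f^{\B}$ for $f^{\B{\upharpoonright}_{\mathscr{L}_{\mathsf{K}}}}$ gives $h(g_f^{\A}(a_1, \dots, a_n)) = g_f^{\B}(h(a_1), \dots, h(a_n))$, which is the required preservation. Combined with the assumption that $h$ preserves the operations of $\mathscr{L}_{\mathsf{K}}$, this shows that $h \colon \A \to \B$ is a homomorphism of $\mathscr{L}_\mathcal{F}$-algebras.

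There is no real obstacle: the entire content of the statement is that the interpretations of the $g_f$'s in $\mathsf{K}[\mathscr{L}_\mathcal{F}]$ are exactly the functions $f^{(-)}$, which are preserved by $\mathscr{L}_\mathsf{K}$-homomorphisms \emph{by the very definition} of ``operation of $\mathsf{K}$''. The only thing worth being careful about in the write-up is to invoke totality explicitly, so that the clause ``$\langle a_1, \dots, a_n\rangle \in \mathsf{dom}(f^\A)$'' in the definition of an operation is trivially met; this is exactly what the requirement in the definition of $\A[\mathscr{L}_\mathcal{F}]$ buys us.
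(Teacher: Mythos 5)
Your proof is correct and follows essentially the same route as the paper's: both reduce the claim to showing that $h$ preserves each $g_f$, identify $g_f^{\A}$ with $f^{\A{\upharpoonright}_{\mathscr{L}_{\mathsf{K}}}}$ (and likewise for $\B$) via the definition of $\mathsf{K}[\mathscr{L}_\mathcal{F}]$, and then invoke the fact that implicit operations of $\mathsf{K}$ are preserved by $\mathscr{L}_\mathsf{K}$-homomorphisms between members of $\mathsf{K}$. Your explicit remark about totality making the domain side condition vacuous is a point the paper leaves implicit, but it changes nothing substantive.
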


\begin{proof}
As $\mathscr{L}_\mathcal{F}$ is an $\mathcal{F}$-expansion of $\mathscr{L}_\mathsf{K}$, it is of the form $\mathscr{L}_\mathsf{K} \cup \{ g_f : f \in \mathcal{F} \}$. Then let $h \colon \A{\upharpoonright}_{\mathscr{L}_{\mathsf{K}}} \to \B{\upharpoonright}_{\mathscr{L}_{\mathsf{K}}}$ be a homomorphism with $\A, \B \in \mathsf{K}[\mathscr{L}_\mathcal{F}]$. It suffices to prove that $h$ preserves $g_f$ for each $f \in \mathcal{F}$. To this end, consider $f \in \mathcal{F}$. Since $\A, \B \in \mathsf{K}[\mathscr{L}_\mathcal{F}]$, we have $\A = \A{\upharpoonright}_{\mathscr{L}_{\mathsf{K}}}[\mathscr{L}_\mathcal{F}]$ and $\B = \B{\upharpoonright}_{\mathscr{L}_{\mathsf{K}}}[\mathscr{L}_\mathcal{F}]$.
Therefore,
\[
g_f^\A = f^{\A{\upharpoonright}_{\mathscr{L}_{\mathsf{K}}}} \, \, \text{ and } \, \, g_f^\B = f^{\B{\upharpoonright}_{\mathscr{L}_{\mathsf{K}}}}.
\]
As $h \colon \A{\upharpoonright}_{\mathscr{L}_{\mathsf{K}}} \to \B{\upharpoonright}_{\mathscr{L}_{\mathsf{K}}}$ is a homomorphism between members of $\mathsf{K}$ and $f$ an implicit operation of $\mathsf{K}$, we know that $h$ preserves $f$. In view of the above display, we conclude that $h$ preserves $g_f$.
\end{proof}

In general, there is no guarantee that each member of $\mathsf{K}$ is a subreduct of a member of $\mathsf{K}[\mathscr{L}_\mathcal{F}]$ or, equivalently, that condition (\ref{D1 : desiderata 1}) is met.\ In the setting of universal classes, this is the case exactly when the members of $\mathcal{F}$ are extendable.

\begin{Proposition}\label{Prop : pp expansions : subreducts : extendable}
Let $\mathsf{K}$ be a universal class, $\mathcal{F} \subseteq \mathsf{imp}(\mathsf{K})$, and $\mathscr{L}_\mathcal{F}$ an $\mathcal{F}$-expansion of $\mathscr{L}_\mathsf{K}$. Then $\mathsf{K}$ is the class of $\mathscr{L}_\mathsf{K}$-subreducts of $\mathsf{K}[\mathscr{L}_\mathcal{F}]$ if and only if $\mathcal{F} \subseteq \mathsf{ext}(\mathsf{K})$.
\end{Proposition}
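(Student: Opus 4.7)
The plan is to prove the equivalence by handling the two directions separately, with the main work concentrated on the direction that uses extendability. I will first note that one containment always comes for free: if $\B \in \mathsf{K}[\mathscr{L}_\mathcal{F}]$, then by construction $\B{\upharpoonright}_{\mathscr{L}_\mathsf{K}} \in \mathsf{K}$, so since $\mathsf{K}$ is closed under $\III$ and $\SSS$ (being universal), every $\mathscr{L}_\mathsf{K}$-subreduct of $\mathsf{K}[\mathscr{L}_\mathcal{F}]$ automatically lies in $\mathsf{K}$, regardless of whether $\mathcal{F}\subseteq\ext(\mathsf{K})$ holds.

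For the implication from $\mathcal{F}\subseteq\ext(\mathsf{K})$ to the subreduct characterization, the only remaining task is to exhibit every $\A\in\mathsf{K}$ as a subreduct. Given such an $\A$, I would invoke \cref{Thm : extendable 1} to obtain $\C\in\mathsf{K}$ with $\A\leq\C$ such that $f^{\C}$ is total for every $f\in\ext(\mathsf{K})$; in particular $f^{\C}$ is total for every $f\in\mathcal{F}$, so $\C[\mathscr{L}_\mathcal{F}]$ is defined and belongs to $\mathsf{K}[\mathscr{L}_\mathcal{F}]$. Since $\A\leq\C=\C[\mathscr{L}_\mathcal{F}]{\upharpoonright}_{\mathscr{L}_\mathsf{K}}$, this realizes $\A$ as an $\mathscr{L}_\mathsf{K}$-subreduct, as required. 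The key point here is that \cref{Thm : extendable 1} provides a single $\C$ that simultaneously totalizes \emph{all} extendable operations, so one application handles the whole family $\mathcal{F}$ at once.

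For the converse, I would pick an $n$-ary $f\in\mathcal{F}$ and verify that $f\in\ext(\mathsf{K})$. Let $\A\in\mathsf{K}$ and $a_1,\dots,a_n\in A$. By hypothesis, $\A$ embeds into $\B{\upharpoonright}_{\mathscr{L}_\mathsf{K}}$ for some $\B\in\mathsf{K}[\mathscr{L}_\mathcal{F}]$; using closure of $\mathsf{K}$ under $\III$, I may assume the embedding is the inclusion, so $\A\leq \B{\upharpoonright}_{\mathscr{L}_\mathsf{K}}\in\mathsf{K}$. By the very definition of $\mathsf{K}[\mathscr{L}_\mathcal{F}]$, there exists $\D\in\mathsf{K}$ with $f^{\D}$ total such that $\B=\D[\mathscr{L}_\mathcal{F}]$; hence $\D=\B{\upharpoonright}_{\mathscr{L}_\mathsf{K}}$ extends $\A$, lies in $\mathsf{K}$, and satisfies $\langle a_1,\dots,a_n\rangle\in A^n\subseteq D^n=\mathsf{dom}(f^{\D})$. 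This witnesses the extendability of $f$ at $\A$ and $(a_1,\dots,a_n)$.

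I do not anticipate a genuine obstacle: all heavy lifting is done by \cref{Thm : extendable 1} (which itself depends on compactness and the closure of universal classes under unions of chains). The only care required is to use the \emph{simultaneous} totalization furnished by that theorem in the first direction, and, in the second direction, to unpack the definition of $\mathsf{K}[\mathscr{L}_\mathcal{F}]$ so that the $\mathscr{L}_\mathsf{K}$-reduct of a member automatically supplies a total interpretation of each $f\in\mathcal{F}$. Proposition \ref{Prop : how to axiomatize K[F]} offers an equivalent axiomatic description that could streamline the bookkeeping, but it is not strictly necessary for the argument above.
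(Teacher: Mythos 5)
Your proof is correct and follows essentially the same route as the paper: the direction assuming $\mathcal{F}\subseteq\ext(\mathsf{K})$ is handled by the simultaneous totalization from \cref{Thm : extendable 1}, and the converse by unpacking the definition of $\mathsf{K}[\mathscr{L}_\mathcal{F}]$ so that the $\mathscr{L}_\mathsf{K}$-reduct of the witnessing algebra provides a total interpretation of $f$. Your explicit remark that the containment ``every $\mathscr{L}_\mathsf{K}$-subreduct of $\mathsf{K}[\mathscr{L}_\mathcal{F}]$ lies in $\mathsf{K}$'' is automatic is a small addition the paper leaves implicit, but it changes nothing of substance.
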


\begin{proof}
We begin with the implication from left to right.\ Suppose that $\mathsf{K}$ is the class of $\mathscr{L}_\mathsf{K}$-subreducts of $\mathsf{K}[\mathscr{L}_\mathcal{F}]$ and consider an $n$-ary $f \in \mathcal{F}$. We need to prove that $f$ is extendable. To this end, consider $\A \in \mathsf{K}$. By assumption $\A$ is a subreduct of some $\C \in \mathsf{K}[\mathscr{L}_\mathcal{F}]$. By the definition of $\mathsf{K}[\mathscr{L}_\mathcal{F}]$ there exists $\B \in \mathsf{K}$ in which $g^\B$ is total for each $g \in \mathcal{F}$ such that $\C = \B[\mathscr{L}_\mathcal{F}]$. As $\A$ is an $\mathscr{L}_\mathsf{K}$-subreduct of $\C = \B[\mathscr{L}_\mathcal{F}]$ and $\B$ is the $\mathscr{L}_\mathsf{K}$-reduct of $\B[\mathscr{L}_\mathcal{F}]$, we obtain $\A \leq \B$. Furthermore, $f^\B$ is total because $f \in \mathcal{F}$. Since $\A \leq \B \in \mathsf{K}$, we conclude that $f$ is extendable.

Then
 we proceed  to prove the implication from right to left. Suppose that $\mathcal{F} \subseteq \mathsf{ext}(\mathsf{K})$ and consider $\A \in \mathsf{K}$. As $\mathsf{K}$ is a universal class, we can apply Theorem \ref{Thm : extendable 1}, obtaining some $\B \in \mathsf{K}$ with $\A \leq \B$ such that $f^\B$ is total for each $f \in \mathcal{F}$. By the definition of $\mathsf{K}[\mathscr{L}_\mathcal{F}]$ we get $\B[\mathscr{L}_\mathcal{F}] \in \mathsf{K}[\mathscr{L}_\mathcal{F}]$. Since $\B$ is the $\mathscr{L}_\mathsf{K}$-reduct of $\B[\mathscr{L}_\mathcal{F}]$ and $\A \leq \B$, the algebra $\A$ is an $\mathscr{L}_\mathsf{K}$-subreduct of a member of $\mathsf{K}[\mathscr{L}_\mathcal{F}]$.
\end{proof}

We close this section with some observations
governing the behavior of  $\mathsf{K}[\mathscr{L}_\mathcal{F}]$ with respect to class operators.

\begin{Proposition}\label{Prop : closure under op}
Let $\mathsf{K}$ be a class of algebras and $\mathcal{F} \subseteq \mathsf{imp}_{\textsc{pp}}(\mathsf{K})$.
Moreover, let
\[
\mathsf{M} = \{ \A \in \mathsf{K} : f^\A \text{ is total for each }f \in \mathcal{F} \}.
\] 
Then for each $\mathbb{O} \in \{ \HHH, \PPP, \PPU\}$ we have 
\[
\mathbb{O}(\mathsf{M}) \cap \mathsf{K} \subseteq \mathsf{M}.
\]
\end{Proposition}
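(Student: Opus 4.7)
The plan is to use the fact that, for each $f \in \mathcal{F}$, $f$ is defined by a pp formula $\varphi_f(x_1, \dots, x_n, y)$, so membership in $\M$ is captured by the $\Pi_2$ sentences $\forall \vec{x}\, \exists y\, \varphi_f(\vec{x}, y)$, one for each $f \in \mathcal{F}$. Since pp formulas are preserved by homomorphisms, direct products, and ultraproducts, these $\Pi_2$ sentences transfer to any algebra lying in $\mathbb{O}(\M) \cap \K$. Concretely, I would fix $\mathbb{O} \in \{\HHH, \PPP, \PPU\}$, $\B \in \mathbb{O}(\M) \cap \K$, and an $n$-ary $f \in \mathcal{F}$, and reduce the task to showing that every tuple in $B^n$ lies in $\mathsf{dom}(f^\B)$.

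The three cases would be handled in parallel. If $\mathbb{O} = \HHH$, a surjective homomorphism $h \colon \A \to \B$ with $\A \in \M$ allows me to lift any $\vec{b} \in B^n$ to some $\vec{a} \in A^n$ with $h(\vec{a}) = \vec{b}$; totality of $f^\A$ yields $\A \vDash \varphi_f(\vec{a}, f^\A(\vec{a}))$, and then preservation of $\varphi_f$ under homomorphisms gives $\B \vDash \varphi_f(\vec{b}, h(f^\A(\vec{a})))$, as required. If $\mathbb{O} = \PPP$, write $\B = \prod_{i \in I} \A_i$ with $\{\A_i : i \in I\} \subseteq \M$; for each $\vec{a} \in B^n$ and $i \in I$, totality of $f^{\A_i}$ yields $b_i \in A_i$ with $\A_i \vDash \varphi_f(p_i(\vec{a}), b_i)$, and preservation of $\varphi_f$ under direct products, via \cref{Thm : preservation}\eqref{item : preservation : pp}, then delivers $\B \vDash \varphi_f(\vec{a}, \langle b_i : i \in I \rangle)$. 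If $\mathbb{O} = \PPU$, write $\B = \prod_{i \in I} \A_i / U$ for some $\{\A_i : i \in I\} \subseteq \M$ and ultrafilter $U$, extract witnesses $b_i \in A_i$ coordinatewise exactly as in the product case, observe that $\llbracket \varphi_f(\vec{a}, \langle b_i : i \in I\rangle) \rrbracket = I \in U$, and conclude by \LL o\'s' Theorem~\ref{Thm : Los} that $\B \vDash \varphi_f(\vec{a}/U, \langle b_i : i \in I\rangle / U)$.

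There is no substantive obstacle here, since the whole argument is a routine transfer of witnesses along the syntactic structure of pp formulas. The only small subtlety worth flagging is that \cref{Thm : preservation}\eqref{item : preservation : pp} is stated under the hypothesis that $\K$ is elementary, whereas the present proposition makes no such assumption; this causes no trouble because the preservation of pp formulas under homomorphisms and direct products depends only on the $\exists$-conjunction-of-equations shape of the formulas and holds for any class whatsoever, so the cited clause still applies in our setting.
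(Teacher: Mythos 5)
Your proof is correct and follows essentially the same route as the paper's: in each of the three cases one transfers the pp sentence $\exists y\,\varphi_f(\vec{x},y)$ from the members of $\M$ to the algebra in $\mathbb{O}(\M)\cap\K$ via \cref{Thm : preservation}\eqref{item : preservation : pp} (for $\HHH$ and $\PPP$) and \LL o\'s' Theorem~\ref{Thm : Los} (for $\PPU$). Your closing remark about the elementary-class hypothesis in \cref{Thm : preservation} is well taken and applies equally to the paper's own proof, since preservation of pp formulas under homomorphisms and direct products is a purely syntactic matter.
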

\begin{proof}
In order to prove that $\mathbb{O}(\mathsf{M}) \cap \mathsf{K} \subseteq \mathsf{M}$,  
consider an $n$-ary $f \in \mathcal{F}$, $\A \in \mathbb{O}(\mathsf{M}) \cap \mathsf{K}$, and $a_1, \dots, a_n \in A$. Since $\mathcal{F} \subseteq \imppp(\K)$ by assumption, there exists a pp formula 
 $\varphi(x_1, \dots, x_n, y)$ defining $f$. 
We need to show that $\langle a_1, \dots, a_n \rangle \in \mathsf{dom}(f^\A)$,
which is equivalent to $\A \vDash \exists y \varphi(a_1, \dots, a_n, y)$.
The definition of $\mathsf{M}$ guarantees that
\begin{equation}\label{Eq : pp expansions : skjdsklqdslkjqdwqqq}
\mathsf{M} \vDash \exists y \varphi(x_1, \dots, x_n, y).
\end{equation}

We have three cases depending on whether $\mathbb{O}$ is $\HHH$, $\PPP$, or $\PPU$. 
First consider the case where $\mathbb{O} = \mathbb{H}$. Then $\A \in \mathbb{H}(\mathsf{M})$ implies that there exist $\B \in \mathsf{M}$ and a surjective homomorphism $h \colon \B \to \A$. Let $b_1, \dots, b_n \in B$ be such that $h(b_i) = a_i$ for each $i \leq n$. From \eqref{Eq : pp expansions : skjdsklqdslkjqdwqqq} and $\B \in \mathsf{M}$ it follows that $\B \vDash \exists y \varphi(b_1, \dots, b_n,y)$. Since $\varphi$ is a pp formula by assumption, so is $\exists y \varphi$. Therefore, we can apply Theorem \ref{Thm : preservation}(\ref{item : preservation : pp}) to obtain that $\A \vDash \exists y \varphi(h(b_1), \dots, h(b_n),y)$. As $h(b_i) = a_i$ for each $i \leq n$, it follows that $\A \vDash \exists y \varphi(a_1, \dots, a_n,y)$, as desired.

Then we consider the case where $\mathbb{O} = \PPP$. From $\A \in \PPP(\mathsf{M})$ it follows that $\A = \prod_{i \in I}\A_i$ for some family $\{ \A_i : i \in I \} \subseteq \mathsf{M}$. In view of (\ref{Eq : pp expansions : skjdsklqdslkjqdwqqq}), we have
\[
\A_i \vDash \exists y \varphi(p_i(a_1), \dots, p_i(a_n), y) \text{ for every }i \in I.
\]
Since $\varphi$ is a pp formula by assumption, so is $\exists y \varphi$. Therefore, we can apply Theorem \ref{Thm : preservation}(\ref{item : preservation : pp}) to the above display, 
obtaining $\A \vDash \exists y \varphi(a_1, \dots, a_n, y)$, as desired. 

Lastly, we consider the case where $\mathbb{O} = \PPU$. From $\A \in \PPU(\mathsf{M})$ it follows that $\A = \prod_{i \in I}\A_i / U$ for some family $\{ \A_i : i \in I \} \subseteq \mathsf{M}$ and ultrafilter $U$ on $I$. Since $\{ \A_i : i \in I \} \subseteq \mathsf{M}$, we can apply \LL o\'s' Theorem \ref{Thm : Los} to (\ref{Eq : pp expansions : skjdsklqdslkjqdwqqq}), 
obtaining $\A \vDash \exists y \varphi(x_1, \dots, x_n, y)$. Thus, $\A \vDash \exists y \varphi(a_1, \dots, a_n, y)$. This concludes the proof.
\end{proof}

\begin{Proposition}\label{Prop : K[F] class operators}
Let $\mathsf{K}$ be a class of algebras,  $\mathcal{F} \subseteq \mathsf{imp}_{\textsc{pp}}(\mathsf{K})$, and $\mathscr{L}_\mathcal{F}$ an $\mathcal{F}$-expansion of $\mathscr{L}_\mathsf{K}$. Moreover, let
\[
\mathsf{M} = \{ \A \in \mathsf{K} : f^\A \text{ is total for each }f \in \mathcal{F} \}.
\] 
The following conditions hold for all $\mathsf{N} \subseteq \mathsf{M}$ and class operators $\mathbb{O}$ such that $\mathbb{O}(\mathsf{K}) \subseteq \mathsf{K}$:
\benroman
\item \label{item : K[K] closed under I Pu P} if $\mathbb{O} \in \{ \III, \HHH, \PPP, \PPU\}$, then
\[
\mathbb{O}(\mathsf{N}[\mathscr{L}_\mathcal{F}]) = (\mathbb{O}(\mathsf{N}))[\mathscr{L}_\mathcal{F}];
\]
\item\label{item : K[F] closed SSS} if $\mathbb{O} = \SSS$ and $\F \subseteq \impeq(\K)$, then
\[
\SSS(\mathsf{N}[\mathscr{L}_\mathcal{F}]) \subseteq (\SSS(\mathsf{N}))[\mathscr{L}_\mathcal{F}].
\]
\eroman
\end{Proposition}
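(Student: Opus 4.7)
The plan is to reduce all four inclusions to a common template.\ For any $\L_\F$-algebra $\B$ on the left-hand side of either inclusion, set $\B_0 := \B\res_{\L_\K}$. The goal is to verify three items: (a)~$\B_0$ lies in the class figuring on the right-hand side (either $\mathbb{O}(\mathsf{N})$ or $\SSS(\mathsf{N})$); (b)~$f^{\B_0}$ is total for every $f \in \F$, so that $\B_0[\L_\F]$ is defined; and (c)~$g_f^\B = f^{\B_0}$ for every $f \in \F$. Conditions (b) and (c) together say exactly that $\B = \B_0[\L_\F]$, and combined with (a) they deliver the desired inclusion. The reverse inclusion in (i) will then be handled by an explicit dual construction.

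For part (i), step (a) follows from the easy observation that for $\mathbb{O} \in \{\III, \HHH, \PPP, \PPU\}$ taking $\L_\K$-reducts commutes (up to inclusion) with $\mathbb{O}$, so $\mathbb{O}(\mathsf{N}[\L_\F])\res_{\L_\K} \subseteq \mathbb{O}(\mathsf{N}[\L_\F]\res_{\L_\K}) = \mathbb{O}(\mathsf{N})$. Since $\mathbb{O}(\mathsf{N}) \subseteq \mathbb{O}(\mathsf{K}) \subseteq \mathsf{K}$ by hypothesis and $\mathbb{O}(\mathsf{N}) \subseteq \mathbb{O}(\mathsf{M})$ because $\mathsf{N} \subseteq \mathsf{M}$, step (b) follows from \cref{Prop : closure under op} (for $\HHH, \PPP, \PPU$; the case $\III$ is immediate since $f$, as an implicit operation of $\mathsf{K}$, is preserved by any $\L_\K$-isomorphism between members of $\mathsf{K}$). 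Step (c) is the main technical point and is handled case by case. Fix a pp defining formula $\varphi_f$ of $f$; then $\A \vDash \varphi_f(\vec{x}, g_f(\vec{x}))$ for every $\A \in \mathsf{N}[\L_\F]$. For $\III$ and $\HHH$, the underlying $\L_\F$-(iso)morphism from some $\A \in \mathsf{N}[\L_\F]$ onto $\B$ restricts to an $\L_\K$-homomorphism $h \colon \A\res_{\L_\K} \to \B_0$ between members of $\mathsf{K}$ along which $f$ is preserved, yielding $g_f^\B(h(\vec{a})) = h(f^{\A\res_{\L_\K}}(\vec{a})) = f^{\B_0}(h(\vec{a}))$ and hence $g_f^\B = f^{\B_0}$ by surjectivity. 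For $\PPP$ we invoke \cref{Thm : preservation}\eqref{item : preservation : pp}, since pp formulas are preserved by direct products; for $\PPU$ the same transfer is furnished by \LL o\'s' Theorem~\ref{Thm : Los}.

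The reverse inclusion in part (i) proceeds by an explicit construction. Given $\A \in \mathbb{O}(\mathsf{N})$ with each $f^\A$ total, we write $\A$ via the appropriate construction from members of $\mathsf{N}$—an isomorphism, a surjective homomorphism, a product, or an ultraproduct—lift each factor $\A_i$ to $\A_i[\L_\F] \in \mathsf{N}[\L_\F]$ (which is legitimate because $\mathsf{N} \subseteq \mathsf{M}$), and perform the same construction in $\L_\F$. The result is a member of $\mathbb{O}(\mathsf{N}[\L_\F])$ whose $\L_\K$-reduct is $\A$; that each $g_f$ is interpreted as $f^\A$ on the outcome follows either from \cref{Prop : small homs are big homs} (for $\III, \HHH$) or from the pp preservation arguments used above (for $\PPP, \PPU$). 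Hence the result equals $\A[\L_\F]$.

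Part (ii) follows the same template with $\mathbb{O} = \SSS$. Given $\B \leq \B' \in \mathsf{N}[\L_\F]$, we have $\B_0 \leq \B'\res_{\L_\K} \in \mathsf{N}$, so $\B_0 \in \SSS(\mathsf{N}) \subseteq \SSS(\mathsf{K}) \subseteq \mathsf{K}$, giving (a). For (b) and (c) jointly, the hypothesis $\F \subseteq \impeq(\K)$ lets us pick a defining formula $\varphi_f$ that is a conjunction of equations, hence quantifier-free. For $\vec{a} \in B^n$, $B$ is closed under $g_f$, so $g_f^\B(\vec{a}) = g_f^{\B'}(\vec{a}) \in B$; since $\B'\res_{\L_\K} \vDash \varphi_f(\vec{a}, g_f^{\B'}(\vec{a}))$ and quantifier-free formulas are preserved by subalgebras by \cref{Thm : preservation}\eqref{item : preservation : universal}, we obtain $\B_0 \vDash \varphi_f(\vec{a}, g_f^\B(\vec{a}))$, whence $f^{\B_0}(\vec{a}) = g_f^\B(\vec{a})$. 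This is precisely the step where the hypothesis $\impeq$ (rather than $\imppp$) is essential and where the equality of part (i) degenerates to an inclusion: existentially quantified pp formulas are preserved by homomorphisms and products but in general not by subalgebras.
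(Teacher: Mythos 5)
Your proposal is correct and follows essentially the same route as the paper's proof: the well-definedness of the expansions via \cref{Prop : closure under op}, preservation of implicit operations along homomorphisms for $\III$ and $\HHH$, \cref{Thm : preservation}\eqref{item : preservation : pp} for $\PPP$, \LL o\'s' Theorem for $\PPU$, \cref{Prop : small homs are big homs} for the reverse inclusions, and preservation of quantifier-free formulas under subalgebras for part (ii). Your unified (a)--(b)--(c) template is merely a tidier packaging of the paper's case-by-case computations, and your explicit remark that the $\III$ instance of step (b) needs a separate (trivial) argument is, if anything, slightly more careful than the paper's appeal to \cref{Prop : closure under op}.
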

\begin{proof}
As $\mathscr{L}_\mathcal{F}$ is an $\mathcal{F}$-expansion of $\mathscr{L}_\mathsf{K}$, it is of the form $\mathscr{L}_\mathsf{K} \cup \{ g_f : f \in \mathcal{F} \}$. 
This fact will be used repeatedly without further notice.

\eqref{item : K[K] closed under I Pu P}: Assume that $\mathsf{N} \subseteq \mathsf{M}$ and let $\mathbb{O} \in \{\III, \HHH, \PPP, \PPU\}$ be such that $\mathbb{O}(\mathsf{K}) \subseteq \mathsf{K}$. 
We first establish the following. 

\begin{Claim} \label{Claim : Well defined expansion}
For every $\A \in \mathsf{N} \cup \mathbb{O}(\mathsf{N})$ the algebra $\A[\mathscr{L}_\mathcal{F}]$ is defined. 
\end{Claim}
\begin{proof} [Proof of the Claim]
The definition of $\mathsf{M}$ guarantees that $\A[\mathscr{L}_\mathcal{F}]$ is defined for each $\A \in \mathsf{M}$. It then suffices to show that $\mathsf{N} \cup \mathbb{O}(\mathsf{N}) \subseteq \M$. As $\mathsf{N} \subseteq \mathsf{M}$ holds by assumption, it remains to prove that $\mathbb{O}(\mathsf{N}) \subseteq \M$.
From $\mathsf{N} \subseteq \M \subseteq \K$ and $\mathbb{O}(\K) \subseteq \K$ it follows that $\mathbb{O}(\mathsf{N}) \subseteq \mathbb{O}(\M)$ and $\mathbb{O}(\mathsf{N}) \subseteq \mathbb{O}(\K) \subseteq \K$. Therefore, $\mathbb{O}(\mathsf{N}) \subseteq \mathbb{O}(\M) \cap \K$. Since \cref{Prop : closure under op} yields $\mathbb{O}(\M) \cap \K \subseteq \M$, we conclude that $\mathbb{O}(\mathsf{N}) \subseteq \M$.
\end{proof}

We have to consider four
cases depending on whether $\mathbb{O}$ is $\III$, $\HHH$, $\PPP$, or $\PPU$.  We will start with the cases where $\mathbb{O}=\III$ and $\mathbb{O} = \HHH$, which can be treated simultaneously. Suppose that $\mathbb{O} \in \{\III,\HHH\}$.
To prove that $\mathbb{O}(\mathsf{N}[\mathscr{L}_\mathcal{F}]) \subseteq (\mathbb{O}(\mathsf{N}))[\mathscr{L}_\mathcal{F}]$ consider $\A \in \mathbb{O}(\mathsf{N}[\mathscr{L}_{\mathcal{F}}])$.
Then there exist $\B \in \mathsf{N}[\mathscr{L}_{\mathcal{F}}]$ and a surjective homomorphism $h \colon \B \to \A$, which we can assume to be an isomorphism when $\mathbb{O} = \mathbb{I}$. 
As $\B\resLK \in \mathsf{N}$ and $h$ is also a homomorphism $h \colon \B\resLK \to \A\resLK$, which is an isomorphism when $\mathbb{O}=\III$, we obtain $\A\resLK \in \mathbb{O}(\mathsf{N})$. Hence, $\A\resLK[\L_\F]$ is defined by \cref{Claim : Well defined expansion} and, therefore, $\A\resLK[\L_\F] \in (\mathbb{O}(\mathsf{N}))[\mathscr{L}_{\mathcal{F}}]$.
To show that $\A \in (\mathbb{O}(\mathsf{N}))[\mathscr{L}_{\mathcal{F}}]$, it is then sufficient to prove that $\A = \A\resLK[\L_\F]$. 
Let $f \in \F$ be $n$-ary, $a_1, \dots, a_n \in A$, and $b_1, \dots, b_n \in B$ be such that $h(b_i) = a_i$ for each $i \leq n$.
We have
\begin{flalign*}
    g_f^{\A}(a_1, \dots, a_n) & = g_f^{\A}(h(a_1), \dots, h(a_n)) = h(g_f^{\B}(b_1, \dots, b_n))\\
    & = h(f^{\B \resLK}(b_1, \dots, b_n)) = f^{\A \resLK}(h(b_1), \dots, h(b_n))\\
    & = f^{\A \resLK}(a_1, \dots, a_n),
\end{flalign*}
where the first and last equalities hold because $a_i = h(b_i)$ for each $i \leq n$, the second and fourth hold because $h \colon \B \to \A$ and $h \colon \B\resLK \to \A\resLK$ are homomorphisms and $f \in \imp(\K)$, and the third holds because $g_f^{\B} = f^{\B \resLK}$ by the definition of $g_f^{\B}$. 
Together with the fact that $\A$ and $\A \resLK[\mathscr{L}_{\mathcal{F}}]$ have the same $\L_\K$-reduct, the above display yields $\A = \A \resLK[\mathscr{L}_{\mathcal{F}}]$, as desired.

For the reverse inclusion, consider $\A[\mathscr{L}_{\mathcal{F}}] \in \mathbb{O}(\mathsf{N})[\mathscr{L}_{\mathcal{F}}]$.
Then there exist $\B \in \mathsf{N}$ and a surjective homomorphism $h \colon \B \to \A$, which is an isomorphism when $\mathbb{O}=\III$. 
Since $\B \in \mathsf{N}$, \cref{Claim : Well defined expansion} yields that $\B[\mathscr{L}_{\mathcal{F}}]$ is defined and belongs to $\mathsf{N}[\mathscr{L}_{\mathcal{F}}]$. 
By \cref{Prop : small homs are big homs}, $h \colon \B[\mathscr{L}_{\mathcal{F}}] \to \A[\mathscr{L}_{\mathcal{F}}]$ is a homomorphism. Thus, we conclude that $\A[\mathscr{L}_{\mathcal{F}}] \in \mathbb{O}(\mathsf{N}[\mathscr{L}_{\mathcal{F}}])$.

Then we consider the case where $\mathbb{O} = \PPP$. From the definitions of $\PPP(\mathsf{N}[\mathscr{L}_\mathcal{F}])$ and $(\PPP(\mathsf{N}))[\mathscr{L}_\mathcal{F}]$ it follows that 
\begin{align*}
\A \in \PPP(\mathsf{N}[\mathscr{L}_\mathcal{F}])&\iff \A = \prod_{i \in I}(\A_i[\mathscr{L}_\mathcal{F}]) \text{ for some }\{ \A_i : i \in I \} \subseteq \mathsf{N};\\
\A \in (\PPP(\mathsf{N}))[\mathscr{L}_\mathcal{F}] & \iff \A = \Big(\prod_{i \in I}\A_i\Big)[\mathscr{L}_\mathcal{F}] \text{ for some }\{ \A_i : i \in I \} \subseteq \mathsf{N}.
\end{align*}
Therefore, to conclude that $\PPP(\mathsf{N}[\mathscr{L}_\mathcal{F}]) = (\PPP(\mathsf{N}))[\mathscr{L}_\mathcal{F}]$, it suffices to show that for every family $\{ \A_i : i \in I \} \subseteq \mathsf{N}$,
\[
\prod_{i \in I}(\A_i[\mathscr{L}_\mathcal{F}]) = \Big(\prod_{i \in I}\A_i\Big)[\mathscr{L}_\mathcal{F}],
\]
 where the algebras in the above display are defined by \cref{Claim : Well defined expansion}. 
Observe that $\prod_{i \in I}(\A_i[\mathscr{L}_\mathcal{F}])$ and $(\prod_{i \in I}\A_i)[\mathscr{L}_\mathcal{F}]$ have the same $\mathscr{L}_\mathsf{K}$-reduct, namely, $\prod_{i \in I}\A_i$.\  
It will then be enough to prove that for all $n$-ary $f \in \mathcal{F}$ and $a_1, \dots, a_n, b \in \prod_{i \in I}A_i$,
\begin{equation}\label{Eq : pp expansions : class operators 1}
g_f^{\prod_{i \in I}(\A_i[\mathscr{L}_\mathcal{F}])}(a_1, \dots, a_n) = b \iff g_f^{(\prod_{i \in I}\A_i)[\mathscr{L}_\mathcal{F}]}(a_1, \dots, a_n) = b.
\end{equation}
To this end, recall from the assumptions that $f$ is defined 
by a pp formula $\varphi$. Observe that $\prod_{i \in I}\A_i \in \PPP(\K) \subseteq \K$, and hence $f^{\prod_{i \in I}\A_i}$ is defined. We will prove that
\begin{align*}
g_f^{\prod_{i \in I}(\A_i[\mathscr{L}_\mathcal{F}])}(a_1, \dots, a_n) = b &\iff g_f^{\A_i[\mathscr{L}_\mathcal{F}]}(p_i(a_1), \dots, p_i(a_n)) = p_i(b) \text{ for every } i \in I\\
&\iff f^{\A_i}(p_i(a_1), \dots, p_i(a_n)) = p_i(b)\text{ for every } i \in I\\
&\iff \A_i \vDash \varphi(p_i(a_1), \dots, p_i(a_n), p_i(b)) \text{ for every }i \in I\\
&\iff \prod_{i \in I}\A_i \vDash \varphi(a_1, \dots, a_n, b)\\
&\iff f^{\prod_{i \in I}\A_i}(a_1, \dots, a_n)= b\\
&\iff g_f^{(\prod_{i \in I}\A_i)[\mathscr{L}_\mathcal{F}]}(a_1, \dots, a_n)= b.
\end{align*}
The above equivalences are justified as follows. The first holds by the definition of a direct product, the second by the definition of $\A_i[\mathscr{L}_\mathcal{F}]$, the third and the fifth because $f$ is defined by $\varphi$, the fourth follows from an application of \cref{Thm : preservation}(\ref{item : preservation : pp}) made possible by the assumption that $\varphi$ is a pp formula, and the last one holds by the definition of $(\prod_{i \in I}\A_i)[\mathscr{L}_\mathcal{F}]$.\ This establishes (\ref{Eq : pp expansions : class operators 1}), thus concluding the proof that $\PPP(\mathsf{N}[\mathscr{L}_\mathcal{F}]) = (\PPP(\mathsf{N}))[\mathscr{L}_\mathcal{F}]$.

Lastly, we consider the case where $\mathbb{O} = \PPU$.
\ From the definitions of $\PPU(\mathsf{N}[\mathscr{L}_\mathcal{F}])$ and $(\PPU(\mathsf{N}))[\mathscr{L}_\mathcal{F}]$ it follows that 
\begin{align*}
\A \in \PPU(\mathsf{N}[\mathscr{L}_\mathcal{F}])\iff & \text{ there exist }\{ \A_i : i \in I \} \subseteq \mathsf{N} \text{ and an ultrafilter $U$ on $I$}\\
&\text{ such that }\A = \prod_{i \in I}(\A_i[\mathscr{L}_\mathcal{F}]) / U;\\
\A \in (\PPU(\mathsf{N}))[\mathscr{L}_\mathcal{F}]  \iff & \text{ there exist }\{ \A_i : i \in I \} \subseteq \mathsf{N} \text{ and an ultrafilter $U$ on $I$}\\
& \text{ such that }\A = \Big(\prod_{i \in I}\A_i / U\Big)[\mathscr{L}_\mathcal{F}].
\end{align*}
Therefore, to conclude that $\PPU(\mathsf{N}[\mathscr{L}_\mathcal{F}]) = (\PPU(\mathsf{N}))[\mathscr{L}_\mathcal{F}]$, it suffices to show that for every family $\{ \A_i : i \in I \} \subseteq \mathsf{N}$ and ultrafilter $U$ on $I$,
\[
\prod_{i \in I}(\A_i[\mathscr{L}_\mathcal{F}]) / U = \Big(\prod_{i \in I}\A_i / U\Big)[\mathscr{L}_\mathcal{F}],
\]
where the algebras in the above display are defined by \cref{Claim : Well defined expansion}.

Similarly to the case $\mathbb{O} = \PPP$ it suffices to show that for all $n$-ary $f \in \mathcal{F}$ and $a_1, \dots, a_n, b \in \prod_{i \in I}A_i/U$
\begin{equation}\label{Eq : pp expansions : class operators 2}
g_f^{\prod_{i \in I}(\A_i[\mathscr{L}_\mathcal{F}]) / U}(a_1/ U, \dots, a_n/ U) = b/ U \iff g_f^{(\prod_{i \in I}\A_i/ U)[\mathscr{L}_\mathcal{F}]}(a_1/ U, \dots, a_n/ U) = b/ U.
\end{equation}
To this end, recall from the assumptions that $f$ is defined by a pp formula $\varphi$. Observe that $\prod_{i \in I}\A_i / U \in \PPU(\K) \subseteq \K$, and hence $f^{\prod_{i \in I}\A_i / U}$ is defined. We will prove that
\begin{align*}
g_f^{\prod_{i \in I}(\A_i[\mathscr{L}_\mathcal{F}]) / U}(a_1/ U, \dots, a_n/ U) = b/ U &\iff g_f^{\prod_{i \in I}(\A_i[\mathscr{L}_\mathcal{F}])}(a_1, \dots, a_n) / U = b/ U\\
&\iff \llbracket g_f^{\prod_{i \in I}(\A_i[\mathscr{L}_\mathcal{F}])}(a_1, \dots, a_n) \thickapprox b \rrbracket \in U\\
& \iff \{ i \in I : g_f^{\A_i[\mathscr{L}_\mathcal{F}]}(p_i(a_1), \dots, p_i(a_n)) = p_i(b) \} \in U \\
& \iff \{ i \in I : f^{\A_i}(p_i(a_1), \dots, p_i(a_n)) = p_i(b) \} \in U \\
& \iff \{ i \in I : \A_i \vDash \varphi(p_i(a_1), \dots, p_i(a_n), p_i(b)) \} \in U \\
& \iff \llbracket \varphi(a_1, \dots, a_n, b) \rrbracket \in U\\
& \iff \prod_{i \in I}\A_i / U \vDash \varphi(a_1 / U, \dots, a_n / U, b / U)\\
& \iff f^{\prod_{i \in I}\A_i / U}(a_1 / U, \dots, a_n / U) = b/ U\\
& \iff g_f^{(\prod_{i \in I}\A_i / U)[\mathscr{L}_\mathcal{F}]}(a_1 / U, \dots, a_n / U) = b/ U.
\end{align*}
The above equivalences are justified as follows.\ The first holds by the definition of a quotient algebra, the second by the definition of an ultraproduct, the third and the sixth are straightforward, the fourth holds by the definition of $\A_i[\mathscr{L}_\mathcal{F}]$, the fifth and the eighth because $\varphi$ defines $f$, the seventh follows from \LL o\'s' Theorem \ref{Thm : Los}, and the last one from the definition of $(\prod_{i \in I}\A_i / U)[\mathscr{L}_\mathcal{F}]$. This establishes (\ref{Eq : pp expansions : class operators 2}), thus concluding the proof that $\PPU(\mathsf{N}[\mathscr{L}_\mathcal{F}]) = (\PPU(\mathsf{N}))[\mathscr{L}_\mathcal{F}]$.

(\ref{item : K[F] closed SSS}):  Suppose that $\mathbb{O} = \SSS$ and that each $f \in \mathcal{F}$ is defined by a conjunction of equations $\varphi_f$. We need to prove that $\SSS(\mathsf{N}[\mathscr{L}_\mathcal{F}]) \subseteq (\SSS(\mathsf{N}))[\mathscr{L}_\mathcal{F}]$. Consider $\A \in \SSS(\mathsf{N}[\mathscr{L}_\mathcal{F}])$. 
Then there exists $\B \in \mathsf{N}$ such that $\A \leq \B[\mathscr{L}_\mathcal{F}]$. 
As $\B$ is the $\mathscr{L}_\mathsf{K}$-reduct of $\B[\mathscr{L}_\mathcal{F}]$ and $\A \leq \B[\mathscr{L}_\mathcal{F}]$, 
 we obtain $\A\resLK \leq \B\in \mathsf{N}$. Therefore, $\A\resLK \in \SSS(\mathsf{N})$. 
We will prove that the algebra  
$\A\resLK[\L_\F]$ 
is defined and coincides with $\A$, whence $\A \in (\SSS(\mathsf{N}))[\mathscr{L}_\mathcal{F}]$, as desired.

Since $\mathsf{K}$ is closed under $\SSS$ by assumption and $\mathsf{N} \subseteq \mathsf{K}$, we obtain $\A\resLK \in \SSS(\mathsf{N}) \subseteq \mathsf{K}$.
 It then suffices to show that for each $f \in \mathcal{F}$ the function 
$f^{\A\resLK}$ 
is total and coincides with the interpretation of $g_f$ in $\A$. To this end, consider an $n$-ary $f \in \mathcal{F}$ and $a_1, \dots, a_n \in A$.
We need to prove that 
\begin{equation}\label{Eq : K[F] is closed under S}
\langle a_1, \dots, a_n \rangle \in \mathsf{dom}(f^{\A\resLK}) \, \, \text{ and } \, \, g_f^\A(a_1, \dots, a_n) = f^{\A\resLK}(a_1, \dots, a_n).
\end{equation}
Observe that 
\begin{equation*}
f^{\B}(a_1, \dots, a_n) = g_f^{\B[\mathscr{L}_{\mathsf{K}}]}(a_1, \dots, a_n) =  g_f^{\A}(a_1, \dots, a_n),    
\end{equation*}
where the first equality holds by $a_1, \dots, a_n \in A \subseteq B$ and the definition of $\B[\mathscr{L}_{\mathsf{K}}]$, and the second holds because $\A \leq \B[\mathscr{L}_{\mathsf{K}}]$. Since $f$ is defined by a conjunction of equations $\varphi_f$ by assumption, the above display yields
\[
\B \vDash \varphi_f(a_1, \dots, a_n, g_f^{\A}(a_1, \dots, a_n)).
\]
From $\A \leq \B[\L_\F]$ it follows that $\A\resLK \leq \B$ because $\B$ is the $\L_\K$-reduct of $\B[\L_\F]$.
As $\varphi_f$ is a conjunction of equations and $\A\resLK \leq \B$, we can apply Theorem \ref{Thm : preservation}(\ref{item : preservation : universal}) to the above display obtaining 
\[
\A\resLK \vDash \varphi_f(a_1, \dots, a_n, g_f^{\A}(a_1, \dots, a_n)).
\]
Since $\varphi_f$ defines $f$ and $\A\resLK \in \K$, we conclude that (\ref{Eq : K[F] is closed under S}) holds.
\end{proof}

\begin{Proposition}\label{Prop : pp expansions : generation}
Let $\mathsf{K}$ be a class of algebras,  $\mathcal{F} \subseteq \mathsf{imp}_{\textsc{pp}}(\mathsf{K})$, and $\mathscr{L}_\mathcal{F}$ an $\mathcal{F}$-expansion of $\mathscr{L}_\mathsf{K}$. Moreover, let 
\[
\M = \{ \A \in \mathsf{K} : f^\A \text{ is total for each }f \in \mathcal{F} \}.
\]
Then for all $\mathsf{N} \subseteq \M$ and class operator $\mathbb{O} \in \{ \UUU, \QQQ, \III\SSS\PPP\}$,
\[
\text{if $\mathsf{K}$ is closed under $\mathbb{O}$, then $\mathbb{O}(\mathsf{N}[\mathscr{L}_\mathcal{F}]) = \SSS((\mathbb{O}(\mathsf{N}))[\mathscr{L}_\mathcal{F}])$}.
\]
\end{Proposition}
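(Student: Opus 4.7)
The plan is to exploit the decomposition $\mathbb{O} = \III\SSS\mathbb{O}'$, where $\mathbb{O}' \in \{\PPU,\, \PPP\PPU,\, \PPP\}$ corresponds respectively to $\mathbb{O}$ being $\UUU$, $\QQQ$, or $\III\SSS\PPP$. Since $\K$ being closed under $\mathbb{O}$ forces closure under each of $\III, \SSS, \PPP, \PPU$ that appears in $\mathbb{O}$, iterated application of \cref{Prop : K[F] class operators}\eqref{item : K[K] closed under I Pu P} starting from $\mathsf{N} \subseteq \M$ yields $\mathbb{O}'\mathsf{N} \subseteq \M$ (so that $(\mathbb{O}'\mathsf{N})[\L_\F]$ is defined) and the identity $\mathbb{O}'(\mathsf{N}[\L_\F]) = (\mathbb{O}'\mathsf{N})[\L_\F]$. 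The nontrivial case $\mathbb{O}' = \PPP\PPU$ is handled by applying the cited result twice, first with $\PPU$ and then with $\PPP$.

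First I would establish the auxiliary observation that $(\mathbb{O}\mathsf{N})[\L_\F]$ is closed under $\L_\F$-isomorphism. If $\boldsymbol{D}' \cong \boldsymbol{D}[\L_\F]$ in $\L_\F$ via an isomorphism $h$, then $\boldsymbol{D}'\resLK \cong \boldsymbol{D} \in \mathbb{O}\mathsf{N}$ in $\L_\K$, so $\boldsymbol{D}'\resLK \in \III(\mathbb{O}\mathsf{N}) = \mathbb{O}\mathsf{N}$. As each $f \in \F$ is an implicit operation of $\K$ and is therefore preserved by the $\L_\K$-isomorphism $h$, a direct computation gives $g_f^{\boldsymbol{D}'} = f^{\boldsymbol{D}'\resLK}$ for every $f \in \F$, so $\boldsymbol{D}' = \boldsymbol{D}'\resLK[\L_\F] \in (\mathbb{O}\mathsf{N})[\L_\F]$. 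Consequently, $\III\SSS((\mathbb{O}\mathsf{N})[\L_\F]) = \SSS((\mathbb{O}\mathsf{N})[\L_\F])$.

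The inclusion $\mathbb{O}(\mathsf{N}[\L_\F]) \subseteq \SSS((\mathbb{O}\mathsf{N})[\L_\F])$ then follows from the chain
\[
\mathbb{O}(\mathsf{N}[\L_\F]) = \III\SSS\mathbb{O}'(\mathsf{N}[\L_\F]) = \III\SSS((\mathbb{O}'\mathsf{N})[\L_\F]) \subseteq \III\SSS((\mathbb{O}\mathsf{N})[\L_\F]) = \SSS((\mathbb{O}\mathsf{N})[\L_\F]),
\]
where the middle inclusion uses that $\mathbb{O}'\mathsf{N} \subseteq \mathbb{O}\mathsf{N} \cap \M$ so that $(\mathbb{O}'\mathsf{N})[\L_\F] \subseteq (\mathbb{O}\mathsf{N})[\L_\F]$. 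For the reverse inclusion, take $\A \in \SSS((\mathbb{O}\mathsf{N})[\L_\F])$, so $\A \leq \B[\L_\F]$ with $\B \in \mathbb{O}\mathsf{N} \cap \M$. Unfolding $\mathbb{O}\mathsf{N} = \III\SSS(\mathbb{O}'\mathsf{N})$, there is an $\L_\K$-isomorphism $\B \cong \B'$ with $\B' \leq \C$ for some $\C \in \mathbb{O}'\mathsf{N} \subseteq \M$. This isomorphism lifts to an $\L_\F$-isomorphism $\B[\L_\F] \cong \B'[\L_\F]$ (both expansions exist because $\M$ is closed under $\L_\K$-isomorphism, so $\B' \in \M$), while \cref{Prop : small homs are big homs} upgrades the $\L_\K$-inclusion $\B' \hookrightarrow \C$ to an $\L_\F$-embedding $\B'[\L_\F] \hookrightarrow \C[\L_\F]$. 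Concatenating, $\A$ embeds in $\C[\L_\F] \in (\mathbb{O}'\mathsf{N})[\L_\F] = \mathbb{O}'(\mathsf{N}[\L_\F])$, giving $\A \in \III\SSS\mathbb{O}'(\mathsf{N}[\L_\F]) = \mathbb{O}(\mathsf{N}[\L_\F])$.

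The key obstacle is that \cref{Prop : K[F] class operators}\eqref{item : K[F] closed SSS} requires $\F \subseteq \impeq(\K)$ to commute $\SSS$ with $[\L_\F]$, whereas here only $\F \subseteq \imppp(\K)$ is available; this is precisely why the statement necessarily weakens the expected equality with $(\mathbb{O}\mathsf{N})[\L_\F]$ to $\SSS((\mathbb{O}\mathsf{N})[\L_\F])$. The workaround is to restrict the $\SSS$-commutation to the controlled form provided by \cref{Prop : small homs are big homs}, applied only to subalgebra inclusions $\B' \leq \C$ in which both algebras already lie in $\M$, so that their $\L_\F$-expansions encode the implicit operations of $\F$ faithfully.
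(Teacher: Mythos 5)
Your proposal is correct and follows essentially the same route as the paper's proof: both decompose $\mathbb{O}$ as $\III\SSS\mathbb{O}'$ via \cref{Thm : quasivariety generation}, commute the inner operators with $[\mathscr{L}_\mathcal{F}]$ using \cref{Prop : K[F] class operators}\eqref{item : K[K] closed under I Pu P}, and handle the reverse inclusion by upgrading the $\mathscr{L}_\mathsf{K}$-embedding into a member of $\mathbb{O}'(\mathsf{N})$ to an $\mathscr{L}_\mathcal{F}$-embedding via \cref{Prop : small homs are big homs}. The only cosmetic difference is where the $\III$ gets absorbed (the paper commutes $\SSS\III$ and treats $\III\PPU$ jointly, while you absorb $\III$ at the end through the isomorphism-closure of $(\mathbb{O}(\mathsf{N}))[\mathscr{L}_\mathcal{F}]$), which changes nothing of substance.
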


\begin{proof}
We will detail the case in which $\mathbb{O} = \UUU$, as the proof of the case in which $\mathbb{O} \in \{\QQQ, \III\SSS\PPP \}$ is analogous. Assume that $\mathsf{K}$ is closed under $\UUU$. Then it is  closed under $\III$ and $\PPU$ as well. Moreover, by assumption
\begin{equation}\label{Eq : pp expansions : IPu = PuI : 0}
\mathsf{N} \subseteq \M
\end{equation}
Therefore, from Proposition \ref{Prop : K[F] class operators} it follows that
\begin{equation}\label{Eq : pp expansions : IPu = PuI}
\III\PPU(\mathsf{N}[\mathscr{L}_\mathcal{F}]) = (\III\PPU(\mathsf{N}))[\mathscr{L}_\mathcal{F}].
\end{equation}

We will show that
\[
\UUU(\mathsf{N}[\mathscr{L}_\mathcal{F}]) = \III\SSS\PPU(\mathsf{N}[\mathscr{L}_\mathcal{F}]) = \SSS\III\PPU(\mathsf{N}[\mathscr{L}_\mathcal{F}]) = \SSS((\III\PPU(\mathsf{N}))[\mathscr{L}_\mathcal{F}])\subseteq \SSS((\UUU(\mathsf{N}))[\mathscr{L}_\mathcal{F}]).
\]
The first equality above holds by \cref{Thm : quasivariety generation}, 
the second because $\III\SSS = \SSS\III$, the third follows from (\ref{Eq : pp expansions : IPu = PuI}), and the last inclusion holds because $\III\PPU(\mathsf{N}) \subseteq \UUU(\mathsf{N})$. This establishes the inclusion $\UUU(\mathsf{N}[\mathscr{L}_\mathcal{F}]) \subseteq \SSS((\UUU(\mathsf{N}))[\mathscr{L}_\mathcal{F}])$.

Therefore, it only remains to prove the reverse inclusion $\SSS((\UUU(\mathsf{N}))[\mathscr{L}_\mathcal{F}]) \subseteq \UUU(\mathsf{N}[\mathscr{L}_\mathcal{F}])$. Consider $\A \in \SSS((\UUU(\mathsf{N}))[\mathscr{L}_\mathcal{F}])$. Then there exists $\B \in \UUU(\mathsf{N})$ such that $\B[\mathscr{L}_\mathcal{F}]$ is defined and $\A \leq \B[\mathscr{L}_\mathcal{F}]$. From $\B \in \UUU(\mathsf{N})$ and 
 \cref{Thm : quasivariety generation} 
it follows that $\B \in \III\SSS\PPU(\mathsf{N})$. Therefore, there exist a family $\{ \B_i : i \in I \} \subseteq \mathsf{N}$, an ultrafilter $U$ on $I$, and an embedding $h \colon \B \to \prod_{i \in I}\B_i / U$. In view of (\ref{Eq : pp expansions : IPu = PuI : 0}), we have 
\begin{equation}\label{Eq : pp expansions : fgrtenayfobq}
\{ \B_i : i \in I \} \subseteq \M
\end{equation}
Since the hypotheses of \cref{Prop : K[F] class operators} are satisfied, 
we can apply
  \cref{Claim : Well defined expansion} 
to the assumptions that $\mathsf{K}$ is closed under $\PPU$ and the above display, obtaining that the algebra $(\prod_{i \in I}\B_i / U)[\mathscr{L}_\mathcal{F}]$ is defined. Observe that $\B, \prod_{i \in I}\B_i / U \in \III\SSS\PPU(\mathsf{N}) \subseteq \mathsf{K}$ because $\mathsf{N} \subseteq \mathsf{K}$ and $\mathsf{K}$ is closed under $\UUU$ by assumption. Consequently,
\[
\B[\mathscr{L}_\mathcal{F}], \Big(\prod_{i \in I}\B_i / U\Big)[\mathscr{L}_\mathcal{F}] \in \mathsf{K}[\mathscr{L}_\mathcal{F}].
\]
Since $h \colon \B \to \prod_{i \in I}\B_i / U$ is an embedding between members of $\mathsf{K}$, from the above display and Proposition \ref{Prop : small homs are big homs} it follows that $h$ can be regarded as an embedding $h \colon \B[\mathscr{L}_\mathcal{F}] \to (\prod_{i \in I}\B_i / U)[\mathscr{L}_\mathcal{F}]$. Lastly, (\ref{Eq : pp expansions : fgrtenayfobq}) and the assumption that $\mathsf{K}$ is closed under $\PPU$ allow us to apply 
\cref{Prop : K[F] class operators}\eqref{item : K[K] closed under I Pu P}, obtaining 
\[
\prod_{i \in I}(\B_i[\mathscr{L}_\mathcal{F}]) / U \in (\PPU(\{ \B_i : i \in I \}))[\L_\F].
\]
As the $\L_\K$-reduct of $\prod_{i \in I}(\B_i[\mathscr{L}_\mathcal{F}]) / U$ is $\prod_{i \in I}\B_i / U$, the above display yields
\[
\prod_{i \in I}(\B_i[\mathscr{L}_\mathcal{F}]) / U = \Big(\prod_{i \in I}\B_i / U\Big)[\mathscr{L}_\mathcal{F}].
\]
Therefore, the map $h \colon \B[\mathscr{L}_\mathcal{F}] \to \prod_{i \in I}(\B_i[\mathscr{L}_\mathcal{F}]) / U$ is an embedding.\ 
Since
$\{\B_i : i \in I \} \subseteq \mathsf{N}$, 
 it follows that 
$\B[\mathscr{L}_\mathcal{F}] \in \III\SSS\PPU(\mathsf{N}[\mathscr{L}_\mathcal{F}])$. As $\A \leq \B[\mathscr{L}_\mathcal{F}]$ by assumption, we conclude that $\A \in \SSS\III\SSS\PPU(\mathsf{N}[\mathscr{L}_\mathcal{F}]) \subseteq \UUU(\mathsf{N}[\mathscr{L}_\mathcal{F}])$.
\end{proof}

\section{Primitive positive expansions}

As we mentioned, our aim is to expand the language of a given elementary class of algebras $\K$ by adding to it enough implicit operations so that every implicit operation of $\K$ becomes interpolable by a set of terms in a class $\M$ of algebras in the expanded language. In view of \cref{Cor : each implicit operations splits into pp formulas}, the latter can be stated as the demand that implicit operations of $\K$ defined by pp formulas be interpolated by terms of $\M$. Because of this, from now on we shall restrict our attention to implicit operations defined by pp formulas.  Furthermore, we require the implicit operations under consideration to be extendable in order to guarantee the validity of condition (\ref{D1 : desiderata 1}) (see Proposition \ref{Prop : pp expansions : subreducts : extendable}).

However, even when the implicit operations in $\mathcal{F}$ are  defined by pp formulas and extendable, 
the class $\mathsf{K}[\mathscr{L}_\mathcal{F}]$ may lack some desirable closure properties. More precisely, there is no guarantee that if $\mathsf{K}$ is a universal class or a quasivariety, then so is $\mathsf{K}[\mathscr{L}_\mathcal{F}]$. This problem is easily overcome by closing $\mathsf{K}[\mathscr{L}_\mathcal{F}]$ under $\SSS$, leading to the core of this section, namely, the notion of a pp expansion. 
\begin{Definition}
Let $\mathsf{K}$ and $\mathsf{M}$ be a pair of classes of algebras.\ Then $\mathsf{M}$ is said to be a \emph{primitive positive expansion} (\emph{pp expansion} for short) of $\mathsf{K}$ when $\mathsf{M} = \SSS(\mathsf{K}[\mathscr{L}_\mathcal{F}])$ for some $\mathcal{F} \subseteq \mathsf{ext}_{\textsc{pp}}(\mathsf{K})$ and $\mathcal{F}$-expansion $\mathscr{L}_\mathcal{F}$ of $\mathscr{L}_{\mathsf{K}}$. In this case, we say that $\mathsf{M}$ is \emph{induced} by $\mathcal{F}$ and $\mathscr{L}_\mathcal{F}$.
\end{Definition}

From Proposition \ref{Prop : pp expansions : subreducts : extendable} we deduce that pp expansions satisfy condition (\ref{D1 : desiderata 1}).

\begin{Proposition}\label{Prop : pp expansions and subreducts}
Let $\mathsf{M}$ be a pp expansion of a universal class $\mathsf{K}$. Then $\mathsf{K}$ is the class of $\mathscr{L}_\mathsf{K}$-subreducts of $\mathsf{M}$.
\end{Proposition}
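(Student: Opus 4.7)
The plan is to prove the equality $\mathsf{K} = \SSS(\mathsf{M}\res_{\mathscr{L}_\mathsf{K}})$ by showing the two inclusions separately. Write $\mathsf{M} = \SSS(\mathsf{K}[\mathscr{L}_\mathcal{F}])$ for some $\mathcal{F} \subseteq \mathsf{ext}_{pp}(\mathsf{K})$ and $\mathcal{F}$-expansion $\mathscr{L}_\mathcal{F}$ of $\mathscr{L}_\mathsf{K}$, as provided by the hypothesis that $\mathsf{M}$ is a pp expansion of $\mathsf{K}$.

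For the inclusion $\mathsf{K} \subseteq \SSS(\mathsf{M}\res_{\mathscr{L}_\mathsf{K}})$, I invoke \cref{Prop : pp expansions : subreducts : extendable}. Since $\mathcal{F} \subseteq \mathsf{ext}_{pp}(\mathsf{K}) \subseteq \mathsf{ext}(\mathsf{K})$ and $\mathsf{K}$ is a universal class, that proposition yields that $\mathsf{K}$ coincides with the class of $\mathscr{L}_\mathsf{K}$-subreducts of $\mathsf{K}[\mathscr{L}_\mathcal{F}]$. Given any $\A \in \mathsf{K}$, there is therefore some $\B \in \mathsf{K}[\mathscr{L}_\mathcal{F}]$ with $\A \leq \B\res_{\mathscr{L}_\mathsf{K}}$. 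Because $\mathsf{K}[\mathscr{L}_\mathcal{F}] \subseteq \SSS(\mathsf{K}[\mathscr{L}_\mathcal{F}]) = \mathsf{M}$, this exhibits $\A$ as a subalgebra of an $\mathscr{L}_\mathsf{K}$-reduct of a member of $\mathsf{M}$.

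For the reverse inclusion $\SSS(\mathsf{M}\res_{\mathscr{L}_\mathsf{K}}) \subseteq \mathsf{K}$, consider $\A \leq \B\res_{\mathscr{L}_\mathsf{K}}$ for some $\B \in \mathsf{M}$. Unfolding $\mathsf{M} = \SSS(\mathsf{K}[\mathscr{L}_\mathcal{F}])$, there exists $\C \in \mathsf{K}[\mathscr{L}_\mathcal{F}]$ with $\B \leq \C$, and taking $\mathscr{L}_\mathsf{K}$-reducts yields $\B\res_{\mathscr{L}_\mathsf{K}} \leq \C\res_{\mathscr{L}_\mathsf{K}}$. By the very definition of $\mathsf{K}[\mathscr{L}_\mathcal{F}]$, we have $\C\res_{\mathscr{L}_\mathsf{K}} \in \mathsf{K}$. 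Combining, $\A \leq \C\res_{\mathscr{L}_\mathsf{K}} \in \mathsf{K}$, and closure of the universal class $\mathsf{K}$ under $\SSS$ gives $\A \in \mathsf{K}$.

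Both halves are essentially bookkeeping once \cref{Prop : pp expansions : subreducts : extendable} is in hand, so there is no real obstacle; the only subtlety is to remember that pp expansions are closed under $\SSS$ by definition so that taking a subalgebra inside $\mathsf{M}$ does not take us outside what can be analyzed via $\mathsf{K}[\mathscr{L}_\mathcal{F}]$, and dually that universal classes are closed under subalgebras so that taking a subreduct inside $\mathsf{K}$ stays in $\mathsf{K}$.
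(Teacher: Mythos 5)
Your proof is correct and follows essentially the same route as the paper: both rest on \cref{Prop : pp expansions : subreducts : extendable} applied to $\mathsf{K}[\mathscr{L}_\mathcal{F}]$, with the remaining work being the routine observation that passing from $\mathsf{K}[\mathscr{L}_\mathcal{F}]$ to $\SSS(\mathsf{K}[\mathscr{L}_\mathcal{F}])$ does not change the class of $\mathscr{L}_\mathsf{K}$-subreducts. You merely spell out in full the two inclusions that the paper compresses into one sentence.
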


\begin{proof}
Assume that $\mathsf{M}$ is a pp expansion of $\mathsf{K}$ of the form $\SSS(\mathsf{K}[\mathscr{L}_\mathcal{F}])$. As $\mathsf{K}$ is a universal class, we can apply \cref{Prop : pp expansions : subreducts : extendable}, obtaining that $\mathsf{K}$ is the class of  $\mathscr{L}_\mathsf{K}$-subreducts of  $\mathsf{K}[\mathscr{L}_\mathcal{F}]$. Hence, $\mathsf{K}$ is also the class of $\mathscr{L}_\mathsf{K}$-subreducts of $\SSS(\mathsf{K}[\mathscr{L}_\mathcal{F}]) = \mathsf{M}$.
\end{proof}

As we announced, the following holds true.

\begin{Theorem}\label{Thm : pp expansion : still quasivariety}
Let $\K$ be a class of algebras. The following conditions hold for a pp expansion $\mathsf{M}$ of $\K$  induced by $\mathcal{F}$ and $\L_\F$:
\benroman
\item\label{item : pp expansion : universal class} if $\mathsf{K}$ is a universal class, then $\mathsf{M}$ is a universal class;
\item\label{item : pp expansion : quasivariety} if $\mathsf{K}$ is a quasivariety, then $\mathsf{M}$ is a quasivariety such that $\mathsf{M}_{\textsc{rsi}} \subseteq \SSS(\mathsf{K}_\textsc{rsi}[\mathscr{L}_\mathcal{F}])$;
\item\label{item : pp expansion : variety} if $\mathsf{K}$ is a variety and $\mathcal{F} \subseteq \mathsf{ext}_{\textsc{eq}}(\K)$, then $\mathsf{M}$ is a variety.
\eroman
\end{Theorem}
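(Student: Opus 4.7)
The plan is to introduce the auxiliary class $\K^* = \{\A \in \K : f^\A \text{ is total for every } f \in \F\}$, so that by construction $\K[\L_\F] = \K^*[\L_\F]$ and $\M = \SSS(\K^*[\L_\F])$. The three parts will then follow by combining \cref{Prop : K[F] class operators} and \cref{Prop : pp expansions : generation} with two further ingredients: \cref{Thm : extendable 1} for the relatively subdirectly irreducible clause of \eqref{item : pp expansion : quasivariety}, and \cref{Thm : preservation}\eqref{item : preservation : universal} for \eqref{item : pp expansion : variety}.

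For parts \eqref{item : pp expansion : universal class} and \eqref{item : pp expansion : quasivariety} I would apply \cref{Prop : pp expansions : generation} with $\mathbb{O} = \UUU$ and $\mathbb{O} = \QQQ$, respectively. Since $\K$ is closed under $\mathbb{O}$, we have $\mathbb{O}(\K^*) \subseteq \K$, and hence the totality clause in the definition of $(\mathbb{O}(\K^*))[\L_\F]$ forces $(\mathbb{O}(\K^*))[\L_\F] = \K^*[\L_\F]$. Then \cref{Prop : pp expansions : generation} yields $\mathbb{O}(\K^*[\L_\F]) = \SSS(\K^*[\L_\F]) = \M$, and since $\SSS \subseteq \mathbb{O}$ together with $\mathbb{O}\mathbb{O} = \mathbb{O}$, we obtain $\mathbb{O}(\M) = \mathbb{O}\SSS(\K^*[\L_\F]) \subseteq \mathbb{O}(\K^*[\L_\F]) = \M$. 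So $\M$ is a universal class (resp.\ a quasivariety), as desired.

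For the $\M_\rsi$ inclusion in \eqref{item : pp expansion : quasivariety}, take $\A \in \M_\rsi$ with $\A \leq \B[\L_\F]$ for some $\B \in \K^*$. Applying the \cref{Thm : Subdirect Decomposition} to $\B \in \K$ produces a subdirect embedding $h \colon \B \to \prod_{i \in I}\B_i$ with each $\B_i \in \K_\rsi$, and the second part of \cref{Thm : extendable 1} yields, for each $i$, an algebra $\B_i' \in \K_\rsi$ with $\B_i \leq \B_i'$ and $f^{\B_i'}$ total for every $f \in \F$. Because pp formulas are preserved by products (\cref{Thm : preservation}\eqref{item : preservation : pp}), $\prod_i \B_i' \in \K^*$; composing $h$ with the inclusion $\prod_i \B_i \hookrightarrow \prod_i \B_i'$ gives an $\L_\K$-embedding $\B \to \prod_i \B_i'$ which, by \cref{Prop : small homs are big homs} and \cref{Prop : K[F] class operators}\eqref{item : K[K] closed under I Pu P}, lifts to an $\L_\F$-embedding $\B[\L_\F] \to \prod_i(\B_i'[\L_\F])$. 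Precomposing with $\A \leq \B[\L_\F]$ and projecting yields a subdirect $\L_\F$-embedding $\A \to \prod_i \A_i'$ with each $\A_i' \leq \B_i'[\L_\F] \in \K_\rsi[\L_\F] \subseteq \M$. Since $\A \in \M_\rsi$, some projection $\A \to \A_i'$ must be an isomorphism, whence $\A \in \SSS(\K_\rsi[\L_\F])$.

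For part \eqref{item : pp expansion : variety}, I would first show $\M = \K^*[\L_\F]$. Given $\A \leq \B[\L_\F]$ with $\B \in \K^*$, an $n$-ary $f \in \F$, and $\vec{a} \in A^n$: since $A$ is closed under $g_f^\B = f^\B$, the value $f^\B(\vec{a})$ lies in $A$; and since $\F \subseteq \exteq(\K)$, each $f$ is defined by a conjunction of equations $\varphi_f$, so $\B \vDash \varphi_f(\vec{a}, f^\B(\vec{a}))$. Preservation of universal formulas under $\SSS$ (\cref{Thm : preservation}\eqref{item : preservation : universal}) applied to $\A\resLK \leq \B$ then yields $\A\resLK \vDash \varphi_f(\vec{a}, f^\B(\vec{a}))$, so $\A\resLK \in \K^*$ and $\A = \A\resLK[\L_\F] \in \K^*[\L_\F]$. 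With $\M = \K^*[\L_\F]$ in hand, closure under $\PPP$ and $\HHH$ follows from \cref{Prop : K[F] class operators}\eqref{item : K[K] closed under I Pu P} combined with $\PPP(\K^*), \HHH(\K^*) \subseteq \K^*$ (the existential positive formula $\exists y \varphi_f$ is preserved by both operators by \cref{Thm : preservation}), while closure under $\SSS$ is already encoded in the identity $\M = \SSS(\K^*[\L_\F]) = \K^*[\L_\F]$. The main obstacle I expect is twofold: in \eqref{item : pp expansion : quasivariety}, threading each $\B_i$ through \cref{Thm : extendable 1} to a relatively subdirectly irreducible $\B_i'$ with total $f$'s, then lifting everything to $\L_\F$ while keeping $\A$'s inclusion intact; and in \eqref{item : pp expansion : variety}, recognising why $\F \subseteq \extpp(\K)$ does not suffice, namely that pp witnesses need not survive passage to subalgebras, which is precisely what $\F \subseteq \exteq(\K)$ rectifies.
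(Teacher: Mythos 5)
Your proof is correct, and for the most part it travels the same road as the paper's. For part \eqref{item : pp expansion : universal class} and the closure half of \eqref{item : pp expansion : quasivariety}, both arguments funnel through \cref{Prop : pp expansions : generation} applied to $\mathsf{N}=\K^*$; the only difference is that the paper first proves $\K=\mathbb{O}(\K^*)$ (which needs \cref{Thm : extendable 1}), whereas you rewrite $(\mathbb{O}(\K^*))[\L_\F]=\K^*[\L_\F]$ directly from $\mathbb{O}(\K^*)\subseteq\K$ and the totality filter in the definition of $[\L_\F]$ --- a mild economy, since extendability is then only needed for the $\M_\rsi$ clause. Your $\M_\rsi$ argument uses the same ingredients as the paper (Subdirect Decomposition, the second part of \cref{Thm : extendable 1}, \cref{Prop : small homs are big homs}, \cref{Prop : K[F] class operators}) but organizes them pointwise around a fixed $\A\in\M_\rsi$ instead of first establishing the global identity $\M=\III\SSS\PPP(\mathsf{N}_\rsi[\L_\F])$; do note that your final step lands you in $\III\SSS(\K_\rsi[\L_\F])$, and absorbing the $\III$ requires the small extra observation that $\III(\K_\rsi[\L_\F])=(\III(\K_\rsi))[\L_\F]=\K_\rsi[\L_\F]$ (via \cref{Prop : K[F] class operators}\eqref{item : K[K] closed under I Pu P} and closure of $\K_\rsi$ under $\III$), which the paper spells out explicitly. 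Part \eqref{item : pp expansion : variety} is where you genuinely diverge: the paper is syntactic --- it invokes \cref{Thm : axiomatization of pp expansion (almost always)} to axiomatize $\M$ by $\Sigma\cup\{\varphi_f(\vec{x},g_f(\vec{x}))\}$ and concludes by Birkhoff's theorem --- while you verify closure under $\HHH$, $\SSS$, $\PPP$ semantically, using preservation of conjunctions of equations under subalgebras to get $\M=\K^*[\L_\F]$ and then \cref{Prop : K[F] class operators} together with \cref{Prop : closure under op} for $\HHH$ and $\PPP$. The paper's route has the bonus of producing an explicit equational axiomatization of $\M$ (reused later, e.g.\ in \cref{Thm : pp expansion : CCM}); yours stays entirely inside the class-operator calculus and makes visible exactly where the hypothesis $\F\subseteq\exteq(\K)$ enters, namely in the downward preservation of the defining formulas, which is the correct diagnosis of why $\extpp$ alone would not suffice.
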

\noindent As shown in \cite[Thm.~2.1]{CKMEXAv2}, the hypothesis that $\mathcal{F} \subseteq \mathsf{ext}_{\textsc{eq}}(\K)$ in \cref{Thm : pp expansion : still quasivariety}\eqref{item : pp expansion : variety} cannot be dispensed with.

Besides the above theorem, the main result of this section consists of four observations 
which facilitate the task of detecting the pp expansions of a given class of algebras. On the one hand, we will establish the following description of pp expansions induced by implicit operations definable by conjunctions of equations (for a similar result, see \cite[Lem.\ 2.1]{CCV25}).

\begin{Theorem}\label{Thm : axiomatization of pp expansion (almost always)}
Let $\K$ be a universal class axiomatized by a set of formulas $\Sigma$ and $\M$ a pp expansion of $\K$  induced by $\mathcal{F} \subseteq \mathsf{ext}_{\textsc{eq}}(\K)$ and $\mathscr{L}_\mathcal{F} = \mathscr{L}_\mathsf{K} \cup \{ g_f : f \in \mathcal{F} \}$. Then
$\M = \K[\mathscr{L}_\mathcal{F}]$ and $\M$ is axiomatized by
\[
\Sigma \cup \{ \varphi_f(x_1, \dots, x_n, g_f(x_1, \dots, x_n)) : f\text{ is an $n$-ary member of } \mathcal{F}  \},
\]
where $\varphi_f$ denotes the conjunction of equations defining $f \in \mathcal{F}$.
\end{Theorem}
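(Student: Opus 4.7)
The plan is to establish the two assertions in sequence: first $\M = \K[\L_\F]$ (so that the closure under $\SSS$ built into the definition of a pp expansion is redundant here), and then derive the axiomatization as a direct consequence of \cref{Prop : how to axiomatize K[F]}. The key structural fact powering both parts is that each $f \in \F$ is defined by a conjunction of equations $\varphi_f$, i.e., by a universal formula, so that the operations $g_f$ are determined by their $\L_\K$-reduct in a way that is preserved by subalgebras.

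For the first part, since $\M = \SSS(\K[\L_\F])$ by definition, I only need to show the nontrivial inclusion $\SSS(\K[\L_\F]) \sub \K[\L_\F]$. I would apply \cref{Prop : K[F] class operators}\eqref{item : K[F] closed SSS} taking $\mathsf{N}$ to be the subclass of members of $\K$ on which every $f \in \F$ is total; with this choice $\mathsf{N}[\L_\F] = \K[\L_\F]$ holds by the very definition of $\K[\L_\F]$, and the hypothesis $\F \sub \impeq(\K)$ required by that proposition follows from $\F \sub \exteq(\K) \sub \impeq(\K)$. The proposition then yields $\SSS(\K[\L_\F]) \sub (\SSS(\mathsf{N}))[\L_\F]$. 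To finish, I would note that $(\SSS(\mathsf{N}))[\L_\F] \sub \K[\L_\F]$: any algebra $\C \in \SSS(\mathsf{N})$ for which $\C[\L_\F]$ is defined satisfies $\C \in \SSS(\K) \sub \K$ (because $\K$ is a universal class, and hence closed under $\SSS$), while the totality of every $f^\C$ is built into the very existence of $\C[\L_\F]$.

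For the axiomatization, once $\M = \K[\L_\F]$ is in hand, I would invoke \cref{Prop : how to axiomatize K[F]}, which describes $\K[\L_\F]$ as the class of $\L_\F$-algebras $\B$ such that $\B\resLK \in \K$ and $\B \vDash \varphi_f(x_1, \dots, x_n, g_f(x_1, \dots, x_n))$ for every $n$-ary $f \in \F$. Since $\Sigma$ axiomatizes $\K$ and consists of $\L_\K$-formulas, the condition $\B\resLK \in \K$ is equivalent to $\B \vDash \Sigma$. Combining the two conditions yields the displayed set of axioms.

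The main obstacle is essentially absent: the delicate closure-under-$\SSS$ argument is already packaged into \cref{Prop : K[F] class operators}\eqref{item : K[F] closed SSS}, whose proof in turn relies on \cref{Thm : preservation}\eqref{item : preservation : universal} applied to the universal formula $\varphi_f$. The only real content of the present proof is the combinatorial bookkeeping of choosing $\mathsf{N}$ appropriately so that the inclusion $(\SSS(\mathsf{N}))[\L_\F] \sub \K[\L_\F]$ is an immediate consequence of the assumption that $\K$ is a universal class.
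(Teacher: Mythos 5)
Your proof is correct, and it reaches the theorem by the same two-step decomposition as the paper (identify $\M$ with $\K[\L_\F]$, then axiomatize $\K[\L_\F]$ via \cref{Prop : how to axiomatize K[F]}); the axiomatization step is handled identically in both arguments. The one place where you diverge is the proof that $\K[\L_\F]$ is closed under $\SSS$: you obtain it semantically, by instantiating \cref{Prop : K[F] class operators}\eqref{item : K[F] closed SSS} with $\mathsf{N}$ equal to the class of members of $\K$ on which every $f \in \F$ is total (so that $\mathsf{N}[\L_\F] = \K[\L_\F]$) and then observing $(\SSS(\mathsf{N}))[\L_\F] \sub \K[\L_\F]$ because $\K$ is $\SSS$-closed. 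The paper instead reverses the order of the two steps and argues syntactically: since $\K$ is universal, $\Sigma$ may be taken to consist of universal formulas by \cref{Thm : classes generation}\eqref{item : universal class generation}, and each $\varphi_f(x_1, \dots, x_n, g_f(x_1, \dots, x_n))$ is a conjunction of equations, so the displayed axiomatization consists of universal formulas and $\K[\L_\F]$ is itself a universal class, hence $\SSS$-closed. Both routes ultimately rest on preservation of conjunctions of equations under subalgebras; the paper's version is slightly more economical and records as a byproduct that $\M$ is a \emph{universal} class axiomatized by universal formulas, whereas yours reuses an already-proved closure lemma and so requires no fresh syntactic manipulation. All the hypotheses you invoke ($\F \sub \exteq(\K) \sub \impeq(\K) \sub \imppp(\K)$, and $\SSS(\K) \sub \K$) are verified correctly, so there is no gap.
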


On the other hand, we will establish the next description of pp expansions in terms of the class operators of universal class and quasivariety generation.

\begin{Theorem}\label{Thm : pp expansion : description in terms of Q and U}
Let $\mathsf{K}$ be a class of algebras,  $\mathcal{F}\subseteq \mathsf{ext}_{\textsc{pp}}(\mathsf{K})$, and $\mathscr{L}_\mathcal{F}$ an $\mathcal{F}$-expansion of $\mathscr{L}_\mathsf{K}$.\ Moreover, let $\mathsf{N} \subseteq \mathsf{K}$ and assume that $f^\A$ is total for all $\A \in  \mathsf{N}$ and $f \in \mathcal{F}$. Then for each  $\mathbb{O} \in \{  \UUU, \QQQ \}$ such that $\mathsf{K} = \mathbb{O}(\mathsf{N})$ the class $\mathbb{O}(\mathsf{N}[\mathscr{L}_\mathcal{F}])$ is a pp expansion of $\mathsf{K}$ that coincides with $\SSS(\mathsf{K}[\mathscr{L}_\mathcal{F}])$.
\end{Theorem}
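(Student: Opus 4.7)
The plan is to derive this theorem as an essentially immediate corollary of \cref{Prop : pp expansions : generation}, which already contains all the heavy lifting: all that remains is to verify that its hypotheses are met in the present situation and then to recognize the resulting class as a pp expansion directly from the definition.

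First I would check the hypotheses of \cref{Prop : pp expansions : generation}. That result applies to any class $\mathcal{F} \subseteq \mathsf{imp}_{pp}(\mathsf{K})$, which is guaranteed here since $\mathcal{F} \subseteq \mathsf{ext}_{pp}(\mathsf{K}) \subseteq \mathsf{imp}_{pp}(\mathsf{K})$. It also requires $\mathsf{N} \subseteq \mathsf{M}$, where $\mathsf{M} = \{\A \in \mathsf{K} : f^\A \text{ is total for each } f \in \mathcal{F}\}$; this is exactly the assumption that $\mathsf{N} \subseteq \mathsf{K}$ and that $f^\A$ is total for all $\A \in \mathsf{N}$ and $f \in \mathcal{F}$. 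Finally, it requires $\mathsf{K}$ to be closed under $\mathbb{O}$, which is immediate from $\mathsf{K} = \mathbb{O}(\mathsf{N})$ together with the idempotence of $\mathbb{O} \in \{\UUU, \QQQ\}$ (so that $\mathbb{O}(\mathsf{K}) = \mathbb{O}\mathbb{O}(\mathsf{N}) = \mathbb{O}(\mathsf{N}) = \mathsf{K}$).

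Next I would apply \cref{Prop : pp expansions : generation} and use the assumption $\mathsf{K} = \mathbb{O}(\mathsf{N})$ to obtain the chain of equalities
\[
\mathbb{O}(\mathsf{N}[\mathscr{L}_\mathcal{F}]) \;=\; \SSS\bigl((\mathbb{O}(\mathsf{N}))[\mathscr{L}_\mathcal{F}]\bigr) \;=\; \SSS(\mathsf{K}[\mathscr{L}_\mathcal{F}]).
\]
This establishes the second half of the statement.

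To complete the proof, it only remains to observe that $\SSS(\mathsf{K}[\mathscr{L}_\mathcal{F}])$ is a pp expansion of $\mathsf{K}$; but this is literally the definition of a pp expansion, realized by the choice $\mathcal{F} \subseteq \mathsf{ext}_{pp}(\mathsf{K})$ and the $\mathcal{F}$-expansion $\mathscr{L}_\mathcal{F}$ furnished by the hypotheses. There is no genuine obstacle here: the entire theorem is a clean repackaging of the previously established \cref{Prop : pp expansions : generation}, specialized to the two class operators of interest.
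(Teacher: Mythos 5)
Your proposal is correct and follows essentially the same route as the paper: both reduce the statement to \cref{Prop : pp expansions : generation} by checking that $\mathsf{N}$ lies in the class of algebras on which every $f \in \mathcal{F}$ is total, then substitute $\mathsf{K} = \mathbb{O}(\mathsf{N})$, and finally note that $\SSS(\mathsf{K}[\mathscr{L}_\mathcal{F}])$ is a pp expansion by definition. Your explicit verification via idempotence that $\mathsf{K}$ is closed under $\mathbb{O}$ is a small detail the paper leaves implicit, but otherwise the arguments coincide.
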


We will then
show that the relation ``being a pp expansion of'' is transitive.

\begin{Theorem}\label{Thm : pp expansion of pp expansions}
Every pp expansion of a pp expansion of a class of algebras $\mathsf{K}$ is a pp expansion of $\mathsf{K}$.
\end{Theorem}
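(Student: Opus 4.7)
The plan is to eliminate the symbols of $\mathscr{L}_\mathcal{F} \setminus \mathscr{L}_\mathsf{K}$ from the pp formulas defining the outer family of implicit operations, thereby producing pp implicit operations of $\mathsf{K}$ that together realize $\mathcal{G}$. Write $\mathsf{M} = \SSS(\mathsf{K}[\mathscr{L}_\mathcal{F}])$ with $\mathcal{F} \subseteq \mathsf{ext}_{pp}(\mathsf{K})$ and each $f \in \mathcal{F}$ defined by a pp formula $\varphi_f$ in $\mathscr{L}_\mathsf{K}$, and $\mathsf{N} = \SSS(\mathsf{M}[\mathscr{L}_\mathcal{G}])$ with $\mathcal{G} \subseteq \mathsf{ext}_{pp}(\mathsf{M})$ and each $g \in \mathcal{G}$ defined by a pp formula $\psi_g$ in $\mathscr{L}_\mathcal{F}$. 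My aim is to produce $\mathcal{H} \subseteq \mathsf{ext}_{pp}(\mathsf{K})$ so that, up to renaming function symbols, $\mathscr{L}_\mathcal{H} = \mathscr{L}_\mathcal{G}$ and $\mathsf{N} = \SSS(\mathsf{K}[\mathscr{L}_\mathcal{H}])$.

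First I would define, for each $\mathscr{L}_\mathcal{F}$-term $t(\vec{x}\,)$, a pp formula $\Phi_t(\vec{x}, y)$ in $\mathscr{L}_\mathsf{K}$ by induction on $t$: variables and applications of symbols in $\mathscr{L}_\mathsf{K}$ are handled directly, whereas an outermost occurrence of $g_f(t_1, \dots, t_k)$ is replaced by a fresh existentially quantified variable $z$ together with the conjunct $\varphi_f(z_1, \dots, z_k, z)$, where each $z_i$ recursively encodes $t_i$ via $\Phi_{t_i}$. Applying this translation equation-by-equation to $\psi_g = \exists \vec{w}\bigsqcap_j(t_j \thickapprox s_j)$ and collecting all new existentials into a single prefix produces a pp formula $\psi_g^*(\vec{x}, y)$ in $\mathscr{L}_\mathsf{K}$ enjoying the following semantic identity: for every $\A \in \mathsf{K}$ with $f^\A$ total for each $f \in \mathcal{F}$ and every $\vec{a}, b \in A$, one has $\A[\mathscr{L}_\mathcal{F}] \vDash \psi_g(\vec{a}, b)$ if and only if $\A \vDash \psi_g^*(\vec{a}, b)$.

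Second, I would prove that $\psi_g^*$ is functional in $\mathsf{K}$ and defines an extendable pp implicit operation $f_g$ of $\mathsf{K}$. Functionality is established by induction on the subterm tree of $\psi_g$: the functionality of each $\varphi_f$ in $\mathsf{K}$ forces every intermediate witness introduced by the translation to be uniquely determined by the witnesses below it, and the functionality of $\psi_g$ in $\mathsf{M}$ then fixes $y$. Extendability follows by iteratively extending a given $\A \in \mathsf{K}$: one invokes the extendability of each $f \in \mathcal{F}$ to produce the intermediate witnesses, and then combines the extendability of $g$ in $\mathsf{M}$ with \cref{Prop : extendable : sufficient conditions} to descend a witness for $y$ back into $\mathsf{K}$. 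This step is the main obstacle, since functionality must be verified on all of $\mathsf{K}$---including those members where some $f^\A$ is only partial---which forces one to coordinate the existential witnesses along the entire syntactic tree of $\psi_g$, exploiting the functionality of each $\varphi_f$ independently of whether $f^\A$ happens to be globally defined in $\A$.

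Setting $\mathcal{H} = \mathcal{F} \cup \{f_g : g \in \mathcal{G}\}$ and identifying $g_{f_g}$ with $h_g$ so that $\mathscr{L}_\mathcal{H} = \mathscr{L}_\mathcal{G}$, the final step is to verify $\mathsf{N} = \SSS(\mathsf{K}[\mathscr{L}_\mathcal{H}])$. The inclusion $\mathsf{K}[\mathscr{L}_\mathcal{H}] \subseteq \mathsf{M}[\mathscr{L}_\mathcal{G}]$ follows directly from the semantic identity, since any $\A \in \mathsf{K}$ making $\A[\mathscr{L}_\mathcal{H}]$ an algebra satisfies $(f_g)^\A = g^{\A[\mathscr{L}_\mathcal{F}]}$ for every $g \in \mathcal{G}$. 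For the reverse inclusion, a given $\D \in \mathsf{M}[\mathscr{L}_\mathcal{G}]$ embeds, as an $\mathscr{L}_\mathcal{F}$-algebra, into $\C[\mathscr{L}_\mathcal{F}]$ for some $\C \in \mathsf{K}$ with all $f^\C$ total; extending $\C$ further within $\mathsf{K}$ using the extendability of the operations in $\mathcal{H}$ then yields $\A \in \mathsf{K}$ for which $\A[\mathscr{L}_\mathcal{H}]$ is defined and contains $\D$ as a subalgebra, with \cref{Prop : implicit operations extend} identifying $g^{\D\res_{\mathscr{L}_\mathcal{F}}}$ with the restriction of $(f_g)^\A$. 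Taking $\SSS$-closures then gives $\mathsf{N} = \SSS(\mathsf{M}[\mathscr{L}_\mathcal{G}]) = \SSS(\mathsf{K}[\mathscr{L}_\mathcal{H}])$, certifying $\mathsf{N}$ as a pp expansion of $\mathsf{K}$.
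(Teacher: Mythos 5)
Your proposal is correct and follows essentially the same route as the paper: the paper's proof also eliminates the symbols of $\mathscr{L}_\mathcal{F} - \mathscr{L}_\K$ by translating each defining pp formula of the outer operations into a pp formula of $\mathscr{L}_\K$ (this is exactly Proposition \ref{Prop : equivalence in K[F]}, whose part (iii) produces your $f_g$, there called $f_*$), sets $\mathcal{F}_1 \cup \{f_* : f \in \mathcal{F}_2\}$ with $g_{f_*} = g_f$, and verifies the same two inclusions $\K[\L_\mathcal{H}] \subseteq \M[\L_\mathcal{G}]$ and $\M[\L_\mathcal{G}] \subseteq \SSS(\K[\L_\mathcal{H}])$. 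The only point worth noting is that the step you flag as the main obstacle is dispatched more cheaply in the paper: the semantic identity and functionality are checked only on $\K[\L_\F]$ (where every $f^\A$ is total) and then transferred to all of $\K$ via \cref{Cor : functionality in Q(K)}, since $\K = \SSS(\K[\L_\F]\resLK)$, rather than by coordinating witnesses on algebras where the $f^\A$ are merely partial.
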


Lastly, we will show that enlarging the set of implicit operations inducing a pp expansion of a class of algebras also yields a pp expansion of the same class.

\begin{Theorem}\label{Thm : pp expansions : LG expands LF}
Let $\K$ be a universal class and $\F \subseteq \mathcal{G} \subseteq \extpp(\K)$.\ Let also $\L_\F$ be an $\F$-expansion of $\L_\K$ and $\L_\mathcal{G}$ a $\mathcal{G}$-expansion of $\L_\K$ such that $\L_\F \subseteq \L_\mathcal{G}$. Then $\SSS(\K[\L_\mathcal{G}])$ is a pp expansion of $\SSS(\K[\L_\F])$.
\end{Theorem}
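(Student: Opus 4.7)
\medskip

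The plan is to produce an explicit family of implicit operations of $\M_\F := \SSS(\K[\L_\F])$ witnessing the fact that $\M_\mathcal{G} := \SSS(\K[\L_\mathcal{G}])$ is a pp expansion of $\M_\F$. The natural candidate is $\mathcal{H} := \{ \tilde g : g \in \mathcal{G} \smallsetminus \F \}$, where for each $g \in \mathcal{G}$ the implicit operation $\tilde g$ of $\M_\F$ is defined by the very same pp formula $\varphi_g \in \L_\K \subseteq \L_\F$ that defines $g$ in $\K$. The $\mathcal{H}$-expansion $\L_\mathcal{H}$ of $\L_{\M_\F} = \L_\F$ is obtained by adjoining the symbols $\{g_g : g \in \mathcal{G} \smallsetminus \F\} = \L_\mathcal{G} \smallsetminus \L_\F$, so that $\L_\mathcal{H}$ coincides with $\L_\mathcal{G}$.

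First, I would verify that each $\tilde g$ really is an implicit operation of $\M_\F$ defined by a pp formula. Since $\K$ is a universal class, it is closed under $\SSS$, so for every $\B \in \M_\F$ with $\B \leq \A[\L_\F]$ and $\A \in \K$, the $\L_\K$-reduct $\B\resLK$ sits below $\A$ and thus belongs to $\K$. Because $\varphi_g$ is an $\L_\K$-formula, its satisfaction in $\B$ depends only on $\B\resLK$, so the functionality of $\varphi_g$ in $\K$ transfers to $\M_\F$, and $\tilde g$ is well defined.

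The main step, and the principal obstacle, is to show $\tilde g \in \extpp(\M_\F)$. Given $\B \in \M_\F$ and $b_1, \dots, b_n \in B$, fix $\A \in \K$ with $\B \leq \A[\L_\F]$. Apply \cref{Thm : extendable 1} to $\K$ and $\A$ to obtain $\A' \in \K$ with $\A \leq \A'$ on which every operation in $\ext(\K)$ — and hence every member of $\F \cup \mathcal{G}$ — is total. By \cref{Prop : implicit operations extend}, for each $f \in \F$ the partial function $f^{\A'}$ extends $f^\A$, which implies that $\A[\L_\F]$ is a subalgebra of $\A'[\L_\F]$, and the latter lies in $\M_\F$. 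Then $c := g^{\A'}(b_1, \dots, b_n)$ is defined, and since $\A' \vDash \varphi_g(b_1, \dots, b_n, c)$ and $\varphi_g$ is an $\L_\K$-formula, also $\A'[\L_\F] \vDash \varphi_g(b_1, \dots, b_n, c)$, witnessing $\langle b_1, \dots, b_n \rangle \in \dom(\tilde g^{\A'[\L_\F]})$. As $\B \leq \A[\L_\F] \leq \A'[\L_\F] \in \M_\F$, this proves that $\tilde g$ is extendable.

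Finally, I would establish the equality $\SSS(\M_\F[\L_\mathcal{H}]) = \M_\mathcal{G}$. For the forward inclusion, given $\C \leq \B[\L_\mathcal{H}]$ with $\B \in \M_\F$ and $\tilde g^\B$ total for every $\tilde g \in \mathcal{H}$, the reduct $\B_0 := \B\resLK$ belongs to $\K$ (by closure of $\K$ under $\SSS$), every $g \in \mathcal{G}$ is total on $\B_0$ (the totality of $\tilde g^\B$ and $\tilde f^\B = f^{\B_0}$ for $f \in \F$ each follow from the pp formula $\varphi_g$ being satisfiable at every tuple of $B_0$), and a direct inspection of the interpretations of the new symbols shows $\B[\L_\mathcal{H}] = \B_0[\L_\mathcal{G}] \in \K[\L_\mathcal{G}]$, whence $\C \in \M_\mathcal{G}$. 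Conversely, if $\C \leq \A[\L_\mathcal{G}]$ with $\A \in \K$ and every $g \in \mathcal{G}$ total on $\A$, then $\A[\L_\F] \in \K[\L_\F] \subseteq \M_\F$, every $\tilde g \in \mathcal{H}$ is total on $\A[\L_\F]$, and $\A[\L_\F][\L_\mathcal{H}] = \A[\L_\mathcal{G}]$, yielding $\C \in \SSS(\M_\F[\L_\mathcal{H}])$. This completes the verification that $\M_\mathcal{G}$ is a pp expansion of $\M_\F$ induced by $\mathcal{H}$ and $\L_\mathcal{H}$.
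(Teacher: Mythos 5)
Your overall strategy coincides with the paper's: lift each $g \in \mathcal{G} \smallsetminus \F$ to an implicit operation of $\M_\F$ via its defining pp formula, check extendability, and identify the resulting pp expansion of $\M_\F$ with $\SSS(\K[\L_\mathcal{G}])$. Your functionality and extendability arguments are correct. The gap is in the forward inclusion of the last step, where you assert $\B[\L_\mathcal{H}] = \B_0[\L_\mathcal{G}] \in \K[\L_\mathcal{G}]$ for $\B_0 = \B\resLK$, and in particular that every $f \in \F$ is total on $\B_0$. This does not follow. A member $\B$ of $\M_\F = \SSS(\K[\L_\F])$ is only a subalgebra of some $\A[\L_\F]$ with $\A \in \K$: the symbol $g_f$ is interpreted on $B$ as the restriction of the total function $f^{\A}$, but the partial function $f^{\B_0}$ defined by the pp formula $\varphi_f$ on the reduct can have a strictly smaller domain, because the existential witnesses required by $\varphi_f$ may lie in $A \smallsetminus B$. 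Concretely, take $\K = \CCMon$ and $\F = \{f\}$ with $f$ defined by $\exists z\,(xz \thickapprox 1 \sqcap y \thickapprox 1)$, which belongs to $\extpp(\CCMon)$. Let $\A$ be the multiplicative monoid of positive rationals and $\B$ the subalgebra of $\A[\L_\F]$ on the positive integers; then $g_f^{\B}$ is constantly $1$, yet $\mathsf{dom}(f^{\B_0}) = \{1\}$, so $\B_0[\L_\F]$ (let alone $\B_0[\L_\mathcal{G}]$) is not even defined and $\B \notin \K[\L_\F]$. Your parenthetical justification conflates totality of the interpreted function symbol with totality of the pp-defined partial function on the reduct.

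The inclusion itself is true, but it requires an extra detour rather than landing in $\K[\L_\mathcal{G}]$ on the nose. Since $\B\res_{\L_\F} \leq \B'$ for some $\B' \in \K[\L_\F]$ and $\mathcal{G} \subseteq \ext(\K)$, \cref{Prop : pp expansions : subreducts : extendable} (or \cref{Thm : extendable 1}) yields $\C \in \K$ with $\B'\resLK \leq \C$ and $\C[\L_\mathcal{G}] \in \K[\L_\mathcal{G}]$. One then verifies $\B[\L_\mathcal{H}] \leq \C[\L_\mathcal{G}]$ symbol by symbol: for $f \in \F$ one has $g_f^{\B}(b_1,\dots,b_n) = f^{\B'\resLK}(b_1,\dots,b_n)$, where $f^{\B'\resLK}$ \emph{is} total because $\B' \in \K[\L_\F]$, and for $g \in \mathcal{G}\smallsetminus\F$ one has $\tilde g^{\B}(b_1,\dots,b_n) = g^{\B\resLK}(b_1,\dots,b_n)$; in both cases \cref{Prop : implicit operations extend} transports the value to $\C$. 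This is exactly the step the paper's proof carries out. Your reverse inclusion is fine as written.
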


Before proving Theorems \ref{Thm : pp expansion : still quasivariety}, \ref{Thm : axiomatization of pp expansion (almost always)}, \ref{Thm : pp expansion : description in terms of Q and U},  \ref{Thm : pp expansion of pp expansions}, and \ref{Thm : pp expansions : LG expands LF}, let us illustrate how these results can be used to describe pp expansions of familiar classes of algebras.

\begin{exa}[\textsf{Lazy pp expansions}]\label{Exa : pp expansions : lazy}
Every class of algebras $\mathsf{K}$ closed under $\SSS$ is a pp expansion of itself. For let $\mathcal{F} = \emptyset$. 
Then $\mathcal{F} = \emptyset \subseteq \mathsf{ext}_{\textsc{pp}}(\mathsf{K})$. Furthermore, let $\mathscr{L}_\mathcal{F} = \mathscr{L}_\mathsf{K}$ and observe that $\mathscr{L}_\mathcal{F}$ is an $\mathcal{F}$-expansion of $\mathscr{L}_\mathsf{K}$ 
because $\mathcal{F} = \emptyset$. Therefore, the class $\mathsf{K}[\mathscr{L}_\mathcal{F}]$ 
coincides 
with $\mathsf{K}$. As $\mathsf{K}$ is closed under $\SSS$, we conclude that $\mathsf{K} = \SSS(\mathsf{K}[\mathscr{L}_\mathcal{F}])$, whence $\mathsf{K}$ is a pp expansion of itself.
\qed
\end{exa}

\begin{exa}[\textsf{Cancellative commutative monoids}]
We will prove the following.

\begin{Theorem}\label{Thm : pp expansion : CCM}
The variety of Abelian groups is a pp  expansion of the quasivariety of cancellative commutative monoids.
\end{Theorem}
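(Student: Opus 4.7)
The plan is to apply the machinery of \cref{Thm : axiomatization of pp expansion (almost always)} with a single implicit operation: the one witnessing the existence of inverses. Let $f$ be the unary implicit operation of ``taking inverses'' in $\mathsf{CCMon}$ supplied by \cref{Thm : inverses in monoids : extendable}. That theorem ensures that $f \in \ext_{eq}(\mathsf{CCMon})$ and that $f$ is defined by the single equation $\varphi_f(x,y) := (x \cdot y \thickapprox 1)$.

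First, I set $\mathcal{F} = \{f\}$ and take $\mathscr{L}_\mathcal{F} = \mathscr{L}_{\mathsf{CCMon}} \cup \{g_f\}$ where $g_f$ is a new unary function symbol. Since $\mathcal{F} \subseteq \ext_{eq}(\mathsf{CCMon})$ and $\mathsf{CCMon}$ is a universal class (in fact a quasivariety, axiomatized by the monoid equations together with commutativity and the cancellation quasiequation), I can apply \cref{Thm : axiomatization of pp expansion (almost always)} to conclude that $\SSS(\mathsf{CCMon}[\mathscr{L}_\mathcal{F}]) = \mathsf{CCMon}[\mathscr{L}_\mathcal{F}]$ is axiomatized by the quasiequations of $\mathsf{CCMon}$ together with the single equation $\varphi_f(x, g_f(x)) = (x \cdot g_f(x) \thickapprox 1)$.

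Next, I identify $\mathsf{CCMon}[\mathscr{L}_\mathcal{F}]$ with the variety $\mathsf{AbGrp}$ of Abelian groups (after renaming $g_f$ as $^{-1}$). For the inclusion $\mathsf{AbGrp} \subseteq \mathsf{CCMon}[\mathscr{L}_\mathcal{F}]$ I observe that every Abelian group is a cancellative commutative monoid (see \cref{Thm : CCM subreducts of Abelian groups}) in which the inverse map satisfies $x \cdot x^{-1} \thickapprox 1$. For the converse inclusion I use that, in any algebra $\A \in \mathsf{CCMon}[\mathscr{L}_\mathcal{F}]$, the equation $x \cdot g_f(x) \thickapprox 1$ together with commutativity shows that $g_f^\A(a)$ is a (necessarily unique, by cancellativity) two-sided inverse of each $a \in A$, so $\A$ is the expansion of an Abelian group.

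The proof is essentially bookkeeping once \cref{Thm : inverses in monoids : extendable} and \cref{Thm : axiomatization of pp expansion (almost always)} are invoked, so there is no genuine obstacle; the only subtlety to be careful about is the distinction between $\SSS(\mathsf{CCMon}[\mathscr{L}_\mathcal{F}])$ and $\mathsf{CCMon}[\mathscr{L}_\mathcal{F}]$, which collapses here precisely because $f$ is defined by a conjunction of equations (so that \cref{Thm : axiomatization of pp expansion (almost always)} applies), ensuring that the pp expansion we construct really is the whole variety of Abelian groups and not just a proper subclass thereof.
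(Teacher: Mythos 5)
Your proof is correct and follows essentially the same route as the paper's: both invoke \cref{Thm : inverses in monoids : extendable} to obtain the extendable implicit operation of taking inverses defined by the equation $x \cdot y \thickapprox 1$, apply \cref{Thm : axiomatization of pp expansion (almost always)} to get the axiomatization of $\mathsf{CCMon}[\mathscr{L}_f]$ (and the collapse of $\SSS(\mathsf{CCMon}[\mathscr{L}_f])$ to $\mathsf{CCMon}[\mathscr{L}_f]$), and then identify that class with the variety of Abelian groups by the same two inclusions. The only cosmetic difference is that you verify the converse inclusion directly from the axiomatization via commutativity and cancellativity, whereas the paper simply observes that every Abelian group arises by adding $f$ to its monoid reduct; both are fine.
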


\begin{proof}
Let $f$ be the unary implicit operation of the quasivariety of cancellative commutative monoids $\mathsf{CCMon}$ given by Theorem \ref{Thm : inverses in monoids : extendable}.\ Recall from the same theorem that  $f$ is extendable and 
defined by the equation $x \cdot y  \thickapprox 1$. 
Moreover, let 
$(\,)^{-1}$
be a unary function symbol and denote by $t^{-1}$ the result of applying $(\,)^{-1}$ to a term $t$.
Then the language $\mathscr{L}_{f} = \mathscr{L}_\mathsf{CCMon} \cup \{ (\,)^{-1}
\}$ is an $f$-expansion of $\mathscr{L}_\mathsf{CCMon}$, in which the role of $g_f$ is played by $(\,)^{-1}$.

As $\mathsf{CCMon}$ is a universal class, Theorem \ref{Thm : axiomatization of pp expansion (almost always)} yields that $\mathsf{CCMon}[\mathscr{L}_{ f}]$ is a pp expansion of $\mathsf{CCMon}$ axiomatized by the axioms for cancellative commutative monoids plus the equation $x \cdot x^{-1} \thickapprox 1$.
 
Clearly, every member of $\mathsf{CCMon}[\mathscr{L}_{f}]$ is an Abelian group.
On the other hand, every Abelian group can be obtained by adding the implicit operation $f$ to its monoid reduct, which is a cancellative commutative monoid. Therefore, $\mathsf{CCMon}[\mathscr{L}_f]$ coincides with the variety of Abelian groups.
\end{proof}
\end{exa}

\begin{exa}[\textsf{Distributive lattices}]

Our aim is to establish the next result.

\begin{Theorem}\label{Thm : pp expansions : DL}
The following conditions hold:
\benroman
\item\label{item : pp expansions : DL 1} the variety of relatively complemented distributive lattices is a pp expansion of the variety of distributive lattices;
\item\label{item : pp expansions : DL 2} the variety of Boolean algebras is a pp expansion of the variety of bounded distributive lattices.
\eroman
\end{Theorem}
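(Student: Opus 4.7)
The plan is to apply \cref{Thm : axiomatization of pp expansion (almost always)} directly, using the extendable implicit operations that were isolated in \cref{Thm : distributive lattice : expandable}. Since $\DL$ and $\bDL$ are varieties (in particular, universal classes), both halves reduce to a one-line application of the axiomatization theorem, followed by a routine identification of the resulting class with the target class.

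For (\ref{item : pp expansions : DL 1}), I would take $f$ to be the ternary implicit operation of ``taking relative complements'' in $\DL$. By \cref{Thm : relative complements is implicit}(\ref{item : relative complements is implicit : 1}) it is defined by the conjunction of equations
\[
\varphi_f(x_1,x_2,x_3,y) = (x_1 \land y \thickapprox x_1 \land x_2 \land x_3) \sqcap (x_1 \lor y \thickapprox x_1 \lor x_2 \lor x_3),
\]
and by \cref{Thm : distributive lattice : expandable}(\ref{item : DL extendable 1}) it belongs to $\ext_{eq}(\DL)$. Letting $r$ be a new ternary function symbol and $\L_f = \L_\DL \cup \{r\}$, \cref{Thm : axiomatization of pp expansion (almost always)} gives that $\DL[\L_f]$ is a pp expansion of $\DL$ axiomatized by the equations for distributive lattices together with $\varphi_f(x_1,x_2,x_3, r(x_1,x_2,x_3))$. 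The remaining step is to observe that this axiomatization is just the definition of a relatively complemented distributive lattice: an $\L_f$-algebra satisfies the above axioms exactly when $r(a,b,c)$ is the complement of $a$ relative to $[a\land b\land c,\,a\lor b\lor c]$, which is the defining property of $\mathsf{RCDL}$. Hence $\DL[\L_f] = \mathsf{RCDL}$.

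For (\ref{item : pp expansions : DL 2}), the argument is entirely analogous, using instead the unary implicit operation of ``taking complements'' in $\bDL$, which by \cref{Thm : relative complements is implicit}(\ref{item : relative complements is implicit : 2}) is defined by $\psi(x,y) = (x \land y \thickapprox 0) \sqcap (x \lor y \thickapprox 1)$, and which belongs to $\ext_{eq}(\bDL)$ by \cref{Thm : distributive lattice : expandable}(\ref{item : DL extendable 2}). Taking a new unary function symbol $\lnot$ and $\L_\psi = \L_\bDL \cup \{\lnot\}$, \cref{Thm : axiomatization of pp expansion (almost always)} yields that $\bDL[\L_\psi]$ is a pp expansion of $\bDL$ axiomatized by the equations for bounded distributive lattices plus $(x \land \lnot x \thickapprox 0) \sqcap (x \lor \lnot x \thickapprox 1)$, and these are precisely the axioms defining Boolean algebras.

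There is no real obstacle here: all the substantive work was carried out when proving that the operations of taking relative complements and complements are extendable and pp-definable by a single conjunction of equations. The proof is essentially a bookkeeping check that the formulas $\varphi_f$ and $\psi$ produced by the axiomatization theorem coincide with the familiar defining equations of $\mathsf{RCDL}$ and Boolean algebras respectively.
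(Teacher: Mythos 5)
Your proposal is correct and follows essentially the same route as the paper: both invoke \cref{Thm : distributive lattice : expandable} to get the extendable implicit operations defined by conjunctions of equations, apply \cref{Thm : axiomatization of pp expansion (almost always)} to the resulting $f$-expansion, and then identify the axiomatized class with $\mathsf{RCDL}$ (resp.\ Boolean algebras). No gaps.
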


\begin{proof}
We detail only the proof of (\ref{item : pp expansions : DL 1}), as the proof of (\ref{item : pp expansions : DL 2}) is analogous. Let $f$ be the ternary implicit operation of the variety of distributive lattices $\mathsf{DL}$ given by \cref{Thm : distributive lattice : expandable}. Recall from the same theorem that $f$ is extendable and defined by the conjunction of equations
\[
\varphi = (x_1 \land y \thickapprox  x_1 \land x_2 \land x_3) \sqcap ( x_1 \lor y \thickapprox x_1 \lor x_2 \lor x_3).
\]
Moreover, let 
 $r$ 
be a ternary function symbol. Then the language $\mathscr{L}_{f} = \mathscr{L}_\mathsf{DL} \cup \{ r
\}$ is an $f$-expansion of $\mathscr{L}_\mathsf{DL}$, in which the role of $g_f$ is played by $r$.

As $\mathsf{DL}$ is a universal class, we can apply Theorem \ref{Thm : axiomatization of pp expansion (almost always)}, obtaining that $\mathsf{DL}[\mathscr{L}_{f}]$ is a pp expansion of $\mathsf{DL}$ axiomatized by the axioms for distributive lattices plus the equations
\[
x_1 \land r(x_1, x_2, x_3) \thickapprox x_1 \land x_2 \land x_3 \, \, \text{ and } \, \, x_1 \lor r(x_1, x_2, x_3) \thickapprox x_1 \lor x_2 \lor x_3.
\]
As a consequence, $\mathsf{DL}[\mathscr{L}_{f}]$ coincides with the variety of relatively complemented distributive lattices (see Example \ref{Exa : relatively complemented DL}).
\end{proof}
\end{exa}

\begin{exa}[\textsf{Reduced commutative rings}]\label{exa:meadows}
A \emph{meadow} is an algebra 
$\langle A; +, \cdot, -, (\,)^*,0, 1\rangle$ 
which comprises a commutative ring $\langle A; +, \cdot, -,0, 1 \rangle$ and a unary operation 
 $(\,)^*$
such that for each $a \in A$,
\[
(a \cdot a^*) \cdot a = a \, \ \text{ and } \, \, a = a^{**}
\]
(see, e.g., \cite{BT07}). As a consequence, the class of meadows forms a variety.

Prototypical examples of meadows arise by adding the operation of ``taking weak inverses'' to fields. More precisely, a \emph{zero-totalized field} is an algebra 
$\langle A; +, \cdot, -, (\,)^*,0, 1\rangle$ 
comprising a field $\langle A; +, \cdot, -,0, 1\rangle$ and a unary operation 
 $(\,)^*$ defined as $a^* = \mathsf{wi}(a)$ for each $a \in A$, where $\mathsf{wi}(a)$ is the weak inverse of $a$ (see Example \ref{Example : inverses in rings}).
Meadows were introduced in response to the desire to construct an equational theory that captures the essence of fields.
The following representation theorem (see \cite[Sec.~3.2]{MedBHT}) shows that they fulfill this expectation.

\begin{Theorem}\label{Thm : meadows : zero-totalized fields}
An algebra is a meadow if and only if it is isomorphic to a subdirect product of zero-totalized fields.
\end{Theorem}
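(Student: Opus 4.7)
The plan is to split the equivalence into the two directions and rely on the representation theorem for reduced commutative rings together with the implicit operation of \cref{Thm : RCRing : extendable implicit operation}.

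For the easy direction (subdirect products of zero-totalized fields are meadows), I would first check that any zero-totalized field satisfies the two defining equations of a meadow. Since $0^{\ast} = 0$ and $a^{\ast} = a^{-1}$ for $a \neq 0$, both $(a \cdot a^{\ast}) \cdot a \thickapprox a$ and $a \thickapprox a^{\ast\ast}$ reduce to an immediate calculation. Because meadows are axiomatized by equations, the variety they form is closed under $\SSS$ and $\PPP$; hence any subdirect product of zero-totalized fields is a meadow, and isomorphic copies of such are too.

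For the converse, let $\A$ be a meadow. The first step is to observe that the ring reduct $\A^-$ is reduced: if $a\cdot a = 0$, then $a = (a\cdot a^{\ast})\cdot a = a^{\ast}\cdot(a\cdot a) = 0$, so $\A^- \in \mathsf{RCRing}$. By \cref{Thm : reduced rings are Q fields} we have $\mathsf{RCRing} = \III\SSS\PPP(\mathsf{Field})$, so I would invoke the \cref{Thm : Subdirect Decomposition} (or directly \cref{Thm : reduced rings are Q fields}) to obtain a subdirect embedding $h \colon \A^- \to \prod_{i\in I} F_i$ with each $F_i$ a field. Let $F_i^{\ast}$ denote the zero-totalization of $F_i$, which is a meadow by the first direction, and let $\prod_{i\in I}F_i^{\ast}$ be their product in the variety of meadows. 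The goal is to promote $h$ to a subdirect embedding $\A \to \prod_{i\in I}F_i^{\ast}$.

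The crux, and the only nontrivial step, is showing that $h$ preserves the unary operation $(\,)^{\ast}$. Here I would use that the pp formula $\varphi(x,y) = (x^{2}y \thickapprox x) \sqcap (xy^{2}\thickapprox y)$ from \cref{Thm : RCRing : extendable implicit operation} defines an implicit operation $f$ of $\mathsf{RCRing}$ with $f^{F}$ total and equal to $\mathsf{wi}$ on each field $F$. A direct calculation in any meadow shows $\A^- \vDash \varphi(a, a^{\ast})$ for every $a \in A$: indeed $a^{2}a^{\ast} = a(aa^{\ast}) = a$ using the first axiom, and $a(a^{\ast})^{2} = (a^{\ast}a^{\ast\ast})a^{\ast} = a^{\ast}$ by applying the first axiom to $a^{\ast}$ and using $a^{\ast\ast} = a$. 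Since $\varphi$ is functional in $\mathsf{RCRing}$, this forces $f^{\A^-}(a) = a^{\ast}$ for every $a \in A$, so the interpretation of $(\,)^{\ast}$ in $\A$ is the restriction of the implicit operation $f$ to $\A^-$. Because $f$ is an implicit operation of $\mathsf{RCRing}$, it is preserved by every homomorphism of reduced commutative rings; in particular $p_{i}\circ h$ sends $a^{\ast} = f^{\A^-}(a)$ to $f^{F_{i}}(p_{i}(h(a))) = \mathsf{wi}(p_{i}(h(a))) = p_{i}(h(a))^{\ast}$ in $F_{i}^{\ast}$. Thus $h$ preserves $(\,)^{\ast}$ and is a meadow embedding. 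It remains subdirect because the underlying sets and projection maps are unchanged, and each $p_{i}\circ h$ was already surjective as a ring homomorphism. The main obstacle is precisely this lifting of a ring-level subdirect embedding to a meadow-level one, and it is overcome by exploiting the implicit definability of $(\,)^{\ast}$ from the ring structure via $\varphi$.
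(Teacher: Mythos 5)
The paper does not actually prove this statement --- it cites it from the reference for meadows (Sec.~3.2 of the Bergstra--Hirshfeld--Tucker paper) --- so your argument has to stand on its own. Its core idea is sound and nicely exploits the paper's machinery: the ring reduct of a meadow is reduced, the meadow axioms force $\A^- \vDash \varphi(a,a^{*})$ for the pp formula $\varphi$ of \cref{Thm : RCRing : implicit operation}, hence $(\,)^{*}$ is the restriction of the implicit operation $f$ and is automatically preserved by every ring homomorphism into a field, where $f$ is the zero-totalized inverse. The easy direction is also fine.

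There is, however, a genuine gap where you claim a \emph{subdirect} embedding $h \colon \A^- \to \prod_{i\in I}F_i$ with the $F_i$ fields. Neither result you invoke delivers this: \cref{Thm : reduced rings are Q fields} gives $\mathsf{RCRing} = \III\SSS\PPP(\mathsf{Field})$, i.e., an embedding into a product of fields with no surjectivity of the projections, while the \cref{Thm : Subdirect Decomposition} gives a subdirect embedding into members of $\mathsf{RCRing}_{\textsc{rsi}}$, which need not be fields. Indeed, ``every reduced commutative ring is a subdirect product of fields'' is false: $\mathbb{Z}_{(p)}$ is an integral domain whose only ideal with a field quotient is the nonzero maximal ideal $(p)$, so it admits no subdirect representation with field factors. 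Hence your sentence ``each $p_i \circ h$ was already surjective as a ring homomorphism'' is unjustified; the subdirectness must come from the meadow structure, not from reducedness alone. The repair is short and uses only what you already have: after promoting $h$ to a meadow embedding $\A \to \prod_{i\in I}F_i^{*}$, corestrict each $p_i \circ h$ to its image; that image is a subalgebra of $F_i^{*}$ in the meadow signature, hence closed under $(\,)^{*}$, hence a subfield of $F_i$ equipped with the zero-totalized inverse, i.e., again a zero-totalized field, and the corestricted map is subdirect. (Equivalently, one can show from the axiom $(x\cdot x^{*})\cdot x \thickapprox x$ that $\A^-$ is von Neumann regular, so every quotient by a prime ideal is a field and the nilradical is zero, which yields the subdirect decomposition at the ring level directly.)
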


We will prove the following.

\begin{Theorem} \label{Thm : Meadow pp exp of RCRing}
The variety of meadows is a pp expansion of the quasivariety of reduced commutative rings.
\end{Theorem}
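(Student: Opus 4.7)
The plan is to mirror the template used to prove \cref{Thm : pp expansion : CCM} and \cref{Thm : pp expansions : DL}. I will start by invoking \cref{Thm : RCRing : extendable implicit operation} to obtain a unary $f \in \mathsf{ext}_{eq}(\mathsf{RCRing})$ defined by the conjunction of equations $\varphi_f = (x^2 y \thickapprox x) \sqcap (x y^2 \thickapprox y)$, whose interpretation $f^\A$ on any field $\A$ is total and coincides with the weak inverse operation $\mathsf{wi}$ of~\eqref{eq: to}. Introducing a unary function symbol $(\,)^*$ to play the role of $g_f$, I will form the $f$-expansion $\mathscr{L}_f = \mathscr{L}_\mathsf{RCRing} \cup \{(\,)^*\}$.

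Since $\mathsf{RCRing}$ is a quasivariety (in particular a universal class) and $f \in \mathsf{ext}_{eq}(\mathsf{RCRing})$, \cref{Thm : axiomatization of pp expansion (almost always)} applies and yields that $\mathsf{RCRing}[\mathscr{L}_f] = \SSS(\mathsf{RCRing}[\mathscr{L}_f])$ is a pp expansion of $\mathsf{RCRing}$, axiomatized by the axioms for reduced commutative rings together with the equations $x^2 x^* \thickapprox x$ and $x (x^*)^2 \thickapprox x^*$. The content of the theorem therefore reduces to showing that this pp expansion coincides with the variety of meadows.

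To carry out the identification, I will apply \cref{Thm : pp expansion : description in terms of Q and U} with $\mathsf{K} = \mathsf{RCRing}$, $\mathsf{N}$ the class $\mathsf{Field}$ of all fields, and $\mathbb{O} = \QQQ$. The required hypotheses hold: $f^\A$ is total for every $\A \in \mathsf{Field}$ by construction, and $\mathsf{RCRing} = \QQQ(\mathsf{Field})$ by \cref{Thm : reduced rings are Q fields}. This gives
\[
\QQQ(\mathsf{Field}[\mathscr{L}_f]) = \SSS(\mathsf{RCRing}[\mathscr{L}_f]) = \mathsf{RCRing}[\mathscr{L}_f].
\]
Observing that $\mathsf{Field}[\mathscr{L}_f]$ is exactly the class of zero-totalized fields (by the definition of $f$ on fields), the left hand side above equals $\QQQ(\{ \text{zero-totalized fields}\})$. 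The next step is to note that the class of zero-totalized fields is elementary, being axiomatized by the field axioms together with the formula $(x \not\thickapprox 0 \to x\cdot x^* \thickapprox 1) \sqcap (x \thickapprox 0 \to x^* \thickapprox 0)$, and is therefore closed under $\PPU$. Combining this with \cref{Thm : quasivariety generation}, we obtain $\QQQ(\mathsf{Field}[\mathscr{L}_f]) = \III\SSS\PPP\PPU(\mathsf{Field}[\mathscr{L}_f]) = \III\SSS\PPP(\mathsf{Field}[\mathscr{L}_f])$. Finally, \cref{Thm : meadows : zero-totalized fields} identifies this last class with the variety of meadows, completing the proof.

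The argument is essentially routine bookkeeping once the apparatus of \cref{Sec: adding implicit operations} is in place; the only delicate point is the verification that ultraproducts of zero-totalized fields remain zero-totalized fields. This in turn amounts to checking that the coordinatewise $(\,)^*$ in the ultraproduct agrees with the weak inverse operation singled out by $\varphi_f$, which follows immediately from the elementary axiomatization noted above (or directly from \LL o\'s' \cref{Thm : Los}). A naive attempt to derive the meadow axiom $x^{**} \thickapprox x$ syntactically from the axioms of $\mathsf{RCRing}[\mathscr{L}_f]$ seems delicate, which is why the semantic route through zero-totalized fields and \cref{Thm : meadows : zero-totalized fields} is the preferred one.
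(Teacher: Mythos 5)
Your proposal is correct and follows essentially the same route as the paper: the same implicit operation $f$ from \cref{Thm : RCRing : extendable implicit operation}, the same $f$-expansion by $(\,)^*$, the identification $\mathsf{Field}[\mathscr{L}_f] = \mathsf{Field}^*$, an appeal to \cref{Thm : pp expansion : description in terms of Q and U} via $\mathsf{RCRing} = \QQQ(\mathsf{Field})$, and the identification of the resulting quasivariety with meadows via \cref{Thm : meadows : zero-totalized fields}. Your extra remarks (the explicit axiomatization from \cref{Thm : axiomatization of pp expansion (almost always)} and the elementarity of zero-totalized fields) are harmless additions not needed in the paper's version.
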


\begin{proof}
Let $f$ be the unary implicit operation of the quasivariety $\mathsf{RCRing}$ of reduced commutative rings given by Theorem \ref{Thm : RCRing : extendable implicit operation} and recall from the same theorem that $f \in \mathsf{ext}_{\textsc{pp}}(\mathsf{RCRing})$. 
Moreover, let 
 $(\,)^*$
be a unary function symbol.
Then the language $\mathscr{L}_{f} = \mathscr{L}_\mathsf{RCRing} \cup \{ 
(\,)^*
\}$ is an $f$-expansion of $\mathscr{L}_\mathsf{RCRing}$, in which the role of $g_f$ is played by $(\,)^*$.

From Theorem \ref{Thm : RCRing : extendable implicit operation} it follows that for each field $\A$ the function $f^\A$ is total and the algebra $\A[\mathscr{L}_{f}]$ coincides with the zero-totalized field obtained by adding the operation of ``taking weak inverses'' to $\A$. Therefore, letting  $\mathsf{Field}$ and $\mathsf{Field}^*$ be the classes of fields and of zero-totalized fields, respectively, we obtain $\mathsf{Field}[\mathscr{L}_{f}] = \mathsf{Field}^*$. In view of Theorems \ref{Thm : reduced rings are Q fields} and \ref{Thm : meadows : zero-totalized fields}, we also have
\[
\mathsf{RCRing} = \QQQ(\mathsf{Field}) \, \, \text{ and } \, \, \mathsf{Meadow} = \QQQ(\mathsf{Field}^*),
\]
where $\mathsf{Meadow}$ is the variety of meadows. 
Lastly, recall that $f \in \mathsf{ext}_{\textsc{pp}}(\mathsf{RCRing})$ and that $f^\A$ is total for each $\A \in \mathsf{Field}$. Together with the left hand side of the above display, this allows us to apply \cref{Thm : pp expansion : description in terms of Q and U}, obtaining that $\QQQ(\mathsf{Field}[\mathscr{L}_{f}])$ is a pp expansion of $\mathsf{RCRing}$. 
By applying in succession the equality $\mathsf{Field}[\mathscr{L}_{f}] = \mathsf{Field}^*$ and the right hand side of the above display, 
we obtain
\[
\QQQ(\mathsf{Field}[\mathscr{L}_{f}]) = \QQQ(\mathsf{Field}^*) = \mathsf{Meadow}.
\]
Hence, we conclude that $\mathsf{Meadow}$ is a pp expansion of $\mathsf{RCRing}$.
\end{proof}

\end{exa}

\begin{exa}[\textsf{Hilbert algebras}]
An \emph{implicative semilattice} is an algebra $\langle A; \land, \to \rangle$ which comprises a semilattice $\langle A; \land \rangle$ and a binary operation $\to$ such that for all $a, b, c \in A$,
\[
a \land b \leq c \iff a \leq b \to c,
\]
where $\leq$ is the meet order of $\langle A; \land \rangle$. The class of implicative semilattices forms a variety (see, e.g., \cite[pp.~105--106]{Koh81}) which coincides with the class of $\langle \land, \to \rangle$-subreducts of Heyting algebras (see \cite[Thms.~5.1 \& 9.1]{Hor62}). We will prove the following.

\begin{Theorem}\label{Thm : pp expansion : Hilbert algebras}
The variety of implicative semilattices is a pp expansion of the variety of Hilbert algebras.
\end{Theorem}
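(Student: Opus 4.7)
The plan is to invoke the extendable implicit operation $f \in \mathsf{ext}_{eq}(\mathsf{Hilbert})$ supplied by \cref{Thm : Hilbert algebras : extendable}, whose defining conjunction of equations $\varphi(x_1, x_2, y)$ picks out the meet on any implication reduct of a Heyting algebra.\ Let $\mathscr{L}_f = \mathscr{L}_{\mathsf{Hilbert}} \cup \{ g_f \}$ be the corresponding $f$-expansion, with $g_f$ a binary symbol that will play the role of $\land$.\ Since $\mathsf{Hilbert}$ is a variety (hence a universal class) and $\{ f \} \subseteq \mathsf{ext}_{eq}(\mathsf{Hilbert})$, \cref{Thm : axiomatization of pp expansion (almost always)} ensures that $\mathsf{Hilbert}[\mathscr{L}_f]$ is a pp expansion of $\mathsf{Hilbert}$, axiomatized by the Hilbert axioms together with $\varphi(x_1, x_2, g_f(x_1, x_2))$.\ It then remains to show that, after identifying $g_f$ with $\land$, the class $\mathsf{Hilbert}[\mathscr{L}_f]$ coincides with the variety of implicative semilattices.

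For the inclusion from implicative semilattices to $\mathsf{Hilbert}[\mathscr{L}_f]$, let $\A = \langle A; \land, \to \rangle$ be an implicative semilattice with Hilbert reduct $\A_\to = \langle A; \to \rangle$.\ Since the implicative semilattices are precisely the $\langle \land, \to \rangle$-subreducts of Heyting algebras, there is a Heyting algebra $\B$ with $\A \leq \B\res_{\{\land,\to\}}$, and in particular $\A_\to \leq \B_\to$.\ As $\varphi$ is quantifier-free, for all $a, b, c \in A$ we have $\A_\to \vDash \varphi(a, b, c)$ iff $\B_\to \vDash \varphi(a, b, c)$; the computation inside the proof of \cref{Thm : Hilbert algebras : extendable} identifies the unique such $c$ in $\B_\to$ as $a \land^\B b$, which equals $a \land^\A b \in A$.\ Hence $f^{\A_\to}$ is total and equals $\land^\A$, so $\A_\to[\mathscr{L}_f]$ is defined, lies in $\mathsf{Hilbert}[\mathscr{L}_f]$, and coincides with $\A$ once $g_f$ is identified with $\land$.

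For the converse inclusion, consider $\C \in \mathsf{Hilbert}[\mathscr{L}_f]$, so $\C = \A[\mathscr{L}_f]$ for some Hilbert algebra $\A$ with $f^\A$ total.\ As $\A$ is a $\to$-subreduct of some Heyting algebra $\B$, \cref{Prop : implicit operations extend} implies that $f^{\B_\to}$ extends $f^\A$; since $f^{\B_\to} = \land^\B$, this forces $a \land^\B b = f^\A(a, b) \in A$ for all $a, b \in A$.\ Thus $A$ is closed under $\land^\B$ and $\to^\B$, and these operations agree with $g_f^\C$ and $\to^\A$, respectively.\ Therefore $\C$ is a $\langle \land, \to \rangle$-subreduct of $\B$, and hence an implicative semilattice.

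The main obstacle is the converse direction: from a purely abstract Hilbert algebra $\A$ on which $f^\A$ happens to be total, one must verify that the resulting binary operation truly is a semilattice meet compatible with $\to^\A$.\ This cannot be read off from the Hilbert axioms alone; the argument proceeds by embedding $\A$ into a Heyting algebra and transporting the meet from there through \cref{Prop : implicit operations extend} and the compatibility result established inside the proof of \cref{Thm : Hilbert algebras : extendable}.
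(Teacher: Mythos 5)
Your proof is correct, but it takes a genuinely different route from the paper's. The paper never invokes \cref{Thm : axiomatization of pp expansion (almost always)} here: it works with the generating class $\mathsf{Heyting}_\to$ of $\langle\to\rangle$-reducts of Heyting algebras, observes that $\mathsf{Heyting}_\to[\mathscr{L}_f]=\mathsf{Heyting}_{\land,\to}$, and then applies \cref{Thm : pp expansion : description in terms of Q and U} to the presentations $\mathsf{Hilbert}=\QQQ(\mathsf{Heyting}_\to)$ and $\mathsf{ISL}=\QQQ(\mathsf{Heyting}_{\land,\to})$, so the identification of the pp expansion with $\mathsf{ISL}$ is a one-line class-operator computation. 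You instead exploit the stronger fact that $f\in\mathsf{ext}_{eq}(\mathsf{Hilbert})$ (the paper's proof only uses $f\in\mathsf{ext}_{pp}$), which legitimately licenses \cref{Thm : axiomatization of pp expansion (almost always)} and collapses the pp expansion to $\mathsf{Hilbert}[\mathscr{L}_f]$ outright; you then verify the two inclusions by hand, the nontrivial one being that a Hilbert algebra on which $f$ is total really carries a compatible semilattice meet, which you obtain by embedding into a Heyting algebra and transporting the meet via \cref{Prop : implicit operations extend}. Each step checks out: the quantifier-free transfer of $\varphi$ between $\A_\to$ and $\B_\to$, the identification of the unique witness with $a\land^\B b$ via \cref{Claim : Hilbert algebras : functional}, and the closure of $A$ under $\land^\B$ in the converse direction are all sound. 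Your argument is more concrete and self-contained at the level of individual algebras; the paper's is shorter and matches the template it reuses for meadows and pseudocomplemented distributive lattices, where the generating-class trick is actually needed.
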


\begin{proof}
First, let
 \begin{align*}
\mathsf{Heyting}_\to &= \text{the class of $\langle \to \rangle$-reducts of Heyting algebras};\\
\mathsf{Heyting}_{\land,\to} &= \text{the class of $\langle \land, \to \rangle$-reducts of Heyting algebras}.
\end{align*}
As the variety $\mathsf{Hilbert}$ of Hilbert algebras is the class of $\langle \to \rangle$-subreducts of Heyting algebras and the variety $\mathsf{ISL}$ of implicative semilattices is the class of $\langle \land, \to \rangle$-subreducts of Heyting algebras, we have
\begin{equation}\label{Eq : Hilbert and ISL : generation}
\mathsf{Hilbert} = \QQQ(\mathsf{Heyting}_\to) \, \, \text{ and } \, \, \mathsf{ISL} = \QQQ(\mathsf{Heyting}_{\land, \to}).
\end{equation}

Now, let $f \in \mathsf{ext}_{\textsc{pp}}(\mathsf{Hilbert})$ be the binary implicit operation given by Theorem \ref{Thm : Hilbert algebras : extendable}. 
Moreover, let 
$\wedge$
be a binary function symbol.
Then the language $\mathscr{L}_{f} = \mathscr{L}_\mathsf{Hilbert} \cup \{ 
\wedge
\}$ is an $f$-expansion of $\mathscr{L}_\mathsf{Hilbert}$, in which the role of $g_f$ is played by $\wedge$.

Recall from Theorem \ref{Thm : Hilbert algebras : extendable} that for every Heyting algebra $\A$ with implication reduct $\A_\to$ the operation $f^{\A_\to}$ is total and coincides with the meet operation of $\A$. Therefore, the algebra $\A_\to[\mathscr{L}_{f}]$ is defined and coincides with the $\langle \land, \to \rangle$-reduct of $\A$. Consequently,
\[
\mathsf{Heyting}_\to[\mathscr{L}_{f}] = \mathsf{Heyting}_{\land,\to}.
\]
Lastly, recall that $f \in \mathsf{ext}_{\textsc{pp}}(\mathsf{Hilbert})$ and that $f^\A$ is total for each $\A \in \mathsf{Heyting}_\to$. Together with the left hand side of (\ref{Eq : Hilbert and ISL : generation}), this allows us to apply \cref{Thm : pp expansion : description in terms of Q and U}, obtaining that $\QQQ(\mathsf{Heyting}_\to[\mathscr{L}_{f}])$ is a pp expansion of $\mathsf{Hilbert}$. By applying in succession the above display and the right hand side of (\ref{Eq : Hilbert and ISL : generation}), we obtain
\[
\QQQ(\mathsf{Heyting}_\to[\mathscr{L}_{f}]) = \QQQ(\mathsf{Heyting}_{\land,\to}) = \mathsf{ISL}.
\]
Hence, we conclude that $\mathsf{ISL}$ is a pp expansion of $\mathsf{Hilbert}$.
\end{proof}

\end{exa}

\begin{exa}[\textsf{Pseudocomplemented distributive lattices}]\label{Exa : depth 2 example : PDL}

A Heyting algebra $\A$ is said to have \emph{depth} $\leq 2$ when the chains in the poset of prime filters of $\A$ have size at most two. The class of all Heyting algebras of depth $\leq 2$ forms a variety  (see, e.g., \cite[Thm.~4.1]{BMRES} and the references therein). We will prove the following.

\begin{Theorem}\label{thm: HA2 pp expansion PDL}
The variety of Heyting algebras of depth $\leq 2$ is a pp expansion of the variety of pseudocomplemented distributive lattices.
\end{Theorem}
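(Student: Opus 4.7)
The plan is to adapt the strategy of the Hilbert-algebra case (\cref{Thm : pp expansion : Hilbert algebras}) using the extendable implicit operation $f$ supplied by \cref{Thm : PDL : extendable implicit operation}, which on every finitely generated SI member of $\mathsf{PDL}$ agrees with the (uniquely defined) Heyting implication. Concretely, I would introduce a fresh binary function symbol $\to$ and take the $f$-expansion $\mathscr{L}_f = \mathscr{L}_\mathsf{PDL} \cup \{\to\}$ in which $\to$ plays the role of $g_f$. Since $f \in \mathsf{ext}_{eq}(\mathsf{PDL}) \subseteq \extpp(\mathsf{PDL})$, this is an eligible $\mathcal{F}$-expansion with $\mathcal{F} = \{f\}$.

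Next, because $\mathsf{PDL}$ is a locally finite variety, \cref{Prop : quasivariety = Q fin gen SI} gives $\mathsf{PDL} = \QQQ(\mathsf{N})$, where $\mathsf{N} = \mathsf{PDL}_\si^\fg$, and \cref{Thm : PDL : extendable implicit operation} guarantees that $f^\A$ is total for each $\A \in \mathsf{N}$. I would then apply \cref{Thm : pp expansion : description in terms of Q and U} with $\mathbb{O} = \QQQ$, obtaining that $\QQQ(\mathsf{N}[\mathscr{L}_f])$ is a pp expansion of $\mathsf{PDL}$ (coinciding with $\SSS(\mathsf{PDL}[\mathscr{L}_f])$).

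To finish, I would show that $\QQQ(\mathsf{N}[\mathscr{L}_f]) = \mathsf{HA}_2$, where $\mathsf{HA}_2$ denotes the variety of Heyting algebras of depth $\leq 2$. For the inclusion from left to right, \cref{Thm : PDL : extendable implicit operation} says that for each $\A \in \mathsf{N}$, $\A[\mathscr{L}_f]$ is the unique Heyting expansion of $\A$; by \cite[Thm.~2]{LakPDL} the lattice reduct of $\A$ is a finite Boolean lattice with a new top element adjoined, so $\A[\mathscr{L}_f]$ is a finite SI Heyting algebra of depth $\leq 2$, and hence $\mathsf{N}[\mathscr{L}_f] \subseteq (\mathsf{HA}_2)_\si^\fg$. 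For the reverse direction, I would invoke the Esakia-dual description of the finitely generated SI members of $\mathsf{HA}_2$: each such algebra is finite, its dual is a poset with a unique minimum and chains of length $\leq 2$, and hence its lattice reduct again has the form ``finite Boolean lattice with a new top''. Such an algebra is therefore uniquely of the form $\A[\mathscr{L}_f]$ for some $\A \in \mathsf{N}$, because on a finite distributive lattice the Heyting implication is uniquely determined by the order. Thus $\mathsf{N}[\mathscr{L}_f] = (\mathsf{HA}_2)_\si^\fg$ up to isomorphism, and applying \cref{Prop : quasivariety = Q fin gen SI} to the (locally finite) variety $\mathsf{HA}_2$ gives $\mathsf{HA}_2 = \QQQ((\mathsf{HA}_2)_\si^\fg) = \QQQ(\mathsf{N}[\mathscr{L}_f])$.

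The main obstacle will be the identification $\mathsf{N}[\mathscr{L}_f] = (\mathsf{HA}_2)_\si^\fg$: the direction $\subseteq$ is immediate from \cref{Thm : PDL : extendable implicit operation} together with \cite[Thm.~2]{LakPDL}, but the reverse direction relies on the structural description of the finitely generated SI algebras of $\mathsf{HA}_2$, and in particular on the fact that depth $\leq 2$ plus subdirect irreducibility forces the dual poset to be a ``fork'' whose algebra of up-sets is precisely a finite Boolean lattice with a new top element adjoined.
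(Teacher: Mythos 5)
Your proposal is correct and follows essentially the same route as the paper: both use the operation $f$ from \cref{Thm : PDL : extendable implicit operation}, the generation results $\mathsf{PDL} = \QQQ(\mathsf{PDL}_\si^\fg)$ and $\mathsf{Heyting}_2 = \QQQ((\mathsf{Heyting}_2)_\si^\fg)$ from \cref{Prop : quasivariety = Q fin gen SI}, the key identification $\mathsf{PDL}_\si^\fg[\mathscr{L}_f] = (\mathsf{Heyting}_2)_\si^\fg$, and \cref{Thm : pp expansion : description in terms of Q and U}. The only cosmetic difference is that you justify the structural description of $(\mathsf{Heyting}_2)_\si^\fg$ via Esakia duality, whereas the paper cites it directly (together with the local finiteness of $\mathsf{Heyting}_2$, which your argument also implicitly needs to ensure finitely generated members are finite).
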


\begin{proof}
Let $\mathsf{PDL}$ and $\mathsf{Heyting}_2$ be the varieties of pseudocomplemented distributive lattices and of Heyting algebras of depth $\leq 2$, respectively. 
We recall that the members of $(\mathsf{Heyting}_{2})_\si^\fg$ are precisely the Heyting algebras whose lattice reduct is a finite Boolean lattice adjoined with a new top element(see \cite[Thm.\ 2]{LakPDL}).
It follows from the characterization of subdirectly irreducible Heyting algebras (see, e.g., \cite[Thm.~IX.4.5]{BD74}) and the local finiteness of $\mathsf{Heyting}_{2}$ (see \cite{Kom75,Kuz74})
that $\mathsf{PDL}_\si^\fg$ is the class of $\langle \land, \lor, \lnot, 0, 1 \rangle$-reducts of $(\mathsf{Heyting}_{2})_\si^\fg$. Lastly, from Theorem \ref{Prop : quasivariety = Q fin gen SI} it follows that
\begin{equation}\label{Eq : PDL and Heyting2 : generation}
\mathsf{PDL} = \QQQ(\mathsf{PDL}_\si^\fg) \, \, \text{ and } \, \, \mathsf{Heyting}_{2} = \QQQ((\mathsf{Heyting}_{2})_\si^\fg).
\end{equation}

Now, let $f \in \mathsf{ext}_{\textsc{pp}}(\mathsf{PDL})$ be the binary implicit operation of $\mathsf{PDL}$ given by \cref{Thm : PDL : extendable implicit operation}.
Moreover, let 
$\to$
be a binary function symbol.
Then the language $\mathscr{L}_{f} = \mathscr{L}_\mathsf{PDL} \cup \{ 
\to
\}$ is an $f$-expansion of $\mathscr{L}_\mathsf{PDL}$, in which the role of $g_f$ is played by 
$\to$.

Recall from Theorem \ref{Thm : PDL : extendable implicit operation} that for each $\A \in \mathsf{PDL}_\si^\fg$ the function $f^\A$ is total and coincides with the implication $\to$ of the unique Heyting algebra with the same lattice reduct as $\A$. As $\mathsf{PDL}_\si^\fg$ is the class of $\langle \land, \lor, \lnot, 0, 1 \rangle$-reducts of $(\mathsf{Heyting}_{2})_\si^\fg$, this yields
\[
\mathsf{PDL}_\si^\fg[\mathscr{L}_{f}] = (\mathsf{Heyting}_{2})_\si^\fg.
\]
Finally, recall that $f \in \mathsf{ext}_{\textsc{pp}}(\mathsf{PDL})$ and that $f^\A$ is total for each $\A \in \mathsf{PDL}_\si^\fg$. Together with the left hand side of (\ref{Eq : PDL and Heyting2 : generation}), this allows us to apply \cref{Thm : pp expansion : description in terms of Q and U}, obtaining that $\QQQ(\mathsf{PDL}_\si^\fg[\mathscr{L}_{f}])$ is a pp expansion of $\mathsf{PDL}$. By applying in succession the above display and the right hand side of (\ref{Eq : PDL and Heyting2 : generation}), we obtain
\[
\QQQ(\mathsf{PDL}_\si^\fg[\mathscr{L}_{f}]) = \QQQ((\mathsf{Heyting}_{2})_\si^\fg) = \mathsf{Heyting}_2.
\]
Hence, we conclude that $\mathsf{Heyting}_2$ is a pp expansion of $\mathsf{PDL}$.
\end{proof}
\end{exa}

The rest of this section is devoted to proving Theorems \ref{Thm : pp expansion : still quasivariety}, \ref{Thm : axiomatization of pp expansion (almost always)}, \ref{Thm : pp expansion : description in terms of Q and U}, \ref{Thm : pp expansion of pp expansions}, and \ref{Thm : pp expansions : LG expands LF}.
 We postpone the proof of \cref{Thm : pp expansion : still quasivariety} and begin by proving \cref{Thm : axiomatization of pp expansion (almost always)}.

\begin{proof}
\cref{Prop : how to axiomatize K[F]} yields that 
\begin{align*}
\mathsf{K}[\mathscr{L}_\mathcal{F}] = \{ \B : &\text{ }\B \text{ is an 
$\mathscr{L}_\mathcal{F}$-algebra}
\text{ such that }\B{\upharpoonright}_{\mathscr{L}_\mathsf{K}} \in \mathsf{K} \text{ and}\\
&\text{ $\B \vDash \varphi_f(x_1, \dots, x_n, g_f(x_1, \dots, x_n))$ for each $n$-ary $f \in \mathcal{F}$} \}.
\end{align*}
Since $\Sigma$ axiomatizes $\K$, for every $\mathscr{L}_\mathcal{F}$-algebra $\B$ we have that $\B{\upharpoonright}_{\mathscr{L}_\mathsf{K}} \in \K$ if and only if $\B \vDash \Sigma$. Therefore, from the above display it follows that the set of formulas
\[
\Sigma \cup \{ \varphi_f(x_1, \dots, x_n, g_f(x_1, \dots, x_n)) : f\text{ is an $n$-ary member of } \mathcal{F}  \}
\]
axiomatizes $\mathsf{K}[\mathscr{L}_\mathcal{F}]$.
To show that $\M=\mathsf{K}[\mathscr{L}_\mathcal{F}]$ it is sufficient to prove that $\mathsf{K}[\mathscr{L}_\mathcal{F}]$ is closed under $\SSS$. As $\K$ is a universal class, \cref{Thm : classes generation}\eqref{item : universal class generation} allows us to assume that $\Sigma$ consists of universal formulas. Together with the fact that each $\varphi_f(x_1, \dots, x_n, g_f(x_1, \dots, x_n))$ is a conjunction of equations, this implies that $\mathsf{K}[\mathscr{L}_\mathcal{F}]$ can be axiomatized by a set of universal formulas. Therefore, $\mathsf{K}[\mathscr{L}_\mathcal{F}]$ is a universal class by \cref{Thm : classes generation}\eqref{item : universal class generation}, and hence it is closed under $\SSS$.
\end{proof}

We are now ready to prove \cref{Thm : pp expansion : still quasivariety}.

\begin{proof}
We first prove that if $\K$ is a universal class or quasivariety, then so is $\M$. It
suffices to show that for each class operator $\mathbb{O} \in \{ \UUU, \QQQ \}$ if $\mathsf{K}$ is closed under $\mathbb{O}$, then $\SSS(\mathsf{K}[\mathscr{L}_\mathcal{F}])$ is closed under $\mathbb{O}$ as well. 

To this end, let $\mathbb{O} \in \{ \UUU, \QQQ \}$ and assume that $\mathsf{K}$ is closed under $\mathbb{O}$. Furthermore, let 
\[
\mathsf{N} = \{ \A \in \mathsf{K} : f^\A \text{ is total for each }f \in \mathcal{F} \}.
\]
We begin with the next observation.

\begin{Claim}\label{Claim : pp expansions : N generates K}
We have $\mathsf{K} = \mathbb{O}(\mathsf{N})$.
\end{Claim}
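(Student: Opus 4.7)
The plan is to prove the two inclusions $\mathbb{O}(\mathsf{N}) \subseteq \mathsf{K}$ and $\mathsf{K} \subseteq \mathbb{O}(\mathsf{N})$ separately. The first is essentially immediate: since $\mathsf{N} \subseteq \mathsf{K}$ by the very definition of $\mathsf{N}$, and $\mathsf{K}$ is assumed to be closed under $\mathbb{O}$, we get $\mathbb{O}(\mathsf{N}) \subseteq \mathbb{O}(\mathsf{K}) = \mathsf{K}$. No work is needed here.

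For the reverse inclusion I would argue as follows. Fix $\A \in \mathsf{K}$. Note that in either case $\mathbb{O} \in \{ \UUU, \QQQ \}$ the class $\mathsf{K}$ is a universal class (every quasivariety is one), so $\mathsf{K}$ is elementary and closed under $\SSS$ and $\III$; in particular $\mathsf{K}$ is a universal class in the sense required by \cref{Thm : extendable 1}. Since $\mathcal{F} \subseteq \mathsf{ext}_{pp}(\mathsf{K}) \subseteq \mathsf{ext}(\mathsf{K})$, applying \cref{Thm : extendable 1} to $\A$ yields some $\B \in \mathsf{K}$ with $\A \leq \B$ and $f^\B$ total for every $f \in \mathsf{ext}(\mathsf{K})$, in particular for every $f \in \mathcal{F}$. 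By the definition of $\mathsf{N}$ we conclude $\B \in \mathsf{N}$.

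The final step is to transfer this extension into membership in $\mathbb{O}(\mathsf{N})$. From $\A \leq \B \in \mathsf{N}$ we get $\A \in \SSS(\mathsf{N})$. Since both $\UUU$ and $\QQQ$ dominate $\SSS$ (explicitly, $\UUU = \III\SSS\PPU$ and $\QQQ = \III\SSS\PPP\PPU$ by \cref{Thm : quasivariety generation}), we obtain $\A \in \mathbb{O}(\mathsf{N})$ in both cases, finishing the proof.

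I do not foresee any substantive obstacle: the whole argument hinges on a single application of \cref{Thm : extendable 1} to produce an extension of $\A$ inside $\mathsf{K}$ on which every operation in $\mathcal{F}$ becomes total. The only minor point to verify is that the hypothesis of \cref{Thm : extendable 1} (that $\mathsf{K}$ be a universal class) is available in both of the relevant cases, which it is because $\mathsf{K}$ is closed under $\mathbb{O}$ and $\UUU$-closure is weaker than $\QQQ$-closure.
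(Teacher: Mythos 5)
Your proposal is correct and follows essentially the same route as the paper: both inclusions are handled the same way, with the key step being a single application of \cref{Thm : extendable 1} (justified by noting that closure under $\mathbb{O} \in \{\UUU,\QQQ\}$ makes $\mathsf{K}$ a universal class) to produce an extension $\B \in \mathsf{N}$ of a given $\A \in \mathsf{K}$, followed by $\SSS(\mathsf{N}) \subseteq \mathbb{O}(\mathsf{N})$. No gaps.
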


\begin{proof}[Proof of the Claim]
As $\mathbb{O} \in \{ \UUU, \QQQ \}$ and  $\mathsf{K}$ is closed under $\mathbb{O}$, we known that $\mathsf{K}$ is a universal class. Therefore, we can apply Theorem \ref{Thm : extendable 1}, obtaining that for each $\A \in \mathsf{K}$ there exists $\B \in \mathsf{K}$ with $\A \leq \B$ such that $f^\B$ is total for each $f \in \mathsf{ext}(\mathsf{K})$. Since $\mathcal{F} \subseteq \mathsf{ext}_{\textsc{pp}}(\mathsf{K}) \subseteq \mathsf{ext}(\mathsf{K})$ by assumption, for each $\A \in \mathsf{K}$ there exists $\B \in \mathsf{K}$ with $\A \leq \B$ such that $f^\B$ is total for each $f \in \mathcal{F}$. Together with the definition of $\mathsf{N}$, this yields $\mathsf{K} \subseteq \SSS(\mathsf{N})$. As $\mathbb{O} \in \{ \UUU, \QQQ \}$, we obtain $\mathsf{K} \subseteq \mathbb{O}(\mathsf{N})$.  On the other hand, from $\mathsf{N} \subseteq \mathsf{K}$ and the assumption that $\mathsf{K}$ is closed under $\mathbb{O}$ it follows that $\mathbb{O}(\mathsf{N}) \subseteq \mathsf{K}$, whence $\mathsf{K} = \mathbb{O}(\mathsf{N})$.
\end{proof}

Since $\mathsf{K}$ is closed under $\mathbb{O}$ by assumption, the definition of $\mathsf{N}$ allows us to apply Proposition \ref{Prop : pp expansions : generation}, obtaining $\mathbb{O}(\mathsf{N}[\mathscr{L}_\mathcal{F}]) = \SSS((\mathbb{O}(\mathsf{N}))[\mathscr{L}_\mathcal{F}])$. Together with Claim \ref{Claim : pp expansions : N generates K}, this yields $\mathbb{O}(\mathsf{N}[\mathscr{L}_\mathcal{F}]) = \SSS(\mathsf{K}[\mathscr{L}_\mathcal{F}])$. Hence, $\SSS(\mathsf{K}[\mathscr{L}_\mathcal{F}])$ is closed under $\mathbb{O}$, as desired. 

Now, we prove the last part of \eqref{item : pp expansion : quasivariety}. 
Assume that $\mathsf{K}$ is a quasivariety. We need to show that $\mathsf{M}_{\textsc{rsi}} \subseteq \SSS(\mathsf{K}_\textsc{rsi}[\mathscr{L}_\mathcal{F}])$. Let
\[
\mathsf{N}_{\textsc{rsi}} = \{ \A \in \mathsf{K}_\textsc{rsi} : f^\A \text{ is total for each }f \in \mathcal{F} \}.
\]
We rely on the following observation.

\begin{Claim}\label{Claim : pp expansions : N generates K : 2}
We have $\mathsf{K} = \III\SSS\PPP(\mathsf{N}_{\textsc{rsi}})$.
\end{Claim}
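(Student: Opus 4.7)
The plan is to establish both inclusions of $\mathsf{K} = \III\SSS\PPP(\mathsf{N}_\textsc{rsi})$ separately, with the bulk of the work going into the left-to-right direction. The right-to-left direction is essentially immediate: since $\mathsf{N}_\textsc{rsi} \subseteq \mathsf{K}_\textsc{rsi} \subseteq \mathsf{K}$ and $\mathsf{K}$ is a quasivariety (so in particular closed under $\III$, $\SSS$, $\PPP$), we get $\III\SSS\PPP(\mathsf{N}_\textsc{rsi}) \subseteq \mathsf{K}$.

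For the left-to-right inclusion, I would proceed in two steps. First, by the Subdirect Decomposition Theorem \ref{Thm : Subdirect Decomposition} applied to the quasivariety $\mathsf{K}$, every $\A \in \mathsf{K}$ embeds subdirectly into a product of members of $\mathsf{K}_\textsc{rsi}$, whence $\mathsf{K} \subseteq \III\SSS\PPP(\mathsf{K}_\textsc{rsi})$. Second, I would show that $\mathsf{K}_\textsc{rsi} \subseteq \III\SSS(\mathsf{N}_\textsc{rsi})$. For this, let $\A \in \mathsf{K}_\textsc{rsi}$. Since $\mathcal{F} \subseteq \mathsf{ext}_{pp}(\mathsf{K}) \subseteq \mathsf{ext}(\mathsf{K})$, the second part of Theorem \ref{Thm : extendable 1} (the case where $\mathsf{K}$ is a quasivariety and $\A \in \mathsf{K}_\textsc{rsi}$) provides $\B \in \mathsf{K}_\textsc{rsi}$ with $\A \leq \B$ such that $f^\B$ is total for each $f \in \mathsf{ext}(\mathsf{K})$, and hence for each $f \in \mathcal{F}$. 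By the definition of $\mathsf{N}_\textsc{rsi}$, this $\B$ lies in $\mathsf{N}_\textsc{rsi}$, so $\A \in \III\SSS(\mathsf{N}_\textsc{rsi})$.

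Combining the two inclusions gives $\mathsf{K} \subseteq \III\SSS\PPP(\III\SSS(\mathsf{N}_\textsc{rsi}))$, and the final step is to collapse this to $\III\SSS\PPP(\mathsf{N}_\textsc{rsi})$ using the standard commutation/absorption rules for class operators, namely $\PPP\III \subseteq \III\PPP$ and $\PPP\SSS \subseteq \SSS\PPP$, which yield
\[
\III\SSS\PPP\III\SSS \subseteq \III\SSS\III\PPP\SSS \subseteq \III\SSS\III\SSS\PPP = \III\SSS\PPP.
\]
This produces the desired inclusion $\mathsf{K} \subseteq \III\SSS\PPP(\mathsf{N}_\textsc{rsi})$.

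There is no serious obstacle here: the entire argument is essentially an application of the Subdirect Decomposition Theorem together with the already-proved second part of Theorem \ref{Thm : extendable 1}, which was precisely designed to preserve relative subdirect irreducibility when extending to make implicit operations total. The only mildly delicate point is the final algebra of class operators, which is standard.
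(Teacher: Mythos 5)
Your proof is correct and uses exactly the same two ingredients as the paper's: the Subdirect Decomposition Theorem to reduce to $\mathsf{K}_{\textsc{rsi}}$, and the second part of Theorem \ref{Thm : extendable 1} to replace each relatively subdirectly irreducible factor by an extension in $\mathsf{N}_{\textsc{rsi}}$. The only (cosmetic) difference is that the paper directly upgrades the subdirect embedding $\A \to \prod_i \A_i$ to an embedding into $\prod_i \B_i$, whereas you route through the composite $\III\SSS\PPP\,\III\SSS$ and collapse it with the standard operator identities; both are fine.
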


\begin{proof}[Proof of the Claim]
Since $\mathsf{K}$ is a quasivariety, it is closed under $\III, \SSS$, and $\PPP$. Therefore, it suffices to prove the inclusion $\mathsf{K} \subseteq \III\SSS\PPP(\mathsf{N}_{\textsc{rsi}})$. To this end, consider $\A \in \mathsf{K}$. In view of the Subdirect Decomposition Theorem \ref{Thm : Subdirect Decomposition} there exist a family $\{ \A_i : i \in I \} \subseteq \mathsf{K}_{\textsc{rsi}}$ and an embedding $h \colon \A \to \prod_{i \in I}\A_i$. By Theorem \ref{Thm : extendable 1} for each $i \in I$ there exists $\B_i \in \mathsf{K}_\textsc{rsi}$ with $\A_i \leq \B_i$ such that $f^{\B_i}$ is total for each $f \in \mathsf{ext}(\mathsf{K})$. As $\mathcal{F} \subseteq \mathsf{ext}(\mathsf{K})$ by assumption, this guarantees that $\{ \B_i : i \in I \} \subseteq \mathsf{N}_{\textsc{rsi}}$. Furthermore, from $\A_i \leq \B_i$ for each $i \in I$ it follows that $\prod_{i \in I}\A_i \leq \prod_{i \in I}\B_i$. Therefore, $h$ can be viewed as an embedding of $\A$ into $\prod_{i \in I}\B_i$. Thus, we conclude that $\A \in \III\SSS\PPP(\mathsf{N}_{\textsc{rsi}})$.
\end{proof}

Since the quasivariety $\mathsf{K}$ is closed under $\III\SSS\PPP$, the definition of $\mathsf{N}_{\textsc{rsi}}$ allows us to apply \cref{Prop : pp expansions : generation}, obtaining $\III\SSS\PPP(\mathsf{N}_{\textsc{rsi}}[\mathscr{L}_\mathcal{F}]) = \SSS((\III\SSS\PPP(\mathsf{N}_{\textsc{rsi}}))[\mathscr{L}_\mathcal{F}])$.  By Claim \ref{Claim : pp expansions : N generates K : 2} this amounts to $\III\SSS\PPP(\mathsf{N}_{\textsc{rsi}}[\mathscr{L}_\mathcal{F}]) = \SSS(\mathsf{K}[\mathscr{L}_\mathcal{F}])$. As $\mathsf{M} = \SSS(\mathsf{K}[\mathscr{L}_\mathcal{F}])$, we conclude that $\III\SSS\PPP(\mathsf{N}_{\textsc{rsi}}[\mathscr{L}_\mathcal{F}]) = \mathsf{M}$.

We are now ready to prove that $\mathsf{M}_{\textsc{rsi}} \subseteq \SSS(\mathsf{K}_\textsc{rsi}[\mathscr{L}_\mathcal{F}])$.\ Consider $\A \in \mathsf{M}_{\textsc{rsi}}$.\ Since  $\mathsf{M} = \III\SSS\PPP(\mathsf{N}_{\textsc{rsi}}[\mathscr{L}_\mathcal{F}])$, there exist a family $\{ \A_i : i \in I \} \subseteq \mathsf{N}_{\textsc{rsi}}$ and an embedding $h \colon \A \to \prod_{i \in I}(\A_i[\mathscr{L}_\mathcal{F}])$. 
Observe that $h$ can be viewed as a subdirect embedding $h \colon \A \to \prod_{i \in I}p_i[h[\A]]$ whose factors belong to $\mathsf{M}$ because for each $i \in I$ we have
\[
p_i[h[\A]] \in \SSS(\A_i[\mathscr{L}_\mathcal{F}]) \subseteq \SSS(\mathsf{N}_{\textsc{rsi}}[\mathscr{L}_\mathcal{F}]) \subseteq \III\SSS\PPP(\mathsf{N}_{\textsc{rsi}}[\mathscr{L}_\mathcal{F}]) =  \mathsf{M}.
\]
As $\A \in \mathsf{M}_{\textsc{rsi}}$, this implies that there exists $i \in I$ such that the map $p_i \circ h \colon \A \to p_i[h[\A]]$ is an isomorphism. Since $p_i[h[\A]] \leq \A_i[\mathscr{L}_\mathcal{F}]$ and $\A_i \in \mathsf{N}_{\textsc{rsi}}$, we obtain $\A \in \III\SSS(\mathsf{N}_{\textsc{rsi}}[\mathscr{L}_\mathcal{F}])$. Lastly, applying in succession $\III\SSS = \SSS\III$, \cref{Prop : K[F] class operators}(\ref{item : K[K] closed under I Pu P}) for the case where $\mathbb{O}= \III$, the fact that $\mathsf{N}_{\textsc{rsi}}$ is closed under $\III$, and the inclusion $\mathsf{N}_{\textsc{rsi}} \subseteq \mathsf{K}_\textsc{rsi}$, we conclude that 
\[
\A \in \III\SSS(\mathsf{N}_{\textsc{rsi}}[\mathscr{L}_\mathcal{F}]) = \SSS\III(\mathsf{N}_{\textsc{rsi}}[\mathscr{L}_\mathcal{F}]) = \SSS((\III(\mathsf{N}_{\textsc{rsi}}))[\mathscr{L}_\mathcal{F}]) = \SSS(\mathsf{N}_{\textsc{rsi}}[\mathscr{L}_\mathcal{F}]) \subseteq \SSS(\mathsf{K}_{\textsc{rsi}}[\mathscr{L}_\mathcal{F}]),
\]
as desired.

It remains to prove \eqref{item : pp expansion : variety}. Assume that $\K$ is a variety and $\F \subseteq \exteq(\K)$. By \cref{Thm : classes generation}\eqref{item : variety generation} there exists a set of equations $\Sigma$ that axiomatizes $\K$. Then \cref{Thm : axiomatization of pp expansion (almost always)} yields that $\mathsf{M}$ is axiomatized by 
\[
\Sigma \cup \{ \varphi_f(x_1, \dots, x_n, g_f(x_1, \dots, x_n)) : f\text{ is an $n$-ary member of } \mathcal{F}  \},
\] 
where $\varphi_f$ is the conjunction of equations defining $f \in \mathcal{F}$. Consequently, $\mathsf{M}$ is axiomatized by a set of equations. Thus, $\M$ is a variety by \cref{Thm : classes generation}\eqref{item : variety generation}.
\end{proof}

Then we prove Theorem \ref{Thm : pp expansion : description in terms of Q and U}.

\begin{proof}
We detail only the case in which $\mathbb{O} = \UUU$, as the case in which $\mathbb{O} = \QQQ$ is handled analogously. Accordingly, assume that $\mathsf{K} = \mathbb{U}(\mathsf{N})$.  As $\mathcal{F} \subseteq \mathsf{ext}_{\textsc{pp}}(\mathsf{K})$ and $\mathscr{L}_\mathcal{F}$ is an $\mathcal{F}$-expansion of $\mathscr{L}_\mathsf{K}$ by assumption, the class $\SSS(\mathsf{K}[\mathscr{L}_\mathcal{F}])$ is a pp expansion of $\mathsf{K}$. Therefore, it only remains to show that $\SSS(\mathsf{K}[\mathscr{L}_\mathcal{F}]) = \UUU(\mathsf{N}[\mathscr{L}_\mathcal{F}])$. Since 
\[
\mathsf{N} \subseteq \{ \A \in \mathsf{K} : f^\A \text{ is total for each }f \in \mathcal{F} \}
\]
by assumption, we can apply \cref{Prop : pp expansions : generation}, obtaining $\UUU(\mathsf{N}[\mathscr{L}_\mathcal{F}]) = \SSS((\UUU(\mathsf{N}))[\mathscr{L}_\mathcal{F}])$. As $\mathsf{K} = \mathbb{U}(\mathsf{N})$, this amounts to $\UUU(\mathsf{N}[\mathscr{L}_\mathcal{F}]) = \SSS(\mathsf{K}[\mathscr{L}_\mathcal{F}])$. 
\end{proof}

It only remains to prove \cref{Thm : pp expansion of pp expansions}. The proof hinges on the next observation, in which a \emph{nonconstant term} is simply a term that is not a constant.

\begin{Proposition}\label{Prop : equivalence in K[F]}
Let $\mathsf{M}$ be a pp expansion of a class of algebras $\mathsf{K}$ induced by $\mathcal{F}$ and $\mathscr{L}_\mathcal{F}$.\ Then the following conditions hold:
\benroman
\item\label{item : equivalence in K[F] : 1} for every constant $c$ of $\mathsf{M}$ there exists a unary $f_c \in \mathsf{ext}_{\textsc{eq}}(\mathsf{K})$ such that $f_c^{\A{\upharpoonright}_{\mathscr{L}_\mathsf{K}}}$ is total and $c^\A = f_c^{\A{\upharpoonright}_{\mathscr{L}_\mathsf{K}}}(a)$ for all $\A \in \mathsf{K}[\mathscr{L}_{\mathcal{F}}]$ and $a \in A$; 
\item\label{item : equivalence in K[F] : 2} for every nonconstant term $t$ of $\mathsf{M}$ 
there exists $f_t \in \mathsf{ext}_{\textsc{pp}}(\mathsf{K})$ such that $t^\A = f_t^{\A{\upharpoonright}_{\mathscr{L}_\mathsf{K}}}$ for each $\A \in \mathsf{K}[\mathscr{L}_\mathcal{F}]$;
\item\label{item : equivalence in K[F] : 3} for every $f \in \mathsf{imp}_{\textsc{pp}}(\mathsf{M})$ there exists $f_* \in \mathsf{imp}_{\textsc{pp}}(\mathsf{K})$ such that $f^\A = f_*^{\A{\upharpoonright}_{\mathscr{L}_\mathsf{K}}}$ for each $\A \in \mathsf{K}[\mathscr{L}_\mathcal{F}]$. Furthermore, if $f \in \mathsf{ext}_{\textsc{pp}}(\mathsf{M})$, then $f_*$ can be chosen in $\mathsf{ext}_{\textsc{pp}}(\mathsf{K})$.
\eroman
\end{Proposition}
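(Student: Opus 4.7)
The plan is to treat the three parts in succession, using each earlier part in the next.

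For part~\eqref{item : equivalence in K[F] : 1}, note that since every $g_f \in \mathscr{L}_\mathcal{F} \setminus \mathscr{L}_\K$ has positive arity (implicit operations have arity at least one), every constant of $\M$ is already a constant of $\mathscr{L}_\K$. Given such a constant $c$, I would take $f_c$ to be the unary implicit operation of $\K$ defined by the single equation $\varphi_c(x, y) = (y \thickapprox c)$. It is total on every $\mathscr{L}_\K$-algebra, preserved by $\mathscr{L}_\K$-homomorphisms since they preserve $c$, and hence lies in $\exteq(\K)$; the identity $c^\A = f_c^{\A\res_{\mathscr{L}_\K}}(a)$ for $\A \in \K[\mathscr{L}_\mathcal{F}]$ and $a \in A$ is immediate from the definition of $\A[\mathscr{L}_\mathcal{F}]$.

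For part~\eqref{item : equivalence in K[F] : 2}, I would proceed by structural induction on a nonconstant $\mathscr{L}_\mathcal{F}$-term $t(x_1, \dots, x_n)$. The base case $t = x_i$ uses the projection defined by $y \thickapprox x_i$. For the inductive step $t = h(t_1, \dots, t_m)$, where $h$ is either a symbol of $\mathscr{L}_\K$ or $h = g_f$ for some $f \in \mathcal{F}$, I would use the corresponding $\mathscr{L}_\K$-term function $h^\K \in \exteq(\K)$ or the implicit operation $f \in \extpp(\K)$, respectively, and define $f_t$ as the composition of this with the inductively obtained $f_{t_1}, \dots, f_{t_m} \in \extpp(\K)$. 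The closure of $\extpp(\K)$ under composition (\cref{Cor : closure under composition for ext}) yields $f_t \in \extpp(\K)$, and the equality $t^\A = f_t^{\A\res_{\mathscr{L}_\K}}$ for $\A \in \K[\mathscr{L}_\mathcal{F}]$ follows by unfolding the composition, using that each $g_f^\A$ equals $f^{\A\res_{\mathscr{L}_\K}}$ and is total for such $\A$.

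For part~\eqref{item : equivalence in K[F] : 3}, suppose that $f$ is defined by a pp $\mathscr{L}_\mathcal{F}$-formula $\varphi(x_1, \dots, x_n, y) = \exists \vec{z}\, \bigsqcap_{i=1}^k (s_i \thickapprox t_i)$. I would build a pp $\mathscr{L}_\K$-formula $\varphi_*$ by translating each equation $s_i \thickapprox t_i$: if either side is a nonconstant $\mathscr{L}_\mathcal{F}$-term I would introduce a fresh variable for its value and use the pp $\mathscr{L}_\K$-formula provided by part~\eqref{item : equivalence in K[F] : 2}; if either side is a constant I would use part~\eqref{item : equivalence in K[F] : 1}; equations between variables remain unchanged. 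By construction, for every $\A \in \K[\mathscr{L}_\mathcal{F}]$ and all $\vec{a}, b \in A$ the equivalence $\A \vDash \varphi(\vec{a}, b)$ iff $\A\res_{\mathscr{L}_\K} \vDash \varphi_*(\vec{a}, b)$ holds, which will yield the required identity $f^\A = f_*^{\A\res_{\mathscr{L}_\K}}$ once $\varphi_*$ is shown to be functional on $\K$.

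The main obstacle is precisely this functionality check. Given $\A \in \K$ with $\A \vDash \varphi_*(\vec{a}, b_1) \sqcap \varphi_*(\vec{a}, b_2)$, I would iteratively exploit the pointwise pp-extendability of each $f \in \mathcal{F}$ (in the spirit of the proof of \cref{Thm : extendable 1}) to embed $\A$ into some $\B \in \K$ admitting an $\mathscr{L}_\mathcal{F}$-expansion $\B[\mathscr{L}_\mathcal{F}] \in \K[\mathscr{L}_\mathcal{F}] \subseteq \M$. Preservation of pp formulas under the embedding $\A \hookrightarrow \B$ gives $\B \vDash \varphi_*(\vec{a}, b_j)$, and the translation correspondence then yields $\B[\mathscr{L}_\mathcal{F}] \vDash \varphi(\vec{a}, b_j)$ for $j = 1, 2$; functionality of $\varphi$ in $\M$ forces $b_1 = b_2$. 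Finally, for the extendability clause, given $f \in \extpp(\M)$, $\A \in \K$, and $\vec{a} \in A^n$, I would extend $\A$ to $\B \in \K$ with $\B[\mathscr{L}_\mathcal{F}] \in \M$ as above, apply extendability of $f$ in $\M$ to find $\C \in \M$ with $\B[\mathscr{L}_\mathcal{F}] \leq \C$ and $\vec{a} \in \dom(f^\C)$, and conclude by taking $\mathscr{L}_\K$-reducts, which land in $\K$ since $\M = \SSS(\K[\mathscr{L}_\mathcal{F}])$.
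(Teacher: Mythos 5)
Your proposal is correct and follows essentially the same route as the paper's proof: part (i) via the observation that constants of $\M$ already lie in $\L_\K$, part (ii) by induction on terms using closure of $\extpp(\K)$ under composition (\cref{Cor : closure under composition for ext}), and part (iii) by translating each equation of the defining pp formula into the pp formulas supplied by (i) and (ii), with a shared fresh variable recording the common value of the two sides. The only place you genuinely diverge is the functionality check in (iii): the paper verifies functionality of the translated formula only on $\K[\L_\F]\resLK$, where it is immediate from the translation equivalence, and then transfers it to $\K = \SSS(\K[\L_\F]\resLK)$ via \cref{Cor : functionality in Q(K)}, exploiting that functionality is expressed by quasiequations; you instead embed an arbitrary $\A \in \K$ into some $\B$ with $\B[\L_\F] \in \K[\L_\F]$ and push the witnesses upward. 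Both routes rest on the same underlying fact (\cref{Thm : extendable 1}, hence on $\K$ being universal, an assumption the paper also uses tacitly at this point through \cref{Prop : pp expansions : subreducts : extendable}), so the difference is cosmetic. One detail in your final extendability clause needs repair: the identity $f^\C = f_*^{\C\resLK}$ is only established for $\C \in \K[\L_\F]$, not for arbitrary $\C \in \M = \SSS(\K[\L_\F])$, so after finding $\C \in \M$ with $\langle a_1, \dots, a_n\rangle \in \dom(f^\C)$ you must pass to some $\D \in \K[\L_\F]$ with $\C \leq \D$ (using \cref{Prop : implicit operations extend} to keep the tuple in $\dom(f^\D)$) before taking $\L_\K$-reducts; this extra step, which is exactly what the paper does, also guarantees that the reduct you end up with lies in $\K$ rather than merely in $\SSS(\K)$.
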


\begin{proof}
By assumption $\mathsf{M}$ is a pp expansion of $\mathsf{K}$ induced by $\mathcal{F}$ and $\mathscr{L}_\mathcal{F}$. Therefore, $\mathscr{L}_\mathcal{F}$ is an $\mathcal{F}$-expansion of $\mathscr{L}_\mathsf{K}$. Consequently, $\mathcal{F} \subseteq \mathsf{ext}_{\textsc{pp}}(\mathsf{K})$ and $\mathscr{L}_\mathcal{F}$ is of the form $\mathscr{L}_\mathsf{K} \cup \{ g_f : f \in \mathcal{F} \}$. This fact will be used repeatedly in the proof.

(\ref{item : equivalence in K[F] : 1}): 
Let $c$ be a constant of $\M$. Then $c$ belongs to $\L_\F$, which is an $\F$-expansion of $\L_\K$. As $\F$-expansions are obtained by adding only functions symbols of positive arity, it follows that $c \in \L_\K$.
Then the equation $\varphi(x,y) = y \thickapprox c$ defines a unary $f_c \in \mathsf{ext}_{\textsc{eq}}(\mathsf{K})$ with the desired properties.

(\ref{item : equivalence in K[F] : 2}): We proceed by induction on the construction of the  nonconstant term $t(x_1, \dots, x_n)$. In the base case, $t = p(x_1, \dots, x_n)$ for a basic operation $p$ of $\mathsf{M}$. Therefore, $p \in \mathscr{L}_\mathcal{F}$. As $\mathscr{L}_\mathcal{F} = \mathscr{L}_\mathsf{K} \cup \{ g_f : f \in \mathcal{F} \}$, we have two cases: either $p \in \mathscr{L}_\mathsf{K}$ or there exists $h \in \mathcal{F} \subseteq \mathsf{ext}_{\textsc{pp}}(\mathsf{K})$ such that $p = g_h$. In the first case, we let $f_t = \langle p^\A : \A \in \mathsf{K} \rangle$ and, in the second,  $f_t = h$. In both cases, $f_t \in \mathsf{ext}_{\textsc{pp}}(\mathsf{K})$ and $t^\A = f_t^{\A{\upharpoonright}_{\mathscr{L}_\mathsf{K}}}$ for each $\A \in \mathsf{K}[\mathscr{L}_\mathcal{F}]$.

In the inductive step, $t = p(t_1, \dots, t_m)$ for some $m$-ary $p \in \mathscr{L}_\mathcal{F}$ and terms $t_i(x_1, \dots, x_n)$ of $\mathsf{M}$. By the inductive hypothesis there exist $f_p, f_{t_1}, \dots, f_{t_m} \in \mathsf{ext}_{\textsc{pp}}(\mathsf{K})$ satisfying the condition in the statement for $p, t_1, \dots, t_m$, respectively. Define $f_t$ as the composition $f_p(f_{t_1}, \dots, f_{t_m})$. As $f_p, f_{t_1}, \dots, f_{t_m} \in \mathsf{ext}_{\textsc{pp}}(\mathsf{K})$, from  \cref{Cor : closure under composition for ext} it follows that $f_t \in \mathsf{ext}_{\textsc{pp}}(\mathsf{K})$ as well. Then consider
$\A \in \mathsf{K}[\mathscr{L}_\mathcal{F}]$ and $a_1, \dots, a_n \in A$. By the inductive hypothesis $p^\A = f_p^{\A{\upharpoonright}_{\mathscr{L}_\mathsf{K}}}$ and $t_i^\A = f_{t_i}^{\A{\upharpoonright}_{\mathscr{L}_\mathsf{K}}}$ for each $i \leq m$. Therefore, for each $i \leq m$,
\[
\mathsf{dom}(f_{p}^{\A{\upharpoonright}_{\mathscr{L}_\mathsf{K}}}) = \mathsf{dom}(p^\A) = A^m \, \, \text{ and } \, \, \mathsf{dom}(f_{t_i}^{\A{\upharpoonright}_{\mathscr{L}_\mathsf{K}}}) = \mathsf{dom}(t_i^\A) = A^n.
\]
Together with $f_t = f_p(f_{t_1}, \dots, f_{t_m})$, this yields $\langle a_1, \dots, a_n \rangle \in \mathsf{dom}(f_t^{\A{\upharpoonright}_{\mathscr{L}_\mathsf{K}}})$ and
\begin{align*}
f_t^{\A{\upharpoonright}_{\mathscr{L}_\mathsf{K}}}(a_1, \dots, a_n) &= f_p^{\A{\upharpoonright}_{\mathscr{L}_\mathsf{K}}}(f_{t_1}^{\A{\upharpoonright}_{\mathscr{L}_\mathsf{K}}}(a_1, \dots, a_n), \dots, f_{t_m}^{\A{\upharpoonright}_{\mathscr{L}_\mathsf{K}}}(a_1, \dots, a_n))\\
& = p^\A(t_1^\A(a_1, \dots, a_n), \dots, t_m^\A(a_1, \dots, a_n)) \\
&= t^\A(a_1, \dots, a_n).
\end{align*}
Hence, we conclude that $t^\A = f_t^{\A{\upharpoonright}_{\mathscr{L}_\mathsf{K}}}$.

(\ref{item : equivalence in K[F] : 3}): Let $f$ be an $n$-ary implicit operation of $\mathsf{M}$ defined by a pp formula 
\begin{equation}\label{Eq : equivalence in K[F] : 2}
\varphi(x_1, \dots, x_n, y) = \exists z_1, \dots, z_k \bigsqcap_{j \leq m}t_{j} \thickapprox s_{j},
\end{equation}
where each $t_{j} \thickapprox s_{j}$ is an equation of $\mathsf{M}$ in variables $x_1, \dots, x_n, z_1, \dots, z_k, y$. For each $j \leq m$ let $\varphi_{t_j}(x_1, \dots, x_{n+k+1}, y)$ and $\varphi_{s_j}(x_1, \dots, x_{n+k+1}, y)$ be 
pp formulas of $\mathsf{K}$ defining the implicit operations $f_{t_j}$ and $f_{s_j}$ of $\mathsf{K}$ given by \eqref{item : equivalence in K[F] : 1} and \eqref{item : equivalence in K[F] : 2}.\footnote{When $t_j$ is a constant $c$, we let the pp formula $\varphi_{t_j}$ be $y \thickapprox c$ (see the proof of \eqref{item : equivalence in K[F] : 1}) and think of $f_{t_j}$ as a constant operation of arity $n+k+1$. Similarly for $s_j$ when it is a constant.}
Moreover, for each $j \leq m$ define
\begin{align*}
\alpha_j &= \varphi_{t_j} (x_1, \dots, x_n, z_1, \dots, z_k, y, v_j) \sqcap \varphi_{s_j}(x_1, \dots, x_n, z_1, \dots, z_k, y, v_j) ;\\
\psi & = \exists z_1, \dots, z_k, v_1, \dots, v_m \bigsqcap_{j \leq m} \alpha_j.
\end{align*}

\begin{Claim}\label{Claim : equivalence in K[F]} For all $\A \in \mathsf{K}[\mathscr{L}_\mathcal{F}]$ and $a_1, \dots, a_n, b \in A$,
\[
\A \vDash \varphi(a_1, \dots, a_n, b) \iff \A{\upharpoonright}_{\mathscr{L}_\mathsf{K}} \vDash \psi(a_1, \dots, a_n, b).
\]
\end{Claim}

\begin{proof}[Proof of the Claim]
As $\varphi$ is the formula in (\ref{Eq : equivalence in K[F] : 2}), we have $\A \vDash \varphi(a_1, \dots, a_n, b)$ if and only if there exist $c_1, \dots, c_k \in A$ such that $t_j^\A(a_1, \dots, a_n, c_1, \dots, c_k, b) = s_j^\A(a_1, \dots, a_n, c_1, \dots, c_k, b)$ for each $j \leq m$. In view of \eqref{item : equivalence in K[F] : 1} and \eqref{item : equivalence in K[F] : 2}, the latter is equivalent to the demand that there exist $c_1, \dots, c_k \in A$ such that for each $j \leq m$,
\[
\langle a_1, \dots, a_n, c_1, \dots, c_k, b \rangle \in \mathsf{dom}(f_{t_j}^{\A{\upharpoonright}_{\mathscr{L}_\mathsf{K}}}) \cap \mathsf{dom}(f_{s_j}^{\A{\upharpoonright}_{\mathscr{L}_\mathsf{K}}})
\] 
and
\[
f_{t_j}^{\A{\upharpoonright}_{\mathscr{L}_\mathsf{K}}}(a_1, \dots, a_n, c_1, \dots, c_k, b) = f_{s_j}^{\A{\upharpoonright}_{\mathscr{L}_\mathsf{K}}}(a_1, \dots, a_n, c_1, \dots, c_k, b).
\]
Since the formulas $\varphi_{t_j}$ and $\varphi_{s_j}$ define $f_{t_j}$ and $f_{s_j}$, respectively, the definition of $\alpha_j$ guarantees that this happens precisely when there exist $c_1, \dots, c_k \in A$ such that
\[
\A{\upharpoonright}_{\mathscr{L}_\mathsf{K}} \vDash \exists v_1, \dots, v_m \bigsqcap_{j \leq m}\alpha_j(a_1, \dots, a_n, c_1, \dots, c_k, b, v_j).
\]
By the definition of $\psi$ this amounts to the demand that $\A{\upharpoonright}_{\mathscr{L}_\mathsf{K}} \vDash \psi(a_1, \dots, a_n, b)$.
\end{proof}

Recall that $\varphi$ defines the implicit operation $f$ of $\mathsf{M}$. Therefore, $\varphi$ is functional in $\mathsf{M}$ and, in particular, in $\mathsf{K}[\mathscr{L}_\mathcal{F}]$. Together with Claim \ref{Claim : equivalence in K[F]}, this yields that $\psi$ is functional in $\{ \A{\upharpoonright}_{\mathscr{L}_\mathsf{K}} : \A \in \mathsf{K}[\mathscr{L}_\mathcal{F}] \}$. As $\psi$ is equivalent to a pp formula by definition, we can apply \cref{Cor : functionality in Q(K)}, obtaining that $\psi$ defines some $f_* \in \mathsf{imp}_{\textsc{pp}}(\SSS(\{ \A{\upharpoonright}_{\mathscr{L}_\mathsf{K}} : \A \in \mathsf{K}[\mathscr{L}_\mathcal{F}] \}))$. Then recall that $\mathsf{M}$ is a pp expansion of $\mathsf{K}$ induced by $\mathcal{F}$ and $\mathscr{L}_\mathcal{F}$ and, therefore, $\mathcal{F} \subseteq \mathsf{ext}_{\textsc{pp}}(\mathsf{K})$. Consequently, we can apply Proposition \ref{Prop : pp expansions : subreducts : extendable}, obtaining that $\mathsf{K} = \SSS(\{ \A{\upharpoonright}_{\mathscr{L}_\mathsf{K}} : \A \in \mathsf{K}[\mathscr{L}_\mathcal{F}] \})$. Hence,
\[
f_* \in \mathsf{imp}_{\textsc{pp}}(\SSS(\{ \A{\upharpoonright}_{\mathscr{L}_\mathsf{K}} : \A \in \mathsf{K}[\mathscr{L}_\mathcal{F}] \})) = \mathsf{imp}_{\textsc{pp}}(\mathsf{K}).
\]

Since $\varphi$ and $\psi$ define $f$ and $f_*$, respectively, from Claim \ref{Claim : equivalence in K[F]} it follows that for all $\A \in \mathsf{K}[\mathscr{L}_\mathcal{F}]$ and $a_1, \dots, a_n, b \in A$,
\begin{align*}
& \langle a_1, \dots, a_n \rangle \in \mathsf{dom}(f^\A) \text{ and }f^\A(a_1, \dots, a_n) = b \\
& \quad \iff  \text{ } \A \vDash \varphi(a_1, \dots, a_n, b)\\
& \quad \iff  \text{ }\A{\upharpoonright}_{\mathscr{L}_\mathsf{K}} \vDash \psi(a_1, \dots, a_n, b)\\
& \quad  \iff  \text{ } \langle a_1, \dots, a_n \rangle \in \mathsf{dom}(f_*^{\A{\upharpoonright}_{\mathscr{L}_\mathsf{K}}}) \text{ and }f_*^{\A{\upharpoonright}_{\mathscr{L}_\mathsf{K}}}(a_1, \dots, a_n) = b.
\end{align*}
Hence, we conclude that $f^\A = f_*^{\A{\upharpoonright}_{\mathscr{L}_\mathsf{K}}}$. This concludes the proof  of the first half of (\ref{item : equivalence in K[F] : 3}).

Therefore, it only remains to prove that if $f \in \mathsf{ext}_{\textsc{pp}}(\mathsf{M})$, then $f_* \in \mathsf{ext}_{\textsc{pp}}(\mathsf{K})$. Accordingly, suppose that $f \in \mathsf{ext}_{\textsc{pp}}(\mathsf{M})$. Since we already proved that $f_* \in \mathsf{imp}_{\textsc{pp}}(\mathsf{K})$, it suffices to show that $f_* \in \mathsf{ext}(\mathsf{K})$. To this end, consider $\A \in \mathsf{K}$ and $a_1, \dots, a_n \in A$. As $\mathsf{K}$ is the class of $\mathscr{L}_\mathsf{K}$-subreducts of $\mathsf{M}$ by Proposition \ref{Prop : pp expansions and subreducts}, there exists $\B \in \mathsf{M}$ such that $\A \leq \B{\upharpoonright}_{\mathscr{L}_\mathsf{K}}$. Since $f \in \mathsf{ext}(\mathsf{M})$, $\B \in \mathsf{M}$, and $a_1, \dots, a_n \in A \subseteq B$, there exists $\C \in \mathsf{M}$ such that $\B \leq \C$ and $\langle a_1, \dots, a_n \rangle \in \mathsf{dom}(f^\C)$. From $\C \in \mathsf{M} = \SSS(\mathsf{K}[\mathscr{L}_\mathcal{F}])$ it follows that there also exists $\D \in \mathsf{K}[\mathscr{L}_\mathcal{F}]$ with $\C \leq \D$. Since $\A \leq \B{\upharpoonright}_{\mathscr{L}_\mathsf{K}}$ and $\B \leq \C \leq \D \in \mathsf{M}$ and $\mathsf{K}$ is the class of $\mathscr{L}_\mathsf{K}$-subreducts of $\mathsf{M}$, we have $\A \leq \D{\upharpoonright}_{\mathscr{L}_\mathsf{K}} \in \mathsf{K}$. Moreover, by applying Proposition \ref{Prop : implicit operations extend} to $\langle a_1, \dots, a_n \rangle \in \mathsf{dom}(f^\C)$ and $\C \leq \D$, we obtain $\langle a_1, \dots, a_n \rangle \in \mathsf{dom}(f^\D)$. Hence,
\[
\D \in \mathsf{K}[\mathscr{L}_\mathcal{F}], \quad \langle a_1, \dots, a_n \rangle \in \mathsf{dom}(f^\D), \, \ \text{ and } \, \, \A \leq \D{\upharpoonright}_{\mathscr{L}_\mathsf{K}} \in \mathsf{K}.
\]
Together with the first half of condition (\ref{item : equivalence in K[F] : 3}), the first two items in the above display imply $\langle a_1, \dots, a_n \rangle \in \mathsf{dom}(f_*^{\D{\upharpoonright}_{\mathscr{L}_\mathsf{K}}})$. 
 As $\A \leq \D{\upharpoonright}_{\mathscr{L}_\mathsf{K}} \in \mathsf{K}$,
we conclude that $f_* \in \mathsf{ext}(\mathsf{K})$.
\end{proof}

We are now ready to prove Theorem \ref{Thm : pp expansion of pp expansions}.

\begin{proof}
Let $\mathsf{M}_2$ be a pp expansion of a pp expansion $\mathsf{M}_1$ of $\mathsf{K}$. We will prove that $\mathsf{M}_2$ is also a pp expansion of $\mathsf{K}$. First, as $\mathsf{M}_1$ is a pp expansion of $\mathsf{K}$, there exist $\mathcal{F}_1 \subseteq \mathsf{ext}_{\textsc{pp}}(\mathsf{K})$ and an $\mathcal{F}_1$-expansion $\mathscr{L}_{\mathcal{F}_1}$ of $\mathscr{L}_\mathsf{K}$ such that $\mathsf{M}_1 = \SSS(\mathsf{K}[\mathscr{L}_{\mathcal{F}_1}])$.\ Similarly, as $\mathsf{M}_2$ is a pp expansion of $\mathsf{M}_1$, there exist $\mathcal{F}_2 \subseteq \mathsf{ext}_{\textsc{pp}}(\mathsf{M}_1)$ and an $\mathcal{F}_2$-expansion $\mathscr{L}_{\mathcal{F}_2}$ of $\mathscr{L}_{\mathcal{F}_1}$ such that $\mathsf{M}_2 = \SSS(\mathsf{M}_1[\mathscr{L}_{\mathcal{F}_2}])$.\ Hence,
\begin{equation}\label{Eq : the classes M[K2] explained in detail}
\mathsf{M}_1 = \SSS(\mathsf{K}[\mathscr{L}_{\mathcal{F}_1}]) \, \, \text{ and } \, \, \mathsf{M}_2 = \SSS(\mathsf{M}_1[\mathscr{L}_{\mathcal{F}_2}]).
\end{equation}
Since $\mathscr{L}_{\mathcal{F}_1}$ is an $\mathcal{F}_1$-expansion of $\mathscr{L}_\mathsf{K}$ and $\mathscr{L}_{\mathcal{F}_2}$ an $\mathcal{F}_2$-expansion of $\mathscr{L}_{\mathcal{F}_1}$, we may assume that 
\[
\mathscr{L}_{\mathcal{F}_1} = \mathscr{L}_\mathsf{K} \cup \{ g_f : f \in \mathcal{F}_1 \} \, \, \text{ and } \, \, \mathscr{L}_{\mathcal{F}_2} = \mathscr{L}_{\mathcal{F}_1} \cup \{ g_f : f \in \mathcal{F}_2 \}.
\]
Consequently,
\begin{equation}\label{Eq : the language LF2 explained in detail}
\mathscr{L}_{\mathcal{F}_2} = \mathscr{L}_\mathsf{K} \cup \{ g_f : f \in \mathcal{F}_1 \} \cup \{ g_f : f \in \mathcal{F}_2 \}.
\end{equation}

As $\mathcal{F}_2 \subseteq \mathsf{ext}_{\textsc{pp}}(\mathsf{M}_1)$ and $\mathsf{M}_1$ is a pp expansion of $\mathsf{K}$,  Proposition \ref{Prop : equivalence in K[F]}(\ref{item : equivalence in K[F] : 3}) guarantees that for every $f \in \mathcal{F}_2$ there exists $f_* \in \mathsf{ext}_{\textsc{pp}}(\mathsf{K})$ such that
\begin{equation}\label{Eq : the functions f and f ast in pp exp}
f^\A = f_*^{\A{\upharpoonright}_{\mathscr{L}_\mathsf{K}}} \text{ for each }\A \in \mathsf{K}[\mathscr{L}_{\mathcal{F}_1}].
\end{equation}
 Since $\mathcal{F}_1 \subseteq \mathsf{ext}_{\textsc{pp}}(\mathsf{K})$ by assumption, the set
\[
\mathcal{F} = \mathcal{F}_1 \cup \{ f_* : f \in \mathcal{F}_2 \}
\]
is a subset of $\mathsf{ext}_{\textsc{pp}}(\mathsf{K})$. Define $g_{f_*} = g_f$ for each $f \in \mathcal{F}_2$ and consider the following $\mathcal{F}$-expansion of  $\mathscr{L}_\mathsf{K}$:
\[
\mathscr{L}_\mathcal{F} = \mathscr{L}_\mathsf{K} \cup \{ g_f : f \in \mathcal{F} \}.
\]
Then the pair $\mathcal{F}$ and $\mathscr{L}_\mathcal{F}$ induces a pp expansion $\SSS(\mathsf{K}[\mathscr{L}_\mathcal{F}])$ of $\mathsf{K}$.
To conclude the proof, it will be enough to show that $\mathsf{M}_2 = \SSS(\mathsf{K}[\mathscr{L}_\mathcal{F}])$, for in this case $\mathsf{M}_2$ would also be a pp expansion of $\mathsf{K}$.

First, observe that 
\begin{align*}
\mathscr{L}_{\mathcal{F}_2} &= \mathscr{L}_\mathsf{K} \cup \{ g_f : f \in \mathcal{F}_1 \} \cup \{ g_f : f \in \mathcal{F}_2 \}\\
&= \mathscr{L}_\mathsf{K} \cup \{ g_f : f \in \mathcal{F}_1 \} \cup \{ g_{f_*} : f \in \mathcal{F}_2 \}\\
&= \mathscr{L}_\mathsf{K} \cup \{ g_f : f \in \mathcal{F} \}\\
&=\mathscr{L}_{\mathcal{F}}.
\end{align*}
The above equalities are justified as follows.\ The first is (\ref{Eq : the language LF2 explained in detail}), the second holds by the definition of $g_{f_*}$ for $f \in \mathcal{F}_2$, the third by the definition of $\mathcal{F}$, and the fourth by that of $\mathscr{L}_{\mathcal{F}}$. This establishes $\mathscr{L}_{\mathcal{F}_2} =  \mathscr{L}_{\mathcal{F}}$. Since $\mathscr{L}_{\mathcal{F}_2}$ and $\mathscr{L}_{\mathcal{F}}$ are, respectively, the languages of $\mathsf{M}_2$ and $\SSS(\mathsf{K}[\mathscr{L}_\mathcal{F}])$, we conclude that these classes have the same language.

In view of the right hand side of (\ref{Eq : the classes M[K2] explained in detail}), in order to prove that $\mathsf{M}_2 = \SSS(\mathsf{K}[\mathscr{L}_\mathcal{F}])$, it suffices to show that
\begin{equation}\label{Eq : double pp expansions : the critical equality : 4}
\mathsf{M}_1[\mathscr{L}_{\mathcal{F}_2}] \subseteq \SSS(\mathsf{K}[\mathscr{L}_\mathcal{F}]) \, \, \text{ and } \, \, \mathsf{K}[\mathscr{L}_\mathcal{F}] \subseteq \mathsf{M}_1[\mathscr{L}_{\mathcal{F}_2}].
\end{equation}
We split the proof of the above display in two claims.

\begin{Claim}\label{Claim : double pp expansions : 1}
We have $\mathsf{M}_1[\mathscr{L}_{\mathcal{F}_2}] \subseteq \SSS(\mathsf{K}[\mathscr{L}_\mathcal{F}])$.
\end{Claim}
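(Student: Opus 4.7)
My plan is to take an arbitrary $\B \in \mathsf{M}_1[\mathscr{L}_{\mathcal{F}_2}]$, unpack it as $\B = \A[\mathscr{L}_{\mathcal{F}_2}]$ for some $\A \in \mathsf{M}_1$ with $f^\A$ total for each $f \in \mathcal{F}_2$, and then exhibit an $\mathscr{L}_\mathcal{F}$-embedding of $\B$ into a member of $\mathsf{K}[\mathscr{L}_\mathcal{F}]$. Using $\mathsf{M}_1 = \SSS(\mathsf{K}[\mathscr{L}_{\mathcal{F}_1}])$, fix $\C \in \mathsf{K}[\mathscr{L}_{\mathcal{F}_1}]$ with $\A \leq \C$, and set $\C_0 = \C{\upharpoonright}_{\mathscr{L}_\mathsf{K}} \in \mathsf{K}$. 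The strategy is to produce $\E \in \mathsf{K}$ with $\C_0 \leq \E$ such that $h^\E$ is total for every $h \in \mathcal{F}$; then $\E[\mathscr{L}_\mathcal{F}] \in \mathsf{K}[\mathscr{L}_\mathcal{F}]$, and the set-theoretic inclusion $A \hookrightarrow E$ will realize the desired embedding $\B \hookrightarrow \E[\mathscr{L}_\mathcal{F}]$, giving $\B \in \SSS(\mathsf{K}[\mathscr{L}_\mathcal{F}])$.

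The core step is producing $\E$. Since $\mathcal{F} \subseteq \mathsf{ext}_{pp}(\mathsf{K}) \subseteq \mathsf{ext}(\mathsf{K})$, this is exactly the content of \cref{Thm : extendable 1} applied to $\C_0$.

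It then remains to verify that $A \hookrightarrow E$ is a homomorphism $\B \to \E[\mathscr{L}_\mathcal{F}]$, where we use $\mathscr{L}_{\mathcal{F}_2} = \mathscr{L}_\mathcal{F}$, already shown. Preservation of the $\mathscr{L}_\mathsf{K}$-operations is immediate from $\A{\upharpoonright}_{\mathscr{L}_\mathsf{K}} \leq \C_0 \leq \E$. For $g_f$ with $f \in \mathcal{F}_1$ and $\vec{a} \in A^n$, using $\C \in \mathsf{K}[\mathscr{L}_{\mathcal{F}_1}]$ and \cref{Prop : implicit operations extend} applied to $f \in \mathsf{imp}(\mathsf{K})$ along $\C_0 \leq \E$, we compute $g_f^\A(\vec{a}) = g_f^\C(\vec{a}) = f^{\C_0}(\vec{a}) = f^\E(\vec{a}) = g_f^{\E[\mathscr{L}_\mathcal{F}]}(\vec{a})$. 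For $g_f = g_{f_*}$ with $f \in \mathcal{F}_2$ and $\vec{a} \in A^n$, combining the totality of $f^\A$, \cref{Prop : implicit operations extend} applied to $f \in \mathsf{imp}(\mathsf{M}_1)$ along $\A \leq \C$, equation~(\ref{Eq : the functions f and f ast in pp exp}), and a further application of \cref{Prop : implicit operations extend} to $f_* \in \mathsf{imp}(\mathsf{K})$ along $\C_0 \leq \E$ yields $g_f^\A(\vec{a}) = f^\A(\vec{a}) = f^\C(\vec{a}) = f_*^{\C_0}(\vec{a}) = f_*^\E(\vec{a}) = g_{f_*}^{\E[\mathscr{L}_\mathcal{F}]}(\vec{a}) = g_f^{\E[\mathscr{L}_\mathcal{F}]}(\vec{a})$. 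Hence $\B \leq \E[\mathscr{L}_\mathcal{F}] \in \mathsf{K}[\mathscr{L}_\mathcal{F}]$, as required.

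I expect the main obstacle to be the extension step, since \cref{Thm : extendable 1} is stated only for universal classes. In the absence of such a hypothesis on $\mathsf{K}$, one would replace it with a bespoke compactness argument modelled on \cref{Claim : extendable} (from the proof of that theorem), relying only on the pp formulas defining the operations in $\mathcal{F}$ together with their extendability. The remaining verification amounts to a routine tracking of how $g_f^\A$, $g_f^\C$, and the interpretations in $\E[\mathscr{L}_\mathcal{F}]$ relate to the underlying implicit operations $f$ and $f_*$.
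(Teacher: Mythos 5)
Your proof is correct and follows essentially the same route as the paper's: the paper obtains the target algebra in $\mathsf{K}[\mathscr{L}_\mathcal{F}]$ by citing \cref{Prop : pp expansions : subreducts : extendable} (which is itself proved via \cref{Thm : extendable 1}, exactly as you invoke it directly) and then verifies the embedding with the same chain of equalities passing through $f$, $f_*$, and \eqref{Eq : the functions f and f ast in pp exp}. Your concern that \cref{Thm : extendable 1} is stated only for universal classes applies equally to the paper's own use of \cref{Prop : pp expansions : subreducts : extendable}, so it is a shared reliance rather than a defect of your argument.
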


\begin{proof}[Proof of the Claim]
Consider $\A \in \mathsf{M}_1[\mathscr{L}_{\mathcal{F}_2}]$. By the definition of $\mathsf{M}_1[\mathscr{L}_{\mathcal{F}_2}]$ we have $\A{\upharpoonright}_{\mathscr{L}_{\mathsf{M}_1}} \in \mathsf{M}_1$.\ As $\mathsf{M}_1 = \SSS(\mathsf{K}[\mathscr{L}_{\mathcal{F}_1}])$ by the left hand side of (\ref{Eq : the classes M[K2] explained in detail}), there exists $\B \in \mathsf{K}[\mathscr{L}_{\mathcal{F}_1}]$ with $\A{\upharpoonright}_{\mathscr{L}_{\mathsf{M}_1}} \leq \B$. Furthermore, $\B{\upharpoonright}_{\mathscr{L}_{\mathsf{K}}} \in \mathsf{K}$ because $\B \in \mathsf{K}[\mathscr{L}_{\mathcal{F}_1}]$. Since $\SSS(\mathsf{K}[\mathscr{L}_\mathcal{F}])$ is a pp expansion of $\mathsf{K}$ and $\B{\upharpoonright}_{\mathscr{L}_{\mathsf{K}}}	\in \mathsf{K}$, we can apply Proposition \ref{Prop : pp expansions : subreducts : extendable}, obtaining some $\C \in \mathsf{K}[\mathscr{L}_\mathcal{F}]$ such that $\B{\upharpoonright}_{\mathscr{L}_{\mathsf{K}}} \leq \C{\upharpoonright}_{\mathscr{L}_{\mathsf{K}}}$. 

We will prove that $\A \leq \C$. Since $\C \in \mathsf{K}[\mathscr{L}_\mathcal{F}]$, this will yield $\A \in \SSS(\mathsf{K}[\mathscr{L}_\mathcal{F}])$, thus concluding the proof of the inclusion $\mathsf{M}_1[\mathscr{L}_{\mathcal{F}_2}] \subseteq \SSS(\mathsf{K}[\mathscr{L}_\mathcal{F}])$. First, as $\mathscr{L}_\mathcal{F} = \mathscr{L}_{\mathcal{F}_2} = \mathscr{L}_{\mathcal{F}_1} \cup \{ g_f : f \in \mathcal{F}_2 \}$, we have $\mathscr{L}_{\mathcal{F}_1} \subseteq \mathscr{L}_\mathcal{F}$.\ Therefore, from $\C \in \mathsf{K}[\mathscr{L}_\mathcal{F}]$ it follows that $\C{\upharpoonright}_{\mathscr{L}_{\mathcal{F}_1}} \in \mathsf{K}[\mathscr{L}_{\mathcal{F}_1}]$. By applying Proposition \ref{Prop : small homs are big homs} to $\B,  \C{\upharpoonright}_{\mathscr{L}_{\mathcal{F}_1}} \in \mathsf{K}[\mathscr{L}_{\mathcal{F}_1}]$ and $\B{\upharpoonright}_{\mathscr{L}_{\mathsf{K}}} \leq \C{\upharpoonright}_{\mathscr{L}_{\mathsf{K}}} = (\C{\upharpoonright}_{\mathscr{L}_{\mathcal{F}_1}}){\upharpoonright}_{\mathscr{L}_{\mathsf{K}}}$, we obtain $\B \leq \C{\upharpoonright}_{\mathscr{L}_{\mathcal{F}_1}}$. Together with $\A{\upharpoonright}_{\mathscr{L}_{\mathsf{M}_1}} \leq \B$ and $\mathscr{L}_{\mathsf{M}_1} =\mathscr{L}_{\mathcal{F}_1}$, this yields $\A{\upharpoonright}_{\mathscr{L}_{\mathcal{F}_1}} \leq \C{\upharpoonright}_{\mathscr{L}_{\mathcal{F}_1}}$.

Since the language of $\A$ and $\C$ is $\mathscr{L}_\mathcal{F} = \mathscr{L}_{\mathcal{F}_1} \cup \{ g_f : f \in \mathcal{F}_2 \}$, in order to prove that $\A \leq \C$, it only remains to show that for all $n$-ary $f \in \mathcal{F}_2$ and $a_1, \dots, a_n \in A$,
\begin{equation}\label{Eq : double pp expansions : the critical equality}
g_f^\A(a_1, \dots, a_n) = g_f^\C(a_1, \dots, a_n).
\end{equation}
To this end, consider an $n$-ary $f \in \mathcal{F}_2$ and $a_1, \dots, a_n \in A$. We will show that
\begin{align*}
g_f^\A(a_1, \dots, a_n) &= f^{\A{\upharpoonright}_{\mathscr{L}_{\mathsf{M}_1}}}(a_1, \dots, a_n)\\
&=f^{\B}(a_1, \dots, a_n)\\
&=f_*^{\B{\upharpoonright}_{\mathscr{L}_\mathsf{K}}}(a_1, \dots, a_n)\\
&=f_*^{\C{\upharpoonright}_{\mathscr{L}_\mathsf{K}}}(a_1, \dots, a_n)\\
&=g_{f_*}^{\C}(a_1, \dots, a_n)\\
&=g_f^{\C}(a_1, \dots, a_n).
\end{align*}
The equalities above are justified as follows. To prove the first, recall that $\A \in \mathsf{M}_1[\mathscr{L}_{\mathcal{F}_2}]$ and $f \in \mathcal{F}_2$, whence $f^{\A{\upharpoonright}_{\mathscr{L}_{\mathsf{M}_1}}}$ is a total function and $g_f^\A(a_1, \dots, a_n) = f^{\A{\upharpoonright}_{\mathscr{L}_{\mathsf{M}_1}}}(a_1, \dots, a_n)$. To prove the second, observe that the assumption that $f \in \mathcal{F}_2 \subseteq \mathsf{imp}(\mathsf{M}_1)$, $\A{\upharpoonright}_{\mathscr{L}_{\mathsf{M}_1}} \leq \B$,  and $\A{\upharpoonright}_{\mathscr{L}_{\mathsf{M}_1}}, \B \in \mathsf{M}_1$ allows us to apply \cref{Prop : implicit operations extend} to the fact that $\langle a_1, \dots, a_n \rangle \in \mathsf{dom}(f^{\A{\upharpoonright}_{\mathscr{L}_{\mathsf{M}_1}}})$, obtaining $\langle a_1, \dots, a_n \rangle \in \mathsf{dom}(f^{\B})$ and $f^{\A{\upharpoonright}_{\mathscr{L}_{\mathsf{M}_1}}}(a_1, \dots, a_n) = f^{\B}(a_1, \dots, a_n)$. The third equality follows from $\B \in \mathsf{K}[\mathscr{L}_{\mathcal{F}_1}]$ and (\ref{Eq : the functions f and f ast in pp exp}). To prove the fourth, observe that  $f_*\in \mathsf{imp}(\mathsf{K})$, $\B{\upharpoonright}_{\mathscr{L}_\mathsf{K}},\C{\upharpoonright}_{\mathscr{L}_{\mathsf{K}}} \in \mathsf{K}$, and $\B{\upharpoonright}_{\mathscr{L}_{\mathsf{K}}} \leq \C{\upharpoonright}_{\mathscr{L}_{\mathsf{K}}}$. Therefore, we can apply \cref{Prop : implicit operations extend} to the fact that $\langle a_1, \dots, a_n \rangle \in \mathsf{dom}(f_*^{\B{\upharpoonright}_{\mathscr{L}_\mathsf{K}}})$, obtaining $\langle a_1, \dots, a_n \rangle \in \mathsf{dom}(f_*^{\C{\upharpoonright}_{\mathscr{L}_\mathsf{K}}})$ and $f_*^{\B{\upharpoonright}_{\mathscr{L}_\mathsf{K}}}(a_1, \dots, a_n) = f_*^{\C{\upharpoonright}_{\mathscr{L}_\mathsf{K}}}(a_1, \dots, a_n)$. The fifth holds because $\C \in \mathsf{K}[\mathscr{L}_\mathcal{F}]$ and $f_* \in \mathcal{F}$. Lastly, the sixth equality holds because $g_{f_*} = g_f$ by assumption. This concludes the proof of (\ref{Eq : double pp expansions : the critical equality}). Hence, we obtain $\A \leq \C$, as desired.
\end{proof}

\begin{Claim}\label{Claim : double pp expansions : 2}
We have $\mathsf{K}[\mathscr{L}_\mathcal{F}] \subseteq \mathsf{M}_1[\mathscr{L}_{\mathcal{F}_2}]$.
\end{Claim}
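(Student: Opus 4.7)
The plan is to pick an arbitrary $\A \in \mathsf{K}[\mathscr{L}_\mathcal{F}]$ and exhibit a member $\B \in \mathsf{M}_1$ such that $\A = \B[\mathscr{L}_{\mathcal{F}_2}]$; the natural candidate is $\B := \A{\upharpoonright}_{\mathscr{L}_{\mathcal{F}_1}}$, which makes sense because $\mathscr{L}_{\mathcal{F}_1} \subseteq \mathscr{L}_\mathcal{F}$ (since $\mathcal{F}_1 \subseteq \mathcal{F}$).

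First I would verify that $\B \in \mathsf{M}_1$. From $\A \in \mathsf{K}[\mathscr{L}_\mathcal{F}]$ we know that $\A{\upharpoonright}_{\mathscr{L}_\mathsf{K}} \in \mathsf{K}$ and that for each $n$-ary $h \in \mathcal{F}$, the function $h^{\A{\upharpoonright}_{\mathscr{L}_\mathsf{K}}}$ is total and agrees with the interpretation of $g_h$ in $\A$.\ Since $\mathcal{F}_1 \subseteq \mathcal{F}$, these conditions hold in particular for every $h \in \mathcal{F}_1$; as $\B{\upharpoonright}_{\mathscr{L}_\mathsf{K}} = \A{\upharpoonright}_{\mathscr{L}_\mathsf{K}}$ and the interpretations of the symbols $g_h$ with $h \in \mathcal{F}_1$ in $\B$ coincide with those in $\A$, this yields $\B \in \mathsf{K}[\mathscr{L}_{\mathcal{F}_1}] \subseteq \SSS(\mathsf{K}[\mathscr{L}_{\mathcal{F}_1}]) = \mathsf{M}_1$.

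Next I would check that $\A = \B[\mathscr{L}_{\mathcal{F}_2}]$. Both algebras share the language $\mathscr{L}_{\mathcal{F}_2} = \mathscr{L}_\mathcal{F}$ (established just before the claim) and the same $\mathscr{L}_{\mathcal{F}_1}$-reduct, namely $\B$; it remains to confirm that for every $n$-ary $f \in \mathcal{F}_2$, the function $f^\B$ is total and its value equals the interpretation of $g_f$ in $\A$. Since $\B \in \mathsf{K}[\mathscr{L}_{\mathcal{F}_1}]$, equation (\ref{Eq : the functions f and f ast in pp exp}) gives $f^\B = f_*^{\B{\upharpoonright}_{\mathscr{L}_\mathsf{K}}} = f_*^{\A{\upharpoonright}_{\mathscr{L}_\mathsf{K}}}$, which is total because $f_* \in \mathcal{F}$ and $\A \in \mathsf{K}[\mathscr{L}_\mathcal{F}]$. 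Using $g_f = g_{f_*}$ (by the definition at the start of the proof of Theorem \ref{Thm : pp expansion of pp expansions}) and the fact that $g_{f_*}^\A = f_*^{\A{\upharpoonright}_{\mathscr{L}_\mathsf{K}}}$ (since $\A \in \mathsf{K}[\mathscr{L}_\mathcal{F}]$ and $f_* \in \mathcal{F}$), we obtain $g_f^\A = g_{f_*}^\A = f_*^{\A{\upharpoonright}_{\mathscr{L}_\mathsf{K}}} = f^\B$, as required.

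There is no serious obstacle here: once the candidate $\B = \A{\upharpoonright}_{\mathscr{L}_{\mathcal{F}_1}}$ is identified, the proof is a bookkeeping exercise which only requires carefully tracking (i) that the language identifications $\mathscr{L}_\mathcal{F} = \mathscr{L}_{\mathcal{F}_2}$ and $g_f = g_{f_*}$ are compatible with the interpretations in $\A$, and (ii) the defining property of $f_*$ encoded in (\ref{Eq : the functions f and f ast in pp exp}). If anything, the delicate point is the double role of the symbol $g_f$ (for $f \in \mathcal{F}_2$), which is simultaneously the new symbol added to pass from $\mathscr{L}_{\mathcal{F}_1}$ to $\mathscr{L}_{\mathcal{F}_2}$ and the symbol $g_{f_*}$ associated with $f_* \in \mathcal{F}$; the convention $g_{f_*} = g_f$ makes this coincidence explicit and is what allows the reduct $\B$ to interpret every symbol of $\mathscr{L}_{\mathcal{F}_1}$ exactly as $\A$ does, ensuring $\A = \B[\mathscr{L}_{\mathcal{F}_2}] \in \mathsf{M}_1[\mathscr{L}_{\mathcal{F}_2}]$.
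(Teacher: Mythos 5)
Your proposal is correct and follows essentially the same route as the paper's proof: the candidate $\B = \A{\upharpoonright}_{\mathscr{L}_{\mathcal{F}_1}}$ is exactly the paper's $\A{\upharpoonright}_{\mathscr{L}_\mathsf{K}}[\mathscr{L}_{\mathcal{F}_1}]$, and your chain $g_f^\A = g_{f_*}^\A = f_*^{\A{\upharpoonright}_{\mathscr{L}_\mathsf{K}}} = f^\B$ together with the totality argument via \eqref{Eq : the functions f and f ast in pp exp} is the same sequence of steps the paper uses to verify $\A = \B[\mathscr{L}_{\mathcal{F}_2}]$.
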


\begin{proof}[Proof of the Claim]
Consider $\A \in \mathsf{K}[\mathscr{L}_\mathcal{F}]$. Then $\A{\upharpoonright}_{\mathscr{L}_\mathsf{K}} \in \mathsf{K}$ and $f^{\A{\upharpoonright}_{\mathscr{L}_\mathsf{K}}}$ is total for each $f \in \mathcal{F}$. As $\mathcal{F}_1 \subseteq \mathcal{F}$, this implies that the algebra $\A{\upharpoonright}_{\mathscr{L}_\mathsf{K}}[\mathscr{L}_{\mathcal{F}_1}]$ is defined and belongs to $\mathsf{K}[\mathscr{L}_{\mathcal{F}_1}]$ and, therefore, to $\mathsf{M}_1$ as well. We will prove that $f^{\A{\upharpoonright}_{\mathscr{L}_\mathsf{K}}[\mathscr{L}_{\mathcal{F}_1}]}$ is total for each $f \in \mathcal{F}_2$. To this end, consider $f \in \mathcal{F}_2$. By first  applying (\ref{Eq : the functions f and f ast in pp exp}) to $\A{\upharpoonright}_{\mathscr{L}_\mathsf{K}}[\mathscr{L}_{\mathcal{F}_1}] \in \mathsf{K}[\mathscr{L}_{\mathcal{F}_1}]$ and then observing that $\A{\upharpoonright}_{\mathscr{L}_\mathsf{K}}$ is the $\mathscr{L}_\mathsf{K}$-reduct of  $\A{\upharpoonright}_{\mathscr{L}_\mathsf{K}}[\mathscr{L}_{\mathcal{F}_1}]$, 
we obtain
\[
f^{\A{\upharpoonright}_{\mathscr{L}_\mathsf{K}}[\mathscr{L}_{\mathcal{F}_1}]} = f_*^{(\A{\upharpoonright}_{\mathscr{L}_\mathsf{K}}[\mathscr{L}_{\mathcal{F}_1}]){{\upharpoonright}_{\mathscr{L}_\mathsf{K}}}} = f_*^{\A{\upharpoonright}_{\mathscr{L}_\mathsf{K}}}.
\]
The function on the right hand side of the above display is total because $\A \in \mathsf{K}[\mathscr{L}_\mathcal{F}]$ and $f_* \in \mathcal{F}$ (the latter by $f \in \mathcal{F}_2$ and the definition of $\mathcal{F}$). Hence, we conclude that the left hand side of the above display is also total, as desired. Since  $\A{\upharpoonright}_{\mathscr{L}_\mathsf{K}}[\mathscr{L}_{\mathcal{F}_1}] \in \mathsf{M}_1$ and $f^{\A{\upharpoonright}_{\mathscr{L}_\mathsf{K}}[\mathscr{L}_{\mathcal{F}_1}]}$ is total for each $f \in \mathcal{F}_2$, the algebra $(\A{\upharpoonright}_{\mathscr{L}_\mathsf{K}}[\mathscr{L}_{\mathcal{F}_1}])[\mathscr{L}_{\mathcal{F}_2}]$ is defined and belongs to $\mathsf{M}_1[\mathscr{L}_{\mathcal{F}_2}]$. Therefore, in order to conclude the proof, it suffices to show that
\begin{equation}\label{Eq : double pp expansions : the critical equality : 3}
\A = (\A{\upharpoonright}_{\mathscr{L}_\mathsf{K}}[\mathscr{L}_{\mathcal{F}_1}])[\mathscr{L}_{\mathcal{F}_2}].
\end{equation}

To this end, recall that the language of these algebras is $\mathscr{L}_{\mathcal{F}_1} \cup \{ g_f : f \in \mathcal{F}_2 \}$ and their universe is $A$. Moreover, recall that $\A \in \mathsf{K}[\mathscr{L}_\mathcal{F}]$. Then $\A = \A{\upharpoonright}_{\mathscr{L}_\mathsf{K}}[\mathscr{L}_\mathcal{F}]$ and $\A{\upharpoonright}_{\mathscr{L}_\mathsf{K}} \in \mathsf{K}$. Together with $\mathcal{F}_1 \subseteq \mathcal{F}$, this yields that the $\mathscr{L}_{\mathcal{F}_1}$-reduct of $\A$ is $\A{\upharpoonright}_{\mathscr{L}_\mathsf{K}}[\mathscr{L}_{\mathcal{F}_1}]$. On the other hand, the $\mathscr{L}_{\mathcal{F}_1}$-reduct of $(\A{\upharpoonright}_{\mathscr{L}_\mathsf{K}}[\mathscr{L}_{\mathcal{F}_1}])[\mathscr{L}_{\mathcal{F}_2}]$ is also $\A{\upharpoonright}_{\mathscr{L}_\mathsf{K}}[\mathscr{L}_{\mathcal{F}_1}]$ by construction. Therefore, in order to prove the above display, it only remains to show that for all $n$-ary $f \in \mathcal{F}_2$ and $a_1, \dots, a_n \in A$,
\begin{equation}\label{Eq : double pp expansions : the critical equality : 2}
g_f^\A(a_1, \dots, a_n) = g_f^{(\A{\upharpoonright}_{\mathscr{L}_\mathsf{K}}[\mathscr{L}_{\mathcal{F}_1}])[\mathscr{L}_{\mathcal{F}_2}]}(a_1, \dots, a_n).
\end{equation}
Consider an $n$-ary $f \in \mathcal{F}_2$ and $a_1, \dots, a_n \in A$. We will prove that
\begin{align*}
g_f^\A(a_1, \dots, a_n) &= g_{f_*}^\A(a_1, \dots, a_n)\\
&= f_*^{\A{\upharpoonright}_{\mathscr{L}_\mathsf{K}}}(a_1, \dots, a_n)\\
&=f_*^{(\A{\upharpoonright}_{\mathscr{L}_\mathsf{K}}[\mathscr{L}_{\mathcal{F}_1}]){\upharpoonright}_{\mathscr{L}_\mathsf{K}}}(a_1, \dots, a_n)\\
&=f^{\A{\upharpoonright}_{\mathscr{L}_\mathsf{K}}[\mathscr{L}_{\mathcal{F}_1}]}(a_1, \dots, a_n)\\
&=g_f^{(\A{\upharpoonright}_{\mathscr{L}_\mathsf{K}}[\mathscr{L}_{\mathcal{F}_1}])[\mathscr{L}_{\mathcal{F}_2}]}(a_1, \dots, a_n).
\end{align*}
The above equalities are justified as follows. The first holds because $g_{f_*} = g_f$ for each $f \in \mathcal{F}_2$ by definition, the second because $\A \in \mathsf{K}[\mathscr{L}_\mathcal{F}]$ and $f_* \in \mathcal{F}$,  the third because $\A{\upharpoonright}_{\mathscr{L}_\mathsf{K}} = (\A{\upharpoonright}_{\mathscr{L}_\mathsf{K}}[\mathscr{L}_{\mathcal{F}_1}]){\upharpoonright}_{\mathscr{L}_\mathsf{K}}$, the fourth follows from (\ref{Eq : the functions f and f ast in pp exp}) and $\A{\upharpoonright}_{\mathscr{L}_\mathsf{K}}[\mathscr{L}_{\mathcal{F}_1}] \in \mathsf{K}[\mathscr{L}_{\mathcal{F}_1}]$, and the fifth from $f \in \mathcal{F}_2$ and the definition of $(\A{\upharpoonright}_{\mathscr{L}_\mathsf{K}}[\mathscr{L}_{\mathcal{F}_1}])[\mathscr{L}_{\mathcal{F}_2}]$. This concludes the proof of (\ref{Eq : double pp expansions : the critical equality : 2}) and, therefore, of (\ref{Eq : double pp expansions : the critical equality : 3}).
\end{proof}

As (\ref{Eq : double pp expansions : the critical equality : 4}) is an immediate consequence of Claims \ref{Claim : double pp expansions : 1} and \ref{Claim : double pp expansions : 2}, we are done.
\end{proof}

The proof of \cref{Thm : pp expansions : LG expands LF} hinges on the following result, which describes how to lift implicit operations to pp expansions.

\begin{Proposition} \label{Prop : lifting implicit operations}
Let $\mathsf{M}$ be a pp expansion of a class of algebras $\mathsf{K}$ and $f \in \imp(\K)$ defined by a formula $\varphi$ of $\mathscr{L}_\mathsf{K}$. Let also $f_\bullet$ be the partial function on $\M$ given by $f_\bullet =\langle f^{\A{\upharpoonright}_{\mathscr{L}_{\mathsf{K}}}} : \A \in \mathsf{M} \rangle$. Then the following conditions hold:
\benroman
\item\label{Prop : lifting implicit operations: impM} $f_\bullet$ is defined by $\varphi$ and belongs to $\imp(\M)$;
\item\label{Prop : lifting implicit operations: impppM} if $f \in \imppp(\K)$, then $f_\bullet \in \imppp(\M)$;
\item\label{Prop : lifting implicit operations: extM} if $\K$ is a universal class and $f \in \mathsf{ext}(\mathsf{K})$, then $f_\bullet \in \mathsf{ext}(\M)$. 
\eroman
\end{Proposition}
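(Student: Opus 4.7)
My plan is to show that the original defining formula $\varphi$ itself already defines $f_\bullet$ on $\M$, which will immediately place $f_\bullet$ in $\imp(\M)$, and in $\imppp(\M)$ whenever $\varphi$ is pp. The key preliminary observation is that for every $\A \in \M = \SSS(\K[\L_\F])$ one has $\A\resLK \in \K$: pick $\B \in \K[\L_\F]$ with $\A \leq \B$, so $\A\resLK \leq \B\resLK \in \K$, and invoke that $\K$ is closed under $\SSS$ in the natural setting (e.g., when $\K$ is a universal class, as is assumed for (iii)). Then, since $\varphi$ is an $\L_\K$-formula, functional on $\K$, and defining $f$ there,
\[
\varphi^\A \;=\; \varphi^{\A\resLK} \;=\; f^{\A\resLK} \;=\; f_\bullet^\A,
\]
so $\varphi$ defines $f_\bullet$ on $\M$. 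To finish (i), I would show that $f_\bullet$ is preserved by $\M$-homomorphisms: any $\L_\F$-homomorphism $h \colon \A \to \A'$ between members of $\M$ is in particular an $\L_\K$-homomorphism $\A\resLK \to \A'\resLK$ between members of $\K$, which preserves the implicit operation $f$ and hence $f_\bullet$. Part (ii) follows from (i) at once, since $\varphi$ being a pp formula and defining $f_\bullet$ is exactly the statement that $f_\bullet \in \imppp(\M)$.

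\textbf{For part (iii).} Fix $\A \in \M$ and a tuple $\vec{a}$ in $A$ of arity that of $f$; the goal is to find $\D \in \M$ with $\A \leq \D$ and $\vec{a} \in \dom(f_\bullet^{\D})$. First, I would use $\M = \SSS(\K[\L_\F])$ to pick $\B \in \K[\L_\F]$ with $\A \leq \B$, so $\B\resLK \in \K$. Since $\K$ is a universal class and $\F \cup \{f\} \subseteq \ext(\K)$ (using the hypothesis $f \in \ext(\K)$ together with $\F \subseteq \extpp(\K) \subseteq \ext(\K)$, which is part of the definition of a pp expansion), I would then apply \cref{Thm : extendable 1} to $\B\resLK$ inside $\K$ to produce $\C \in \K$ with $\B\resLK \leq \C$ on which \emph{every} extendable implicit operation of $\K$ is total. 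In particular $f^\C$ is total (so $\vec{a} \in \dom(f^\C)$) and $g^\C$ is total for every $g \in \F$, so the $\L_\F$-expansion $\D := \C[\L_\F]$ is defined and belongs to $\K[\L_\F] \subseteq \M$. To close the argument, I would verify $\B \leq \D$ in $\L_\F$: for each $g \in \F$, \cref{Prop : implicit operations extend} applied inside $\K$ to $\B\resLK \leq \C$ ensures $g^\C$ extends $g^{\B\resLK}$, so the interpretations of the new symbols coincide on $B$. Chaining $\A \leq \B \leq \D$ with $\vec{a} \in \dom(f^\C) = \dom(f_\bullet^{\D})$ yields the extendability of $f_\bullet$.

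\textbf{Main obstacle.} The delicate step is the simultaneous totalization in (iii). A naive extension of $\B\resLK$ inside $\K$ on which only $f$ becomes total would not suffice: for the resulting algebra to support the $\L_\F$-expansion needed to land in $\K[\L_\F] \subseteq \M$, \emph{every} $g \in \F$ must also become total on the same extension, otherwise $\C[\L_\F]$ would not even be defined. \cref{Thm : extendable 1} is precisely the tool that resolves this, since its conclusion makes all extendable implicit operations of $\K$ total at once on a single extension. The only remaining subtlety is compatibility of the interpretations in $\L_\F$ along the inclusion $\B\resLK \leq \C$, which is handled by \cref{Prop : implicit operations extend}.
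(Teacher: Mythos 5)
Your proposal is correct and follows essentially the same route as the paper's proof: for (i) and (ii) you show that $\varphi$, being an $\L_\K$-formula, already defines $f_\bullet$ on $\M$ and that homomorphisms in $\M$ restrict to homomorphisms in $\K$; for (iii) you embed $\A$ into some $\B \in \K[\L_\F]$, apply \cref{Thm : extendable 1} to $\B\resLK$ to totalize all extendable operations at once on a single $\C \in \K$, and pass to $\C[\L_\F] \in \K[\L_\F] \subseteq \M$. The only cosmetic difference is that you justify $\B \leq \C[\L_\F]$ via \cref{Prop : implicit operations extend}, where the paper invokes \cref{Prop : small homs are big homs}; these amount to the same observation.
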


\begin{proof}
Throughout the proof we assume that $f$ is $n$-ary.

\eqref{Prop : lifting implicit operations: impM}: Observe that \cref{Prop : pp expansions and subreducts} implies that $\A{\upharpoonright}_{\mathscr{L}_{\mathsf{K}}} \in \K$ for every $\A \in \M$. Hence, $f_\bullet$ is a well-defined partial function on $\M$. We show that $f_\bullet$ is preserved by homomorphisms in $\M$. Consider a homomorphism $h \colon \A \to \B$ with $\A, \B \in \M$ and let $a_1, \dots, a_n \in A$ be such that $\langle a_1, \dots, a_n \rangle \in \mathsf{dom}(f_\bullet^\A)$. Since $f_\bullet^\A = f^{\A{\upharpoonright}_{\mathscr{L}_{\mathsf{K}}}}$, we have
\[
\langle a_1, \dots, a_n \rangle \in \mathsf{dom}(f^{\A{\upharpoonright}_{\mathscr{L}_{\mathsf{K}}}}).
\] 
As $h \colon \A{\upharpoonright}_{\mathscr{L}_{\mathsf{K}}} \to \B{\upharpoonright}_{\mathscr{L}_{\mathsf{K}}}$ is a homomorphism and $f$ is an implicit operation of $\mathsf{K}$, the above display implies that 
\[
\langle h(a_1), \dots, h(a_n) \rangle \in \mathsf{dom}(f^{\B{\upharpoonright}_{\mathscr{L}_{\mathsf{K}}}}) \, \, \text{ and } \, \, h(f^{\A{\upharpoonright}_{\mathscr{L}_{\mathsf{K}}}}(a_1, \dots, a_n)) = f^{\B{\upharpoonright}_{\mathscr{L}_{\mathsf{K}}}}(h(a_1), \dots, h(a_n)).
\]
Together with $f_\bullet^\A = f^{\A{\upharpoonright}_{\mathscr{L}_{\mathsf{K}}}}$ and $f_\bullet^\B = f^{\B{\upharpoonright}_{\mathscr{L}_{\mathsf{K}}}}$, this yields
\[
\langle h(a_1), \dots, h(a_n) \rangle \in \mathsf{dom}(f_\bullet^\B) \, \, \text{ and } \, \, h(f_\bullet^\A(a_1, \dots, a_n)) = f_\bullet^\B(h(a_1), \dots, h(a_n)).
\]
Hence, $f_\bullet$ is preserved by homomorphisms, as desired.

Lastly, we will prove that for all $\A \in \M$ and $a_1, \dots, a_n,b \in A$,
\begin{align*}
& \langle a_1, \dots, a_n \rangle \in \dom(f_\bullet^\A) \text{ and } f_\bullet^\A(a_1, \dots, a_n)=b\\
& \quad \iff  \text{ }
    \langle a_1, \dots, a_n \rangle \in \dom(f^{\A{\upharpoonright}_{\mathscr{L}_{\mathsf{K}}}}) \text{ and } f^{\A{\upharpoonright}_{\mathscr{L}_{\mathsf{K}}}}(a_1, \dots, a_n)=b \\
& \quad \iff  \text{ }  \A{\upharpoonright}_{\mathscr{L}_{\mathsf{K}}} \vDash \varphi(a_1, \dots, a_n,b)\\
& \quad \iff  \text{ }  \A \vDash \varphi(a_1, \dots, a_n,b).
\end{align*}
The first equivalence above holds by the definition of $f_\bullet$, the second because $\varphi$ defines $f$, and the third because $\varphi$ is an $\L_\K$-formula. In view of the above series of equivalences, $\varphi$ defines $f_\bullet$. As $f_\bullet$ is preserved by homomorphisms, we conclude that $f_\bullet \in \imp(\M)$.

\eqref{Prop : lifting implicit operations: impppM}: Suppose that $f \in \imppp(\K)$. Then we may assume that $\varphi$ is a pp formula. It follows from \eqref{Prop : lifting implicit operations: impM} that $f_\bullet$ is defined by $\varphi$ and belongs to $\imp(\M)$, whence $f_\bullet \in \imppp(\M)$.

\eqref{Prop : lifting implicit operations: extM}:
Since $\M$ is a pp expansion of $\K$, it is of the form $\SSS(\K[\L_\mathcal{F}])$ for some $\F \subseteq \mathsf{ext}_{\textsc{pp}}(\mathsf{K})$ and $\L_\F$.  Suppose that $f \in \ext(\K)$.
To show that $f_\bullet \in \mathsf{ext}(\mathsf{M})$, consider $\A \in \mathsf{M}$ and $a_1, \dots, a_n \in A$.
Since $\M = \SSS (\K[\L_\F])$, there exists $\B[\L_\F] \in \K[\L_\F]$ such that $\A \leq \B[\L_\F]$.\ 
As $\B \in \K$, \cref{Thm : extendable 1} yields $\C \in \mathsf{K}$ such that $\B \leq \C$ and $g^\C$ is total for every $g \in \ext(\K)$. 
In particular,
\begin{equation} \label{Eq : props of C}
    \C[\L_\F] \text{ is defined and } \langle a_1, \dots, a_n \rangle \in \mathsf{dom}(f^{\C}).
\end{equation}
By \cref{Prop : small homs are big homs} from $\B \leq \C$ it follows that $\B[\L_\F] \leq \C[\L_\F]$.
Then $\A \leq \B[\L_\F] \leq \C[\L_\F]$ with $\C[\L_\F] \in \mathsf{K}[\L_\F] \subseteq \mathsf{M}$. 
Since $\C[\L_\F]\resLK = \C$, the definition of $f_\bullet$ yields  $f_\bullet^{\C[\L_\F]} = f^{\C}$. 
From \eqref{Eq : props of C} it follows that $\langle a_1, \dots, a_n \rangle \in \mathsf{dom}(f_\bullet^{\C[\L_\F]})$ and, therefore,  $f_\bullet \in \ext(\M)$.
\end{proof}

We now proceed to prove \cref{Thm : pp expansions : LG expands LF}.

\begin{proof}
Let $\M = \SSS(\K[\L_\F])$ and $\L_{\mathcal{G}} = \L_\K \cup \{ g_f : f \in \mathcal{G}\}$. Define $\mathcal{G}_\bullet = \{ f_\bullet : f \in \mathcal{G}-\F\}$, where $f_\bullet =\langle f^{\A{\upharpoonright}_{\L_\K}} : \A \in \mathsf{M} \rangle$.
Since $\mathcal{G} \subseteq \extpp(\K)$, \cref{Prop : lifting implicit operations} implies that $\mathcal{G}_\bullet \subseteq \extpp(\M)$.
Consider the pp expansion $\SSS(\M[\L_{\mathcal{G}_\bullet}])$ of $\M$ induced by $\mathcal{G}_\bullet$ and $\L_{\mathcal{G}_{\bullet}}$, where $\L_{\mathcal{G}_{\bullet}}=\L_{\mathcal{G}}$ and $g_f^{\A[\L_{\mathcal{G}_{\bullet}}]}=f_\bullet^\A$ for all $f \in \mathcal{G}-\F$ and $\A[\L_{\mathcal{G}_{\bullet}}] \in \M[\L_{\mathcal{G}_{\bullet}}]$.
Since $\SSS(\M[\L_{\mathcal{G}_{\bullet}}])$ is a pp expansion of $\M$, 
to prove that $\SSS(\K[\L_{\mathcal{G}}])$ is a pp expansion of $\M$, it suffices to show that 
\begin{equation} \label{Eq : pp exp on K and M coincide}
\SSS(\mathsf{M}[\mathscr{L}_{\mathcal{G}_{\bullet}}]) = \SSS(\mathsf{K}[\mathscr{L}_{\mathcal{G}}]).
\end{equation}

For the inclusion from left to right of \eqref{Eq : pp exp on K and M coincide} it is enough to show that $\mathsf{M}[\mathscr{L}_{\mathcal{G}_{\bullet}}] \subseteq \SSS(\mathsf{K}[\mathscr{L}_{\mathcal{G}}])$. 
To this end, consider $\A \in \mathsf{M}[\mathscr{L}_{\mathcal{G}_{\bullet}}]$. 
Then $\A {\upharpoonright}_{\L_\mathcal{F}} \in \mathsf{M} = \SSS(\K[\L_{\mathcal{F}}])$. Consequently, there exists $\B \in \K[\L_{\mathcal{F}}]$ such that $\A  {\upharpoonright}_{\L_\mathcal{F}} \leq \B$.
As $\mathcal{G} \subseteq \ext(\K)$, \cref{Prop : pp expansions : subreducts : extendable} yields that $\mathsf{K}$ is the class of subreducts of $\mathsf{K}[\mathscr{L}_{\mathcal{G}}]$. Together with $\B \resLK \in \K$ (which holds because $\B \in \K[\L_\mathcal{F}]$), this entails that there exists $\C[\mathscr{L}_{\mathcal{G}}] \in \mathsf{K}[\mathscr{L}_{\mathcal{G}}]$ such that $\B \resLK \leq \C$. 

We will prove that $\A \leq \C[\mathscr{L}_{\mathcal{G}}]$. First, from $\L_\K \subseteq \L_\mathcal{F}$, $\A  {\upharpoonright}_{\L_\mathcal{F}} \leq \B$ and $\B \resLK \leq \C$ it follows that $\A\resLK \leq \C$. Therefore, it only remains to show that $g_f^{\A}(a_1, \dots, a_n) = g_f^{\C[\mathscr{L}_{\mathcal{G}}]}(a_1, \dots, a_n)$ for all $n$-ary $f \in \mathcal{G}$ and $a_1, \dots, a_n \in A$.
We have that
\begin{align*}
    g_f^{\A}(a_1, \dots, a_n)
    & = f_\bullet^{\A}(a_1, \dots, a_n)\\
    & = f^{\A\resLK}(a_1, \dots, a_n)\\
    & = f^{\C}(a_1, \dots, a_n)\\
    & = g_f^{\C[\mathscr{L}_{\mathcal{G}}]}(a_1, \dots, a_n).
\end{align*}
The first equality above holds because $\A \in \mathsf{M}[\mathscr{L}_{\mathcal{G}_{\bullet}}]$ and the second is a consequence of the definition of $f_\bullet$. 
The third equality follows from \cref{Prop : implicit operations extend} because $\A \resLK \leq \C$, and the last from the interpretation of $g_f$ in $\mathsf{K}[\mathscr{L}_{\mathcal{G}}]$. Therefore, $\A \leq \C[\mathscr{L}_{\mathcal{G}}] \in \mathsf{K}[\mathscr{L}_{\mathcal{G}}]$. We conclude that  $\mathsf{M}[\mathscr{L}_{\mathcal{G}_{\bullet}}] \subseteq \SSS(\mathsf{K}[\mathscr{L}_{\mathcal{G}}])$, as desired.

We now prove the inclusion from right to left of \eqref{Eq : pp exp on K and M coincide}. It suffices to show that $\mathsf{K}[\mathscr{L}_{\mathcal{G}}] \subseteq \mathsf{M}[\mathscr{L}_{\mathcal{G}_\bullet}]$.
To this end, consider $\A \in \mathsf{K}[\mathscr{L}_{\mathcal{G}}]$. Then $f^{\A\resLK}$ is a total function for every $f \in \mathcal{G}$. In particular, we have that $f^{\A\resLK}$ is total for every $f \in \mathcal{F}$ because $\mathcal{F} \subseteq \mathcal{G}$.
It follows that $\A\res_{\L_\F} \in \K[\L_\F] \subseteq \M$ and $f_\bullet^{\A\res_{\L_\F}}$ is total for every $f \in \mathcal{G}-\F$. Then $\A\res_{\L_\F}[\L_{\mathcal{G}_\bullet}]$ is defined and belongs to $\M[\L_{\mathcal{G}_\bullet}]$.
So, it only remains to show that $\A=\A\res_{\L_\F}[\L_{\mathcal{G}_\bullet}]$. Clearly, $\A$ and $\A\res_{\L_\F}$ have the same $\L_\mathcal{F}$-reduct. Moreover,
for every $f \in \mathcal{G}-\F$ we have that
\[
g_f^{\A} = f^{\A \resLK} = f_\bullet^{\A\res_{\L_\F}} = g_f^{\A\res_{\L_\F}[\mathscr{L}_{\mathcal{G}}]},
\] 
where the first equality is a consequence of the interpretation of $g_f$ in $\mathsf{K}[\mathscr{L}_{\mathcal{G}}]$, the second follows from the definition of $f_\bullet$ and the fact that $(\A\res_{\L_\F})\resLK = \A\resLK$, and the last one from the interpretation of $g_f$ in $\mathsf{M}[\mathscr{L}_{\mathcal{G}_\bullet}]$. Thus, $\mathsf{K}[\mathscr{L}_{\mathcal{G}}] \subseteq \mathsf{M}[\mathscr{L}_{\mathcal{G}_\bullet}]$, as desired.
\end{proof}

\section{The Beth companion}

 Recall that the strong Beth definability property is the demand that every implicit operation be interpolated by a set of terms  (see Definition \ref{Def : strong Beth prop}). We shall now extend the idea of interpolation to accommodate for situations in which the implicit operation and the set of terms belong to different classes of algebras.

\begin{Definition}
Let $\mathsf{K}$ and $\mathsf{M}$ be a pair of classes of algebras with $\mathscr{L}_\mathsf{K} \subseteq \mathscr{L}_\mathsf{M}$ such that the $\mathscr{L}_\mathsf{K}$-reducts of $\mathsf{M}$ belong to $\mathsf{K}$.\ We say that an $n$-ary  implicit operation 
$f$ of $\mathsf{K}$ is 
\benroman
\item\label{item 1 : interpolation extended : def} \emph{interpolated} in $\mathsf{M}$ by a set of $n$-ary terms $\{ t_i : i \in I \}$ of $\mathsf{M}$ when for all $\A \in \mathsf{M}$ and $\langle a_1, \dots, a_n \rangle \in \mathsf{dom}(f^{\A{\upharpoonright}_{\mathscr{L}_\mathsf{K}}})$ there exists $i \in I$ such that 
\[
f^{\A{\upharpoonright}_{\mathscr{L}_\mathsf{K}}}(a_1, \dots, a_n) = t_i^\A(a_1, \dots, a_n);
\]
\item\label{item 2 : interpolation extended : def}\emph{interpolated} in $\mathsf{M}$ by a set of $n$-ary partial functions $\{ g_i : i \in I \}$ of $\mathsf{M}$ when for all $\A \in \mathsf{M}$ and $\langle a_1, \dots, a_n \rangle \in \mathsf{dom}(f^{\A{\upharpoonright}_{\mathscr{L}_\mathsf{K}}})$ there exists $i \in I$ such that 
\[
\langle a_1, \dots, a_n \rangle \in \mathsf{dom}(g_i^\A) \, \, \text{ and } \, \, f^{\A{\upharpoonright}_{\mathscr{L}_\mathsf{K}}}(a_1, \dots, a_n) = g_i^\A(a_1, \dots, a_n).
\]
\eroman
\end{Definition}

\begin{Remark}
When $\mathsf{K} = \mathsf{M}$, part (\ref{item 1 : interpolation extended : def}) of the above definition specializes to the familiar demand that the  implicit 
operation 
$f$ be interpolated by the set of terms $\{ t_i : i \in I \}$.\ Notice that part (\ref{item 2 : interpolation extended : def}) subsumes part (\ref{item 1 : interpolation extended : def}), for term functions can be viewed as implicit operations (see \cref{Example : term functions}). However, we opted for splitting the definition in two halves for the sake of clarity. Lastly,  we remark that the above definition applies to the situation where $\mathsf{M}$ is a pp expansion of $\mathsf{K}$ because in this case $\mathscr{L}_\mathsf{K} \subseteq \mathscr{L}_\mathsf{M}$ and the $\mathscr{L}_\mathsf{K}$-reducts of $\mathsf{M}$ belong to $\mathsf{K}$ (see Proposition \ref{Prop : pp expansions and subreducts}).
\qed
\end{Remark}

Recall that the notion of a pp expansion was introduced to address the following question:\ is 
it possible to expand a given class of algebras $\mathsf{K}$ by introducing new function symbols for some of its implicit operations so that 
\benroman
\item every implicit operation of $\mathsf{K}$ becomes interpolable by a set of terms in the resulting expansion $\mathsf{M}$, and
\item the basic desiderata  \eqref{D1 : desiderata 1} and \eqref{D1 : desiderata 2}\footnote{See the first paragraph of \cref{Sec: adding implicit operations}.} are met?
\eroman
This idea is made precise by the following definition.

\begin{Definition}
A pp expansion $\mathsf{M}$ of a class of algebras $\mathsf{K}$ is said to be a \emph{Beth companion} of $\mathsf{K}$ when every implicit operation of $\mathsf{K}$ is interpolated in $\mathsf{M}$ by a set of terms of $\mathsf{M}$.
\end{Definition}

The aim of this section is to prove a triplet of results on Beth companions. The first governs the interplay between pp expansions and Beth companions.

\begin{Theorem}\label{Thm : Beth companions vs pp expansions}
Let $\mathsf{K}$ be a universal class, $\mathsf{M}_1$ a pp expansion of $\mathsf{K}$, and $\mathsf{M}_2$ a pp expansion of $\mathsf{M}_1$. Then the following conditions hold:
\benroman
\item\label{item : Beth companions vs pp expansions : 1} if $\mathsf{M}_1$ is a Beth companion of $\mathsf{K}$, then $\mathsf{M}_2$ is a Beth companion of $\mathsf{K}$;
\item\label{item : Beth companions vs pp expansions : 2} if $\mathsf{M}_2$ is a Beth companion of $\mathsf{M}_1$, then $\mathsf{M}_2$ is a Beth companion of $\mathsf{K}$.
\eroman
\end{Theorem}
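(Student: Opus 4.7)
The plan is to treat both parts after a single preliminary reduction:\ by \cref{Thm : pp expansion of pp expansions}, pp expansions compose, so in both (i) and (ii) the class $\mathsf{M}_2$ is automatically a pp expansion of $\mathsf{K}$. What remains in each case is to verify the interpolation clause of the definition of a Beth companion, namely that every $f \in \mathsf{imp}(\mathsf{K})$ is interpolated in $\mathsf{M}_2$ by a set of terms of $\mathsf{M}_2$.

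For (i), I would fix $f \in \mathsf{imp}(\mathsf{K})$, take $\A \in \mathsf{M}_2$, and a tuple $\langle a_1, \dots, a_n \rangle \in \mathsf{dom}(f^{\A\resLK})$. Writing $\mathsf{M}_2 = \SSS(\mathsf{M}_1[\mathscr{L}_{\mathcal{F}}])$ for a suitable $\mathcal{F}$, I would set $\B := \A{\upharpoonright}_{\mathscr{L}_{\mathsf{M}_1}}$ and observe that $\B \in \mathsf{M}_1$ (since $\mathsf{M}_1$ is closed under $\SSS$ by \cref{Thm : pp expansion : still quasivariety}\eqref{item : pp expansion : universal class}) and $\B\resLK = \A\resLK$. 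Because $\mathsf{M}_1$ is a Beth companion of $\mathsf{K}$, there is a term $t$ of $\mathsf{M}_1$ with $f^{\B\resLK}(a_1, \dots, a_n) = t^\B(a_1, \dots, a_n)$. Since $t$ involves only symbols from $\mathscr{L}_{\mathsf{M}_1} \subseteq \mathscr{L}_{\mathsf{M}_2}$, we have $t^\B(a_1, \dots, a_n) = t^\A(a_1, \dots, a_n)$. Combining yields $f^{\A\resLK}(a_1, \dots, a_n) = t^\A(a_1, \dots, a_n)$, so $t$, regarded as a term of $\mathsf{M}_2$, interpolates $f$ at the chosen tuple.

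For (ii), the key move is to lift $f \in \mathsf{imp}(\mathsf{K})$ to an implicit operation of $\mathsf{M}_1$ via \cref{Prop : lifting implicit operations}:\ letting $\varphi$ be a formula defining $f$, the partial function $f_\bullet = \langle f^{\B\resLK} : \B \in \mathsf{M}_1 \rangle$ belongs to $\mathsf{imp}(\mathsf{M}_1)$ and is itself defined by $\varphi$. Since $\mathsf{M}_2$ is a Beth companion of $\mathsf{M}_1$ by hypothesis, some set of terms $\{t_i : i \in I\}$ of $\mathsf{M}_2$ interpolates $f_\bullet$ in $\mathsf{M}_2$. Given $\A \in \mathsf{M}_2$ and $\langle a_1, \dots, a_n \rangle \in \mathsf{dom}(f^{\A\resLK})$, I would set $\B := \A{\upharpoonright}_{\mathscr{L}_{\mathsf{M}_1}} \in \mathsf{M}_1$ and use $f_\bullet^\B = f^{\B\resLK} = f^{\A\resLK}$ to conclude that $\langle a_1, \dots, a_n \rangle \in \mathsf{dom}(f_\bullet^\B)$. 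The interpolation of $f_\bullet$ by $\{t_i : i \in I\}$ then supplies an index $i$ with $f^{\A\resLK}(a_1, \dots, a_n) = t_i^\A(a_1, \dots, a_n)$, finishing the verification.

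I do not anticipate a genuine mathematical obstacle in either part; the step requiring the most care is purely clerical, namely keeping track of the language chain $\mathscr{L}_\mathsf{K} \subseteq \mathscr{L}_{\mathsf{M}_1} \subseteq \mathscr{L}_{\mathsf{M}_2}$ and of the corresponding reducts (and checking that terms evaluated on reducts agree with their evaluation on the original algebra). The substantive inputs are \cref{Thm : pp expansion of pp expansions}, which delivers the pp-expansion part of the conclusion in both cases, and, for (ii), \cref{Prop : lifting implicit operations}, which allows us to route an implicit operation of $\mathsf{K}$ through $\mathsf{M}_1$ without having to describe $\mathsf{imp}(\mathsf{M}_1)$ explicitly.
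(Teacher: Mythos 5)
Your proposal is correct and follows essentially the same route as the paper's proof: both parts first invoke \cref{Thm : pp expansion of pp expansions} to get that $\mathsf{M}_2$ is a pp expansion of $\mathsf{K}$, part (i) transfers the interpolating terms from $\mathsf{M}_1$ by passing to the $\mathscr{L}_{\mathsf{M}_1}$-reduct (which lies in $\mathsf{M}_1$ by \cref{Prop : pp expansions : subreducts : extendable}), and part (ii) lifts $f$ to $\mathsf{imp}(\mathsf{M}_1)$ via \cref{Prop : lifting implicit operations} exactly as the paper does. No gaps.
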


In general, a quasivariety $\mathsf{K}$ need not possess a Beth companion  (see Section \ref{Sec : Classes without Beth comp}). However, when a Beth companion of $\mathsf{K}$ exists, the above result yields a description of a concrete Beth companion of $\mathsf{K}$.

\begin{Corollary}\label{Cor : Beth companion : ext(K)}
Let $\mathsf{K}$ be a  universal class, 
$\mathcal{F} = \mathsf{ext}_{\textsc{pp}}(\mathsf{K})$, and $\mathscr{L}_\mathcal{F}$ an $\mathcal{F}$-expansion of $\mathscr{L}_\mathsf{K}$. Then $\mathsf{K}$ has a Beth companion if and only if $\SSS(\mathsf{K}[\mathscr{L}_\mathcal{F}])$ is a Beth companion of $\mathsf{K}$.
\end{Corollary}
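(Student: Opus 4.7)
The plan is to establish the nontrivial implication (the converse being obvious from the definition of ``$\mathsf{K}$ has a Beth companion'') by showing that any Beth companion of $\mathsf{K}$ is dominated, as a pp expansion, by $\SSS(\mathsf{K}[\mathscr{L}_\mathcal{F}])$, and then invoking the transfer result in \cref{Thm : Beth companions vs pp expansions}\eqref{item : Beth companions vs pp expansions : 1}.

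First I would unfold the assumption. Suppose $\mathsf{K}$ has a Beth companion $\mathsf{M}$. By the definition of a pp expansion, there exist $\mathcal{F}' \subseteq \mathsf{ext}_{pp}(\mathsf{K})$ and an $\mathcal{F}'$-expansion $\mathscr{L}_{\mathcal{F}'}$ of $\mathscr{L}_\mathsf{K}$ with $\mathsf{M} = \SSS(\mathsf{K}[\mathscr{L}_{\mathcal{F}'}])$. Since $\mathcal{F} = \mathsf{ext}_{pp}(\mathsf{K})$, we have $\mathcal{F}' \subseteq \mathcal{F}$. Because the function symbols of an $\mathcal{F}'$-expansion are only indexed by $\mathcal{F}'$ and chosen fresh, we can identify them (by renaming if necessary) with the corresponding symbols $g_f$, $f \in \mathcal{F}'$, appearing in the given $\mathcal{F}$-expansion $\mathscr{L}_\mathcal{F}$. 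This renaming preserves the class $\mathsf{M}$ up to isomorphism of algebras, so we may assume from the outset that $\mathscr{L}_{\mathcal{F}'} = \mathscr{L}_\mathsf{K} \cup \{ g_f : f \in \mathcal{F}' \} \subseteq \mathscr{L}_\mathcal{F}$.

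Next I would apply \cref{Thm : pp expansions : LG expands LF} with $\mathcal{F}' \subseteq \mathcal{F} \subseteq \mathsf{ext}_{pp}(\mathsf{K})$ and $\mathscr{L}_{\mathcal{F}'} \subseteq \mathscr{L}_\mathcal{F}$. This yields that $\SSS(\mathsf{K}[\mathscr{L}_\mathcal{F}])$ is a pp expansion of $\SSS(\mathsf{K}[\mathscr{L}_{\mathcal{F}'}]) = \mathsf{M}$. Since $\mathsf{M}$ is itself a pp expansion of the universal class $\mathsf{K}$, and we have just shown that $\SSS(\mathsf{K}[\mathscr{L}_\mathcal{F}])$ is a pp expansion of $\mathsf{M}$, the hypotheses of \cref{Thm : Beth companions vs pp expansions}\eqref{item : Beth companions vs pp expansions : 1} are met (with $\mathsf{M}_1 = \mathsf{M}$ and $\mathsf{M}_2 = \SSS(\mathsf{K}[\mathscr{L}_\mathcal{F}])$). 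As $\mathsf{M}_1 = \mathsf{M}$ is by assumption a Beth companion of $\mathsf{K}$, that theorem delivers that $\mathsf{M}_2 = \SSS(\mathsf{K}[\mathscr{L}_\mathcal{F}])$ is a Beth companion of $\mathsf{K}$, as desired.

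The only delicate step will be the identification of the language $\mathscr{L}_{\mathcal{F}'}$ as a sublanguage of $\mathscr{L}_\mathcal{F}$. The notion of pp expansion is defined with respect to a \emph{choice} of new function symbols, so strictly speaking $\mathscr{L}_{\mathcal{F}'}$ as provided by the assumption need not sit inside the specific $\mathscr{L}_\mathcal{F}$ given in the statement. However, since both languages are determined solely by their arity profile and the indexing set of new symbols, the renaming is harmless: the class $\SSS(\mathsf{K}[\mathscr{L}_{\mathcal{F}'}])$ and the property of being a Beth companion of $\mathsf{K}$ are preserved by relabeling the new function symbols. Once this minor bookkeeping is set up, the proof reduces to a single application of \cref{Thm : pp expansions : LG expands LF} followed by \cref{Thm : Beth companions vs pp expansions}\eqref{item : Beth companions vs pp expansions : 1}.
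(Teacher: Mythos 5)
Your proof is correct and follows essentially the same route as the paper: both arguments hinge on the same two results, namely \cref{Thm : pp expansions : LG expands LF} to exhibit $\SSS(\mathsf{K}[\mathscr{L}_\mathcal{F}])$ as a pp expansion of the given Beth companion $\mathsf{M}$, and \cref{Thm : Beth companions vs pp expansions}\eqref{item : Beth companions vs pp expansions : 1} to transfer the Beth companion property. The only difference is cosmetic bookkeeping: you relabel the symbols of $\mathscr{L}_{\mathcal{F}'}$ at the outset so that it sits inside the given $\mathscr{L}_\mathcal{F}$, whereas the paper builds an auxiliary $\mathcal{F}$-expansion extending $\mathscr{L}_{\mathcal{F}'}$ and invokes (faithful) term equivalence at the end; both devices are justified by the fact that renaming function symbols preserves the Beth companion property.
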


The second result connects Beth companions with the strong Beth definability and the strong epimorphism surjectivity properties as follows.

\begin{Theorem}\label{Thm : Beth companions vs strong Beth}
The following conditions are equivalent for a pp expansion $\mathsf{M}$ of a universal class $\mathsf{K}$:
\benroman
\item\label{item : Beth companions vs strong Beth : 1} $\mathsf{M}$ is a Beth companion of $\mathsf{K}$;
\item\label{item : Beth companions vs strong Beth : 2} $\mathsf{M}$ has the strong Beth definability property;
\item\label{item : Beth companions vs strong Beth : 3} $\mathsf{M}$ has the strong epimorphism surjectivity property;
\item\label{item : Beth companions vs strong Beth : 4} every member of $\mathsf{imp}_{\textsc{pp}}(\mathsf{K})$ is interpolated in $\mathsf{M}$ by a set of terms of $\mathsf{M}$.
\eroman
In addition, when $\mathsf{K}$ is a quasivariety, we can add the following equivalent condition:
\benroman
\setcounter{enumi}{4}
\item\label{item : Beth companions vs strong Beth : 5} every member of $\mathsf{imp}_{\textsc{pp}}(\mathsf{K})$ is interpolated in $\mathsf{M}$ by a single term of $\mathsf{M}$.
\eroman
\end{Theorem}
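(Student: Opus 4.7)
The plan is to establish the cycle (i)$\Rightarrow$(iv)$\Rightarrow$(i) and the equivalence (i)$\Leftrightarrow$(ii); since $\mathsf{M}$ is a universal class by \cref{Thm : pp expansion : still quasivariety}\eqref{item : pp expansion : universal class}, the equivalence (ii)$\Leftrightarrow$(iii) follows immediately from \cref{Thm : strong ES = strong Beth}. The principal technical tools will be \cref{Prop : lifting implicit operations} for lifting implicit operations from $\mathsf{K}$ to $\mathsf{M}$, \cref{Prop : equivalence in K[F]}\eqref{item : equivalence in K[F] : 3} for the converse descent, and \cref{Cor : each implicit operations splits into pp formulas} for reducing arbitrary implicit operations to pp ones.

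The implication (i)$\Rightarrow$(iv) is trivial since $\mathsf{imp}_{pp}(\mathsf{K}) \subseteq \mathsf{imp}(\mathsf{K})$. For (iv)$\Rightarrow$(i), invoke \cref{Cor : each implicit operations splits into pp formulas} to decompose any $f \in \mathsf{imp}(\mathsf{K})$ as a pointwise union of finitely many members of $\mathsf{imp}_{pp}(\mathsf{K})$; each is interpolated in $\mathsf{M}$ by a set of terms by (iv), so the union of these finitely many sets interpolates $f$ in $\mathsf{M}$. For (ii)$\Rightarrow$(i), given $f \in \mathsf{imp}(\mathsf{K})$, lift to $f_\bullet \in \mathsf{imp}(\mathsf{M})$ via \cref{Prop : lifting implicit operations}\eqref{Prop : lifting implicit operations: impM}; the strong Beth definability of $\mathsf{M}$ interpolates $f_\bullet$ by a set $T$ of terms of $\mathsf{M}$, and by the very definition $f_\bullet^\A = f^{\A\resLK}$, this set $T$ interpolates $f$ in $\mathsf{M}$.

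The crux is (i)$\Rightarrow$(ii). Let $h \in \mathsf{imp}(\mathsf{M})$; by \cref{Cor : each implicit operations splits into pp formulas} we may reduce to $h \in \mathsf{imp}_{pp}(\mathsf{M})$. Writing $\mathsf{M} = \SSS(\mathsf{K}[\mathscr{L}_\mathcal{F}])$ for some $\mathcal{F} \subseteq \mathsf{ext}_{pp}(\mathsf{K})$, \cref{Prop : equivalence in K[F]}\eqref{item : equivalence in K[F] : 3} yields $h_* \in \mathsf{imp}_{pp}(\mathsf{K})$ with $h^\B = h_*^{\B\resLK}$ for every $\B \in \mathsf{K}[\mathscr{L}_\mathcal{F}]$. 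Since $\mathsf{M}$ is a Beth companion of $\mathsf{K}$, there is a set $T$ of terms of $\mathsf{M}$ that interpolates $h_*$ in $\mathsf{M}$. To see that $T$ interpolates $h$ as well, fix $\A \in \mathsf{M}$ and $\langle a_1, \dots, a_n \rangle \in \dom(h^\A)$, and choose $\B \in \mathsf{K}[\mathscr{L}_\mathcal{F}]$ with $\A \leq \B$: by \cref{Prop : implicit operations extend} applied to $h \in \mathsf{imp}(\mathsf{M})$, we obtain $h^\A(a_1, \dots, a_n) = h^\B(a_1, \dots, a_n) = h_*^{\B\resLK}(a_1, \dots, a_n)$, and by the interpolation of $h_*$ in $\mathsf{M}$ there is $t \in T$ with $h_*^{\B\resLK}(a_1, \dots, a_n) = t^\B(a_1, \dots, a_n) = t^\A(a_1, \dots, a_n)$, the last equality because terms commute with subalgebra inclusions. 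Hence $h^\A(a_1, \dots, a_n) = t^\A(a_1, \dots, a_n)$, as desired.

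Finally, suppose $\mathsf{K}$ is a quasivariety, so $\mathsf{M}$ is also a quasivariety by \cref{Thm : pp expansion : still quasivariety}\eqref{item : pp expansion : quasivariety} and hence closed under direct products. The implication (v)$\Rightarrow$(iv) is trivial. For (ii)$\Rightarrow$(v), \cref{Prop : classes closed under products : strong Beth} strengthens the strong Beth definability of $\mathsf{M}$ to the statement that each member of $\mathsf{imp}_{pp}(\mathsf{M})$ is interpolated by a \emph{single} term of $\mathsf{M}$; applying this to the lift $f_\bullet \in \mathsf{imp}_{pp}(\mathsf{M})$ of any $f \in \mathsf{imp}_{pp}(\mathsf{K})$ (the lift being pp by \cref{Prop : lifting implicit operations}\eqref{Prop : lifting implicit operations: impppM}) gives the desired single-term interpolation of $f$ in $\mathsf{M}$. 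The main obstacle lies in the descent step (i)$\Rightarrow$(ii): the descended operation $h_*$ furnished by \cref{Prop : equivalence in K[F]}\eqref{item : equivalence in K[F] : 3} only agrees with $h$ on $\mathsf{K}[\mathscr{L}_\mathcal{F}]$, so one must carefully bootstrap this local agreement to the entire class $\mathsf{M} = \SSS(\mathsf{K}[\mathscr{L}_\mathcal{F}])$ by means of subalgebra embeddings and the monotonicity of implicit operations under extensions provided by \cref{Prop : implicit operations extend}.
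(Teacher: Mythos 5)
Your proposal is correct and follows essentially the same route as the paper: the (ii)$\Leftrightarrow$(iii) step via \cref{Thm : strong ES = strong Beth}, the lift via \cref{Prop : lifting implicit operations} for (ii)$\Rightarrow$(i), the descent via \cref{Prop : equivalence in K[F]}\eqref{item : equivalence in K[F] : 3} combined with \cref{Prop : implicit operations extend} for the crux implication, and \cref{Prop : classes closed under products : strong Beth} for (v). The only difference is cosmetic — you arrange the cycle as (iv)$\Rightarrow$(i) and (i)$\Rightarrow$(ii) where the paper proves (iv)$\Rightarrow$(ii) directly, but the underlying arguments coincide.
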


The last result in this section
states that, in the setting of quasivarieties, Beth companions are essentially unique (when they exist).
To make this precise, we adapt the notion of term equivalence (see, e.g., \cite[p.~131]{Ber11}) to expresses that two pp expansions of $\K$ in possibly distinct languages are essentially indistinguishable.
Let $\M_1$ and $\M_2$ be a pair of pp expansions of a class of algebras $\K$.
For $i = 1,2$ let $T_i$ be the set of terms of $\M_i$ with variables in $\{x_n : n \in \mathbb{N}\}$. Let $\rho \colon \L_{\M_2} \to T_1$ be a map that preserves the arities. For each $\L_{\M_1}$-algebra $\A$ let $\rho(\A)$ be the $\L_{\M_2}$-algebra with universe $A$ such that $f^{\rho(\A)}=\rho(f)^\A$ for each function symbol $f$ in $\L_{\M_2}$. Similarly, given an arity-preserving map $\tau \colon \L_{\M_1} \to T_2$ and an $\L_{\M_2}$-algebra $\B$, we define an $\L_{\M_1}$-algebra $\tau(\B)$.
We say that $\M_1$ and $\M_2$ are \emph{faithfully term equivalent relative to $\K$} if there exist arity-preserving maps $\tau \colon \L_{\M_1} \to T_2$ and $\rho \colon \L_{\M_2} \to T_1$ such that
$\tau(f)=f(x_1, \dots, x_n)$ and $\rho(f)=f(x_1, \dots, x_n)$ for each $n$-ary function symbol $f$ in $\L_\K$, and
for all $\A \in \M_1$ and $\B \in \M_2$ we have
\benroman
\item\label{item:term eq 1} $\rho(\A) \in \M_2$;
\item\label{item:term eq 2} $\tau(\B) \in \M_1$;
\item\label{item:term eq 3} $\tau \rho (\A)=\A$;
\item\label{item:term eq 4} $\rho \tau(\B)=\B$.
\eroman

In view of the following theorem, from now on, we will talk about \emph{the} Beth companion of a quasivariety.

\begin{Theorem}\label{Thm : Beth companions : term equivalence}
All the Beth companions of a quasivariety $\K$ are  faithfully term equivalent relative to $\mathsf{K}$.
\end{Theorem}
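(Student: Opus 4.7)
The plan is to construct the translations $\tau$ and $\rho$ out of interpolating terms supplied by the Beth companion hypothesis, and then verify the four faithful-term-equivalence clauses by running every computation through a single common extension inside $\K$ in which every implicit operation of $\K$ has become total. Write $\M_1 = \SSS(\K[\L_{\F_1}])$ and $\M_2 = \SSS(\K[\L_{\F_2}])$ for some $\F_i \subseteq \extpp(\K)$, so that $\L_{\M_i} = \L_\K \cup \{g_f : f \in \F_i\}$; by \cref{Thm : pp expansion : still quasivariety}, both $\M_i$ are quasivarieties. For every $f \in \F_1 \subseteq \imppp(\K)$, the fact that $\M_2$ is a Beth companion of $\K$ together with \cref{Thm : Beth companions vs strong Beth}\eqref{item : Beth companions vs strong Beth : 5}---available because $\K$ is a quasivariety---yields a single term $t_f \in T_2$ interpolating $f$ in $\M_2$. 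Symmetrically, fix a term $s_g \in T_1$ interpolating each $g \in \F_2$ in $\M_1$. I set $\tau(g_f) = t_f$ for $f \in \F_1$ and $\tau(h) = h(x_1,\dots,x_n)$ for $h \in \L_\K$ of arity $n$, and define $\rho$ symmetrically. The clause of faithful term equivalence fixing $\L_\K$-symbols then holds by construction.

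The technical heart of the argument is the following common-extension lemma. Given $\B \in \M_2$, apply \cref{Thm : extendable 1} to obtain $\D \in \K$ with $\B\res_{\L_\K} \leq \D$ and $h^\D$ total for every $h \in \ext(\K)$; enlarging $\D$ so that it extends a witness $\tilde\C \in \K$ for $\B \in \SSS(\K[\L_{\F_2}])$, \cref{Prop : small homs are big homs} lifts the $\L_\K$-inclusion $\tilde\C \leq \D$ to an $\L_{\M_2}$-embedding $\tilde\C[\L_{\F_2}] \leq \D[\L_{\F_2}]$, giving $\B \leq \D[\L_{\F_2}] \in \M_2$. Since $f^\D$ is total for every $f \in \F_1 \subseteq \ext(\K)$, the interpolation property of $t_f$ in $\M_2$ forces $t_f^{\D[\L_{\F_2}]}(\vec d\,) = f^\D(\vec d\,)$ on all of $D^n$; restricting to $\vec a \in B^n$ gives $t_f^\B(\vec a) = f^\D(\vec a)$, and because $t_f^\B$ has codomain $B$, the value $f^\D(\vec a)$ automatically lies in $B$. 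The mirror statement yields, for each $\A \in \M_1$, an extension $\D'$ of $\A\res_{\L_\K}$ with $s_g^\A(\vec a) = g^{\D'}(\vec a) \in A$ for every $g \in \F_2$ and $\vec a \in A^n$.

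The four conditions now follow, provided the same $\D$ is reused throughout. For (ii): the chain of equalities $g_f^{\tau(\B)}(\vec a) = t_f^\B(\vec a) = f^\D(\vec a) = g_f^{\D[\L_{\F_1}]}(\vec a)$ together with $\tau(\B)\res_{\L_\K} = \B\res_{\L_\K} \leq \D$ embeds $\tau(\B)$ into $\D[\L_{\F_1}] \in \K[\L_{\F_1}] \subseteq \M_1$; (i) is the mirror. For (iv): this embedding means the same $\D$ witnesses both $\B \leq \D[\L_{\F_2}]$ and $\tau(\B) \leq \D[\L_{\F_1}]$, so the mirror lemma applied to $\tau(\B)$ through this common $\D$ gives $s_g^{\tau(\B)}(\vec a) = g^\D(\vec a)$, while the $\M_2$-side of the lemma applied to $g \in \F_2$ gives $g_g^\B(\vec a) = g^\D(\vec a)$. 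Since the $\L_\K$-reducts already agree, $\rho\tau(\B) = \B$; condition (iii) is symmetric.

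The main obstacle is precisely this coordination. A priori, $s_g^{\tau(\B)}(\vec a)$ is pinned only to $g$'s value in \emph{some} extension of $\B\res_{\L_\K}$ making $\F_1$ total, while $g_g^\B(\vec a)$ is pinned only to $g$'s value in some (a priori different) extension making $\F_2$ total, and there is no immediate reason for the two values to coincide. The resolution is to insist that $\D$ makes \emph{every} $h \in \ext(\K)$ total, so that the two pp-expansions $\D[\L_{\F_1}]$ and $\D[\L_{\F_2}]$ exist simultaneously and \cref{Prop : small homs are big homs} transports the subalgebra inclusions from $\L_\K$ into both $\L_{\M_1}$ and $\L_{\M_2}$, collapsing both interpolation identities to the single common value $g^\D(\vec a)$ (respectively $f^\D(\vec a)$).
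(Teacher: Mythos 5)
Your proposal is correct and follows essentially the same route as the paper: the translations $\tau$ and $\rho$ are built from single-term interpolants supplied by \cref{Thm : Beth companions vs strong Beth}\eqref{item : Beth companions vs strong Beth : 5}, and the four clauses are verified by passing to a common algebra of $\K$ on which both $\F_1$ and $\F_2$ are total, so that both interpolation identities collapse to the same implicit-operation value. The only (cosmetic) difference is how that common algebra is produced — the paper routes through the joint pp expansion $\SSS(\K[\L_{\F_1\cup\F_2}])$ and \cref{Thm : pp expansions : LG expands LF}, while you invoke \cref{Thm : extendable 1} directly and form $\D[\L_{\F_1}]$ and $\D[\L_{\F_2}]$ by hand; both devices do the same job.
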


Before proving these results, we shall illustrate their applicability by describing Beth companions of familiar classes of algebras.

\begin{exa}[\textsf{Beth companions}]
Our aim is to prove the next result, which describes Beth companions of some familiar classes of algebras. Further examples will be given once sufficient portions of the theory of Beth companions will become available. The curious reader may consult \cref{table: Beth companions}, which summarizes compactly all the examples considered in this work.

\begin{Theorem}\label{Thm : Beth companion : examples}
The following conditions hold:
\benroman
\item\label{item : Beth companion : example : 1} the variety of Abelian groups is the Beth companion of the quasivariety of cancellative commutative monoids;
\item\label{item : Beth companion : example : 2} the variety of Boolean algebras is the Beth companion of the variety of bounded distributive lattices;
\item\label{item : Beth companion : example : 3} the variety of relatively complemented distributive lattices is the Beth companion of the variety of distributive lattices;
\item\label{item : Beth companion : example : 4} the variety of implicative semilattices is the Beth companion of the variety of Hilbert algebras;
\item\label{item : Beth companion : example : 5} the variety of Heyting algebras of depth $\leq 2$ is the Beth companion of the variety of pseudocomplemented distributive lattices;
\item\label{item : Beth companion : example : 6} every universal class with the strong epimorphism surjectivity property is a Beth companion of itself.
\eroman
\end{Theorem}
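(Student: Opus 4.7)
The plan is to reduce each item to a pair of statements about $\mathsf{M}$: that $\mathsf{M}$ is a pp expansion of $\mathsf{K}$ and that $\mathsf{M}$ has the strong epimorphism surjectivity property. Indeed, by \cref{Thm : Beth companions vs strong Beth} these two conditions together are equivalent to $\mathsf{M}$ being a Beth companion of $\mathsf{K}$. Hence for each item it suffices to cite (or prove) that $\mathsf{M}$ is a pp expansion and that it has the SES. Item (6) is then immediate: $\mathsf{K}$ is a pp expansion of itself by \cref{Exa : pp expansions : lazy}, and the SES holds by assumption.

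The pp expansion half of (1)--(5) has already been settled in this section. Specifically, \cref{Thm : pp expansion : CCM} handles (1); \cref{Thm : pp expansions : DL} handles (2) and (3); \cref{Thm : pp expansion : Hilbert algebras} handles (4); and \cref{thm: HA2 pp expansion PDL} handles (5). For the SES half, item (1) is \cref{Exa : abalian groups : SES}, and items (2) and (3) are \cref{Thm : SES : Boolean algebras}. The substantive remaining work lies in items (4) and (5).

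For (4), I would establish the SES for the variety $\mathsf{ISL}$ of implicative semilattices by exploiting the fact that $\mathsf{ISL}$ is locally finite (Diego's theorem). An application of \cref{Thm : finitely generated : SES} reduces the problem to showing $\mathsf{d}_{\mathsf{ISL}}(\A,\B) = A$ for every pair of \emph{finite} implicative semilattices $\A \leq \B$; this is classical and can be verified, for instance, by representing each finite member of $\mathsf{ISL}$ as the algebra of upsets of a finite poset and noting that any inclusion between such dual posets is witnessed by a pair of order-homomorphisms separating points outside the image. For (5), I would establish the SES for the variety of Heyting algebras of depth $\leq 2$ directly using \cref{Cor : CP variety SES} (this variety is arithmetical, being congruence permutable and congruence distributive because it is a variety of Heyting algebras, and it has the amalgamation property, since it is locally finite with the finite model property and the superintuitionistic logic of depth $\leq 2$ enjoys interpolation). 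By that corollary, it suffices to show that every finitely generated $\B \in (\mathsf{Heyting}_{2})_{\textsc{fsi}}$ lacks proper epic subalgebras; and here the finitely generated finitely subdirectly irreducible members are exactly the finite Heyting algebras whose lattice reduct is a Boolean lattice adjoined with a new top (see the description used in the proof of \cref{thm: HA2 pp expansion PDL}), for which surjectivity of epimorphisms can be checked directly.

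The main obstacle is therefore item (5), because one must verify that HA2 satisfies the hypotheses of \cref{Cor : CP variety SES} and then compute all proper subalgebras of each $(\mathsf{Heyting}_2)_{\textsc{fsi}}^\fg$; item (4) is comparatively routine given Diego's local finiteness theorem. Alternatively, both (4) and (5) can be settled by appealing to Maksimova's classification of varieties of Heyting algebras with the projective Beth property (\cite[Thm.~8.1]{Mak00}), transferred via the relevant reducts, which lists HA2 explicitly and from which the case of implicative semilattices follows by a standard argument on implication subreducts.
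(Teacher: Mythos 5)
Your reduction is exactly the paper's: each item is settled by pairing the pp-expansion results already established in the text (\cref{Thm : pp expansion : CCM}, \cref{Thm : pp expansions : DL}, \cref{Thm : pp expansion : Hilbert algebras}, \cref{thm: HA2 pp expansion PDL}, \cref{Exa : pp expansions : lazy}) with the strong epimorphism surjectivity property of the expanded class and then invoking \cref{Thm : Beth companions vs strong Beth}; items (1)--(3) and (6) coincide with the paper's proof verbatim. The only divergence is how the SES is certified in items (4) and (5): the paper simply cites \cite[Props.~81 \& 82]{Dek20} for implicative semilattices and \cite[Thm.~8.1(3)]{Mak00} for Heyting algebras of depth $\leq 2$ --- i.e., the ``alternative'' you mention in your final sentence --- whereas your primary route sketches direct verifications. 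Those sketches would need genuine work to close: for (4) the dominion computation $\d_{\mathsf{ISL}}(\A,\B)=A$ for finite algebras is only asserted (and note that an inclusion of finite implicative semilattices does not dualize to an inclusion of posets, so the separating homomorphisms are not as immediate as stated); for (5) your justification of the amalgamation property of $\mathsf{Heyting}_2$ via ``locally finite with the finite model property'' is not a reason for AP at all --- AP for a variety of Heyting algebras is equivalent to a (deductive) interpolation property of the corresponding superintuitionistic logic, and that $\mathbf{BD}_2$ enjoys it is itself a nontrivial theorem of Maksimova that must be cited, after which one still has to compute the epic subalgebras of every finitely generated member of $(\mathsf{Heyting}_2)_{\textsc{fsi}}$. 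Since you explicitly fall back on the citation route, the proof as a whole goes through and matches the paper; but as written, the direct arguments for (4) and (5) are the under-justified points.
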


\begin{proof}
(\ref{item : Beth companion : example : 1}): By Theorem \ref{Thm : pp expansion : CCM} the variety $\mathsf{AG}$ of Abelian groups is a pp expansion of the quasivariety $\mathsf{CCMon}$ of cancellative commutative monoids. Recall from Example \ref{Exa : abalian groups : SES} that  $\mathsf{AG}$ has the strong epimorphism surjectivity property. Hence, we can apply Theorem \ref{Thm : Beth companions vs strong Beth}, obtaining that $\mathsf{AG}$ is the Beth companion of $\mathsf{CCMon}$.

(\ref{item : Beth companion : example : 2})--(\ref{item : Beth companion : example : 5}): 
Analogous to the proof of (\ref{item : Beth companion : example : 1}). For (\ref{item : Beth companion : example : 2}), use Theorems \ref{Thm : pp expansions : DL}(\ref{item : pp expansions : DL 2}) and \ref{Thm : SES : Boolean algebras}. For (\ref{item : Beth companion : example : 3}), use Theorems \ref{Thm : pp expansions : DL}(\ref{item : pp expansions : DL 1}) and \ref{Thm : SES : Boolean algebras}. For (\ref{item : Beth companion : example : 4}), use \cref{Thm : pp expansion : Hilbert algebras} and the fact that the variety of implicative semilattices has the strong epimorphism surjectivity property (see \cite[Props.~81 \&~82]{Dek20}). For (\ref{item : Beth companion : example : 5}), use \cref{thm: HA2 pp expansion PDL} and the fact that the variety of Heyting algebras of depth~$\leq 2$ has the strong epimorphism surjectivity property  (see \cite[Thm.~8.1(3)]{Mak00}).

(\ref{item : Beth companion : example : 6}): Consider a universal class $\mathsf{K}$ with the strong epimorphism surjectivity property. In view of Example \ref{Exa : pp expansions : lazy}, the class $\mathsf{K}$ is a pp expansion of itself. Therefore, we can apply \cref{Thm : Beth companions vs strong Beth}, obtaining that $\mathsf{K}$ is a Beth companion of itself. 
\end{proof}
\end{exa}

\begin{Remark}
The following shows that the hypothesis that $\K$ is a quasivariety in \cref{Thm : Beth companions : term equivalence} cannot be replaced with the requirement that $\K$ is a universal class. Let $\A = \langle A; \land, \lor, 0, 1\rangle$ be the
two-element bounded lattice and $\K=\UUU(\A)$. Since $\A$ is finite and has no proper subalgebras, from \cref{Thm : quasivariety generation} and \cref{Prop : P_u trivial in finite setting} it follows that
 $\K=  \UUU(\A) = \III\SSS\PPU(\A)=\III(\A)$. Therefore, every member of $\K$ lacks proper subalgebras because it is isomorphic to $\A$. Therefore, for all $\B \leq \C \in \K$ we have $\d_\K(\B,\C)=\d_\K(\C,\C)=\C$. Together with \cref{Prop : SES and dominions}, this yields that $\K$ has the strong epimorphism surjectivity property. Therefore, \cref{Thm : Beth companion : examples}\eqref{item : Beth companion : example : 6} implies that $\K$ is a Beth companion of itself. 

We will show that there exists another Beth companion $\M$ of $\K$ such that $\K$ and $\M$ are not faithfully term equivalent relative to $\K$.
Since $\K$ is a class of bounded distributive lattices, it follows from \cref{Thm : relative complements is implicit}\eqref{item : relative complements is implicit : 2} that taking complements yields a unary implicit operation $f$ of $\K$ defined by a conjunction of equations. As $f^\B$ is total for every 
 $\B \in \III(\A) = \K$, 
we have  $f \in \exteq(\K)$. Let $\L_f = \L_\K \cup \{\neg\}$ be an $f$-expansion of $\L_\K$ and $\M=\SSS(\K[\L_f])$ the pp expansion of $\K$ induced by $f$ and $\L_f$. As every member of $\K$ lacks proper subalgebras, so does every member of $\K[\L_f]$, and hence $\M=\K[\L_f]$. Together with $\K = \III(\A)$ and the fact that $\A[\L_f]$ is defined (because $f^\A$ is total), this yields $\M = \III(\A[\L_f])$.
 By arguing as above we obtain that $\M$ has the strong epimorphism surjectivity property. Therefore, \cref{Thm : Beth companions vs strong Beth} implies that $\M$ is a Beth companion of $\K$.

 It only remains to show that $\M$ and $\K$ are not faithfully term equivalent relative to $\K$. Suppose the contrary, with a view to contradiction. Then let $\tau \colon \L_\K \to T_1$ and $\rho \colon \L_f \to T_2$ be the maps witnessing the faithful term equivalence, where $T_1$ and $T_2$ are the sets of terms of $\L_f$ and $\L_\K$, respectively, with variables in $\{ x_n : n \in \mathbb{N} \}$. Recall  that $\K = \III(\A)$ and  $\M = \III(\A[\L_f])$. Therefore, $\A[\L_f] \cong \rho(\A)$ because  $\rho(\A) \in \M$. Let $h \colon  \rho(\A) \to \A[\L_f]$ be an isomorphism. For every $a \in A$ we have
 \[
  h(\rho(\lnot)^\A(a)) = h(\lnot^{\rho(\A)}a) = \lnot^{\A[\L_f]}h(a) = f^\A(h(a)).
 \]
Consequently, there exists a term $t(x)$ of $\K$ (namely, $\rho(\lnot)$) such that $h(t^\A(a)) = f^\A(h(a))$ for every $a \in A$.
Since $f^\A(h(a))$ is the complement of $h(a)$ in the two-element bounded lattice $\A$, we have $f^\A(h(a)) \neq h(a)$ for every $a \in A$. Therefore, for every $a \in A$ we obtain $h(t^\A(a)) \neq h(a)$, and 
hence
$t^\A(a) \neq a$. From $A=\{0,1\}$ it follows that $t^\A(1) = 0$ and $t^\A(0) = 1$. Since all the basic operations of $\A$ are order preserving in each component, $t^\A$ is order preserving in each component as well. Therefore, $t^\A(0) \leq t^\A(1)$, a contradiction with the fact that $t^\A(1) = 0$ and $t^\A(0) = 1$.
 Hence, $\K$ and $\M$ are not faithfully term equivalent relative to $\K$. In fact, the same argument shows that $\K$ and $\M$ are not even term equivalent (see, e.g., \cite[p.~131]{Ber11} for the definition of term equivalence).
\qed
\end{Remark}

Now, we turn our attention to proving Theorem \ref{Thm : Beth companions vs pp expansions}.

\begin{proof}
Recall from the assumptions that $\mathsf{M}_1$ is a pp expansion of $\mathsf{K}$ and that $\mathsf{M}_2$ is a pp expansion of $\mathsf{M}_1$. Therefore, $\mathsf{M}_2$ is also a pp expansion of $\mathsf{K}$ by \cref{Thm : pp expansion of pp expansions}. This fact will be used repeatedly in the proof.

(\ref{item : Beth companions vs pp expansions : 1}): Suppose that $\mathsf{M}_1$ is a Beth companion of $\mathsf{K}$. We will prove that so is $\mathsf{M}_2$. Since $\mathsf{M}_2$ is a pp expansion of $\mathsf{K}$, it suffices to show that every implicit operation of $\mathsf{K}$ is interpolated in $\mathsf{M}_2$ by a set of terms of $\mathsf{M}_2$. Accordingly, consider an $n$-ary $f \in \mathsf{imp}(\mathsf{K})$. As $\mathsf{M}_1$ is a Beth companion of $\mathsf{K}$, there exists a family $\{ t_i : i \in I \}$ of $\mathsf{M}_1$ that interpolates $f$ in $\mathsf{M}_1$. Since $\mathsf{M}_2$ is a pp expansion of $\mathsf{M}_1$, we know that $\{ t_i : i \in I \}$ is also a set of terms of $\mathsf{M}_2$. We will prove that it interpolates $f$ in $\mathsf{M}_2$.

To this end, consider $\A \in \mathsf{M}_2$ and $a_1, \dots, a_n \in A$ such that $\langle a_1, \dots, a_n \rangle \in \mathsf{dom}(f^{\A{\upharpoonright}_{\mathscr{L}_\mathsf{K}}})$. From $\mathscr{L}_\mathsf{K} \subseteq \mathscr{L}_{\mathsf{M}_1}$ it follows that $(\A{\upharpoonright}_{\mathscr{L}_{\mathsf{M}_1}}){\upharpoonright}_{\mathscr{L}_\mathsf{K}} = 
\A{\upharpoonright}_{\mathscr{L}_\mathsf{K}}$. Therefore, 
\[
f^{\A{\upharpoonright}_{\mathscr{L}_\mathsf{K}}} = f^{(\A{\upharpoonright}_{\mathscr{L}_{\mathsf{M}_1}}){\upharpoonright}_{\mathscr{L}_\mathsf{K}}} \, \, \text{ and } \, \, \langle a_1, \dots, a_n \rangle \in \mathsf{dom}(f^{(\A{\upharpoonright}_{\mathscr{L}_{\mathsf{M}_1}}){\upharpoonright}_{\mathscr{L}_\mathsf{K}}}).
\]
We will prove that $\A{\upharpoonright}_{\mathscr{L}_{\mathsf{M}_1}} \in \mathsf{M}_1$. Recall from the assumptions that  $\K$ is a universal class. Therefore, so is its pp expansion $\M_1$ by \cref{Thm : pp expansion : still quasivariety}(\ref{item : pp expansion : universal class}). Together with the assumption that $\M_2$ is a pp expansion of $\M_1$, this allows us to apply \cref{Prop : pp expansions : subreducts : extendable}, obtaining $\A{\upharpoonright}_{\mathscr{L}_{\mathsf{M}_1}} \in \mathsf{M}_1$, as desired.
Together with the above display and the assumption that $\{ t_i : i \in I \}$ interpolates $f$ in $\mathsf{M}_1$, this implies that there exists $i \in I$ such that $f^{\A{\upharpoonright}_{\mathscr{L}_\mathsf{K}}}(a_1, \dots, a_n) = t_i^{\A{\upharpoonright}_{\mathscr{L}_{\mathsf{M}_1}}}(a_1, \dots, a_n) = t_i^{\A}(a_1, \dots, a_n)$.

(\ref{item : Beth companions vs pp expansions : 2}): Assume that $\mathsf{M}_2$ is a Beth companion of $\mathsf{M}_1$. We will prove that $\mathsf{M}_2$ is a Beth companion of $\mathsf{K}$ as well. As in the previous case, it 
suffices to show that every implicit operation of $\mathsf{K}$ is interpolated in $\mathsf{M}_2$ by a family of 
terms of $\mathsf{M}_2$. Accordingly, consider an $n$-ary $f \in \mathsf{imp}(\mathsf{K})$. 
By \cref{Prop : lifting implicit operations} there exists $g \in \mathsf{imp}(\mathsf{M}_1)$ such that $g^\A = f^{\A{\upharpoonright}_{\mathscr{L}_{\mathsf{K}}}}$ for each $\A \in \mathsf{M}_1$. Observe that $g$ is 
interpolated in $\mathsf{M}_2$ by a family $\{ t_i : i \in I \}$ of terms of $\mathsf{M}_2$ because $\mathsf{M}_2$ 
is a Beth companion of $\mathsf{M}_1$. We will prove that the same family interpolates $f$ in $\mathsf{M}_2$. To 
this end, consider $\A \in \mathsf{M}_2$ and $a_1, \dots, a_n \in A$ such that $\langle a_1, \dots, a_n \rangle \in \mathsf{dom}(f^{\A{\upharpoonright}_{\mathscr{L}_\mathsf{K}}})$. 
 Since $\K$ is a universal class, \cref{Thm : pp expansion : still quasivariety}\eqref{item : pp expansion : universal class} implies that $\M_1$ is a universal class as well. 
As $\A \in \mathsf{M}_2$ and $\mathsf{M}_2$ 
is a pp expansion of $\mathsf{M}_1$
we have $\A{\upharpoonright}_{\mathscr{L}_{\mathsf{M}_1}} \in \mathsf{M}_1$ 
by Proposition \ref{Prop : pp expansions : subreducts : extendable}. Furthermore, $(\A{\upharpoonright}_{\mathscr{L}_{\mathsf{M}_1}}){\upharpoonright}_{\mathscr{L}_{\mathsf{K}}} = \A{\upharpoonright}_{\mathscr{L}_{\mathsf{K}}}$ because $\mathscr{L}_\mathsf{K} \subseteq \mathscr{L}_{\mathsf{M}_1}$. Therefore,
\[
g^{\A{\upharpoonright}_{\mathscr{L}_{\mathsf{M}_1}}} = f^{(\A{\upharpoonright}_{\mathscr{L}_{\mathsf{M}_1}}){\upharpoonright}_{\mathscr{L}_{\mathsf{K}}}} = f^{\A{\upharpoonright}_{\mathscr{L}_{\mathsf{K}}}} \, \, \text{ and } \, \, \langle a_1, \dots, a_n \rangle \in \mathsf{dom}(g^{\A{\upharpoonright}_{\mathscr{L}_{\mathsf{M}_1}}}).
\]
As $\{ t_i : i \in I \}$ interpolates $g$ in $\mathsf{M}_2$, the above display guarantees the existence of some $i \in I$ such that $f^{\A{\upharpoonright}_{\mathscr{L}_{\mathsf{K}}}}(a_1, \dots, a_n) = g^{\A{\upharpoonright}_{\mathscr{L}_{\mathsf{M}_1}}}(a_1, \dots, a_n) = t_i^\A(a_1, \dots, a_n)$.
\end{proof}
Then we prove Corollary \ref{Cor : Beth companion : ext(K)}.

\begin{proof}
It suffices to prove the implication from left to right, as the other one is straightforward. Accordingly, let 
 $\M$ be a Beth companion of $\K$.

\begin{Claim}\label{Claim : corollary : term equivalent to a Beth companion}
The class $\SSS(\mathsf{K}[\mathscr{L}_\mathcal{F}])$ is a pp expansion of $\mathsf{K}$ which, moreover, is term equivalent to a Beth companion of $\mathsf{K}$.
\end{Claim}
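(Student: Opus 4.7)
The plan is to reduce the claim to an application of \cref{Thm : Beth companions vs pp expansions}\eqref{item : Beth companions vs pp expansions : 1} via the pre-existing Beth companion $\M$ of $\K$. First, $\SSS(\K[\L_\F])$ is a pp expansion of $\K$ directly by definition, since $\F = \extpp(\K) \subseteq \extpp(\K)$ and $\L_\F$ is an $\F$-expansion of $\L_\K$. So the real content lies in exhibiting term equivalence with a Beth companion of $\K$.

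Since $\M$ is a Beth companion of $\K$, in particular it is a pp expansion of $\K$, so there exist $\mathcal{G} \subseteq \extpp(\K)$ and a $\mathcal{G}$-expansion $\L_\mathcal{G}$ of $\L_\K$ such that $\M = \SSS(\K[\L_\mathcal{G}])$. Since $\mathcal{G} \subseteq \extpp(\K) = \F$, I would construct an $\F$-expansion $\L'_\F$ of $\L_\K$ containing $\L_\mathcal{G}$ by reusing the function symbols assigned to members of $\mathcal{G}$ in $\L_\mathcal{G}$ and introducing fresh symbols for the operations in $\F - \mathcal{G}$. Then \cref{Thm : pp expansions : LG expands LF} applies with $\mathcal{G} \subseteq \F$ and $\L_\mathcal{G} \subseteq \L'_\F$ to yield that $\SSS(\K[\L'_\F])$ is a pp expansion of $\SSS(\K[\L_\mathcal{G}]) = \M$. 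Combining this with the assumption that $\M$ is a Beth companion of $\K$, \cref{Thm : Beth companions vs pp expansions}\eqref{item : Beth companions vs pp expansions : 1} guarantees that $\SSS(\K[\L'_\F])$ is a Beth companion of $\K$.

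It remains to verify that $\SSS(\K[\L_\F])$ is faithfully term equivalent relative to $\K$ to $\SSS(\K[\L'_\F])$. Both $\L_\F$ and $\L'_\F$ are $\F$-expansions of $\L_\K$ and hence differ only in the choice of function symbols attached to the operations in $\F$. For each $f \in \F$ let $g_f$ and $g_f'$ denote its symbol in $\L_\F$ and $\L'_\F$, respectively. I would define $\tau \colon \L_\F \to T_2$ by sending each $g_f$ to $g_f'(x_1, \dots, x_n)$ and fixing the symbols of $\L_\K$, and analogously $\rho \colon \L'_\F \to T_1$. Because both algebras in $\SSS(\K[\L_\F])$ and $\SSS(\K[\L'_\F])$ interpret their respective symbols $g_f,g_f'$ as $f$ evaluated on the common $\L_\K$-reduct, the maps $\rho$ and $\tau$ transport members back and forth between the two classes and satisfy $\tau\rho(\A)=\A$ and $\rho\tau(\B)=\B$. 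This yields the required term equivalence.

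The conceptually delicate point, and the main obstacle I would spend care on, is the construction of $\L'_\F$: the expansion language must simultaneously accommodate $\L_\mathcal{G}$ (so that Theorem~\ref{Thm : pp expansions : LG expands LF} is applicable) while remaining a genuine $\F$-expansion (so that the term equivalence with $\L_\F$ is essentially just a renaming of function symbols). Once this bookkeeping is in place, the pp-expansion transitivity encoded in Theorem~\ref{Thm : pp expansions : LG expands LF} and the companion transitivity encoded in Theorem~\ref{Thm : Beth companions vs pp expansions}\eqref{item : Beth companions vs pp expansions : 1} do all the structural work, and the verification of items (i)--(iv) of the definition of faithful term equivalence reduces to checking the obvious interpretations.
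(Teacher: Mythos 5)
Your proposal is correct and follows essentially the same route as the paper: both decompose the argument by writing the given Beth companion as $\SSS(\K[\L_\mathcal{G}])$ with $\mathcal{G}\subseteq\F$, form the $\F$-expansion $\L_\mathcal{G}\cup\{g_f : f\in\F-\mathcal{G}\}$, and then combine \cref{Thm : pp expansions : LG expands LF} with \cref{Thm : Beth companions vs pp expansions}\eqref{item : Beth companions vs pp expansions : 1}, concluding with the observation that the two $\F$-expansions differ only by a renaming of function symbols. Your explicit construction of the renaming maps $\tau$ and $\rho$ merely spells out what the paper leaves as an immediate consequence of the definition of $\L_\mathcal{F}'$.
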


\begin{proof}[Proof of the Claim]
Since $\mathsf{M}$ is a pp expansion of $\mathsf{K}$, we may assume that it is induced by some $\mathcal{G} \subseteq \mathsf{ext}_{\textsc{pp}}(\mathsf{K})$ and $\mathcal{G}$-expansion $\mathscr{L}_\mathcal{G}$ of $\mathscr{L}_\mathsf{K}$. As $\mathcal{F} = \mathsf{ext}_{\textsc{pp}}(\mathsf{K})$ by assumption, we have $\mathcal{G} \subseteq \mathsf{ext}_{\textsc{pp}}(\mathsf{K}) = \mathcal{F}$. Then there exists an $\mathcal{F}$-expansion $\mathscr{L}_\mathcal{F}'$ of the form $\mathscr{L}_\mathcal{G} \cup \{ g_f : f \in \mathcal{F} - \mathcal{G} \}$. Since $\mathcal{G} \subseteq \mathcal{F}$ and $\mathscr{L}_\mathcal{G} \subseteq \mathscr{L}_\mathcal{F}'$, \cref{Thm : pp expansions : LG expands LF} implies that $\SSS(\mathsf{K}[\mathscr{L}_\mathcal{F}'])$ is a pp expansion of $\mathsf{M} = \SSS(\mathsf{K}[\mathscr{L}_\mathcal{G}])$. Together with the assumption that $\mathsf{M}$ is a Beth companion of $\mathsf{K}$, this allows us to apply Theorem \ref{Thm : Beth companions vs pp expansions}(\ref{item : Beth companions vs pp expansions : 1}), obtaining that $\SSS(\mathsf{K}[\mathscr{L}_\mathcal{F}'])$ is a Beth companion of $\mathsf{K}$ as well. Lastly, the definition of $\mathscr{L}_\mathcal{F}'$ guarantees that the classes $\SSS(\mathsf{K}[\mathscr{L}_\mathcal{F}])$ and $\SSS(\mathsf{K}[\mathscr{L}_\mathcal{F}'])$ are term equivalent. 
\end{proof}

Recall that $\SSS(\mathsf{K}[\mathscr{L}_\mathcal{F}])$ is a pp expansion of $\mathsf{K}$ by assumption. To prove that it is also a Beth companion of $\mathsf{K}$, consider some $f \in \mathsf{imp}(\mathsf{K})$. By Claim \ref{Claim : corollary : term equivalent to a Beth companion} the class $\SSS(\mathsf{K}[\mathscr{L}_\mathcal{F}])$ is term equivalent to a class in which $f$ is interpolated by a set of terms. By the definition of term equivalence, this guarantees that $f$ is also interpolated in $\SSS(\mathsf{K}[\mathscr{L}_\mathcal{F}])$ by a set of terms.
\end{proof}

Next we prove Theorem \ref{Thm : Beth companions vs strong Beth}.

\begin{proof}
Let $\mathsf{M}$ be a pp expansion of a universal class $\mathsf{K}$ of the form $\SSS(\K[\L_\F])$. 
\cref{Thm : pp expansion : still quasivariety}\eqref{item : pp expansion : universal class} implies that $\M$ is a universal class. So, conditions (\ref{item : Beth companions vs strong Beth : 2}) and (\ref{item : Beth companions vs strong Beth : 3}) are equivalent by \cref{Thm : strong ES = strong Beth}. Furthermore, the implication (\ref{item : Beth companions vs strong Beth : 1})$\Rightarrow$(\ref{item : Beth companions vs strong Beth : 4}) is straightforward.

(\ref{item : Beth companions vs strong Beth : 4})$\Rightarrow$(\ref{item : Beth companions vs strong Beth : 2}):
To prove that $\mathsf{M}$ has the strong Beth definability property, it suffices to show that every implicit operation of $\mathsf{M}$ defined by a pp formula is interpolated by a set of terms. For suppose that this is the case.  As $\mathsf{M}$ is an elementary class, 
we can apply Propositions~\ref{Prop : interpolation : infinite to finite} and~\ref{Prop : elementary strong Beth}, obtaining that $\mathsf{M}$ has the strong Beth definability property, as desired.
Then 
consider an $n$-ary $f \in \mathsf{imp}_{\textsc{pp}}(\mathsf{M})$. By \cref{Prop : equivalence in K[F]}(\ref{item : equivalence in K[F] : 3}) there exists $g \in \mathsf{imp}_{\textsc{pp}}(\mathsf{K})$ such that 
\begin{equation}\label{Eq : Beth companion = strong Beth : 1}
f^\A = g^{\A{\upharpoonright}_{\mathscr{L}_\mathsf{K}}}\text{ for each }\A \in \mathsf{K}[\mathscr{L}_\F]. 
\end{equation}
 Applying (\ref{item : Beth companions vs strong Beth : 4}) to $g \in \mathsf{imp}_{\textsc{pp}}(\mathsf{K})$
yields 
a set  $\{ t_i : i \in I \}$ of terms of $\mathsf{M}$ 
that interpolates $g$ in $\mathsf{M}$. We will show that $\{ t_i : i \in I \}$  interpolates 
$f$ as well. To this end, consider $\A \in \mathsf{M}$ and $a_1, \dots, a_n \in A$ such that $\langle a_1, \dots, a_n \rangle \in \mathsf{dom}(f^\A)$. Since $\A \in \mathsf{M} = \SSS(\K[\L_\F])$, we have $\A \leq \B$ for some $\B \in \mathsf{K}[\mathscr{L}_\F]$.
As $f \in \mathsf{imp}(\mathsf{M})
$ and $\A, \B \in \mathsf{M}$ are such that $\A \leq \B$, from \cref{Prop : implicit operations extend}  and $\langle a_1, \dots, a_n \rangle \in \mathsf{dom}(f^\A)$ it follows that 
\[
\langle a_1, \dots, a_n \rangle \in \mathsf{dom}(f^\B) \, \, \text{ and } \, \, f^\A(a_1, \dots, a_n) = f^\B(a_1, \dots, a_n).
\]
Furthermore, by applying (\ref{Eq : Beth companion = strong Beth : 1}) to the assumption that $\B \in \mathsf{K}[\mathscr{L}_\F]$,
we obtain $f^\B = g^{\B{\upharpoonright}_{\mathscr{L}_\mathsf{K}}}$. Together with the above display, this yields
\[
\langle a_1, \dots, a_n \rangle \in \mathsf{dom}(g^{\B{\upharpoonright}_{\mathscr{L}_\mathsf{K}}}) \, \, \text{ and } \, \, f^\A(a_1, \dots, a_n) = g^{\B{\upharpoonright}_{\mathscr{L}_\mathsf{K}}}(a_1, \dots, a_n).
\]
Since $\{ t_i : i \in I \}$  interpolates $g$ in $\mathsf{M}$, from the left hand side of the above display and $\B \in \mathsf{K}[\mathscr{L}_\F] \subseteq \mathsf{M}$
it follows that there exists $i \in I$ such that $g^{\B{\upharpoonright}_{\mathscr{L}_\mathsf{K}}}(a_1, \dots, a_n) = t_i^\B(a_1, \dots, a_n)$. Together with the right hand side of the above display and the fact that $\A \leq \B$, this yields
\[
f^\A(a_1, \dots, a_n) = g^{\B{\upharpoonright}_{\mathscr{L}_\mathsf{K}}}(a_1, \dots, a_n) = t_i^{\B}(a_1, \dots, a_n) = t_i^\A(a_1, \dots, a_n).
\]
Hence, we conclude that $\{ t_i : i \in I \}$ interpolates $f$, as desired.

(\ref{item : Beth companions vs strong Beth : 2})$\Rightarrow$(\ref{item : Beth companions vs strong Beth : 1}): Suppose that $\mathsf{M}$ has the strong Beth definability property and consider an $n$-ary $f \in \mathsf{imp}(\mathsf{K})$. We need to prove that $f$ is interpolated in $\mathsf{M}$ by a set of terms of $\mathsf{M}$. In view of Proposition \ref{Prop : lifting implicit operations}, there exists $g \in \mathsf{imp}(\mathsf{M})$ such that $g^\A = f^{\A{\upharpoonright}_{\mathscr{L}_{\mathsf{K}}}}$ for each $\A \in \mathsf{M}$. As $\mathsf{M}$ has the strong Beth definability property, the implicit operation $g$ is interpolated by a set of terms $\{ t_i : i \in I \}$ of $\mathsf{M}$.
We will show that this family interpolates $f$ in $\mathsf{M}$ as well. Consider $\A \in \mathsf{M}$ and $a_1, \dots, a_n \in A$ such that $\langle a_1, \dots, a_n \rangle \in \mathsf{dom}(f^{\A{\upharpoonright}_{\mathscr{L}_\mathsf{K}}})$. Together with $g^\A = f^{\A{\upharpoonright}_{\mathscr{L}_{\mathsf{K}}}}$, this yields
\[
\langle a_1, \dots, a_n \rangle \in \mathsf{dom}(g^{\A}) \, \, \text{ and } \, \, f^{\A{\upharpoonright}_{\mathscr{L}_\mathsf{K}}}(a_1, \dots, a_n) = g^\A(a_1, \dots, a_n).
\]
Since $g$ is interpolated by $\{ t_i : i \in I \}$, there exists $i \in I$ such that $g^\A(a_1, \dots, a_n) = t_i^\A(a_1, \dots, a_n)$. By the right hand side of the above display this amounts to $f^{\A{\upharpoonright}_{\mathscr{L}_\mathsf{K}}}(a_1, \dots, a_n) = t_i^\A(a_1, \dots, a_n)$. Hence, we conclude that $f$ is interpolated by $\{ t_i : i \in I \}$ in $\mathsf{M}$.

Next we prove the last part of the statement. To this end, in the rest of the proof, we assume that $\mathsf{K}$ is a quasivariety.

 (\ref{item : Beth companions vs strong Beth : 5})$\Rightarrow$(\ref{item : Beth companions vs strong Beth : 4}): Straightforward.
 
 (\ref{item : Beth companions vs strong Beth : 2})$\Rightarrow$(\ref{item : Beth companions vs strong Beth : 5}): Consider $f \in \mathsf{imp}_{\textsc{pp}}(\mathsf{K})$. In view of \cref{Prop : lifting implicit operations}, there exists $g \in \mathsf{imp}_{\textsc{pp}}(\mathsf{M})$ such that $g^\A = f^{\A{\upharpoonright}_{\mathscr{L}_{\mathsf{K}}}}$ for each $\A \in \mathsf{M}$. Observe that $\mathsf{M}$ is a quasivariety by \cref{Thm : pp expansion : still quasivariety}(\ref{item : pp expansion : quasivariety}). Together with the assumption that $\mathsf{M}$ has the strong Beth definability property and $g \in \mathsf{imp}_{\textsc{pp}}(\mathsf{M})$, this allows us to apply \cref{Prop : classes closed under products : strong Beth}, obtaining that $g$ is interpolated by a single term $t$ of $\mathsf{M}$. An argument analogous to the one detailed in the proof of the implication (\ref{item : Beth companions vs strong Beth : 2})$\Rightarrow$(\ref{item : Beth companions vs strong Beth : 1}) shows that $t$ interpolates $f$ in $\mathsf{M}$ as well (the only difference is that, in this case, the role of the family $\{ t_i : i \in I \}$ is taken over by the single term $t$).
\end{proof}

It only remains to prove \cref{Thm : Beth companions : term equivalence}. The proof hinges on the next observation.

\begin{Remark} \label{Rem : properties preserved by term equivalence}
The strong connection provided by a faithful term equivalence guarantees the preservation of many important properties.
    Let $\M_1$ and $\M_2$ be two pp expansions of $\K$ that are faithfully term equivalent relative to $\K$ and suppose that the term equivalence is witnessed by $\tau \colon \L_{\M_1} \to T_2$ and $\rho \colon \L_{\M_2} \to T_1$. Then it is straightforward to verify that the following conditions hold:
    \benroman
    \item\label{item : preservation term eq : homs} if $h \colon \A \to \B$ is a homomorphism  between $\mathscr{L}_{\M_1}$-algebras, then  $h \colon \rho(\A) \to \rho(\B)$ is a homomorphism between $\mathscr{L}_{\M_2}$-algebras; 
    \item\label{item : preservation term eq : subalg} if $\A, \B$ are $\mathscr{L}_{\M_1}$-algebras such that $\A \leq \B$, then $\rho(\A) \leq \rho(\B)$;
    \item\label{item: preservation term eq : reduct} if $\A \in \M_1$, then $\A\resLK=\rho(\A)\resLK$;
    \item\label{item : preservation term eq : Con lattices} if $\M_1$ and  $\M_2$ are quasivarieties, then $\mathsf{Con}_{\M_1}(\A) = \mathsf{Con}_{\M_2}(\rho(\A))$ for every $\A \in \M_1$; 
    \item\label{item : preservation term eq : (quasi)variety} if $\M_1$ is a universal class, quasivariety, or variety, then so is $\M_2$;
    \item\label{item : preservation term eq : Beth companion} $\M_1$ is a Beth companion of $\K$ if and only if $\M_2$ is a Beth companion of $\K$.
    \eroman
Since the roles of $\rho$ and $\tau$ are interchangeable, $\M_1$ and $\M_2$ may be swapped in the conditions above.
\qed
\end{Remark}

We are now ready to prove \cref{Thm : Beth companions : term equivalence}.

\begin{proof}
Let $\mathsf{M}_1$ and $\mathsf{M}_2$ be a pair of Beth companions of a quasivariety $\mathsf{K}$. We will prove that $\mathsf{M}_1$ and $\mathsf{M}_2$ are faithfully term equivalent relative to $\K$.
For $i = 1,2$ let $\mathcal{F}_i \subseteq \mathsf{ext}_{\textsc{pp}}(\mathsf{K})$ be such that $\mathsf{M}_i = \SSS(\mathsf{K}[\mathscr{L}_{\mathcal{F}_i}])$. We may assume that 
\[
\L_{\mathcal{F}_1} = \L_\K \cup \{g_f : f \in \mathcal{F}_1 \} \, \, \text{ and } \, \, \L_{\mathcal{F}_2} = \L_\K \cup \{g_f : f \in \mathcal{F}_2 \}.
\]
Then let $T_i$ be the set of terms of $\mathscr{L}_{\F_i}$ with variables in $\{ x_n : n \in \mathbb{N} \}$. 
Since $\K$ is a quasivariety, \cref{Thm : Beth companions vs strong Beth} implies that for every $n$-ary $f \in \mathsf{imp}_{\textsc{pp}}(\mathsf{K})$ and $i = 1, 2$ there exists an $n$-ary term  $F_i(f) \in T_i$
that interpolates $f$ in $\M_i$. Therefore, for every $n$-ary $f \in \imppp(\K)$, $\A \in \M_i$ with $i = 1, 2$, and $a_1, \dots, a_n \in A$,
\begin{equation} \label{Eq : f and F_i}
\langle a_1, \dots, a_n \rangle \in \mathsf{dom}(f^{\A \resLK}) \text{ implies }    f^{\A \resLK}(a_1, \dots, a_n) = F_i(f)^{\A}(a_1, \dots, a_n).
\end{equation}

Define $\tau \colon \mathscr{L}_{\F_1} \to T_2$ and 
$\rho \colon \mathscr{L}_{\F_2} \to T_1$ by setting 
 $\tau(f)=f(x_1, \dots, x_n)$ and $\rho(f)=f(x_1, \dots, x_n)$ for every $n$-ary $f \in \L_\K$, and 
\[
\tau(g_{f_1})=F_2(f_1) \quad \text{and} \quad \rho(g_{f_2})=F_1(f_2)
\]
for each $f_1 \in \F_1$ and $f_2 \in \F_2$.  We proceed to show that $\tau$ and $\rho$ witness that $\M_1$ and $\M_2$ are faithfully term equivalent relative to $\K$. Let $\F=\F_1 \cup \F_2$ and let $\L_\F$ be an $\F$-expansion of $\L_\K$ such that $\L_{\F_1},\L_{\F_2} \subseteq \L_\F$. Recall that $\mathcal{F} = \mathcal{F}_1 \cup \mathcal{F}_2 \subseteq \extpp(\K)$ by assumption. Therefore,
\cref{Thm : pp expansions : LG expands LF} implies that  $\SSS(\mathsf{K}[\mathscr{L}_{\mathcal{F}}])$ is a pp expansion of both $\mathsf{M}_1 = \SSS(\mathsf{K}[\mathscr{L}_{\mathcal{F}_1}])$ and $\mathsf{M}_2 = \SSS(\mathsf{K}[\mathscr{L}_{\mathcal{F}_2}])$. To verify conditions \eqref{item:term eq 1} and \eqref{item:term eq 3} in the definition of faithful term equivalence, consider $\A \in \M_1$. We need to prove that $\rho(\A) \in \M_2$ and $\tau(\rho(\A)) =\A$. Since $\SSS(\mathsf{K}[\mathscr{L}_{\mathcal{F}}])$ is a pp expansion of $\mathsf{M}_1$, \cref{Prop : pp expansions and subreducts} implies that $\A$ is an $\L_{\F_1}$-subreduct of $\SSS(\mathsf{K}[\mathscr{L}_{\mathcal{F}}])$. As every member of $\SSS(\mathsf{K}[\mathscr{L}_{\mathcal{F}}])$ is a subalgebra of a member of  $\mathsf{K}[\mathscr{L}_{\mathcal{F}}]$, it follows that there exists $\B \in \mathsf{K}[\mathscr{L}_{\mathcal{F}}]$ such that $\A \leq \B\res_{\L_{\F_1}}$. 
 Since $\M_2$ is a universal class by \cref{Thm : pp expansion : still quasivariety}\eqref{item : pp expansion : universal class} and $\SSS(\mathsf{K}[\mathscr{L}_{\mathcal{F}}])$ is also a pp expansion of $\M_2$, \cref{Prop : pp expansions and subreducts} implies that $\B\res_{\L_{\F_2}} \in \M_2$.

We now show that $\rho(\A) \in \M_2$. Observe that $\M_2$ is closed under subalgebras because it is a pp expansion of $\K$. Hence, it is sufficient to prove  $\rho(\A) \leq \B\res_{\L_{\F_2}}$ because $\B\res_{\L_{\F_2}} \in \M_2$. 
We have
\begin{equation}\label{eq:rho tau preserve reduct}
\rho(\C)\resLK =\C\resLK \text{ and } \tau(\D)\resLK =\D\resLK \text{ for all }\C \in \M_1 \text{ and }\D \in \M_2    
\end{equation}
because $\rho(f)=f(x_1, \dots, x_n)$ and $\tau(f)=f(x_1, \dots, x_n)$ for every $n$-ary $f \in \L_\K$ by the definition of $\rho$ and $\tau$.
Since $\A \leq \B\res_{\L_{\F_1}}$, 
from 
\eqref{eq:rho tau preserve reduct}
it follows that 
\begin{equation}\label{Eq : rho preserves the reduct : Beth companion : term equivalence}
    \rho(\A)\resLK =\A\resLK \leq (\B\res_{\L_{\F_1}})\resLK =\B\resLK = (\B\res_{\L_{\F_2}})\resLK.
\end{equation}
Therefore, it only remains to show that $g_f^{\rho(\A)}(a_1, \dots, a_n)=g_f^{\B\res_{\L_{\F_2}}}(a_1, \dots, a_n)$ for all $n$-ary $g_f \in \L_{\F_2}-\L_\K$ and $a_1, \dots, a_n \in A$. We have
\begin{align*}
g_f^{\rho(\A)}(a_1, \dots, a_n) &= \rho(g_f)^\A(a_1, \dots, a_n)\\
&= F_1(f)^\A(a_1, \dots, a_n)\\
&= F_1(f)^{\B\res_{\L_{\F_1}}}(a_1, \dots, a_n)\\
&= f^{(\B\res_{\L_{\F_1}})\resLK}(a_1, \dots, a_n)\\
&= f^{(\B\res_{\L_{\F_2}})\resLK}(a_1, \dots, a_n)\\
&= g_f^{\B\res_{\L_{\F_2}}}(a_1, \dots, a_n),
\end{align*}
where the first two equalities hold by the definitions of $\rho(\A)$ and $\rho(g_f)$, 
the third holds because $\A \leq \B\res_{\L_{\F_1}}$,
the fifth because $(\B\res_{\L_{\F_1}})\resLK = (\B\res_{\L_{\F_2}})\resLK$, and the last because $\B \in \K[\L_{\F}]$ implies $\B\res_{\L_{\F_2}} \in \K[\L_{\F_2}]$.
The fourth equality follows from \eqref{Eq : f and F_i} because $(\B\res_{\L_{\F_1}})\resLK = (\B\res_{\L_{\F_2}})\resLK$ and $\B\res_{\L_{\F_2}} \in \K[\L_{\F_2}]$ imply $\langle a_1, \dots, a_n \rangle \in \dom(f^{(\B\res_{\L_{\F_1}})\resLK})$.
 We conclude that $\rho(\A) \leq \B\res_{\L_{\F_2}}$, and hence $\rho(\A) \in \M_2$.

Next we prove that $\tau(\rho(\A)) =\A$. 
By 
\eqref{eq:rho tau preserve reduct}
we obtain
\[
\tau(\rho(\A))\resLK = \rho(\A)\resLK =\A\resLK.
\]
Therefore, it remains to show that $g_f^{\tau(\rho(\A))}=g_f^\A$ for every $n$-ary $g_f \in \L_{\F_1}-\L_K$. Let $a_1, \dots, a_n \in A$. Then
\begin{align*}
g_f^{\tau(\rho(\A))}(a_1, \dots, a_n) &= \tau(g_f)^{\rho(\A)}(a_1, \dots, a_n)\\
&= F_2(f)^{\rho(\A)}(a_1, \dots, a_n)\\
&= F_2(f)^{\B\res_{\L_{\F_2}}}(a_1, \dots, a_n)\\
&= f^{(\B\res_{\L_{\F_2}})\resLK}(a_1, \dots, a_n)\\
&= f^{(\B\res_{\L_{\F_1}})\resLK}(a_1, \dots, a_n)\\
&= g_f^{\B\res_{\L_{\F_1}}}(a_1, \dots, a_n)\\
&= g_f^{\A}(a_1, \dots, a_n),
\end{align*}
where the first equality follows from the definition of $\tau(\E)$ for an $\L_{\F_2}$-algebra $\E$, the second from the definition of $\tau(g_f)$, 
the third holds because 
$\rho(\A) \leq \B\res_{\L_{\F_2}}$ as we established above,
 the fifth because $(\B\res_{\L_{\F_2}})\resLK = (\B\res_{\L_{\F_1}})\resLK$, the sixth because $\B \in \K[\L_{\F}]$ implies $\B\res_{\L_{\F_1}} \in \K[\L_{\F_1}]$, and the last because $\A \leq \B\res_{\L_{\F_1}}$. 
 The fourth equality follows from \eqref{Eq : f and F_i} because $(\B\res_{\L_{\F_2}})\resLK = (\B\res_{\L_{\F_1}})\resLK$ and $\B\res_{\L_{\F_1}} \in \K[\L_{\F_1}]$ imply $\langle a_1, \dots, a_n \rangle \in \dom(f^{(\B\res_{\L_{\F_2}})\resLK})$. 
 We conclude that $\tau(\rho(\A)) =\A$.

Thus, conditions \eqref{item:term eq 1} and \eqref{item:term eq 3} in the definition of faithful term equivalence hold. The proof that conditions \eqref{item:term eq 2} and \eqref{item:term eq 4} hold is analogous.
 Since 
 $\tau(f)=f(x_1, \dots, x_n)$ and $\rho(f)=f(x_1, \dots, x_n)$ for every $n$-ary $f \in \L_\K$, 
 we conclude that $\mathsf{M}_1$ and $\mathsf{M}_2$ are faithfully term equivalent relative to $\K$.
\end{proof}

\section{Structure theory}

In this section, we address the following question:\ what do we gain by moving from a quasivariety to its Beth companion? We will answer it by showing that not only does the Beth companion of a quasivariety $\K$ inherit a significant portion of the structure theory of $\K$, but its structure theory is often much richer than that of $\K$. 

We recall that if $\M$ is a pp expansion of a quasivariety $\K$, then $\M$ is a quasivariety and $\K$ the class of $\L_\K$-subreducts of $\M$ (see \cref{Thm : pp expansion : still quasivariety}\eqref{item : pp expansion : quasivariety} and \cref{Prop : pp expansions and subreducts}).

\begin{Definition}\label{def:congruence preserving simple equational}
    A pp expansion $\M$ of a quasivariety $\K$ is said to be 
    \benroman
\item \emph{congruence preserving} when $\mathsf{Con}_{\mathsf{M}}(\A) = \mathsf{Con}_{\mathsf{K}}(\A \resLK)$ for every $\A \in \mathsf{M}$;
\item \emph{simple} when it is of the form $\K[\L_\F]$ for some $\mathcal{F} \subseteq \extpp(\K)$;
\item \emph{equational} when it is induced by some $\mathcal{F} \subseteq \exteq(\K)$.
\eroman
We say that a Beth companion of $\K$ is \emph{congruence preserving} (resp.\ \emph{simple}, or \emph{equational}) when it is so as a pp expansion of $\K$. 
\end{Definition}

\begin{Remark}
The properties introduced above are preserved by faithful term equivalence. More precisely, let $\M_1$ and $\M_2$ be two pp expansions of a quasivariety $\K$ that are faithfully term equivalent relative to $\K$. It is straightforward to prove that $\M_1$ is congruence preserving if and only if $\M_2$ is. In \cite[Cor.~3.6]{BethCat26}, it is shown that $\M_1$ is simple if and only if $\M_2$ is. It can also be proved that $\M_1$ is equational if and only if $\M_2$ is; since we will not rely on this fact, we omit its proof.
\qed
\end{Remark}

The next result compares the strength of the notions introduced in \cref{def:congruence preserving simple equational}. 
To this 
end, we recall that a quasivariety $\K$ has the \emph{relative congruence extension property} when for all $\A \leq \B \in \K$ and $\theta \in \mathsf{Con}_\K(\A)$ there exists $\phi \in \mathsf{Con}_\K(\B)$ such that $\theta = \phi{\upharpoonright}_A$. When $\K$ is a variety, we simply say that $\K$ has the \emph{congruence extension property}.\footnote{Although we will not rely on this fact, we remark that the amalgamation and the relative congruence extension properties persist in pp expansions of quasivarieties.} Lastly, we write ``AP'' as a shorthand for ``amalgamation property''.

\begin{Theorem}\label{Thm : implications : simple - equational}
Let $\M$ be a pp expansion of a quasivariety $\K$. Then the following holds:
\[
\K \text{ has the AP } \Longrightarrow \M \text{ is equational } \Longrightarrow \M \text{ is simple }\Longrightarrow\M \text{ is congruence preserving}.    
\]
Moreover, if $\K$ has the relative congruence extension property, then $\M$ is congruence preserving.
\end{Theorem}

\begin{Remark}
All the quasivarieties in (\ref{item : Beth companion : example : 1})--(\ref{item : Beth companion : example : 5}) of \cref{Thm : Beth companion : examples}  have the amalgamation property.\footnote{The class $\K$ has the amalgamation property when it is the quasivariety of  cancellative commutative monoids (see, e.g., \cite[pp.~100, 108]{SurvKissal}), the variety of (resp.\ bounded) distributive lattices  (see, e.g., \cite[p.\ 114]{BD74}), the variety of Hilbert algebras (see \cite[Thm.\ 1]{MR975871}), or
the variety of pseudocomplemented distributive lattices (see \cite[Thm.\ 5]{GLAP}).
} Therefore, their Beth companions are amenable to the chain of implications in \cref{Thm : implications : simple - equational}.
 On the other hand, we remark that the converses of the implications in \cref{Thm : implications : simple - equational} need not hold in general. A nonsimple congruence preserving Beth companion is exhibited in \cite[Thm.~3.3]{CKMEXAv2}. For a simple nonequational Beth companion, see \cref{Exa : Beth companion : meadows : zero characteristic}. The variety generated by the four-element linearly ordered Heyting algebra has the strong epimorphism surjectivity property (see \cite[Thm.~8.1]{Mak00}) and, therefore, is its own Beth companion by \cref{Thm : Beth companion : examples}(\ref{item : Beth companion : example : 6}). As such, it is equational. However, it lacks the amalgamation property (see \cite[Thm.\ 2]{Mak77}). Lastly, the variety of lattices is its own Beth  companion 
(see page~\pageref{lattices ES} and \cref{Thm : Beth companion : examples}(\ref{item : Beth companion : example : 6})) and, as such, it is congruence preserving. Nonetheless, it lacks the congruence extension property (see, e.g., \cite[p.~102]{SurvKissal}).
\qed
\end{Remark}

The congruence preserving pp expansions of a quasivariety $\K$ inherit a substantial portion of the structure theory of $\K$, namely, the one related to the structure of lattices of $\mathsf{K}$-congruences. The next concepts are instrumental to make this idea precise.  Let $\K$ be a quasivariety. A \emph{congruence equation} is a formal equation in the binary
symbols $\land, \lor$, and $\circ$. A congruence equation is \emph{valid} in an algebra $\A$ relative to $\K$ when it becomes true
whenever we interpret the variables of the equation as $\mathsf{K}$-congruences of $\A$, and for arbitrary binary relations $\alpha$ and $\beta$ on $A$, we interpret $\alpha \land \beta$, $\alpha \lor \beta$, and $\alpha \circ \beta$ as $\alpha \cap \beta$, $\mathsf{Cg}_\K^\A(\alpha \cup \beta)$, and $\alpha \circ \beta$, respectively.
We say that a congruence equation is \emph{valid} in $\K$ when it is valid relative to $\K$ in every member of $\K$ \cite{Pixley72,Wille70} (see also \cite{KearnesKiss12}). For instance, a quasivariety $\K$ is relatively congruence distributive precisely when the congruence equation $(x \lor y) \land (x \lor z) \thickapprox x \lor (y \land z)$ is valid in $\K$. Similarly, a variety $\K$ is congruence permutable if and only if the congruence equation $x \circ y  = y \circ x$ is valid in $\K$.

\begin{Theorem}\label{Thm: easy consequences of congruence preservation : congruence equations}
Let $\M$ be a congruence preserving pp expansion of a quasivariety $\K$. Then every congruence equation valid in $\K$ is valid in $\M$.
\end{Theorem}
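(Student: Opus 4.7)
The plan is to reduce validity in $\M$ to validity in $\K$ by using congruence preservation to identify, for each $\A \in \M$, the two sets of congruences over which $\epsilon$ is quantified. First I would fix a congruence equation $\epsilon$ valid in $\K$ and an algebra $\A \in \M$. Since $\M$ is a pp expansion of the quasivariety (hence universal class) $\K$, \cref{Prop : pp expansions and subreducts} yields $\A\resLK \in \K$, so $\epsilon$ holds in $\A\resLK$ relative to $\K$ under every assignment of its variables to elements of $\mathsf{Con}_\K(\A\resLK)$.

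Next I would take an arbitrary assignment of the variables of $\epsilon$ to $\M$-congruences $\theta_1, \dots, \theta_n \in \mathsf{Con}_\M(\A)$. By the hypothesis of congruence preservation, $\mathsf{Con}_\M(\A) = \mathsf{Con}_\K(\A\resLK)$, so the very same $\theta_1, \dots, \theta_n$ are $\K$-congruences of $\A\resLK$. The heart of the argument is then to check that each of the three binary connectives $\land, \lor, \circ$ is computed the same way in the two settings when applied to the arbitrary binary relations on $A$ that arise as values of subterms of $\epsilon$. For $\land$ (intersection) and $\circ$ (relational composition) this is immediate because both are defined set-theoretically on binary relations without reference to $\K$ or $\M$. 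For $\lor$ one has to verify
\[
\mathsf{Cg}_\M^{\A}(\alpha \cup \beta) = \mathsf{Cg}_\K^{\A\resLK}(\alpha \cup \beta)
\]
for all binary relations $\alpha, \beta \subseteq A \times A$, and this follows directly from $\mathsf{Con}_\M(\A) = \mathsf{Con}_\K(\A\resLK)$: both expressions denote the least element of this common set that contains $\alpha \cup \beta$.

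With these three identifications in hand, a routine induction on term complexity shows that the left-hand side and the right-hand side of $\epsilon$, evaluated in $\A$ relative to $\M$ at $\theta_1, \dots, \theta_n$, coincide with their evaluations in $\A\resLK$ relative to $\K$ at the same tuple. Validity of $\epsilon$ in $\K$ forces the latter two values to agree, so $\epsilon$ holds in $\A$ relative to $\M$, as desired. I do not anticipate a genuine obstacle here; the only substantive point is the identification of the two join operations on arbitrary binary relations, and that is an automatic consequence of congruence preservation once one notes that both joins are characterized as least elements of the same poset.
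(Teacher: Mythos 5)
Your proposal is correct and follows the same route as the paper, which simply declares the statement immediate from the definitions; you have merely spelled out the details the paper leaves implicit (that $\land$ and $\circ$ are computed set-theoretically and that the two joins coincide because both are least elements of the common poset $\mathsf{Con}_\M(\A) = \mathsf{Con}_\K(\A\resLK)$).
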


\begin{proof}
        Immediate from the definitions of a congruence preserving pp expansion and of a  congruence equation.
\end{proof}

In addition, congruence preserving pp expansions preserve and reflect the property of ``being relatively (finitely) subdirectly irreducible'' (cf.\  \cref{Thm : pp expansion : still quasivariety}(\ref{item : pp expansion : quasivariety})) and preserve the property of ``being a variety''.
We recall that the latter fails for arbitrary pp expansions (see \cite[Thm.~2.1]{CKMEXAv2}). More precisely, we will prove the following.

\begin{Theorem}\label{Thm: easy consequences of congruence preservation : variety and RFSI}
Let $\M$ be a congruence preserving pp expansion of a quasivariety $\K$. Then 
\[
\mathsf{M}_\textsc{rfsi} = \{ \A \in \M : \A{\upharpoonright}_{\mathscr{L}_\K} \in \K_{\textsc{rfsi}}\} \, \, \text{ and } \, \, \mathsf{M}_\textsc{rsi} = \{ \A \in \M : \A{\upharpoonright}_{\mathscr{L}_\K} \in \K_{\textsc{rsi}}\}.
\] 
Moreover, if $\mathsf{K}$ is a variety, then $\mathsf{M}$ is a variety. 
\end{Theorem}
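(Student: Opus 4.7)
The plan is to derive both parts of the theorem from the congruence preservation hypothesis by combining Proposition \ref{Prop : RFSI} (which characterizes relative (finite) subdirect irreducibility via the lattice of relative congruences) with Proposition \ref{Prop : variety iff Con=ConK} (which characterizes varieties among quasivarieties via equality of the congruence and relative-congruence lattices).

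For the first equality, I would fix $\A \in \M$ and exploit that, by assumption, $\Con_\M(\A) = \Con_\K(\A\resLK)$ as lattices. Since (complete) meet irreducibility is a purely order-theoretic notion, this yields $\mathsf{Irr}_\M^\infty(\A) = \mathsf{Irr}_\K^\infty(\A\resLK)$ and $\mathsf{Irr}_\M(\A) = \mathsf{Irr}_\K(\A\resLK)$. Now Proposition \ref{Prop : RFSI} characterizes membership in $\M_\textsc{rsi}$ (resp.\ $\M_\textsc{rfsi}$) by the condition that $\textup{id}_A$ lies in $\mathsf{Irr}_\M^\infty(\A)$ (resp.\ $\mathsf{Irr}_\M(\A)$), and analogously for $\K_\textsc{rsi}$ and $\K_\textsc{rfsi}$. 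Hence $\A \in \M_\textsc{rsi}$ iff $\A\resLK \in \K_\textsc{rsi}$, and similarly for $\textsc{rfsi}$. I would note in passing that $\A \in \M$ automatically forces $\A\resLK \in \K$ by Proposition \ref{Prop : pp expansions and subreducts}, so the right-hand sides of both equalities are well-defined.

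For the second part, suppose $\K$ is a variety. I would fix $\A \in \M$ and show $\Con(\A) = \Con_\M(\A)$; Proposition \ref{Prop : variety iff Con=ConK} then guarantees that $\M$ is a variety. The inclusion $\Con_\M(\A) \subseteq \Con(\A)$ is immediate from the definition of relative congruence. For the reverse inclusion, any $\L_\M$-congruence of $\A$ is in particular an $\L_\K$-congruence of $\A\resLK$ (since $\L_\K \subseteq \L_\M$ means every basic $\L_\K$-operation is preserved), giving $\Con(\A) \subseteq \Con(\A\resLK)$. Since $\K$ is a variety and $\A\resLK \in \K$, Proposition \ref{Prop : variety iff Con=ConK} applied to $\K$ yields $\Con(\A\resLK) = \Con_\K(\A\resLK)$; and by the congruence preservation hypothesis, this latter lattice equals $\Con_\M(\A)$. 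Chaining these inclusions and equalities, $\Con(\A) \subseteq \Con_\M(\A)$, as required.

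The proof is essentially bookkeeping once the right ingredients are assembled, so I do not foresee any serious obstacle; the only conceptual observation to keep in mind is that congruence preservation simultaneously transfers the order-theoretic characterizations of relative (finite) subdirect irreducibility and the lattice-theoretic criterion distinguishing varieties from quasivarieties.
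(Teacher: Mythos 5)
Your proof is correct and follows essentially the same route as the paper: congruence preservation transfers the $\mathsf{Irr}$-characterization of Proposition \ref{Prop : RFSI} for the first part, and the chain $\Con(\A)\subseteq\Con(\A\resLK)=\Con_\K(\A\resLK)=\Con_\M(\A)$ combined with Proposition \ref{Prop : variety iff Con=ConK} for the second. The only detail worth adding is an explicit appeal to Theorem \ref{Thm : pp expansion : still quasivariety}\eqref{item : pp expansion : quasivariety} to record that $\M$ is a quasivariety before invoking Proposition \ref{Prop : variety iff Con=ConK} on $\M$.
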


As we mentioned, the aim of this section is to explain what 
we gain by moving from a quasivariety to its Beth companion.\ Our main result states that not only does every congruence preserving Beth companion of a quasivariety $\K$ inherit a significant portion of the structure theory of $\K$ (see Theorems \ref{Thm: easy consequences of congruence preservation : congruence equations} and \ref{Thm: easy consequences of congruence preservation : variety and RFSI}), but it often gains remarkable properties in comparison to those of $\K$. 
 More precisely, we will establish the following.

\begin{Theorem}\label{Thm : Con preserving Beth comp : structure theorem : main}
Let $\K$ be a relatively congruence distributive quasivariety for which $\K_{\textsc{rfsi}}$ is closed under nontrivial subalgebras.\ Then every congruence preserving Beth companion $\M$ of $\K$ is an arithmetical variety with the congruence extension property such that $\M_\textsc{fsi}$ is closed under nontrivial subalgebras.
\end{Theorem}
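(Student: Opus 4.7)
The plan is to separate the work into three easy transfers, a harder structural step, and a wrap-up.

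\emph{Stage 1: Easy transfers.} Since $\M$ is a pp expansion of the quasivariety $\K$, Theorem~\ref{Thm : pp expansion : still quasivariety}(ii) gives that $\M$ is a quasivariety. Because $\M$ is congruence preserving, Theorem~\ref{Thm: easy consequences of congruence preservation : congruence equations} transports the congruence equation characterizing relative congruence distributivity from $\K$ to $\M$, so $\M$ is relatively congruence distributive. By Theorem~\ref{Thm: easy consequences of congruence preservation : variety and RFSI} we have $\M_\textsc{rfsi} = \{\A \in \M : \A\resLK \in \K_\textsc{rfsi}\}$; since the $\L_\K$-reduct of a nontrivial $\M$-subalgebra is a nontrivial subalgebra of the $\K$-reduct, the hypothesis on $\K_\textsc{rfsi}$ transfers and $\M_\textsc{rfsi}$ is closed under nontrivial subalgebras. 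Finally, since $\M$ is a Beth companion, Theorem~\ref{Thm : Beth companions vs strong Beth} (combined with Theorem~\ref{Thm : strong ES = strong Beth}) shows that $\M$ has the strong epimorphism surjectivity property.

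\emph{Stage 2: Variety, arithmeticity, congruence extension.} The plan for the main stage is first to establish the relative congruence extension property, then conclude $\M$ is a variety, and finally to promote relative congruence distributivity to arithmeticity. For relative CEP, take $\A \leq \B \in \M$ and $\theta \in \Con_\M(\A)$. Using Corollary~\ref{Cor : CD vs FSI product congruences} together with the Subdirect Decomposition Theorem~\ref{Thm : Subdirect Decomposition}, I intend to reduce to the case where $\B/\phi \in \M_\textsc{rfsi}$ for a suitable extending congruence $\phi$; the closure of $\M_\textsc{rfsi}$ under nontrivial subalgebras, together with the SES property rephrased via dominions (Proposition~\ref{Prop : SES and dominions} and Corollary~\ref{Cor : failures of the SES}), will then force $\phi \cap A^2 = \theta$. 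Having relative CEP in hand, I would then apply Proposition~\ref{Prop : variety iff Con=ConK}: for any $\A \in \M$, an arbitrary $\theta \in \Con(\A)$ agrees with an $\M$-congruence on every principal subalgebra by relative CEP, and a union-of-chain argument (Proposition~\ref{Prop : universal class : unions of chains}) shows $\theta \in \Con_\M(\A)$, so $\M$ is a variety. With $\M$ a congruence distributive variety in which $\M_\textsc{fsi}$ is closed under nontrivial subalgebras, congruence permutability can be extracted by analyzing $\M$-congruences of products $\B \times \C$ for $\B, \C \in \M_\textsc{fsi}$ via Corollary~\ref{Cor : CD vs FSI product congruences} and SES: under these conditions the relational composition of two $\M$-congruences is forced to be symmetric, yielding the Mal'cev condition and hence arithmeticity.

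\emph{Stage 3: Wrap-up.} Since $\M$ is a variety by Stage 2, we have $\M_\fsi = \M_\textsc{rfsi}$, and the closure of this class under nontrivial subalgebras was already established in Stage 1.

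\emph{Main obstacle.} The heart of the argument is Stage 2, and specifically the establishment of the relative congruence extension property: SES controls subalgebras while CEP controls quotients, and bridging the two requires the dominion machinery of Section~\ref{Sec : existential elimination}. The RCD hypothesis and the subalgebra closure of $\M_\textsc{rfsi}$, fed through Corollary~\ref{Cor : CD vs FSI product congruences}, are precisely what prevents the lifted congruence $\phi$ from collapsing more of $\A$ than $\theta$ does; without either hypothesis this collapse can occur, as the case of reduced commutative rings shows. Once this subtle step is cleared, the remaining conclusions follow routinely from the general theory developed earlier in the paper.
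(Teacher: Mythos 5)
Your Stage 1 matches the paper exactly, and your overall ordering (quasivariety, RCD, closure of $\M_\textsc{rfsi}$ under nontrivial subalgebras, SES, then the structural upgrade) is the right skeleton. The problem is in Stage 2, and it is a genuine gap rather than a presentational one. You propose to prove the \emph{relative} congruence extension property first and then deduce that $\M$ is a variety from it, via ``an arbitrary $\theta \in \Con(\A)$ agrees with an $\M$-congruence on every principal subalgebra by relative CEP.'' Relative CEP asserts that an $\M$-congruence of a subalgebra extends to an $\M$-congruence of the ambient algebra; it says nothing about whether the restriction of an \emph{arbitrary} congruence to a finitely generated subalgebra is an $\M$-congruence, which is exactly what your union-of-chains argument would need as input. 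The implication ``relative CEP $\Rightarrow$ variety'' is false in general: torsion-free Abelian groups form a proper quasivariety with the relative CEP (extend a subgroup $H \leq \A$ with $\A/H$ torsion-free to its isolator in $\B$). So even granting your relative-CEP step, the passage to ``$\M$ is a variety'' does not go through as written, and with it the identification $\M_\fsi = \M_\textsc{rfsi}$ and the meaning of the (non-relative) CEP in the conclusion.

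The paper routes around this by proving one structural lemma that yields variety and CEP simultaneously (\cref{Prop : con preserving Beth comp: structural UA trick}): for $\A \in \M$ and any $\B \leq \A \times \A$ containing the relevant diagonal pairs, $B = \Cg_\M^\A(B) \cap (p_1[B] \times p_2[B])$. The proof realizes $\Cg_\M^\A(B)$ as a subdirect subalgebra of $\A \times \A$ (\cref{Prop : congruences are subalgebras}), applies SES to the inclusion $\B \leq \Cg_\M^\A(B)^*$ to get a separating pair of homomorphisms into some $\C \in \M_\textsc{rfsi}$, and then uses RCD together with closure of $\M_\textsc{rfsi}$ under nontrivial subalgebras (via \cref{Cor : CD vs FSI product congruences}) to write the two kernels as restrictions of product congruences; a case analysis then produces a contradiction. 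Taking $\B$ to be a congruence of $\A$ gives $\theta = \Cg_\M^\A(\theta)$, hence $\Con(\A) = \Con_\M(\A)$ and $\M$ is a variety by \cref{Prop : variety iff Con=ConK}; taking $\B$ to be a congruence of a subalgebra $\C \leq \A$ gives CEP. Arithmeticity is then imported from \cref{Thm : weak ES : structural result} (an external result on the weak ES property), rather than derived by your proposed symmetry analysis of $\theta \circ \phi$ on products of FSI algebras, which as sketched is too vague to assess. If you want to keep your architecture, the fix is to replace the ``relative CEP $\Rightarrow$ variety'' step by a direct proof that every congruence is an $\M$-congruence along the lines of the lemma above; the dominion machinery you invoke is the right tool, but it must be applied to the inclusion of the congruence into the $\M$-congruence it generates inside $\A \times \A$, not to a quotient $\B/\phi$.
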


In view of \cref{Thm : Con preserving Beth comp : structure theorem : main}, under reasonable assumptions, the structure theory of a congruence preserving Beth companion $\mathsf{M}$ of a quasivariety $\K$ enhances that of $\K$ as follows: $\mathsf{M}$ turns out to be a variety (as opposed to a quasivariety) which, moreover, is arithmetical (as opposed to relatively congruence distributive only) and possesses the congruence extension property.

Let us illustrate the effect of \cref{Thm : Con preserving Beth comp : structure theorem : main} in the setting of filtral quasivarieties. A variety $\K$ is said to be \emph{discriminator} when there exist a class of algebras $\M$ and a quaternary term $t$ such that $\K = \VVV(\M)$ and $t^{\A}$ is the quaternary discriminator function on $A$ for every $\A \in \M$ \cite{BurrisWener79,Werner78} (see also  \cite[Sec.~IV.9]{BuSa00}). Examples of discriminator varieties include the variety of Boolean algebras and for each $n \in \mathbb{N}$ the variety of rings satisfying the equation $x \thickapprox x^n$ (see, e.g., \cite[pp.\ 179–180]{Burris1992symbolic}). The importance of discriminator varieties derives from the fact that they admit a general representation theorem in terms of Boolean products with subdirectly irreducible factors \cite[Thm.\ IV.9.4]{BuSa00} (see also \cite{Vaggione2001}). While every discriminator variety is filtral (see, e.g., \cite[p.~101]{Ber87}),
the converse need not hold in general: for instance, the variety of (bounded) distributive lattices is filtral, but it is not discriminator. 
However, its Beth companion, the variety of Boolean algebras, is a discriminator variety. From \cref{Thm : Con preserving Beth comp : structure theorem : main} we will infer that this is true in general.

\begin{Corollary} \label{Cor : rel filtral -> discriminator}
    Every Beth companion of a relatively filtral quasivariety is a discriminator variety.
\end{Corollary}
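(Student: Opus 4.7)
The plan is to construct a concrete Beth companion of $\K$ that is manifestly a discriminator variety, and then propagate this property to every Beth companion of $\K$ via \cref{Thm : Beth companions : term equivalence}.

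By \cref{Thm : relatively filtral quasivarieties : extendability}, there exists a quaternary $f \in \exteq(\K)$ such that $f^\A$ is total and coincides with the quaternary discriminator on $A$ for every $\A \in \K_{\textsc{rsi}}$. Let $d$ be a quaternary function symbol and let $\L_f = \L_\K \cup \{d\}$ be the corresponding $f$-expansion, in which the role of $g_f$ is played by $d$. Setting $\mathsf{N} := \K_{\textsc{rsi}}[\L_f]$ and using $\K = \QQQ(\K_{\textsc{rsi}})$ (an immediate consequence of the \cref{Thm : Subdirect Decomposition}), \cref{Thm : pp expansion : description in terms of Q and U} yields that
\[
\M^\ast := \QQQ(\mathsf{N}) = \SSS(\K[\L_f])
\]
is a pp expansion of $\K$.

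Since $d$ is the discriminator on every member of $\mathsf{N}$, the class $\mathsf{N}$ has a common discriminator term. By the classical theory of discriminator varieties (see, e.g., \cite[Ch.~IV.9]{BuSa00}), $\VVV(\mathsf{N})$ is a discriminator variety that coincides with $\III\SSS\PPP\PPU(\mathsf{N})$ and hence, by \cref{Thm : quasivariety generation}, with $\QQQ(\mathsf{N}) = \M^\ast$. Therefore $\M^\ast$ is a discriminator variety with discriminator term $d$. In particular, $\M^\ast$ is arithmetical with the congruence extension property, its subdirectly irreducible members are simple, and $\M^\ast_\si$ is closed under nontrivial subalgebras. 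These features suffice to establish, via \cref{Prop : SES and dominions} combined with the classical semisimple structure of discriminator varieties, that $\M^\ast$ enjoys the strong epimorphism surjectivity property. Hence \cref{Thm : Beth companions vs strong Beth} ensures that $\M^\ast$ is a Beth companion of $\K$.

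Now let $\M$ be an arbitrary Beth companion of $\K$. By \cref{Thm : Beth companions : term equivalence}, $\M$ and $\M^\ast$ are faithfully term equivalent relative to $\K$ via arity-preserving maps $\tau$ and $\rho$. Then \cref{Rem : properties preserved by term equivalence}\eqref{item : preservation term eq : (quasi)variety} guarantees that $\M$ is a variety, while parts \eqref{item : preservation term eq : homs}, \eqref{item : preservation term eq : subalg}, and \eqref{item : preservation term eq : Con lattices} of the same remark ensure that the assignment $\A \mapsto \tau(\A)$ restricts to a bijection between $\M_\si$ and $\M^\ast_\si$. Setting $t := \rho(d)$, one verifies directly from the definition of faithful term equivalence that $t^\B$ coincides with the discriminator on $B$ for every $\B \in \M_\si$. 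Since $\M = \VVV(\M_\si)$, we conclude that $\M$ is a discriminator variety with discriminator term $t$.

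The main obstacle is to import enough of the classical theory of discriminator varieties — specifically, the equality $\QQQ(\mathsf{N}) = \VVV(\mathsf{N})$ whenever $\mathsf{N}$ has a common discriminator term, together with the strong epimorphism surjectivity property for such varieties — to certify that $\M^\ast$ is indeed a Beth companion of $\K$. Once this is established, the rest of the argument reduces to a routine translation using faithful term equivalence.
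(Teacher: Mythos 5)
The fatal step is your claim that $\M^\ast$, being a discriminator variety, has the strong epimorphism surjectivity property and is therefore a Beth companion of $\K$. No such implication holds: discriminator varieties need not have even the ordinary epimorphism surjectivity property, and the paper never asserts otherwise. Its \cref{Cor : filtral vs discriminator : purely algebraic version} goes in exactly the opposite direction (filtral \emph{plus} strong epimorphism surjectivity implies discriminator). A concrete counterexample to your step: let $\B$ be the three-element algebra with universe $\{0,1,2\}$, two constants $0,1$, a unary operation $u$ with $u(0)=1$, $u(1)=0$, $u(2)=2$, and a quaternary basic operation $t$ interpreted as the discriminator on $B$. Then $\VVV(\B)$ is a discriminator variety whose subdirectly irreducible members are, up to isomorphism, $\B$ and its subalgebra $\A$ with universe $\{0,1\}$; there are no homomorphisms $\B \to \A$ (as $\B$ is simple and the constants must stay distinct), and the only endomorphism of $\B$ is the identity (automorphisms fix the constants, hence fix $2$), so $\mathsf{d}_{\VVV(\B)}(\A,\B)=B\neq A$. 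Since every discriminator variety $\V$ is relatively filtral and your $f$ is interpolated on $\V_{\textsc{rsi}}$ by the discriminator term itself, your $\M^\ast$ is term equivalent to $\V$ in that case, so your argument would prove that every discriminator variety has the strong epimorphism surjectivity property — which the example refutes. Relatedly, your proof purports to show unconditionally that $\M^\ast$ \emph{is} a Beth companion, i.e., that every relatively filtral quasivariety has one; the corollary is deliberately conditional, and this stronger existence claim is not available.

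The paper avoids the problem by running the argument in the other direction: it starts from an arbitrary Beth companion $\M$ of $\K$, which has the strong epimorphism surjectivity property for free by \cref{Thm : Beth companions vs strong Beth}; it then shows $\M$ is congruence preserving (because relatively filtral quasivarieties have the relative congruence extension property, so \cref{Thm : sufficient conditions for being congruence preserving} applies), deduces that $\M$ is itself a relatively filtral quasivariety (relative semisimplicity and the dually Brouwerian condition on compact congruences transfer through $\mathsf{Con}_{\M}(\A)=\mathsf{Con}_{\K}(\A\resLK)$), and finally invokes \cref{Cor : filtral vs discriminator : purely algebraic version}. The salvageable parts of your proposal — that $\SSS(\K[\L_f])=\QQQ(\K_{\textsc{rsi}}[\L_f])$ is a pp expansion and a discriminator variety, and the faithful-term-equivalence transfer of the discriminator term — are fine, but they cannot close the argument without the missing certification that $\M^\ast$ is a Beth companion, and that certification is false in general.
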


Before proving these results, let us illustrate the applicability of Theorem \ref{Thm : Con preserving Beth comp : structure theorem : main} in the context of ring theory.

\begin{exa}[\textsf{Reduced commutative rings}]\label{Exa : Beth companion : meadows : zero characteristic}
We recall that in a field of prime characteristic $p$ every element has at most one $p$-th root
(see, e.g., \cite[F14 p.\ 71]{FLField}). The following concept originates in \cite{CKM25RCR}. A field $\A$ is said to be \emph{weakly rooted} when one of the following conditions holds:
\benroman
\item $\A$ has characteristic $0$;
\item $\A$ has prime characteristic $p$ and all its elements have a   $p$-th root.
\eroman
Given a prime $p$, the \emph{weak $p$-root} of an element $a$ of a weakly rooted field $\A$ is 
\[
 r_p(a) = \begin{cases*}
                    \text{the $p$-th root of $a$}
                    & if  $\A$ has characteristic $p$;  \\
                     0 & otherwise.
                 \end{cases*}
 \]
An \emph{implicitly closed field} is an algebra $\langle A; +, \cdot, -, 0, 1, (\,)^*, \{ r_p : \text{$p$ is prime}\}\rangle$, where the tuple $\langle A; +, \cdot, -, 0, 1, (\,)^*\rangle$ is a zero-totalized field (see \cref{exa:meadows}) that is weakly rooted and $r_p$ is the unary operation of taking weak $p$-roots.
Algebras that embed into direct products of implicitly closed fields will be called \emph{implicitly closed meadows}. 
The class $\mathsf{ICM}$ of implicitly closed meadows is a variety axiomatized by the axioms of meadows (see \cref{exa:meadows}) together with the equation 
\[
(r_p(x))^p \thickapprox  (1-p^*p)x
\]
for every prime $p$, where we denote by $p$ the result of summing $p$-times the unit $1$.

It is established in \cite[Thm.~6.1]{CKM25RCR} that $\mathsf{ICM}$ is a simple Beth companion of the quasivariety $\mathsf{RCRing}$ of reduced commutative rings. Thus, from  
\cref{Thm : implications : simple - equational}
it follows that $\mathsf{ICM}$ is a congruence preserving Beth companion of $\mathsf{RCRing}$.
We also recall that  $\mathsf{RCRing}$ is relatively congruence distributive (see \cite[Thm.~2]{Tom54}) and its relatively finitely subdirectly irreducible members are precisely the integral domains 
(see \cite[p.~410]{CampRaf}),
where an \emph{integral domain} is a commutative ring validating the formulas $0 \not \thickapprox 1$ and $xy\thickapprox 0 \to (x\thickapprox 0 \sqcup y\thickapprox 0)$. Therefore, the class of relatively finitely subdirectly irreducible members of $\mathsf{RCRing}$ is closed under nontrivial subalgebras. Consequently,  from  \cref{Thm : Con preserving Beth comp : structure theorem : main} it follows that $\mathsf{ICM}$ is an arithmetical variety with the congruence extension property. None of these facts holds for $\mathsf{RCRing}$, which is a proper quasivariety without the relative congruence extension property\footnote{To show that $\mathsf{RCRing}$ lacks the relative congruence extension property, consider the polynomial ring $\mathbb{Z}[x]$ and observe that it is an integral domain  and, therefore, belongs to $\mathsf{RCRing}$.
Then let $\theta$ be the congruence of $\mathbb{Z}[x]$ generated by the pair $\langle x, 0 \rangle$. As $\mathbb{Z}[x] / \theta \cong \mathbb{Z}$, we obtain that $\theta$ is an $\mathsf{RCRing}$-congruence of $\mathbb{Z}[x]$. On the other hand, observe that $\mathbb{Z}[x]$ is a subalgebra of a field $\A$ because it is an integral domain (see, e.g., \cite[Thm.~11.7.2]{Art10}). Since $\A$ is a field, it does not possess a congruence extending $\theta$.} (congruence permutability is usually understood as a property of varieties only).

Lastly, we prove that $\mathsf{ICM}$ is not an equational pp expansion of $\mathsf{RCRing}$, as we promised earlier. For suppose that $\mathsf{ICM}$ is equational, with a view to contradiction. By \cref{Thm : axiomatization of pp expansion (almost always)} there exists a family $\F = \{ f_*\} \cup \{ f_p : p \text{ is prime}\} \subseteq \exteq(\mathsf{RCRing})$ of unary implicit operations  corresponding to $\{ (\,)^* \} \cup \{ r_p : \text{$p$ is prime}\}$ such that $\mathsf{ICM} = \mathsf{RCRing}[\L_\F]$. Then consider the fields $\mathbb{Q}$ and $\mathbb{Z}/2\mathbb{Z}$, viewed as implicitly closed fields. As $f_2 \in \exteq(\mathsf{RCRing})$, there exists a conjunction of equations $\varphi(x, y)$ in $\L_\mathsf{RCRing}$ defining $f_2$. Since $\mathbb{Q}, \mathbb{Z}/2\mathbb{Z} \in \mathsf{ICM} = \mathsf{RCRing}[\L_\F]$ and $r_2^\mathbb{Q}(1) = 0$ and $r_2^{\mathbb{Z}/2 \mathbb{Z}}(1) = 1$, we have
\[
\mathbb{Q} \vDash \varphi(1, 0) \, \, \text{ and }\, \, \mathbb{Z} / 2 \mathbb{Z} \vDash \varphi(1, 1).
\]
Now, let $\mathbb{Z}$ be the ring of integers. As $\varphi(x, y)$ is a conjunction of equations in $\L_{\mathsf{RCRing}}$, from the left hand side of the above display and $\mathbb{Z} \leq \mathbb{Q}{\upharpoonright}_{\L_\mathsf{RCRing}}$, it follows that $\mathbb{Z} \vDash \varphi(1, 0)$. Let $\pi \colon \mathbb{Z}  \to \mathbb{Z}/ 2 \mathbb{Z}$ be the canonical ring homomorphism. Since $\varphi(x, y)$ is a conjunction of equations in $\L_{\mathsf{RCRing}}$ and $\mathbb{Z} \vDash \varphi(1, 0)$, we obtain $\mathbb{Z}/2 \mathbb{Z} \vDash \varphi(1, 0)$. Together with the right hand side of the above display, this contradicts the assumption that $\varphi(x, y)$ is functional in $\mathsf{RCRing}$.
\qed
\end{exa}

Next, we prove Theorems 
\ref{Thm : implications : simple - equational},
\ref{Thm: easy consequences of congruence preservation : variety and RFSI}, 
and \ref{Thm : Con preserving Beth comp : structure theorem : main} and Corollary \ref{Cor : rel filtral -> discriminator}. 
The proof of  
\cref{Thm : implications : simple - equational} hinges upon the next 
observation.

\begin{Proposition} \label{Prop : trivial inclusion f congruence preservation}
Let $\M=\SSS(\K[\mathscr{L}_\mathcal{F}])$ be a pp expansion of a quasivariety $\K$. Then the following conditions hold:
\benroman
\item\label{item : cp pp exp : 1} for every $\A \in \M$ we have $\mathsf{Con}_{\mathsf{M}}(\A) \subseteq \mathsf{Con}_{\mathsf{K}}(\A \resLK)$;
\item\label{item : cp pp exp : 2} for every $\A \in \K[\mathscr{L}_\mathcal{F}]$ we have $\mathsf{Con}_\M(\A) = \mathsf{Con}_\K(\A \resLK)$.
\eroman 
\end{Proposition}

\begin{proof}
 \eqref{item : cp pp exp : 1}:  Consider $\theta \in \mathsf{Con}_{\mathsf{M}}(\A)$. The inclusion $\mathscr{L}_{\mathsf{K}} \subseteq \mathscr{L}_{\mathsf{M}}$ guarantees that $\theta$ is a congruence of $\A \resLK$. 
    Moreover, since $\theta$ is an $\mathsf{M}$-congruence of $\A$, we have $\A/\theta \in \mathsf{M}$. 
    Thus, $(\A \resLK)/\theta = (\A/\theta) \resLK \in \mathsf{M} \resLK \subseteq \mathsf{K}$, where the last inclusion holds by \cref{Prop : pp expansions and subreducts}.
    Consequently,  $\theta \in \mathsf{Con}_{\mathsf{K}}(\A \resLK)$.

  \eqref{item : cp pp exp : 2}: Consider $\A \in \K[\mathscr{L}_\mathcal{F}]$.\ As the the inclusion $\mathsf{Con}_\M(\A) \subseteq  \mathsf{Con}_\K(\A \resLK)$ holds by \eqref{item : cp pp exp : 1}, it suffices to prove the reverse inclusion. Consider $\theta \in \mathsf{Con}_{\mathsf{K}}(\A \resLK)$.\ Then $(\A{\upharpoonright}_{\mathscr{L}_\K}) / \theta \in \K$. 
  By \cref{Prop : pp expansions : subreducts : extendable}  there exists $\B \in \K[\mathscr{L}_\mathcal{F}]$ such that $(\A{\upharpoonright}_{\mathscr{L}_\K}) / \theta \leq \B{\upharpoonright}_{\mathscr{L}_\K}$.  
Therefore, the canonical surjection $h \colon \A{\upharpoonright}_{\mathscr{L}_\K} \to \A{\upharpoonright}_{\mathscr{L}_\K} / \theta$ can be viewed as a homomorphism $h \colon \A{\upharpoonright}_{\mathscr{L}_\K} \to \B{\upharpoonright}_{\mathscr{L}_\K}$.\ As $\A, \B \in \K[\mathscr{L}_\mathcal{F}]$, Proposition \ref{Prop : small homs are big homs} guarantees that $h$ is also a homomorphism from $\A$ to $\B$. Together with $\B \in \M$ and $\mathsf{Ker}(h) = \theta$, this yields that $\theta$ is an $\M$-congruence of $\A$.
\end{proof}

We are now ready to prove 
\cref{Thm : implications : simple - equational}.

\begin{proof}
First, suppose that $\K$ has the amalgamation property. We need to prove that $\M$ is equational. As $\M$ is a pp expansion of $\K$, it is of the form  $\SSS(\K[\L_\mathcal{F}])$.\  From the assumption that $\K$ has the amalgamation property and Theorem \ref{Thm : existential elimination}(\ref{item : existential elimination : 1}) it follows that every $f \in \mathcal{F} \subseteq \extpp(\K)$ is interpolated by some $f_* \in \imp_{\textsc{eq}}(\K)$. Consider the set of implicit operations
\[
\mathcal{F}_* = \{ f_* : f \in \mathcal{F} \}
\]
of $\K$. We will prove that $\mathcal{F}_* \subseteq \ext_{\textsc{eq}}(\K)$.
As $\mathcal{F}_* \subseteq \imp_{\textsc{eq}}(\K)$ by definition, it suffices to show that $\mathcal{F}_* \subseteq \ext(\K)$. Consider an $n$-ary $f_* \in \mathcal{F}_*$,  $\A \in \K$, and $a_1, \dots, a_n \in A$. Since $f \in \mathcal{F} \subseteq \ext(\K)$, there exists $\B \in \K$ with $\A \leq \B$ such that $\langle a_1, \dots, a_n \rangle \in \mathsf{dom}(f^\B)$. Consequently, $\langle a_1, \dots, a_n \rangle \in \mathsf{dom}(f_*^\B)$ because $f_*$ interpolates $f$. Hence, $f_*$ is extendable, as desired. Therefore, $\mathsf{M}_* = \SSS(\mathsf{K}[\mathscr{L}_{\mathcal{F}_*}])$ is an equational pp expansion of $\mathsf{K}$. Therefore, to 
 prove that $\M$ is an equational pp expansion of $\K$, 
it only remains to show that $\M = \mathsf{M}_*$, under the assumption that the function symbols $g_f$ and $g_{f_*}$ coincide for each $f \in \F$.

Since both $\M$ and $\M_*$ are closed under subalgebras, it will be enough to prove
\[
\mathsf{K}[\mathscr{L}_{\mathcal{F}_*}] \subseteq \SSS(\mathsf{K}[\mathscr{L}_{\mathcal{F}}])\, \, \text{ and }\, \, \mathsf{K}[\mathscr{L}_{\mathcal{F}}] \subseteq \mathsf{K}[\mathscr{L}_{\mathcal{F}_*}].
\]
To prove $\mathsf{K}[\mathscr{L}_{\mathcal{F}_*}] \subseteq \SSS(\mathsf{K}[\mathscr{L}_{\mathcal{F}}])$, consider $\A \in \K$ for which $\A[\L_{\mathcal{F}_*}]$ is defined. As $\F \subseteq \extpp(\K)$, by \cref{Thm : extendable 1} there exists $\B \in \K$ with $\A \leq \B$ for which $\B[\L_\F]$ is defined. Let $f \in \F$. Since $f_*$ interpolates $f$ by assumption and $f^\B$ is total because $\B[\L_\F]$ is defined, we obtain $f^\B = f_*^\B$. Hence, $\B[\L_{\F_*}]$ is also defined and coincides with $\B[\L_\F]$. Together with $\A[\L_{\F_*}] \leq \B[\L_{\F_*}]$ (which holds by $\A \leq \B$ and \cref{Prop : implicit operations extend}), this yields $\A[\L_{\F_*}] \leq \B[\L_\F] \in \K[\L_\F]$, whence $\A[\L_{\F_*}] \in \SSS(\mathsf{K}[\mathscr{L}_{\mathcal{F}}])$. Lastly, we prove the inclusion $\mathsf{K}[\mathscr{L}_{\mathcal{F}}] \subseteq \mathsf{K}[\mathscr{L}_{\mathcal{F}_*}]$. Consider $\A \in \K$ for which $\A[\L_{\mathcal{F}}]$ is defined. As $f_*$ interpolates $f$ for each $f \in \F$, we obtain that $\A[\L_{\mathcal{F}_*}]$ is also defined and coincides with $\A[\L_{\mathcal{F}}]$, whence $\A[\L_{\mathcal{F}}] \in \mathsf{K}[\mathscr{L}_{\mathcal{F}_*}]$. This concludes the proof that $\M$ is equational.

The implication ``if $\M$ is equational, then it is simple'' follows from \cref{Thm : axiomatization of pp expansion (almost always)}. Next, suppose that $\M$ is simple, i.e., that $\M=\K[\L_\F]$ for some $\F \subseteq \extpp(\K)$.
    Therefore, $\mathsf{Con}_\M(\A) = \mathsf{Con}_\K(\A \resLK)$ for every $\A \in \M$ by \cref{Prop : trivial inclusion f congruence preservation}(\ref{item : cp pp exp : 2}). Hence, $\M$ is congruence preserving.

    It only remains to show that if $\K$ has the relative congruence extension property, then $\M$ is congruence preserving. Suppose that $\K$ has the relative congruence extension property.  In view of \cref{Prop : trivial inclusion f congruence preservation}(\ref{item : cp pp exp : 1}), it suffices to show that $\mathsf{Con}_{\mathsf{K}}(\A \resLK) \subseteq \mathsf{Con}_\M(\A)$ for each $\A \in \M$. To this end, consider $\A \in \mathsf{M}$ and $\theta \in \mathsf{Con}_{\mathsf{K}}(\A \resLK)$. As $\M$ is a pp expansion of $\K$, it is of the form $\SSS(\K[\L_\mathcal{F}])$. Since $\A \in \M$, this implies the existence of some $\B \in \K[\L_\mathcal{F}]$ such that $\A \leq \B$. Clearly,  $\A \resLK \leq \B \resLK \in \K$. Therefore, from the assumption that $\mathsf{K}$ has the relative congruence extension property it follows that there exists $\phi \in \mathsf{Con}_{\mathsf{K}}(\B \resLK)$ such that $\theta = \phi{\upharpoonright}_A$.
    Since $\B \in \mathsf{K}[\mathscr{L}_{\mathcal{F}}]$, we can apply \cref{Prop : trivial inclusion f congruence preservation}(\ref{item : cp pp exp : 2}), obtaining  $\phi \in \mathsf{Con}_{\mathsf{K}}(\B \resLK) = \mathsf{Con}_{\mathsf{M}}(\B)$.
    Together with $\A \leq \B$, this yields $\phi {\upharpoonright}_A \in \mathsf{Con}_{\mathsf{M}}(\A)$. As $\theta = \phi{\upharpoonright}_A$, we conclude that $\theta \in \mathsf{Con}_{\mathsf{M}}(\A)$.
\end{proof}

Next we prove \cref{Thm: easy consequences of congruence preservation : variety and RFSI}.

\begin{proof}
We begin by proving that $\mathsf{M}_\textsc{rfsi} = \{ \A \in \M : \A{\upharpoonright}_{\mathscr{L}_\K} \in \K_{\textsc{rfsi}}\}$, namely, that for every $\A \in \M$,
\begin{equation}\label{Eq : Beth completions : FSI members stay the same : iff}
\A \in \M_\textsc{rfsi} \iff \A{\upharpoonright}_{\mathscr{L}_\K} \in \K_\textsc{rfsi}.
\end{equation}
To this end, consider $\A\in \M$ and observe that $\A{\upharpoonright}_{\mathscr{L}_\K} \in \K$ by \cref{Prop : pp expansions and subreducts}.\  From \cref{Prop : RFSI} it follows that
\begin{align*}
\A \in \M_\textsc{rfsi} &\iff \textup{id}_A \in \mathsf{Irr}_\M(\A);\\
\A{\upharpoonright}_{\mathscr{L}_\K} \in \K_\textsc{rfsi} &\iff \textup{id}_A \in \mathsf{Irr}_\K(\A{\upharpoonright}_{\mathscr{L}_\K}).
\end{align*}
Observe that $\mathsf{Con}_\M(\A) = \mathsf{Con}_\K(\A{\upharpoonright}_{\mathscr{L}_\K})$ because $\M$ is a congruence preserving pp expansion of $\K$ by assumption. Therefore, $\mathsf{Irr}_\M(\A) = \mathsf{Irr}_\K(\A{\upharpoonright}_{\mathscr{L}_\K})$. Together with the above display, this establishes  (\ref{Eq : Beth completions : FSI members stay the same : iff}), as desired. The proof that $\mathsf{M}_\textsc{rsi} = \{ \A \in \M : \A{\upharpoonright}_{\mathscr{L}_\K} \in \K_{\textsc{rsi}}\}$ is analogous and, therefore, omitted.

It only remains to prove the last part of the statement. Suppose that $\K$ is a variety.  From Theorem \ref{Thm : pp expansion : still quasivariety}\eqref{item : pp expansion : quasivariety} it follows that $\mathsf{M}$ is a quasivariety. Therefore, it remains to show that $\mathsf{M}$ is closed under homomorphic images. 
 By \cref{Prop : variety iff Con=ConK} it suffices to prove that $\mathsf{Con}(\A) = \mathsf{Con}_{\mathsf{M}}(\A)$ for every $\A \in \M$. 
To this end, observe that for every $\A \in \M$,
\[
\mathsf{Con}(\A) \subseteq \mathsf{Con}(\A \resLK) = \mathsf{Con}_{\mathsf{K}}(\A \resLK) = \mathsf{Con}_{\mathsf{M}}(\A),
\]
    where the first step is straightforward,
    the second follows from $\A \resLK \in \K$ (see \cref{Prop : pp expansions and subreducts}) and the assumption that $\K$ is a variety, and the third from $\A \in \M$ and the assumption that  $\mathsf{M}$ is a congruence preserving pp expansion of $\mathsf{K}$.
 Thus, $\mathsf{Con}(\A) = \mathsf{Con}_{\mathsf{M}}(\A)$, as desired.
\end{proof}

Our next goal is to prove Theorem \ref{Thm : Con preserving Beth comp : structure theorem : main}. To this end, it is convenient to establish the following technical observation first.

\begin{Proposition}\label{Prop : con preserving Beth comp: structural UA trick}
Let $\K$ be a relatively congruence distributive quasivariety for which $\K_{\textsc{rfsi}}$ is closed under nontrivial subalgebras. Moreover, consider $\A \in \K$ and  $\B \leq \A \times \A$ with projection maps $p_1, p_2 \colon \B \to \A$ such that for every $\langle a, b \rangle \in B$ we have $\langle a, a\rangle, \langle b, b \rangle \in B$. 
Assume that $\K$ has the strong epimorphism surjectivity property. Then $B = \mathsf{Cg}_\K^\A(B) \cap (p_1[B] \times p_2[B])$.
\end{Proposition}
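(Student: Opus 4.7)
The inclusion $B \subseteq \mathsf{Cg}_\K^\A(B) \cap (p_1[B] \times p_2[B])$ is immediate. For the reverse inclusion, note first that the hypothesis on $B$ forces $p_1[B] = p_2[B]$; denote this common set by $C$ and let $\C$ be the subalgebra of $\A$ with universe $C$. Write $\theta = \mathsf{Cg}_\K^\A(B)$. Then $\B \leq \C \times \C$ is a subdirect product containing the diagonal $\Delta_\C = \{\langle c, c\rangle : c \in C\}$.

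I would argue by contradiction, assuming there is $\langle a, b\rangle \in \theta \cap C^2 \setminus B$. Since $\K$ has the strong epimorphism surjectivity property, \cref{Prop : SES and dominions} yields $\mathsf{d}_\K(\B, \A \times \A) = B$, so there exist homomorphisms $g, h \colon \A \times \A \to \F$ with $\F \in \K$, $g\res_B = h\res_B$, and $g(\langle a, b\rangle) \neq h(\langle a, b\rangle)$. Applying \cref{Thm : CD vs product congruences} to the trivial subdirect product $\A \times \A \leq \A \times \A$, we obtain $\phi_1, \phi_2, \eta_1, \eta_2 \in \mathsf{Con}_\K(\A)$ with $\ker g = \phi_1 \times \eta_1$ and $\ker h = \phi_2 \times \eta_2$. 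The equation $g\res_B = h\res_B$ translates into the equality of $\K$-congruences of $\B$ given by $(\phi_1 \times \eta_1) \cap B^2 = (\phi_2 \times \eta_2) \cap B^2$.

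The desired contradiction would follow once one shows that either $a\,\phi_i\, b$ for both $i \in \{1, 2\}$, or $a\,\eta_i\, b$ for both $i \in \{1, 2\}$. Indeed, in the first case $g(\langle a, b\rangle) = g(\langle b, b\rangle) = h(\langle b, b\rangle) = h(\langle a, b\rangle)$, using $\langle b, b\rangle \in \Delta_\C \subseteq B$ together with $g\res_B = h\res_B$; the second case is symmetric, going through $\langle a, a\rangle$ instead of $\langle b, b\rangle$.

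The hard part—and the ``structural universal algebra trick'' suggested by the label of the statement—is to establish one of these congruence containments. My plan is to combine the equality of congruences on $B^2$ obtained above with the subdirect decomposition of $\B$ into relatively subdirectly irreducible factors (\cref{Prop : quasivariety = Q fin gen SI}) and \cref{Cor : CD vs FSI product congruences}: each completely meet-irreducible $\K$-congruence of $\B$ has a ``first-coordinate'' or ``second-coordinate'' form $(\phi \times C^2) \cap B^2$ or $(C^2 \times \phi) \cap B^2$ with $\phi \in \mathsf{Con}_\K(\C)$ and $\C/\phi \in \K_{\textsc{rsi}}$. Partitioning the irreducible factors according to these two types and intersecting yields congruences $\alpha, \beta \in \mathsf{Con}_\K(\C)$ with $\alpha \cap \beta = \Delta_\C$; the rigidity imposed on $\phi_i, \eta_i$ by the equality $(\phi_i \times \eta_i)\cap B^2 = (\alpha \times \beta)\cap B^2$ together with the fact that $a\theta b$ should then pin down the required membership of the pair $(a,b)$. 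The closure of $\K_{\textsc{rfsi}}$ under nontrivial subalgebras is essential here: it ensures that the factors arising in the decomposition remain in $\K_{\textsc{rfsi}}$ after restriction from $\A$ to $\C$, so that the coordinatewise analysis in $\C \times \C$ genuinely reflects the $\K$-congruence $\theta$ of the ambient algebra $\A$. I expect the main difficulty in the argument to be this delicate bookkeeping across the two coordinates of the subdirect product $\B \leq \C \times \C$.
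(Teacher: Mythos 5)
Your opening moves match the paper's: argue by contradiction, use the strong epimorphism surjectivity property (via \cref{Prop : SES and dominions}) to produce homomorphisms $g,h$ agreeing on $B$ but separating $\langle a,b\rangle$, decompose their kernels using relative congruence distributivity, and reduce everything to showing that $g$ and $h$ both identify $\langle a,b\rangle$ with one of the diagonal pairs $\langle a,a\rangle,\langle b,b\rangle\in B$. Up to that point the argument is sound. But the step you yourself flag as ``the hard part'' is where the proof actually lives, and your sketch of it does not close. Two concrete problems: (i) the equality $(\phi_i\times\eta_i)\cap B^2=(\alpha\times\beta)\cap B^2$ that your ``rigidity'' argument rests on is never derived and is false in general --- all you know is $(\phi_1\times\eta_1)\cap B^2=(\phi_2\times\eta_2)\cap B^2$, and nothing forces this common restriction to be $\mathrm{id}_B$; (ii) from a general two-sided decomposition $\Ker(g)=\phi_1\times\eta_1$, $\Ker(h)=\phi_2\times\eta_2$ obtained from \cref{Thm : CD vs product congruences} alone, the dichotomy ``$a\,\phi_i\,b$ for both $i$, or $a\,\eta_i\,b$ for both $i$'' simply cannot be extracted. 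Note also that nowhere in your argument is the hypothesis that $\K_{\textsc{rfsi}}$ is closed under nontrivial subalgebras actually used, which is a sign that the decisive structural input is missing.

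The missing idea is to make the kernels \emph{one-sided} before attempting the case analysis. By \cref{Rem : simplification of SES} the separating homomorphisms may be taken into some $\C\in\K_{\textsc{rsi}}$; the paper moreover takes their domain to be the subdirect product $\Cg_\K^\A(B)^*\leq\A\times\A$ given by \cref{Prop : congruences are subalgebras} (so that the relevant pairs $\langle a,b\rangle,\langle a,a\rangle,\langle b,b\rangle$ all lie in the domain). Since $\K_{\textsc{rfsi}}$ is closed under nontrivial subalgebras, each quotient of $\Cg_\K^\A(B)^*$ by $\Ker(g)$ or $\Ker(h)$ is trivial or in $\K_{\textsc{rfsi}}$, and \cref{Cor : CD vs FSI product congruences} then gives $\Ker(g),\Ker(h)\in\{(\eta\times A^2)\res,(A^2\times\eta)\res\}$ for suitable $\eta\in\Con_\K(\A)$ --- a single coordinate, not a genuine product. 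With that in hand the argument finishes in two cases: if both kernels are governed by the same coordinate, then $\langle a,a\rangle$ (or $\langle b,b\rangle$) is immediately identified with $\langle a,b\rangle$ by both maps, a contradiction; in the mixed case, the agreement of $g$ and $h$ on $B$ combined with the one-sidedness of $\Ker(h)$ forces $B\subseteq\eta_1$, hence $\Cg_\K^\A(B)\subseteq\eta_1$ since $\eta_1$ is a $\K$-congruence, and then $\langle a,b\rangle\in\eta_1$ puts $\langle\langle a,b\rangle,\langle b,b\rangle\rangle$ in $\Ker(g)\cap\Ker(h)$, again a contradiction. This is exactly where both standing hypotheses (closure of $\K_{\textsc{rfsi}}$ under nontrivial subalgebras, and the passage from $B$ to the generated $\K$-congruence) earn their keep; your proposal would need to be rebuilt around this point.
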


\begin{proof}
As the inclusion $B \subseteq \mathsf{Cg}_\K^\A(B) \cap (p_1[B] \times p_2[B])$ always holds, we detail the proof of the reverse inclusion. Consider $\langle a, b \rangle \in \mathsf{Cg}_\K^\A(B) \cap (p_1[B] \times p_2[B])$. We need to prove that $\langle a, b \rangle \in B$. Suppose the contrary, with a view to contradiction. 

From Proposition \ref{Prop : congruences are subalgebras} it follows that $\mathsf{Cg}_\K^\A(B)$ is the universe of a member $\mathsf{Cg}_\K^\A(B)^*$ of $\K$ such that $\mathsf{Cg}_\K^\A(B)^* \leq \A \times \A$ is a subdirect product. Together with 
 $B \subseteq \mathsf{Cg}_\K^\A(B)$ 
and the assumption that $\B \leq \A \times \A$, this yields $\B \leq \mathsf{Cg}_\K^\A(B)^*$. As $\B \leq \mathsf{Cg}_\K^\A(B)^* \in \K$ and $\langle a, b \rangle \in \mathsf{Cg}_\K^\A(B) - B$,
we can apply the assumption that $\K$ has the strong epimorphism surjectivity property, obtaining a pair of homomorphisms $g, h \colon \mathsf{Cg}_\K^\A(B)^* \to \C$ with $\C \in \K$ such that 
\begin{equation}\label{Eq : con preserving Beth comps : K variety : 1}
g{\upharpoonright}_B = h{\upharpoonright}_B \, \, \text{ and } \, \, g(\langle a, b\rangle) \ne h(\langle a, b \rangle).
\end{equation}
In view of Remark \ref{Rem : simplification of SES},
we may further assume that $\C \in \K_{\textsc{rfsi}}$.

Since $\langle a, b \rangle \in p_1[B] \times p_2[B]$ by assumption, there exist $c, d \in A$ such that $\langle a, c \rangle, \langle d, b \rangle \in B$. By the assumptions this yields $\langle a, a\rangle, \langle b, b \rangle \in B$. Consequently, the left hand side of (\ref{Eq : con preserving Beth comps : K variety : 1}) guarantees that $g(\langle a, a\rangle) = h(\langle a, a \rangle)$ and $g(\langle b, b\rangle) = h(\langle b, b \rangle)$. By the right hand side of (\ref{Eq : con preserving Beth comps : K variety : 1}) this implies that
\begin{equation}\label{Eq : con preserving Beth comps : K variety : 33}
\text{either }g(\langle a, b \rangle) \ne g(\langle a, a\rangle) \text{ or } h(\langle a, b \rangle) \ne h(\langle a, a\rangle)
\end{equation}
and 
\begin{equation}\label{Eq : con preserving Beth comps : K variety : 33b}
\text{either }g(\langle a, b \rangle) \ne g(\langle b, b\rangle) \text{ or } h(\langle a, b \rangle) \ne h(\langle b, b\rangle).
\end{equation}

We rely on the next observation.

\begin{Claim}\label{Claim : con preserving Beth comps : K variety : 1}
For every $\phi \in \{ \mathsf{Ker}(g), \mathsf{Ker}(h) \}$ there exists $\eta \in \mathsf{Con}_\K(\A)$ such that \[
\phi \in \{ (\eta \times A^2){\upharpoonright}_{\mathsf{Cg}_\K^\A(B)}, (A^2 \times \eta){\upharpoonright}_{\mathsf{Cg}_\K^\A(B)}\}.
\]
\end{Claim}

\begin{proof}[Proof of the Claim]
By symmetry it suffices to show that the statement holds for the case where $\phi = \mathsf{Ker}(g)$. Since $g \colon \mathsf{Cg}_\K^\A(B)^* \to \C$ is a homomorphism with $\C \in \K$, we have $\mathsf{Ker}(g) \in \mathsf{Con}_\K(\mathsf{Cg}_\K^\A(B)^*)$. Moreover, recall that $\C \in \textsf{K}_\textsc{rfsi}$ and that $\K_\textsc{rfsi}$ is closed under nontrivial subalgebras by assumption. Therefore, from $\mathsf{Cg}_\K^\A(B)^* / \mathsf{Ker}(g) \in \III\SSS(\C)$ it follows that $\mathsf{Cg}_\K^\A(B)^* / \mathsf{Ker}(g)$ is either trivial or belongs to $\K_\textsc{rfsi}$. Lastly, recall that $\mathsf{Cg}_\K^\A(B)^* \leq \A \times \A$ is a subdirect product with $\A \in \K$. As $\K$ is a relatively congruence distributive quasivariety by assumption, we can apply \cref{Cor : CD vs FSI product congruences}, obtaining the desired conclusion.
\end{proof}

By symmetry and \cref{Claim : con preserving Beth comps : K variety : 1} we may assume that there exist $\eta_1, \eta_2 \in \mathsf{Con}_\K(\A)$ such that $\mathsf{Ker}(g) = (\eta_1 \times A^2){\upharpoonright}_{\mathsf{Cg}_\K^\A(B)}$ and $\mathsf{Ker}(h) \in \{ (\eta_2 \times A^2){\upharpoonright}_{\mathsf{Cg}_\K^\A(B)}, (A^2 \times \eta_2){\upharpoonright}_{\mathsf{Cg}_\K^\A(B)}\}$. We have two cases depending on whether $\mathsf{Ker}(h)$ is $(\eta_2 \times A^2){\upharpoonright}_{\mathsf{Cg}_\K^\A(B)}$ or $(A^2 \times \eta_2){\upharpoonright}_{\mathsf{Cg}_\K^\A(B)}$. 

First, suppose that $\mathsf{Ker}(h) = (\eta_2 \times A^2){\upharpoonright}_{\mathsf{Cg}_\K^\A(B)}$. 
Then
\begin{equation}\label{Eq : con preserving Beth comps : K variety : 12}
\mathsf{Ker}(g) = (\eta_1 \times A^2){\upharpoonright}_{\mathsf{Cg}_\K^\A(B)} \, \, \text{ and } \, \, \mathsf{Ker}(h) = (\eta_2 \times A^2){\upharpoonright}_{\mathsf{Cg}_\K^\A(B)}.
\end{equation}
Recall that $\langle a, b \rangle \in \mathsf{Cg}_\K^\A(B)$ and, therefore, $a, b \in A$. Together with $\eta_1, \eta_2 \in \mathsf{Con}(\A)$, this implies $\langle a, a \rangle \in \eta_1 \cap \eta_2$ and $\langle a, b \rangle \in A^2$.  
 Thus,
$\langle \langle a, a \rangle, \langle a, b \rangle \rangle \in (\eta_1 \times A^2) \cap (\eta_2 \times A^2)$. Since  $\langle a, a \rangle, \langle a, b \rangle \in \mathsf{Cg}_\K^\A(B),$ 
we obtain
\[
\langle \langle a, a \rangle, \langle a, b \rangle  \rangle \in ((\eta_1 \times A^2) \cap (\eta_2 \times A^2)){\upharpoonright}_{\mathsf{Cg}_\K^\A(B)} = (\eta_1 \times A^2)_{\mathsf{Cg}_\K^\A(B)} \cap (\eta_2 \times A^2)_{\mathsf{Cg}_\K^\A(B)}.
\]
In view of (\ref{Eq : con preserving Beth comps : K variety : 12}), this amounts to $\langle \langle a, a \rangle, \langle a, b \rangle \rangle \in \mathsf{Ker}(g) \cap \mathsf{Ker}(h)$, that is,
\[
g(\langle a, a \rangle) = g(\langle a, b \rangle) \, \, \text{ and } \, \, h(\langle a, a \rangle) = h(\langle a, b \rangle),
\]
a contradiction with (\ref{Eq : con preserving Beth comps : K variety : 33}).

It only remains to consider the case where $\mathsf{Ker}(h) = (A^2 \times \eta_2){\upharpoonright}_{\mathsf{Cg}_\K^\A(B)}$. In this case, we have 
\begin{equation}\label{Eq : con preserving Beth comps : K variety : 123}
\mathsf{Ker}(g) = (\eta_1 \times A^2){\upharpoonright}_{\mathsf{Cg}_\K^\A(B)} \, \, \text{ and } \, \, \mathsf{Ker}(h) = (A^2 \times \eta_2){\upharpoonright}_{\mathsf{Cg}_\K^\A(B)}.
\end{equation}
We rely on the following observation.

\begin{Claim}\label{Claim : con preserving Beth comps : K variety : theta into eta}
We have $\mathsf{Cg}_\K^\A(B) \subseteq \eta_1$.
\end{Claim}

\begin{proof}[Proof of the Claim]
As $\eta_1 \in \mathsf{Con}_\K(\A)$, it will be enough to show that $B \subseteq \eta_1$. To this end, consider $\langle p, q \rangle \in B$. By assumption we also have $\langle q, q\rangle \in B$. Since $\B \leq \A \times \A$ by assumption, we have $p, q \in A$. Together with 
$\eta_2 \in \mathsf{Con}(\A)$, 
this yields $\langle q, q \rangle \in \eta_2$ and, therefore, $\langle \langle p, q \rangle, \langle q, q \rangle \rangle \in (A^2 \times \eta_2)$. As $\langle p, q \rangle \in B \subseteq \mathsf{Cg}_\K^\A(B)$ and $\langle q, q \rangle \in \mathsf{Cg}_\K^\A(B)$ (the latter because $\mathsf{Cg}_\K^\A(B)$ is a congruence of $\A$), we obtain
\[
\langle \langle p, q \rangle, \langle q, q \rangle \rangle \in (A^2 \times \eta_2){\upharpoonright}_{\mathsf{Cg}_\K^\A(B)} = \mathsf{Ker}(h),
\]
where the last equality holds by the right hand side of (\ref{Eq : con preserving Beth comps : K variety : 123}). 
 The left hand side of (\ref{Eq : con preserving Beth comps : K variety : 1}) implies
$\mathsf{Ker}(g){\upharpoonright}_B = \mathsf{Ker}(h){\upharpoonright}_B$. Together with the above display and $\langle p, q \rangle, \langle q, q\rangle \in B$, this implies $\langle \langle p, q \rangle, \langle q, q \rangle \rangle \in \mathsf{Ker}(g)$. Since $\mathsf{Ker}(g) = (\eta_1 \times A^2){\upharpoonright}_{\mathsf{Cg}_\K^\A(B)}$ by the left hand side of (\ref{Eq : con preserving Beth comps : K variety : 123}), we conclude that $\langle p, q \rangle \in \eta_1$.
\end{proof}

Now, recall that $\langle a, b \rangle \in \mathsf{Cg}_\K^\A(B)$. By \cref{Claim : con preserving Beth comps : K variety : theta into eta} we obtain $\langle a, b \rangle \in \eta_1$. 
 Since $b \in A$, 
this implies $\langle \langle a, b \rangle, \langle b, b \rangle \rangle \in (\eta_1 \times A^2)$. 
On the other hand, from $a, b \in A$ and 
$\eta_2 \in  \mathsf{Con}(\A)$
it follows that $\langle \langle a, b \rangle, \langle b, b \rangle \rangle \in (A^2 \times \eta_2)$. As $\langle a, b \rangle, \langle b, b\rangle \in \mathsf{Cg}_\K^\A(B)$, we obtain
\[
\langle \langle a, b \rangle, \langle b, b \rangle \rangle \in (\eta_1 \times A^2){\upharpoonright}_{\mathsf{Cg}_\K^\A(B)} \, \, \text{ and } \, \, \langle \langle a, b \rangle, \langle b, b \rangle \rangle \in (A^2 \times \eta_2){\upharpoonright}_{\mathsf{Cg}_\K^\A(B)}.
\]
By (\ref{Eq : con preserving Beth comps : K variety : 123}) this amounts to $\langle \langle a, b \rangle, \langle b, b \rangle \rangle \in \mathsf{Ker}(g) \cap \mathsf{Ker}(h)$, that is,
\[
g(\langle a, b \rangle) = g(\langle b, b \rangle) \, \, \text{ and } \, \, h(\langle a, b \rangle) = h(\langle b, b \rangle),
\]
a contradiction with (\ref{Eq : con preserving Beth comps : K variety : 33b}).
\end{proof}

We will also make use of the next observation from \cite[Thm.\ 6.1]{CKMWES}.

\begin{Theorem}\label{Thm : weak ES : structural result}
Let $\K$ be a relatively congruence distributive quasivariety for which $\K_{\textsc{rfsi}}$ is closed under nontrivial subalgebras.\ If $\K$ has the weak epimorphism surjectivity property, then the variety $\mathbb{V}(\K)$ is arithmetical.
\end{Theorem}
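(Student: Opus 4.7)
My plan is to establish arithmeticity of $\VVV(\K)$ by separately verifying congruence distributivity and congruence permutability; by Pixley's classical theorem (or by taking both Jónsson and Mal'cev terms and combining), this is equivalent to $\VVV(\K)$ having a Pixley term $p(x,y,z)$ satisfying $p(x,y,x) \thickapprox x$, $p(x,y,y) \thickapprox x$, and $p(x,x,y) \thickapprox y$. Throughout, I will exploit \cref{Thm : Jonsson}, which gives $\VVV(\K)_\fsi \subseteq \HHH\SSS\PPU(\K)$ whenever $\VVV(\K)$ turns out to be congruence distributive, and \cref{Prop : quasivariety = Q fin gen SI}, which lets me reduce statements about $\K$ to its finitely generated relatively subdirectly irreducible members.

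For congruence distributivity of $\VVV(\K)$, I would first extract \emph{relative Jónsson terms} for $\K$ from the assumption of relative congruence distributivity, using the Mal'cev-type characterization of this property. These terms witness the distributivity of $\mathsf{Con}_\K(\A)$ via a standard Jónsson-lemma argument applied in the quasivariety setting. Since $\K_\rfsi$ is closed under nontrivial subalgebras, the subdirectly irreducible members of $\VVV(\K)$ lying in $\HHH\SSS\PPU(\K)$ inherit the validity of these term identities (quasiequationally, and modulo nontriviality, one can pass through $\PPU$ using \LL o\'s), and Jónsson's original argument then promotes these to bona fide Jónsson terms in $\VVV(\K)$.

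For congruence permutability, which is the crux of the proof, I would argue by contradiction and exploit the weak epimorphism surjectivity property. Suppose $\VVV(\K)$ lacks a Mal'cev term. Then there is some $\A \in \VVV(\K)_\fsi \subseteq \HHH\SSS\PPU(\K)$ together with $\K$-congruences $\theta, \phi \in \mathsf{Con}_\K(\A)$ violating $\theta \circ \phi = \phi \circ \theta$; this translates to a pair of elements witnessing nonpermutability. Using the established congruence distributivity and \cref{Thm : CD vs product congruences} to control the congruences of subdirect powers, I would build a finitely generated subalgebra $\B \leq \A \times \A$ that projects subdirectly and whose "witness diagonal" inclusion $\B_0 \hookrightarrow \B$ (where $\B_0$ is the subalgebra generated by the relevant diagonal and related elements) is a proper $\K$-epimorphism. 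The existence of such a proper epic finitely generated subalgebra contradicts WES, so $\VVV(\K)$ must in fact be congruence permutable. Combining this with the Jónsson terms yields a Pixley term and hence arithmeticity.

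The main obstacle is the construction in the last paragraph: translating the failure of permutability into a concrete non-surjective $\K$-epimorphism between finitely generated algebras. The bridge is provided by the subalgebra-closure of $\K_\rfsi$ (which keeps the relevant witnesses inside the subdirectly irreducible stratum controlled by the RCD hypothesis) and by the finite-generation reduction that makes WES applicable. The technical heart is verifying that the candidate inclusion really is a $\K$-epimorphism, which amounts to showing that the "missing element" required for permutability is determined up to $\K$-homomorphisms by the generating data; here \cref{Prop : congruences are subalgebras}, \cref{Cor : CD vs FSI product congruences}, and a careful manipulation of pp-definability (in the spirit of \cref{Thm : dominions : pp formulas}) are likely needed to realize the non-surjective epimorphism witnessing the contradiction.
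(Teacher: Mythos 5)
The paper does not actually prove this statement---it is imported verbatim from \cite[Thm.~6.1]{CKMWES}---so there is no in-text proof to compare against; judged on its own terms, your proposal contains two genuine gaps. The first is in the congruence distributivity step. Relative congruence distributivity of a quasivariety $\K$ is not an equational condition: the known Mal'cev-type characterizations of this property produce terms satisfying \emph{quasi-identities} relative to $\K$, not the J\'onsson identities, and such conditions do not transfer to $\VVV(\K)$. In general, RCD of $\K$ does not imply congruence distributivity of $\VVV(\K)$, so no argument that (as yours does at this stage) ignores the weak epimorphism surjectivity hypothesis can establish it. Your appeal to \cref{Thm : Jonsson} here is moreover circular, since J\'onsson's Theorem takes congruence distributivity of $\VVV(\K)$ as a hypothesis, which is precisely what you are trying to prove.

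The second gap is the opening reduction in the permutability step: the failure of congruence permutability of a variety is \emph{not} witnessed by a finitely subdirectly irreducible member carrying non-permuting congruences. The variety $\DL$ of distributive lattices is congruence distributive and not congruence permutable, yet its only nontrivial finitely subdirectly irreducible member is the simple two-element lattice, whose congruences trivially permute; since your step invokes no hypothesis beyond the (claimed) distributivity, $\DL$ refutes it. The correct starting point is the Mal'cev-condition formulation: a Pixley term exists if and only if a specific tuple belongs to the subalgebra of $\Free_{\VVV(\K)}(\{x,y\})^{3}$ generated by three prescribed tuples, and one shows that this tuple lies in the dominion of that finitely generated subalgebra (factoring homomorphisms into relatively finitely subdirectly irreducible algebras through coordinates via \cref{Cor : CD vs FSI product congruences}, which is where RCD and the closure of $\K_\rfsi$ under nontrivial subalgebras enter) before invoking the weak epimorphism surjectivity property to conclude the dominion is trivial. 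This is exactly the shape of \cref{Prop : con preserving Beth comp: structural UA trick} and \cref{Thm : con preserving Beth comps : structural result}. Your final paragraph gestures at this dominion computation but leaves it entirely unexecuted, and since it constitutes the whole content of the theorem, the proposal cannot stand as a proof.
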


As a last step before proving Theorem \ref{Thm : Con preserving Beth comp : structure theorem : main}, we establish the following result on the strong epimorphism surjectivity property.

\begin{Theorem}\label{Thm : con preserving Beth comps : structural result}
Let $\K$ be a relatively congruence distributive quasivariety for which $\K_{\textsc{rfsi}}$ is closed under nontrivial subalgebras.\ If $\K$ has the strong epimorphism surjectivity property, then $\K$ is an arithmetical variety with the congruence extension property.
\end{Theorem}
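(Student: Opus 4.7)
The plan is to derive three separate conclusions: (a) $\K$ is a variety, (b) $\K$ is arithmetical, and (c) $\K$ has the congruence extension property. Both (a) and (c) will follow from a single structural trick, namely, applying \cref{Prop : con preserving Beth comp: structural UA trick} with a congruence playing the role of the subalgebra $\B$. The key observation is that any congruence $\theta$ is reflexive, so for each $\langle a, b \rangle \in \theta$ the diagonal pairs $\langle a, a \rangle, \langle b, b \rangle$ also lie in $\theta$, and $p_1[\theta] = p_2[\theta]$ equals the domain on which $\theta$ is a congruence.

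First, to show $\K$ is a variety, by \cref{Prop : variety iff Con=ConK} it suffices to prove $\mathsf{Con}(\A) = \mathsf{Con}_\K(\A)$ for every $\A \in \K$. Fix $\theta \in \mathsf{Con}(\A)$, and regard it, via \cref{Prop : congruences are subalgebras}, as the universe of a subalgebra of $\A \times \A$. The reflexivity of $\theta$ allows us to apply \cref{Prop : con preserving Beth comp: structural UA trick}, yielding $\theta = \mathsf{Cg}_\K^\A(\theta) \cap (p_1[\theta] \times p_2[\theta])$; since $p_1[\theta] = p_2[\theta] = A$, this reduces to $\theta = \mathsf{Cg}_\K^\A(\theta) \in \mathsf{Con}_\K(\A)$. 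Hence $\mathsf{Con}(\A) \subseteq \mathsf{Con}_\K(\A)$, and the reverse inclusion is trivial. Since SES implies the weak epimorphism surjectivity property, \cref{Thm : weak ES : structural result} now yields that $\VVV(\K) = \K$ is arithmetical, establishing (b).

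For (c), let $\A \leq \B \in \K$ and $\theta \in \mathsf{Con}(\A)$. View $\theta \subseteq A \times A \subseteq B \times B$ as a subalgebra of $\B \times \B$, noting that operations of $\B$ restrict to operations of $\A$ on $A$. The diagonal-closure hypothesis of \cref{Prop : con preserving Beth comp: structural UA trick} again holds by reflexivity of $\theta$, and applying the proposition (this time with $\B$ in place of $\A$) gives
\[
\theta = \mathsf{Cg}_\K^\B(\theta) \cap (p_1[\theta] \times p_2[\theta]) = \mathsf{Cg}_\K^\B(\theta) \cap (A \times A).
\]
Because $\K$ is a variety by (a), $\mathsf{Cg}_\K^\B(\theta) = \mathsf{Cg}^\B(\theta)$ is an honest congruence of $\B$ extending $\theta$, which establishes CEP.

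The main obstacle, such as it is, consists in checking that the hypotheses of \cref{Prop : con preserving Beth comp: structural UA trick} apply in both invocations. The ambient algebra ($\A$ or $\B$) belongs to $\K$ by assumption, SES holds by hypothesis, and the diagonal-closure requirement is immediate from reflexivity. The only point deserving care is that in the CEP argument the subset $\theta$ of $B \times B$ must be a subalgebra of $\B \times \B$ (not merely of $\A \times \A$), which is automatic since $\A \leq \B$. Once these verifications are in place, the structural identity does all the work, collapsing both the variety-ness and CEP arguments to a one-line computation of projection images.
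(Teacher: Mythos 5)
Your proposal is correct and follows essentially the same route as the paper's proof: both establish variety-ness and the congruence extension property by viewing a congruence as a diagonal-closed subalgebra of a square and invoking \cref{Prop : con preserving Beth comp: structural UA trick}, and both obtain arithmeticity from \cref{Thm : weak ES : structural result} via the weak epimorphism surjectivity property. The only differences are cosmetic (ordering of the three conclusions and choice of letters).
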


\begin{proof}
Suppose that $\K$ has the strong epimorphism surjectivity property. 

We begin by showing that $\K$ is a variety. As $\K$ is a quasivariety, by \cref{Prop : variety iff Con=ConK} it suffices to show that $\mathsf{Con}(\A) = \mathsf{Con}_\K(\A)$ 
for every $\A \in \K$. 
To this end, consider $\A \in \K$ and $\theta \in \mathsf{Con}(\A)$. We will show that $\theta = \mathsf{Cg}_\K^\A(\theta)$ which, in turns, implies $\theta \in \mathsf{Con}_\K(\A)$, as desired. In view of \cref{Prop : congruences are subalgebras}, the congruence $\theta$ is the universe of a subalgebra $\B \leq \A \times \A$. Furthermore, for every $\langle a, b \rangle \in B = \theta$ we have $\langle a, a\rangle, \langle b, b \rangle \in \theta = B$ because $\theta$ is a congruence of $\A$. Therefore, we can apply \cref{Prop : con preserving Beth comp: structural UA trick}, obtaining
\[
\theta = B = \mathsf{Cg}_\K^\A(B) \cap (p_1[B]\times p_2[B]) = \mathsf{Cg}_\K^\A(\theta) \cap (p_1[\theta]\times p_2[\theta]).
\]
Since $\theta$ is a reflexive relation on $\A$, we have $p_1[\theta]\times p_2[\theta] = A \times A$. Therefore, the above display yields $\theta = \mathsf{Cg}_\K^\A(\theta) \cap (A \times A) = \mathsf{Cg}_\K^\A(\theta)$. It follows that $\theta \in \mathsf{Con}_\K(\A)$.  Hence, we conclude that $\K$ is a variety.

Next we prove that the variety $\K$ has the congruence extension property.\ It will be enough to show that $\theta = \mathsf{Cg}^\A(\theta){\upharpoonright}_C$ for all $\C \leq \A \in \K$ and $\theta \in \mathsf{Con}(\C)$. Accordingly, consider $\C \leq \A \in \K$ and $\theta \in \mathsf{Con}(\C)$. In view of Proposition \ref{Prop : congruences are subalgebras}, the congruence $\theta$ is the universe of a subalgebra $\B \leq \C \times \C$. Furthermore, for every $\langle a, b \rangle \in B = \theta$ we have $\langle a, a\rangle, \langle b, b \rangle \in \theta = B$ because $\theta$ is a congruence of $\C$. As $\C \leq \A$, we have $\C \times \C \leq \A \times \A$, whence $\B \leq \A \times \A$. Therefore, we can apply Proposition \ref{Prop : con preserving Beth comp: structural UA trick}, obtaining
\[
\theta = B = \mathsf{Cg}_\K^\A(B) \cap (p_1[B]\times p_2[B]) = \mathsf{Cg}_\K^\A(\theta) \cap (p_1[\theta]\times p_2[\theta]).
\]
Since $\theta$ is a reflexive relation on $\C$, we have $p_1[\theta]\times p_2[\theta] = C \times C$. Therefore, the above display yields $\theta = \mathsf{Cg}_\K^\A(\theta) \cap (C \times C) = \mathsf{Cg}^\A(\theta){\upharpoonright}_C$. Thus, we conclude that $\K$ has the congruence extension property.

It only remains to prove that the variety $\K$ is arithmetical. As $\K$ has the strong epimorphism surjectivity property by assumption, it has the weak epimorphism surjectivity property as well. Moreover, $\K = \mathbb{V}(\K)$ because $\K$ is a variety. Therefore, from Theorem \ref{Thm : weak ES : structural result} it follows that $\K$ is arithmetical.
\end{proof}

We are now ready to prove Theorem \ref{Thm : Con preserving Beth comp : structure theorem : main}.

\begin{proof}
Let $\M$ be a  congruence preserving Beth companion 
of $\K$. Since $\M$ is a pp expansion of $\K$ and $\K$ is a quasivariety by assumption, we obtain that $\M$ is also a quasivariety (see \cref{Thm : pp expansion : still quasivariety}(\ref{item : pp expansion : quasivariety})). In addition, as $\M$ is a Beth companion of $\K$ and $\K$ is a quasivariety, $\M$ has the strong epimorphism surjectivity property by \cref{Thm : Beth companions vs strong Beth}. From the assumption that $\K$ is relatively congruence distributive and \cref{Thm: easy consequences of congruence preservation : congruence equations} it follows that $\M$ is relatively congruence distributive as well. Lastly, we will prove that $\M_{\textsc{rfsi}}$ is closed under nontrivial subalgebras. Consider $\A \leq \B \in \M_{\textsc{rfsi}}$ with $\A$ nontrivial. By \cref{Thm: easy consequences of congruence preservation : variety and RFSI} we have $\B{\upharpoonright}_{\mathscr{L}_\K} \in \K_{\textsc{rfsi}}$. Furthermore, $\A \leq \B$ implies $\A{\upharpoonright}_{\mathscr{L}_\K} \leq \B{\upharpoonright}_{\mathscr{L}_\K}$. As $\A$ is nontrivial, so is $\A{\upharpoonright}_{\mathscr{L}_\K}$. Therefore, $\A{\upharpoonright}_{\mathscr{L}_\K} \leq \B{\upharpoonright}_{\mathscr{L}_\K} \in \K_{\textsc{rfsi}}$ and the assumption that $\K_{\textsc{rfsi}}$ is closed under nontrivial subalgebras guarantee that $\A{\upharpoonright}_{\mathscr{L}_\K} \in \K_{\textsc{rfsi}}$. Together with $\A \in \M$, this allows us to apply \cref{Thm: easy consequences of congruence preservation : variety and RFSI}, obtaining $\A \in \M_{\textsc{rfsi}}$. Hence, we conclude that $\M_{\textsc{rfsi}}$ is closed under nontrivial subalgebras.

Therefore, $\M$ is a quasivariety with the strong epimorphism surjectivity property that, moreover, is relatively congruence distributive  and such that $\M_{\textsc{rfsi}}$ is closed under nontrivial subalgebras. Thus, from \cref{Thm : con preserving Beth comps : structural result} it follows that $\M$ is an arithmetical variety with the congruence extension property. Since $\M$ is a variety, we obtain $\M_{\textsc{fsi}} = \M_{\textsc{rfsi}}$. As $\M_{\textsc{rfsi}}$ is closed under nontrivial subalgebras, we conclude that so is $\M_{\textsc{fsi}}$.
\end{proof}

Our last goal is to prove Corollary \ref{Cor : rel filtral -> discriminator}. To this end, we rely on the following consequence of Theorem \ref{Thm : con preserving Beth comps : structural result} which, in  the context of varieties, originates in \cite[Cor.\ 3(i)]{Ber87} (see also \cite[Example 6.5]{CKMWES}).

\begin{Corollary}\label{Cor : filtral vs discriminator : purely algebraic version}
Every relatively filtral quasivariety with the strong epimorphism surjectivity property is a discriminator variety.
\end{Corollary}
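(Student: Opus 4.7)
The plan is to combine the structural consequences of \cref{Thm : con preserving Beth comps : structural result} with the explicit discriminator-valued implicit operation furnished by \cref{Thm : relatively filtral quasivarieties : extendability}, and then invoke the strong Beth definability property to upgrade that implicit operation to a genuine quaternary term.

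First I would verify that a relatively filtral quasivariety $\K$ satisfies the hypotheses of \cref{Thm : con preserving Beth comps : structural result}. Namely, since every $\K$-congruence of a member of a subdirect product $\A \leq \prod_{i \in I}\A_i$ with $\A_i \in \K_\textsc{rsi}$ arises from a filter on $I$, the lattices $\Con_\K(\A)$ are distributive, so $\K$ is relatively congruence distributive; and the filter-based description also forces $\K_\textsc{rfsi}$ to be closed under nontrivial subalgebras, because the restriction of a filter-induced congruence to a nontrivial subalgebra preserves meet irreducibility of the identity. Granted these two facts, \cref{Thm : con preserving Beth comps : structural result} yields that $\K$ is an arithmetical variety with the congruence extension property; in particular $\K_\textsc{rsi}=\K_\textsc{si}$, and the \cref{Thm : Subdirect Decomposition} gives $\K=\VVV(\K_\textsc{si})$.

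Next I would apply \cref{Thm : relatively filtral quasivarieties : extendability} to obtain a quaternary $f \in \exteq(\K)$, defined by a conjunction of equations, such that $f^\A$ is total and coincides with the quaternary discriminator function on $A$ for every $\A \in \K_\textsc{rsi}$. Because $\K$ is a universal class with the strong epimorphism surjectivity property, \cref{Thm : strong ES = strong Beth} ensures that $\K$ has the strong Beth definability property, and since $\K$ is closed under direct products, \cref{Prop : classes closed under products : strong Beth} produces a single quaternary term $t$ of $\K$ such that $f^\A(a_1,\dots,a_4)=t^\A(a_1,\dots,a_4)$ for every $\A\in\K$ and $\langle a_1,\dots,a_4\rangle\in\dom(f^\A)$.

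Restricting to $\A \in \K_\textsc{si}=\K_\textsc{rsi}$, where $f^\A$ is total and equals the discriminator, this equality forces $t^\A$ to be the quaternary discriminator on $A$. Together with $\K = \VVV(\K_\textsc{si})$, this exhibits $\K$ as a discriminator variety, as required. The main obstacle I anticipate is the preliminary step — confirming relative congruence distributivity of $\K$ and the closure of $\K_\textsc{rfsi}$ under nontrivial subalgebras directly from the definition of relative filtrality — because these are the only ingredients not packaged by a single previously stated theorem; once they are in hand, the rest is a direct chaining of \cref{Thm : con preserving Beth comps : structural result}, \cref{Thm : relatively filtral quasivarieties : extendability}, \cref{Thm : strong ES = strong Beth}, and \cref{Prop : classes closed under products : strong Beth}.
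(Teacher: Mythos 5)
Your proof is correct, but the second half takes a genuinely different route from the paper's. Both arguments open the same way: relative filtrality gives relative congruence distributivity and closure of $\K_{\textsc{rfsi}}$ under nontrivial subalgebras, and then \cref{Thm : con preserving Beth comps : structural result} upgrades $\K$ to an (arithmetical) variety. The paper then finishes in one line: an arithmetical relatively filtral quasivariety is a congruence permutable filtral variety, and congruence permutable filtral varieties coincide with discriminator varieties by the classical results of Blok--K\"ohler--Pigozzi and Fried--Kiss. You instead manufacture the discriminator term explicitly: take the quaternary $f \in \ext_{eq}(\K)$ of \cref{Thm : relatively filtral quasivarieties : extendability} that realizes the discriminator on $\K_{\textsc{rsi}}$, pass from the strong epimorphism surjectivity property to the strong Beth definability property via \cref{Thm : strong ES = strong Beth}, and use \cref{Prop : classes closed under products : strong Beth} to interpolate $f$ by a single term $t$, which must then be the discriminator on every member of $\K_{\textsc{si}}$; since $\K = \VVV(\K_{\textsc{si}})$ once $\K$ is a variety, this exhibits $\K$ as a discriminator variety directly from the definition. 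Your route is longer but more self-contained within the paper's own machinery and is constructive in the sense that it produces the witnessing term; the paper's route is shorter at the cost of citing the external equivalence of congruence permutable filtral varieties with discriminator varieties. One remark on what you flag as the main obstacle: the preliminary facts (relative congruence distributivity and closure of $\K_{\textsc{rfsi}}$ under nontrivial subalgebras) need not be reproved from the definition of relative filtrality -- the paper simply cites \cite[Cor.~6.5(i, iv)]{CampRaf} for both, and your sketched direct verifications, while plausible, are vaguer than actual proofs (in particular, distributivity of $\Con_\K(\A)$ from the filter description requires checking that joins of $\K$-congruences correspond to joins of filters, which is not immediate). With that citation in hand, every remaining step of your argument is a correct application of a result stated in the paper.
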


\begin{proof}
Let $\K$ be a relatively filtral quasivariety with the strong epimorphism surjectivity property. As $\K$ is relatively filtral, it is relatively congruence distributive and $\K_\textsc{rfsi}$ is closed under nontrivial subalgebras  (see, e.g., \cite[Cor. 6.5(i, iv)]{CampRaf}). 
Therefore, from \cref{Thm : con preserving Beth comps : structural result} it follows that $\K$ is an arithmetical variety. Since $\K$ is a  relatively  filtral quasivariety, this yields that $\K$ is a congruence permutable filtral variety. As congruence permutable filtral varieties coincide with discriminator varieties (see, e.g., \cite{BlokKohlerPigozzi84,FriedKiss83}), we conclude that $\K$ is a discriminator variety.
\end{proof}

Furthermore, we recall that a join semilattice $\langle A; \lor \rangle$ is said to be \emph{dually Brouwerian} when for all $a, b \in A$ there exists the smallest $c \in A$ such that $a \leq b \lor c$. Moreover, an element $a$ of a complete lattice $\A$ is \emph{compact} when for every $X \subseteq A$ such that $a \leq \bigvee X$ there exists a finite $Y \subseteq X$ such that $a \leq \bigvee Y$. With every quasivariety $\K$ and algebra $\A$ we associate a join semilattice $\mathsf{Comp}_\K(\A)$ whose universe is the set of compact elements of $\mathsf{Con}_\K(\A)$ and whose join operation $+$ is defined by the rule $\theta + \phi = \mathsf{Cg}^\A_\K(\theta \cup \phi)$. Lastly, a member $\A$ of a quasivariety $\K$ is said to be \emph{simple relative to} $\K$ when $\mathsf{Con}_\K(\A)$ has exactly two elements, and we say that $\K$ is \emph{relatively semisimple} when every member of $\K_\textsc{rsi}$ is  simple relative to $\K$.  When $\K$ is a variety, we simply say that $\A$ is \emph{simple} and $\K$ \emph{semisimple}. We will rely on the fact that a quasivariety $\K$ is relatively filtral if and only if it is relatively semisimple and $\mathsf{Comp}_\K(\A)$ is dually Brouwerian for every $\A \in \K$ (see \cite[Thm. 6.3]{CampRaf} and \cite[Thms.\ 5 \& 8]{KohlerPigozzi80}).\footnote{This description of filtrality originated in the context of varieties  (see \cite{FriedGraetzerQuackenbusgìh80,FriedKiss83}) and was later extended to quasivarieties in \cite{CampRaf}.} Bearing this in mind, we will now prove \cref{Cor : rel filtral -> discriminator}.

\begin{proof}
Let $\M$ be a 
Beth companion of a relatively filtral quasivariety $\K$. Since every relatively filtral quasivariety has the relative congruence extension property (see, e.g., \cite[Cor.\ 6.5(i)]{CampRaf}), so does $\K$. As $\M$ is a pp expansion of $\K$, we conclude that $\M$ is congruence preserving by 
\cref{Thm : implications : simple - equational}.

We will show that $\M$ is also a relatively filtral quasivariety. The fact that $\M$ is a quasivariety is a consequence of Theorem \ref{Thm : pp expansion : still quasivariety}(\ref{item : pp expansion : quasivariety}) and the assumption that $\M$ is a pp expansion of the quasivariety $\K$. Therefore, it only remains to prove that $\M$ is relatively filtral, i.e., that it is relatively semisimple and $\mathsf{Comp}_\M(\A)$ is dually Brouwerian for every $\A \in \M$. To show that $\M$ is relatively semisimple, consider $\A \in \M_\textsc{rsi}$. Since $\M$ is a congruence preserving pp expansion of $\K$, we can apply \cref{Thm: easy consequences of congruence preservation : variety and RFSI}, obtaining  $\A\resLK \in \K_\textsc{rsi}$. 
As the quasivariety $\K$ is relatively semisimple (because it is relatively filtral by assumption), we obtain that $\mathsf{Con}_\K(\A\resLK)$ has exactly two elements. Moreover, $\mathsf{Con}_\M(\A) = \mathsf{Con}_\K(\A\resLK)$ because $\A \in \M$ and $\M$ is a congruence preserving pp expansion of $\K$. Therefore, $\mathsf{Con}_\M(\A)$ has exactly two elements, whence $\A$ is simple relative to $\M$. Hence, we conclude that $\M$ is relatively semisimple, as desired. Next we prove that $\mathsf{Comp}_\M(\A)$ is dually Brouwerian for every $\A \in \M$. Consider $\A \in \M$ and observe that $\A\resLK \in \K$ by \cref{Prop : pp expansions and subreducts}. Since $\M$ is congruence preserving, we have $\mathsf{Con}_\M(\A) = \mathsf{Con}_\K(\A\resLK)$, whence $\mathsf{Comp}_\M(\A) = \mathsf{Comp}_\K(\A\resLK)$. As $\mathsf{Comp}_\K(\A\resLK)$ is dually Brouwerian (because $\K$ is relatively filtral and $\A\resLK \in \K$), we conclude that so is $\mathsf{Comp}_\M(\A)$. Thus, we conclude that $\M$ is a relatively filtral quasivariety.

Lastly, observe that $\M$ has the strong epimorphism surjectivity property because it is 
 a 
Beth companion of the quasivariety $\K$ (see Theorem \ref{Thm : Beth companions vs strong Beth}).\ Thus, $\M$ is a relatively filtral quasivariety with the strong epimorphism surjectivity property. By \cref{Cor : filtral vs discriminator : purely algebraic version} we conclude that $\M$ is a discriminator variety.
\end{proof}

\section{Absolutely closed and primal algebras}

The aim of this section is to provide additional criteria to establish whether a pp expansion of a quasivariety is a Beth companion. The two main results are Theorems \ref{Thm : Beth compl. iff eq absolutely closed} and \ref{Prop : Beth comp triangle trick} involving absolutely closed algebras and primal algebras, respectively. We will employ these results to determine the Beth companions of torsion-free Abelian groups (\cref{Exa : tfAB}), Abelian $\ell$-groups (\cref{Exa : lAB}), MV-algebras (\cref{Exa : MV}), and varieties of MV-algebras generated by a finite chain (\cref{Exa : MV gen by fin chains}).

We begin by recalling the definition of an absolutely closed algebra (see \cite[p.~236]{Isb65}).

\begin{Definition}
Let $\mathsf{K}$ be a class of algebras. A member $\A$ of $\K$ is called \emph{absolutely closed} in $\mathsf{K}$ when 
$\mathsf{d}_{\mathsf{K}}(\A, \B) = A$
for every $\B \in \mathsf{K}$ such that $\A \leq \B$.
The class of absolutely closed members of $\mathsf{K}$ will be denoted by $\KAC$.
\end{Definition}

On the one hand, absolutely closed algebras are related to the reducts of the members of Beth companions, as the following result states.

\begin{Theorem} \label{Thm : acl and Beth companion}
Let $\mathsf{K}$ be a quasivariety with a Beth companion $\mathsf{M} = \SSS(\mathsf{K}[\mathscr{L}_{\mathcal{F}}])$. 
Then 
\[
\mathsf{K}[\mathscr{L}_{\mathcal{F}}] \resLK \subseteq \KAC \subseteq 
\mathsf{M} \resLK.
\]
Moreover, if $\M$ is an equational Beth companion of $\K$, then
$\mathsf{M} \resLK = \KAC$.
\end{Theorem}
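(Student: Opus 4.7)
The plan is to prove the three inclusions — $\K[\L_\F]\resLK \subseteq \KAC$, $\KAC \subseteq \M\resLK$, and (under the equational hypothesis) $\M\resLK \subseteq \KAC$ — separately, in that order.

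For the first inclusion, pick $\B \in \K[\L_\F]$ and set $\A = \B\resLK$; given an arbitrary extension $\A \leq \C \in \K$, we show $\d_\K(\A,\C) \subseteq A$. Let $c \in \d_\K(\A,\C)$. By \cref{Thm : dominions : pp formulas} there exist $f \in \imppp(\K)$ and $a_1, \dots, a_n \in A$ with $c = f^\C(a_1, \dots, a_n)$. Since $\K$ is a quasivariety and $\M$ is a Beth companion, \cref{Thm : Beth companions vs strong Beth}\eqref{item : Beth companions vs strong Beth : 5} provides a term $t$ of $\M$ interpolating $f$ in $\M$. Using \cref{Thm : extendable 1} and $\F \subseteq \extpp(\K) \subseteq \ext(\K)$, choose $\C^* \in \K$ with $\C \leq \C^*$ and $h^{\C^*}$ total for every $h \in \F$, so that $\C^*[\L_\F] \in \K[\L_\F] \subseteq \M$. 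By \cref{Prop : implicit operations extend} applied to $\A \leq \C^*$, the total function $h^\A = g_h^\B$ agrees with $h^{\C^*} = g_h^{\C^*[\L_\F]}$ on $A^n$ for every $h \in \F$, so $\B \leq \C^*[\L_\F]$ as $\L_\F$-algebras. \cref{Prop : implicit operations extend} also gives $f^{\C^*}(a_1, \dots, a_n) = c$, and hence $c = t^{\C^*[\L_\F]}(a_1, \dots, a_n) = t^\B(a_1, \dots, a_n) \in B = A$.

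For the second inclusion, take $\A \in \KAC$ and again use \cref{Thm : extendable 1} to obtain $\A^* \in \K$ with $\A \leq \A^*$ and $f^{\A^*}$ total for each $f \in \F$, so that $\A^*[\L_\F] \in \K[\L_\F] \subseteq \M$. For each $f \in \F$ and $a_1, \dots, a_n \in A$ we have $f^{\A^*}(a_1, \dots, a_n) \in \d_\K(\A, \A^*)$ by \cref{Thm : dominions : pp formulas}, and this dominion equals $A$ because $\A$ is absolutely closed. Consequently $A$ is closed under every operation of $\A^*[\L_\F]$ (the $\L_\K$-operations map $A$ into itself because $\A \leq \A^*$, and the new operations $g_f$ do so by the previous observation), so $A$ is the universe of a subalgebra $\A'$ of $\A^*[\L_\F]$ with $\A'\resLK = \A$. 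Hence $\A' \in \SSS(\K[\L_\F]) = \M$, which gives $\A \in \M\resLK$.

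Finally, suppose $\M$ is equational, so $\M$ is faithfully term equivalent relative to $\K$ to a pp expansion $\mathsf{N} = \SSS(\K[\L_\mathcal{G}])$ with $\mathcal{G} \subseteq \exteq(\K)$. Because $\K$ is a universal class, \cref{Thm : axiomatization of pp expansion (almost always)} yields $\mathsf{N} = \K[\L_\mathcal{G}]$, and \cref{Rem : properties preserved by term equivalence}\eqref{item : preservation term eq : Beth companion} ensures $\mathsf{N}$ is a Beth companion of $\K$. Applying the already established first inclusion to $\mathsf{N}$ gives $\mathsf{N}\resLK = \K[\L_\mathcal{G}]\resLK \subseteq \KAC$. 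Since the maps witnessing the faithful term equivalence are the identity on $\L_\K$, condition \eqref{item: preservation term eq : reduct} of \cref{Rem : properties preserved by term equivalence} yields $\M\resLK = \mathsf{N}\resLK$, hence $\M\resLK \subseteq \KAC$, and combining with the second inclusion we obtain equality.

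The main obstacle is the first inclusion, specifically the verification that $\B$ embeds into $\C^*[\L_\F]$ as an $\L_\F$-algebra: this is where the hypothesis $\B \in \K[\L_\F]$ (which forces the new operations on $\B$ to \emph{equal} the implicit operations of $\K$, not merely agree with them on some domain) interacts with the interpolation of pp-definable implicit operations by a single term in $\M$, and it is what allows the term $t$ to compute the value $c$ inside $\B$ and thereby place $c$ in $A$.
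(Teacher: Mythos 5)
Your proof is correct. The second inclusion and the equational case follow essentially the paper's own argument: closing $A$ under the $g_f$'s via \cref{Thm : dominions : pp formulas} and absolute closedness, and reducing the equational case to a faithfully term equivalent companion $\K[\L_{\mathcal{G}}]$ with $\mathcal{G}\subseteq\exteq(\K)$ (you re-derive $\M\resLK=\mathsf{N}\resLK$ from \cref{Rem : properties preserved by term equivalence}\eqref{item: preservation term eq : reduct} where the paper cites \cref{Prop : Beth comp have same reducts}, which is the same computation). The first inclusion is where you genuinely diverge: the paper embeds the arbitrary extension $\B$ of $\A\resLK$ into a reduct of a member of $\M$ and then invokes the already-established equivalence \cref{Prop : doms in K vs. M}\eqref{item : check SES of M in K} (``Beth companion iff all reduct-dominions are trivial''), whereas you unwind the dominion element as $f^{\C}(a_1,\dots,a_n)$ for $f\in\imppp(\K)$ via \cref{Thm : dominions : pp formulas}, pass to an extension $\C^*$ on which the $\F$-operations are total so that $\C^*[\L_\F]\in\M$, verify $\B\leq\C^*[\L_\F]$ using \cref{Prop : implicit operations extend}, and evaluate the interpolating term inside $\B$. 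Your verification that $\B\leq\C^*[\L_\F]$ is the one delicate step and it is sound: since $\B\in\K[\L_\F]$ forces $g_h^{\B}=h^{\B\resLK}$ with $h^{\B\resLK}$ total, \cref{Prop : implicit operations extend} gives agreement with $h^{\C^*}$ on all of $A^n$, not just on some subdomain. What the paper's route buys is brevity (the dominion comparison is packaged once in \cref{Prop : doms in K vs. M}); what yours buys is a self-contained, explicitly computational argument that makes visible exactly how the interpolating term witnesses membership of the dominion element in $A$. Both are valid.
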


On the other hand, the next result shows that absolutely closed algebras provide a sufficient condition for a pp expansion of a quasivariety to be a Beth companion.

\begin{Theorem} \label{Thm : Beth compl. iff eq absolutely closed}
Let $\mathsf{M}$ be a pp expansion of a quasivariety $\mathsf{K}$ such that $\mathsf{M} \resLK \subseteq \KAC$. Then $\M$ is a Beth companion of $\mathsf{K}$.
\end{Theorem}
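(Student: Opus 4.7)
The plan is to combine Theorem~\ref{Thm : Beth companions vs strong Beth} with the characterization of dominions in terms of pp-definable implicit operations (Theorem~\ref{Thm : dominions : pp formulas}). First I would note that $\M$, being a pp expansion of the quasivariety $\K$, is itself a quasivariety by Theorem~\ref{Thm : pp expansion : still quasivariety}\eqref{item : pp expansion : quasivariety}, and hence a universal class, so Theorem~\ref{Thm : Beth companions vs strong Beth} applies. Consequently, it suffices to verify that $\M$ has the strong epimorphism surjectivity property, which by Proposition~\ref{Prop : SES and dominions} amounts to showing $\d_{\M}(\A, \B) = A$ for every $\A \leq \B \in \M$.

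Fix $\A \leq \B \in \M$ and $b \in \d_{\M}(\A, \B)$; the goal is to prove $b \in A$. By Theorem~\ref{Thm : dominions : pp formulas}, applied in the elementary class $\M$, there exist $f \in \imppp(\M)$ and $\langle a_1, \dots, a_n \rangle \in \dom(f^\B) \cap A^n$ with $f^\B(a_1, \dots, a_n) = b$. Writing $\M = \SSS(\K[\L_\mathcal{F}])$ for the inducing $\mathcal{F} \subseteq \extpp(\K)$ and $\L_\mathcal{F}$, I would choose $\B^* \in \K[\L_\mathcal{F}]$ with $\B \leq \B^*$ and invoke Proposition~\ref{Prop : implicit operations extend} to lift the witness, obtaining $\langle a_1, \dots, a_n \rangle \in \dom(f^{\B^*})$ and $f^{\B^*}(a_1, \dots, a_n) = b$.

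The key step is transferring $f^{\B^*}(\vec{a}) = b$ from the $\L_\M$-side to the $\L_\K$-side. By Proposition~\ref{Prop : equivalence in K[F]}\eqref{item : equivalence in K[F] : 3}, there exists $f_* \in \imppp(\K)$ with $f^\C = f_*^{\C\resLK}$ for every $\C \in \K[\L_\mathcal{F}]$; since $\B^* \in \K[\L_\mathcal{F}]$, this yields $\langle a_1, \dots, a_n \rangle \in \dom(f_*^{\B^*\resLK}) \cap A^n$ and $f_*^{\B^*\resLK}(a_1, \dots, a_n) = b$. Now applying Theorem~\ref{Thm : dominions : pp formulas} in the elementary class $\K$, and using $\A\resLK \leq \B^*\resLK \in \K$, I would conclude $b \in \d_{\K}(\A\resLK, \B^*\resLK)$. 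The hypothesis $\M\resLK \subseteq \KAC$ gives $\A\resLK \in \KAC$, whence $\d_{\K}(\A\resLK, \B^*\resLK) = A$, forcing $b \in A$ as desired.

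I do not foresee any serious obstacle in this argument: all ingredients are available from the paper, and the one subtle point is that Proposition~\ref{Prop : equivalence in K[F]}\eqref{item : equivalence in K[F] : 3} is stated only for $\A \in \K[\L_\mathcal{F}]$, not for all of $\M$. This is precisely the reason for passing through the extension $\B^* \in \K[\L_\mathcal{F}]$ of $\B$: Proposition~\ref{Prop : implicit operations extend} allows us to propagate the witnessing tuple from $\B$ up to $\B^*$, where the $\L_\M$/$\L_\K$ correspondence is directly applicable.
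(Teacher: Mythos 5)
Your proof is correct, but it takes a genuinely different route from the paper's. The paper's own argument is a two-line application of \cref{Prop : doms in K vs. M}\eqref{item : check SES of M in K}, which already packages the equivalence ``$\M$ is a Beth companion of $\K$ iff $\d_{\K}(\A\resLK,\B\resLK)=A$ for all $\A\leq\B\in\M$''; the absolutely closed hypothesis then finishes the job immediately. Behind that proposition, the transfer between $\d_{\M}$ and $\d_{\K}$ is carried out at the level of homomorphisms (via \cref{Prop : small homs are big homs}), with no mention of pp formulas. You instead re-derive the needed implication from scratch: after reducing to the strong epimorphism surjectivity property of $\M$ via \cref{Thm : Beth companions vs strong Beth} and \cref{Prop : SES and dominions}, you pass through the Bacsich--Campercholi description of dominions by pp-definable implicit operations (\cref{Thm : dominions : pp formulas}) on the $\M$-side, lift the witness into some $\B^*\in\K[\L_\mathcal{F}]$ using \cref{Prop : implicit operations extend}, and translate the operation down to $\imppp(\K)$ via \cref{Prop : equivalence in K[F]}\eqref{item : equivalence in K[F] : 3} before applying \cref{Thm : dominions : pp formulas} again on the $\K$-side. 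Every step checks out, and your handling of the subtlety that \cref{Prop : equivalence in K[F]}\eqref{item : equivalence in K[F] : 3} only applies to members of $\K[\L_\mathcal{F}]$ (hence the extension to $\B^*$) is exactly right. What the paper's route buys is brevity and a cleaner separation of concerns, since the dominion comparison is isolated in a reusable lemma; what your route buys is a self-contained argument that makes the role of pp-definable implicit operations explicit and only establishes the one inclusion $\d_{\M}(\A,\B)\subseteq\d_{\K}(\A\resLK,\B^*\resLK)$ actually needed, rather than the full equality of \cref{Prop : doms in K vs. M}\eqref{item : dom_K = dom_K[L_F]}.
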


We postpone the proofs of Theorems \ref{Thm : acl and Beth companion} and \ref{Thm : Beth compl. iff eq absolutely closed} and proceed to describe some of their applications. To this end, it is convenient to introduce injective algebras and absolute retracts (see, e.g., \cite[pp.~80--81]{SurvKissal}) and relate them to absolutely closed algebras.

\begin{Definition}
A member $\A$ of a quasivariety $\K$ is said to be:
\benroman
\item \emph{injective in $\K$} when for all $\B, \C \in \mathsf{K}$ such that $\B \leq \C$ and homomorphism $f \colon \B \to \A$ there exists a homomorphism $g \colon \C \to \A$ such that $g\res_{B}=f$;
\[
\begin{tikzcd}[scale=1.6]
\B \arrow[d, "f"'] \arrow[r, hookrightarrow] & \C \arrow[dl, dashed, "g"] \\
\A &
\end{tikzcd}
\]
\item an \emph{absolute retract in $\K$} when for every $\B \in \mathsf{K}$ such that $\A \leq \B$ there exists a homomorphism $g \colon \B \to \A$ such that $g\res_{A}$ is the identity map on $A$. 
\[
\begin{tikzcd}[scale=1.6]
\A \arrow[d, "id"'] \arrow[r, hookrightarrow] & \B \arrow[dl, dashed, "g"] \\
\A &
\end{tikzcd}
\]
\eroman
\end{Definition}

\begin{Proposition}\label{Prop : injective implies abs closed}
The following conditions hold for a class of algebras $\K$ and $\A \in \K$:
\benroman
\item\label{Prop : injective implies abs closed: 1} if $\A$ is injective in $\K$, then it is an absolute retract in $\K$;
\item\label{Prop : injective implies abs closed: 2} if $\A$ is an absolute retract in $\K$, then it is absolutely closed in $\K$.
\eroman
\end{Proposition}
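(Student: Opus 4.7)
The plan is to handle the two implications separately, using the universal-property-style definitions directly without needing any of the more advanced machinery developed earlier in the paper.

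For part \eqref{Prop : injective implies abs closed: 1}, the strategy is to apply the definition of injectivity to the identity map. More precisely, assume $\A$ is injective in $\K$ and let $\B \in \K$ with $\A \leq \B$. I would take $f \colon \A \to \A$ to be the identity map $\mathrm{id}_\A$ and let $\A \hookrightarrow \B$ be the inclusion map; then injectivity of $\A$ produces a homomorphism $g \colon \B \to \A$ such that $g \res_A = \mathrm{id}_A$, which is exactly the condition for $\A$ to be an absolute retract in $\K$.

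For part \eqref{Prop : injective implies abs closed: 2}, the key idea is that the retraction supplies a homomorphism $\B \to \B$ that fixes $A$ pointwise but can differ from the identity off $A$, which the dominion condition forbids. Assume $\A$ is an absolute retract in $\K$ and let $\B \in \K$ with $\A \leq \B$. Since the inclusion $A \subseteq \d_\K(\A,\B)$ always holds, it suffices to show $\d_\K(\A,\B) \subseteq A$. Let $b \in \d_\K(\A,\B)$. Applying the absolute retract property, I obtain a homomorphism $g \colon \B \to \A$ with $g\res_A = \mathrm{id}_A$. Denoting the inclusion by $\iota \colon \A \to \B$, I consider the two homomorphisms $\mathrm{id}_\B, \iota \circ g \colon \B \to \B$; note $\B \in \K$ by assumption. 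For every $a \in A$ we have $\mathrm{id}_\B(a) = a = \iota(g(a))$, so $\mathrm{id}_\B \res_A = (\iota \circ g) \res_A$. The definition of $\d_\K(\A,\B)$ then gives $b = \mathrm{id}_\B(b) = \iota(g(b)) = g(b)$, and since $g(b) \in A$ we conclude $b \in A$. Thus $\d_\K(\A,\B) = A$, as desired.

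Both arguments are essentially immediate from the definitions, so I do not anticipate a substantive obstacle; the only subtlety worth checking is that the auxiliary algebra appearing in the definition of dominion (namely the codomain $\C$ of the two homomorphisms) is allowed to be $\B$ itself, which is fine since $\B \in \K$.
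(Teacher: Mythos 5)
Your proposal is correct and follows essentially the same route as the paper: part (1) is the standard application of injectivity to the identity map (the paper simply cites a reference for this), and for part (2) the paper likewise compares $\mathrm{id}_\B$ with $\iota \circ g$ on $\d_\K(\A,\B)$, using that they agree on $A$ and that the image of $\iota \circ g$ lies in $A$. No gaps.
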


\begin{proof}
 \eqref{Prop : injective implies abs closed: 1}: See, e.g., \cite[Prop.~1.1]{SurvKissal}.

\eqref{Prop : injective implies abs closed: 2}: Suppose that $\A$ is an absolute retract in $\K$ and consider $\B \in \K$ such that $\A \leq \B$. Since $\A$ is an absolute retract in $\K$, there exists a homomorphism $g \colon \B \to \A$ such that $g\res_{A}$ is the identity on $A$.  Let $h = i \circ g$, where $i$ is the inclusion map of $\A$ into $\B$. Then $h \colon \B \to \B$ is a homomorphism such that  $h(a)=g(a)=a$ for every $a \in A$. Consider the identity map $id \colon \B \to \B$. As $h(a)=a=id(a)$ for every $a \in A$, we have $h\res_A = id\res_A$. Then $h(b)=id(b)=b$ for every $b \in \d_\K(\A,\B)$.  Since the image of $h$ is $A$, it follows that $\d_\K(\A,\B) = A$. Therefore, $\A$ is absolutely closed in $\K$.
\end{proof}

We are now ready to illustrate how  \cref{Thm : Beth compl. iff eq absolutely closed} can be applied to describe the Beth companions of concrete classes of algebras.

\begin{exa}[\textsf{Torsion-free Abelian groups}]\label{Exa : tfAB}
An Abelian group\footnote{In this and the next example, we temporarily switch to the additive notation for Abelian groups, as it will be more convenient.} $\A = \langle A; +, -, 0 \rangle$ is said to be \emph{torsion-free} when $0$ is its only element of finite order. 
Torsion-free Abelian groups form a quasivariety $\tfAb$ axiomatized relative to Abelian groups by the quasiequations $nx \thickapprox  0 \to x \thickapprox  0$ for every $n \in \mathbb{Z}^+$. Our aim is to describe the Beth companion of $\tfAb$.
    
To this end, let $\A \in \tfAb$, $a \in A$, and $n \in \Z^+$. We say that an element $b \in A$ is the result of \emph{dividing $a$ by $n$} if $nb=a$. An Abelian group $\A$ is called \emph{divisible} when for all $a \in A$ and $n \in \Z^+$ there exists some $b \in A$ such that $nb=a$. In view of the next result, ``dividing by $n$'' is an extendable implicit operation of $\tfAb$.
\begin{Proposition} \label{Prop : div is ext imp op in T}
For each $n \in \mathbb{Z}^+$ there exists a unary $f_n \in \exteq(\tfAb)$ such that for all $\A \in \tfAb$ and $a \in \mathsf{dom}(f_n^\A)$,
\begin{align*}
    \mathsf{dom}(f_n^\A) &= \{ c \in A : nb = c \text{ for some }b \in A \};\\
f_n^\A(a) &= \text{the result of dividing $a$ by $n$.}
\end{align*}
\end{Proposition}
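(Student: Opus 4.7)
The plan is to take $\varphi_n(x,y)$ to be the single equation $ny \thickapprox x$, where $ny$ abbreviates the $n$-fold sum $y + \cdots + y$. First I will verify that $\varphi_n$ is functional in $\tfAb$: if $\A \in \tfAb$ and $nb_1 = a = nb_2$, then $n(b_1 - b_2) = 0$, and the defining quasiequation $nx \thickapprox 0 \to x \thickapprox 0$ forces $b_1 = b_2$. Since $\varphi_n$ is a pp formula (indeed a conjunction of equations, the conjunction having one conjunct) that is functional in $\tfAb$, \cref{Cor : functionality in Q(K)} then yields an implicit operation $f_n$ of $\QQQ(\tfAb) = \tfAb$ defined by $\varphi_n$, and unwinding the definition gives $\dom(f_n^\A) = \{a \in A : nb = a \text{ for some } b \in A\}$ with $f_n^\A(a)$ equal to the unique such witness. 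In particular $f_n \in \impeq(\tfAb)$.

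It remains to establish extendability. I will invoke the classical fact that every torsion-free Abelian group $\A$ embeds into a divisible torsion-free Abelian group $\B$ (for instance $\B = \A \otimes_\Z \Q$ regarded as an Abelian group, or equivalently the direct limit of the multiplication-by-$k$ maps $\A \to \A$ for $k \in \Z^+$). Such $\B$ lies in $\tfAb$ and satisfies $\dom(f_n^\B) = B$, so any $a \in A \subseteq B$ trivially belongs to $\dom(f_n^\B)$; this provides the required extension and shows $f_n \in \ext(\tfAb)$. A more streamlined alternative would be to let $\mathsf{M}$ denote the class of divisible torsion-free Abelian groups, observe that $\PPP(\tfAb) \subseteq \tfAb \subseteq \QQQ(\mathsf{M})$ and that $f_n^\A$ is total for each $\A \in \mathsf{M}$, and then apply \cref{Prop : extendable : sufficient conditions}\eqref{item : extendable : sufficient conditions : 2} to conclude $f_n \in \extpp(\tfAb)$ directly.

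No serious obstacle is expected. The only points that require attention are (i) noticing that what requires the torsion-free hypothesis is functionality rather than existence of a divisor, and (ii) invoking the divisible-hull construction to produce an extension in which $n$-division becomes total. Combining $f_n \in \impeq(\tfAb)$ with $f_n \in \ext(\tfAb)$ gives $f_n \in \exteq(\tfAb)$, as required.
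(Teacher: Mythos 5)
Your proof is correct, and it takes a mildly but genuinely different route from the paper's. For functionality you argue directly in all of $\tfAb$ via the cancellation quasiequation ($nb_1 = a = nb_2$ forces $n(b_1-b_2)=0$, hence $b_1=b_2$), whereas the paper first invokes the fundamental theorem of finitely generated Abelian groups together with \cref{Prop : quasivariety = Q fin gen SI} to establish $\tfAb = \QQQ(\mathbb{Q})$ and then only checks functionality on the single algebra $\mathbb{Q}$. For extendability you appeal to the classical divisible hull $\A \otimes_\Z \Q$ (or, in your streamlined variant, to \cref{Prop : extendable : sufficient conditions}\eqref{item : extendable : sufficient conditions : 2} with $\M$ the class of divisible torsion-free groups), whereas the paper again routes everything through $\mathbb{Q}$: since $f_n^{\mathbb{Q}}$ is total and $\tfAb = \QQQ(\mathbb{Q})$, the same proposition applies with $\M = \{\mathbb{Q}\}$. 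Your version is more elementary in that it avoids the structure theorem for finitely generated Abelian groups entirely, at the cost of importing the divisible-hull construction; the paper's version concentrates all verifications on one concrete algebra and incidentally records the useful fact that $\tfAb$ is the quasivariety generated by $\mathbb{Q}$ (which it reuses in the analogous $\ell$-group argument). Both arguments are complete; the only point worth flagging is that your direct functionality check makes transparent exactly where the torsion-free hypothesis enters, which is the observation you yourself single out.
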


\begin{proof} 
Let $\varphi_n(x, y) = x \thickapprox ny$. Moreover, let $\Z$ and $\mathbb{Q}$ be the additive groups of the integers and the rationals, respectively. The fundamental theorem of finitely generated Abelian groups (see, e.g., \cite[Thm.~5.2.3]{DF04}) implies that every finitely generated torsion-free Abelian group is isomorphic to $\Z^m$ for some $m \in \mathbb{N}$.  \cref{Prop : quasivariety = Q fin gen SI} implies that $\tfAb$ is generated as a quasivariety by its finitely generated members. Since $\Z \leq \mathbb{Q}$, we obtain that $\tfAb$ is also generated as a quasivariety by $\mathbb{Q}$. Observe that for all $\frac{a}{b},\frac{c}{d} \in \mathbb{Q}$ and $n \in \Z^+$ we have that $\mathbb{Q} \vDash \varphi_n(\frac{a}{b},\frac{c}{d})$ implies $\frac{c}{d}=\frac{a}{nb}$. Therefore, each $\varphi_n$ is functional in $\mathbb{Q}$.\ As $\tfAb$ is the quasivariety generated by $\mathbb{Q}$, \cref{Cor : functionality in Q(K)} yields that $\varphi_n$ defines a member $f_n$ of $\impeq(\tfAb)$.\ From the definition of $\varphi_n$ it follows that the two displays in the statement hold. Moreover, $f_n^{\mathbb{Q}}$ is total because every rational can be divided by $n$ in $\mathbb{Q}$. As $\tfAb$ is generated as a quasivariety by $\mathbb{Q}$, \cref{Prop : extendable : sufficient conditions}\eqref{item : extendable : sufficient conditions : 2} guarantees that $f_n$ is extendable.
\end{proof}

\begin{Corollary}\label{Cor : TAFG lacks the SES}
 $\tfAb$ lacks the strong epimorphism surjectivity property.
\end{Corollary}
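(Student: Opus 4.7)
The plan is to apply \cref{Cor : failures of the SES} directly, using the implicit operation $f_2 \in \impeq(\tfAb)$ provided by the preceding \cref{Prop : div is ext imp op in T}. Recall that this says a universal class $\K$ lacks the strong epimorphism surjectivity property whenever there exist $\A \leq \B \in \K$, some $f \in \imppp(\K)$, and a tuple in $\dom(f^\B) \cap A^n$ whose image under $f^\B$ escapes $A$. Since $\impeq(\tfAb) \subseteq \imppp(\tfAb)$, the operation $f_2$ is a legitimate witness.

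I would take $\A = \mathbb{Z}$ and $\B = \mathbb{Q}$, the additive groups of the integers and the rationals, both members of $\tfAb$ with $\A \leq \B$. By the description of $f_2$ in \cref{Prop : div is ext imp op in T}, we have $\dom(f_2^{\mathbb{Q}}) = \mathbb{Q}$ (every rational is divisible by $2$ in $\mathbb{Q}$), so in particular $1 \in \dom(f_2^{\mathbb{Q}}) \cap \mathbb{Z}$, with $f_2^{\mathbb{Q}}(1) = \tfrac{1}{2} \notin \mathbb{Z}$. Applying \cref{Cor : failures of the SES} concludes the argument.

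There is no real obstacle here: this is a one-line verification. The entire proof amounts to exhibiting $\A \leq \B$ in $\tfAb$ and a single element of $\A$ whose image under $f_2^{\B}$ lies outside $\A$, then invoking \cref{Cor : failures of the SES}.
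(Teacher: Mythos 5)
Your proof is correct and follows essentially the same route as the paper: the paper's own argument uses the identical witness ($\mathbb{Z} \leq \mathbb{Q}$, the operation $f_2$, and $f_2^{\mathbb{Q}}(1) = \tfrac{1}{2} \notin \mathbb{Z}$), merely citing \cref{Thm : dominions : pp formulas} together with \cref{Prop : SES and dominions} directly instead of their packaged consequence \cref{Cor : failures of the SES}. This is a purely cosmetic difference.
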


\begin{proof}
Let $f_2$ be the member of $\exteq(\tfAb)$ given by   \cref{Prop : div is ext imp op in T}. Since $\Z \leq \mathbb{Q} \in \tfAb$ and $f_2^\mathbb{Q}(1)=\frac{1}{2} \notin \Z$, from  \cref{Thm : dominions : pp formulas} it follows that $\frac{1}{2} \in \d_\tfAb(\Z, \mathbb{Q})-\Z$. Therefore,  $\tfAb$ lacks the strong epimorphism surjectivity property by \cref{Prop : SES and dominions}.
\end{proof}

Let $\F=\{f_n : n \in \Z^+\}$ be the set of implicit operations given by \cref{Prop : div is ext imp op in T}.\ By the same proposition we have $\F \subseteq \exteq(\tfAb)$.
Denote by $\L$ the language of groups and let $\L_\F = \L \cup \{ d_n : n \in \Z^+\}$ be an $\F$-expansion of $\L$ in which the role of $g_{f_n}$ is played by $d_n$. 
 Then $\DAb = \SSS(\tfAb[\mathscr{L}_{\mathcal{F}}])$ is an equational pp expansion of $\tfAb$. We will prove that it is an equational Beth companion of $\tfAb$. To this end, we rely on the following observation.

\begin{Proposition}\label{Prop : divisible torsion free injective}
$\DAb\res_\L$ is the class of divisible torsion-free Abelian groups.\ Moreover, every member of $\DAb\res_\L$ is injective in $\tfAb$.
\end{Proposition}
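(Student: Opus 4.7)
The plan is to establish the two assertions in sequence. For the first, I would show the inclusion from left to right by taking $\A \in \DAb$ and, using $\DAb = \SSS(\tfAb[\L_\F])$, fixing $\B \in \tfAb[\L_\F]$ with $\A \leq \B$. Then $\A\res_\L \leq \B\res_\L \in \tfAb$ and closure of $\tfAb$ under subalgebras yields $\A\res_\L \in \tfAb$. For divisibility, fix $a \in A$ and $n \in \Z^+$: the element $d_n^\A(a) \in A$ is defined since $d_n$ is a basic operation of $\A$, and $\A \leq \B$ together with the definition of $\tfAb[\L_\F]$ gives $d_n^\A(a) = d_n^\B(a) = f_n^{\B\res_\L}(a)$. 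Proposition \ref{Prop : div is ext imp op in T} guarantees $n \cdot f_n^{\B\res_\L}(a) = a$, so the equality $n \cdot d_n^\A(a) = a$ holds in $\A\res_\L$.

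For the reverse inclusion, let $\A$ be a divisible torsion-free Abelian group. Divisibility and the torsion-free property ensure that for every $a \in A$ and every $n \in \Z^+$ there is a \emph{unique} $b \in A$ with $nb = a$, so Proposition \ref{Prop : div is ext imp op in T} implies that $f_n^\A$ is total for all $n \in \Z^+$. Hence the expansion $\A[\L_\F]$ is defined and lies in $\tfAb[\L_\F] \subseteq \DAb$; its $\L$-reduct is $\A$, whence $\A \in \DAb\res_\L$.

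For the second claim, let $\A \in \DAb\res_\L$; by the first part, $\A$ is a divisible torsion-free Abelian group. To prove injectivity in $\tfAb$, consider $\B \leq \C$ in $\tfAb$ and a homomorphism $f \colon \B \to \A$. The cleanest route is to invoke the classical theorem of Baer (see, e.g., \cite[Thm.~3.35]{DF04} or any standard text) stating that an Abelian group is injective in the variety of Abelian groups precisely when it is divisible; since $\tfAb$ is a subclass of the variety of Abelian groups, injectivity in the larger class immediately yields injectivity in $\tfAb$. Alternatively, one can reproduce the Zorn-style argument directly: consider the poset $P$ of extensions $(\B', f')$ with $\B \leq \B' \leq \C$ and $f'\colon \B' \to \A$ extending $f$, and a maximal element $(\B^*, f^*)$, then show $\B^* = \C$ by extending $f^*$ to $\mathsf{Sg}^\C(B^* \cup \{c\})$ for any $c \in C - B^*$, using divisibility of $\A$ to choose a value for $f^*(c)$ that is consistent with the relations $n_0 c \in B^*$ (where $n_0$ is minimal such, if any exists).

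The only subtle point is the first half: one must confirm that the operation $d_n$ in a subalgebra $\A \leq \B$ of some $\B \in \tfAb[\L_\F]$ is actually witnessed by the division operation on $\B\res_\L$. This is immediate from the subalgebra condition and the definition of $\tfAb[\L_\F]$, but it is the crux that links the expanded language to the divisibility property of the underlying group. The second half is entirely standard once Baer's injectivity criterion is invoked.
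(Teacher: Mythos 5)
Your proof is correct and follows essentially the same route as the paper: both parts hinge on the observation that the $\L$-reducts of $\DAb$ are exactly the members of $\tfAb$ on which every $f_n$ is total (hence the divisible ones), and both derive injectivity in $\tfAb$ from Baer's criterion for injectivity in the variety of all Abelian groups together with $\tfAb \subseteq \mathsf{AG}$. The only cosmetic difference is that the paper first invokes Theorem \ref{Thm : axiomatization of pp expansion (almost always)} to get $\DAb = \tfAb[\L_\F]$ outright, whereas you handle the passage to subalgebras of $\tfAb[\L_\F]$ by hand; both are fine.
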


\begin{proof}
Let $\A \in \tfAb$. As $\F \subseteq \exteq(\tfAb)$, from \cref{Thm : axiomatization of pp expansion (almost always)} it follows that $\DAb =\tfAb[\mathscr{L}_{\mathcal{F}}]$.\ Then $\A \in \DAb\res_{\L}$ if and only if $f_n^\A$ is total for every $n \in \Z^+$. Therefore, $\A \in \DAb\res_{\L}$ if and only if $\A$ is divisible. Since $\DAb\res_{\L} \subseteq \tfAb$, we conclude that the members of $\DAb\res_{\L}$ are exactly the divisible torsion-free Abelian groups.\
It only remains to prove that every member of $\DAb\res_{\L}$ is injective in $\tfAb$. 
It is well known that the injective members of the variety $\mathsf{AG}$ of Abelian groups are exactly the divisible Abelian groups (see, e.g., \cite[Cor.~2.3.2]{Wei94}). Since $\tfAb \subseteq \mathsf{AG}$, from the definition of an injective algebra it follows that every divisible member of $\mathsf{AG}$ that is torsion-free is also injective in $\tfAb$. As all the members of $\DAb\res_\L$ are divisible, we conclude they are injective in $\tfAb$.
\end{proof}

We are now ready to establish the desired description of the Beth companion of $\tfAb$.

\begin{Theorem}\label{Thm : Beth companion TF Abelian groups}
$\DAb$ is a variety and an equational Beth companion of $\tfAb$.
\end{Theorem}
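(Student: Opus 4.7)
The plan is to establish the two claims separately, first the Beth companion claim and then the variety claim. For the Beth companion claim, I will invoke \cref{Thm : Beth compl. iff eq absolutely closed}, which requires showing that $\DAb \resLK \subseteq (\tfAb)_\ac$. By \cref{Prop : divisible torsion free injective}, every member of $\DAb \resLK$ (i.e., every divisible torsion-free Abelian group) is injective in $\tfAb$. Then chaining parts \eqref{Prop : injective implies abs closed: 1} and \eqref{Prop : injective implies abs closed: 2} of \cref{Prop : injective implies abs closed} shows that every injective in $\tfAb$ is an absolute retract, and every absolute retract is absolutely closed. Hence $\DAb\resLK \subseteq (\tfAb)_\ac$, so \cref{Thm : Beth compl. iff eq absolutely closed} gives that $\DAb$ is a Beth companion of $\tfAb$. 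The companion is equational by construction, since $\F \subseteq \exteq(\tfAb)$ by \cref{Prop : div is ext imp op in T}.

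For the variety claim, I will exhibit an equational axiomatization of $\DAb$ and invoke \cref{Thm : classes generation}\eqref{item : variety generation}. Since $\tfAb$ is a universal class and $\F \subseteq \exteq(\tfAb)$, by \cref{Thm : axiomatization of pp expansion (almost always)} we have $\DAb = \tfAb[\L_\F]$, axiomatized by the Abelian group equations, the quasiequations $nx \thickapprox 0 \to x \thickapprox 0$ for $n \in \Z^+$, and the equations $x \thickapprox n d_n(x)$ for $n \in \Z^+$. I will prove this axiomatization coincides with the purely equational theory consisting of the Abelian group equations together with $n d_n(x) \thickapprox x$ and $d_n(nx) \thickapprox x$ for all $n \in \Z^+$. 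One inclusion is immediate: in any member of $\DAb$ the operation $d_n$ is division by $n$ in a torsion-free divisible Abelian group, so both equations hold. For the converse, from the equation $d_n(nx) \thickapprox x$ applied at $x=0$ we obtain $d_n(0)=0$, whence if $na=0$ then $a = d_n(na) = d_n(0) = 0$, so $\A\resLK$ is torsion-free. Together with $n d_n(x) \thickapprox x$ this shows $\A\resLK$ is a torsion-free divisible Abelian group, and uniqueness of division forces $d_n^\A$ to coincide with $f_n^{\A\resLK}$, so $\A \in \tfAb[\L_\F] = \DAb$.

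The routine calculations are entirely elementary. The only subtle point, which one could regard as the main obstacle, is recognizing that the torsion-free quasiequation of $\tfAb$ can be replaced by an equation once division is in the signature: this requires observing that $d_n(nx) \thickapprox x$ together with $n d_n(x) \thickapprox x$ simultaneously axiomatize divisibility and torsion-freeness, since the second equation instantiated at $0$ yields $d_n(0) = 0$ which is exactly what is needed to eliminate the quasiequation. Once this is in place, the variety claim follows immediately from \cref{Thm : classes generation}\eqref{item : variety generation}.
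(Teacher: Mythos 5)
Your proposal is correct and follows essentially the same route as the paper: the Beth companion claim via injectivity of divisible torsion-free Abelian groups, \cref{Prop : injective implies abs closed}, and \cref{Thm : Beth compl. iff eq absolutely closed}; and the variety claim by trading the torsion-freeness quasiequation for the equation $d_n(nx) \thickapprox x$ (using $d_n(0)=0$ for one direction and torsion-freeness/uniqueness of division for the other), exactly as in the paper's comparison of $\Gamma$ and $\Gamma'$.
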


\begin{proof}
Let $\Sigma$ be a set of equations axiomatizing the variety of Abelian groups.\ Since $\varphi_n = x \thickapprox ny$ is the equation defining $f_n$ (see the proof of \cref{Prop : div is ext imp op in T}), from \cref{Thm : axiomatization of pp expansion (almost always)} it follows that $\DAb$ is axiomatized by the set of quasiequations
\[
\Gamma = \Sigma \cup \{nx \thickapprox  0 \to x \thickapprox  0 : n \in \Z^+\} \cup \{ x \thickapprox nd_n(x) : n \in \Z^+\}.
\]

We will show that $\DAb$ is also axiomatized by the set of equations
\[
\Gamma' = \Sigma \cup \{x \thickapprox d_n(nx)  : n \in \Z^+\} 
\cup 
 \{ x \thickapprox nd_n(x) : n \in \Z^+\}.
\]
It will be enough to prove that $\Gamma$ and $\Gamma'$ have the same models. First, let $\A$ be a model of $\Gamma$.\ Then $\A\res_{\L}$ is a torsion-free Abelian group. Consider $a \in A$ and $n \in \Z^+$. Since  $\A \vDash x \thickapprox nd_n(x)$, we have $na = nd_n^\A(na)$. Then $0 = n(d_n^\A(na)-a)$, which implies $a =  d_n^\A(na)$ because $\A\res_{\L}$ is torsion-free. So, $\A \vDash \Gamma'$. Conversely, suppose that $\A$ is a model of $\Gamma'$. Let $a \in A$ and $n \in \Z^+$. If $na = 0$, then 
\[
a=d_n^\A(na)=d_n^\A(0)=d_n^\A(n0)=0,
\]
where the first and last equalities hold because $\A \vDash x \thickapprox d_n(nx)$, the second because $na=0$ by assumption, and the third because $0=n0$. Therefore, $\A \vDash nx \thickapprox  0 \to x \thickapprox  0$. Hence, $\A$ is a model of $\Gamma$. We conclude that the set of equations $\Gamma'$ axiomatizes $\DAb$, which is then a variety.

Lastly, by \cref{Prop : divisible torsion free injective} every member of $\DAb\res_{\L}$ is injective in $\tfAb$. Then \cref{Prop : injective implies abs closed} implies $\DAb\res_{\L} \subseteq \tfAb_\ac$.
Since $\DAb$ is a pp expansion of $\tfAb$, from \cref{Thm : Beth compl. iff eq absolutely closed} it follows that $\DAb$ is a Beth companion of $\tfAb$ which, moreover, is equational because $\DAb$ is an equational pp expansion of $\tfAb$.
\end{proof}

\end{exa}

For the next pair of examples, it is convenient to introduce the following class of algebras (see \cite[p.~236]{Isb65}). 
\begin{Definition}
A member $\A$ of a class of algebras $\K$ is called \emph{saturated} in $\K$ when there exists no $\B \in \K$ such that $\A$ is a proper $\K$-epic subalgebra of $\B$. 
\end{Definition}

In quasivarieties, saturated algebras are also called \emph{epicomplete} (see, e.g., \cite[p.~176]{BH89}). Saturated and absolutely closed algebras are related as follows.

\begin{Proposition} \label{Prop : AP implies acl = sat}
The following conditions hold for a class of algebras $\K$:
\benroman
\item\label{Prop : AP implies acl = sat: 1} every algebra that is absolutely closed in $\K$  is saturated in $\K$;
\item\label{Prop : AP implies acl = sat: 2} if $\K$ is a quasivariety with the amalgamation property, then every algebra that is saturated  in $\K$ is absolutely closed in $\K$.
\eroman
\end{Proposition}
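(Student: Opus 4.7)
The plan is to prove the two parts separately, with part (1) reducing to an elementary contradiction and part (2) relying crucially on the amalgamation property via \cref{Prop : doms computable in subalgebras}.

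For part (\ref{Prop : AP implies acl = sat: 1}), I would argue by contraposition. Suppose $\A$ is not saturated in $\K$. Then there exists $\B \in \K$ in which $\A$ is a proper $\K$-epic subalgebra; after using closure under $\III$ we may assume $\A \leq \B$ and $\A \neq \B$. By the definition of a $\K$-epimorphism and the characterization of the dominion as the intersection of equalizers of pairs of homomorphisms into $\K$ that agree on $A$, the inclusion being a $\K$-epimorphism translates to $\d_\K(\A, \B) = B$. Since $A \subsetneq B$, this yields $\d_\K(\A, \B) \neq A$, showing that $\A$ is not absolutely closed in $\K$.

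For part (\ref{Prop : AP implies acl = sat: 2}), assume $\K$ is a quasivariety with the amalgamation property and $\A \in \K$ is saturated. To show $\A$ is absolutely closed, fix $\B \in \K$ with $\A \leq \B$. Since $\mathsf{d}_\K(\A, \B)$ is the universe of a subalgebra $\D$ of $\B$ containing $A$, and $\K$ is closed under subalgebras, we have $\A \leq \D \leq \B$ with $\D \in \K$. The key step is to apply \cref{Prop : doms computable in subalgebras} (which requires exactly closure under finite direct products together with the amalgamation property), obtaining
\[
\d_\K(\A, \D) = \d_\K(\A, \B) \cap D = D \cap D = D.
\]
This means every element of $D$ lies in the dominion of $\A$ inside $\D$, so the inclusion $\A \hookrightarrow \D$ is a $\K$-epimorphism. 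Saturation of $\A$ then forces $\A = \D$, i.e., $\d_\K(\A, \B) = A$. As $\B$ was arbitrary, $\A$ is absolutely closed in $\K$.

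The main obstacle is identifying that \cref{Prop : doms computable in subalgebras} is the precise tool needed: without the amalgamation property, there is no reason to expect that dominions behave well under restriction from $\B$ to the subalgebra $\D$, and in particular there is no reason to expect the inclusion $\A \hookrightarrow \D$ to remain a $\K$-epimorphism. Once this lemma is in hand, both directions of the equivalence are short and purely formal.
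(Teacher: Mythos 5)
Your proposal is correct and follows essentially the same route as the paper: part (\ref{Prop : AP implies acl = sat: 1}) is the same elementary argument (you phrase it by contraposition, the paper by contradiction, which is immaterial), and part (\ref{Prop : AP implies acl = sat: 2}) uses exactly the paper's key step of applying \cref{Prop : doms computable in subalgebras} to the subalgebra $\D$ with universe $\d_\K(\A,\B)$ to conclude that $\A \hookrightarrow \D$ is a $\K$-epimorphism. The only cosmetic remark is that in part (\ref{Prop : AP implies acl = sat: 1}) the appeal to closure under $\III$ is unnecessary, since the definition of a $\K$-epic subalgebra already presupposes $\A \leq \B$.
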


\begin{proof}
\eqref{Prop : AP implies acl = sat: 1}: Suppose that $\A$ is absolutely closed in $\K$. Consider $\B \in \K$ such that $\A$ is a proper subalgebra of $\B$. Assume, with a view to contradiction, that $\A$ is a $\K$-epic subalgebra of $\B$. Then $f=g$ for every $\C \in \K$ and pair of homomorphisms $f,g \colon \B \to \C$ such that $f\res_A=g\res_A$. Therefore, $\d_\K(\A,\B)=B$. Since $\A$ is absolutely closed in $\K$, we have that $\d_\K(\A,\B) = A$. We conclude that $\A=\B$, which contradicts the assumption that $\A$ is a proper subalgebra of $\B$.

\eqref{Prop : AP implies acl = sat: 2}: Assume that $\K$ has the amalgamation property and that $\A$ is saturated in $\K$. Let $\B \in \K$ be such that $\A \leq \B$. To prove that $\A$ is absolutely closed in $\K$, we need to show that $\d_\K(\A,\B)=A$. Consider the subalgebra $\D$ of $\B$ with universe $\d_\K(\A,\B)$. Then $\A \leq \D \leq \B$. Since $\K$ has the amalgamation property and is a quasivariety by assumption, \cref{Prop : doms computable in subalgebras} implies 
 $\mathsf{d}_{\mathsf{K}}(\A, \D)=\mathsf{d}_{\mathsf{K}}(\A, \B) \cap D$. 
As $\mathsf{d}_{\mathsf{K}}(\A, \B) = D$,
we obtain  
 $\mathsf{d}_{\mathsf{K}}(\A, \D)=D$. 
 Furthermore, $\D \in \K$ because $\D \leq \B \in \K$ and $\K$ is a quasivariety. It follows that $\A$ is a $\K$-epic subalgebra of $\D$. Then  the assumption that $\A$ is saturated in $\K$ and $\D \in \K$ let us conclude that $\A=\D$. Therefore, $\d_\K(\A,\B)=A$, as desired.
\end{proof}

\begin{exa} [\textsf{$\ell$-groups}] \label{Exa : lAB}
An Abelian $\ell$-group is an algebra $\langle A; +,-,\land, \lor,0 \rangle$ such that 
$\langle  A; +,-, 0 \rangle$ is an Abelian group, 
$\langle A; \land, \lor \rangle$ is a lattice, and
\[
a \leq b \text{ implies } a + c \leq b + c
\]
for all $a,b,c \in A$, where $\leq$ denotes the partial order on $A$ induced by its lattice structure (see, e.g., \cite{MR1369091}).
The class $\lAb$ of Abelian $\ell$-groups forms a variety (see, e.g., \cite[Cor.~1 of Thm.~XIII.2]{BirkLT}). Our aim is to describe the Beth companion of $\lAb$.

To this end, given $\A \in \lAb$, $a,b \in A$, and $n \in \Z^+$, we say that $b$ is the result of \emph{dividing $a$ by $n$} if $nb=a$. We say that an Abelian $\ell$-group is \emph{divisible} when so is its group reduct.

\begin{Proposition} \label{Prop : div is ext imp op in T lgroup}
For each $n \in \mathbb{Z}^+$ there exists a unary $f_n \in \exteq(\lAb)$ such that for all $\A \in \lAb$ and $a \in \mathsf{dom}(f_n^\A)$,
\begin{align*}
    \mathsf{dom}(f_n^\A) &= \{ c \in A : nb = c \text{ for some }b \in A \};\\
f_n^\A(a) &= \text{the result of dividing $a$ by $n$}.
\end{align*}
\end{Proposition}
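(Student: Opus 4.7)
The plan is to follow the template of \cref{Prop : div is ext imp op in T} verbatim, adjusting only the argument for extendability. Let $\varphi_n(x,y)$ be the equation $x \thickapprox ny$, which is a (trivial) conjunction of equations. First I would verify that $\varphi_n$ is functional in $\lAb$. This reduces to the classical fact that every Abelian $\ell$-group is torsion-free: if $nb_1 = a = nb_2$ in some $\A \in \lAb$, then $n(b_1 - b_2) = 0$, so $b_1 = b_2$. Since $\lAb$ is a quasivariety (in fact a variety) and $\varphi_n$ is a pp formula functional in $\lAb$, \cref{Cor : functionality in Q(K)} delivers an implicit operation $f_n \in \impeq(\lAb)$ defined by $\varphi_n$, and the two displays describing $\dom(f_n^\A)$ and $f_n^\A(a)$ are immediate from the shape of $\varphi_n$.

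The remaining step is to show $f_n \in \ext(\lAb)$, which I would carry out via \cref{Prop : extendable : sufficient conditions}\eqref{item : extendable : sufficient conditions : 2} applied with $\mathsf{K} = \lAb$ and $\mathsf{M}$ the subclass of divisible Abelian $\ell$-groups. On $\mathsf{M}$ the function $f_n^\A$ is total by the very definition of divisibility, and $\PPP(\lAb) \subseteq \lAb$ since $\lAb$ is a variety. The one nontrivial inclusion to verify is $\lAb \subseteq \QQQ(\mathsf{M})$, which I would derive from the stronger statement $\lAb \subseteq \III\SSS(\mathsf{M})$: every Abelian $\ell$-group embeds into a divisible one. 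Combined with $\mathsf{M} \subseteq \lAb$ this yields $\lAb = \QQQ(\mathsf{M})$, and \cref{Prop : extendable : sufficient conditions}\eqref{item : extendable : sufficient conditions : 2} then gives the desired conclusion $f_n \in \exteq(\lAb)$.

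The main obstacle is the existence of divisible hulls in $\lAb$. In the torsion-free abelian group case of \cref{Prop : div is ext imp op in T}, the corresponding step is transparent because the fundamental theorem of finitely generated abelian groups pins down $\tfAb^{\fg}_{\rsi}$ as copies of $\Z$, each of which embeds into $\mathbb{Q}$. No such concrete reduction is available for $\ell$-groups, and instead one has to invoke the classical result (due to Conrad and Jakub\'\i k; see, e.g., Bigard--Keimel--Wolfenstein, \emph{Groupes et Anneaux R\'eticul\'es}) that every Abelian $\ell$-group possesses a divisible hull, i.e., embeds into a divisible Abelian $\ell$-group. With this classical embedding theorem in hand, the argument above goes through and produces the required $f_n \in \exteq(\lAb)$.
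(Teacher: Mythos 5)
Your proposal is correct, but the extendability step takes a genuinely different route from the paper. You both reduce functionality to torsion-freeness of the group reduct and both invoke \cref{Cor : functionality in Q(K)} to get $f_n \in \impeq(\lAb)$, and you both finish by applying \cref{Prop : extendable : sufficient conditions}\eqref{item : extendable : sufficient conditions : 2}; the difference lies in the choice of the generating class $\M$ on which $f_n$ is total. The paper takes $\M = \{\mathbb{Q}\}$: it recalls that the members of $\lAb_{\textsc{rfsi}}$ are linearly ordered (Birkhoff), that the nontrivial linearly ordered ones form $\UUU(\mathbb{Q})$ (Gurevich--Kokorin), and hence via the \cref{Thm : Subdirect Decomposition} that $\lAb = \QQQ(\mathbb{Q})$, exactly mirroring the role played by $\mathbb{Q}$ in \cref{Prop : div is ext imp op in T}. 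You instead take $\M$ to be the class of divisible Abelian $\ell$-groups and verify $\lAb \subseteq \III\SSS(\M)$ by the classical divisible-hull theorem. Your route is more direct semantically --- it goes straight at the content of extendability without detouring through a single generator --- but it imports a heavier embedding theorem, one that is close in spirit to the very statement being proved (indeed, within the paper's own framework the existence of divisible extensions is \emph{derived} from the extendability of the $f_n$, so it is important, as you do, to cite the hull theorem as an external classical result rather than obtain it from the pp-expansion machinery, which would be circular). The paper's route keeps the external inputs lighter and structurally parallel to the torsion-free Abelian group case. One small point: your one-line justification of torsion-freeness ("$n(b_1-b_2)=0$, so $b_1=b_2$") silently uses that Abelian $\ell$-groups are torsion-free, which is the classical fact itself rather than a proof of it; like the paper, you should simply cite it.
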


\begin{proof}
Let $\varphi_n(x, y) = x \thickapprox ny$. The proof of 
\cref{Prop : div is ext imp op in T} shows that $\varphi_n$ defines a unary implicit operation of $\tfAb$. As the class of group reducts of $\lAb$ is $\tfAb$ (see \cite[Cor.\ of Thm.~XIII.11]{BirkLT}), the equation $\varphi_n$ defines also a unary $f_n \in \impeq(\lAb)$. Clearly, the two displays in the statement hold for $f_n$. Therefore, it only remains to show that $f_n$ is extendable. Recall from \cite[Lem.~1, p.~317]{MR6550} that the members of $\lAb_\textsc{rfsi}$ are nontrivial and linearly ordered. Moreover, the class of nontrivial linearly ordered members of $\lAb_\textsc{rfsi}$ is $\UUU(\mathbb{Q})$, where $\mathbb{Q}$ denotes the additive group of the rationals equipped with the lattice structure induced by the standard order of $\mathbb{Q}$ by \cite{MR155753}.
Therefore, the Subdirect Decomposition Theorem \ref{Thm : Subdirect Decomposition} yields that $\lAb$ is generated as a quasivariety by $\mathbb{Q}$. 
Arguing as in the proof of \cref{Prop : div is ext imp op in T}, we conclude that each $f_n$ is extendable.
\end{proof}

The next observation can be traced back at least to \cite{MR574094}.

\begin{Corollary}
$\lAb$ lacks the strong epimorphism surjectivity property.
\end{Corollary}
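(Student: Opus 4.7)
The plan is to mimic the proof of \cref{Cor : TAFG lacks the SES} in the $\ell$-group setting, using the extendable implicit operation of ``dividing by $2$'' provided by \cref{Prop : div is ext imp op in T lgroup}. Let $f_2$ be this implicit operation; by the proof of \cref{Prop : div is ext imp op in T lgroup}, it is defined by the equation $x \thickapprox 2y$, so in particular $f_2 \in \impeq(\lAb) \subseteq \imppp(\lAb)$.

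First I would pick a concrete witness. Consider the additive groups of integers and rationals equipped with the lattice structure induced by their natural linear order; denote these $\ell$-groups by $\Z$ and $\mathbb{Q}$ respectively. Both belong to $\lAb$, and $\Z$ is a subalgebra of $\mathbb{Q}$. Since $2 \cdot \frac{1}{2} = 1$ in $\mathbb{Q}$, we have $1 \in \dom(f_2^\mathbb{Q})$ and $f_2^\mathbb{Q}(1) = \frac{1}{2}$, while clearly $\frac{1}{2} \notin \Z$.

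Next I would apply \cref{Thm : dominions : pp formulas} to the pair $\Z \leq \mathbb{Q}$ in $\lAb$: as $f_2 \in \imppp(\lAb)$, $1 \in \dom(f_2^\mathbb{Q}) \cap \Z$, and $f_2^\mathbb{Q}(1) = \frac{1}{2}$, it follows that $\frac{1}{2} \in \d_\lAb(\Z, \mathbb{Q})$. Hence $\d_\lAb(\Z, \mathbb{Q}) \ne \Z$. Finally, \cref{Prop : SES and dominions} (applied to $\lAb$, which is closed under $\III$ and $\SSS$) yields that $\lAb$ lacks the strong epimorphism surjectivity property.

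There is no real obstacle here: the argument is a direct transcription of the $\tfAb$ case, and the only point to verify is that $\Z$ and $\mathbb{Q}$ are bona fide members of $\lAb$ with $\Z \leq \mathbb{Q}$, which is immediate from the definitions.
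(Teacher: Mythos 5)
Your proof is correct and follows exactly the route the paper takes: the paper's own proof of this corollary is stated as "analogous to the proof of \cref{Cor : TAFG lacks the SES}" with $\mathbb{Q}$ and its subalgebra $\mathbb{Z}$ viewed as linearly ordered $\ell$-groups, which is precisely the witness you construct before invoking \cref{Thm : dominions : pp formulas} and \cref{Prop : SES and dominions}. No gaps.
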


\begin{proof}
Analogous to the proof of \cref{Cor : TAFG lacks the SES} with the sole difference that $\mathbb{Q}$ is the algebra employed in the proof of \cref{Prop : div is ext imp op in T lgroup} and $\mathbb{Z}$ the subalgebra of $\mathbb{Q}$ whose universe is the set of integers.
\end{proof}

Set $\F=\{f_n : n \in \Z^+\}$ be the set of  implicit operations given by \cref{Prop : div is ext imp op in T lgroup}.\ 
By the same proposition we have  $\F \subseteq \extpp(\tfAb)$.
Let $\L_\F$ be an $\F$-expansion of $\L_{\lAb}$. Then $\lDAb = \SSS(\lAb[\mathscr{L}_{\mathcal{F}}])$ is an equational pp expansion of $\lAb$.

\begin{Theorem} \label{Thm : div are Beth of tfAb and lAb}
$\lDAb$ is a variety and an equational Beth companion of $\lAb$.
\end{Theorem}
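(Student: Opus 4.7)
The plan is to mirror the proof of \cref{Thm : Beth companion TF Abelian groups}. I will (i) show that $\lDAb$ is equationally axiomatizable, hence a variety; (ii) show that every divisible Abelian $\ell$-group is absolutely closed in $\lAb$, i.e., $\lDAb \res_\L \subseteq \lAb_\ac$; and (iii) invoke \cref{Thm : Beth compl. iff eq absolutely closed} to conclude that $\lDAb$ is a Beth companion of $\lAb$. Because $\lDAb$ is the pp expansion induced by $\F \subseteq \exteq(\lAb)$ (\cref{Prop : div is ext imp op in T lgroup}), it will furthermore be equational.

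For step (i), I apply \cref{Thm : axiomatization of pp expansion (almost always)} to the pp expansion $\lDAb = \SSS(\lAb[\L_\F])$. Since $\F \subseteq \exteq(\lAb)$, that theorem yields that $\lDAb$ coincides with $\lAb[\L_\F]$ and is axiomatized by any equational axiomatization $\Sigma$ of $\lAb$ together with the equations $\{ x \thickapprox nd_n(x) : n \in \Z^+ \}$. In contrast with the torsion-free case, no quasiequation need be rewritten: every Abelian $\ell$-group is automatically torsion-free since its group reduct lies in $\tfAb$ by \cite[Cor.~of Thm.~XIII.11]{BirkLT}, so the single equation $x \thickapprox nd_n(x)$ already pins down $d_n(x)$ uniquely. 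Hence $\lDAb$ is equationally axiomatizable and \cref{Thm : classes generation}\eqref{item : variety generation} ensures that it is a variety.

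The main obstacle is step (ii). Unlike the situation for $\tfAb$, divisible Abelian $\ell$-groups are not injective in $\lAb$ in general (the injective members of $\lAb$ form a proper subclass, namely, the laterally complete divisible ones), so the approach via \cref{Prop : injective implies abs closed} is unavailable. Instead, I rely on the classical amalgamation property of $\lAb$ (due to Pierce), which together with \cref{Prop : AP implies acl = sat}\eqref{Prop : AP implies acl = sat: 2} reduces the task to showing that every divisible Abelian $\ell$-group is saturated in $\lAb$. Given divisible $\A \leq \B \in \lAb$ with $A \neq B$ and $b \in B - A$, the construction of two distinct $\ell$-group homomorphisms out of $\B$ that agree on $\A$ exploits the $\QQQ$-vector-lattice structure of $\A$: by a Zorn-style argument one selects a prime $\ell$-ideal $I$ of $\B$ with $b \notin A + I$, producing a totally ordered quotient $\B/I$ in which $A/I$ is a proper $\QQQ$-subspace and $b + I \notin A/I$; one then picks a $\QQQ$-linear functional $\varphi \colon \B/I \to \QQQ$ vanishing on $A/I$ but nonzero at $b + I$, and takes $\C = (\B/I) \oplus \QQQ$ with the lexicographic order, defining $h_1(x) = (x + I, 0)$ and $h_2(x) = (x + I, \varphi(x + I))$. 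Both maps are $\ell$-group homomorphisms since, in lex order, the second component contributes to the comparison only when the first vanishes and under this constraint the additive and lattice conditions reduce to those already guaranteed by $\QQQ$-linearity; and clearly $h_1 \res_A = h_2 \res_A$ while $h_1(b) \neq h_2(b)$.

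Once (i) and (ii) are in place, \cref{Thm : Beth compl. iff eq absolutely closed} yields that $\lDAb$ is a Beth companion of $\lAb$, and the equational character follows from $\F \subseteq \exteq(\lAb)$.
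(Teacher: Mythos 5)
Your overall architecture coincides with the paper's: show $\lDAb$ is a variety, reduce the Beth-companion claim via \cref{Thm : Beth compl. iff eq absolutely closed} to the inclusion $\lDAb\res_{\L_\lAb} \subseteq \lAb_\ac$, and obtain that inclusion from Pierce's amalgamation property together with \cref{Prop : AP implies acl = sat}\eqref{Prop : AP implies acl = sat: 2} once one knows that the divisible Abelian $\ell$-groups are exactly the saturated members of $\lAb$. Your steps (i) and (iii) are fine (the paper gets the variety claim even more directly from \cref{Thm : pp expansion : still quasivariety}\eqref{item : pp expansion : variety}, but your route through \cref{Thm : axiomatization of pp expansion (almost always)} works). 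The point of divergence is that the paper simply cites \cite[Thm.~2.1]{EpiLG} for ``divisible $=$ saturated'', whereas you attempt to prove the implication ``divisible $\Rightarrow$ saturated'' from scratch.

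That is where there is a genuine gap. Your construction hinges on the claim that, given divisible $\A \leq \B \in \lAb$ and $b \in B - A$, one can find a prime $\ell$-ideal $I$ of $\B$ with $b \notin A + I$. This is false in general. Take $\B = \mathbb{Q} \times \mathbb{Q}$ with the coordinatewise order, let $\A$ be the diagonal $\{\langle q,q\rangle : q \in \mathbb{Q}\}$ (which is divisible), and let $b = \langle 1,0\rangle$. The only proper prime $\ell$-ideals of $\B$ are the two coordinate axes $I_1 = \{0\}\times\mathbb{Q}$ and $I_2 = \mathbb{Q}\times\{0\}$, and $A + I_1 = A + I_2 = B$, so $b \in A + I$ for every choice of $I$. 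In this example $b$ is nonetheless separated, but by the two distinct projections $\B \to \mathbb{Q}$ (which agree on $\A$ and differ at $b$), not by a single quotient followed by two maps into a lexicographic extension. A complete argument must therefore handle two cases --- when $b$ takes different ``values over $A$'' at two different primes, and when it takes the same value at every prime, where your lex-extension trick applies --- and your sketch covers only the second. This case analysis is essentially the content of the Anderson--Conrad theorem the paper invokes; without it, or a full replacement, the proof is incomplete. (A minor aside: your parenthetical description of the injective members of $\lAb$ is unsubstantiated, but it is also immaterial --- all that matters is that the injectivity route used for $\tfAb$ is unavailable, which is exactly why both you and the paper switch to saturation.)
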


\begin{proof}
Since $\F \subseteq \exteq(\lAb)$ and $\lAb$ is a variety, \cref{Thm : pp expansion : still quasivariety}\eqref{item : pp expansion : variety} implies that $\lDAb$ is also a variety.
Moreover, from \cref{Thm : axiomatization of pp expansion (almost always)} it follows that $\lDAb =\lAb[\mathscr{L}_{\mathcal{F}}]$.
Therefore, $\lDAb\res_{\L_\lAb}$ is the class of divisible Abelian $\ell$-groups.
Then \cite[Thm.~2.1]{EpiLG}
yields that $\lDAb\res_{\L_\lAb}$ is the class of members of $\lAb$ that are saturated in $\lAb$. 
Since $\ell\mathsf{Ab}$ has the amalgamation property (see \cite[Thm.\ 2.3]{APLG}), we can apply \cref{Prop : AP implies acl = sat}\eqref{Prop : AP implies acl = sat: 2}, obtaining $\lDAb\res_{\L_\lAb} \subseteq \lAb_\ac$. Thus, \cref{Thm : Beth compl. iff eq absolutely closed} implies that $\lDAb$ is a Beth companion of $\lAb$ which, moreover, is equational because $\lDAb$ is an equational pp expansion of $\lAb$.
\end{proof}
\end{exa}

\begin{exa}[\textsf{MV-algebras}]\label{Exa : MV}
An \emph{MV-algebra} is an algebra $\A = \langle A; \oplus, \neg, 0 \rangle$ comprising a commutative monoid  $\langle A; \oplus, 0 \rangle$  and satisfying the equations
\[
\neg \neg x \thickapprox x, \qquad x \oplus \neg 0 \thickapprox \neg 0, \qquad \neg(\neg x \oplus y) \oplus y \thickapprox \neg(\neg y \oplus x) \oplus x.
\]    
From a logical standpoint, the interest of MV-algebras derives from the fact that they algebraize the infinite-valued \Luk ukasiewicz logic (see, e.g., \cite{COM00}).

The variety $\mathsf{MV}$ of MV-algebras is generated by the algebra $\I = \langle [0, 1], \oplus, \neg,0\rangle$ with universe the real unit interval $[0, 1] =\{ a \in \mathbb{R} : 0 \leq a \leq 1\}$ and equipped with the operations
\[
a \oplus b = \min\{a+b,1\} \quad
\text{ and }  \quad 
\neg a = 1-a
\]
for all $a,b \in [0, 1]$, where $+$ and $-$ denote the standard addition and subtraction in $\mathbb{R}$ (see, e.g., \cite[Prop.~8.1.1]{COM00}). Our aim is to describe the Beth companion of $\MV$. 

To this end, we will employ the following abbreviations
\[
1 = \neg 0 \qquad \text{ and } \qquad x \odot y = \neg (\neg x \oplus \neg y),
\]
and for every $n \in \mathbb{N}$ we recursively define $n.x$ by  setting
\[
0.x = 0\qquad \text{ and }\qquad (n+1).x = (n.x) \oplus x.
\]

Let $A \in \MV$ and $a \in A$. For every $n \in \Z^+$ we say that $b \in A$ is the result of \emph{dividing $a$ by $n$} when $n.b=a$ and $b \odot ((n-1).b) = 0$. An $\MV$-algebra $\A$ is called \emph{divisible} when for all $a \in A$ and $n \in \Z^+$ there exists $b \in A$ obtained as the result of dividing $a$ by $n$.

\begin{Proposition} \label{Prop : MV div formula}
For each $n \in \mathbb{Z}^+$ there exists a unary $f_n \in \exteq(\MV)$ such that for all $\A \in \MV$ and $a \in \mathsf{dom}(f_n^\A)$,
\begin{align*}
    \mathsf{dom}(f_n^\A) &= \{ c \in A : b \text{ is the result of diving $c$ by $n$ for some }b \in A \};\\
f_n^\A(a) &= \text{the result of dividing $a$ by $n$}.
\end{align*}
\end{Proposition}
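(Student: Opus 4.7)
The plan is to mimic the strategy used in \cref{Prop : div is ext imp op in T} and \cref{Prop : div is ext imp op in T lgroup}: exhibit a conjunction of equations $\varphi_n(x,y)$ that captures ``$y$ is the result of dividing $x$ by $n$'', verify functionality in a generating class, and then invoke \cref{Cor : functionality in Q(K)} and \cref{Prop : extendable : sufficient conditions}\eqref{item : extendable : sufficient conditions : 2}. Concretely, I would take
\[
\varphi_n(x,y) = (x \thickapprox n.y) \sqcap (y \odot ((n-1).y) \thickapprox 0),
\]
whose two conjuncts are exactly the two clauses in the definition of ``$y$ is the result of dividing $x$ by $n$''. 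Note that $n.y$ and $(n-1).y$ are abbreviations for iterated applications of $\oplus$, so $\varphi_n$ is genuinely a conjunction of MV-equations.

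First I would show that $\varphi_n$ is functional in the standard MV-algebra $\I$. A short induction on $n$ gives $n.b = \min(nb,1)$ in $\I$, and from the definition $b \odot c = \max(0,b+c-1)$ one computes that the equation $b \odot ((n-1).b) = 0$ is equivalent to $nb \leq 1$. Hence any $b \in [0,1]$ satisfying $\I \vDash \varphi_n(a,b)$ must meet $nb \leq 1$ and $n.b = nb = a$, forcing $b = a/n$; in particular $f_n^{\I}$ is total and coincides with the usual division by $n$ on the real interval. Invoking the well-known fact that $\MV = \QQQ(\I)$ (a consequence of Chang's subdirect representation together with Di Nola's representation theorem, which yields $\MV \subseteq \III\SSS\PPP\PPU(\I)$), I apply \cref{Cor : functionality in Q(K)} to conclude that $\varphi_n$ defines a unary $f_n \in \impeq(\MV)$; the two displays in the statement then follow directly from the definition of $\varphi_n$ and the definition of the result of dividing by $n$.

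For extendability, since $f_n^{\I}$ is total and $\MV = \QQQ(\I)$, I apply \cref{Prop : extendable : sufficient conditions}\eqref{item : extendable : sufficient conditions : 2} with $\mathsf{M} = \{\I\}$ and $\K = \MV$ (noting that $\MV$ is a variety and hence closed under $\PPP$) to obtain $f_n \in \ext(\MV)$, whence $f_n \in \exteq(\MV)$. The only nonroutine step is the arithmetic verification of functionality in $\I$; the passage from $\I$ to the whole variety $\MV$ is automatic once $\MV = \QQQ(\I)$ is invoked, and totality of $f_n^{\I}$ (divisibility of $\I$) takes care of extendability.
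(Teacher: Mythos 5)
Your proposal is correct and follows essentially the same route as the paper: the same defining conjunction of equations $\varphi_n$, the same arithmetic verification that $\varphi_n$ is functional and total in $\I$ (your observation that the second conjunct forces $nb\leq 1$, whence $n.b=nb$, is just a slightly more compact packaging of the paper's case analysis), and the same appeal to $\MV=\QQQ(\I)$ together with \cref{Cor : functionality in Q(K)} and \cref{Prop : extendable : sufficient conditions}\eqref{item : extendable : sufficient conditions : 2}. No gaps.
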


\begin{proof}
Consider the  conjunction of equations
\[
\varphi_n(x,y) = (n.y \thickapprox  x) \sqcap (y \odot ((n-1).y)  \thickapprox  0).
\]
We will rely on the following observation.

\begin{Claim}\label{Claim: division in I}
For all $a,b \in [0, 1]$ the following conditions hold in $\I$:
\benroman
\item\label{Claim: division in I: 1} $n.b=a$ if and only if either $\left(a < 1 \text{ and } b=\frac{a}{n} \right)$ or $\left(a = 1 \text{ and } b \geq \frac{1}{n}\right)$;\\[-1.7ex]
\item\label{Claim: division in I: 2} $b \odot ((n-1).b)=0$ if and only if $b \leq \frac{1}{n}$;\\[-1.7ex]
\item\label{Claim: division in I: 3} $\I \vDash \varphi_n (a,b)$ if and only if $b = \frac{a}{n}$.
\eroman
\end{Claim}

\begin{proof}[Proof of the Claim]
The definitions of $\oplus$ and $\neg$ on $\I$ yield $n.c = \min \{ nc, 1\}$ and $c \odot d = \max \{ c+d-1, 0\}$ for all $c,d \in I$ and $n \in \mathbb{N}$.

\eqref{Claim: division in I: 1}: Suppose that $n.b=a$. Then $\min \{ nb, 1\} = a$. If $a < 1$, then $nb = a$. If $a = 1$, then $\min \{nb,1\}=1$. So, $nb \geq 1$, which yields $b \geq \frac{1}{n}$. To prove the reverse implication, we have to consider two cases. First, assume that $a < 1$ and $b=\frac{a}{n}$. Then $n.b =\min \{nb,1\} = \min \{a,1\}=a$. Next we consider the case where $a = 1$ and $b \geq \frac{1}{n}$. We have $nb \geq 1$, and hence $n.b = \min \{nb,1\}= 1 = a$. 

\eqref{Claim: division in I: 2}: We have that $b \odot ((n-1).b)=0$ if and only if $\max \{ b+((n-1).b)-1, 0\} = 0$, which is equivalent to $b+((n-1).b-1) \leq 0$. Moreover,
\begin{align*}
b+((n-1).b)-1 = b + \min\{ (n-1)b, 1\}-1=\min\{nb-1,b\}.
\end{align*}
Therefore, $b \odot ((n-1).b)=0$ if and only if $\min\{nb-1,b\} \leq 0$. As $b \geq 0$, the latter condition is equivalent to $nb -1 \leq 0$, and hence to $b \leq \frac{1}{n}$.

\eqref{Claim: division in I: 3}: Together with \eqref{Claim: division in I: 1} and \eqref{Claim: division in I: 2}, the definition of $\varphi_n$  yields that $\I \vDash \varphi_n (a,b)$ if and only if either $\left(a < 1 \text{ and } b=\frac{a}{n} \text{ and } b \leq \frac{1}{n} \right)$ or $\left(a = 1 \text{ and } b = \frac{1}{n}\right)$. Since $a \in [0, 1]$, we have $\frac{a}{n} \leq \frac{1}{n}$. Therefore, if $b = \frac{a}{n}$, we have $b \leq \frac{1}{n}$. We conclude that $\I \vDash \varphi_n (a,b)$ if and only if $b=\frac{a}{n}$.
\end{proof}

From \cref{Claim: division in I}\eqref{Claim: division in I: 3} it follows that $\varphi_n$ is functional and total in $\I$. Since $\MV = \QQQ(\I)$ (see, e.g., \cite[p.~84]{GT98}), \cref{Cor : functionality in Q(K)} and \cref{Prop : extendable : sufficient conditions}\eqref{item : extendable : sufficient conditions : 2} imply that  $\varphi_n$ defines a unary $f_n \in \exteq(\MV)$. Lastly, as $\varphi_n$ defines $f_n$, the two displays in the statement hold.
\end{proof}

As a consequence, we obtain the following observation from \cite{BlHoo06,MR1888129}.

\begin{Corollary}
$\MV$ lacks the strong epimorphism surjectivity property.
\end{Corollary}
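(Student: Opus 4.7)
The plan is to mirror the strategy used for \cref{Cor : TAFG lacks the SES} and the analogous result for $\lAb$ in \cref{Exa : lAB}: exhibit a witness to the failure of the strong epimorphism surjectivity property by producing an MV-subalgebra $\C \leq \I$ and an element outside $\C$ that nevertheless belongs to $\d_\MV(\C, \I)$ because it is the value of an implicit operation defined by a pp formula applied to elements of $\C$. The natural candidate for the operation is the division implicit operation $f_2 \in \extpp(\MV)$ from \cref{Prop : MV div formula}, which is defined by the conjunction of equations $\varphi_2(x,y) = (2.y \thickapprox x) \sqcap (y \odot y \thickapprox 0)$ and, in $\I$, satisfies $f_2^\I(a) = a/2$ by \cref{Claim: division in I}\eqref{Claim: division in I: 3}.

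Concretely, I would take $\C$ to be the two-element MV-subalgebra of $\I$ with universe $\{0, 1\}$; a direct computation in $\I$ (using $a \oplus b = \min\{a+b, 1\}$ and $\neg a = 1-a$) shows that $\{0, 1\}$ is closed under the MV-operations, so $\C \leq \I$ with $\C, \I \in \MV$. Since $1 \in C$ and $f_2^\I(1) = 1/2$ by \cref{Prop : MV div formula}, the element $1/2$ lies in $\I$ but not in $C$. Because $f_2$ is defined by a pp formula (in fact by a conjunction of equations) and $1/2 = f_2^\I(1)$ with $1 \in C$, we can apply \cref{Thm : dominions : pp formulas} to conclude that $1/2 \in \d_\MV(\C, \I)$. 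Therefore $\d_\MV(\C, \I) \ne C$, and \cref{Prop : SES and dominions} immediately yields that $\MV$ lacks the strong epimorphism surjectivity property.

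There is no real obstacle here: the only nontrivial point is checking that $\{0, 1\}$ really is closed under $\oplus$ and $\neg$ in $\I$ (which is immediate) and that $f_2^\I(1) = 1/2$, which is already recorded in \cref{Claim: division in I}. The rest of the argument is a direct invocation of the dominion characterization of pp-definable implicit operations and of the equivalence between the strong epimorphism surjectivity property and the triviality of dominions.
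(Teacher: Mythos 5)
Your proposal is correct and is essentially identical to the paper's own proof: it uses the same two-element subalgebra $\{0,1\}$ of $\I$, the same implicit operation $f_2$ with $f_2^\I(1)=\tfrac{1}{2}\notin\{0,1\}$, and the same appeal to \cref{Thm : dominions : pp formulas} and \cref{Prop : SES and dominions}.
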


\begin{proof}
    Let $f_2$ be the member of $\exteq(\MV)$ given by   \cref{Prop : MV div formula}. Moreover, let $\A$ be the subalgebra of $\I$ with universe $\{0,1\}$. Since $\A \leq \I \in \MV$ and $f_2^\I(1)=\frac{1}{2} \notin A$, from  \cref{Thm : dominions : pp formulas} it follows that $\frac{1}{2} \in \d_\MV(\A, \I)- A$. Therefore,  $\MV$ lacks the strong epimorphism surjectivity property by \cref{Prop : SES and dominions}.
\end{proof}

A \emph{DMV-algebra} (see \cite{Ger01} and \cite[Def.~5.1.1]{Ger01thesis}) is an algebra $\A = \langle A; \oplus, \neg, \{d_n\}_{n \in \Z^+}, 0 \rangle$ comprising an MV-algebra $\langle A; \oplus, \neg, 0 \rangle$ and a sequence of unary operations $\{ d_n \}_{n \in \mathbb{Z}^+}$ satisfying the  equations
\[
n.d_n(x) \thickapprox  x \quad \text{and} \quad  d_n(x) \odot ((n-1).d_n(x))  \thickapprox  0.
\]
Let $\DMV$ be the variety of DMV-algebras.

\begin{Theorem}  \label{Thm : Beth comp of MV}
$\DMV$ is an equational Beth companion of $\MV$. 
\end{Theorem}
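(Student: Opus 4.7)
The plan is to proceed exactly as in the $\ell$-group case (\cref{Exa : lAB}), using the defining equations of DMV-algebras to recognize the class as a pp expansion and then invoking \cref{Thm : Beth compl. iff eq absolutely closed} via the amalgamation property.

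First, I would set up the pp expansion. By \cref{Prop : MV div formula}, we have $\F = \{f_n : n \in \Z^+\} \subseteq \exteq(\MV)$, with each $f_n$ defined by the conjunction of equations
\[
\varphi_n(x,y) = (n.y \thickapprox x) \sqcap (y \odot ((n-1).y) \thickapprox 0).
\]
Let $\L_\F = \L_\MV \cup \{d_n : n \in \Z^+\}$ be an $\F$-expansion of $\L_\MV$ in which the role of $g_{f_n}$ is played by $d_n$. Since $\MV$ is a variety and $\F \subseteq \exteq(\MV)$, \cref{Thm : axiomatization of pp expansion (almost always)} yields that the equational pp expansion $\SSS(\MV[\L_\F]) = \MV[\L_\F]$ of $\MV$ is axiomatized by the axioms of MV-algebras together with the equations $\varphi_n(x,d_n(x))$ for $n \in \Z^+$. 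These are precisely the equations defining $\DMV$, so $\DMV = \MV[\L_\F]$ is an equational pp expansion of $\MV$.

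To conclude that $\DMV$ is a Beth companion of $\MV$, by \cref{Thm : Beth compl. iff eq absolutely closed} it is enough to show that $\DMV\res_{\L_\MV} \subseteq \MV_\ac$. By construction, $\DMV\res_{\L_\MV}$ consists precisely of the divisible MV-algebras (every element can be divided by every $n$, as witnessed by $d_n$). So the task reduces to showing that every divisible MV-algebra is absolutely closed in $\MV$.

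For this, I would invoke that $\MV$ enjoys the amalgamation property (a classical result of Mundici, see, e.g., \cite{COM00}) together with \cref{Prop : AP implies acl = sat}\eqref{Prop : AP implies acl = sat: 2} to reduce the task to showing that every divisible MV-algebra is saturated in $\MV$, that is, epicomplete. This last fact is the main obstacle of the proof. My plan is to derive it from its known $\ell$-group analogue (already used in \cref{Thm : div are Beth of tfAb and lAb}) via Mundici's categorical equivalence between $\MV$ and the category of unital Abelian $\ell$-groups, under which divisibility and epicompleteness correspond to each other and the Mundici functor carries epimorphisms to epimorphisms. Alternatively, one can simply cite the explicit characterization of epicomplete MV-algebras from the literature on MV-algebras (see, e.g., \cite{BlHoo06,MR1888129}). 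Either route closes the argument: divisible MV-algebras are saturated, hence absolutely closed, and \cref{Thm : Beth compl. iff eq absolutely closed} concludes that $\DMV$ is a Beth companion of $\MV$, which is equational since $\DMV = \MV[\L_\F]$ with $\F \subseteq \exteq(\MV)$.
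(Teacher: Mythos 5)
Your proposal is correct and follows essentially the same route as the paper: identify $\DMV$ with the equational pp expansion $\MV[\L_\F]$ via \cref{Thm : axiomatization of pp expansion (almost always)}, reduce to $\DMV\res_{\L_\MV}\subseteq\MV_\ac$ via \cref{Thm : Beth compl. iff eq absolutely closed}, and obtain absolute closedness from the amalgamation property of $\MV$ together with \cref{Prop : AP implies acl = sat} and the known fact that divisible MV-algebras are epicomplete (the paper cites \cite[Thm.~3.18(ii)]{epiDvZa} for this). Only a small caution: your alternative route through Mundici's equivalence is sketchier than you suggest, since that equivalence is with \emph{unital} Abelian $\ell$-groups with unit-preserving morphisms, so epimorphisms and epicompleteness do not transfer verbatim from the category of all Abelian $\ell$-groups used in \cref{Thm : div are Beth of tfAb and lAb}; citing the MV-algebra literature directly is the cleaner option.
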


\begin{proof}
Let $\F = \{f_n : n \in \mathbb{Z}^+\} \subseteq \exteq(\MV)$ be the family of operations given by \cref{Prop : MV div formula}.
Moreover, let $d_n$ be a unary function symbol for each $n \in \Z^+$. Then the language $\L_\F = \L_\MV \cup \{ d_n : n \in \Z^+\}$ is an $\F$-expansion of $\L_\MV$ in which the role of $g_{f_n}$ is played by $d_n$. From \cref{Thm : axiomatization of pp expansion (almost always)} and the fact that each $f_n$ is defined by the conjunction of equations $\varphi_n$ in the proof of \cref{Prop : MV div formula} it follows that $\SSS(\MV[\L_\F])=\MV[\L_\F]$ is an equational pp expansion of $\MV$ axiomatized by the axioms of MV-algebras plus the equations $n.d_n(x) \thickapprox  x$ and $d_n(x) \odot ((n-1).d_n(x))  \thickapprox  0$ for $n \in \Z^+$.
Clearly, every member of $\MV[\L_\F]$ is a DMV-algebra. On the other hand, every DMV-algebra  can be obtained by adding the implicit operations $f_n$ to its MV-algebra reduct, which belongs to $\MV[\L_\F]\res_{\L_\MV}$. Therefore,  $\MV[\L_\F]$ coincides with the variety $\DMV$ of DMV-algebras.

In view of \cref{Thm : Beth compl. iff eq absolutely closed}, to show that $\DMV$ is a Beth companion of $\MV$, it suffices to prove that $\DMV\res_\MV \subseteq \MV_\ac$. The definition of $\DMV$ yields that the members of $\DMV\res_\MV$ are divisible MV-algebras. Every divisible MV-algebra is saturated in $\MV$ (see \cite[Thm.~3.18(ii)]{epiDvZa}) and $\MV$ has the amalgamation property (see \cite[p.~91]{Mun88}). Therefore, from \cref{Prop : AP implies acl = sat}\eqref{Prop : AP implies acl = sat: 2} it follows that $\DMV\res_\MV \subseteq \MV_\ac$. Then \cref{Thm : Beth compl. iff eq absolutely closed} yields that $\DMV$ is a Beth companion of $\MV$ which, moreover, is equational because $\DMV$ is an equational pp expansion of $\MV$.
\end{proof}
\end{exa}

The next concept originates in \cite{MR57230,MR57231}.

\begin{Definition}
A finite algebra $\A$ is said to be \emph{primal} when for every function $f \colon A^n \to A$ of positive arity there exists a term $t(x_1, \dots, x_n)$ of $\L_\A$ such that for all $a_1, \dots, a_n \in A$,
\[
f(a_1, \dots, a_n) = t^\A(a_1, \dots, a_n).
\]
\end{Definition}

Examples of primal algebras include the two-element Boolean algebra and the rings of the form $\mathbb{Z}_p$ with $p$ prime (see, e.g., \cite{MR57230}). Primal algebras admit the following elegant characterization (see, e.g., \cite[Cor.\ IV.10.8]{BuSa00}), where \emph{rigid} means ``lacking nonidentity automorphisms''.

\begin{Theorem}\label{Thm : primal description : simple etc}
A finite algebra $\A$ is primal if and only if $\VVV(\A)$ is arithmetical and $\A$ is simple, rigid, and lacks proper subalgebras.
\end{Theorem}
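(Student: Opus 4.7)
The plan is to prove both directions by using standard tools from universal algebra, with the main work being the reverse direction, where I would invoke the Baker–Pixley interpolation theorem.

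For the forward direction, suppose $\A$ is primal. To see that $\A$ lacks proper subalgebras, fix any $a \in A$ and note that the constant unary function $c_a(x) = a$ is a term function of $\A$, say $c_a = t^\A$. Then any subuniverse $B$ of $\A$ satisfies $a = t^\A(b) \in B$ for every $b \in B$, whence $B = A$. Rigidity follows similarly, since the only function $h \colon A \to A$ preserving all (basic, hence by primality all) term operations of $\A$ is the identity. For simplicity, any nonidentity congruence $\theta$ contains some pair $\langle a, b \rangle$ with $a \ne b$; for every $c, d \in A$, the unary function sending $a$ to $c$ and all other elements to $d$ is a term function (by primality) and hence preserves $\theta$, forcing $\langle c, d \rangle \in \theta$, so $\theta = A \times A$. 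Finally, primality ensures the existence of a Pixley term $p(x,y,z)$ (one satisfying $p(x,y,x) \thickapprox p(x,y,y) \thickapprox x$ and $p(y,y,x) \thickapprox x$), and such a term witnesses that $\VVV(\A)$ is arithmetical by a standard result (see, e.g., \cite[Thm.~II.12.2]{BuSa00}).

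For the reverse direction, suppose $\A$ is finite, simple, rigid, lacks proper subalgebras, and $\VVV(\A)$ is arithmetical. Since $\VVV(\A)$ is congruence distributive, it has a majority term (equivalently, a near unanimity term of arity $3$); in fact $(x \land y) \lor \cdots$ is available via Pixley's theorem, but one can equally extract a majority term directly from arithmeticity. The key step is to invoke the \emph{Baker–Pixley theorem}: if $\A$ admits a near unanimity term of arity $n+1$, then a function $f \colon A^k \to A$ is a term function of $\A$ if and only if $f$ preserves every subuniverse of $\A^n$. With $n = 2$, it suffices to show that every function $f \colon A^k \to A$ preserves every subuniverse of $\A^2$.

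So the heart of the proof is to enumerate all subalgebras $\B \leq \A \times \A$. Let $\B \leq \A \times \A$ and consider the projections $p_1[B], p_2[B] \leq A$. Since $\A$ lacks proper subalgebras, $p_1[B] = p_2[B] = A$, so $\B$ is a subdirect product of $\A$ with itself. The kernel of each projection is a congruence on $\B$, and by congruence permutability together with the simplicity of $\A$, a classical Fleischer-type argument (see, e.g., \cite[Lem.~IV.10.4]{BuSa00}) shows that $\B$ is either the full product $A \times A$ or the graph of an isomorphism $\sigma \colon \A \to \A$; by rigidity, the only such $\sigma$ is the identity, so $\B$ is either $A \times A$ or the diagonal $\Delta_A$. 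Every function $f \colon A^k \to A$ trivially preserves both $A \times A$ and $\Delta_A$. Applying Baker–Pixley, every function on $A$ is a term function of $\A$, i.e., $\A$ is primal.

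The main obstacle I anticipate is the subalgebra analysis of $\A \times \A$: one must carefully extract the Fleischer-type conclusion from the combination of congruence permutability, simplicity, and rigidity. Everything else reduces to citing Baker–Pixley and invoking the characterizations of arithmeticity already developed in the excerpt. If the paper expects a self-contained argument at this point, I would prove the Fleischer reduction directly by showing that, for $\B \leq \A \times \A$ subdirect with $\A$ simple and congruence-permutable, $\B$ is either $\{\langle a, \sigma(a)\rangle : a \in A\}$ for some automorphism $\sigma$ or all of $A \times A$.
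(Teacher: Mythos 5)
The paper does not actually prove this statement: it is quoted as a known result with a pointer to \cite[Cor.~IV.10.8]{BuSa00}, so there is no in-paper argument to compare yours against. Your proposal is the standard self-contained proof and is essentially correct. The forward direction is fine: constant functions, the ``characteristic'' functions used for simplicity, and a Pixley function are all term functions of a primal algebra, which yields the absence of proper subalgebras, rigidity, simplicity, and arithmeticity of $\VVV(\A)$. In the reverse direction, the combination of Baker--Pixley with the Fleischer analysis of subalgebras of $\A \times \A$ does the job: the absence of proper subalgebras forces every $\B \leq \A \times \A$ to be subdirect, congruence permutability plus simplicity (and finiteness) forces $\B$ to be either all of $A \times A$ or the graph of an automorphism, and rigidity collapses the latter to the diagonal; both relations are preserved by every function $A^k \to A$, so Baker--Pixley gives primality. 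Three small repairs are needed. First, the sentence ``since $\VVV(\A)$ is congruence distributive, it has a majority term'' is false as a general implication -- the paper itself records, citing \cite[Lem.~3]{Mit78}, that congruence distributivity does not yield a near unanimity term; what you need, and what you correctly fall back on in the same sentence, is that \emph{arithmeticity} yields a Pixley term $p$ and hence the majority term $p(x,p(x,y,z),z)$. Second, the stray ``$(x \land y) \lor \cdots$'' should be deleted, since $\A$ carries no lattice structure. Third, the one-element algebra is primal under the paper's definition but not simple, so, like the cited source, you should tacitly assume $\vert A \vert \geq 2$.
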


The structure theory of varieties generated by a primal algebra is very rich, as a consequence of the fact that these are precisely the varieties categorically equivalent to the variety of Boolean algebras \cite{MR244130} (see also \cite{MR724265}). In particular, since the variety of Boolean algebras has the surjective epimorphism property (see \cref{Thm : SES : Boolean algebras}) and this property is preserved by categorical equivalences between varieties by \cref{Rem : SES = monos are regular}, we deduce the following.

\begin{Proposition}\label{Prop : primal algebras have the SES}
Varieties generated by a primal algebra have the strong epimorphism surjectivity property.
\end{Proposition}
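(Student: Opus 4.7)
The plan is to deduce the result directly from the three ingredients assembled in the paragraph preceding the statement, namely:
\begin{enumerate}[(a)]
\item Hu's theorem (\cite{MR244130}) stating that a variety is generated by a primal algebra if and only if it is categorically equivalent to the variety of Boolean algebras;
\item the fact that the variety of Boolean algebras has the strong epimorphism surjectivity property (\cref{Thm : SES : Boolean algebras});
\item the characterization provided by \cref{Rem : SES = monos are regular} that, in a quasivariety, the strong epimorphism surjectivity property is equivalent to the purely categorical demand that every monomorphism be regular.
\end{enumerate}

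Given a variety $\K$ generated by a primal algebra $\A$, the first step is to invoke (a) to obtain a categorical equivalence $F \colon \K \to \mathsf{BA}$, where $\mathsf{BA}$ denotes the variety of Boolean algebras. The second step is to observe that since both $\K$ and $\mathsf{BA}$ are varieties (hence quasivarieties containing free algebras), monomorphisms coincide with embeddings in each, and regular monomorphisms are exactly the equalizers. The third step is to note that both classes, being monomorphism and being a regular monomorphism, are categorical: any categorical equivalence sends monomorphisms to monomorphisms and equalizers to equalizers. Hence $F$ restricts to a bijection between the regular monomorphisms (resp.\ monomorphisms) of $\K$ and of $\mathsf{BA}$.

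Finally, by (b) and (c) applied to $\mathsf{BA}$, every monomorphism in $\mathsf{BA}$ is regular. Transporting this property back along $F$ using the previous step, every monomorphism in $\K$ is regular. Applying (c) in the reverse direction yields that $\K$ has the strong epimorphism surjectivity property. There is no substantive obstacle here: once Hu's theorem is invoked, the only thing to be careful about is the categorical character of the property, which is guaranteed by the reformulation in \cref{Rem : SES = monos are regular}.
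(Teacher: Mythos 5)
Your proposal is correct and follows essentially the same route as the paper: the paper derives the proposition in the paragraph immediately preceding it, by combining Hu's theorem, the strong epimorphism surjectivity property of Boolean algebras (\cref{Thm : SES : Boolean algebras}), and the categorical characterization of that property via regular monomorphisms (\cref{Rem : SES = monos are regular}). Your write-up merely spells out the transport-along-equivalence step in more detail.
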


In the next example, we will employ the following observation.

\begin{Theorem} \label{Prop : Beth comp triangle trick}
Let $\A$ be an $\mathscr{L}$-algebra, $\F \subseteq \imppp(\A)$, and $\L_\mathcal{F}$ an $\F$-expansion of $\L$ such that $\A[\mathscr{L}_{\mathcal{F}}]$ is defined. If $\A[\mathscr{L}_{\mathcal{F}}]$ is primal, then  $\VVV(\A[\mathscr{L}_{\mathcal{F}}])$ is a Beth companion of $\mathbb{Q}(\A)$. If, moreover, $\F \subseteq \impeq(\A)$, then the Beth companion $\VVV(\A[\mathscr{L}_{\mathcal{F}}])$ is  equational.
\end{Theorem}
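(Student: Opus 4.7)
The strategy is to show that $\VVV(\A[\L_\F])$ coincides with the pp expansion $\SSS(\QQQ(\A)[\L_\F])$ of $\QQQ(\A)$, after which the conclusion will follow from the equivalence between being a Beth companion and having the strong epimorphism surjectivity property (\cref{Thm : Beth companions vs strong Beth}), combined with the fact that varieties generated by a primal algebra automatically possess this property (\cref{Prop : primal algebras have the SES}). First I would lift each $f \in \F$ to a member of $\extpp(\QQQ(\A))$: the pp formula defining $f$ in $\A$ is functional in $\{\A\}$ and hence in $\QQQ(\{\A\}) = \QQQ(\A)$ by \cref{Cor : functionality in Q(K)}, so it defines some $f_\bullet \in \imppp(\QQQ(\A))$; since $f^\A = f_\bullet^\A$ is total (because $\A[\L_\F]$ is defined), \cref{Prop : extendable : sufficient conditions}\eqref{item : extendable : sufficient conditions : 2} applied with $\mathsf{M} = \{\A\}$ gives $f_\bullet \in \extpp(\QQQ(\A))$. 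Then \cref{Thm : pp expansion : description in terms of Q and U} (with $\mathbb{O} = \QQQ$ and $\mathsf{N} = \{\A\}$) identifies the pp expansion $\SSS(\QQQ(\A)[\L_\F])$ of $\QQQ(\A)$ with $\QQQ(\A[\L_\F])$.

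The heart of the proof --- and what I expect to be the main obstacle --- is the identification $\VVV(\A[\L_\F]) = \QQQ(\A[\L_\F])$, for which I would exploit primality. By \cref{Thm : primal description : simple etc}, $\A[\L_\F]$ is finite, simple, and lacks proper subalgebras, and $\VVV(\A[\L_\F])$ is arithmetical, hence congruence distributive. Any $\B \in \VVV(\A[\L_\F])$ is then a subdirect product of members of $\VVV(\A[\L_\F])_\si$ by the Subdirect Decomposition Theorem~\ref{Thm : Subdirect Decomposition}; Jónsson's Theorem~\ref{Thm : Jonsson} places these factors in $\HHH\SSS\PPU(\A[\L_\F])$; \cref{Prop : P_u trivial in finite setting} absorbs $\PPU$ into $\III$; and the primal properties (no proper subalgebras and simplicity, combined with the convention that subdirectly irreducible algebras are nontrivial) reduce $\HHH\SSS(\A[\L_\F])$ to $\III(\A[\L_\F])$. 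Hence $\VVV(\A[\L_\F]) \subseteq \III\SSS\PPP(\A[\L_\F]) \subseteq \QQQ(\A[\L_\F])$, with the reverse inclusion trivial.

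Putting these together, $\VVV(\A[\L_\F]) = \SSS(\QQQ(\A)[\L_\F])$ is a pp expansion of $\QQQ(\A)$ possessing the strong epimorphism surjectivity property (\cref{Prop : primal algebras have the SES}), so \cref{Thm : Beth companions vs strong Beth} forces it to be a Beth companion of $\QQQ(\A)$. For the final claim, if $\F \subseteq \impeq(\A)$, then the defining formulas are already conjunctions of equations and their lifts belong to $\impeq(\QQQ(\A)) \cap \extpp(\QQQ(\A)) = \exteq(\QQQ(\A))$, so the pp expansion $\SSS(\QQQ(\A)[\L_\F]) = \VVV(\A[\L_\F])$ is by definition equational, yielding the stated equational Beth companion.
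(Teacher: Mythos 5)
Your proof is correct and follows essentially the same route as the paper: lift each $f \in \F$ to a member of $\extpp(\QQQ(\A))$ via \cref{Cor : functionality in Q(K)} and \cref{Prop : extendable : sufficient conditions}, identify $\SSS(\QQQ(\A)[\L_\F])$ with $\QQQ(\A[\L_\F])$ via \cref{Thm : pp expansion : description in terms of Q and U}, and conclude with \cref{Prop : primal algebras have the SES} and \cref{Thm : Beth companions vs strong Beth}. The only difference is that you derive the identity $\VVV(\A[\L_\F]) = \QQQ(\A[\L_\F])$ from scratch (correctly) via J\'onsson's Theorem, the Subdirect Decomposition Theorem, and the structural consequences of primality, whereas the paper simply cites a known result from Burris--Sankappanavar for this fact.
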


\begin{proof}
Suppose that $\A[\mathscr{L}_{\mathcal{F}}]$ is primal. Then  $\VVV(\A[\mathscr{L}_{\mathcal{F}}])$ has the strong epimorphism surjectivity property by \cref{Prop : primal algebras have the SES}. Furthermore, recall from \cite[Thm.\ IV.9.4]{BuSa00} that $\QQQ(\A[\mathscr{L}_{\mathcal{F}}]) = \VVV(\A[\mathscr{L}_{\mathcal{F}}])$  because $\A[\mathscr{L}_{\mathcal{F}}]$ is primal. Together with \cref{Thm : Beth companions vs strong Beth}, this yields that, to conclude that $\VVV(\A[\mathscr{L}_{\mathcal{F}}])$ is a Beth companion of $\QQQ(\A)$, it only remains to show that $\QQQ(\A[\mathscr{L}_{\mathcal{F}}])$ is a pp expansion of $\QQQ(\A)$.

Since $\F \subseteq \imppp(\A)$, for every $f \in \F$ there exists a pp formula $\varphi_f$ functional in $\A$ that defines $f$.
By \cref{Cor : functionality in Q(K)} each $\varphi_f$ defines some $f^* \in \imppp(\QQQ(\A))$. Let $\F^* = \{ f^* : f \in \F\}$.
As $\A[\L_\F]$ is defined, $f^{\A}$ is total for every $f \in \F$. Consequently,  $f^\A=(f^*)^\A$ because $f$ and $f^*$ are both defined by $\varphi_f$. Therefore, $(f^*)^\A$ is total for every $f^* \in \F^*$. Then \cref{Prop : extendable : sufficient conditions}\eqref{item : extendable : sufficient conditions : 2} yields $\mathcal{F}^* \subseteq \extpp(\QQQ(\A))$. We can regard $\L_\F$ as an $\F^*$-expansion of $\L$ by setting $g_{f^*}=g_f$ for each $f \in \F$. Since $f^\A=(f^*)^\A$ for every $f \in \F$, the definition of $\A[\L_\F]$ is independent on whether $\L_\F$ is thought of as an $\F$-expansion or as an $\F^*$-expansion.
Thus, \cref{Thm : pp expansion : description in terms of Q and U} implies that $\QQQ(\A[\mathscr{L}_{\mathcal{F}}])$ is a pp expansion of $\QQQ(\A)$ induced by $\F^*$ and $\L_\F$.

The last part of the statement follows immediately from the construction described above.
\end{proof}

\begin{exa} [\textsf{Varieties of MV-algebras generated by a  finite chain}] \label{Exa : MV gen by fin chains}
For $n \in \mathbb{Z}^+$, we denote by $\Luk_n$ the  subalgebra of the real unit interval $\I$ (cf.\ \cref{Exa : MV}) with universe $\{\frac{m}{n} : m \in \mathbb{N}, \ m \leq n\}$. Notice that $\Luk_n$ is a finite $\MV$-algebra of $n+1$ elements. We consider the variety 
\[
\MV_n = \mathbb{V}(\Luk_n) = \QQQ(\Luk_n),
\]
where the second equality in the above display holds by \cite[Lem.\ 1.6]{MVGiTo}. One of the reasons the varieties $\MV_n$ are of interest is that they are precisely the proper nontrivial subvarieties of $\MV$ with the amalgamation property (see \cite[Thm.\ 13]{MVDNLet}) or, equivalently, the subvarieties of $\MV$ generated by a finite subdirectly irreducible algebra (see, e.g., \cite[Lem.~3]{Cha59} and \cite[Prop.~3.6.5]{COM00}).

For each $n \in \mathbb{Z}^+$ consider the conjunction of equations
\[
\psi_n(x, y) = (n.y \thickapprox  1) \sqcap (y \odot ((n-1).y) \thickapprox  0).
\]
As $\Luk_n \leq \I$, \cref{Claim: division in I}(\ref{Claim: division in I: 1}, \ref{Claim: division in I: 2}) implies that for all $a,b \in \Luk_n$,
\[
\Luk_n \vDash \psi_n(a,b) \iff b=\frac{1}{n}.
\]
Therefore, $\psi_n$ defines a total unary $c_n \in \impeq(\Luk_n)$ such that $c_n^{\scriptsize \Luk_n}$ is the constant map with value $\frac{1}{n}$.  Let $\L_n$ be a $c_n$-expansion of $\L_{\MV_n}$. Since $c_n^{\scriptsize\Luk_n}$ is total, the algebra $\Luk_n[\L_n]$ is defined and can thought of as the result of adding a constant for the element $\frac{1}{n}$ to $\Luk_n$. Then let $\DMV_n =\VVV(\Luk_n[\L_n])$.

\begin{Theorem} \label{Thm : Beth comp of V(MVn)}
$\DMV_n$ is a an equational Beth companion of $\MV_n$.
\end{Theorem}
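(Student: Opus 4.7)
The plan is to apply \cref{Prop : Beth comp triangle trick} with $\A = \Luk_n$ and $\F = \{c_n\} \subseteq \impeq(\Luk_n)$. Since $\MV_n = \QQQ(\Luk_n)$ and $c_n^{\scriptsize\Luk_n}$ is total, the proposition reduces the theorem to showing that the finite algebra $\Luk_n[\L_n]$ is primal; the resulting Beth companion $\VVV(\Luk_n[\L_n]) = \DMV_n$ will automatically be equational because $c_n$ is defined by a conjunction of equations.

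To verify primality I would invoke \cref{Thm : primal description : simple etc}, which demands four checks. First, $\VVV(\Luk_n[\L_n])$ is arithmetical: the variety $\MV$ is already arithmetical because it has a lattice reduct (hence a majority term $t(x,y,z) = (x \wedge y) \vee (x \wedge z) \vee (y \wedge z)$, making $\VVV(\MV)$ congruence distributive by \cref{Thm : lattice reduct -> CD}) and a well-known term-definable Mal'cev operation, and those same terms remain available after adding the unary symbol $g_{c_n}$. Second, $\Luk_n[\L_n]$ is simple: $\Luk_n$ is simple as a finite totally ordered MV-algebra, and expanding the language can only shrink the congruence lattice. Third, $\Luk_n[\L_n]$ is rigid: any automorphism of $\Luk_n[\L_n]$ restricts to an automorphism of $\Luk_n$, but the finite chain $\Luk_n$ admits only the identity automorphism since MV-algebra homomorphisms preserve the order and the bounds $0, 1$.

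The fourth condition is that $\Luk_n[\L_n]$ has no proper subalgebras. Any subalgebra $\B \leq \Luk_n[\L_n]$ must contain the constant $0$; applying the interpretation of $g_{c_n}$ to $0$ produces the element $\tfrac{1}{n} \in B$, and a short induction on $k$ using the operation $\oplus$ shows that $\tfrac{k}{n} = k . \tfrac{1}{n} \in B$ for every $0 \leq k \leq n$. Hence $B = \Luk_n$.

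With the four conditions in place, \cref{Thm : primal description : simple etc} yields that $\Luk_n[\L_n]$ is primal, and then \cref{Prop : Beth comp triangle trick} delivers the desired conclusion. None of the four verifications is genuinely delicate; the only mild obstacle I anticipate is citing (or writing out) the standard Mal'cev term for MV-algebras, but this is well established in the literature.
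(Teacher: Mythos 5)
Your proposal is correct and follows essentially the same route as the paper: reduce to primality of $\Luk_n[\L_n]$ via \cref{Prop : Beth comp triangle trick} and verify the four conditions of \cref{Thm : primal description : simple etc} (finiteness/simplicity by congruence preservation under expansion, no proper subalgebras since $\frac{k}{n}=k.\frac{1}{n}$, and arithmeticity inherited from $\MV_n$ because the witnessing terms survive language expansion). The only cosmetic difference is that you derive rigidity from the rigidity of the chain $\Luk_n$ itself, whereas the paper derives it from the fact that the new constant $\frac{1}{n}$ generates the whole algebra; both are valid.
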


\begin{proof}
Recall that $\MV_n = \QQQ(\Luk_n)$. Furthermore, $c_n \in \impeq(\Luk_n)$ and $\Luk_n[\L_n]$ is defined. Therefore, in view of \cref{Prop : Beth comp triangle trick}, it suffices to show that $\Luk_n[\L_n]$ is a primal algebra. To this end, we will employ \cref{Thm : primal description : simple etc}.

First, observe that $\Luk_n[\L_n]$ is finite because so is $\Luk_n$.\ Moreover, recall from \cite[Cor.~3.5.4]{COM00} that $\Luk_n$ is simple. As $\Luk_n[\L_n]$ is obtained by adding a constant operation to $\Luk_n$, we have $\mathsf{Con}(\Luk_n) = \mathsf{Con}(\Luk_n[\L_n])$. Hence, $\Luk_n[\L_n]$ is simple too. Similarly, since $\Luk_n[\L_n]$ is obtained by adding to $\Luk_n$ a constant operation with value $\frac{1}{n}$ and in $\Luk_n$ we have $\frac{m}{n} = m.\frac{1}{n}$ for every $0 \leq m \leq n$, the algebra $\Luk_n[\L_n]$ is rigid and lacks proper subalgebras.
Lastly, recall that $\MV_n = \VVV(\Luk_n)$ is arithmetical (see, e.g., \cite[Prop.~7.6]{GM05}). Therefore, $\VVV(\A)$ is arithmetical for every expansion $\A$ of $\Luk_n$ by \cite[Thm.\ II.12.5]{BuSa00}. In particular, $\VVV(\Luk_n[\L_n])$ is arithmetical, as desired.
\end{proof}
\end{exa}

We now turn our attention to proving Theorems \ref{Thm : acl and Beth companion} and \ref{Thm : Beth compl. iff eq absolutely closed}. We begin by establishing the following pair of results.

\begin{Proposition} \label{Prop : doms in K vs. M}\label{Prop : doms pp expansion ext}
    Let $\mathsf{M} = \SSS(\mathsf{K}[\mathscr{L}_{\mathcal{F}}])$ be a pp expansion of a universal class $\mathsf{K}$. Then the following conditions hold:
    \benroman
    \item\label{item : dom_K = dom_K[L_F]}\label{Thm : doms pp expansion ext : 1} $\mathsf{d}_{\mathsf{K}}(\A \resLK, \B \resLK) = \mathsf{d}_{\mathsf{M}}(\A, \B)$ for all $\A \leq \B \in \mathsf{K}[\mathscr{L}_{\mathcal{F}}]$; 
    \item\label{item : check SES of M in K}\label{Thm : doms pp expansion ext : 2} $\mathsf{M}$ is a Beth companion of $\K$ if and only if $\mathsf{d}_{\mathsf{K}}(\A \resLK, \B \resLK) = A$ for all $\A \leq \B \in \M$. 
    \eroman
\end{Proposition}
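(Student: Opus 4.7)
The plan is to prove part \eqref{Thm : doms pp expansion ext : 1} first as the main technical content and then derive part \eqref{Thm : doms pp expansion ext : 2} as a short corollary using \cref{Thm : Beth companions vs strong Beth}. For part \eqref{Thm : doms pp expansion ext : 1}, I would establish the two inclusions separately. The inclusion $\d_{\K}(\A\resLK, \B\resLK) \subseteq \d_{\M}(\A, \B)$ is the easy one: given a pair of $\L_\F$-homomorphisms $g, h \colon \B \to \C$ with $\C \in \M$ agreeing on $A$, view them as $\L_\K$-homomorphisms $\B\resLK \to \C\resLK$; by \cref{Prop : pp expansions and subreducts} the algebra $\C\resLK$ lies in $\K$, so the assumption that $b$ is in the $\K$-dominion delivers $g(b) = h(b)$ immediately.

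The reverse inclusion $\d_{\M}(\A, \B) \subseteq \d_{\K}(\A\resLK, \B\resLK)$ carries the technical weight of the statement. Starting from $g, h \colon \B\resLK \to \C$ in $\K$ that agree on $A$, I would enlarge the target using \cref{Prop : pp expansions : subreducts : extendable} (made available by $\F \subseteq \ext_{pp}(\K)$ and the fact that $\K$ is a universal class) to find $\C^* \in \K[\L_\F]$ with $\C \leq \C^*\resLK$, and view $g, h$ as $\L_\K$-homomorphisms $\B\resLK \to \C^*\resLK$. The crucial move is then to promote them to $\L_\F$-homomorphisms $\B \to \C^*$ via \cref{Prop : small homs are big homs}, which requires both $\B$ and $\C^*$ to lie in $\K[\L_\F]$. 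This promotion is the main obstacle and precisely the point where the hypothesis $\B \in \K[\L_\F]$ is indispensable: for $\B \in \M \setminus \K[\L_\F]$ the operations $g_f^\B$ are merely restrictions of operations living in a larger algebra of $\K[\L_\F]$, and an $\L_\K$-homomorphism from $\B\resLK$ need not preserve them. Once the promotion is available, applying $b \in \d_{\M}(\A, \B)$ to $g, h \colon \B \to \C^*$ yields $g(b) = h(b)$.

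Part \eqref{Thm : doms pp expansion ext : 2} then follows by a single reduction that I would use in both directions: given arbitrary $\A \leq \B \in \M$, pick $\B^* \in \K[\L_\F]$ with $\B \leq \B^*$ (possible because $\M = \SSS(\K[\L_\F])$), apply part \eqref{Thm : doms pp expansion ext : 1} to the pair $\A \leq \B^*$, and transfer between dominions over $\B$ and over $\B^*$ via \cref{Cor: dominions subalg quot}\eqref{Cor: dominions subalg quot: 1}. For the forward direction, \cref{Thm : Beth companions vs strong Beth} together with \cref{Prop : SES and dominions} gives $\d_{\M}(\A, \B^*) = A$; part \eqref{Thm : doms pp expansion ext : 1} converts this to $\d_{\K}(\A\resLK, \B^*\resLK) = A$, and \cref{Cor: dominions subalg quot}\eqref{Cor: dominions subalg quot: 1} shrinks it to $\d_{\K}(\A\resLK, \B\resLK) \subseteq A$, with the reverse inclusion being automatic. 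For the backward direction, the hypothesis applied at $\B^*$ and part \eqref{Thm : doms pp expansion ext : 1} yield $\d_{\M}(\A, \B^*) = A$, \cref{Cor: dominions subalg quot}\eqref{Cor: dominions subalg quot: 1} gives $\d_{\M}(\A, \B) = A$, and \cref{Prop : SES and dominions} combined with \cref{Thm : Beth companions vs strong Beth} then delivers the Beth companion property.
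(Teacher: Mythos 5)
Your proposal is correct and follows essentially the same route as the paper: the easy inclusion of (i) via restriction of homomorphisms and \cref{Prop : pp expansions and subreducts}, the hard inclusion via enlarging the target into $\K[\L_\F]$ using \cref{Prop : pp expansions : subreducts : extendable} and then promoting with \cref{Prop : small homs are big homs}, and (ii) by passing to an extension $\B^* \in \K[\L_\F]$ and transferring with \cref{Cor: dominions subalg quot}\eqref{Cor: dominions subalg quot: 1}. Your remark on why $\B \in \K[\L_\F]$ (rather than merely $\B \in \M$) is indispensable for the promotion step is accurate and matches where the paper uses that hypothesis.
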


\begin{proof}
\eqref{item : dom_K = dom_K[L_F]}: Let $\A \leq \B \in \mathsf{K}[\mathscr{L}_{\mathcal{F}}]$. To establish the inclusion from left to right, consider $b \in B-\mathsf{d}_{\M}(\A, \B)$. Then there exists $\C \in \M$ and a pair of homomorphisms $g,h \colon \B \to \C$ such that $g\res_{A}=h\res_{A}$ and $g(b) \neq h(b)$. From \cref{Prop : pp expansions and subreducts} it follows that $\C\resLK \in \K$. Together with the fact that $g,h \colon \B\resLK \to \C\resLK$ are homomorphisms such that $g\res_{A}=h\res_{A}$ and $g(b) \neq h(b)$, this yields $b \in B-\mathsf{d}_{\mathsf{K}}(\A\resLK, \B\resLK)$. Hence, $\mathsf{d}_{\mathsf{K}}(\A\resLK, \B\resLK) \subseteq \mathsf{d}_{\M}(\A, \B)$. To prove the reverse inclusion, consider $b \in B-\mathsf{d}_{\mathsf{K}}(\A\resLK, \B\resLK)$. Then there exist $\C \in \K$ and a pair of homomorphisms $g,h \colon \B\resLK \to \C$ such that $g\res_{A}=h\res_{A}$ and $g(b) \neq h(b)$. Since $\M = \SSS(\K[\L_\mathcal{F}])$  is a pp expansion of the universal class $\K$ by assumption,  \cref{Prop : pp expansions : subreducts : extendable} guarantees that
$\mathsf{K}$ is the class of $\mathscr{L}_\mathsf{K}$-subreducts of $\mathsf{K}[\mathscr{L}_\mathcal{F}]$. So, there exists $\D \in \mathsf{K}[\mathscr{L}_\mathcal{F}]$ such that $\C \leq \D\resLK$. As $\B, \D \in \mathsf{K}[\mathscr{L}_\mathcal{F}]$, the homomorphisms  $g,h \colon \B\resLK \to \D\resLK$ can by viewed as homomorphisms $g,h \colon \B \to \D$ by \cref{Prop : small homs are big homs}. Therefore, from $\D \in \M$, $g\res_{A}=h\res_{A}$, and $g(b) \neq h(b)$ it follows that  $b \in B-\mathsf{d}_{\M}(\A, \B)$.\ Hence, conclude that $\mathsf{d}_{\M}(\A, \B) \subseteq \mathsf{d}_{\mathsf{K}}(\A\resLK, \B\resLK)$.

\eqref{item : check SES of M in K}: 
Assume that $\mathsf{M}$ is a Beth companion of $\K$ and consider $\A \leq \B \in  \mathsf{M}$. We will show that $\mathsf{d}_{\mathsf{K}}(\A \resLK, \B \resLK) = A$. Since $\mathsf{M} = \SSS(\mathsf{K}[\mathscr{L}_{\mathcal{F}}])$, there exists $\C \in \mathsf{K}[\mathscr{L}_{\mathcal{F}}]$ such that $\B \leq \C$. As $\C \in \mathsf{K}[\mathscr{L}_{\mathcal{F}}]$, from \eqref{item : dom_K = dom_K[L_F]} it follows that $\mathsf{d}_{\mathsf{K}}(\A \resLK, \C \resLK) = A$. 
 Moreover, \cref{Cor: dominions subalg quot}\eqref{Cor: dominions subalg quot: 1} yields $\d_\K(\A \resLK, \B \resLK) \subseteq \d_\K(\A \resLK, \C \resLK)$. Therefore, $\d_\K(\A \resLK, \B \resLK) = A$.

To prove the converse implication assume that $\d_{\mathsf{K}}(\A \resLK, \B \resLK) = A$ for all $\A \leq \B \in \mathsf{M}$. 
By \cref{Prop : SES and dominions,Thm : Beth companions vs strong Beth}, to show that $\M$ is a Beth companion of $\K$, it suffices to establish that $\mathsf{d}_{\mathsf{M}}(\C, \D) = C$ for all $\C \leq  \D\in \mathsf{M}$. Consider $\C \leq \D \in \M$.
As $\mathsf{M} = \SSS(\mathsf{K}[\mathscr{L}_{\mathcal{F}}])$, there exists $\E \in \mathsf{K}[\mathscr{L}_{\mathcal{F}}]$ such that $\D \leq \E$. 
Then 
\[
\mathsf{d}_{\mathsf{M}}(\C, \D) \subseteq \mathsf{d}_{\mathsf{M}}(\C, \E) =  \mathsf{d}_{\mathsf{K}}(\C \resLK, \E \resLK),
\]
where the first equality holds by \cref{Cor: dominions subalg quot}\eqref{Cor: dominions subalg quot: 1} and the second follows from \eqref{item : dom_K = dom_K[L_F]} because $\E \in \mathsf{K}[\mathscr{L}_{\mathcal{F}}]$. 
Our assumption implies that $\mathsf{d}_{\mathsf{K}}(\C \resLK, \E \resLK) = C$. Thus, $\mathsf{d}_{\mathsf{M}}(\C, \D) = C$. 
\end{proof}

\begin{Proposition} \label{Prop : Beth comp have same reducts}
    Let $\mathsf{M}_1$ and  $\mathsf{M}_2$ be a pair of Beth companions of a quasivariety $\mathsf{K}$. 
    Then $\mathsf{M}_1 \resLK = \mathsf{M}_2 \resLK$.
\end{Proposition}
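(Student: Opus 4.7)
The plan is to derive this as an immediate corollary of \cref{Thm : Beth companions : term equivalence}, which we are allowed to invoke. That theorem guarantees that any two Beth companions $\M_1$ and $\M_2$ of the quasivariety $\K$ are faithfully term equivalent relative to $\K$. Let $\tau \colon \L_{\M_1} \to T_2$ and $\rho \colon \L_{\M_2} \to T_1$ be the arity-preserving maps witnessing this faithful term equivalence.

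To prove $\M_1 \resLK \subseteq \M_2 \resLK$, I would pick an arbitrary $\A \in \M_1$ and show that $\A \resLK$ lies in $\M_2 \resLK$. By condition (i) in the definition of faithful term equivalence, $\rho(\A) \in \M_2$, so $\rho(\A) \resLK \in \M_2 \resLK$. The key fact I would invoke is \cref{Rem : properties preserved by term equivalence}\eqref{item: preservation term eq : reduct}, which states that $\A \resLK = \rho(\A) \resLK$ for every $\A \in \M_1$ (this uses that $\tau$ and $\rho$ are required to send each function symbol of $\L_\K$ to itself). Combining these yields $\A \resLK \in \M_2 \resLK$, as desired.

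By symmetry, swapping the roles of $\M_1$ and $\M_2$ and using $\tau$ in place of $\rho$, we obtain $\M_2 \resLK \subseteq \M_1 \resLK$, so the two classes of $\L_\K$-reducts coincide. There is no substantive obstacle here beyond correctly citing \cref{Thm : Beth companions : term equivalence} and the relevant clause of \cref{Rem : properties preserved by term equivalence}; all the work is concentrated in the essential uniqueness theorem, which has already been established in the paper.
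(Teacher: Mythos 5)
Your proof is correct and matches the paper's argument exactly: both invoke \cref{Thm : Beth companions : term equivalence} to obtain the witnessing maps $\tau$ and $\rho$, apply \cref{Rem : properties preserved by term equivalence}\eqref{item: preservation term eq : reduct} to get $\rho(\A)\resLK = \A\resLK$, and conclude by symmetry. No issues.
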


\begin{proof}
Since $\M_1$ and $\M_2$ are Beth companions of $\K$, by \cref{Thm : Beth companions : term equivalence} there exists a pair of maps $\tau \colon \L_{\M_1} \to T_2$ and $\rho \colon \L_{\M_2} \to T_1$ witnessing that $\mathsf{M}_1$ and $\mathsf{M}_2$ are faithfully term equivalent relative to $\mathsf{K}$, where $T_i$ is the set of terms of $\M_i$ in a countably infinite set of variables for $i = 1,2$.
By symmetry it suffices to show $\mathsf{M}_1 \resLK \subseteq \mathsf{M}_2 \resLK$. To this end, consider $\A \in \mathsf{M}_1$.
The definition of a faithful term equivalence yields  $\rho(\A) \in \mathsf{M}_2$ and $\rho(\A) \resLK = \A \resLK$ (see \cref{Rem : properties preserved by term equivalence}\eqref{item: preservation term eq : reduct}).
Therefore,
$\A \resLK = \rho(\A) \resLK \in \mathsf{M}_2 \resLK$, 
as desired. 
\end{proof}

We are now ready to prove \cref{Thm : acl and Beth companion}.

\begin{proof}
We first prove the inclusion $\K[\L_\F]\resLK \subseteq \K_\ac$. Consider $\A \in \K[\L_\F]$. To show that $\A\resLK$ is absolutely closed in $\K$, let $\B \in \K$ with $\A \resLK \leq \B$. We need to prove that $\d_\K(\A \resLK, \B) = A$.
Since $\M$ is a pp expansion of $\K$, \cref{Prop : pp expansions and subreducts} guarantees the existence of $\C \in \M$ such that $\B \leq \C \resLK$. Then \cref{Cor: dominions subalg quot}\eqref{Cor: dominions subalg quot: 1} yields $\d_\K(\A \resLK, \B) \subseteq \d_\K(\A \resLK, \C \resLK)$. As $\M$ is a Beth companion of $\K$, from \cref{Prop : doms in K vs. M}\eqref{item : check SES of M in K} it follows that $\d_\K(\A \resLK, \C\resLK) =A$.  Therefore, $\d_\K(\A \resLK, \B) = A$, as desired

To prove the inclusion $\K_\ac \subseteq \M\resLK$, consider $\A \in \K_\ac$. Since $\mathcal{F} \subseteq \mathsf{ext}(\mathsf{K})$, \cref{Prop : pp expansions : subreducts : extendable} implies that there exists $\B \in \mathsf{K}[\mathscr{L}_{\mathcal{F}}]$ such that $\A \leq \B \resLK$. 
We show that $A$ is the universe of a subalgebra of $\B$. From $\A \leq \B \resLK$ it follows that $A$ is closed under the operations of the language of $\K$. 
Recall that every operation symbol of $\mathscr{L}_{\mathcal{F}} - \mathscr{L}_{\mathsf{K}}$ is of the form $g_f$ for some $f \in \F$. 
Consider an $n$-ary $f \in \F$. We will show that $A$ is closed under $g_f^\B$. Since $\B \in \mathsf{K}[\mathscr{L}_{\mathcal{F}}]$, we have  $g_f^\B=f^{\B\resLK}$. Let $a_1, \dots, a_n \in A$. Since $f^{\B\resLK}$ is total, \cref{Thm : dominions : pp formulas} yields  $f^{\B\resLK}(a_1, \dots, a_n) \in \mathsf{d}_{\mathsf{K}}(\A, \B \resLK)$. As $\A$ is absolutely closed in $\K$ by assumption, we have $\mathsf{d}_{\mathsf{K}}(\A, \B \resLK) = A$, and hence $f^{\B\resLK}(a_1, \dots, a_n) \in A$. We have shown that $A$ is the universe of a subalgebra of $\B$.
Therefore, we can expand $\A$ to an $\mathscr{L}_{\mathsf{M}}$-algebra $\A^*$ by setting $g_f^{\A^*} = g_f^{\B} \res_A$ for every $g_f \in \mathscr{L}_{\mathcal{F}} - \mathscr{L}_{\mathsf{K}}$. The definition of $\A^*$ guarantees that $\A^* \leq \B$. Since $\B \in \M$ and $\M$ is a universal class by \cref{Thm : pp expansion : still quasivariety}\eqref{item : pp expansion : universal class}, we obtain $\A^* \in \SSS(\M) \subseteq \mathsf{M}$.
Thus, we conclude that $\A = \A^* \resLK \in \mathsf{M} \resLK$. 

It remains to show that, when $\M$ is an equational Beth companion of $\K$, we have $\K_\ac = \M\res_{\L_\K}$. Suppose that $\M$ is an equational Beth companion of $\K$. Then $\mathsf{M}$ is 
faithfully term equivalent to a 
Beth companion $\mathsf{M}^*$ of $\K$ induced by a family of operations defined by conjunctions of equations.
Moreover, \cref{Thm : axiomatization of pp expansion (almost always)} yields that
$\mathsf{M}^*$ is of the form $\mathsf{K}[\mathscr{L}_{\mathcal{F}^*}]$ for some $\mathcal{F}^* \subseteq \mathsf{ext}_{\textsc{eq}}(\mathsf{K})$.  
Then $\K[\mathscr{L}_{\mathcal{F}^*}]\resLK =\M^*\resLK$.
Since $\K[\L_{\F^*}]\resLK \subseteq \K_\ac \subseteq \M^*\resLK$ by the first two paragraphs of this proof, we have $\K_\ac =\M^*\resLK$. As $\M$ and $\M^*$ are Beth companions of $\K$, \cref{Prop : Beth comp have same reducts} yields $\M\resLK = \M^*\resLK$. Hence, we conclude that $\K_\ac =\M\resLK$.
\end{proof}

Lastly, we prove \cref{Thm : Beth compl. iff eq absolutely closed}.

\begin{proof}
In view of \cref{Prop : doms in K vs. M}(\ref{item : check SES of M in K}), to prove that $\M$ is a Beth companion of $\K$, it suffices to show that $\mathsf{d}_{\mathsf{K}}(\A \resLK, \B \resLK) = A$ for all $\A \leq \B \in \M$. To this end, consider $\A \leq \B \in \M$. Since $\M$ is a universal class by 
\cref{Thm : pp expansion : still quasivariety}(\ref{item : pp expansion : universal class}), we obtain $\A \in \M$. Therefore, the assumption that $\mathsf{M} \resLK \subseteq  \K_\ac$ implies $\A\resLK \in \K_\ac$. Hence, $\mathsf{d}_{\mathsf{K}}(\A \resLK, \B \resLK) = A$.
\end{proof}

\section{Classes without a Beth companion} \label{Sec : Classes without Beth comp}

 We close this work by providing some examples of classes of algebras lacking a Beth companion.\
The two main results of the section concern the varieties of monoids and of commutative monoids (\cref{Thm : M lacks Beth comp}), and certain quasivarieties of Heyting algebras (\cref{Thm : quasivariety HA and C5}). We begin with the  result on monoids.

\begin{Theorem} \label{Thm : M lacks Beth comp}
The varieties of monoids and of commutative monoids lack a Beth companion. 
\end{Theorem}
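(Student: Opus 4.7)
The strategy is to assume, towards a contradiction, that $\K \in \{\mathsf{Mon}, \mathsf{CMon}\}$ admits a Beth companion $\M$, and derive a contradiction using Isbell's operations. By \cref{Cor : Beth companion : ext(K)}, we may take $\M = \SSS(\K[\L_\F])$ with $\F = \extpp(\K)$. Since $\K$ is a quasivariety, \cref{Thm : Beth companions vs strong Beth}\eqref{item : Beth companions vs strong Beth : 5} guarantees that every $f \in \imppp(\K)$ is interpolated in $\M$ by a single $\L_\M$-term. In particular, for each $n \geq 1$, Isbell's $n$-th operation $f_n \in \imppp(\K)$ (\cref{Thm : Isbell implicit operations}) is interpolated by some $\L_\M$-term $t_n$.

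We adapt the obstruction exhibited in \cref{Exa : Isbell is not interolable by eq : existential elimination}. Let $A = \{1\} \cup \{2m : m \in \mathbb{N}\}$, viewed as a submonoid of the multiplicative monoid $\mathbb{N} = \langle \mathbb{N}; \cdot, 1\rangle$, and set $c_1 = c_{2n+1} = 6$ and $c_j = 12$ for $1 < j < 2n+1$; then $c_1, \ldots, c_{2n+1} \in A$ and $f_n^\mathbb{Q}(c_1, \ldots, c_{2n+1}) = 3$. By \cref{Prop : pp expansions and subreducts}, there exists $\B \in \M$ with $\mathbb{Q} \leq \B\resLK$, and the interpolation forces $t_n^\B(c_1, \ldots, c_{2n+1}) = 3$. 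Letting $\hat{A}$ denote the $\L_\M$-subalgebra of $\B$ generated by $A$, we obtain $3 \in \hat{A}\resLK$.

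The sought contradiction would follow by showing $\hat{A}\resLK \cap \mathbb{N} = A$, since then $3 \in A$, contradicting the definition of $A$. This reduces to establishing that each operation $g_f$ with $f \in \extpp(\K)$ sends $A$-tuples to elements of $A$ when evaluated in $\B$: equivalently, for every $f \in \extpp(\K)$ with defining pp formula $\varphi_f$ and every $a_1, \ldots, a_k \in A$, any $b \in B$ with $\B\resLK \vDash \varphi_f(a_1, \ldots, a_k, b)$ must satisfy $b \in A$. The idea is to mimic the homomorphism-based computation in \cref{Exa : Isbell is not interolable by eq : existential elimination}, where $h, k \colon \mathbb{N} \to \{0, 1\}$ agree on $A$ but differ on $3$; by the functoriality of $f$ under $\K$-homomorphisms, any $b$ defined via $\varphi_f$ from $A$-elements would be forced to satisfy $h(b) = k(b)$, pinning $b$ down to $A$.

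The main obstacle is that $h$ and $k$ do not themselves extend from $\mathbb{N}$ to $\mathbb{Q}$ (the equations $h(2)h(\tfrac12) = h(1) = 1$ and $h(2) = 0$ are incompatible), so the distinguishing homomorphisms must instead be constructed directly on $\B\resLK$. One natural attempt is to pass to a $\K$-congruence of $\B\resLK$ collapsing rational fractions with nontrivial denominator, and to verify that any $f \in \extpp(\K)$ is compatible with such a quotient. This compatibility is the crux, and it is here that one genuinely exploits the failure of amalgamation in $\K$ (recalled after \cref{Thm : existential elimination}), which constrains the shape of the pp formulas that can define extendable operations in $\extpp(\K)$. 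The case $\K = \mathsf{CMon}$ is treated verbatim, since $\mathbb{N}$, $\mathbb{Q}$, and the homomorphisms $h, k$ are commutative and Isbell's \cref{Thm : Isbell Theorem} covers $\mathsf{CMon}$ as well.
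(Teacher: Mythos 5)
Your reduction has the right overall shape (contradiction via interpolation of Isbell's operations plus a closure claim for the generated subalgebra), but the step you yourself flag as ``the crux'' is a genuine gap, and the method you propose for closing it cannot work. You want to show that every $g_f$ with $f \in \extpp(\K)$ maps $A$-tuples into $A$ (or at least that $\hat{A}\resLK \cap \mathbb{N} = A$) by producing a pair of $\K$-homomorphisms, or a $\K$-congruence, on $\B\resLK$ that agrees on $A$ but separates $3$. No such pair exists: since $c_1, \dots, c_{2n+1} \in A$ and $3 = f_n^{\mathbb{Q}}(c_1, \dots, c_{2n+1})$, \cref{Thm : dominions : pp formulas} and \cref{Cor: dominions subalg quot}\eqref{Cor: dominions subalg quot: 1} give $3 \in \d_\K(\A, \mathbb{Q}) \subseteq \d_\K(\A, \B\resLK)$ unconditionally, so \emph{every} pair of homomorphisms out of $\B\resLK$ into a monoid that agrees on $A$ must agree on $3$. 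The trick of \cref{Exa : Isbell is not interolable by eq : existential elimination} works only because conjunctions of equations are preserved downward to the subalgebra $\mathbb{N}$ of $\mathbb{Q}$, where $h$ and $k$ live; members of $\extpp(\K)$ are defined by genuine pp formulas and do not restrict to subalgebras, so the computation cannot be pulled back into $\mathbb{N}$. Moreover, under your own hypothesis the closure claim is directly contradicted by the very object the hypothesis supplies: \cref{Prop : Beth comp implies interpolation with extendable} produces a unary $g \in \extpp(\K)$ interpolating inversion, so $g^{\B\resLK}(2) = \tfrac12 \notin A$ with $2 \in A$. So the ``compatibility'' you defer to the failure of amalgamation is not a technical verification but the entire content of the theorem, and no argument for it is given.

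The paper's proof goes a different way and the difference is essential. Using the fact that Isbell's operations form a dominion base (\cref{Thm : Isbell Theorem}) together with \cref{Thm : interpolation dominion base terms}, it first extracts a concrete algebraic constraint on \emph{every} unary member of $\imppp(\K)$: there are $l, r \in \mathbb{N}$ with $a^l f^\A(a) = a^r$ whenever $a \in \dom(f^\A)$ (\cref{Prop: property unary extpp in Mon}). Applying this to the unary $g \in \extpp(\K)$ that interpolates inversion and evaluating at $2$ in an extension of $\mathbb{Q}$ forces $l = r+1$; the contradiction is then obtained not in $\mathbb{Q}$ or $\mathbb{N}$ at all, but in a finite truncated commutative monoid with an element $c$ satisfying $c^{r+1} = c^{r+2}$ and $c^r \neq c^{r+1}$, where extendability of $g$ yields $c^{r+1}g(c) = c^r$ and hence $c^r = c^{r+1}$. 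Your proposal never brings in this zigzag-derived identity or the finite-monoid obstruction, which is the actual engine of the argument.
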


The theorem above is the starting point of a complete description of the varieties of commutative monoids admitting a Beth companion. As the methods utilized in its proof go beyond the theory of implicit operations developed here,
we will provide such a description in the separate work \cite{CKMMON}. 

In order to prove 
 \cref{Thm : M lacks Beth comp}, 
we first need to introduce the notion of a dominion base and establish some technical results about dominion bases and implicit operations.

\begin{Definition}
    Let $\mathsf{K}$ be a class of algebras and $\Delta \subseteq \imppp(\mathsf{K})$. We say that $\Delta$ is a \emph{dominion base} for $\K$ when for all $\A \leq \B \in \K$ and $b \in \d_\K(\A, \B)$ there exist $f \in \Delta$ and $\langle a_1, \dots, a_n \rangle \in \dom({f^\B}) \cap A^n$ such that $f^\B(a_1, \dots, a_n)=b$.
\end{Definition}

\cref{Thm : dominions : pp formulas} states that $\imppp(\mathsf{K})$ is a dominion base for every elementary class $\K$, and Isbell's Zigzag Theorem \ref{Thm : Isbell Theorem} states that Isbell's operations (see \cref{Exa : Isbell operations}) form a dominion base for the varieties of monoids and of commutative monoids. The following result illustrates how having a concrete and transparent dominion base simplifies the task of finding interpolants for implicit operations.

\begin{Theorem}\label{Thm : interpolation dominion base terms}
Let $\K$ be a quasivariety with dominion base $\Delta$ and $f \in \imppp(\K)$ of arity~$n$. Then there exist $g \in \Delta$ and $n$-ary terms $t_1, \dots, t_m$ of $\K$
such that the composition $g(t_1^\K, \dots, t_m^\K)$ interpolates $f$ in $\K$. 
\end{Theorem}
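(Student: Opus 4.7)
The plan is to imitate the proof of \cref{Thm : existential elimination}\eqref{item : existential elimination : 1}, replacing the appeal to \cref{Cor: dominions AP impeq} by the assumed dominion base $\Delta$. Since $f \in \imppp(\K)$, fix a pp formula $\varphi(x_1, \dots, x_n, y)$ that defines $f$ and consider the ``generic'' algebra $\boldsymbol{T}_\K(\varphi)$ given by \cref{Def: TK}, together with its subalgebra
\[
\A = \mathsf{Sg}^{\boldsymbol{T}_\K(\varphi)}\bigl(x_1/\theta(\varphi), \dots, x_n/\theta(\varphi)\bigr).
\]
By \cref{Prop : finitely presented}\eqref{item : finitely presentable : 1} we have $\boldsymbol{T}_\K(\varphi) \in \K$, the tuple $\langle x_1/\theta(\varphi), \dots, x_n/\theta(\varphi) \rangle$ lies in $\mathsf{dom}(f^{\boldsymbol{T}_\K(\varphi)})$, and $f^{\boldsymbol{T}_\K(\varphi)}(x_1/\theta(\varphi), \dots, x_n/\theta(\varphi)) = y/\theta(\varphi)$. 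Together with \cref{Thm : dominions : pp formulas}, this yields $y/\theta(\varphi) \in \mathsf{d}_\K(\A, \boldsymbol{T}_\K(\varphi))$.

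Next I would apply the dominion base hypothesis: there exist an $m$-ary $g \in \Delta$ and $\langle a_1, \dots, a_m \rangle \in \mathsf{dom}(g^{\boldsymbol{T}_\K(\varphi)}) \cap A^m$ with $g^{\boldsymbol{T}_\K(\varphi)}(a_1, \dots, a_m) = y/\theta(\varphi)$. Since $\A$ is generated by $x_1/\theta(\varphi), \dots, x_n/\theta(\varphi)$, for each $i \leq m$ there is a term $t_i(x_1, \dots, x_n)$ of $\K$ such that $a_i = t_i^{\boldsymbol{T}_\K(\varphi)}(x_1/\theta(\varphi), \dots, x_n/\theta(\varphi))$. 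I claim that the composition $h := g(t_1^\K, \dots, t_m^\K)$ interpolates $f$ in $\K$.

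To check this, consider $\C \in \K$ and $\langle c_1, \dots, c_n \rangle \in \mathsf{dom}(f^\C)$. By \cref{Prop : finitely presented}\eqref{item : finitely presentable : 2} there exists a homomorphism $k \colon \boldsymbol{T}_\K(\varphi) \to \C$ with $k(x_i/\theta(\varphi)) = c_i$ for every $i \leq n$ and $k(y/\theta(\varphi)) = f^\C(c_1, \dots, c_n)$. The choice of the terms $t_i$ then gives $k(a_i) = t_i^\C(c_1, \dots, c_n)$. Since $g \in \imp(\K)$ is preserved by $k$ and $\langle a_1, \dots, a_m \rangle \in \mathsf{dom}(g^{\boldsymbol{T}_\K(\varphi)})$, it follows that $\langle t_1^\C(c_1, \dots, c_n), \dots, t_m^\C(c_1, \dots, c_n) \rangle \in \mathsf{dom}(g^\C)$ and
\[
g^\C\bigl(t_1^\C(\vec{c}\,), \dots, t_m^\C(\vec{c}\,)\bigr) = k\bigl(g^{\boldsymbol{T}_\K(\varphi)}(a_1, \dots, a_m)\bigr) = k(y/\theta(\varphi)) = f^\C(c_1, \dots, c_n),
\]
where $\vec{c}\, = \langle c_1, \dots, c_n \rangle$. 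By the definition of composition of implicit operations, this exactly says that $\langle c_1, \dots, c_n \rangle \in \mathsf{dom}(h^\C)$ and $h^\C(\vec{c}\,) = f^\C(\vec{c}\,)$, so $h = g(t_1^\K, \dots, t_m^\K)$ interpolates $f$, as desired.

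I do not expect a real obstacle here: the argument is an adaptation of the ``freeness plus extension'' argument from \cref{Thm : existential elimination}, and the only nontrivial move is converting the equality $g^{\boldsymbol{T}_\K(\varphi)}(a_1, \dots, a_m) = y/\theta(\varphi)$ into an interpolation statement in arbitrary $\C \in \K$. This is handled by the universal mapping property recorded in \cref{Prop : finitely presented}\eqref{item : finitely presentable : 2} combined with preservation of $g$ under homomorphisms, as above.
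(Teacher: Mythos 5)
Your proof is correct and follows essentially the same route as the paper's: build a generic model of the defining pp formula, observe that $y$ lies in the dominion of the subalgebra generated by the input variables, apply the dominion base, and transport the resulting identity to an arbitrary $\C \in \K$ along the canonical homomorphism, using that $g$ is preserved by homomorphisms. The only (harmless) difference is packaging: you reuse $\boldsymbol{T}_\K(\varphi)$ and \cref{Prop : finitely presented}, whereas the paper redoes the same construction by hand with the free algebra $\boldsymbol{F}_\K(X)$ and \cref{Prop : homomorphisms : smaller congruences}; the two constructions yield the same algebra and the same universal mapping property.
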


\begin{proof}
Let $\varphi$ be a pp formula defining $f$. Then 
\[
\varphi (x_1, \dots, x_n,y) = \exists z_1, \dots, z_k \psi(z_1, \dots, z_k, x_1, \dots, x_n, y),
\]
where $\psi$ is a conjunction of equations.\ 
Let $X=\{z_1, \dots, z_k, x_1, \dots, x_n, y\}$. Since $\K$ is a quasivariety, the free algebra $\boldsymbol{F}_\K(X)$ belongs to $\K$ (see \cref{thm:quasivariety free algebra}).
We denote by $\theta$ the $\K$-congruence of $\boldsymbol{F}_\K(X)$ generated by the pairs $\langle s_1, s_2 \rangle$, where
$s_1 \thickapprox s_2$ is an equation in $\psi$. Consider $\B = \boldsymbol{F}_\K(X) / \theta$ and $\A = \mathsf{Sg}^\B( x_1 / \theta, \dots, x_n / \theta)$.
The definition of $\theta$ implies that 
\[
\B \vDash \psi(z_1/\theta, \dots, z_k/\theta, x_1/\theta, \dots, x_n/\theta, y/\theta).
\]
Therefore, $\langle x_1 / \theta, \dots, x_n / \theta \rangle \in \mathsf{dom}(f^\B)$ and
$f^\B(x_1/\theta, \dots, x_n/\theta)=y/\theta$ because $f$ is defined by $\varphi$.
As $f \in \imppp(\K)$ by assumption and $x_1/\theta, \dots, x_n/\theta \in A$ by the definition of $\A$,
it follows from \cref{Thm : dominions : pp formulas} that $y / \theta \in \mathsf{d}_{\mathsf{K}}(\A, \B)$.
Since $\Delta$ is a dominion base for $\mathsf{K}$ and $\A$ is generated by $x_1 / \theta, \dots, x_n / \theta$, there exist an $m$-ary $g \in \Delta$ and terms $t_1(x_1, \dots, x_n), \dots, t_{m}(x_1, \dots, x_n)$ of $\K$ such that 
\begin{equation}\label{Eq : the formula : precedent : pre}
\langle t_1^\B(x_1 / \theta, \dots, x_n/ \theta), \dots, t_{m}^\B(x_1/ \theta, \dots, x_n/ \theta) \rangle \in \mathsf{dom}(g^\B)    
\end{equation}
 and 
\begin{equation}\label{Eq : the formula}
g^\B(t_1^\B(x_1 / \theta, \dots, x_n/ \theta), \dots, t_{m}^\B(x_1/ \theta, \dots, x_n/ \theta))= y/ \theta.
\end{equation}

We will prove that $g(t_1^\K, \dots, t_m^\K)$ interpolates $f$ in $\K$. To this end, consider $\C \in \mathsf{K}$ and $c_1, \dots, c_n, d \in C$ such that $\langle c_1, \dots, c_n \rangle \in \dom(f^\C)$ and $f^\C(c_1,\dots,c_n)=d$.
We need to show that 
\[
\langle t_1^\C(c_1, \dots, c_n), \dots, t_{m}^\C(c_1, \dots, c_n) \rangle \in \mathsf{dom}(g^\C) \, \, \text{ and }\, \, 
g^\C(t_1^\C(c_1, \dots, c_n), \dots, t_{m}^\C(c_1, \dots, c_n))=d.
\]
Since $f$ is defined by $\varphi$, from $f^\C(c_1, \dots, c_n)=d$ it follows that $\C \vDash \psi(e_1, \dots, e_k, c_1, \dots,c_n, d)$ for some $e_1, \dots, e_k \in C$. Therefore, 
\begin{equation}\label{Eq : the formula 2}
\C \vDash s_1(e_1, \dots, e_k, c_1, \dots,c_n, d) \thickapprox s_2(e_1, \dots, e_k, c_1, \dots,c_n, d)
\end{equation}
for every equation $s_1 \thickapprox s_2$ in $\psi$. As  $X=\{z_1, \dots, z_k, x_1, \dots, x_n, y\}$ is a set of free generators for $\boldsymbol{F}_\K(X)$, there exists a homomorphism $h \colon \boldsymbol{F}_\K(X) \to \C$ such that $h(z_i)=e_i$ and $h(x_i)=c_i$ for each $i$, and $h(y)=d$. The definition of $\theta$ and~\eqref{Eq : the formula 2} yield $\theta \subseteq \Ker(h)$. 
Since $\B= \boldsymbol{F}_\K(X)/\theta$, \cref{Prop : homomorphisms : smaller congruences} implies that the homomorphism $h$ induces a homomorphism $k \colon \B \to \C$ such that $k(z_i/\theta)=e_i$ and $k(x_i/\theta)=c_i$ for each $i$, and $k(y/\theta)=d$.
Since $k$ is a homomorphism, we obtain
\[
t_i^\C(c_1, \dots, c_n) = t_i^\B(k(x_1 / \theta), \dots, k(x_n/ \theta)) = k(t_i^\B(x_1 / \theta, \dots, x_n/ \theta))
\]
for each $i \leq m$. So, 
\eqref{Eq : the formula : precedent : pre} implies  $\langle t_1^\C(c_1, \dots, c_n), \dots, t_{m}^\C(c_1, \dots, c_n) \rangle \in \dom(g^\C)$ because $g \in \imppp(\K)$.
We also have
\begin{align*}
g^\C(t_1^\C(c_1, \dots, c_n)&, \dots, t_{m}^\C(c_1, \dots, c_n)) \\
&= g^\C(t_1^\C(k(x_1/\theta), \dots, k(x_n/\theta)), \dots, t_{m}^\B(k(x_1/\theta), \dots, k(x_n/\theta)))\\
& = k(g^\B(t_1^\B(x_1 / \theta, \dots, x_n/ \theta), \dots, t_{m}^\B(x_1/ \theta, \dots, x_n/ \theta)))\\
& = k(y/ \theta)\\
& =d,
\end{align*}
where the first equality holds because $k(x_i/\theta)=c_i$ for each $i$, the second follows from 
the assumptions that  $k$ is a homomorphism and $g \in \imppp(\K)$, the third  from (\ref{Eq : the formula}), and the last from $k(y/\theta)=d$. Hence, we conclude that $g^\C(t_1^\C(c_1, \dots, c_n), \dots, t_{m}^\C(c_1, \dots, c_n))=d$, as desired.
\end{proof}

We now apply \cref{Thm : interpolation dominion base terms} and the fact that Isbell's operations form a dominion base to deduce a useful property of implicit operations of the variety of (commutative) monoids.

\begin{Proposition}\label{Prop: property unary extpp in Mon}
Let $\K$ be either the variety of monoids or the variety of commutative monoids. For every unary $f \in \imppp(\K)$ there exist $l,r \in \mathbb{N}$ such that $a^l f^\A(a) = a^r$ for all $\A \in \K$ and $a \in \dom(f^\A)$.
\end{Proposition}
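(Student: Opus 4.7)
The proof will combine Isbell's Zigzag Theorem (\cref{Thm : Isbell Theorem}) with the interpolation result \cref{Thm : interpolation dominion base terms}. Isbell's theorem says precisely that the family $\Delta = \{ \varphi_n^\K : n \in \mathbb{N}\}$ of Isbell's operations forms a dominion base for $\K$. Applying \cref{Thm : interpolation dominion base terms} to the unary $f \in \imppp(\K)$, I will obtain some $n \in \mathbb{N}$ and unary terms $t_1(x), \dots, t_{2n+1}(x)$ of $\K$ such that the composition $\varphi_n^\K(t_1^\K, \dots, t_{2n+1}^\K)$ interpolates $f$ in $\K$. Since every unary term in the language of monoids is a power of the variable, I may write $t_i(x) = x^{k_i}$ with $k_i \in \mathbb{N}$.

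The case $n = 0$ is immediate: $\varphi_0$ defines the identity, so $f^\A(a) = a^{k_1}$ for every $\A \in \K$ and $a \in \dom(f^\A)$, and $l = 0$, $r = k_1$ work. I then focus on $n \geq 1$. Fix $\A \in \K$ and $a \in \dom(f^\A)$, and set $b = f^\A(a)$. Because the interpolant agrees with $f$ at $a$, the definition of $\varphi_n$ produces elements $z_1, \dots, z_n, w_1, \dots, w_n \in A$ satisfying the zigzag equalities of \cref{Exa : Isbell operations} with $x_i$ replaced by $a^{k_i}$ and $y$ by $b$. Only the identities
\[
b = a^{k_1} z_1, \qquad a^{k_{2i}} z_i = a^{k_{2i+1}} z_{i+1}\ (1\leq i\leq n-1), \qquad a^{k_{2n}} z_n = a^{k_{2n+1}}
\]
will be needed.

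The heart of the argument is a short induction on $j \in \{1, \dots, n-1\}$ establishing
\[
a^{k_2 + k_4 + \cdots + k_{2j}}\, b \;=\; a^{k_1 + k_3 + \cdots + k_{2j+1}}\, z_{j+1}.
\]
Each inductive step consists of left-multiplying the preceding identity by $a^{k_{2j+2}}$ and substituting $a^{k_{2j+2}} z_{j+1} = a^{k_{2j+3}} z_{j+2}$. The only nontrivial rearrangement is moving factors that are \emph{powers of the same element} $a$ past one another, which is legitimate in any monoid, commutative or not; this is the unique place where commutativity could a priori matter, and its irrelevance is precisely what makes the argument cover both the variety of monoids and the variety of commutative monoids uniformly. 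A final left-multiplication by $a^{k_{2n}}$ together with one use of $a^{k_{2n}} z_n = a^{k_{2n+1}}$ will then yield $a^l f^\A(a) = a^r$ with $l = \sum_{i=1}^n k_{2i}$ and $r = \sum_{i=0}^n k_{2i+1}$, both depending only on $f$. The main conceptual obstacle is convincing oneself that the zigzag witnesses $z_i, w_i$ can be used in such a calculation even though they need not commute with $a$; the answer is that they can, because one never commutes a $z_i$ or a $w_i$ with anything, only powers of $a$ among themselves.
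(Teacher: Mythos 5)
Your proposal is correct and follows essentially the same route as the paper: invoke Isbell's Zigzag Theorem to get the dominion base, apply \cref{Thm : interpolation dominion base terms} to interpolate $f$ by $g_n(x^{k_1},\dots,x^{k_{2n+1}})$, and then unwind only the $z$-side zigzag identities by an induction in which the sole commutations are between powers of $a$, yielding $l=\sum_{i=1}^n k_{2i}$ and $r=\sum_{i=0}^n k_{2i+1}$. The paper's Claim~\ref{Claim : useful a_i-conditions noncomm} is exactly your induction (with $c_{n+1}=1^\A$ absorbing your final multiplication step), so there is nothing to add.
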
 

\begin{proof}
Consider a unary $f \in \imppp(\K)$. For each $n \in \mathbb{N}$ we denote by $g_n$ the implicit operation of $\K$ defined by the $n$-th  Isbell's
formula (see \cref{Exa : Isbell operations}). As Isbell's operations form a dominion base for $\K$, \cref{Thm : interpolation dominion base terms} yields $n \in \mathbb{N}$ and unary terms $t_1, \dots, t_{2n+1}$ such that 
$g_n(t_1^\K, \dots, t_{2n+1}^\K)$
interpolates $f$ in $\K$. 
As unary terms of $\K$ are equivalent to powers of a variable, there exist   $e_1, \dots, e_{n+1} \in \mathbb{N}$ such that $\K \vDash t_i(x) \thickapprox  x^{e_i}$ for each $i \leq 2n+1$.

Recall that $g_0^\A$ is the identity function for every $\A \in \K$. Thus, when $n=0$ we have $f^\A(a)= t_1^\A(a)=a^{e_1}$ for all $\A \in \K$ and $a \in \dom(f^\A)$, and hence we can take $l=0$ and $r=e_1$. So, in the rest of the proof we will assume that $n > 0$. 
Let $l=\sum_{i=1}^n e_{2i}$ and $r = \sum_{i=0}^n e_{2i+1}$. Consider $\A \in \K$ and $a \in \dom(f^\A)$. We show that $a^l f^\A(a) = a^r$. To this end, let $a_i = a^{e_i}$ for every $i \leq 2n+1$. Then $f^\A(a)= g_n^\A(a_1, \dots, a_{2n+1})$ because 
 $g_n(t_1^\K, \dots, t_{2n+1}^\K)$ 
interpolates $f$ in $\K$ and $\K \vDash t_i(x) \thickapprox  x^{e_i}$ for each $i \leq 2n+1$. Notice that $a_1, \dots, a_{2n+1}$ pairwise commute because they are powers of $a$ in $\A$.

We will rely on the next fact, which was established under the commutativity assumption in \cite[proof of 2.7]{Isb65}.

\begin{Claim}\label{Claim : useful a_i-conditions noncomm}
We have $(\prod_{i = 1}^n a_{2i}) f^\A(a) = \prod_{i = 0}^n a_{2i+1}$.
\end{Claim}

\begin{proof}[Proof of the Claim]
As $f^\A(a)= g_n^\A(a_1, \dots, a_{2n+1})$, the definition of $g_n$ implies that there exist $c_1, \dots, c_n \in A$ satisfying
        \benroman
        \item\label{Claim : useful a_i-conditions noncomm : 1} $f^\A(a) = a_1c_1$;
        \item\label{Claim : useful a_i-conditions noncomm : 2} $a_{2i} c_i = a_{2i+1} c_{i+1}$ for every $1 \leq i \leq n-1$;
        \item\label{Claim : useful a_i-conditions noncomm : 3} $a_{2n}c_n = a_{2n+1}$.
        \eroman
Let $c_{n+1} = 1^\A$. To conclude the proof of the claim, it suffices to show that for every positive $m \leq n$,
\begin{equation}\label{eq : induction claim}
\left(\prod_{i = 1}^m a_{2i}\right) f^\A(a) = \left(\prod_{i = 0}^m a_{2i+1}\right) c_{m+1}.   
\end{equation}
This is because we set $c_{n+1}=1^\A$ and, therefore, for $m=n$ we obtain $(\prod_{i = 1}^n a_{2i}) f^\A(a) = (\prod_{i = 0}^n a_{2i+1}) c_{n+1} = \prod_{i = 0}^n a_{2i+1}$, as desired.

The proof of the above display proceeds by induction on $m \leq n$. First, we have 
\[
a_2 f^\A(a) = a_2 a_1 c_1 = a_1 a_2 c_1 = a_1 a_3 c_2,
\]
where the first equality holds by \eqref{Claim : useful a_i-conditions noncomm : 1}, the second because $a_1$ and $a_2$ commute, and the third follows from \eqref{Claim : useful a_i-conditions noncomm : 2}.\
Therefore, \eqref{eq : induction claim} holds for $m=1$. 
Suppose now that $1 < m \leq n$. We show that \eqref{eq : induction claim} holds for $m$ under the assumption that it holds for $m-1$,
which means that $(\prod_{i = 1}^{m-1} a_{2i}) f^\A(a) = (\prod_{i = 0}^{m-1} a_{2i+1}) c_{m}$.
We have
\begin{align*}
\left(\prod_{i = 1}^m a_{2i}\right) f^\A(a) & = a_{2m} \left(\prod_{i = 1}^{m-1} a_{2i}\right) f^\A(a) = a_{2m} \left(\prod_{i = 0}^{m-1} a_{2i+1}\right) c_{m} = \left(\prod_{i = 0}^{m-1} a_{2i+1}\right) a_{2m} c_{m}\\
&= \left(\prod_{i = 0}^{m-1} a_{2i+1}\right) a_{2m+1} c_{m+1} = \left(\prod_{i = 0}^{m} a_{2i+1}\right)c_{m+1},
\end{align*}
where the first and third equalities hold because $a_{2m}$ commutes with every $a_i$, the second follows from the induction hypothesis, the fourth is a consequence of \eqref{Claim : useful a_i-conditions noncomm : 2} when $i \leq n-1$ and of \eqref{Claim : useful a_i-conditions noncomm : 3} and $c_{n+1} = 1^\A$  when $m=n$, and the last is straightforward. This shows that \eqref{eq : induction claim} holds for every $m \leq n$.
\end{proof}

Since $l=\sum_{i=1}^n e_{2i}$ and $r = \sum_{i=0}^n e_{2i+1}$, we have
\[
\prod_{i = 1}^n a_{2i} = \prod_{i = 1}^n a^{e_{2i}} =a^l \quad \text{and} \quad \prod_{i = 0}^n a_{2i+1} = \prod_{i = 0}^n a^{e_{2i+1}} = a^r.
\]
Therefore, \cref{Claim : useful a_i-conditions noncomm} yields 
\[
a^l f^\A(a) = \left(\prod_{i = 1}^n a_{2i}\right) f^\A(a) = \prod_{i = 0}^n a_{2i+1} = a^r.\qedhere
\]
\end{proof}

We will also need the following technical result.

\begin{Proposition} \label{Prop : Beth comp implies interpolation with extendable}
Let $\mathsf{K}$ be a quasivariety with a Beth companion of the form $\SSS(\K[\L_\F])$. 
Then for every $f \in \imppp(\K)$ there exists $g \in \extpp(\K)$ that interpolates $f$ in $\K[\L_\F]\resLK$.
\end{Proposition}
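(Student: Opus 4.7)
The plan is to combine the ``single term interpolation'' afforded by the Beth companion hypothesis with the description of terms of $\M$ as implicit operations of $\K$ provided by \cref{Prop : equivalence in K[F]}. More precisely, since $\K$ is a quasivariety and $\M$ is a Beth companion of $\K$, I would first invoke \cref{Thm : Beth companions vs strong Beth}(\ref{item : Beth companions vs strong Beth : 5}) to upgrade the Beth property: for any $n$-ary $f \in \imppp(\K)$, there exists a single $n$-ary term $t$ of $\M$ that interpolates $f$ in $\M$. The natural candidate for $g \in \extpp(\K)$ is the implicit operation of $\K$ whose $\A\resLK$-component agrees with the term function $t^\A$ for every $\A \in \K[\L_\F]$.

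The construction of $g$ from $t$ is then exactly what \cref{Prop : equivalence in K[F]} is designed to do. If $t$ is nonconstant, part~\eqref{item : equivalence in K[F] : 2} of that proposition directly provides an $n$-ary $g \in \extpp(\K)$ with $g^{\A\resLK} = t^\A$ for each $\A \in \K[\L_\F]$. If instead $t$ is a constant $c$, then as observed at the start of the proof of \cref{Prop : equivalence in K[F]}\eqref{item : equivalence in K[F] : 1}, $c$ must already belong to $\L_\K$, because $\F$-expansions only add function symbols of positive arity; hence the $\L_\K$-equation $\varphi(x_1, \dots, x_n, y) = (y \thickapprox c)$ defines a total $n$-ary $g \in \exteq(\K) \subseteq \extpp(\K)$ with $g^{\A\resLK}(a_1, \dots, a_n) = c^\A$ for all inputs, independently of $a_1, \dots, a_n$.

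In either case, checking that $g$ interpolates $f$ in $\K[\L_\F]\resLK$ is immediate: given $\A \in \K[\L_\F]$ and $\langle a_1, \dots, a_n \rangle \in \dom(f^{\A\resLK})$, the totality of $t^\A$ guarantees $\langle a_1, \dots, a_n \rangle \in \dom(g^{\A\resLK})$, and the interpolation property of $t$ in $\M$ yields
\[
f^{\A\resLK}(a_1, \dots, a_n) = t^\A(a_1, \dots, a_n) = g^{\A\resLK}(a_1, \dots, a_n).
\]
The only point requiring care is the arity bookkeeping in the constant case, which is handled cleanly by writing the defining equation with exactly $n$ free variables $x_1, \dots, x_n$; I do not expect any genuine obstacle, since all the heavy lifting is absorbed into the two quoted results.
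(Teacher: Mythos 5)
Your proposal is correct and follows essentially the same route as the paper's proof: invoke \cref{Thm : Beth companions vs strong Beth} to get a single interpolating term $t$ of $\M$, convert $t$ into a member of $\extpp(\K)$ via \cref{Prop : equivalence in K[F]}, and verify interpolation pointwise using $g^{\A\resLK}=t^{\A[\L_\F]}$. The only divergence is that the paper dismisses the case of a constant term outright (asserting $t$ cannot be a constant since $f$ has positive arity), whereas you handle it explicitly with the $n$-ary operation defined by $y \thickapprox c$; your treatment is, if anything, the more careful of the two.
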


\begin{proof}
Since $f \in \imppp(\K)$, by \cref{Thm : Beth companions vs strong Beth} there exists a term $t$ of $\L_\F$ that interpolates $f$ in $\SSS(\K[\L_\F])$.
As $f$ has positive arity and $t$ interpolates $f$, it follows that $t$ is not a constant. 
So, \cref{Prop : equivalence in K[F]}\eqref{item : equivalence in K[F] : 2} 
yields $g \in \mathsf{ext}_{\textsc{pp}}(\mathsf{K})$ 
such that 
\begin{equation*}
t^\B=g^{\B\resLK} \textrm{ for every } \B \in \K[\L_\F].
\end{equation*}

We will show that $g$ interpolates $f$ in $\K[\L_\F]\resLK$.
Let $\A \in \K[\L_\F]\resLK$ and $\langle a_1, \dots, a_n \rangle \in \mathsf{dom}(f^\A)$. As $\A[\L_\F] \in \K[\L_\F]$ and $\A = \A[\L_\F]\resLK$, the above display implies that $t^{\A[\L_\F]}=g^{\A}$. It follows that $g^{\A}$ is total, and hence $\langle a_1, \dots, a_n \rangle \in \mathsf{dom}(g^{\A})$. Since $t$ interpolates $f$ in $\SSS(\K[\L_\F])$, we obtain
\[
f^{\A}(a_1, \dots, a_n) = f^{\A[\L_\mathcal{F}]\resLK}(a_1, \dots, a_n) = t^{\A[\L_\F]}(a_1, \dots, a_n) = g^\A(a_1, \dots, a_n).
\]
We conclude that $g$ interpolates $f$ in $\K[\L_\F]\resLK$.
\end{proof}

We are now ready to prove \cref{Thm : M lacks Beth comp}.

\begin{proof}
Let $\K$ be either the variety of monoids or the variety of commutative monoids and assume, with a view to contradiction, that $\K$ has a Beth companion. 
\cref{Cor : Beth companion : ext(K)} implies that $\K$ has also a Beth companion of the form $\SSS(\K[\L_\F])$. Let $f$ be the   operation of taking inverses in monoids. Then $f \in \imppp(\K)$ by \cref{Thm : inverses monoid : implicit operation}.
\cref{Prop : Beth comp implies interpolation with extendable} yields a unary $g \in \extpp(\K)$ that interpolates $f$ in $\K[\L_\F]\res_{\L_\K}$. By \cref{Prop: property unary extpp in Mon} there exist $l,r \in \mathbb{N}$ such that $a^l g^\A(a) = a^r$ for all $\A \in \K$ and $a \in \dom(g^\A)$.
Since $\K$ is a variety and $\SSS(\K[\L_\F])$ a pp expansion of $\K$, \cref{Prop : pp expansions and subreducts} implies that $\K = \SSS((\SSS(\K[\L_\F]))\res_{\L_\K})$.\ Hence, $\K \subseteq \SSS(\K[\L_\F]\res_{\L_\K})$. Therefore, there exists an extension $\B$ of the multiplicative monoid $\mathbb{Q}$ of the rationals such that $\B \in \K[\L_\F]\res_{\L_\K}$. Then $g^\B$ is total because $g \in \extpp(\K) = \F$.
Since $g$ interpolates $f$ in $\K[\L_\F]\res_{\L_\K}$, we obtain $g^\B(2)=f^\B(2)=f^\mathbb{Q}(2)=2^{-1}$. Therefore, $2^{l-1}=2^l g^\B(2)=2^r$. So, $2^{l-1}=2^r$ holds in $\mathbb{Q}$. In turn, this implies $l=r+1$. Consequently, 
\begin{equation}\label{Eq : r+1 is r multiplied by g : monoid}
a^{r+1} g^\A(a) = a^r\text{ for all }\A \in \K\text{ and }a \in \dom(g^\A).
\end{equation}

\begin{Claim} \label{Prop : special A noncomm}
There exist $\C \in \K$ and $c \in C$ such that $c^r \neq c^{r+1}$ and $c^{r+1}=c^{r+2}$.
\end{Claim}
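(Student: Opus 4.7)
The plan is to construct $\C$ explicitly as a finite ``truncated cyclic'' commutative monoid of $r+2$ elements. Concretely, I would take $C = \{e_0, e_1, \ldots, e_{r+1}\}$ with identity $e_0$ and multiplication
\[
e_i \cdot e_j = e_{\min(i+j,\, r+1)}.
\]
Commutativity is immediate from the commutativity of $+$ on $\mathbb{N}$, and associativity reduces to the identity $\min(\min(a+b,\, m)+k,\, m) = \min(a+b+k,\, m)$ for $a,b,k \in \mathbb{N}$ and $m = r+1$, verified by splitting into the cases $a+b \leq r+1$ and $a+b > r+1$. Hence $\C = \langle C; \cdot, e_0 \rangle$ is a commutative monoid, so it belongs to $\K$ irrespective of whether $\K$ is the variety of monoids or that of commutative monoids.

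Setting $c = e_1$, an easy induction on $i$ yields $c^i = e_{\min(i,\, r+1)}$ for every $i \in \mathbb{N}$. Consequently,
\[
c^r = e_r,\qquad c^{r+1} = e_{r+1},\qquad c^{r+2} = e_{\min(r+2,\, r+1)} = e_{r+1}.
\]
Since $e_0, e_1, \ldots, e_{r+1}$ are pairwise distinct by construction, we obtain $c^r = e_r \neq e_{r+1} = c^{r+1}$ while $c^{r+1} = e_{r+1} = c^{r+2}$, which witnesses the claim.

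I do not foresee any real obstacle: the construction is completely explicit, and the only verification required is the routine associativity check above. All the substantive work has already been done in the preceding paragraph, where $l = r+1$ was extracted from \eqref{Eq : r+1 is r multiplied by g : monoid} using $\mathbb{Q}$; the claim simply manufactures the combinatorial witness needed in the next step. After the claim, the intended contradiction will proceed by using $g \in \extpp(\K)$ to embed $\C$ into some $\D \in \K$ with $c \in \dom(g^\D)$, applying \eqref{Eq : r+1 is r multiplied by g : monoid} to get $c^{r+1} g^\D(c) = c^r$, and then multiplying by $c$ and using $c^{r+2} = c^{r+1}$ to force $c^r = c^{r+1}$ in $\D$, contradicting the construction of $\C$.
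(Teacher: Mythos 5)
Your construction is exactly the one in the paper: the monoid with universe $\{c^0,\dots,c^{r+1}\}$ and product $c^i\cdot c^j = c^{\min(i+j,\,r+1)}$, with witness $c=c^1$. The proof is correct and takes essentially the same approach, differing only in notation and in spelling out the routine associativity check.
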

\begin{proof}[Proof of the Claim]
Let $C$ be the set of symbols $\{c^i : 0 \leq i \leq r+1\}$
    and 
    define the following binary operation on $C$ 
    \[
    c^i \cdot c^j 
    = \begin{cases}
        c^{i+j} &\textrm{ if }  i + j < r+1;\\
        c^{r+1} &\textrm{ otherwise.}
    \end{cases}
    \]
    It is straightforward to verify that this defines a commutative monoid $\C$ with neutral element $c^0$. So, $\C \in \K$.
    Let $c=c^1$. Then $c^i$ is the $i$-th power of $c^1$ for every $i \leq r+1$.
    From the definition of $\C$ it then follows immediately that $c^r \neq c^{r+1}$ and $c^{r+1}=c^{r+2}$. 
\end{proof}

Let $\C$ and $c \in C$ be as in \cref{Prop : special A noncomm}. Since $g \in \extpp(\K)$, there exists $\D \in \K$ such that $\C \leq \D$ and $g^\D$ is total. From $c \in \dom(g^\D)$ and (\ref{Eq : r+1 is r multiplied by g : monoid}) it follows $c^{r+1} g^{\D}(c) = c^r$. Thus, using $c^{r+1}=c^{r+2}$, we deduce
\[
c^r = c^{r+1} g^{\D}(c) = c^{r+2} g^{\D}(c) = c \? c^{r+1} g^{\D}(c) = c \? c^{r} = c^{r+1},
\]
a contradiction with $c^r \neq c^{r+1}$ in $\C$.
\end{proof}

\begin{Remark}\label{Rem : semigroups and Beth comp}
    The proof of \cref{Thm : M lacks Beth comp} can easily be adapted to show that both the variety of semigroups and the variety of commutative semigroups lack a Beth companion. 
    To see this, recall that Isbell's formulas (see \cref{Exa : Isbell operations}) also form a dominion base for these varieties  (see \cite[Thm.\ 2.3]{Isb65} for semigroups and \cite[Thm.\ 1.1]{HoIsEpiII} for commutative semigroups). 
    The changes required for adapting the proof of \cref{Thm : M lacks Beth comp} are limited to the following. First, the role of the implicit operation $f$ defined by $\varphi = (x \cdot y \thickapprox 1) \sqcap (y \cdot x \thickapprox 1)$ should be taken over by the implicit operation $g$ defined by \[\psi = (x^2 \cdot y \thickapprox x) \sqcap (y^2 \cdot x \thickapprox y) \sqcap (x \cdot y \thickapprox y \cdot x),\] 
    which also defines inverses when they exist.
    In particular, $g$ coincides with $f$ on $\mathbb{Q}$. 
    Moreover, the monoids $\mathbb{Q}$ and $\C$ appearing in the proof of \cref{Thm : M lacks Beth comp} should be replaced by their semigroup reducts. Lastly, the proof of Claim \ref{Claim : useful a_i-conditions noncomm} 
   uses  
    the fact that $\A$ has a neutral element, which need not be the case if $\A$ is an arbitrary semigroup. This problem can be overcome easily by adding a neutral element to $\A$ in the proof of  that claim.

    On the other hand, the implicit operation $g$ becomes extendable when restricted to the quasivariety $\mathsf{CCS}$ of cancellative commutative semigroups. 
    In fact, an argument similar to the one detailed in the proof of \cref{Thm : Beth companion : examples}(\ref{item : Beth companion : example : 1}) shows that the pp expansion of $\mathsf{CCS}$ induced by $g$ is the Beth companion of $\mathsf{CCS}$ and, moreover,  is term equivalent to the variety of Abelian groups (inversion is given by $g$ and the neutral element is rendered as the unary operation $x \cdot g(x)$). 
    \qed
\end{Remark}

The second main result of the section gives a sufficient condition for a quasivariety of Heyting algebras to lack a Beth companion. 
In order to state it, 
we first need to recall some 
definitions. The first is the notion of a maximal relatively subdirectly irreducible algebra in a quasivariety (see, e.g., \cite[p.~81]{SurvKissal}).

\begin{Definition}
Let $\K$ be a quasivariety. We say that $\A \in \K_\rsi$ is \emph{maximal} when $\A$ cannot be properly embedded into any $\B \in \K_\rsi$. We denote by $\Mcal(\K_\rsi)$ the class of maximal members of $\K_\rsi$.
\end{Definition}

We will also make use of the ordered sum, an operation that has been extensively used to study implicit definability and surjectivity of epimorphisms in varieties of Heyting algebras (see, e.g., \cite[p.~87]{Mak00} and \cite[p.~9]{MVESHA}).
Intuitively, the  ordered sum of two Heyting algebras $\A$ and $\B$ is the result of pasting $\A$ below $\B$ and gluing the top element $1^\A$ of $\A$ to the bottom element $0^\B$ of $\B$. 

\begin{Definition}
Let $\A$ and $\B$ be a pair of Heyting algebras. Their  \emph{ordered sum}  (also known as  \emph{vertical sum} or just  \emph{sum}) $\A + \B$ is the unique Heyting algebra whose 
universe is the disjoint union of $A - \{ 1^\A\}$ and $B$ and whose lattice order is given by 
\begin{align*}
    c \leq d \iff &\text{ either }(c,d \in A - \{ 1^\A\}\text{ and }c \leq^\A d)\\
& \text{ or }(c,d \in B\text{ and }c \leq^\B d)\\
&\text{ or }(c \in A - \{ 1^\A\}\text{ and }d \in B), 
\end{align*}
where $\leq^\A$ and $\leq^\B$ denote the lattice orders of $\A$ and $\B$, respectively.
\end{Definition}

In the following, for each $n \in \mathbb{Z}^+$ we will denote by $\C_n$ the $n$-element linearly ordered Heyting algebra. 
We are now ready to state the second main result of the section.

\begin{Theorem}\label{Thm : quasivariety HA and C5}
Let $\K$ be a relatively congruence distributive quasivariety of Heyting algebras. If there exists a Heyting algebra $\A$ such that $\A + \C_5 \in \Mcal(\K_\rsi)$, then $\K$ lacks a Beth companion.
\end{Theorem}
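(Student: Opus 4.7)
The plan is to argue by contradiction. Suppose $\K$ has a Beth companion; by \cref{Cor : Beth companion : ext(K)} we may take it to be $\M = \SSS(\K[\L_\F])$ with $\F = \extpp(\K)$ and $\L_\F = \L_\K \cup \{g_f : f \in \F\}$. Write $\B := \A + \C_5$ and label the top five elements of the ordered sum as $1^\A = e_0 < e_1 < e_2 < e_3 < e_4 = 1^\B$. The idea is to isolate the middle element $e_2$ as an element in the dominion of a specific subalgebra, and to show that this "implicit definability'' of $e_2$ cannot be corrected by passing to $\M$.

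First, I will use that $\A + \C_5 \in \Mcal(\K_\rsi)$ to lift $\B$ into $\M$. For any $\C \in \K$ with $\B \leq \C$, \cref{Prop : extendability in quasivarieties : si trick} supplies a $\D \in \K_\rsi$ with $\B \leq \D$ and a surjective homomorphism $\C \to \D$, and maximality forces $\D = \B$. Hence $\B$ is an absolute retract in $\K$, so by \cref{Prop : injective implies abs closed}\eqref{Prop : injective implies abs closed: 2} it is absolutely closed, and by \cref{Thm : acl and Beth companion} there exists $\B^\# \in \M$ with $\B^\# \resLK = \B$. Next, I will compute $\mathbf{E} := \mathsf{Sg}^\B(A \cup \{e_1, e_3\})$. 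Unwinding the Heyting operations on the ordered sum shows that
\[
E = A \cup \{e_1, e_3, 1^\B\},
\]
so in particular $e_2 \notin E$.

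The main technical step is to show $e_2 \in \d_\K(\mathbf{E}, \B)$. Given $h_1, h_2 \colon \B \to \D$ in $\K$ agreeing on $E$, I split cases on $\ker(h_i) \in \mathsf{Con}_\K(\B)$. Since $\B \in \K_\rsi$, each kernel is either the identity or contains the monolith, which is the principal $\K$-congruence collapsing $e_3$ with $1^\B$; when a kernel contains the monolith, $h_i(e_3) = h_i(1^\B)$, and the equalities on $E$ together with the Heyting identities $e_2 \to e_1 = e_1$ and $e_3 \to e_2 = e_2$ force $h_1(e_2) = h_2(e_2)$. When both kernels are trivial, I consider the subalgebra $h_1[\B] \vee h_2[\B] \leq \D$: if $h_1(e_2) \neq h_2(e_2)$, this subalgebra is non-RSI but admits, via \cref{Thm : CD vs product congruences} and the relative congruence distributivity of $\K$, a subdirect factor in $\K_\rsi$ that properly extends $\B$, contradicting the maximality of $\B$ in $\K_\rsi$. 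Concretely the obstruction is that a two-embedding picture of $\B$ into $\D$ "separating $e_2$'' encodes a copy of $\A + \C_6$ inside some $\K$-RSI algebra, which maximality forbids.

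Finally I deduce a contradiction. Let $\mathbf{E}^\# := \mathsf{Sg}^{\B^\#}(A \cup \{e_1, e_3\}) \leq \B^\#$ in $\L_\F$. Since $\M$ has the strong epimorphism surjectivity property (\cref{Thm : Beth companions vs strong Beth}), \cref{Prop : doms in K vs. M}\eqref{item : check SES of M in K} gives $\d_\K(\mathbf{E}^\# \resLK, \B) = E^\#$. Combined with Step 3 and monotonicity of dominion, $e_2 \in E^\#$. So $e_2$ is obtained by an $\L_\F$-term in the generators of $\mathbf{E}^\#$, and by \cref{Prop : equivalence in K[F]}\eqref{item : equivalence in K[F] : 3} and \cref{Cor : closure under composition for ext} this yields an extendable pp-implicit operation $h \in \extpp(\K)$ with $h^\B(\vec{a}) = e_2$ for some $\vec{a} \in E$. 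The main obstacle, and the crux of the proof, is to rule this out: by extendability, $h$ remains defined when we extend $\B$ into a product like $\B \times \B$ or a quotient thereof in which the pair of embeddings of Step 3 becomes diagonal vs.\ non-diagonal on $e_2$, producing two distinct values of $h$ on a single input and contradicting the functionality of $h$ in $\K$. Assembling the final contradiction therefore reduces to leveraging both the maximality of $\B$ in $\K_\rsi$ (no longer chains above $\A$) and relative congruence distributivity (to control subdirect decompositions in the two-embedding argument).
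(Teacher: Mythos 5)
Your proof has a genuine gap at its central step: the claim that $e_2 \in \d_\K(\mathbf{E}, \B)$ for $\mathbf{E} = \mathsf{Sg}^\B(A \cup \{e_1,e_3\})$ is false in general. Take the simplest instance of the hypotheses: $\A$ trivial, so $\B = \C_5$ with elements $0 < e_1 < e_2 < e_3 < 1$, and $\K = \VVV(\C_5)$, which is relatively congruence distributive and has $\C_5 \in \Mcal(\K_\si)$. Here $E = \{0, e_1, e_3, 1\}$. Now let $\C_4$ have elements $0 < a < b < 1$ and consider the two homomorphisms $h_1, h_2 \colon \C_5 \to \C_4$ where $h_1$ is the quotient by the congruence of the filter $\{e_3,1\}$ (so $e_1 \mapsto a$, $e_2 \mapsto b$, $e_3, 1 \mapsto 1$) and $h_2$ is the quotient by the congruence of the filter $\{e_2,e_3,1\}$ followed by the embedding of $\C_3$ onto the subalgebra $\{0,a,1\}$ of $\C_4$ (so $e_1 \mapsto a$ and $e_2, e_3, 1 \mapsto 1$). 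Both are homomorphisms into $\C_4 \in \K$, they agree on $E$, yet $h_1(e_2) = b \neq 1 = h_2(e_2)$. Hence $e_2 \notin \d_\K(\mathbf{E}, \B)$. Your case analysis misses exactly this situation: when both kernels contain the monolith, the identities $e_2 \to e_1 = e_1$ and $e_3 \to e_2 = e_2$ only yield $h_i(e_2) \to h_i(e_1) = h_i(e_1)$ and the vacuous $1 \to h_i(e_2) = h_i(e_2)$, which do not pin down $h_i(e_2)$. Since the whole contradiction is supposed to flow from this dominion computation, the argument collapses. (The concluding step, where two embeddings are supposed to force an extendable implicit operation to take two values on one input, is also left too vague to assess, but the earlier failure is already fatal.)

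The correct mechanism, as in the paper, is that the middle level of $\C_5$ becomes implicitly definable only after passing to the square $\B \times \B$: one takes the subalgebra $\D \leq \B \times \B$ whose universe is the diagonal with the single pair $\langle c_2, c_3\rangle$ replacing $\langle c_2,c_2\rangle$ and $\langle c_3,c_3\rangle$, checks that $\D$ is a homomorphic image of $\B$, and shows $\langle c_5, c_4\rangle \in \d_\K(\D, \B\times\B) - D$ by using that every homomorphism from $\B \times \B$ into a member of $\K_\rfsi$ factors through a projection (this is where relative congruence distributivity enters) and that injective such factors must be isomorphisms (this is where maximality enters). The contradiction then comes from the fact that both $\D$ and $\B \times \B$ lie in $\HHH\PPP(\Mcal(\K_\rsi)) \cap \K$, a class on which every extendable pp operation is total, so a Beth companion would force all dominions there to be trivial. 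If you want to salvage your outline, you would need to replace the pair $(\mathbf{E}, \B)$ by a pair of algebras built from powers of $\B$ in which the relevant element genuinely lies in the dominion, and you would need a principled reason (such as totality of extendable operations on $\HHH\PPP(\Mcal(\K_\rsi)) \cap \K$) why the Beth companion cannot repair that dominion.
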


To give a better understanding of the applicability of \cref{Thm : quasivariety HA and C5}, we rely on the next characterization of relatively congruence distributive quasivarieties of Heyting algebras, where $\mathsf{HA}$ stands for the variety of Heyting algebras.

\begin{Theorem}
    A quasivariety $\mathsf{K}$ of Heyting algebras is relatively congruence distributive if and only if $\mathsf{K} = \QQQ(\mathsf{M})$ for some universal class $\M$ such that $\mathsf{M} \subseteq \mathsf{HA}_\fsi$.
\end{Theorem}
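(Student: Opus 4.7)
The approach rests on the classical Czelakowski--Dziobiak characterization of relative congruence distributivity (see~\cite{CD90}): a quasivariety $\K$ is relatively congruence distributive if and only if $\K_\rfsi$ is closed under ultraproducts. Two preliminary observations will do most of the work. First, $\HA_\fsi$ is itself a universal class, axiomatized over $\HA$ by a universal disjunctive formula $\forall x,y\,\bigl((x \vee y \not\thickapprox 1) \sqcup x \thickapprox 1 \sqcup y \thickapprox 1\bigr)$ encoding join-irreducibility of the top, together with the nontriviality condition $0 \not\thickapprox 1$. Second, for every subquasivariety $\K \subseteq \HA$ we have $\K \cap \HA_\fsi \subseteq \K_\rfsi$, since $\mathsf{Con}_\K(\A)$ is a meet-subsemilattice of $\mathsf{Con}(\A)$, so meet-irreducibility of $\mathsf{id}_A$ passes downward. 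Consequently, $\M := \K \cap \HA_\fsi$ is a natural universal subclass of $\HA_\fsi$ whose members all lie in $\K_\rfsi$.

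For direction $(\Leftarrow)$, suppose $\K = \QQQ(\M)$ with $\M \subseteq \HA_\fsi$ universal. \cref{Thm : easy Jonsson quasivarieties} yields $\K_\rfsi \subseteq \III\SSS\PPU(\M) = \M$ (using universality of $\M$). Any ultraproduct of members of $\K_\rfsi \subseteq \M$ then lies in $\PPU(\M) \cap \PPU(\K) \subseteq \M \cap \K \subseteq \K \cap \HA_\fsi \subseteq \K_\rfsi$ by the preliminary observations; so $\K_\rfsi$ is closed under $\PPU$, and $\K$ is RCD by Czelakowski--Dziobiak.

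For direction $(\Rightarrow)$, take $\M := \K \cap \HA_\fsi$. The inclusion $\QQQ(\M) \subseteq \K$ is immediate, and via the \cref{Thm : Subdirect Decomposition} together with the identity $\K = \QQQ(\K_\rfsi)$ (which itself follows from SDD), the reverse inclusion reduces to $\K_\rfsi \subseteq \M$, equivalently $\K_\rfsi \subseteq \HA_\fsi$. I would argue this last inclusion by contradiction: if $\A \in \K_\rfsi$ has $1^\A = a \vee b$ with $a, b < 1^\A$, then via the filter--congruence correspondence the HA-congruences $\theta_{\uparrow a}, \theta_{\uparrow b}$ meet to $\mathsf{id}_A$ (because $\uparrow a \cap \uparrow b = \uparrow(a \vee b) = \{1\}$). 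If both quotients $\A/\theta_{\uparrow a}, \A/\theta_{\uparrow b}$ belong to $\K$, these are nonidentity $\K$-congruences meeting to $\mathsf{id}_A$, contradicting the meet-irreducibility of $\mathsf{id}_A$ in $\mathsf{Con}_\K(\A)$.

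The hard part is the remaining case, where one of these HA-quotients fails to lie in $\K$. Here I would replace $\theta_{\uparrow a}, \theta_{\uparrow b}$ by their $\K$-relativizations $\Cg_\K^\A(1, a)$ and $\Cg_\K^\A(1, b)$, which are nonidentity $\K$-congruences but whose intersection might strictly exceed $\mathsf{id}_A$. To recover a contradiction, I would exploit the distributivity of $\mathsf{Con}_\K(\A)$ afforded by RCD: either a careful congruence-theoretic manipulation forces these relativizations to coincide with the original $\theta_{\uparrow a}, \theta_{\uparrow b}$, or the distributive law combined with intersection against suitable auxiliary $\K$-congruences produces an alternative meet-decomposition of $\mathsf{id}_A$ into two nonidentity $\K$-congruences, again contradicting RFSI of $\A$.
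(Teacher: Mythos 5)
Your overall architecture is close to the paper's and the surrounding class-operator work is correct: both directions hinge on the single equivalence ``$\K$ is relatively congruence distributive iff $\K_\rfsi \subseteq \HA_\fsi$'', and your observations that $\HA_\fsi$ is a universal class, that $\K \cap \HA_\fsi \subseteq \K_\rfsi$, that \cref{Thm : easy Jonsson quasivarieties} gives $\K_\rfsi \subseteq \UUU(\M) = \M \subseteq \HA_\fsi$ in $(\Leftarrow)$, and that $\M := \K \cap \HA_\fsi$ works in $(\Rightarrow)$ once $\K_\rfsi \subseteq \HA_\fsi$ is known (the paper uses $\UUU(\K_\rfsi)$, which is equivalent for this purpose) are all fine. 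The gap is the bridge itself. The ``classical Czelakowski--Dziobiak characterization'' you invoke --- relative congruence distributivity iff $\K_\rfsi$ is closed under ultraproducts --- is not a theorem, and the direction you need in $(\Leftarrow)$ is false in general: for $\V = \VVV(\mathbb{Z}_2)$, the variety of Abelian groups of exponent $2$, one has $\V_\fsi = \III(\mathbb{Z}_2)$, trivially closed under ultraproducts, yet $\Con(\mathbb{Z}_2 \times \mathbb{Z}_2)$ is the nondistributive lattice $M_3$. The correct tool, and the one the paper uses, is \cite[Cor.~1.4]{CD90}: for a quasivariety $\K$ such that $\VVV(\K)$ is congruence distributive, $\K$ is relatively congruence distributive if and only if $\K_\rfsi \subseteq \VVV(\K)_\fsi$. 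Since in $(\Leftarrow)$ you have already derived $\K_\rfsi \subseteq \M \subseteq \HA_\fsi$, that citation finishes the direction immediately; the detour through ultraproduct closure should simply be deleted.

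The more substantial gap is in $(\Rightarrow)$, where you need the other half of that same equivalence: relative congruence distributivity forces $\K_\rfsi \subseteq \HA_\fsi$. Your filter argument only handles the case where both quotients $\A/\theta_{\uparrow a}$ and $\A/\theta_{\uparrow b}$ already lie in $\K$, and you explicitly leave the remaining case as an ``either \dots or'' without committing to an argument that works. This is not a routine verification: passing to $\Cg_\K^\A(1,a)$ and $\Cg_\K^\A(1,b)$ can strictly enlarge the intersection, and extracting a meet-decomposition of $\mathrm{id}_A$ into two nonidentity $\K$-congruences from the distributivity of $\Con_\K(\A)$ is exactly the nontrivial content of \cite[Cor.~1.4]{CD90}. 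As written, the proposal establishes neither direction of the theorem; citing that corollary once (as the paper does) repairs both.
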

\begin{proof}
Let $\K$ be a quasivariety of Heyting algebras. Since the variety $\mathsf{HA}$ is congruence distributive  by \cref{Thm : lattice reduct -> CD}, from \cite[Cor. 1.4]{CD90} it follows that $\mathsf{K}$ is relatively congruence distributive if and only if  $\K_\rfsi \subseteq \mathsf{HA}_\fsi$. Therefore, it only remains to prove that $\K_\rfsi \subseteq \mathsf{HA}_\fsi$ if and only if there exists a universal class $\M$ such that $\mathsf{K} = \QQQ(\mathsf{M})$ and $\mathsf{M} \subseteq \mathsf{HA}_\fsi$.

We first establish the implication from left to right. To this end, assume that $\K_\rfsi \subseteq \mathsf{HA}_\fsi$.
Let $\M=\UUU(\mathsf{K}_\rfsi)$. The \cref{Thm : Subdirect Decomposition} yields $\mathsf{K} = \QQQ(\K_\rfsi)$. It follows that $\mathsf{K} = \QQQ(\M)$ because $\K_\rfsi \subseteq \M \subseteq \K$.
It is well known (see, e.g., \cite[Prop.~A.4.3]{Lau19}) that a Heyting algebra is finitely subdirectly irreducible if and only if its greatest element is join irreducible, a property that can be expressed with a universal sentence. Therefore, 
$\mathsf{HA}_\fsi$ is a universal class by \cref{Thm : classes generation}\eqref{item : universal class generation}.
Then
\[
\M = \UUU(\mathsf{K}_\rfsi) \subseteq \UUU(\mathsf{HA}_\fsi) = \mathsf{HA}_\fsi.
\]
Thus, $\mathsf{M}$ has the desired properties. 
For the converse implication, assume that there exists a universal class $\M$ such that $\mathsf{K} = \QQQ(\mathsf{M})$ and $\mathsf{M} \subseteq \mathsf{HA}_\fsi$.\ \cref{Thm : easy Jonsson quasivarieties} implies that $\K_{\rfsi}=\QQQ(\M)_{\rfsi} \subseteq \III\SSS\PPU(\M)$. Then $\K_{\rfsi} \subseteq \UUU(\M)$ by \cref{Thm : quasivariety generation}.
As $\M$ is a universal class contained in $\mathsf{HA}_\fsi$, we obtain 
\[
\mathsf{K}_\rfsi \subseteq \UUU(\mathsf{M}) = \mathsf{M} \subseteq \mathsf{HA}_\fsi,
\]
as desired.
\end{proof}

Before presenting its proof, we first establish a series of consequences of \cref{Thm : quasivariety HA and C5}. As every variety of Heyting algebras is congruence distributive (see \cref{Thm : lattice reduct -> CD}), the following is an immediate corollary of \cref{Thm : quasivariety HA and C5}.

\begin{Corollary}\label{Cor : variety HA and C5}
Let $\K$ be a variety of Heyting algebras. If there exists a Heyting algebra $\A$ with $\A + \C_5 \in \Mcal(\K_\si)$, then $\K$ lacks a Beth companion.
\end{Corollary}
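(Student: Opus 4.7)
The proof plan is essentially to verify that the hypotheses of \cref{Thm : quasivariety HA and C5} are met for $\K$ and then invoke that theorem. First I would observe that every variety is a quasivariety, so $\K$ qualifies as a candidate for the theorem. Next I would check relative congruence distributivity: since every Heyting algebra has a lattice reduct, $\K$ has a lattice reduct, so \cref{Thm : lattice reduct -> CD} yields that $\K = \VVV(\K)$ is congruence distributive. As $\K$ is a variety, \cref{Prop : variety iff Con=ConK} gives $\mathsf{Con}(\A) = \mathsf{Con}_\K(\A)$ for every $\A \in \K$, so $\mathsf{Con}_\K(\A)$ is distributive for every $\A \in \K$, i.e., $\K$ is relatively congruence distributive.

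The second point is to reconcile the hypothesis $\A + \C_5 \in \Mcal(\K_\si)$ with the hypothesis $\A + \C_5 \in \Mcal(\K_\rsi)$ used in \cref{Thm : quasivariety HA and C5}. As remarked in the paragraph following the definition of relatively (finitely) subdirectly irreducible algebras, when $\K$ is a variety the qualifier ``relative to $\K$'' may be dropped, i.e., $\K_\si = \K_\rsi$. Consequently $\Mcal(\K_\si) = \Mcal(\K_\rsi)$, and the assumption of the corollary translates directly into the assumption of \cref{Thm : quasivariety HA and C5}.

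Combining these two observations, \cref{Thm : quasivariety HA and C5} applies to $\K$ and delivers the desired conclusion that $\K$ lacks a Beth companion. There is no genuine obstacle here, since the corollary is explicitly flagged in the excerpt as ``an immediate consequence'' of the theorem; the entire content of the proof is the two routine translations above.
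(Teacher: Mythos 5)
Your proposal is correct and matches the paper's argument, which likewise derives the corollary from \cref{Thm : quasivariety HA and C5} by noting that every variety of Heyting algebras is congruence distributive (via \cref{Thm : lattice reduct -> CD}) and that for varieties the relative notions coincide with the absolute ones. The extra details you supply (e.g., invoking \cref{Prop : variety iff Con=ConK} to identify $\mathsf{Con}_\K(\A)$ with $\mathsf{Con}(\A)$) are exactly the routine translations the paper leaves implicit.
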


We will use the following consequence of \cref{Thm : quasivariety HA and C5} to show that infinitely many varieties of Heyting algebras lack a Beth companion. 

\begin{Corollary}\label{Cor : no Beth comp fin gen quasivarieties HA}
Let $\K$ be a finite set of finite Heyting algebras such that $\QQQ(\K)$ is relatively congruence distributive. Assume that there exists a Heyting algebra $\A$ such that $\A + \C_5 \in \K$ and $\A + \C_5$ cannot be properly embedded into any member of $\K$.
Then $\QQQ(\K)$ lacks a Beth companion.
\end{Corollary}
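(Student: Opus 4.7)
The plan is to verify the hypotheses of \cref{Thm : quasivariety HA and C5} applied to $\QQQ(\K)$. Relative congruence distributivity is given, so the task reduces to showing that $\A + \C_5$ is a maximal member of $\QQQ(\K)_\rsi$. Once this is established, \cref{Thm : quasivariety HA and C5} delivers the conclusion immediately.

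First, I would check that $\A + \C_5 \in \QQQ(\K)_\rsi$. Since $\K$ consists of finite Heyting algebras and $\A + \C_5 \in \K$, the algebra $\A + \C_5$ is finite. Its top element coincides with the top of $\C_5$, whose unique coatom is also the unique coatom of the ordered sum. A finite Heyting algebra with a second greatest element is subdirectly irreducible (see, e.g., \cite[Thm.~IX.4.5]{BD74}), so $\A + \C_5 \in \mathsf{HA}_\si$. Since $\QQQ(\K) \subseteq \mathsf{HA}$ and $\A + \C_5 \in \QQQ(\K)$, this immediately gives $\A + \C_5 \in \QQQ(\K)_\rsi$.

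The main work is the maximality argument. Suppose $\B \in \QQQ(\K)_\rsi$ with $\A + \C_5 \leq \B$; I would show $\B = \A + \C_5$. Since $\B$ is relatively subdirectly irreducible it is in particular relatively finitely subdirectly irreducible, so \cref{Thm : easy Jonsson quasivarieties} yields $\B \in \III\SSS\PPU(\K)$. Because $\K$ is a finite set of finite algebras, \cref{Prop : P_u trivial in finite setting} gives $\PPU(\K) \subseteq \III(\K)$, hence $\B \in \III\SSS(\K)$. Thus there exists $\C \in \K$ and an embedding $h \colon \B \to \C$, which restricts to an embedding of $\A + \C_5$ into $\C$. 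By hypothesis no proper embedding of $\A + \C_5$ into a member of $\K$ exists, so $h$ restricted to $\A + \C_5$ is surjective onto $\C$. Consequently $|A + \C_5| = |C| \geq |B| \geq |A + \C_5|$, forcing $\B = \A + \C_5$.

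Combining these two steps, $\A + \C_5$ is a maximal member of $\QQQ(\K)_\rsi$, and \cref{Thm : quasivariety HA and C5} yields that $\QQQ(\K)$ has no Beth companion. No serious obstacle is expected: the ordered-sum structure handles subdirect irreducibility automatically, and the Jónsson-style reduction \cref{Thm : easy Jonsson quasivarieties} combined with the finiteness shortcut \cref{Prop : P_u trivial in finite setting} is exactly what upgrades maximality within $\K$ to maximality within the whole class $\QQQ(\K)_\rsi$.
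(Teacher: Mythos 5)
Your proof is correct and follows essentially the same route as the paper: reduce to showing $\A + \C_5 \in \Mcal(\QQQ(\K)_\rsi)$, then combine \cref{Thm : easy Jonsson quasivarieties} with \cref{Prop : P_u trivial in finite setting} to get $\QQQ(\K)_\rsi \subseteq \III\SSS(\K)$ and invoke the hypothesis. Your explicit verification that $\A + \C_5 \in \QQQ(\K)_\rsi$ (via its second greatest element) is a detail the paper leaves implicit, and your cardinality argument is just a rephrasing of the paper's direct contradiction.
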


\begin{proof}
By \cref{Thm : quasivariety HA and C5} it is sufficient to show that $\A + \C_5 \in \Mcal(\QQQ(\K)_\rsi)$. 
Since $\K$ is a finite set of finite Heyting algebras, $\PPU(\K) \subseteq \III(\K)$ (see \cref{Prop : P_u trivial in finite setting}). Consequently,   from
\cref{Thm : easy Jonsson quasivarieties} it follows
that $\QQQ(\K)_{\rsi} \subseteq \III\SSS(\K)$. Therefore, if $\A + \C_5$  embeds properly into a member of $\QQQ(\K)_{\rsi}$, then it  also embeds properly  into a member of $\K$, but this contradicts our hypothesis. Thus, $\A + \C_5 \in \Mcal(\QQQ(\K)_\rsi)$. 
\end{proof}

We also obtain an analogue of \cref{Cor : no Beth comp fin gen quasivarieties HA} for finitely generated varieties of Heyting algebras.

\begin{Corollary}\label{Cor : lack Beth companion finitely generated varieties of HA}
Let $\K$ be a finite set of finite Heyting algebras. Assume that there exists a Heyting algebra $\A$ such that $\A + \C_5 \in \K$ and one of the following conditions holds:
\benroman
\item\label{Cor : lack Beth companion finitely generated varieties of HA : 1} $\A + \C_5 \in \HHH\SSS(\B)$ implies $\A + \C_5 \cong \B$ for each $\B  \in \K$;
\item\label{Cor : lack Beth companion finitely generated varieties of HA : 2} all members of $\K$ have size at most $\lvert \A + \C_5 \rvert$.
\eroman
Then $\VVV(\K)$ lacks a Beth companion.
\end{Corollary}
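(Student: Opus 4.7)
The plan is to deduce the corollary from \cref{Thm : quasivariety HA and C5} applied to the variety $\VVV(\K)$. Being a variety of Heyting algebras, $\VVV(\K)$ is congruence distributive by \cref{Thm : lattice reduct -> CD}, hence relatively congruence distributive as a quasivariety, and $\VVV(\K)_{\textsc{rsi}} = \VVV(\K)_{\textsc{si}}$. Therefore, it will suffice to show that $\A + \C_5 \in \Mcal(\VVV(\K)_{\textsc{si}})$.

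First I would verify that $\A + \C_5 \in \VVV(\K)_{\textsc{si}}$. Since $\A + \C_5 \in \K \subseteq \VVV(\K)$, this amounts to verifying subdirect irreducibility. But the top element of $\A + \C_5$ is the top of $\C_5$, whose unique lower cover in $\A + \C_5$ is the second-largest element of $\C_5$; hence the top is completely join-irreducible, which is the well-known characterization of subdirect irreducibility for Heyting algebras.

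For maximality, I would suppose, with a view to contradiction, that $\A + \C_5$ embeds properly into some $\D \in \VVV(\K)_{\textsc{si}}$. Since $\VVV(\K)$ is congruence distributive, \cref{Thm : Jonsson} combined with \cref{Prop : P_u trivial in finite setting} gives
\[
\VVV(\K)_{\textsc{fsi}} \subseteq \HHH\SSS\PPU(\K) \subseteq \HHH\SSS(\K),
\]
so there exists $\B \in \K$ and a surjective homomorphism $h \colon \C \to \D$ with $\C \leq \B$. In particular, $\D$ is finite and $|\D| \leq |\B|$. Under hypothesis (ii), this immediately yields the contradictory chain $|\A + \C_5| < |\D| \leq |\B| \leq |\A + \C_5|$. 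Under hypothesis (i), identify $\A + \C_5$ with its image under the assumed embedding into $\D$; a routine check shows that the preimage $h^{-1}[A + \C_5]$ is closed under the operations of $\C$ (because $A + \C_5$ is closed under those of $\D$), yielding a subalgebra $\C' \leq \C \leq \B$ together with a surjection $h{\upharpoonright}_{C'} \colon \C' \to \A + \C_5$. Hence $\A + \C_5 \in \HHH\SSS(\B)$, and (i) forces $\A + \C_5 \cong \B$, contradicting $|\A + \C_5| < |\D| \leq |\B| = |\A + \C_5|$.

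Having established $\A + \C_5 \in \Mcal(\VVV(\K)_{\textsc{si}})$, \cref{Thm : quasivariety HA and C5} will deliver the desired conclusion that $\VVV(\K)$ lacks a Beth companion. The argument is essentially bookkeeping once the preimage trick for membership in $\HHH\SSS(\B)$ is in hand; the only mild subtlety is splitting cleanly between hypotheses (i) and (ii), which is unproblematic.
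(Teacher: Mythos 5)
Your proof is correct and follows essentially the same route as the paper: both reduce the corollary to showing $\A + \C_5 \in \Mcal(\VVV(\K)_{\textsc{si}})$ and then combine J\'onsson's Theorem with \cref{Prop : P_u trivial in finite setting} to place any putative proper subdirectly irreducible extension inside $\HHH\SSS(\K)$, from which the cardinality contradiction follows. The only cosmetic differences are that the paper reduces (ii) to (i) rather than treating it directly, and establishes $\A + \C_5 \in \HHH\SSS(\B)$ via the class-operator identity $\III\SSS\HHH\SSS(\B) = \HHH\SSS(\B)$ instead of your explicit preimage argument (which is the same computation unwound).
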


\begin{proof}
Suppose that \eqref{Cor : lack Beth companion finitely generated varieties of HA : 1} holds. By \cref{Cor : variety HA and C5} it suffices to show that $\A + \C_5 \in \Mcal(\VVV(\K)_\si)$. Suppose, with a view to contradiction, that there exists $\D \in \VVV(\K)_{\si}$ into which $\A + \C_5$ properly embeds.\
Since $\K$ is a finite set of finite Heyting algebras, $\PPU(\K) \subseteq \III(\K)$ (see \cref{Prop : P_u trivial in finite setting}). Since $\VVV(\K)$ is congruence distributive by \cref{Thm : lattice reduct -> CD},
from 
\cref{Thm : Jonsson} it follows 
that $\VVV(\K)_{\si} \subseteq \HHH\SSS(\K)$. 
 Then $\D \in \HHH\SSS(\K)$, and so there exists $\B \in \K$ such that $\D \in \HHH\SSS(\B)$. Since $\A + \C_5$ embeds into $\D$, we have
$\A + \C_5 \in \III\SSS\HHH\SSS(\B)$. 
For every class $\M$ we have $\SSS\HHH(\M) \subseteq \HHH\SSS(\M)$ (see, e.g., \cite[Lem.~II.9.2]{BuSa00}) and $\III \HHH(\M) = \HHH(\M)$.\ Consequently,  $\III\SSS\HHH\SSS(\B) = \HHH\SSS(\B)$, and hence
$\A + \C_5 \in \HHH\SSS(\B)$. Then \eqref{Cor : lack Beth companion finitely generated varieties of HA : 1} implies  $\A + \C_5 \cong \B$. Recall that $\A + \C_5, \B \in \K$ and all the members of $\K$ are finite. Therefore, since $\A+\C_5$ properly embeds into $\D$ and $\D \in \HHH\SSS(\B)$, we have $\lvert \A+\C_5 \rvert < \lvert \D \rvert \leq \lvert \B \rvert$.
Thus, we reached a contradiction because $\A + \C_5 \cong \B$. 

To conclude the proof, it is then sufficient to show that \eqref{Cor : lack Beth companion finitely generated varieties of HA : 2} implies \eqref{Cor : lack Beth companion finitely generated varieties of HA : 1}. Let $\B \in \K$ be such that $\A + \C_5 \in \HHH\SSS(\B)$.
Then $\lvert \A + \C_5 \rvert \leq \lvert \B \rvert$. So, \eqref{Cor : lack Beth companion finitely generated varieties of HA : 2} yields $\lvert \A + \C_5 \rvert = \lvert \B \rvert$. Since $\A + \C_5 \in \HHH\SSS(\B)$ and $\B$ is finite (the latter because $\B \in \K$ and $\K$ is a class of finite algebras by assumption), we obtain that $\A + \C_5 \cong \B$. Thus, \eqref{Cor : lack Beth companion finitely generated varieties of HA : 1} holds.
\end{proof}

\begin{exa}[\textsf{G\"odel algebras}]
A Heyting algebra is called a \emph{G\"odel algebra} when it satisfies the prelinearity axiom $(x \to y) \vee (y \to x) \thickapprox 1$. From a logical standpoint, the interest of G\"odel algebras is that they algebraize the G\"odel-Dummett logic (see, e.g., \cite{MR1464942,MR1900263}).

The variety $\GA$ of G\"odel algebras is generated by the class of all finite linearly ordered Heyting algebras or by any infinite linearly ordered Heyting algebra (see \cite[Thm.~1.5]{Hor69a}). 
Every proper subvariety of $\GA$ is of the form $\VVV(\C_n)$ for $n \geq 1$ and $\VVV(\C_n) \subseteq \VVV(\C_m)$ if and only if $n \leq m$ (see \cite{DM71,HK72}). 
 The next result governs the existence of a Beth companion for varieties of G\"odel algebras.

\begin{Theorem} \label{Thm : V(Cn) lacks Beth comp}
A variety $\V$ of G\"odel algebras lacks a Beth companion if and only if $\V=\VVV(\C_n)$ for $n \geq 5$. All the remaining varieties of G\"odel algebras are their own Beth companion.
\end{Theorem}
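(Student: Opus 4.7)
The plan is to first reduce the biconditional to a statement about the strong epimorphism surjectivity property. Combining \cref{Thm : Beth companions vs strong Beth} with \cref{Thm : Beth companion : examples}\eqref{item : Beth companion : example : 6}, a variety is its own Beth companion if and only if it has the strong ES property. Since the subvarieties of $\GA$ are known to be the trivial variety, $\VVV(\C_n)$ for $n \geq 2$, and $\GA$ itself, the theorem reduces to showing that the varieties of G\"odel algebras lacking a Beth companion are exactly $\VVV(\C_n)$ for $n \geq 5$.

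For the direction that $\VVV(\C_n)$ lacks a Beth companion when $n \geq 5$, I would apply \cref{Cor : lack Beth companion finitely generated varieties of HA}\eqref{Cor : lack Beth companion finitely generated varieties of HA : 2} with $\K = \{\C_n\}$ and $\A = \C_{n-4}$. The crucial arithmetic is that the ordered sum $\C_k + \C_5$ has $(k-1) + 5 = k + 4$ elements and is itself a chain, so $\C_{n-4} + \C_5 \cong \C_n \in \K$. Since $\K$ has a single member and $|\C_n| = |\A + \C_5|$, condition~(ii) of the corollary is automatic, yielding that $\VVV(\C_n) = \VVV(\K)$ lacks a Beth companion.

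For the converse direction one must show that the trivial variety, $\VVV(\C_2) = \mathsf{BA}$, $\VVV(\C_3)$, $\VVV(\C_4)$, and $\GA$ itself are each their own Beth companion, i.e., each has the strong ES property. The trivial case is immediate and $\VVV(\C_2)$ is handled by \cref{Thm : SES : Boolean algebras}. For the remaining three cases I would invoke Maksimova's classification \cite[Thm.~8.1]{Mak00} of the sixteen varieties of Heyting algebras with the strong ES property (already referenced in \cref{exa: Heyting and SES}); among the subvarieties of $\GA$, precisely $\VVV(\C_3), \VVV(\C_4),$ and $\GA$ itself appear on that list, which closes the argument.

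The main obstacle is that a self-contained verification of strong ES for $\VVV(\C_3), \VVV(\C_4),$ and $\GA$ cannot proceed via \cref{Cor : CP variety SES}, since Heyting algebras are not congruence permutable. A possible workaround is to exploit the fact that $\GA$ has the amalgamation property (a classical result on G\"odel logic), so that \cref{Thm : AP -> WES = SES} reduces strong ES to weak ES; the latter can then in principle be attacked by analyzing finitely generated FSI members, which in $\GA$ are precisely the finite chains $\C_m$. For $m \leq 4$ the combinatorics is a short case check ruling out proper epic finitely generated subalgebras, while for $\GA$ itself the key input is the absence of proper $\GA$-epic finitely generated subalgebras of any finite chain, a fact implicit in Maksimova's analysis of the chain structure.
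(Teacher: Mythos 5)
Your proof is correct and follows essentially the same route as the paper's: Maksimova's classification \cite[Thm.~8.1]{Mak00} identifies $\GA$ and $\VVV(\C_n)$ for $n \leq 4$ as exactly the subvarieties with the strong epimorphism surjectivity property (hence their own Beth companions via \cref{Thm : Beth companion : examples}), and the isomorphism $\C_n \cong \C_{n-4} + \C_5$ together with \cref{Cor : lack Beth companion finitely generated varieties of HA} handles $n \geq 5$. One correction to your closing aside (which is not part of the actual argument, so the proof stands): Heyting algebras \emph{are} congruence permutable --- indeed arithmetical, as used in \cref{exa: Heyting and SES} --- so your stated reason for why \cref{Cor : CP variety SES} cannot be applied is wrong; the genuine obstacle to a self-contained verification lies elsewhere.
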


\begin{proof}
From \cite[Thm.~8.1]{Mak00} it follows that the varieties of G\"odel algebras with the strong epimorphism surjectivity property are exactly $\GA$ and $\VVV(\C_n)$ for $n \leq 4$. Hence, these varieties are their own Beth companions by \cref{Thm : Beth companion : examples}\eqref{item : Beth companion : example : 6}.  Let $n \geq 5$.
As $\C_n \cong \C_{n-4} + \C_5$, from \cref{Cor : lack Beth companion finitely generated varieties of HA} it follows that $\VVV(\C_n)$ lacks a Beth companion.
\end{proof}
\end{exa}

In order to prove \cref{Thm : quasivariety HA and C5}, we first establish a series of useful results.

\begin{Proposition} \label{Prop : extendables total on max SI}
Let $\K$ be a quasivariety and $f \in \extpp(\K)$.
Then $f^\A$ is total for every $\A \in  \HHH\PPP(\Mcal(\K_\rsi)) \cap \K$.
\end{Proposition}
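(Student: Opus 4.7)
The plan is to reduce the claim to showing $f^\B$ is total whenever $\B \in \Mcal(\K_\rsi)$, and then to bootstrap from there via the closure properties provided by \cref{Prop : closure under op}. The core observation is that maximality in $\K_\rsi$ forces $\B$ to be a retract of any extension witnessing the extendability of $f$, and implicit operations are preserved by retractions.

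First I would prove: if $\B \in \Mcal(\K_\rsi)$, then $f^\B$ is total. Fix $b_1, \dots, b_n \in B$. Since $f \in \extpp(\K)$, there exists $\C \in \K$ with $\B \leq \C$ and $\langle b_1, \dots, b_n\rangle \in \dom(f^\C)$. Applying \cref{Prop : extendability in quasivarieties : si trick} (with $\B \in \K_\rsi$ playing the role of its hypothesis $\A$), one obtains $\D \in \K_\rsi$ together with an embedding $k \colon \B \to \D$ and a surjective homomorphism $h \colon \C \to \D$ such that $h\res_B = k$. Here the crucial point is that the embedding $k$ is constructed from the subdirect decomposition of $\C$ as the restriction of $h$ to $B$. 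By the maximality of $\B$ in $\K_\rsi$, the embedding $k$ must be an isomorphism, so $k^{-1} \circ h \colon \C \to \B$ is a homomorphism whose restriction to $B$ is $\id_B$, exhibiting $\B$ as a retract of $\C$. Since $f$ is an implicit operation of $\K$ it is preserved by this retraction, and therefore $\langle b_1, \dots, b_n\rangle \in \dom(f^\B)$.

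Second I would set $\M = \{\A \in \K : f^\A \text{ is total}\}$. The previous step yields $\Mcal(\K_\rsi) \subseteq \M$. As $f \in \extpp(\K) \subseteq \imppp(\K)$, \cref{Prop : closure under op} applied to $\F = \{f\}$ gives $\PPP(\M) \cap \K \subseteq \M$ and $\HHH(\M) \cap \K \subseteq \M$. Given $\A \in \HHH\PPP(\Mcal(\K_\rsi)) \cap \K$, choose $\B \in \PPP(\Mcal(\K_\rsi))$ with a surjective homomorphism onto $\A$. Since $\K$ is a quasivariety and $\Mcal(\K_\rsi) \subseteq \K$, we have $\B \in \PPP(\K) \subseteq \K$, so $\B \in \PPP(\M) \cap \K \subseteq \M$; in particular $\{\B\} \subseteq \M$. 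Then $\A \in \HHH(\{\B\}) \cap \K \subseteq \HHH(\M) \cap \K \subseteq \M$, so $f^\A$ is total.

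The main (in fact the only substantive) obstacle is the retraction step for maximal relatively subdirectly irreducible algebras: one must carefully extract from \cref{Prop : extendability in quasivarieties : si trick} the fact that the embedding it produces agrees with the surjective homomorphism on the original subalgebra, so that maximality genuinely delivers a retraction rather than just some homomorphism between isomorphic copies. Once this is handled, the lift to $\HHH\PPP(\Mcal(\K_\rsi)) \cap \K$ is a direct application of \cref{Prop : closure under op}.
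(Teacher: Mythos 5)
Your proof is correct, and its second half (introducing $\M = \{\A \in \K : f^\A \text{ is total}\}$ and climbing up through $\PPP$ and then $\HHH$ via \cref{Prop : closure under op}) is exactly the paper's argument. Where you diverge is the base case $\B \in \Mcal(\K_\rsi)$: the paper simply invokes the second part of \cref{Thm : extendable 1} to obtain $\C \in \K_\rsi$ with $\B \leq \C$ and $f^\C$ total, whereupon maximality forces $\B = \C$ outright and nothing more is needed. You instead argue pointwise, fixing one tuple, extending to some $\C \in \K$ where $f$ is defined on that tuple, and then using \cref{Prop : extendability in quasivarieties : si trick} plus maximality to exhibit $\B$ as a retract of $\C$, after which preservation of $f$ under the retraction finishes the job. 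Both routes work. Yours is more economical in that it only uses extendability at a single tuple rather than the full chain-and-compactness construction behind \cref{Thm : extendable 1}, but it pays for this with the delicacy you yourself flag: the \emph{statement} of \cref{Prop : extendability in quasivarieties : si trick} only asserts the existence of $\C \in \K_\rsi$ with $\B \leq \C$ and a surjection $h \colon \C \to \D$, not that $h$ restricts to the inclusion on $B$; that extra compatibility has to be read off from its proof (where it does hold, since the isomorphism with $\B_j$ is chosen to make $\B \leq \C$ literally a subalgebra inclusion). The paper's route avoids this entirely because maximality is applied to a genuine subalgebra inclusion $\B \leq \C$ with $f^\C$ already total, so no retraction or preservation argument is required.
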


\noindent \textit{Proof.} First, assume that $\A \in \Mcal(\K_\rsi)$. 
Since $f \in \extpp(\K)$ and $\A \in \K_\rsi$, by \cref{Thm : extendable 1} there exists $\B \in \K_\rsi$ such that $\A \leq \B$ and $f^{\B}$ is total. The maximality of $\A$ implies that $\A = \B$.
This shows that $f^{\A}$ is total for every $\A \in \Mcal(\K_\rsi)$.
Then let $\M = \{ \A \in \K : f^\A \text{ is total}\}$. We have $\Mcal(\K_\rsi) \subseteq \M$. Our goal is to show that $\HHH\PPP(\Mcal(\K_\rsi)) \cap \K \subseteq \M$. Since $\M \subseteq \K$ and $\K$ is a quasivariety, we have $\PPP(\M) \cap \K = \PPP(\M)$. Therefore, applying \cref{Prop : closure under op} twice with $\mathbb{O} =\PPP$ and $\mathbb{O} = \HHH$, we obtain
\[
\pushQED{\qed}\HHH\PPP(\Mcal(\K_\rsi)) \cap \K \subseteq \HHH\PPP(\M) \cap \K \subseteq \HHH(\PPP(\M) \cap \K) \cap \K \subseteq \HHH(\M) \cap \K \subseteq \M.\qedhere \popQED
\]

\begin{Proposition} \label{Prop : Trivial doms in HPP_U(maxSI)}
    Let $\mathsf{K}$ be a quasivariety with a Beth companion. Moreover, let $\A,\B \in \HHH\PPP(\mathcal{M}(\mathsf{K}_\rsi)) \cap \K$ be such that $\A \leq \B$. Then $\mathsf{d}_{\mathsf{K}}(\A, \B) = A$.
\end{Proposition}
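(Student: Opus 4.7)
The plan is to use the Beth companion $\SSS(\K[\L_\F])$ of $\K$ with $\F = \extpp(\K)$ (provided by \cref{Cor : Beth companion : ext(K)}) as a bridge between implicit operations definable by pp formulas and extendable ones, and then to exploit the fact that extendable implicit operations are already total on $\HHH\PPP(\Mcal(\K_\rsi)) \cap \K$.

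First I would observe that for $\F = \extpp(\K)$ every $g \in \F$ satisfies: $g^\A$ and $g^\B$ are total. This is immediate from \cref{Prop : extendables total on max SI}, applied to both $\A$ and $\B$, using the hypothesis $\A, \B \in \HHH\PPP(\Mcal(\K_\rsi)) \cap \K$. As a consequence, the algebras $\A[\L_\F]$ and $\B[\L_\F]$ are defined and belong to $\K[\L_\F]$, which gives
\[
\A, \B \in \K[\L_\F]\resLK.
\]

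Next, the inclusion $A \subseteq \d_\K(\A,\B)$ holds by definition of dominion, so it suffices to prove the reverse inclusion. Let $b \in \d_\K(\A,\B)$. By \cref{Thm : dominions : pp formulas} there exist $f \in \imppp(\K)$ and $\langle a_1, \dots, a_n \rangle \in \dom(f^\B) \cap A^n$ with $f^\B(a_1, \dots, a_n) = b$. By \cref{Cor : Beth companion : ext(K)}, the class $\SSS(\K[\L_\F])$ is a Beth companion of $\K$, so \cref{Prop : Beth comp implies interpolation with extendable} yields some $g \in \F = \extpp(\K)$ that interpolates $f$ in $\K[\L_\F]\resLK$. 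Since $\B \in \K[\L_\F]\resLK$ and $\langle a_1, \dots, a_n \rangle \in \dom(f^\B)$, interpolation gives $\langle a_1, \dots, a_n \rangle \in \dom(g^\B)$ and
\[
g^\B(a_1, \dots, a_n) = f^\B(a_1, \dots, a_n) = b.
\]

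Finally, since $g \in \extpp(\K)$ and $\A \in \HHH\PPP(\Mcal(\K_\rsi)) \cap \K$, \cref{Prop : extendables total on max SI} implies that $g^\A$ is total, so $\langle a_1, \dots, a_n \rangle \in \dom(g^\A)$ and $g^\A(a_1, \dots, a_n) \in A$. From $\A \leq \B$ with $\A, \B \in \K$ and $g \in \imp(\K)$, \cref{Prop : implicit operations extend} yields $g^\B(a_1, \dots, a_n) = g^\A(a_1, \dots, a_n)$. Combining, $b = g^\A(a_1, \dots, a_n) \in A$, as required. The argument is essentially bookkeeping once the right tools are assembled; the only subtle point is recognizing that \cref{Prop : extendables total on max SI} applies to $\A$ and $\B$ simultaneously, so that both lift to $\K[\L_\F]\resLK$ and the extendable interpolant $g$ from \cref{Prop : Beth comp implies interpolation with extendable} forces the witness $b$ to land in $A$.
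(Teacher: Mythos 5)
Your proof is correct. The core of your argument coincides with the paper's: both proofs invoke \cref{Cor : Beth companion : ext(K)} to take the Beth companion in the form $\SSS(\K[\L_\F])$ with $\F = \extpp(\K)$, and both use \cref{Prop : extendables total on max SI} to conclude that every $g \in \F$ is total on $\A$ and $\B$, so that $\A[\L_\F]$ and $\B[\L_\F]$ lie in $\K[\L_\F]$. Where you differ is only in the concluding step: the paper finishes in one line by citing \cref{Prop : doms pp expansion ext}\eqref{item : check SES of M in K}, which directly gives $\d_\K(\A[\L_\F]\resLK, \B[\L_\F]\resLK) = A$ for subalgebra pairs in the Beth companion, whereas you re-derive this conclusion by hand via the pp description of dominions (\cref{Thm : dominions : pp formulas}), the extendable interpolant supplied by \cref{Prop : Beth comp implies interpolation with extendable}, and \cref{Prop : implicit operations extend}. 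Your route is slightly longer but makes explicit the mechanism by which the witness $b$ is forced into $A$ (namely, totality of the extendable interpolant on $\A$), which the packaged lemma hides; the paper's version buys brevity at the cost of opacity. Both are complete and correct.
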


\begin{proof}
We may assume that 
 $\K$ has a Beth companion
of  the form $\SSS(\K[\L_\mathcal{F}])$. \cref{Prop : extendables total on max SI} implies that $f^\A$ and $f^\B$ are total for every $f \in \F$. Therefore, $\A[\L_\F]$ and $\B[\L_\F]$ are defined and are members of $\K[\L_\mathcal{F}]$. It then follows from \cref{Prop : doms pp expansion ext}\eqref{Thm : doms pp expansion ext : 2} that
\[
\mathsf{d}_{\mathsf{K}}(\A, \B) =\mathsf{d}_{\K}(\A[\L_\F]\resLK, \B[\L_\F]\resLK) = A.\qedhere
\]
\end{proof}

\begin{Proposition} \label{Prop : d_K vs d_(K_RSI)}
Let $\K$ be a quasivariety. Then $\d_\K(\A, \B) = \d_{\K_\rsi}(\A, \B)$ for all $\L_\K$-algebras $\A, \B$ such that $\A \leq \B$.
\end{Proposition}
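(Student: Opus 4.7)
The plan is to prove both inclusions separately, with the easy direction coming directly from $\K_\rsi \subseteq \K$ and the harder direction relying on the Subdirect Decomposition Theorem to reduce witnesses of failure from arbitrary members of $\K$ to relatively subdirectly irreducible ones.

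For the inclusion $\d_\K(\A, \B) \subseteq \d_{\K_\rsi}(\A, \B)$, I would simply observe that if $b \in \d_\K(\A, \B)$, then every pair of homomorphisms $g, h \colon \B \to \C$ with $\C \in \K$ that agrees on $A$ must agree on $b$; since $\K_\rsi \subseteq \K$, this condition is \emph{a fortiori} satisfied when we restrict to codomains in $\K_\rsi$, giving $b \in \d_{\K_\rsi}(\A, \B)$.

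For the reverse inclusion $\d_{\K_\rsi}(\A, \B) \subseteq \d_\K(\A, \B)$, I would argue contrapositively. Suppose $b \in B$ with $b \notin \d_\K(\A, \B)$. Then there exist $\C \in \K$ and homomorphisms $g, h \colon \B \to \C$ such that $g{\upharpoonright}_A = h{\upharpoonright}_A$ but $g(b) \neq h(b)$. By the Subdirect Decomposition Theorem \ref{Thm : Subdirect Decomposition} applied to $\C \in \K$, there exists a subdirect embedding $e \colon \C \to \prod_{i \in I} \C_i$ with $\{\C_i : i \in I\} \subseteq \K_\rsi$. Since $e$ is injective and $g(b) \neq h(b)$, we have $e(g(b)) \neq e(h(b))$, so there exists $j \in I$ with $p_j(e(g(b))) \neq p_j(e(h(b)))$. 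The homomorphisms $p_j \circ e \circ g, p_j \circ e \circ h \colon \B \to \C_j$ then agree on $A$ (since $g$ and $h$ do) but disagree on $b$, and $\C_j \in \K_\rsi$, whence $b \notin \d_{\K_\rsi}(\A, \B)$.

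The proof is quite short and there is no serious obstacle; the only point requiring care is that the Subdirect Decomposition Theorem is being applied to $\C$, which belongs to the quasivariety $\K$ by assumption, so the decomposition into $\K_\rsi$-factors is guaranteed. Note that we do not need $\A, \B \in \K$ — the statement is about arbitrary $\L_\K$-algebras with $\A \leq \B$, and both sides of the equality are defined purely in terms of homomorphisms out of $\B$ into algebras in $\K$ (resp.\ $\K_\rsi$), so the argument above applies verbatim.
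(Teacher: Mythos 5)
Your proof is correct and follows essentially the same route as the paper: the easy inclusion from $\K_\rsi \subseteq \K$, and the reverse inclusion by pushing the separating pair $g,h$ through a subdirect decomposition of $\C$ into members of $\K_\rsi$ and picking a coordinate where the images of $b$ still differ. The paper phrases the decomposition as $\C \leq \prod_{i \in I}\C_i$ with restricted projections rather than an explicit embedding $e$, but this is only a notational difference.
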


\begin{proof}
From the definition of a dominion it follows immediately that $\d_\K(\A, \B) \subseteq \d_{\K_\rsi}(\A, \B)$.
To prove the other inclusion, let $b \in B$ and assume that $b \notin \d_\K(\A, \B)$. 
Then there exist $\C \in \K$ and homomorphisms $g,h \colon \B \to \C$ such that $g\res_A = h\res_A$ and $g(b) \neq h(b)$. The \cref{Thm : Subdirect Decomposition} implies that there exists $\{\C_i : i \in I\} \subseteq \K_\rsi$ such that $\C \leq \prod_{i \in I} \C_i$ is a subdirect product. For each $i \in I$ let $p_i \colon \C \to \C_i$ be the restriction of the canonical projection. Since $g(b) \neq h(b)$, there exists $i \in I$ such that $(p_i \circ g)(b) \neq (p_i \circ h)(b)$. As $(p_i \circ g)\res_A = p_i \circ (g\res_A) = p_i \circ (h\res_A) = (p_i \circ h)\res_A$, the homomorphisms $p_i \circ g, p_i \circ h \colon \B \to \C_i$ witness that $b \notin \d_{\K_\rsi}(\A, \B)$. Thus, $\d_{\K_\rsi}(\A, \B) \subseteq  \d_\K(\A, \B)$.
\end{proof}

\begin{Proposition} \label{Prop : maps to SI factor through projections}
Let $\K$ be a relatively congruence distributive quasivariety of Heyting algebras. 
Let also $\A_1,\A_2,\B$ be Heyting algebras with $\B \in \K_\rfsi$ and 
$h \colon \A_1 \times \A_2 \to \B$ a homomorphism.
Then there exist $i \in \{1,2\}$ and a homomorphism $g \colon \A_i \to \B$ such that $h = g \circ \pi_i$, where $\pi_i \colon \A_1 \times \A_2 \to \A_i$ is the canonical projection map.
\end{Proposition}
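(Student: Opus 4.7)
The plan is to exploit the join irreducibility of the top element of $\B$. Since $\K$ is a relatively congruence distributive quasivariety of Heyting algebras, the characterization established just before \cref{Thm : quasivariety HA and C5} shows that $\K_\rfsi \subseteq \HA_\fsi$, and a Heyting algebra belongs to $\HA_\fsi$ precisely when its top element is join irreducible (the standard fact recalled in the proof of that characterization). Hence the assumption $\B \in \K_\rfsi$ forces $1^\B$ to be join irreducible in $\B$.

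The key step is to apply $h$ to the identity $(1^{\A_1}, 0^{\A_2}) \vee (0^{\A_1}, 1^{\A_2}) = (1^{\A_1}, 1^{\A_2})$, yielding $h(1^{\A_1}, 0^{\A_2}) \vee h(0^{\A_1}, 1^{\A_2}) = 1^\B$. By join irreducibility, at least one of the two joinands equals $1^\B$; the two ensuing cases are symmetric under swapping $\A_1$ and $\A_2$, so I may assume $h(1^{\A_1}, 0^{\A_2}) = 1^\B$. Monotonicity of $h$ applied to $(1^{\A_1}, 0^{\A_2}) \leq (1^{\A_1}, a_2)$ then gives $h(1^{\A_1}, a_2) = 1^\B$ for every $a_2 \in A_2$, and combining this with the decomposition $(a_1, a_2) = (a_1, 1^{\A_2}) \wedge (1^{\A_1}, a_2)$ produces the equality $h(a_1, a_2) = h(a_1, 1^{\A_2})$ for every $(a_1, a_2) \in A_1 \times A_2$, showing that $h$ depends only on its first coordinate.

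Defining $g \colon A_1 \to B$ by $g(a_1) = h(a_1, 1^{\A_2})$ then yields $h = g \circ \pi_1$ by construction. It remains to verify that $g$ is a Heyting homomorphism, which is routine: preservation of $\wedge$, $\vee$, $\to$, and $1$ is automatic because $1^{\A_2}$ is a fixed point of those operations in $\A_2$, while preservation of $0$ follows by specializing the equality $h(a_1, a_2) = h(a_1, 1^{\A_2})$ to $(a_1, a_2) = (0^{\A_1}, 0^{\A_2})$, which gives $g(0^{\A_1}) = h(0^{\A_1}, 1^{\A_2}) = h(0^{\A_1}, 0^{\A_2}) = 0^\B$. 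I do not anticipate a serious obstacle: once the join irreducibility of $1^\B$ is available, everything reduces to a short calculation inside the product Heyting algebra $\A_1 \times \A_2$.
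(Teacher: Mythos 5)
Your proof is correct and follows essentially the same route as the paper: both arguments hinge on the join irreducibility of $1^{\B}$ (obtained from relative congruence distributivity via the fact that $\K_{\rfsi}\subseteq\mathsf{HA}_{\fsi}$) applied to the decomposition $1^{\B}=h(\langle 1^{\A_1},0^{\A_2}\rangle)\vee h(\langle 0^{\A_1},1^{\A_2}\rangle)$. The only difference is cosmetic: where you derive the one-coordinate dependence from the meet decomposition $\langle a_1,a_2\rangle=\langle a_1,1^{\A_2}\rangle\wedge\langle 1^{\A_1},a_2\rangle$ and then verify the induced $g$ directly, the paper computes $h(\langle a,c\rangle)\to h(\langle b,c\rangle)=1^{\B}$ to show $\mathsf{Ker}(\pi_i)\subseteq\mathsf{Ker}(h)$ and invokes the standard factorization through quotients.
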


\begin{proof}
Since $\K$ is relatively congruence distributive and $\B \in \K_\rfsi$, from \cite[Cor.~1.4]{CD90} it follows that $\B$ is a finitely subdirectly irreducible Heyting algebra. Therefore, the greatest element $1^{\B}$ of $\B$ is join irreducible (see, e.g., \cite[Prop.~A.4.3]{Lau19}).
Let $0^{\A_i}$ and $1^{\A_i}$ be the least and greatest elements of $\A_i$ for $i=1,2$. Then 
\[
1^{\B} = h(\langle 1^{\A_1},1^{\A_2} \rangle) = h(\langle 0^{\A_1},1^{\A_2} \rangle \vee \langle 1^{\A_1},0^{\A_2} \rangle) = h(\langle 0^{\A_1},1^{\A_2} \rangle) \vee h(\langle 1^{\A_1},0^{\A_2} \rangle).  
\]
Since $1^\B$ is join irreducible, we have
\[
h(\langle 0^{\A_1},1^{\A_2} \rangle) = 1^{\B} \quad \text{or} \quad h(\langle 1^{\A_1},0^{\A_2} \rangle) = 1^{\B}.
\]

By symmetry we may assume that $h(\langle 0^{\A_1},1^{\A_2} \rangle) = 1^{\B}$.
We will prove that 
\begin{equation}\label{Eq : the map h is a projection on some coordinate : HAs}
h(\langle a, c\rangle) = h(\langle b, c\rangle)\text{ for all }a, b \in A_1\text{ and }c \in A_2.
\end{equation}
To this end, observe that 
\begin{align*}
h(\langle a,c \rangle) \to h(\langle b,c \rangle) = h(\langle a \to b,c \to c \rangle) = h(\langle a \to b,1^{\A_2} \rangle) \geq h(\langle 0^{\A_1},1^{\A_2} \rangle) = 1^{\B},
\end{align*}
and, therefore, $h(\langle a,c \rangle) \leq h(\langle b,c \rangle)$. An analogous  argument shows that $h(\langle b,c \rangle) \leq h(\langle a,c \rangle)$, whence $h(\langle a, c\rangle) = h(\langle b, c\rangle)$, as desired.

Lastly, from \eqref{Eq : the map h is a projection on some coordinate : HAs} it follows that $\mathsf{ker}(\pi_2) \subseteq \mathsf{ker}(h)$.
As a straightforward consequence of \cref{Prop : homomorphisms : smaller congruences} (see, e.g., \cite[Ex.~1.26.8]{Ber11}), we
obtain
a homomorphism $g \colon \A_2 \to \B$ such that 
$h = g \circ \pi_2$.
\end{proof}

We are now ready to prove \cref{Thm : quasivariety HA and C5}.

\begin{proof}
To simplify the notation, we let $\B = \A +\C_5$.
\begin{Claim}\label{claim : existence D nontrivial dominion}
There exists $\D \in \HHH(\B)$ such that $\D \leq \B \times \B$ and $\d_\K(\D,\B \times \B) \neq D$.
\end{Claim}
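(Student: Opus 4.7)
The plan is to realize $\D$ as the image of a single homomorphism $\phi\colon\B\to\B\times\B$ whose two coordinates are two distinct homomorphisms from $\B$ into itself with a common nontrivial kernel, and then to produce an element of the dominion outside $D$ by applying a suitable pp formula (supplied by \cref{Thm : dominions : pp formulas}) to elements of $D$. Label the top chain of $\B=\A+\C_5$ as $e_0<e_1<e_2<e_3<e_4$ with $e_0=1^{\A}$ and $e_4=1^{\B}$, let $\theta$ be the congruence of $\B$ generated by $\langle e_3,e_4\rangle$, and let $\pi\colon\B\to\B/\theta$ be the canonical surjection; so $\B/\theta\cong\A+\C_4$. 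Since $\A+\C_4$ embeds into $\A+\C_5$ in several distinct ways (one can omit any single non-top element of $\{e_1,e_2,e_3\}$), one chooses two such embeddings $\iota_1,\iota_2\colon\B/\theta\to\B$ that agree on $\A\cup\{e_0\}$ and on the top but differ on an intermediate element, e.g.\ $\iota_1$ with image $\A\cup\{e_0,e_1,e_2,e_4\}$ and $\iota_2$ with image $\A\cup\{e_0,e_1,e_3,e_4\}$.

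Set $\phi(x)=\langle\iota_1(\pi(x)),\iota_2(\pi(x))\rangle$ and $\D=\phi(\B)$. By construction $\phi$ is a homomorphism, $\D\leq\B\times\B$, and $\D\cong\B/\theta$ is a quotient of $\B$, so $\D\in\HHH(\B)$. Since $\K$ is closed under subalgebras and $\B/\theta$ is a Heyting algebra coming from a congruence that $\K$ certainly admits (it is generated by a single pair in a member of $\K_\rsi\subseteq\K$), we also have $\D\in\K$.

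To exhibit a dominion element lying outside $D$, I will invoke \cref{Thm : dominions : pp formulas}: pick a pp formula $\psi(\vec{x},y)$ in the language of Heyting algebras that is functional in $\K$ and a tuple $\vec{a}$ from $D$ such that the unique witness $y^{\ast}=\psi^{\B\times\B}(\vec{a})$ does not belong to $D$. The intended $\psi$ is of relative-complement type, asking for the unique $y$ with $y\wedge u\thickapprox v$ and $y\vee u\thickapprox w$ for appropriately chosen $u,v,w\in D$ built from the ``twisted'' generators $\langle\iota_1(\overline{e_i}),\iota_2(\overline{e_i})\rangle$; the depth-$5$ structure of $\C_5$, together with the discrepancy between $\iota_1$ and $\iota_2$ on the intermediate layer, is exactly what forces such a $y^{\ast}$ to exist in $\B\times\B$ and to be distinct from every element of $D$. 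Functionality of $\psi$ in $\K$ reduces, via \cref{Prop : maps to SI factor through projections}, to checking uniqueness in algebras of $\K_\rsi$, where it follows from the maximality hypothesis $\A+\C_5\in\Mcal(\K_\rsi)$: if two distinct witnesses existed in some $\C\in\K_\rsi$, one could glue $\B$ and $\C$ to properly extend $\B$ inside $\K_\rsi$, contradicting maximality.

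The hardest step is writing down $\psi$ and $\vec{a}$ explicitly so that all three ingredients come out simultaneously: that $\psi^{\B\times\B}(\vec{a})$ is genuinely outside $D$, that $\psi$ evaluates to a single well-defined element in every member of $\K_\rsi$, and that the maximality of $\B$ in $\K_\rsi$ is the right lever to enforce this functionality. This uniqueness-via-maximality argument is the analogue, in the language of dominions and pp formulas, of Maksimova's classical witness to the failure of the projective Beth property in Heyting algebras of depth at least five; everything else (closure of $\D$ under operations, $\D\in\HHH(\B)\cap\K$, and the final application of \cref{Thm : dominions : pp formulas}) is routine.
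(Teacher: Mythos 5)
Your construction of $\D$ is essentially the paper's: the paper also realizes $D$ as the image of a homomorphism $k \colon \B \to \B \times \B$ that collapses two adjacent elements of the top chain and re-embeds the resulting copy of $\A + \C_4$ with one coordinate ``shifted'', yielding the twisted diagonal $\{\langle a,a\rangle : a \in A - \{1^\A\}\} \cup \{\langle c_1,c_1\rangle, \langle c_2,c_3\rangle, \langle c_4,c_4\rangle, \langle c_5,c_5\rangle\}$. That part of your argument is sound (modulo checking closure under $\to$, which the paper does via an explicit description of $\to^{\B}$).

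The genuine gap is in the second half. To conclude $\d_\K(\D, \B\times\B) \neq D$ you propose to apply \cref{Thm : dominions : pp formulas} in the ``inward'' direction, which requires actually exhibiting a pp formula $\psi$ that is functional in $\K$ together with a tuple from $D$ whose unique witness lies outside $D$ --- and you explicitly leave this, which is the entire content of the claim, unwritten. Your candidate (``relative-complement type'') is unlikely to work: the chain part of $D$ is linearly ordered, and in a chain the complement of $u$ relative to an interval $[p,q]$ exists only when $u \in \{p,q\}$, so relative complements of tuples from $D$'s chain do not escape $D$; the element one needs, $\langle c_5, c_4\rangle$, is produced by the Heyting implication structure, not the lattice structure. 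Your argument for functionality is also not an argument: ``gluing $\B$ and $\C$'' to contradict maximality presupposes an amalgam inside $\K_{\textsc{rsi}}$ that the hypotheses do not provide. The paper avoids all of this by verifying dominion membership directly from the definition: by \cref{Prop : d_K vs d_(K_RSI)} it suffices to consider pairs $g,h \colon \B\times\B \to \E$ with $\E \in \K_{\textsc{rsi}}$ agreeing on $D$; by \cref{Prop : maps to SI factor through projections} each factors through a projection; and in the only nontrivial case (different projections, $g = g'\circ\pi_1$, $h = h'\circ\pi_2$) the twisted pair $\langle c_2,c_3\rangle \in D$ forces $g'(c_2) = h'(c_3)$, while maximality of $\B$ in $\K_{\textsc{rsi}}$ forces any injective $g'$ or $h'$ to be an isomorphism and hence (since $c_3$ is the unique element with exactly three elements above it, so is fixed by every automorphism) yields a contradiction unless one of $g', h'$ collapses the chain, in which case $g(\langle c_5,c_4\rangle) = h(\langle c_5,c_4\rangle)$ follows. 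You would need to supply an argument of comparable substance to close the gap.
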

\begin{proof}[Proof of the Claim]
Let the elements of $\C_5$ be $0^{\C_5}= c_1 < c_2 < c_3 < c_4 < c_5 = 1^{\C_5}$. Recall that $B=(A - \{1^{\A}\}) \cup C_5$. We define
\[
D = \{ \langle a,a \rangle : a \in A-\{1^{\A}\}\} \cup \{\langle c_1,c_1 \rangle, \langle c_2,c_3 \rangle, \langle c_4,c_4 \rangle, \langle c_5,c_5 \rangle\} \subseteq B \times B. 
\]
As $\B = \A +\C_5$ and $\C_5$ is linearly ordered, the implication $\to^\B$ of $\B$ can be described in terms of the implication $\to^{\A}$ of $\A$ as follows. For all $a,b \in B$ we have
\[
a \to^\B b = 
\begin{cases}
c_5 & \text{if } a \leq b;\\
a \to^{\A} b & \text{if } a \nleq b \text{ and } a,b \in A - \{1^{\A}\};\\
b & \text{otherwise.}\\
\end{cases}
\]
It is then immediate to verify that $\D \leq \B \times \B$.
Moreover, a straightforward verification yields  that the map  $k \colon \B \to \B \times \B$ defined as follows is a homomorphism whose image is $D$: for every $b \in B$,
\[
k(b) = 
\begin{cases}
\langle b, b \rangle & \text{if } b \in \{ c_1, c_5 \} \cup (A - \{1^{\A}\});\\
\langle c_2, c_3 \rangle & \text{if }b = c_2;\\
\langle c_4, c_4 \rangle & \text{if }b = c_3;\\
\langle c_5, c_5 \rangle & \text{if }b = c_4.
\end{cases}
\]
Thus, $\D \in \HHH(\B)$. 

Therefore, it only remains to show that $\d_\K(\D,\B \times \B) \neq D$.
By \cref{Prop : d_K vs d_(K_RSI)} we have  
\[
\d_\K(\D, \B \times \B) = \d_{\K_\rsi}(\D, \B \times \B).
\]
Since $\langle c_5, c_4 \rangle \in (B \times B) -  D$, it suffices to show  $\langle c_5, c_4 \rangle \in \d_{\K_\rsi}(\D, \B \times \B)$. 
Let $g,h \colon \B \times \B \to \E$ be a pair of homomorphisms such that $\E \in \K_{\rsi}$ and $g \res_D = h \res_D$. We need to prove that $g(\langle c_5, c_4 \rangle) = h(\langle c_5, c_4 \rangle)$.
Since $\E \in \K_\rsi \subseteq \K_{\rfsi}$, \cref{Prop : maps to SI factor through projections} yields that both $g$ and $h$ factor through a projection. 
We have two cases: either $g$ and $h$ factor through the same projection or not. 

First, suppose that $g$ and $h$ factor through $\pi_1$. Then there exists a pair of homomorphisms $g', h' \colon \B \to \E$ such that $g = g' \circ \pi_1$ and $h = h' \circ \pi_1$. 
Therefore,
\[
g(\langle c_5, c_4 \rangle) = g'(c_5) = 1^\E = h'(c_5) = h(\langle c_5, c_4 \rangle),
\]
where the second and third equalities hold because $c_5$ is the greatest element of $\B$, and the others because $g = g' \circ \pi_1$ and $h = h' \circ \pi_1$.

Next we consider the case where both $g$ and $h$ factor through $\pi_2$.\ Then there exists a pair of homomorphisms $g', h' \colon \B \to \E$ such that $g = g' \circ \pi_2$ and $h = h' \circ \pi_2$. Therefore,
\[
g(\langle c_5, c_4 \rangle) = g'(c_4) = g(\langle c_4, c_4 \rangle) = h(\langle c_4, c_4 \rangle)  = h'(c_4) = h(\langle c_5, c_4 \rangle),
\]
where the middle equality holds because $\langle c_4, c_4 \rangle \in D$ and $g\res_D = h\res_D$, and the others because $g = g' \circ \pi_2$ and $h = h' \circ \pi_2$.

Lastly, suppose that $g$ and $h$ factor through different projections. Without loss of generality, we may assume that $g$ factors through $\pi_1$ and $h$ factors through $\pi_2$.\ 
Then there exists a pair of homomorphisms $g', h' \colon \B \to \E$ such that $g = g' \circ \pi_1$ and $h = h' \circ \pi_2$. 
Since $g\res_D = h\res_D$ and $\langle c_2,c_3 \rangle,\langle c_4,c_4 \rangle \in D$, we obtain
\begin{align} 
g'(c_2) &= g(\langle c_2,c_3 \rangle) = h(\langle c_2,c_3 \rangle) = h'(c_3); \label{Eq : equality cond assymetry} \\ 
g'(c_4) &= g(\langle c_4,c_4 \rangle) = h(\langle c_4,c_4 \rangle) = h'(c_4). \label{Eq : equality cond a_2}
\end{align}
Assume, with a view to contradiction, that both $g'$ and $h'$ are injective. As $\B \in \Mcal(\K_\rsi)$ by assumption and $\E \in \K_\rsi$, it follows that $g'$ and $h'$ are isomorphisms. 
Since $c_3$ is the only element $a$ of $\B$ such that $\{ b \in B : a \leq b\}$ has size $3$, every automorphism of $\B$ must fix $c_3$. It follows that $g'(c_3)=h'(c_3)$ because $(h')^{-1} \circ g'$ is an automorphism of $\B$. Then \eqref{Eq : equality cond assymetry} yields $g'(c_2)= h'(c_3) = g'(c_3)$, which is impossible because $g'$ is injective. 
Therefore, either $g'$ or $h'$ is not injective.
 Suppose first that $g'$ is not injective. Then there exist $a,b \in B$ such that $a \nleq b$ and $g'(a)=g'(b)$. As $a \nleq b$ and $g'$ is a homomorphism, we obtain $a \to^\B b \neq 1^\B$ and $g'(a \to^\B b)=g'(a) \to^\E g'(b)=1^\E$. Note that every Heyting algebra homomorphism is order preserving because it is a lattice homomorphism. Therefore, since $c_4$ is the second greatest element of $\B$, we have $a \to^\B b \leq c_4$ and, consequently, $1^\E=g'(a \to^\B b) \leq g'(c_4)$. So, $g'(c_4)=1^\E$. Then we have
\[
g(\langle c_5, c_4 \rangle) = g'(c_5) = 1^\E = g'(c_4)  = h'(c_4) = h(\langle c_5, c_4 \rangle),
\]
where first and last equalities hold because $g = g' \circ \pi_1$ and $h = h' \circ \pi_2$, the second because $c_5$ is the greatest element of $\B$, the third because $g'(c_4)=1^\E$ as we just observed, and the fourth follows from \eqref{Eq : equality cond a_2}.
Next, suppose that $h'$ is not injective. An  argument similar to the one above shows that $h'(c_4)=1^\E$. Then
\[
h(\langle c_5, c_4 \rangle) = h'(c_4) = 1^{\E} = g'(c_5) = g(\langle c_5, c_4 \rangle),
\]
where first and last equalities hold because $g = g' \circ \pi_1$ and $h = h' \circ \pi_2$, the second because $h'(c_4)=1^\E$ as we just observed, and the third because $c_5$ is the greatest element of $\B$. 
We conclude that $g(\langle c_5, c_4 \rangle) = h(\langle c_5, c_4 \rangle)$ in all possible cases. Thus, $\langle c_5, c_4 \rangle \in \d_{\K_\rsi}(\D, \B \times \B)$, as desired.

\end{proof}

Let $\D$ be as in \cref{claim : existence D nontrivial dominion}. As $\B \in \K$, $\D \leq \B \times \B$, and $\K$ is a quasivariety, we obtain $\D, \B \times \B \in \K$. Since $\B \in \Mcal(\K_\rsi)$ and
$\D \in \HHH(\B)$, it follows that $\D, \B \times \B \in \HHH\PPP(\Mcal(\K_\rsi)) \cap \K$. If $\K$ had a Beth companion, then \cref{Prop : Trivial doms in HPP_U(maxSI)} would imply that $\d_\K(\D,\B \times \B) = D$, contradicting \cref{claim : existence D nontrivial dominion}. Thus, $\K$ lacks a Beth companion.
\end{proof}

\

\subsection*{Acknowledgments} We are grateful to Rodrigo Nicolau Almeida and Simon Santschi for comments on an earlier draft of this manuscript. We also thank Miguel Campercholi, Diego Castaño, Ivan Di Liberti, and Luca Reggio for many interesting conversations, and Marino Gran for drawing our attention to \cite{MR1268510}.

\end{document}